\title{Microlocal condition for non-displaceability\footnote{Partially supported by an NSF grant}}
\author{Dmitry Tamarkin}
\address{}
\begin{document}
\maketitle
\newtheorem{theorem}{Theorem}[section]
\newtheorem{Axiom}[theorem]{Axiom}
\newtheorem{Claim}[theorem]{Claim}
\newtheorem{Conjecture}[theorem]{Conjecture}
\newtheorem{Lemma}[theorem]{Lemma}
\newtheorem{sublemma}[theorem]{Sublemma}
\newtheorem{Corollary}[theorem]{Corollary}
\newtheorem{Proposition}[theorem]{Proposition}
\newtheorem{Theorem}[theorem]{Theorem}
\newtheorem{Definition}[theorem]{Definition}
\newtheorem{Definition-Proposition}[theorem]
{Definition-Proposition}
\newtheorem{Condition}[theorem]{Condition}
\def\tf{(f^t)}
\def\Ker{{\text{Ker}}}
\def\Int{{\text{Int}}}
\def\bfZ{{\mathbb{Z}}}
\def\Real{{\text{Re}}}
\def\bn{{\mathbf{n}}}
\def\pr{{\mathbf{pr}}}
\def\bT{{\mathbf{T}}}
\def\bZ{{\mathbf{Z}}}
\def\N{{\mathbf{N}}}
\def\cO{{\mathcal{O}}}
\def\cF{{\mathbf{f}}}
\def\conv{ \bullet}
\def\pmin{\pm}
\def\cC{{\mathcal{C}}}
\def\cD{{\mathcal{D}}}
\def\cI{{\mathcal{I}}}
\def\cV{{\mathcal{V}}}
\def\cT{{\mathbb{T}}}
\def\cX{{\mathcal{X}}}
\def\cY{{\mathcal{Y}}}
\def\bL{\mathbb{L}}
\def\Re{{\mathbb{Re}}}
\def\Co{\mathbb{C}}
\def\gf{{\mathbb{K}}}
\def\Sh{\text{Sh}}
\def\mS{\text{SS}}
\def\ms{{\mS}}
\def\Zentrum{\mathbf{Z}}
\def\sp{\mathfrak{S}}
\def\bfS{{\mathcal{S}}}
\def\cU{{\mathcal{ U}}}
\def\O{{\mathcal{O}}}
\def\g{\mathfrak{g}}
\def\h{\mathfrak{h}}
\def\k{{\mathfrak{k}}}
\def\t{\mathfrak{t}}
\def\SU{\text{SU}}
\def\SO{\text{SO}}
\def\CP{\mathbb{CP}}
\def\RP{\mathbb{RP}}
\def\su{{\mathfrak{su}}}
\def\so{\mathfrak{so}}

\def\diag{\text{diag}}
\def\Inj{{\text{Inj}}}
\def\Subsets{\text{Subsets}}
\def\subsets{\Subsets}
\def\ihom{\underline{\text{Hom}}}
\def\Cone{\text{Cone}}
\def\cone{\Cone}
\def\ve{\varepsilon}
\def\Id{\text{Id}}
\def\dB{{\partial B}}
\def\limdir{{\varinjlim}}
\def\liminv{{\varprojlim}}
\def\Tr{{\text{Tr}}}
\def\op{\text{op}}
\def\Ad{\text{Ad}}
\def\into{\hookrightarrow}
\def\pt{\mathbf{pt}}
\def\bfq{\mathbf{q}}
\def\char{\text{char }}
\def\gl{{\frak{gl}}}
\def\dist{{\text{dist}}}
\def\bN{{\mathbf{N}}}
\def\umin{{U^{-}}}
\def\follows{\Rightarrow}
\def\trian{<}
\def\sph{\Sigma}
\def\orient{\text{or}}
\def\Fl{\mathcal{FL}}
\def\GL{{\text{GL}}}
\def\FL{\Fl}
\def\bft{{\mathbf{t}}}
\def\bfb{\mathbf{b}}
\def\complexes{\mathbf{Complexes}}
\def\valuesp{{\mathbb{S}}}
\def\Con{{\mathbf{Con}}}
\def\vs{\sigma}
\def\image{\text{Image }}
\def\DBSh{{\text{DBSh}}}
\def\strict{{\text{strict}}} 

\centerline{\em To Boris Tsygan on his 50-th birthday}

\begin{abstract} We formulate a sufficient condition for non-displaceability
(by Hamiltonian symplectomorphisms which are  identity outside of a compact)
of a pair of subsets in a cotangent bundle. This condition
is based on micro-local analysis of sheaves on manifolds
by Kashiwara-Schapira.
This condition  is used to prove  that the real projective space and the Clifford torus
inside the complex projective space are mutually non-displaceable
\end{abstract}
\section{Introduction} Let $M$ be a symplectic manifold
and $A,B\subset M$ its compact subsets. $A$ and $B$ are called
non-displaceable if $A\cap X(B)\neq \emptyset$, where $X$
is any Hamiltonian symplectomorphism of $M$ which is identity
outside of a compact.  Given such $A$ and $B$, it is, in general, a 
non-trivial problem to  decide, whether they are displaceable
or not (see, for example, \cite{Pol} and the 
literature therein).  In non-trivial cases (when, say,
$A$ and $B$ can be displaced by a diffeomorphism), all the methods known so far use different versions
of Floer cohomology. 

In this paper we introduce a sufficient condition for non-displaceability in the case when $M=T^*X$ with the standard 
symplectic
structure. Our approach is based on 
Kashiwara-Shapira's microlocal theory of sheaves on manifolds
and is independent of Floer's theory. 
We apply our condition in the following setting. Let
our sympleectic manifold be $\CP^N$ with the standard
symplectic structure and let our subsets be
$\RP^N\subset \CP^N$ and $\mathbb{T}^N\subset \CP^N$,
where $\mathbb{T^N}$ is the Clifford torus consisting
of all points $(z_0:z_1:\cdots:z_n)$ such that
$|z_0|=|z_1|=\cdots=|z_n|$. Let $A$ and $B$ be arbitrarily
chosen from the two subsets specified, we show that such 
$A$ and $B$ are non-displaceable. Same result has been
proven in \cite{Pol} using Hamiltonian Floer theory. Non-displaceability of Clifford
torus has been proven in \cite{Cho}
via computing Floer cohomology.

Observe that our condition applies despite $\CP^N\neq T^*X$.
We use a  certain Lagrangian correspondence between
$T^*\SU(N)$ and $\CP^N\times (\CP^N)^\text{opp}$,
where the symplectic form on $(\CP^N)^\text{opp}$ equals
the opposite to that on $\CP^N$, see Sec. \ref{svedenie}.
This way our original problem gets reduced to 
non-displaceablity of certain subsets in $T^*\SU(N)$.

Let us now get back to the non-displaceability condition
for subsets in  a symplectic manifold $T^*X$, where
$X$ is a smooth manifold.  Fix a ground field $\gf$.
We start with a category
$\cD(X)$ which is defined as a  full subcategory 
of the unbounded derived category of sheaves of $\gf$-vector
spaces on $X\times \Re$,
consisting of all objects $F\in D(X\times\Re)$
satisfying the following condition: for any open $U\subset
X$ and any $c\in \Re\cup \{\infty\}$, 
$R\Gamma_c(U\times (-\infty,c);F)=0$. 
 The category
$\cD(X)$ admits a microlocal definition. Let $\partial_t$
be the vector field on $X\times \Re$ corresponding
to the infinitesimal shifts along $\Re$.
 Let $\Omega_{\leq 0}\subset T^*(X\times \Re)$ be the subset consisting
of all 1-forms $\eta$ satisfying $i_{\partial_t}\eta\leq 0$.
Let $\cC_{\leq 0}\subset D(X\times \Re)$ be the 
 full subcategory
 consisting of all objects microsupported
on $\Omega_{\leq 0}$. One can show that $\cD(X)$ is the left
orthogonal complement to $\cC_{\leq 0}$.  

 One can show
that the embedding  $\cC_{\leq 0}\subset D(X\times \Re)$ 
admits a left adjoint. Therefore, $\cD(X)$ can be 
identified with   a quotient $D(X\times \Re)/\cC_{\leq 0}$.
This motivates us to define microsupports of objects
from $\cD(X)$ as conic closed subsets of
$\Omega_{>0}:=T^*(X\times \Re)\backslash \Omega_{\leq 0}$. 
Thus, we set $\mS_\cD(F):=\mS(F)\cap \Omega_{>0}$ for
any $F\in \cD(X)$.

Let us identify $T^*(X\times \Re)=T^*X\times T^*\Re$.
Let $A\subset T^*X$ be a subset. Define 
$\Cone(A)\subset \Omega_{>0}$ to consist
of all points $(\eta,\alpha)\in T^*X\times T^*\Re$
such that $i_{\partial_t}\alpha>0$ (meaning that
$(\eta,\alpha)\in \Omega_{>0}$) and 
$$
\frac{\eta}{i_{\partial_t}\alpha}\in A.
$$

Let $\cD_A(X)\subset \cD(X)$ be the full subcategory
consisting of all $F\in \cD(X)$ such that $\mS_\cD(F)\subset
\Cone(A)$. This way we can link subsets of $T^*X$ with
the category $\cD(X)$.

 Let $c\in \Re$, let $T_c:X\times \Re\to X\times \Re$ be 
the shift by $c$: $T_c(x,t)=(x,t+c)$. One sees that
$T_c(\Cone(A))=\Cone(A)$. Therefore, 
  the endofunctor
$T_{c*}:D(X\times \Re)\to D(X\times \Re)$ preserves
$\cD_A(X)$ for all $A$.  For any $c>0$,
one can construct a natural transformation
$\tau_c:
\Id\to T_{c*}$ of endofunctors on $\cD_A(X)$ for any $A$,
see Sec. \ref{preob:sdvig}.

We can now formulate the non-displaceablity condition 
(Theorem \ref{main}).

{\em Let $A,B\subset T^*X$ be compact subsets. Suppose
there exist $F_A\in \cD_A(X)$; $F_B\in \cD_B(X)$ such that
for any $c\geq 0$, the natural map
$R\hom(F_A;F_B)\to R\hom(F_A;T_{c*}F_B)$, induced by $\tau_c$,
does not vanish.  Then $A$ and $B$ are non-displaceable.}

{\bf Remark.}  For $c\in \Re$ set
 $H_c(F_A,F_B):=H_c:=R\hom(F_A;T_{c*}F_B)$. For any $d\geq 0$, the natural transformation $\tau_d$
induces a map $\tau_{c,c+d}:H_c\to H_{c+d}$. 

 Let $H(F_A,F_B):=H\subset \prod_{c\in \Re}H_c$ be defined as a subset consisting of all collections
$h_c\in H_c$ such that  there exists a sequence
$c_1<c_2<\cdots<c_n<\cdots$; $c_n\to \infty$ such that
$h_c=0$ for all $c\notin \{c_1,c_2,\ldots,c_n,\ldots\}$.
The maps $\tau_{c,c+d}$ induce  maps
$\tau_d:H\to H$ for all $d\geq 0$. This way we get 
an action of the semigroup $\Re_{\geq 0}$ on $H$. This implies
that Novikov's ring, which is a group ring of $\Re_{\geq 0}$,
acts on $H$. There are indications that thus defined
module over Novikov's ring $H$ is related to Floer
cohomology of the pair $A,B$. In this language, 
our nondisplaceability condition means that $H(F_A,F_B)$
 has a non-trivial non-torsion part.

{\bf Remark} It seems likely that under an appropriate version
of Riemann-Hilbert correspondence our picture should become
similar to the setting of \cite{NT}. This paper can be considered
as an attempt to translate \cite{NT} into the language of constructible
sheaves. 

{\bf Remark} There is some similarity between our theory and the
approach from \cite{NZ} where the authors identify the derived 
category of constructible 
sheaves on $X$ with a certain version of the Fukaya category
on $T^*X$. The authors use Lagrangian subsets of $T^*X$ which are close
to being conic, whereas we work with compact subsets of $T^*X$.

Let us now briefly describe the way our non-displaceability
 condition 
is applied to the above mentioned example 
$\RP^N,\mathbb{T}^N\subset \CP^N$.  As was explained, 
the problem can be reduced to proving non-displaceability
of certain subsets of $T^*\SU(N)$. Given such a subset, say
$A$, it is, in general, a non-trivial problem to
construct a non-zero object $F\in \cD_A(\SU(N))$. Our major
tool here is a certain object $\sp\in D(G\times \h)$ which
is defined uniquely up-to a unique isomorphism
by certain microlocal conditions to  be now specified.
Here $G=\SU(N)$ and $\h$ is the Cartan subalgebra of 
$\g$, the Lie algebra of $\SU(N)$. 

Let $C_+\subset \h$ be the positive Weyl chamber. 
For every $A\in \g$ there exists a unique element 
$\|A\|\in \h$ such that $\|A\|$ is conjugated with $A$.
Let us identify $T^*(G\times \h)=G\times \h\times \g^*\times \h^*$ (via interpreting $\g^*$ as the space of right-invariant
1-forms on $G$). Let us identify $\g^*=\g$, $\h^*=\h$ by means
of  Killing's form.
 Let $\Omega_\sp\subset G\times \h\times\g\times \h=
\Omega_\sp\subset G\times \h\times\g^*\times \h^*$ 
consist of all points of the form
$(g,X,\omega,\eta)$, where $\eta=\|\omega\|$.
Let also $i_0:G\to G\times \h$ be the embedding
$i_0(g)=(g,0)$. 
We then define $\sp$ as an object of $D(G\times \h)$
such that $\mS(\sp)\subset \Omega_\sp$ and $i_0^{-1}\sp\cong 
\gf_{e}$, where $\gf_e$ is the skyscraper at the unit 
$e\in G$. One can show that this way $\sp$ is determined
uniquely up-to a unique isomorphism.  It turns out
that the required  objects $F_A\in \cD_A(X),  F_B\in \cD_B(X), \ldots,$ can be easily expressed in terms of $\sp$. 

Our next task is to compute the graded
vector spaces  $R\hom(F_A,T_{c*}F_B)$ 
and to make sure that the maps
$\tau_c:R\hom(F_A,F_B)\to R\hom(F_A,T_{c*}F_B)$ are not zero
for all $c\geq 0$.  This problem gets gradually reduced
to finding an explicit description of the restriction
$i_e^{-1}\sp \in D(\h)$, where $i_e:\h\to G\times \h$,
$i_e(X)=(e,X)$, and $e\in G$ is the unit.

{\bf Remark.}
Let $C_-:=-C_+$, let $C_-^\circ\subset C_-$ be the interior.
It turns out that the stalks of $(i_e^{-1}\sp|_{C_-})$ have a transparent topological meaning (however, this meaning won't be
used in our proofs). Let $X\in C_-$; let
$O(X):=\sp|_{e\times X}[-\dim \h].$  

On the other hand, let us consider the smooth loop space
$\Omega G$. For $\gamma:[0,1]\to G$ being a smooth loop, we
set $\|\gamma\|\in C_+$,
$$
\|\gamma\|:=\int\limits_0^1 \|\gamma'(t)\|dt,
$$
where $\gamma'(t)\in \g$ is the $t$-derivative of $\gamma$.
Let $\Omega_X\subset \Omega(G)$ be the subspace
consisting of all loops $\gamma$ such that 
$\|\gamma\|\leq -X$ (here $Y\leq -X$ means
$<Y+X,C_+>\leq 0$, where $<,>$ is the restriction
of the positive definite invariant  form on $\g$ onto $\h$).
It can be shown that $\O(X)\cong H_\bullet(\Omega_X)$.

In regard with this setting, one can ask the following
question (which will be probably discussed in a subsequent
paper).  We have an obvious concatenation map
$\Omega_X\times \Omega_Y\to \Omega_{X+Y}$ whence a product
$\O(X)\otimes \O(Y)\to \O(X+Y)$. One can show that this 
product is commutative so that the  spaces
$\O()$  form a filtered commutative algebra. 
It can be shown that this  
algebra can be obtained in the following algebro-geometric
way. Let $\Fl$ be the projective $\gf$-variety  of
complete flags in $\gf^N$.  Fix a regular nilpotent operator
$n:\gf^N\to \gf^N$ (that is, $n$ consists of one Jordan block).
Let $\mathbf{Pet}\subset \Fl(N)$ be the closed subvariety
consisting of all flags $0=V_0\subset V_1\subset \cdots V_N=
\gf^N$ satisfying $nV_i\subset V_{i+1}$ for all $i<N$.
This variety was  discovered by Peterson, see e.g.
 \cite{Kost}. 
 
Let $\bL\in \h$ be the lattice formed by all 
elements $X$ such that $e^X$ is in the center of $G$.
Given $l\in \bL$ we canonically have a line bunble
$L_l$ on $\Fl$. It turns out that for all
$l\in \bL\cap C_+$ we have an isomorphism
$O(l)=\Gamma(\mathbf{Pet};L_l|_{\mathbf{Pet}})$,
and this isomorphism is compatible with the natural
product on both sides.

A  related result  is proven
in \cite{Bez}, where, among other interesting 
results, the authors identify $H_\bullet(\Omega(G))$
with the algebra of functions on a certain affine
open subset of $\mathbf{Pet}$.

Let us now go over the content of the paper.
In Sec. \ref{Generalities}-\ref{condition} we formulate
and prove the non-displaceability condition.

In Sec. \ref{cepeen} we start applying
the non-displaceability condition to 
$\RP^N,\cT^N\subset\CP^N$.  Finally, the problem is
reduced to the existence of an
object $u_\O\in \cD(G)$ satisfying certain properties
(see Proposition \ref{diagonal}). 

In Sec. \ref{const:uo} the object $u_\O$ gets constructed
out of $\sp$ (where we use certain properties
of $\sp$ to be proven in the subsequent sections).
 
The rest of the paper is devoted
to  constructing and studying $\sp$. 
In Sec. \ref{specp} we construct an object $\sp$ and
prove its uniqueness. 

In Sec. \ref{restrC-} we compute an isomorphism type
of $\sp|_{z\times C_-^\circ}$ where $z$ is any element
in the center of $G$. In essense, the computation is a version
of Bott's computation of $H_\bullet(\Omega(G))$ 
using Morse theory. 

The goal of  Sec. \ref{bsche} is to extend the result of
the previous section to $z\times \h$. This is done by means
of establishing a certain periodicity propery of
$\sp$ with respect to shifts along $\h$ by elements
of the lattice $\bL=\{X\in \h|e^X\in \Zentrum\}$,
where $\Zentrum\subset G$ is the center.  Namely, we show
that $\sp$ is, what we call, {\em a strict $B$-sheaf.}
(see Sec. \ref{strict:sh}).  We show that any
strict $B$-sheaf can be recovered from its
restricton onto $\Zentrum\times C_-^\circ$. By virtue of this
statement we are able to identify the isomorphism type
of $\sp|_{\Zentrum\times G}$.

There are two appendices. In the first one we 
introduce the notation used when working with $\SU(N)$ and
its Lie algebra. We also included a couple of useful Lemmas
(which, most likely, can be found elsewhere in the literature).
These Lemmas are mainly used when constructing and
studying $\sp$. The notation is used systematically
starting from Sec. \ref{const:uo}.

In the second appendix we list, for the reader's convenience, the rules for computing
the microsupport from \cite{KS}.  These rules are used 
throughout the paper. 

Strictly speaking, these rules are proved in \cite{KS} 
for the bounded derived category. However, one sees that
they carry over directly to the unbounded derived category,
in which case we use them.

{\em Acknowledgements} I would like to thank Boris Tsygan and Alexander 
Getmanenko for motivation and numerous fruitful discussions. I am  grateful
to Pavel Etingof, Roman Bezrukavnikov, Ivan Mirkovich, and David
Kazhdan for their explanations on Peterson varieties.

 \section{Generalities}\label{Generalities}

\subsection{Unbounded derived category}
\subsubsection{} Fix a ground field $\gf$.
 Abelian category $\Sh_M$ of sheaves of $\gf$-vector spaces 
on a smooth manifold $M$ is of finite injective dimension.  Therefore, one has
a simple model of the unbounded derived category $D(M)$, namely one can take 
 unbounded complexes of injective sheaves on $M$; given two such complexes, we define
 $\hom_{D(M)}(I_1,I_2):=H^0\hom^\bullet(I_1,I_2)$.
This definition is stable under quasi-isomorphisms precisely because of finite
injective dimension of $\Sh_M$. The main results of the formalism of 6 functors
remain valid for $D(X)$ (excluding the Verdier duality).

\subsubsection{} We still have a notion of singular support of an object
of $D(M)$ and it is defined in the same way as in \cite{KS}
The results on functorial properties of singular support from Ch 5,6 of 
\cite{KS} are still valid for the  unbounded derived category, and we will
freely use them. For the convenience of the reader the results
from \cite{KS} used in this paper are listed in 
Sec. \ref{appendix}

\subsection{Sheaves on $X\times \Re$}  Let $X$ be a smooth manifold.
We will work with the manifold $X\times \Re$. Let $t$ be the coordinate on
$\Re$ and let $V=\partial/\partial t$ be the vector field
corresponding to the infinitesimal shift along $\Re$.
Let $\Omega_{\leq 0}\subset T^*(X\times \Re)$ be the closed subset
consisting of all 1-forms $\omega$ with $(\omega,V)\leq 0$. Let
$\Omega_{>0}\subset T^*(X\times \Re)$ be the complement to
$\Omega_{\leq 0}$, i.e. the set of all 1-forms $\omega$ such that
$(\omega,V)>0$.

Let $C_{\leq 0}(X)\subset D(X\times \Re)$ be the full subcategory 
of objects microsupported on $\Omega_{\leq 0}$.
Let $\cD(X):=D(X\times \Re)/C_{\leq 0}(X)$.

\begin{Proposition} The embedding $C_{\leq 0}(X)\to D(X\times \Re)$
has a left adjoint. Therefore, $\cD(X)$ is equivalent
to the left orthogonal complement to $C_{\leq 0}(X)$
in $D(X\times \Re)$.
\end{Proposition}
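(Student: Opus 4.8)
The plan is to construct the left adjoint to the inclusion $\iota: C_{\leq 0}(X) \hookrightarrow D(X\times\Re)$ explicitly, using the microlocal cut-off functor of Kashiwara--Schapira adapted to the half-space $\Omega_{\leq 0}$ determined by the coordinate $t$. The key geometric fact is that $\Omega_{\leq 0}$ is the set of covectors $\omega$ with $\langle \omega, \partial_t\rangle \leq 0$, i.e.\ it is a \emph{half-space} (in the fiber direction of $T^*\Re$) that is moreover closed, convex, and invariant under translations in the $t$-direction. For such a "propagation" situation one has a standard device: convolution along $\Re$ with the constant sheaf on a half-line.

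First I would set up the convolution. Let $p_1,p_2,m: X\times\Re\times\Re \to X\times\Re$ be, respectively, the two projections remembering one $\Re$-factor and the addition map $m(x,t,s)=(x,t+s)$; for $F,G \in D(X\times\Re)$ define $F \conv G := Rm_{!}(p_1^{-1}F \otimes p_2^{-1}G)$ (convolution in the $\Re$-variable only). Then I would consider the object $\gf_{[0,+\infty)}$ on $\Re$, pulled back to $X\times\Re\times\Re$ appropriately, and look at the functor $L(F) := \Cone(F \to F\conv \gf_{[0,+\infty)})$ or rather the appropriate truncation; more precisely the right candidate for the \emph{projector} onto $C_{\leq 0}$ is convolution with $\gf_{(-\infty,0]}$ (or a shift thereof), and the left adjoint to $\iota$ is the complementary projector, realized as $F \mapsto \Cone\bigl(F\conv \gf_{(-\infty,0]}[\text{shift}] \to F\bigr)$ or its cone-shifted variant. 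I would pin down the exact half-line and shift by testing on the generators $\gf_{X\times[a,b)}$ and $\gf_{X\times(a,b]}$, using the elementary microsupport computation $\ms(\gf_{(a,b]}) \subset \{t=a, \tau\geq 0\}\cup\{t=b,\tau\leq 0\}\cup\{a\leq t\leq b,\tau=0\}$ and the behaviour of $\ms$ under the proper map $m$ (the estimate $\ms(F\conv G) \subset$ the "sum" of $\ms(F)$ and $\ms(G)$ along $\Re$, which follows from the operations-on-microsupport bounds in \cite{KS} collected in the appendix). The two things to check are: (i) for every $F$, the object $L(F)$ has microsupport in $\Omega_{\leq 0}$, hence lies in $C_{\leq 0}(X)$; (ii) the natural map $F \to L(F)$ becomes, after applying $\hom(-,G)$ with $G \in C_{\leq 0}(X)$, an isomorphism, i.e.\ the cofiber $P(F)$ of $P(F)\to F$ (the complementary projector, microsupported in $\Omega_{>0}$) has no maps \emph{into} objects of $C_{\leq 0}$. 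For (ii) the crucial input is the non-characteristic / microlocal $\hom$-vanishing: if $\ms(A) \cap \ms(B)[\text{suitably}] = \emptyset$ in a separated-enough way then $R\hom(A,B)=0$; here $\Omega_{>0}$ and $\Omega_{\leq 0}$ are disjoint closed-vs-open but one needs the sharper statement that a sheaf microsupported in the \emph{open} half $\Omega_{>0}$ admits no nonzero morphisms to one microsupported in $\Omega_{\leq 0}$, which is exactly the kind of antisymmetric propagation estimate that convolution with a half-line is designed to produce.

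Once the adjunction $L \dashv \iota$ is established, the second sentence of the Proposition is formal: a full subcategory whose inclusion has a left adjoint is reflective, and for a reflective subcategory of a triangulated (here, of a stable/triangulated) category the quotient $D(X\times\Re)/C_{\leq 0}(X)$ is canonically equivalent to the left orthogonal complement ${}^{\perp}C_{\leq 0}(X) = \{F : R\hom(F,G)=0 \ \forall G\in C_{\leq 0}(X)\}$, the equivalence being induced by the complementary projector $P = \Cone(L[-1]\to \Id)$ landing in ${}^{\perp}C_{\leq 0}(X)$ together with the localization functor. I would state this as a short lemma on Bousfield localization in triangulated categories and cite it, since it is entirely standard; the only point requiring the specific geometry is that $C_{\leq 0}(X)$ is triangulated and closed under the relevant (possibly infinite) coproducts, which holds because microsupport conditions are closed under shifts, cones, and arbitrary direct sums in the unbounded derived category (the last using that $\ms$ of a coproduct is contained in the closure of the union of the $\ms$'s).

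The main obstacle I expect is step (i)--(ii) above, specifically getting the microsupport bookkeeping for the convolution \emph{and} the bound in the right direction to be sharp enough: one must show not merely that $P(F)$ has microsupport avoiding the interior of $\Omega_{\leq 0}$, but that it is genuinely left-orthogonal to everything in $C_{\leq 0}$, and dually that $L(F)\in C_{\leq 0}$ on the nose (closed microsupport condition, including the zero section and the boundary $\tau=0$ locus). This is where a careful choice between the half-lines $[0,\infty)$, $(0,\infty)$, $(-\infty,0]$, $(-\infty,0)$ and a degree shift is essential, and where I would spend most of the effort, checking it on the constant sheaves of products of intervals, which generate $D(X\times\Re)$ under the operations that preserve the relevant microsupport conditions. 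The translation-invariance of $\Omega_{\leq 0}$ in $t$ is what makes the convolution with a fixed half-line the correct functor and guarantees the result is $T_{c*}$-equivariant, which will be needed later but is automatic here.
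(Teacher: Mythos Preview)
Your approach is essentially the paper's: convolution along $\Re$ with a half-line produces the two projectors, and one checks that the resulting triangle $P(F)\to F\to L(F)$ has $P(F)\in{}^{\perp}C_{\leq 0}(X)$ and $L(F)\in C_{\leq 0}(X)$. Two points where the paper is more decisive than your sketch:

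\textbf{(a) The precise map.} The natural map goes $F*_\Re\gf_{[0,\infty)}\to F*_\Re\gf_0=F$, coming from the restriction $\gf_{[0,\infty)}\to\gf_0$; there is no natural map in the direction $F\to F*_\Re\gf_{[0,\infty)}$ as you first wrote. The cone is $F*_\Re\gf_{(0,\infty)}[1]$, so $P(F)=F*_\Re\gf_{[0,\infty)}$ is the projector onto the left orthogonal and $L(F)=F*_\Re\gf_{(0,\infty)}[1]$ is the left adjoint you seek.

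\textbf{(b) Orthogonality is not a microsupport argument.} Your instinct that ``microlocal $\hom$-vanishing'' is the mechanism for (ii) is a red herring, and you were right to flag it as the obstacle: $P(F)$ still contains the zero section in its microsupport, so no disjointness criterion applies. The paper instead reduces to generators $F=\gf_{U\times(a,b)}$, computes $\gf_{U\times(a,b)}*_\Re\gf_{[0,\infty)}$ explicitly as $\gf_{U\times[c,\infty)}[-1]$ for some $c$, and then observes that $R\hom(\gf_{U\times[c,\infty)},G)$ is the cone of the restriction $R\Gamma(U\times\Re;G)\to R\Gamma(U\times(-\infty,c);G)$, which is an isomorphism precisely by the definition of $G\in C_{\leq 0}(X)$ (non-characteristic propagation in the $-\partial_t$ direction). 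So the orthogonality comes directly from the defining microsupport condition on $G$, not from any estimate on $\ms(P(F))$. Your fallback plan of ``checking on constant sheaves of products of intervals'' is exactly right; commit to it and drop the $\hom$-vanishing heuristic.

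For (i), the paper does use a microsupport estimate as you propose, after first reducing to compactly supported $F$ so that the addition map is proper on the support; this reduction is the one step you might want to make explicit.
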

\begin{proof} Let $p_1:X\times \Re\times \Re\to X\times \Re$;
$p_2:X\times \Re\times \Re\to \Re$; $a:X\times \Re\times \Re\to
\Re$ be given by $p_1(x,t_1,t_2)=(x,t_1)$; $p_2(x,t_1,t_2)=t_2$;
$a(x,t_1,t_2)=t_1+t_2.$. For $F\in D(X\times \Re)$ and
$S\in D(\Re)$ set $F*_\Re S:=Ra_!(p_1^{-1}F\otimes  p_2^{-1}S)$.

It is clear that $F*_\Re \gf_0\cong F$ where $\gf_0$ is the 
sky-scraper at $0\in \Re$. 

We have a natural map $\gf_{[0,\infty)}\to \gf_0$ in $D(\Re)$.

For an $F\in D(X\times \Re)$, consider the induced map
\begin{equation}\label{phi}
F*_\Re \gf_{[0,\infty)}\to F*_\Re \gf_0=F.
\end{equation}

1) Let us show that {\em $F*_\Re \gf_{[0,\infty)}$ is in the left 
orthogonal complement to $C_{\leq 0}(X)$.}

Indeed, let $G\in C_{\leq 0}(X)$. Let $U\subset X$ be an open subset
and let $(a,b)\subset \Re$. Any object $F\in D(X\times\Re)$
can be produced from  objects of the type $\gf_{U\times (a,b)}$
for various $U$ and $(a,b)$ by  taking direct limit.
Therefore, without loss of generality, one can assume $F=\gf_{U\times(a,b)}$. One then has
$$
R\hom_{X\times\Re}(F*_\Re \gf_{[0,\infty)};G)=
$$
$$
=R\hom_{X\times\Re}(\gf_{U\times (a,b)}*_\Re \gf_{[0,\infty)};G)
$$
$$
=R\hom_{X\times \Re}(\gf_{U\times [a,\infty)}[-1];G)
$$
$$
=\Cone(R\Gamma(U\times \Re;G)\stackrel r\to R\Gamma(U\times (-\infty;a);G)).
$$
The map $r$ is an isomorphism because $G\in C_{\leq 0}$. Therefore,
$\Cone(r)=0$, whence the statement.

2) {\em  Cone of the map (\ref{phi})  is in $C_{\leq 0}(X)$.}
Indeed, consider the cone of the map $\gf_{[0,\infty)}\to 
\gf_{0}$.
This cone is isomorphic to $\gf_{(0,\infty)}[1]$. One then   has to  check that
$F*_\Re \gf_{(0,\infty)}\in C_{\leq 0}(X)$. One can represent $F$ as an inductive
limit of compactly supported objects. Therefore, without loss
of generality, one can assume $F$ is compactly supported. One then 
can estimate the microsupport of $F*_\Re \gf_{[0,\infty)}$ using functorial 
properties of microsupport. Indeed, let us identify

$$
T^*(X\times \Re\times \Re)=T^*X\times T^*(\Re\times \Re).
$$

Let us also identify $T^*(\Re\times \Re)=\Re^4$ so that a point 
$(t_1,t_2,k_1,k_2)\in \Re^4$ corresponds to the 
1-form $k_1dt_1+k_2dt_2$ at the 
point $(t_1,t_2)\in \Re\times \Re$. We then have
$$
p_1^{-1}F\otimes p_2^{-1}\gf_{(0,\infty)}=
F\boxtimes \gf_{(0,\infty)};
$$
$$
\mS(F\boxtimes \gf_{(0,\infty)})
$$
$$
\subset \{(\omega,t_1,t_2,k_1,k_2)
\in  T^*X\times \Re^4
|
(t_2,k_2)\in \mS(\gf_{(0,\infty)})\}.
$$
 This means that either $t_2=0$ and $k_2\leq 0$ or $t_2>0$ and 
$k_2=0$.

As $F$ is compactly supported, it follows that 
the map $a$ is proper on the support of
$ F\boxtimes \gf_{(0,\infty)}.
$

Therefore, $\mS  Ra_!( F\boxtimes \gf_{(0,\infty)})
$ is contained in the set of all
points $(\omega,t,k)\in T^*X\times \Re^2$ such that
there exists a point $(\omega,t_1,t_2,k_1,k_2)\in \mS(F\boxtimes \gf_{(0,\infty)})$ such that $t=t_1+t_2$; $k_1=k_2=k$.
This implies that $k\leq 0$, therefore,
$$
F*_\Re \gf_{(0,\infty)}=Ra_!(F\boxtimes \gf_{(0,\infty)})\in
C_{\leq 0}(X),
$$
as was required.

The statements 1) and 2) imply that we have an exact triangle
$$
\to F*_\Re \gf_{(0,\infty)}\to F*_\Re\gf_{[0,\infty)} 
\to F\to F*_\Re \gf_{(0,\infty)}[1]\to\cdots,
$$                                                                   
  where $F*_\Re \gf_{(0,\infty)}[1]$ is in $C_{\leq 0}(X)$ and 
$ F*_\Re\gf_{[0,\infty)}$
is in the left orthogonal complement to $C_{\leq 0}(X)$. 
Therefore, $F\mapsto  F*_\Re \gf_{(0,\infty)}[1]$ is the left adjoint
functor  to
the embedding $C_{\leq 0}(X)\to D(X\times \Re)$.
\end{proof}

Thus, we have proven
\begin{Proposition}\label{checkorthogonal}
An object $F\in D(X\times \Re)$ is in the left orthogonal 
complement to $C_{\leq 0}(X)$ iff the map
 (\ref{phi}) is an isomorphism.
\end{Proposition}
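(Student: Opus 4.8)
The plan is to read off both implications directly from the exact triangle established in the proof of the previous proposition, with no further computation. Recall that for any $F\in D(X\times\Re)$ there is a triangle
$$
F*_\Re \gf_{(0,\infty)}\to F*_\Re\gf_{[0,\infty)}\to F\to F*_\Re \gf_{(0,\infty)}[1],
$$
in which the middle map $F*_\Re\gf_{[0,\infty)}\to F$ is precisely (\ref{phi}), the cone of (\ref{phi}) is $F*_\Re \gf_{(0,\infty)}[1]$, step 1) of that proof shows $F*_\Re\gf_{[0,\infty)}$ lies in the left orthogonal complement ${}^{\perp}C_{\leq 0}(X)$, and step 2) shows $F*_\Re \gf_{(0,\infty)}[1]$ lies in $C_{\leq 0}(X)$.

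First, the ``if'' direction: if (\ref{phi}) is an isomorphism then $F\cong F*_\Re\gf_{[0,\infty)}$, and the latter lies in ${}^{\perp}C_{\leq 0}(X)$ by step 1), hence so does $F$. For the ``only if'' direction, assume $F\in{}^{\perp}C_{\leq 0}(X)$ and let $C$ denote the cone of (\ref{phi}). On one hand $C\cong F*_\Re\gf_{(0,\infty)}[1]\in C_{\leq 0}(X)$ by step 2). On the other hand $C$ sits in a triangle $C[-1]\to F*_\Re\gf_{[0,\infty)}\to F\to C$ whose two middle vertices lie in ${}^{\perp}C_{\leq 0}(X)$; since $C_{\leq 0}(X)$ is stable under shifts (microsupport is unaffected by shifts), ${}^{\perp}C_{\leq 0}(X)$ is a triangulated subcategory and in particular closed under taking cones, so $C\in{}^{\perp}C_{\leq 0}(X)$ as well. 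Thus $C$ lies in both $C_{\leq 0}(X)$ and ${}^{\perp}C_{\leq 0}(X)$, which forces $\hom_{D(X\times\Re)}(C,C)=0$, so $\mathrm{id}_C=0$ and $C\cong 0$; hence (\ref{phi}) is an isomorphism.

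I do not anticipate a genuine obstacle: once the triangle of the preceding proposition is in hand the argument is purely formal. The only points that need to be stated with a little care are the two standard facts being invoked — that the left orthogonal complement of a shift-stable full subcategory is triangulated (hence closed under cones), and that an object lying simultaneously in a subcategory and in its left orthogonal complement must be zero.
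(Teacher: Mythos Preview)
Your proof is correct and is exactly the standard semiorthogonal-decomposition argument the paper has in mind: note that the paper gives no separate proof of this proposition, writing only ``Thus, we have proven'' immediately after establishing the triangle in the preceding proposition. Your write-up simply spells out the two formal steps (that ${}^{\perp}C_{\leq 0}(X)$ is triangulated and that an object in both $C_{\leq 0}(X)$ and its left orthogonal must vanish) which the paper leaves implicit.
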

\subsubsection{}
From now on we identify $\cD(X)$ with a full subcategory
of $D(X\times \Re)$ which is the left orthogonal complement to $C_{\leq 0}(X)$.
Thus, the arrow (\ref{phi})
is an isomorphism for any  $F\in\cD(X)\subset D(X\times \Re)$
(and only for objects from $\cD(X)$).
\subsubsection{}\label{preob:sdvig}
 Let $T_c:X\times \Re\to X\times \Re$
be the shift along $\Re$ by $c$: $T_c(x,t)=(x,t+c)$. 
We have $T_{c*}F=F*_\Re\gf_{c}$. If $F\in \cD$, we have
\begin{equation}\label{sdvigident}
T_{c*}F\cong F*_\Re \gf_{[0,\infty)}*_\Re\gf_c\cong
F*_\Re\gf_{[c,\infty)}.
\end{equation}
One can easily check that $T_{c*}F\in \cD(X)$; for example, this follows from an isomorphism
$$
F*_\Re\gf_{[c,\infty)}\cong T_{c*}F*_\Re \gf_{[0,\infty)},
$$
which is the case for any $F\in D(X\times \Re)$.

For all $c\geq d$ we then have a natural map $T_{d*} F\to T_{c*}F$ which is induced by the
embedding $[c,\infty)\subset [d,\infty)$ and we use the identification (\ref{sdvigident}).
 This implies that we have natural transformations
$\tau_{dc}: T_{d*}\to T_{c*}$ of endofunctors on $\cD(X)$ for all $d\leq c$.                                                                                                                                     It is clear that $\tau_{dc}\tau_{ed}=\tau_{ec}$ for all
$e\leq d\leq c$.

 \subsubsection{} \label{torsio} Call an object $F\in \cD(X)$ 
{\em a torsion    object} if there exists $c>0$ such that 
the natural map $\tau_{0c}:F\to T_{c*}F$ is zero in $\cD(X)$.                             
               
\subsubsection{} Still thinking of $\cD(X)$ as 
a quotient $D(X\times\Re)/C_{\leq 0}(X)$,
the microsupport of an object $F\in \cD(X)$ 
is naturally defined as a closed subset
 of $\Omega_{>0}\subset T^*(X\times \Re)$. Denote this microsupport
by $\mS_\cD(F)\subset \Omega_{>0}$. 

Let us see what this means in terms of the identification of $\cD(X)$ with a full subcategory
of $\cD(X)$ which is the  left orthogonal
complement to $C_\leq (X)$. Let $F\in\cD(X)\subset 
D(X\times \Re)$. We then have
$\mS_\cD(F)=\mS(F)\cap \Omega_{>0}$, where $\mS(F)$ is the microsupport of $F$ which is
viewed as an object of $D(X\times \Re)$.

\subsubsection{} Let us identify $T^*\Re=\Re\times \Re$ so that $(t_0,k)\in \Re\times \Re$  corresponds
to the 1-form $kdt$ at the point $t_0\in \Re$.
We then have an induced identification $T^*(X\times \Re)=T^*X \times \Re\times \Re$.

Let $A\subset T^*X$ be a subset. Define the conification $\cone(A)\subset \Omega_{>0}$ to consist of all points
$(\omega,t,k)\in T^*X\times \Re\times \Re$ such that  $k>0$, $(x,\omega/k)\in A$.
Let $\cD_A(X)\subset \cD(X)$ be the full subcategory
consisting of all objects $F\in \cD(X)$ such that
 $\mS_{\cD}(F)\subset \cone(A)$.

\section{Non-displaceability condition}\label{condition}
Let $X$ be a compact manifold. Let $L_1,L_2\subset T^*X$ be compact
subsets. Call $L_1,L_2$ mutually non-displaceable
if for every Hamiltonian symplectomorphism $\Phi$ of $T^*X$ which is
identity outside of a compact, $\Phi(L_1)\cap L_2 \neq \emptyset$.
Our goal is to prove 
\begin{Theorem}\label{main} Suppose there exist objects $F_i\in \cD_{L_i}(X)$,
$i=1,2$ such that for all $c>0$ the natural map
$$
\tau_c:R\hom(F_1,F_2)\to R\hom(F_1,T_cF_2)
$$
is not zero. Then $L_1$ and $L_2$ are mutually non-displaceable.
\end{Theorem}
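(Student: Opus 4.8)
The plan is a proof by contradiction: I assume $L_1$ and $L_2$ are displaceable, lift the displacing Hamiltonian isotopy to an autoequivalence of $\cD(X)$, transport $F_1$ along it, and then contradict a vanishing statement for objects attached to disjoint compact subsets of $T^*X$.

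\emph{Conification of the isotopy.} Suppose there is a Hamiltonian isotopy $\{\Phi_s\}_{s\in[0,1]}$ of $T^*X$, equal to the identity outside a fixed compact, with $\Phi_0=\Id$ and $\Phi_1(L_1)\cap L_2=\emptyset$, generated by a compactly supported Hamiltonian $h_s$ with $|h_s|\le C$. Put $\tilde h_s(x,t;\xi,k):=k\,h_s(x,\xi/k)$ on $\Omega_{>0}=\{k>0\}$ and $0$ on $\Omega_{\le 0}$. Its flow is a homogeneous Hamiltonian isotopy $\widetilde\Phi_s$ of $T^*(X\times\Re)$ which is the identity on $\Omega_{\le 0}$, commutes with every translation $T_c$ (since $\tilde h_s$ does not depend on $t$), moves the $t$-coordinate by at most $C$, and satisfies $\widetilde\Phi_s(\cone(A))=\cone(\Phi_sA)$ for every $A\subset T^*X$.

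\emph{Quantization and comparison ``at infinity''.} I would then quantize $\widetilde\Phi_s$: there should be $K\in D(X\times\Re\times X\times\Re\times[0,1])$ with $K|_{s=0}=\gf_{\Delta}$, invariant under the diagonal shift, and with microsupport governed by the graph of $\widetilde\Phi_s$, so that convolution with $K_s$ is an autoequivalence $\Psi_s$ of $D(X\times\Re)$ preserving $C_{\le 0}(X)$ (because $\widetilde\Phi_s$ fixes $\Omega_{\le 0}$ pointwise) and hence $\cD(X)$, commuting with the $T_{c*}$ and the $\tau_{dc}$, with $\Psi_0=\Id$ and $\Psi_s(\cD_A(X))=\cD_{\Phi_sA}(X)$. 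The bound $|h_s|\le C$ yields natural maps of kernels $\gf_{\Delta_X\times\{t_2-t_1\ge -C\}}\to K_1\to\gf_{\Delta_X\times\{t_2-t_1\ge C\}}$ composing to the obvious one, i.e.\ natural transformations $u\colon T_{-C*}\Rightarrow\Psi_1$ and $v\colon\Psi_1\Rightarrow T_{C*}$ on $\cD(X)$ with $v\circ u=\tau_{-C,C}$. Set $G_1:=\Psi_1F_1\in\cD_{\Phi_1(L_1)}(X)$. Applying $R\hom(-,T_{c*}F_2)$ to $T_{-C*}F_1\to G_1\to T_{C*}F_1$ and using the equivalence $T_{d*}$ to rewrite $R\hom(T_{d*}F_1,T_{c*}F_2)\cong R\hom(F_1,T_{(c-d)*}F_2)$, one gets, compatibly with the $\tau$-maps,
$$
R\hom(F_1,T_{(c-C)*}F_2)\ \stackrel{\alpha_c}{\longrightarrow}\ R\hom(G_1,T_{c*}F_2)\ \stackrel{\beta_c}{\longrightarrow}\ R\hom(F_1,T_{(c+C)*}F_2),
$$
with $\beta_c\circ\alpha_c$ induced by $\tau_{c-C,c+C}$. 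Writing $N(P,Q):=\varinjlim_{c\to\infty}R\hom(P,T_{c*}Q)$ (a filtered colimit along the $\tau$'s) and passing to the colimit gives $\bar\beta\circ\bar\alpha=\Id_{N(F_1,F_2)}$, so $\bar\alpha\colon N(F_1,F_2)\to N(G_1,F_2)$ is a split monomorphism.

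\emph{Conclusion, and the main obstacle.} The hypothesis that $\tau_c\colon R\hom(F_1,F_2)\to R\hom(F_1,T_{c*}F_2)$ is non-zero for all $c\ge 0$ forces $N(F_1,F_2)\ne 0$: if every class of $R\hom(F_1,F_2)$ were killed by some $\tau_{0c}$, then, $R\hom(F_1,F_2)$ being finite-dimensional in each degree, one common $c$ would make $\tau_{0c}$ vanish, a contradiction. Hence $N(G_1,F_2)\ne 0$ by the split injection above. On the other hand $G_1\in\cD_{\Phi_1(L_1)}(X)$, $F_2\in\cD_{L_2}(X)$, and $\Phi_1(L_1)\cap L_2=\emptyset$, so everything reduces to the \emph{separation lemma}: \emph{if $A,B\subset T^*X$ are compact and disjoint, $P\in\cD_A(X)$ and $Q\in\cD_B(X)$, then $N(P,Q)=0$} (one expects even $\tau_{c,c'}\colon R\hom(P,T_{c*}Q)\to R\hom(P,T_{c'*}Q)$ to vanish for $c'\gg c$). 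This is the step I expect to be the main obstacle: one must exploit that $\cone(A)$ and $\cone(B)$ are disjoint closed cones with compact bases, both $\Re$-translation invariant, so that a microlocal cut-off / non-characteristic deformation in the fibre direction (the rules of \cite{KS}) confines the interaction of $P$ with $T_{c*}Q$ to a region that is pushed off to $t=+\infty$ as $c$ grows, where objects of $\cD(X)$ contribute nothing by the defining vanishing $R\Gamma_c(U\times(-\infty,c);-)=0$; note that mere disjointness of $\mS_\cD(P)$ and $\mS_\cD(Q)$ does \emph{not} suffice, since they may sit at different heights inside a common $\Re$-invariant cone. The construction of the quantization $K$ in the unbounded derived category is the other point that needs care. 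Granting these, $N(G_1,F_2)=0$ contradicts $N(G_1,F_2)\ne 0$; hence no displacing $\Phi_1$ exists and $L_1,L_2$ are mutually non-displaceable.
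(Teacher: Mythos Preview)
Your overall architecture is the same as the paper's: assume displaceability, push $F_1$ (or $F_2$) by a sheaf-theoretic lift of the Hamiltonian flow so that its microsupport lands in $\Phi_1(L_1)$, and then invoke a separation result for objects supported on disjoint compacts. You have correctly located the two substantive ingredients. The paper proves them as Theorem~\ref{disjoint} (separation) and Theorem~\ref{hamilt} (Hamiltonian transport), and then assembles the conclusion exactly as you do, via the ``isomorphism up to torsion'' device, which is just your interleaving $T_{-C*}\Rightarrow\Psi_1\Rightarrow T_{C*}$ rephrased.

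Two points are worth noting. First, your quantization step is different from the paper's. You conify the isotopy and invoke a GKS-type kernel on $X\times\Re\times X\times\Re\times[0,1]$; the paper instead factors $\Phi$ into small symplectomorphisms, realizes each one by an explicit generating-function kernel $L_S$, and shows that convolution with $L_S$ is ``isomorphic up to torsion'' to convolution with the identity kernel. Both routes yield the same interleaving data; yours is cleaner once GKS is available, the paper's is self-contained. Second, the paper's separation theorem is \emph{stronger} than the version you aim for: it proves $R\hom_{\cD(X)}(P,Q)=0$ outright when $A\cap B=\emptyset$, not merely that the $\tau$-maps eventually vanish.

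There is one genuine gap in your write-up. From the hypothesis ``$\tau_c$ is nonzero for all $c$'' you try to conclude $N(F_1,F_2)=\varinjlim_c R\hom(F_1,T_{c*}F_2)\neq 0$, arguing via finite-dimensionality of $R\hom(F_1,F_2)$ in each degree. That finite-dimensionality is nowhere established, and even if granted, it does not give $N\neq 0$: the degree in which $\tau_c$ is nonzero may drift with $c$, so that every individual class dies while the map stays nonzero. Fortunately the detour through $N$ is unnecessary. Your own factorization
\[
R\hom(F_1,T_{(c-C)*}F_2)\xrightarrow{\alpha_c} R\hom(G_1,T_{c*}F_2)\xrightarrow{\beta_c} R\hom(F_1,T_{(c+C)*}F_2),\qquad \beta_c\alpha_c=\tau_{c-C,c+C},
\]
combined with the (strong) separation $R\hom(G_1,T_{c*}F_2)=0$, gives directly $\tau_{0,2C}=0$ at $c=C$, contradicting the hypothesis. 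So drop the colimit argument and use the factoring; then your proof is complete modulo the two black boxes you flagged, both of which the paper supplies.
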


The proof will occupy the whole section.
\subsection{Disjoint supports}
Our goal is to prove:
\begin{Theorem}\label{disjoint} let $F_i \in \cD_{A_i}(X)$, where $i=1,2$,
$A_i\subset T^*X$ are compact sets and $A_1\cap A_2=\emptyset$.
We then have $R\hom_{\cD(X)}(F_1,F_2)=0$.
\end{Theorem}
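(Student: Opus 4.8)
The plan is to show that when $A_1\cap A_2=\emptyset$ the Hom-space is forced to vanish by a microlocal argument: the two microsupports $\mS_\cD(F_1)$ and $\mS_\cD(F_2)$ live in disjoint cones $\cone(A_1)$, $\cone(A_2)$ of $\Omega_{>0}$, and Hom out of an object into another whose microsupport is "separated" from it in an appropriate sense must be zero. The subtlety, of course, is that $\cone(A_1)$ and $\cone(A_2)$ are \emph{not} disjoint — they both contain the zero section's worth of directions as $k\to\infty$ is irrelevant, but more importantly along the fibre over $X\times\Re$ the condition is only that $\omega/k$ lies in $A_1$ resp.\ $A_2$, so the cones do meet over points where $\omega=0$; and they approach each other near $k=0^+$. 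So a naive "disjoint microsupport $\Rightarrow$ no morphisms" will not apply directly; one must use the extra structure of $\cD(X)$, namely that objects are insensitive to $C_{\leq 0}(X)$ and that $R\hom$ is computed in the quotient category.

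First I would reduce to a local statement. Cover a neighborhood of $A_1\cup A_2$ in $T^*X$ by finitely many open sets so that each meets at most one of $A_1$, $A_2$; pull this back to a cover of a neighborhood of $\cone(A_1)\cup\cone(A_2)$ in $\Omega_{>0}$. The idea is to write $R\hom_{\cD(X)}(F_1,F_2)$ as the global sections of a complex on $X\times\Re$ (using that $\cD(X)$ is a full subcategory of $D(X\times\Re)$ and $T_{c*}$-stability), and then to localize the microsupport estimate. Concretely, since $\mS(F_1)\subset \Omega_{\leq 0}\cup\cone(A_1)$ and $\mS(F_2)\subset\Omega_{\leq 0}\cup\cone(A_2)$, and $\cone(A_1)\cap\cone(A_2)\subset\{\omega=0\}$ — i.e.\ the only common codirections are those pointing purely in the $\Re$-direction — I would invoke the Kashiwara--Schapira estimate for the microsupport of $R\ihom(F_1,F_2)$: it is contained in $\mS(F_1)^a \,\widehat+\,\mS(F_2)$ (under a properness/transversality hypothesis), which in the directions over a point where $A_1,A_2$ are genuinely separated forces $R\ihom(F_1,F_2)$ to be locally constant in $t$ with microsupport confined to $\Omega_{\leq 0}$, hence its relevant cohomology in $\cD(X)$ vanishes.

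The cleanest route, and the one I would actually pursue, is: form $K:=R\ihom_{X\times\Re}(F_1,F_2)\in D(X\times\Re)$; observe $R\hom_{\cD(X)}(F_1,F_2)$ is a summand/quotient controlled by $R\Gamma$ of $K$ modulo the $C_{\leq 0}$-part. Using $A_1\cap A_2=\emptyset$ and compactness, one gets $\mS(K)\cap\{k>0\}=\emptyset$ outside a neighborhood where it is actually empty — more precisely $\mS(K)\subset\Omega_{\leq 0}$ entirely, because at any $(x,t;\,\omega,k)$ with $k>0$ a contribution would require $\omega/k\in A_1$ and $\omega/k\in A_2$ simultaneously, impossible. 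Then $K\in C_{\leq 0}(X)$. Now one needs the bridge from "$K\in C_{\leq 0}$" to "$R\hom_{\cD}(F_1,F_2)=0$": by definition $R\hom_{\cD(X)}(F_1,F_2)=R\hom_{D(X\times\Re)}(F_1,F_2')$ where $F_2'$ is the image of $F_2$ under the localization, but $F_2\in\cD(X)$ already, so $R\hom_{\cD}(F_1,F_2)$ is the $0$-th cohomology (and shifts) of $R\Gamma(X\times\Re;K)$ \emph{corrected} by the left-orthogonality; since $F_1$ lies in the left orthogonal complement to $C_{\leq 0}(X)$ and $K$ — no, $K$ is an internal Hom, not an object of $C_{\leq 0}(X)$ paired with $F_1$. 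Let me instead argue directly: $R\hom_{\cD(X)}(F_1,F_2)=R\hom_{D(X\times\Re)}(F_1,F_2)$ since both are in the left orthogonal complement, and I compute this via the local-to-global spectral sequence over the cover, where on each chart one factor has microsupport avoiding the relevant part of $\cone(A_i)$, so the restriction of $R\ihom(F_1,F_2)$ there is microsupported in $\Omega_{\leq 0}$; patching, $R\ihom(F_1,F_2)\in C_{\leq 0}(X)$, and then $R\hom_{D(X\times\Re)}(\gf_{X\times\Re}, R\ihom(F_1,F_2))$ — no. The key final input I expect to need is: if $G\in C_{\leq 0}(X)$ then $R\Gamma(X\times\Re;G)$, or rather the mapping space from any object of $\cD(X)$ into $G$, vanishes — but here $F_2\notin C_{\leq 0}$.

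So the honest statement of the main obstacle: the hard part is correctly packaging the microsupport estimate for $R\ihom(F_1,F_2)$ together with the fact that $F_1$ is left-orthogonal to $C_{\leq 0}(X)$, to conclude $R\hom_{\cD(X)}(F_1,F_2)=0$. I expect the argument runs: (i) $\mS\big(R\ihom(F_1,F_2)\big)\subset \mS(F_1)^a\widehat+\mS(F_2)$; (ii) using $\mS(F_i)\subset\Omega_{\leq 0}\cup\cone(A_i)$, the disjointness $A_1\cap A_2=\emptyset$, and the convexity in the $\Re$-fibre, the $\{k>0\}$ part of this sum is empty, so $R\ihom(F_1,F_2)\in C_{\leq 0}(X)$; (iii) then $R\hom_{\cD(X)}(F_1,F_2)=R\hom_{D(X\times\Re)}(F_1,F_2)$ and, because $F_2$ differs from an object of $C_{\leq 0}(X)$-localization... — this last bridge is where I would spend the most care, likely by writing $F_2\xrightarrow{\sim} F_2*_\Re\gf_{[0,\infty)}$ and using that $R\ihom(F_1,-)$ sends the triangle $F_2*_\Re\gf_{(0,\infty)}\to F_2*_\Re\gf_{[0,\infty)}\to F_2$ to one where the first and third terms force the $\hom$ out of $F_1$ to be built from something in $C_{\leq 0}$, hence annihilated by $R\hom_{D(X\times\Re)}(F_1,-)$ since $F_1$ is left-orthogonal. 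Transversality/properness to legitimize step (i) is arranged by the compactness of $A_1,A_2$ (so one reduces $F_i$ to compactly supported representatives as in the proof of the previous Proposition). I do not anticipate trouble with the geometry of the cones; the delicate point is purely the homological bookkeeping at the level of the quotient category, and I would treat it by the triangle argument just sketched.
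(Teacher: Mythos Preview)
Your step (ii) does not go through: $K=R\ihom_{X\times\Re}(F_1,F_2)$ is \emph{not} in $C_{\leq 0}(X)$. The estimate $\mS(K)\subset\mS(F_2)+\mS(F_1)^a$ always contains $\mS(F_2)$ itself, because the zero section sits inside $\mS(F_1)^a$; hence $\mS(K)$ contains $\cone(A_2)\subset\Omega_{>0}$ and the conclusion $K\in C_{\leq 0}(X)$ is simply false. Your attempted rescue in (iii) via the triangle $F_2*_\Re\gf_{(0,\infty)}\to F_2*_\Re\gf_{[0,\infty)}\to F_2$ does not help either: since $F_2\in\cD(X)$ the middle term is isomorphic to $F_2$, so applying $R\hom(F_1,-)$ you are trying to deduce vanishing of $R\hom(F_1,F_2)$ from itself.

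The paper's argument is organised around a different object. It introduces an extra real parameter and forms $H:=Rp_{2*}R\ihom(p_1^{-1}F_1,a^!F_2)$ on $X\times\Re$, then pushes to a complex $Rq_*H$ on a single copy of $\Re$; morally this is the family $c\mapsto R\hom(F_1,T_{c*}F_2)$. One then proves: (a) $R\Gamma(\Re;Rq_*H)=R\hom(F_1*_\Re\gf_\Re,F_2)=0$, which uses only $F_1\in\cD(X)$ --- this is where left-orthogonality is actually spent; and (b) $Rq_*H$ is locally constant on $\Re$, by a microsupport estimate showing it is nonsingular at both $(t,1)$ (here $A_1\cap A_2=\emptyset$ and compactness enter) and $(t,-1)$. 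From (a) and (b), $Rq_*H=0$, and the stalk at $0$ recovers $R\hom(F_1,F_2)=0$. A key ingredient you do not mention, and which is needed to make \emph{any} microsupport bookkeeping work here, is Lemma~\ref{first}: for $F\in\cD_A(X)$ with $A$ compact, $\mS(F)\cap\Omega_{\leq 0}$ lies in the zero section. This is nontrivial --- the paper proves it via a Fourier-transform equivalence on $\cD(\Re^n)$ together with a microsupport bound for non-proper pushforwards --- and without it neither your estimate nor the paper's can be justified.
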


\subsubsection{Lemma}  Let $M$ be a smooth manifold let $E$ be
a finite-dimensional real vector space of dimension $\geq 1$.
Let $p:M\times E\to M$ be the projection. 
Let $F\in D(M\times E)$. Let $\omega\in T^*M$,
$\omega\neq 0$.
 Let $U\subset T^*M$ be a neighborhood of $\omega$. 
Let $V\subset E^*$ be a neighborhood of 0 in the dual vector space.
Let us identify $T^*(M\times E)=T^*M\times E\times E^*$.
\begin{Lemma}\label{impropernonsingular} Suppose that

$F$ is non-singular on the set
 $$ U\times E\times V\subset T^*M\times E\times E^*
=T^*(M\times E)
$$
Then $Rp_! F$ and $Rp_*F$  are non-singular at $\omega$.
\end{Lemma}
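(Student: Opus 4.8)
The plan is to reduce the statement to a local computation near the fiber $p^{-1}(x_0)$, where $x_0\in M$ is the base point of $\omega$, and then to apply the standard estimates for the microsupport of a direct image (Proposition 5.4.4 / 5.4.17 of \cite{KS}), being careful that $p$ is not proper on $E$, so these estimates do not apply verbatim. First I would recall that non-singularity of $Rp_!F$ (resp. $Rp_*F$) at $\omega$ is a condition on germs, so I may shrink $M$ to a small neighborhood of $x_0$ and replace $U$ by a smaller conic neighborhood of $\mathbb{R}_{>0}\omega$; I may also assume $F$ is concentrated over a fixed compact base, but of course \emph{not} over a compact part of $E$, which is precisely where the hypothesis ``$F$ non-singular on $U\times E\times V$'' does its work.

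The key idea is to introduce a cutoff in the $E$-direction. Choose a smooth proper function $\psi:E\to\mathbb{R}_{\geq 0}$ (e.g. a norm-square), and for $r>0$ let $E_{\leq r}=\{\psi\leq r\}$, a compact set, with closed inclusion $j_r$ and open inclusion of $E_{<r}$. One has exact triangles relating $F$, $F_{E_{\leq r}}:=F\otimes \gf_{M\times E_{\leq r}}$, and $F_{E>r}$. For the compactly-supported-in-$E$ piece $F_{E_{\leq r}}$, the projection $p$ \emph{is} proper on its support, so the \cite{KS} bound gives
$$
\mS(Rp_! F_{E_{\leq r}})\subset \{(x,\xi)\mid \exists v\in E,\ (x,\xi,v,0)\in \mS(F_{E_{\leq r}})\}.
$$
The microsupport of $F_{E_{\leq r}}$ differs from that of $F$ only by covectors with a nonzero $E^*$-component coming from $\partial E_{\leq r}$, i.e. conormals to the sphere $\psi=r$; these have $E^*$-component a \emph{nonzero} multiple of $d\psi$, so they never contribute points with zero $E^*$-component over a base point in a small neighborhood of $x_0$, because on $U\times E\times V$ the sheaf $F$ itself is non-singular and $V\ni 0$ means the relevant covector $(x,\xi,v,0)$ with $(x,\xi)\in U$ is forced to lie in the non-singular locus. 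Tracking this carefully shows $Rp_!F_{E_{\leq r}}$ is non-singular at $\omega$ for every $r$. The same argument, applied via $Rp_*$ and the ``$\leq r$'' replaced by suitable open truncations, handles $Rp_*$; alternatively one dualizes.

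Finally I would pass to the limit in $r$. Since $E_{\leq r}$ exhausts $E$, one has $Rp_!F=\varinjlim_r Rp_!F_{E_{\leq r}}$ (filtered colimit commutes with $Rp_!$ after the proper base of $M$ is fixed), and a filtered colimit of objects non-singular at $\omega$ is again non-singular at $\omega$, because the microsupport of a filtered colimit is contained in the closure of the union of the microsupports and here all of them avoid a \emph{fixed} conic neighborhood of $\omega$ — the neighborhood not depending on $r$ is the crucial point, and it is exactly what the hypothesis ``non-singular on all of $U\times E\times V$'' (uniform in the $E$-variable) guarantees. For $Rp_*F$ one writes instead $Rp_*F=\varprojlim_r Rp_*(F_{E_{<r}})$ and uses the dual stability of non-singularity under the relevant limits, or simply observes $Rp_*F$ and $Rp_!F$ have the same germ at $\omega$ up to the contribution of ``the part of $F$ near infinity in $E$,'' which the hypothesis kills. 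The main obstacle I anticipate is making the uniformity in $r$ genuinely rigorous: one must check that the conormal directions to $\{\psi=r\}$ really do stay away, uniformly, from the locus $\{E^*\text{-component}=0\}$ over a neighborhood of $x_0$ — this is where choosing $\psi$ to be, say, a positive-definite quadratic form (so that $d\psi(v)=0 \iff v=0$, and the conormal at $\psi=r$ is $\mathbb{R}_{\neq 0}\,d\psi(v)$ with $\psi(v)=r>0$, hence nonzero) makes the bound clean, and where a sloppy choice of cutoff would reintroduce exactly the covectors we are trying to exclude.
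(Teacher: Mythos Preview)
Your approach has a genuine gap at the step where you claim that the microsupport of $F_{E_{\leq r}}$ differs from that of $F$ ``only by covectors with a nonzero $E^*$-component.'' The estimate $\mS(F\otimes\gf_{M\times E_{\leq r}})\subset \mS(F)+N^*(M\times E_{\leq r})$ produces, at a boundary point $v$ with $\psi(v)=r$, covectors of the form $(x,\xi,v,\eta)+(x,0,v,t\,d\psi(v))$ with $(x,\xi,v,\eta)\in\mS(F)$ and $t\ge 0$. Setting the $E^*$-component to zero gives $\eta=-t\,d\psi(v)$; for $t$ large this lies \emph{outside} $V$, and the hypothesis says nothing about whether $(x_0,\xi_0,v,-t\,d\psi(v))\in\mS(F)$. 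So the boundary can perfectly well contribute points with zero $E^*$-component and base covector $(x_0,\xi_0)\in U$, and your argument that each $Rp_!F_{E_{\leq r}}$ is non-singular at $\omega$ does not go through. The uniformity-in-$r$ claim then has nothing to stand on. (The $Rp_*$ case via inverse limits is shakier still: microsupport is not in general controlled by inverse limits.)

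The paper's proof handles exactly this difficulty, but by a different mechanism. Rather than truncating, it fixes a single diffeomorphism $Z:E\to B$ onto the open unit ball, chosen so that $Z^{-1}(b)\sim b/(1-|b|)$ near $\partial B$. Then $Rp_!F=Rp_!j_!z_!F$ with $p$ proper on $M\times\overline{B}$, and one estimates $\mS(j_!z_!F)$ by the \emph{Whitney} sum $\mS(z_!F)\,\hat{+}\,N^*(M\times B)^a$. The point of the asymptotics is that covectors of $F$ with $E^*$-component outside $V$ are carried by $(Z^{-1})^*$ to covectors of $z_!F$ whose $E^*$-norm is at least $1/(1-R_k)$ with $R_k=|b_k|$; meanwhile the Whitney-sum condition $t_k|\beta_k-b_k|\to 0$ forces $t_k(1-R_k)\to 0$, so the boundary conormal $t_k\beta_k$ is too small to cancel them (cancellation would force $2t_k(1-R_k)\to 1$). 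In other words, the specific growth rate of the compactifying diffeomorphism, together with the extra control in $\hat{+}$ beyond the naive sum, is what rules out the cancellation that breaks your cutoff argument.
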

\begin{proof} We will only prove Lemma for $Rp_!F$; the proof
for $Rp_*F$ is similar.

 Fix a Euclidean inner product $<,>$ on $E$.
 Without loss of generality one can assume that $V=B\subset E^*$
is an open unit ball.

Let
$\theta:[0,\infty)\to [0,1)$ be a  function
such that:

--- $\theta'(x)>0$ for all $x\geq 0$;

---  there exists an $\ve>0$ such that for all $x\in [0,\ve]$
 we have
$\theta(x)=x$.

--- there exists an $M>0$ such that
 for all $x>M$, $\theta(x)=1-1/x$.

 Let $B:=\{v\in E|\ |v|<1\}$. Let
$Z:E\to B$ be the embedding given by
$$
Z(v)=\frac{\theta(|v|)}{|v|}v.
$$

It follows that $Z$ is a  diffeomorphism.
Let $J:B\to E$ be the open embedding
Let us split $p:M\times E\to M$ as
$$
M\times E\stackrel{\Id\times Z}\to 
M\times B\stackrel {\Id\times j}\to
M\times E\stackrel p\to X.
$$
Denote $z:=\Id\times Z$; $j:=\Id\times J$.
We have $Rp_!F=Rp_!j_!z_!F$. We then see that $p$ is proper
on the support of $j_!z_!F$. Let us estimate $\mS(z_!F)$.

Let $a(x):[0,1)\to [0,\infty)$ be the inverse function to $\theta$.
It follows that $a(x)=x$ for $x<\ve$ and
there exists $\delta>0$ such that for all $x\in (1-\delta;1)$, 
$a(x)=1/(1-x)$.
We then get $Z^{-1}v=(a(|v|)/|v|)v$.
 The condition of Lemma implies
that for all $\omega\in U$ and for all $c\in B$ 
we have  $(\omega,v,c)\notin \mS(F)$, where $v\in E$.
Let 
$$S_{UB}=\{(\omega,v,c)| \omega\in U;v\in E;c\in B\}\subset
T^*(M\times E).
$$

We then see that  the set 
$(Z^{-1})^*S_{UB}\subset T^*(X\times B)$;
 $$
(Z^{-1})^*S_{UB}=\{(\omega,v,
\sum_j c_jd((a(|v|)/|v|)v^j)\},
$$
where $\omega\in U$, $v\in B$, and $|c|<1$.
Let us now estimate $\mS(j_!z_!F)$. According to
 \ref{ks:openembedding},
we have
$$
\mS(j_!z_!F)\subset \mS(z_!F)\hat{+}
N^*(X\times  B)^a,
$$
where on the RHS we have a Witney sum of the following conic subsets of $T^*(M\times E)$: 

---we identify $\mS(z_!F)$ with a conic subset of $T^*(M\times E)$ as follows:
 $\mS(z_!F)\subset T^*(M\times B)\subset T^*(M\times E)$;

--- $N^*(M\times  B)^a$ is the exterior conormal cone to the boundary of $M\times B\subset M\times E$. We have
$$
N^*(M\times  B)^a=\{(\omega,b,tb)\in T^*M\times E\times E| |b|=1;t\geq 0\},
$$
where we identify $T^*M\times E\times E=T^*M\times E\times E^*$.
 
By definition one has:
$$
\mS(z_!F)\hat{+} N^*(M\times  B)^a=\mS(z_!F)\cup \Lambda,
$$
where 
$\Lambda$ consists of all points of the form
$(\omega,b,\eta)\in T^*M\times E\times E$ where

---$\omega\in T^*_{x_0}M$; so let us choose a nieghborhood $U_{x_0}$ of $x_0$ in $M$ and identify
$T^*U=U\times \Re^{\dim M}$; let us denote points of $T^*U$ by $(x,\zeta)$, $x\in U$;
$\zeta\in \Re^{\dim X}$;

---$b\in \dB$ and there exists a sequence
of points $(x_k,\omega_k,b_k,\eta_k)\in \mS(Z_!F)\cap T^*(U_{x_0}\times B)$;
$(\beta_k;t_k)\in \dB\times \Re_{\geq 0}$
 where $x_k\to x_0$; $b_k\to b$; $\beta_k\to b$;
$\omega_k\to \omega$; $\eta_k+2t_k \sum_j \beta_jdv_j\to\eta$;
$t_k(|\beta_k-b_k|+|x_k-x_0|)\to 0$.

We will show that $(x_0,b,\omega,0)\notin \Lambda$ for any
$b\in\dB$.  Let us prove the
  statement by contradiction.
Indeed, without loss of generality,
one can assume that $(x_k,\omega_k)\in U$, therefore,
$(b_k,\eta_k)\notin Z^{-1*}(E\times V).$ As $V\subset E^*$ is an
open unit ball, this means that
$(b_k,\eta_k)$ is of the form

$$
\eta_k =\sum_j c_k^j d(a(|b_k|)b_k^j/|b_k|)
$$
and $|c_k|\geq 1$.  as $b_k\to b$, $|b_k|\to 1$ and without loss
of generality one can assume $|b_k|>1-\delta$ so that
$a(|b_k|)=1/(1-|b_k|)$. Thus
$$
\eta_k=\sum_j c_k^j d(b_k^j/(|b_k|(1-|b_k|))
$$
Let $R_k=|b_k|$. We then have
$$
\eta_k =<c_k,db_k>/(R_k(1-R_k))+<c_k,b_k>\frac{2R_k-1}{R_k^3(1-R_k)^2}<b_k,db_k>
$$
so that
$$<\eta_k,\eta_k>=<c_k,c_k>/(R_k^2(1-R_k)^2)+<c_k,b_k>^2
\frac{(2R_k-1)^2}{R_k^4(1-R_k)^4}
$$
$$
+2<c_k,b_k>^2\frac{2R_k-1}{R_k^4(1-R_k)^3}
$$
$$
><c_k,c_k>/(R_k^2(1-R_k)^2)>1/(1-R_k)^2
$$
as long as  $R_k>1/2$ which is the case for all $k$ large enough,
without loss of generality we can assume that $R_k>1/2$ for all $k$.
Thus, $|\eta_k|>1/(1-R_k)$.

Therefore,$$
|\eta_k+2t_k \sum_j \beta_k^jdv_k^j|\geq |\eta_k|-2|t_k||\beta|>
1/(1-R_k)-2t_k
$$

By assumption $|\eta_k+2t_k \sum_j \beta_k^jdv_k^j|\to 0$, hence
$$
1/(1-R_k)-2t_k\to 0
$$

and $2t_k(1-R_k)\to 1$. On the other hand,
we have
$$
t_k(|b_k-\beta_k|)\geq t_k(1-R_k),
$$
because $|\beta_k|=1$ and $|b_k|=R_k$.
Therefore, $t_k(1-R_k)\to 0$.
We have a contradiction which shows that
as long as $(x,\omega)\in U$, $(x,\omega,e,0)\notin \mS(j_!Z_!F)$.
Since the map $p:X\times E\to X$ is proper on the support 
of $j_!Z_!F$ (i.e.  $X\times \overline{B}$) we know that
$(x,\omega)\notin \mS(Rp_!j_!Z_!F)$ which proves Lemma
\end{proof}
\begin{Corollary}\label{impropersupport}\label{improperstar}
Let $F\in D(X\times E)$ and let $p:X\times E\to
X$, $\kappa:T^*X\times E\times E^*\to T^*X\times E^*$ be the projections.  Let $\cI:T^*X\to T^*X\times E^*$ be the embedding
given by $I(x,\omega)=(x,\omega,0)$. We then have
$$
\mS(Rp_!F),\mS(Rp_*F)\subset \cI^{-1}\overline{\kappa(\mS(F))},
$$
where the bar means the closure.
\end{Corollary}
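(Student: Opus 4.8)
The plan is to deduce the Corollary directly from Lemma \ref{impropernonsingular} by reformulating its hypothesis pointwise. If $\dim E=0$ then $p$, $\kappa$, $\cI$ are all identity maps and there is nothing to prove, so I assume $\dim E\geq 1$.

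I will fix $(x_0,\omega_0)\in T^*X$ with $(x_0,\omega_0)\notin \cI^{-1}\overline{\kappa(\mS(F))}$ and show $(x_0,\omega_0)\notin \mS(Rp_!F)$ and $(x_0,\omega_0)\notin \mS(Rp_*F)$. By definition of $\cI$, the hypothesis says $(x_0,\omega_0,0)\notin \overline{\kappa(\mS(F))}$, so there is a basic open neighbourhood $U\times V$ of $(x_0,\omega_0,0)$ in $T^*X\times E^*$ — with $U\subset T^*X$ a neighbourhood of $(x_0,\omega_0)$ and $V\subset E^*$ a neighbourhood of $0$ — that is disjoint from $\kappa(\mS(F))$. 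Under the identification $T^*(X\times E)=T^*X\times E\times E^*$ the map $\kappa$ sends $(x,\omega,v,c)$ to $(x,\omega,c)$, so $(U\times V)\cap\kappa(\mS(F))=\emptyset$ says exactly that $\mS(F)\cap(U\times E\times V)=\emptyset$, i.e. that $F$ is non-singular on $U\times E\times V$ in the sense of Lemma \ref{impropernonsingular}.

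If $\omega_0\neq 0$, I will apply Lemma \ref{impropernonsingular} verbatim with $M=X$ and $\omega=\omega_0$; it yields that $Rp_!F$ and $Rp_*F$ are non-singular at $(x_0,\omega_0)$, as desired. If $\omega_0=0$ the Lemma does not apply, and I will argue by supports instead: taking $\omega=0$ and $c=0$ in the non-singularity statement shows $(x,0,v,0)\notin\mS(F)$ for all $v\in E$ and all $x$ in a neighbourhood $N$ of $x_0$ in $X$; since the part of $\mS(F)$ lying over the zero section is the zero section over $\text{supp}\,F$, the set $\{x\}\times E$ is disjoint from $\text{supp}\,F$ for $x\in N$, so $N\cap p(\text{supp}\,F)=\emptyset$ and $x_0\notin\overline{p(\text{supp}\,F)}$; because $F$ vanishes on $p^{-1}\bigl(X\setminus\overline{p(\text{supp}\,F)}\bigr)$, both $\text{supp}\,Rp_!F$ and $\text{supp}\,Rp_*F$ are contained in $\overline{p(\text{supp}\,F)}$, and since $\pi(\mS(G))=\text{supp}\,G$ for every $G$ this gives $(x_0,0)\notin\mS(Rp_!F)\cup\mS(Rp_*F)$.

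As $(x_0,\omega_0)$ was arbitrary, both inclusions follow. I do not expect a genuine obstacle: all the hard analysis is already contained in Lemma \ref{impropernonsingular}. The only steps needing care are the bookkeeping identifying "a product neighbourhood of $(x_0,\omega_0,0)$ disjoint from $\kappa(\mS(F))$" with "non-singularity of $F$ on $U\times E\times V$", and the separate, purely support-theoretic handling of the zero section, about which Lemma \ref{impropernonsingular} is silent.
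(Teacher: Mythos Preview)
Your proof is correct and is exactly the argument the paper has in mind: the paper's own proof of this Corollary consists of the single word ``Clear'', indicating it is an immediate consequence of Lemma~\ref{impropernonsingular}. Your separate, support-theoretic handling of the case $\omega_0=0$, which that Lemma explicitly excludes, is a useful point that the paper leaves implicit.
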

\begin{proof} Clear
\end{proof}

\subsubsection{Kernels and convolutions}\label{kernelconv}
 Let $X_1,X_2,X_3$ be manifolds.
We are going to  define a functor $$D(X_1\times X_2\times \Re)
\times D(X_2\times X_3\times \Re)\to D(X_1\times X_3\times \Re).$$

Let 
\begin{equation}\label{pij}
p_{ij}:X_1\times X_2\times X_3 \times \Re\times \Re\to 
X_i\times X_j\times \Re
\end{equation}
 be the following maps
$$
p_{12}(x_1,x_2,x_3,t_1,t_2)=(x_1,x_2,t_1);
$$
$$
p_{23}(x_1,x_2,x_3,t_1,t_2)=(x_2,x_3,t_2);
$$

$$
p_{13}(x_1,x_2,x_3,t_1,t_2)=(x_1,x_3,t_1+t_2).
$$

Let $A\in D(X_1\times X_2\times \Re)$ and 
$B\in D(X_2\times X_3\times\Re)$. Set
$$
A\bullet_{X_2} B:=Rp_{13!}(p_{12}^{-1}A\otimes p_{23}^{-1}B),
$$
$A\bullet_{X_2} B\in D(X_1\times X_3\times \Re)$.

Let now $X_k$, $k=1,2,3,4$, are manifolds and
let $A_k\in D(X_k\times X_{k+1}\times \Re)$, $k=1,2,3$.
We then have a natural isomorphism
$$
(A_1\bullet_{X_2} A_2)\bullet_{X_3} A_3\cong
A_1\bullet_{X_2}(A_2\bullet_{X_3} A_3).
$$

Let $A\in D(X\times \Re)$ and $S\in D(\Re)$.  Let $\pt$ be a point.
We then have $A*_\Re S\cong A\bullet_\pt S\cong S\bullet_\pt A$.

Let  $A\in \cD(X_1\times X_2)$ and 
$B\in D(X_2\times X_3\times \Re)$. 
Then $A\bullet_{X_2} B\in \cD(X_1\times X_3\times \Re)$. Indeed, according to Lemma \ref{checkorthogonal}, we need to check that the natural map
$$
\gf_{[0,\infty)}*_\Re (A\bullet_{X_2} B)\to
 \gf_0*_\Re (A\bullet_{X_2} B)
$$
is an isomorphism.

It follows that this map is isomorphic to a map
$$
(\gf_{[0,\infty)}\bullet_\pt A)\bullet_{X_2} B\to
(\gf_0\bullet_\pt A)\bullet_{X_2} B
$$
which is, in turn, induced by the natural map
$$
\gf_{[0,\infty)}\bullet_\pt A\to\gf_{0}\bullet_\pt A
$$
 which is an isomorphism because $A\in \cD(X_1\times X_2)$.

In particular, it follows that 
$$
\bullet_{X_2}:\cD(X_1\times X_2)\times \cD(X_2\times X_3)\to 
\cD(X_1\times X_3).
$$
\subsubsection{Fourier transform}\label{Fourier}
Let $E=\Re^n$ be a real vector space and let $E^*$ be the dual
space. Let $G\subset E\times E^*\times \Re$ be a closed subset
$G=\{(X,P,t)|<X,P>+t\geq 0\}$, where $<,>:E\times E^*\to \Re$ is 
the pairing.  One sees that $\gf_G\in \cD(E\times E^*)$.
Let $\Gamma\subset E^*\times E\times\Re$ be 
a closed subset $G=\{(P,X,T)|-<P,X>+t\geq 0\}$. 
Again, we have $\gf_\Gamma\in \cD(E^*\times E\times \Re)$.

Define functors $F: \cD(E)\to \cD(E^*)$; $\Phi:\cD(E^*)\to \cD(E)$
 as follows. Set
$$
F(A):=A\bullet_E \gf_G;
$$
$$
\Phi(B):=B\bullet_{E^*} \gf_\Gamma.
$$
$F,\Phi$ are called 'Fourier transform'.

Let us study
  the composition 
$\Phi\circ F:\cD(E)\to \cD(E)$. We have an isomorphism
$$
\Phi\circ F(A)\cong A\bullet_E (\gf_G\bullet_{E^*}\gf_\Gamma).
$$

Let us compute $\gf_G\bullet_{E^*}\gf_\Gamma$. 
Let $$q:E\times E^*\times E\times \Re\times \Re\to E\times E\times\Re$$ be given by $q(X_1,P,X_2,t_1,t_2)=(X_1,X_2,t_1+t_2)$.
By definition, we have
$$
\gf_G\bullet_{E^*}\gf_\Gamma=Rq_!\gf_K,
$$

where $$
K=\{(X_1,P,X_2,t_1,t_2)| t_1+<X_1,P>\geq 0; t_2-<X_2,P>\geq 0\}
$$
Let us decompose $q=q_1q_2$, where
$$
q_2:E\times E^*\times E\times\Re\times \Re\to E\times E^*\times E\times \Re
$$
$q_2(X_1,P,X_2,t_1,t_2)=(X_1,P,X_2,t_1+t_2)$;
and 
$$
q_1: E\times E^*\times E\times\Re\to E\times E\times \Re,
$$
$$
q_1(X_1,P,X_2,t)=(X_1,X_2,t).
$$
We see that $q_2(K)=L:=\{(X_1,P,X_2,t)| t+<X_1-X_2,P>\geq 0\}$.
Furthermore, the map $q_2|_K:K\to L$ is proper; it is also
a Serre fibration
with a contractible  fiber. Therefore, we have an isomorphism
$Rq_{2!}\gf_K\cong \gf_L$.

Let us now compute $Rq_{1!}\gf_L$. Let $\Delta\subset E\times E^*\times E\times \Re$ be given by
$$
\Delta=\{(X_1,P,X_2,t)|X_1=X_2;t\geq 0\}.
$$
We have $\Delta\subset L$ so that we have an induced map
$$
\gf_L\to \gf_\Delta.
$$
It is easy to check that the induced map
$$
Rq_{1!}\gf_L\to Rq_{1!}\gf_\Delta
$$
is an isomorphism. 

We also have an isomorphism $Rq_{1!}\gf_\Delta\cong
\gf_{\{(X_1,X_2,t)|X_1=X_2;t\geq 0\}}[-n].$

Thus, we have an isomoprhism  
$$
Rq_! \gf_K=\gf_{\{(X_1,X_2,t)|X_1=X_2;t\geq 0\}}[-n]
$$

For any $A\in D(E\times \Re)$, we have an isomorphism
$$
A\bullet_E \gf_{\{(X_1,X_2,t)|X_1=X_2;t\geq 0\}}\cong
A*_\Re \gf_{[0,\infty)}.
$$
Thus we have an isomorphism of  functors
$$
\Phi(F(\cdot))\cong (\cdot)*_\Re \gf_{[0,\infty)}[-n]
$$
The functor on the RHS acts on $\cD(E)$ as the shift by $-n$.
Thus we have established an isomorphism of functors
$
\Phi\circ F\cong \Id[-n].
$
Analogously, we can prove $F\circ \Phi\cong \Id[-n]$. We have proven:
\begin{Theorem} $\Phi[n]$ and $F$ are mutually inverse
equivalences of $\cD(E)$ and $\cD(E^*)$.
\end{Theorem}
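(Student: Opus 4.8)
The plan is to show that both composites are, up to a single global shift, the identity: $\Phi\circ F\cong\Id[-n]$ on $\cD(E)$ and $F\circ\Phi\cong\Id[-n]$ on $\cD(E^*)$, and then to twist $\Phi$ by $[n]$ so that these become genuine quasi-inverses. The first isomorphism is exactly the content of the kernel computation carried out just above in Sec.~\ref{Fourier}: by associativity of $\bullet$ one has $\Phi\circ F(\cdot)\cong(\cdot)\bullet_E(\gf_G\bullet_{E^*}\gf_\Gamma)$, the composite kernel is identified with $\gf_{\{(X_1,X_2,t)\,\mid\,X_1=X_2,\ t\geq 0\}}[-n]$, and convolution over $E$ with this kernel is the functor $(\cdot)*_\Re\gf_{[0,\infty)}[-n]$, which by Proposition~\ref{checkorthogonal} restricts to $\Id[-n]$ on $\cD(E)$.

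The kernel computation is the only place where anything genuinely has to be checked, so I would isolate its three ingredients. First, $\gf_G\in\cD(E\times E^*)$ and $\gf_\Gamma\in\cD(E^*\times E)$, which is immediate from the description of $\cD$ as the left orthogonal complement to $C_{\leq 0}$, so that $F$ and $\Phi$ indeed land in the stated categories (here the general fact of Sec.~\ref{kernelconv}, that $\bullet$ preserves $\cD$, also enters). Second, in the factorization $q=q_1q_2$ the map $q_2|_K:K\to L$ is proper with convex, hence contractible, fibers, which gives $Rq_{2!}\gf_K\cong\gf_L$. Third, $\Delta\subset L$ and the inclusion-induced map $Rq_{1!}\gf_L\to Rq_{1!}\gf_\Delta$ is an isomorphism because each fiber of $q_1$ over a point of the target deformation retracts, within $L$, onto its intersection with $\Delta$; and $Rq_{1!}\gf_\Delta\cong\gf_{\{X_1=X_2,\,t\geq 0\}}[-n]$ because $q_1|_\Delta$ has fibers $\cong E$, whence the shift by $-n=-\dim E$. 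None of this is deep, but keeping track of the several maps $p_{ij}$, $q$, $q_1$, $q_2$ and of where the shift appears is where care is needed — this is the main obstacle.

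It then remains to treat $F\circ\Phi:\cD(E^*)\to\cD(E^*)$, and here I would invoke symmetry rather than repeat the work: the entire setup is invariant under the involution exchanging the two base factors and replacing the pairing by its negative, under which $G$ and $\Gamma$ are interchanged and the maps $q=q_1q_2$ go over to the corresponding maps for $\gf_\Gamma\bullet_E\gf_G$. The identical argument therefore yields $\gf_\Gamma\bullet_E\gf_G\cong\gf_{\{(P_1,P_2,t)\,\mid\,P_1=P_2,\ t\geq 0\}}[-n]$ and hence $F\circ\Phi\cong(\cdot)*_\Re\gf_{[0,\infty)}[-n]\cong\Id[-n]$ on $\cD(E^*)$ by Proposition~\ref{checkorthogonal} once more. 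Finally, setting $\Psi:=\Phi[n]$ one gets $\Psi\circ F\cong(\Phi\circ F)[n]\cong\Id$ on $\cD(E)$ and $F\circ\Psi\cong(F\circ\Phi)[n]\cong\Id$ on $\cD(E^*)$, so $F$ and $\Phi[n]$ are mutually inverse equivalences, as claimed.
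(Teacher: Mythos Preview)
Your approach is essentially identical to the paper's: compute the composite kernel $\gf_G\bullet_{E^*}\gf_\Gamma$ via the factorization $q=q_1q_2$, identify it with $\gf_{\{X_1=X_2,\ t\ge 0\}}[-n]$, recognize convolution with this as $(\cdot)*_\Re\gf_{[0,\infty)}[-n]\cong\Id[-n]$ on $\cD(E)$, and then invoke symmetry for the other composite (the paper simply says ``analogously''); the final shift by $[n]$ turns both isomorphisms into genuine quasi-inverses.

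One small correction: your stated reason for $Rq_{1!}\gf_L\to Rq_{1!}\gf_\Delta$ being an isomorphism is not quite right. Over a point $(X_1,X_2,t)$ with $X_1\neq X_2$, the fiber of $q_1$ inside $L$ is a closed half-space in $E^*$ while the fiber inside $\Delta$ is \emph{empty}, so there is no deformation retract. The correct argument is stalkwise: for $X_1\neq X_2$ the stalk of $Rq_{1!}\gf_L$ is $R\Gamma_c$ of a closed half-space, which vanishes, matching $Rq_{1!}\gf_\Delta$; for $X_1=X_2$ the two fibers in $L$ and $\Delta$ coincide (both are $E^*$ when $t\ge 0$ and empty when $t<0$). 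The paper does not spell this out either (it says ``it is easy to check''), so this does not affect the validity of your overall argument.
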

\subsubsection{}
 Let us now study the effect of the Fourier transform on the microsupports. 
 Let $$a:T^*E=E\times E^*\to T^*E^*=E^*\times E$$ be given by
$a(X,P)=(-P,X)$. It is clear that $a$ is a symplectomorphism.
\begin{Theorem}\label{Fouriersupport} Let $A\subset T^*E$ be a closed subset
and  $S\in \cD_A(E)$. Then $F(S)\in \cD_{a(A)}(E^*)$.

Let $B\in T^*E^*$ be a closed subset and $S\in \cD_B(E^*)$.
Then $\Phi(S)\in \cD_{a^{-1}(B)}(E)$.
\end{Theorem}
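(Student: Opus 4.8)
The plan is to deduce the statement about microsupports directly from the convolution formula $F(A) = A \bullet_E \gf_G$ together with the standard functorial estimates for $\mS$ listed in the appendix, exactly as the proof of Theorem \ref{disjoint}'s supporting Lemma \ref{impropernonsingular} prepares us to do. First I would compute $\mS(\gf_G)$ where $G = \{(X,P,t)\mid \langle X,P\rangle + t \geq 0\}$. Since $G$ is a closed half-space bounded by the smooth hypersurface $\{\langle X,P\rangle + t = 0\}$ in $E\times E^*\times\Re$, the microsupport $\mS(\gf_G)$ is the (outward) conormal cone to this hypersurface, i.e.\ the set of points $(X,P,t; \lambda P\, dX + \lambda X\, dP + \lambda\, dt)$ with $\lambda \geq 0$ on the boundary, together with the zero section over the interior; writing this in coordinates on $T^*(E\times E^*\times\Re) = E\times E^* \times \Re \times E^* \times E \times \Re$, the relevant covectors are of the form $(X,P,t;\ \lambda P,\ \lambda X,\ \lambda)$ with $\lambda\geq 0$ and $\langle X,P\rangle + t = 0$. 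The key point is that the $\Re$-component of the covector is $\lambda\geq 0$, which is why $\gf_G \in \cD(E\times E^*)$ to begin with.

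Next I would track the microsupport through the definition $A\bullet_E \gf_G = Rp_{13!}(p_{12}^{-1}A \otimes p_{23}^{-1}\gf_G)$, where the three projections go from $E \times E \times E^* \times \Re\times\Re$. The estimate for $\otimes$ (Whitney sum of the pulled-back microsupports) followed by the estimate for $Rp_{13!}$ (Corollary \ref{impropersupport}, needed precisely because $p_{13}$ is \emph{not} proper on the support in general — this is the role the tricky Lemma \ref{impropernonsingular} plays) gives: a covector in $\mS_\cD(F(A))$ over a point $(X_2, t_1+t_2)$ with positive $\Re$-component $k>0$ arises from a covector $(\omega/k') \in A$ over some $X_1$ with $\Re$-component $k'>0$ in $\mS_\cD(A)$, matched against a boundary covector $(\lambda P, \lambda X_1, \lambda)$ of $\gf_G$ with $\lambda>0$, under the constraints coming from $p_{12}, p_{23}$ being submersions and $p_{13}$ summing the $\Re$-coordinates. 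Unwinding: the $E^*$-direction forces the base point $P$; the $E$-directions in $T^*E \times T^*E^*$ must cancel against the $\lambda X_1\, dP$ term, pinning $X_1$; and after normalizing by the (positive) $\Re$-component one reads off that the resulting covector over $P$ lies in $a(A)$, where $a(X,P) = (-P,X)$. Concretely: if $(X_1, \xi) \in A$ (so $\xi \in E^*$), the half-space contributes the relation that ties $\xi$ to $P$ and $X_1$ to the $E$-covector over $P$, producing exactly $(-\xi, X_1)$... wait, one must be careful with signs; the clean way is to observe $a$ is the symplectomorphism identifying $T^*E$ with $T^*E^*$ under which $\gf_G$'s boundary conormal is the graph, so $F$ transports $\cD_A$ into $\cD_{a(A)}$.

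The cleanest route, which I would actually adopt to avoid the coordinate bookkeeping, is to bypass the direct microsupport chase and instead use the already-established equivalence $\Phi \circ F \cong \Id[-n]$ together with a symmetry argument: it suffices to prove the inclusion $\mS_\cD(F(S)) \subset \cone(a(A))$ in \emph{one} direction for a generating class of objects and then invoke $\Phi[n]$ as the inverse to get the reverse inclusion and the second sentence for free (since $a^{-1}$ corresponds to $\Phi$ by the same computation with the roles of $E, E^*$ swapped and $G$ replaced by $\Gamma$, whose boundary conormal realizes $a^{-1}$). So the two sentences of the theorem are formally equivalent once one of them, say for $F$, is known; and that one follows from: $\mS(\gf_G) = $ conormal to $\{\langle X,P\rangle + t = 0\}$, plus the convolution estimate, plus the observation that this conormal is precisely the graph of $a$ (conified in the $\Re$-direction). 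The main obstacle I anticipate is \emph{not} conceptual but the careful handling of the non-properness of $p_{13}$ — this is exactly where Corollary \ref{impropersupport} is invoked, and one must check its hypothesis, namely that after the fiberwise-contraction trick the relevant map is proper on supports, which is automatic here because $A$ and hence everything in sight is controlled at infinity by the compactness of $A \subset T^*X$ (here $A \subset T^*E$); and the secondary nuisance is getting the sign in $a(X,P) = (-P,X)$ to come out consistently, which amounts to orienting the boundary conormal of $G$ correctly (outward vs.\ inward), determined by the fact that $\gf_G$, not $\gf_{G^\circ}$ or $\gf_{\bar{G}^c}$, is the object being convolved.
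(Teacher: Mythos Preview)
Your approach is essentially the paper's: compute $\mS(\gf_G)$ as the conormal to the hypersurface $\{\langle X,P\rangle + t = 0\}$, pull back along $p_{12},p_{23}$, estimate the tensor product, and then push forward via $p_{13}$ using Lemma~\ref{impropernonsingular}. The paper carries this out in full coordinates: it writes $p_{13}=\pi I$ with $I(X,P,t_1,t_2)=(X,P,t_1+t_2,t_2)$, computes the set $\Omega_4$ bounding $\mS(I_!(\cdots))$, and then shows directly that if $(P,\xi)\notin a(A)$ no point of $\Omega_4$ can approximate $(P,t,\xi,1)$ in the sense required by Lemma~\ref{impropernonsingular}. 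Your sketch stops exactly where the work is (``Unwinding\ldots wait, one must be careful with signs''); there is no conceptual shortcut here---the coordinate chase \emph{is} the proof, and the sign in $a(X,P)=(-P,X)$ only comes out after doing it.

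Two specific corrections. First, you invoke ``the compactness of $A\subset T^*X$'' to control things at infinity, but in this theorem $A\subset T^*E$ is only assumed \emph{closed}, not compact (compactness enters later, in Theorem~\ref{disjoint} and Lemma~\ref{first}). Fortunately nothing in the argument needs compactness: Lemma~\ref{impropernonsingular} has no properness hypothesis on the sheaf side---the fiberwise contraction trick in its proof manufactures properness automatically---and the final step only uses that $(\xi,-P)\notin A$ with $A$ closed to get an open neighborhood disjoint from $A$. Second, your ``cleanest route'' does not actually bypass anything: you still need the microsupport inclusion for $F$ in one direction, which is precisely the computation above. The paper does not deduce Part~2 from Part~1 via the equivalence $\Phi\circ F\cong\Id[-n]$; it simply remarks that the proof for $\Phi$ is similar (swap $G\leftrightarrow\Gamma$, $E\leftrightarrow E^*$).
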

\begin{proof} By definition, we have $$
F(S)=Rp_{13!}(p_{12}^{-1}S\otimes p_{23}^{-1}\gf_G).
$$
Here the maps $p_{ij}$ are the same as in (\ref{pij}) for $X_1=\pt$; $X_2=E$;
$X_3=E^*$.

The condition $S\in \cD_A(E)$ means that
$\mS(S)$ is contained in the set $\Omega_0$ of all points
$$(x,t,\omega,k)\in E\times \Re\times E^*\times \Re=T^*(E\times \Re),$$
where either $k\leq 0$ or $k>0$ and $(x,\omega/k)\in A$.

Therefore, 
$$
\mS(p_{12}^{-1}S)\subset  \Omega_1:=
\{(X,P,t_1,t_2,\omega,0,k,0)|(X,t,\omega,k)\in \Omega_0\}.
$$

As $G\subset E\times E^*\times \Re$ is defined by the equation
$t+<X,P>\geq 0$, we know that 
$\mS(\gf_G)$ consists of all points of the form
$$
(X,P,t,kP,kX,k)\in E\times E^*\times \Re\times E^*\times E\times \Re
$$
where $t+<X,P>\geq 0$, $k\geq 0$ and $k>0$ implies $t+<X,P>=0$.

Therefore
$$
\mS(p_{23}^{-1}\gf_G)=\Omega_2:=\{(X,P,t_1,t_2,k_1P,k_1X,0,k_1)|(X,P,t_2,
k_1P,k_1X,k_1)\in \mS(\gf_G)\}.
$$
We see that $\Omega_1\cap -\Omega_2$ is contained
in the zero section of $T^*(E\times E^*\times \Re\times \Re)$. Therefore,
$$
\mS((p_{12}^{-1}S)\otimes (p_{23}^{-1}\gf_G))
$$
is contained in the set of all poits of the form $\omega_1+\omega_2$
where $\omega_i\in \Omega_i$ and $\omega_1,\omega_2$ are in
the same fiber of $T^*(E\times E^*\times \Re\times \Re)$.

We have
$$
\mS(p_{12}^{-1}S\otimes p_{23}^{-1}\gf_G)\subset \Omega_3,
$$
where
$
\Omega_3
$
consists of all points of the form
$$
(X,P,t_1,t_2,\omega+k_1P,k_1X,k,k_1)
$$
where:

--- if $k>0$, then $(X,\omega/k)\in A$;

--- $t_2+<X,P>\geq 0$;

--- $k_1\geq 0$;

--- if $k_1>0$, then $t_2+<X,P>=0$.

Let $I:E\times E^*\times \Re\times \Re\to E\times E^*\times \Re\times\Re$ be given by
$$
I(X,P,t_1,t_2)=(X,P,t_1+t_2;t_2).
$$
Let 
$\pi:E\times E^*\times \Re\times \Re\to  E^*\times \Re$
be given by $\pi(X,P,t_1,t_2)=(P,t_1)$. We then have
$p_{13}=\pi I$; 
$$
Rp_{13!}(p_{12}^{-1}S\otimes p_{23}^{-1}\gf_G)\cong
R\pi_! I_!(p_{12}^{-1}S\otimes p_{23}^{-1}\gf_G).
$$

It is easy to see that
$$
\mS I_!(p_{12}^{-1}S\otimes p_{23}^{-1}\gf_G)
$$
is contained in the set $\Omega_4$ of
 of all points
$(X,P,t_1+t_2,t_2,\omega,\eta,k,k_1-k)$
where  $(X,P,t_1,t_2,\omega,\eta,k,k_1)\in \Omega_3$.

Suppose that a point $(P,\xi)\in E^*\times E=T^*E^*$ does not belong to
$a(A)$, that is $(-\xi,P)\notin A$. We will prove that
$R\pi_! I_!(p_{12}^{-1}S\otimes p_{23}^{-1}\gf_G)$ is non-singular
at any point of the form $(P,t,\xi,1)\in E^*\times \Re\times E\times \Re=T^*(E^*\times \Re)$ (this means precisely that
$ R\pi_! I_!(p_{12}^{-1}S\otimes p_{23}^{-1}\gf_G)\in \cD_{a(A)}(E^*)$.)

According to Lemma \ref{impropernonsingular}, it suffices to 
find an $\ve>0$ such that any point of the form
$$
(X,P',t_1,t_2,\omega,\eta,k',k_1')
$$
with $|P'-P|<\ve$; $|\omega|<\ve$; $|\eta-\xi|<\ve$; $|k'-1|<\ve$;
$|k'_1|<\ve$ is not in $\Omega_4$. Assume it is, then there should
exist a point $(X,P',t_1,t_2,\omega+k_1P',k_1X,k,k_1)\in \Omega_3$
such that $|P'-P|<\ve$; $|\omega+k_1P'|<\ve$; $|k_1X-\xi|<\ve$;
$|k-1|<\ve$; $|k'-k|<\ve$.  If $\ve$ is small enough, we
have $k,k_1>0$ and $(X,\omega/k)\in A$.  For any $\delta>0$, there exists a $\ve>0$ such that   these conditions imply:
\begin{equation}\label{bliz}
|\omega+P|<\delta; |X-\xi|<\delta.
\end{equation}

However,we know that $(\xi;-P)=a^{-1}(P,\xi)\notin A$. As $A$
is closed, for $\delta$ small enough, there will be no
points in $A$ satisfying (\ref{bliz}).

The proof of Part 2 is similar.
\end{proof}
\subsubsection{Lemma}\begin{Lemma}\label{first} Let $S\in \cD_A(X)$ where $A$ is a compact.
  Then $\ms(S)\cap \Omega_{\leq 0}(X)\subset
T^*_{X\times \Re}(X\times \Re)$. That is $S$ is non-singular at every point of the form $(x,t,\omega,kdt)$, where  either 
$k\leq 0$ and $\omega\neq 0$ or  $k<0$.
\end{Lemma}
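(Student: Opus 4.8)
The plan is to reduce the statement to a local computation about objects of $\cD(X)$ and their microsupport, exploiting the defining property that the map $(\ref{phi})$ is an isomorphism together with the hypothesis that $\mS_\cD(S)$ sits inside $\cone(A)$ for a \emph{compact} $A$. First recall that $\mS(S)\cap\Omega_{>0}=\mS_\cD(S)\subset\cone(A)$, so the only new content concerns points of $\mS(S)$ lying in $\Omega_{\leq 0}$, i.e.\ covectors $(x,t,\omega,k\,dt)$ with $k\leq 0$. I would split the argument into the two cases named in the statement: (a) $k<0$ (arbitrary $\omega$), and (b) $k=0$ and $\omega\neq 0$.

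For case (a), the idea is to use the identification $T_{c*}S\cong S*_\Re\gf_{[c,\infty)}$ from $(\ref{sdvigident})$ and the fact that $S\cong S*_\Re\gf_{[0,\infty)}$. Convolution on the right with $\gf_{[0,\infty)}$ only adds microsupport in the region $k\geq 0$: indeed $\mS(\gf_{[0,\infty)})\subset\{(t,k):k\le 0,\ t=0\}\cup\{(t,0):t\ge0\}$, and one estimates $\mS(S*_\Re\gf_{[0,\infty)})$ via the functorial bounds for $Ra_!$ and $\otimes$ exactly as in the proof of the first Proposition of Section \ref{Generalities} (where it is shown that $F*_\Re\gf_{(0,\infty)}\in C_{\leq0}$). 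Running that estimate in reverse — using that $S$ itself equals $S*_\Re\gf_{[0,\infty)}$ — forces $\mS(S)$ to contain no covectors with $k<0$ at all; the component with $i_{\partial_t}(\cdot)<0$ of the right-hand side comes only from the $t_2=0$, $k_2\le 0$ part of $\mS(\gf_{[0,\infty)})$ combined with the $k=0$ part of $\mS(S)$, so the resulting $k$ is $\le 0$ but in fact can only be $0$ after the fiberwise sum along $a$. I would phrase this as: the natural triangle relating $S$, $S*_\Re\gf_{[0,\infty)}$ and $S*_\Re\gf_{(0,\infty)}[1]$ shows that a covector of $\mS(S)$ with $k<0$ would have to already be present and be unaffected, but the isomorphism $S\cong S*_\Re\gf_{[0,\infty)}$ together with the support estimate on the latter (which lives in $k\ge0$ on the nose away from $k=0$) gives the contradiction.

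For case (b), $k=0$ and $\omega\ne0$: here I would invoke compactness of $A$. Consider the projection $p:X\times\Re\to X$ (or rather work with the covector direction transverse to $dt$). Since $\mS_\cD(S)\subset\cone(A)$ and $A$ is compact, for every $(x,t)$ the covectors in $\mS(S)$ over it with $k>0$ have $\omega/k\in A$, hence $|\omega|\le C|k|$ for a fixed constant $C=\sup_{A}|\cdot|$. A covector $(x,t,\omega,0)$ with $\omega\ne0$ would be a limit, along the closed conic set $\mS(S)$, of covectors $(x_n,t_n,\omega_n,k_n)$; if these eventually have $k_n>0$ we get $|\omega_n|\le C k_n\to 0$, contradicting $\omega\neq0$ in the limit (after rescaling), and if $k_n\le 0$ we are back in the region handled by case (a). Thus no such covector exists. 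I would organize this cleanly using Lemma \ref{impropernonsingular}/Corollary \ref{impropersupport} applied to $p$, which is exactly the tool that turns the compactness bound $|\omega|\le C|k|$ into non-singularity at covectors with $k=0$, $\omega\ne0$.

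The main obstacle I anticipate is bookkeeping in case (a): one must be careful that the functorial microsupport estimate for $S*_\Re\gf_{[0,\infty)}$ genuinely controls the sign of $k$ and that properness/compact-support reductions (representing $S$ as an inductive limit of compactly supported objects, as in the earlier Proposition) are legitimate in the unbounded derived category — but this is exactly the kind of reduction already carried out in Section \ref{Generalities}, so it should go through verbatim. Case (b) is essentially immediate once the quantitative bound $|\omega|\leq C|k|$ on $\mS_\cD(S)$ is recorded and Corollary \ref{impropersupport} is invoked, so I expect the write-up there to be short.
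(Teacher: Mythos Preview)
Your case (a) can be made to work, and in fact it shows something stronger: for \emph{any} $S\in\cD(X)$, the microsupport lies in $\{k\ge 0\}$. Using $S\cong S*_\Re\gf_{[0,\infty)}$ and the non-proper estimate of Corollary~\ref{impropersupport} applied to the addition map (after the change of variables $T=t_1+t_2$, $s=t_2$), any point $(x,T,\omega,k)$ in $\mS(S*_\Re\gf_{[0,\infty)})$ must lie in the closure of points with last coordinate $\alpha_2-\alpha_1$, where $(s,\alpha_2)\in\mS(\gf_{[0,\infty)})$ forces $\alpha_2\ge 0$; setting the last coordinate to $0$ gives $k=\lim\alpha_1=\lim\alpha_2\ge 0$. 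Note, though, that your remark about ``representing $S$ as an inductive limit of compactly supported objects'' is a red herring here: microsupport does not behave well under direct limits, and it is Corollary~\ref{impropersupport} that does the work.

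Case (b), however, has a genuine gap. Your argument says that a covector $(x,t,\omega,0)$ with $\omega\neq 0$ ``would be a limit, along the closed conic set $\mS(S)$, of covectors $(x_n,t_n,\omega_n,k_n)$'' and then splits into $k_n>0$ versus $k_n\le 0$. But membership in a closed set does not force a point to be a nontrivial limit of other points; the constant sequence $k_n\equiv 0$ is always available and is covered by neither sub-case (case (a) handled $k<0$, not $k=0$). The bootstrap via $S\cong S*_\Re\gf_{[0,\infty)}$ does not close this gap either: rerunning the closure argument with the knowledge that $k\ge 0$, the problematic sequences are precisely those with $\alpha_1^{(n)}=0$, and these just reproduce points of the same type you are trying to exclude. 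Your proposed invocation of Corollary~\ref{impropersupport} for $p:X\times\Re\to X$ estimates $\mS(Rp_!S)$, not $\mS(S)$, so it does not address the question. The example $S=\gf_{V\times[0,\infty)}$ for $V\subset X$ a closed submanifold lies in $\cD(X)$ and has $(x,0,\omega,0)\in\mS(S)$ for $\omega\in N^*(V)\setminus 0$, confirming that compactness of $A$ must be used substantively, not only through the inequality $|\omega|\le C|k|$ on $\{k>0\}$.

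The paper's proof takes a completely different route that exploits compactness of $A$ in an essential way. One localizes to a chart and uses a bounded-derivative diffeomorphism $\Psi:\Re^n\to U$ so that the pull-back $G$ lies in $\cD_{E\times V}(E)$ with $V\subset E^*$ \emph{bounded}. Then one applies the Fourier transform $F:\cD(E)\to\cD(E^*)$ of Sec.~\ref{Fourier}: by Theorem~\ref{Fouriersupport}, $H:=F(G)\in\cD_{V\times E}(E^*)$, and since $V$ is bounded the restriction of $H$ to the complement of $\overline{V}\times\Re$ lies simultaneously in $C_{\le 0}$ and in its left orthogonal, hence vanishes. Thus $H$ is \emph{supported} on the compact $V\times\Re$ in the $E^*$-direction. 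Now one writes $G=\Phi(H)[n]$ and estimates $\mS(\Phi(H))$ explicitly from the convolution formula, using Lemma~\ref{impropernonsingular}; the compact support of $H$ in the $P$-variable is exactly what makes the closure estimate succeed at $k=0$, $\omega\neq 0$. This Fourier step is the missing idea in your approach to (b): it converts the compactness of $A$ into genuine compact support of an auxiliary object, after which the standard microsupport calculus applies.
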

\begin{proof} Choose a point $x_0\in X$, coordinates $x^i$
near $x_0$ so that $x_0$ has zero coordinates and let $U$
be a small neighborhood of $x_0$ given by $|x^i|<1$ for all $i$.  Consider the set $A\cap T^*U$. This set is contained
in the set $B:=\{(x,\sum a_idx^i)| |a_i|\leq M\}$, for some 
$M>0$ large enough. Let $\psi:\Re\to (-1,1)$ be an increasing 
surjective smooth function whose derivative is bounded (say 
$\psi(x)= (2/\pi) \text{arctan}(x)$).  Fix a constant $C>0$
such that $0<\psi'(x)\leq C$ for all $x$.

we then have a diffeomorphism $\Psi:E:=\Re^n\to U$, $\Psi(X^1,X^2,\ldots,X^n)=(\psi(X^1),\psi(X^2),\ldots, \psi(X^n))$. It then follows that the set
$\Psi^{-1}B$ consists of all points $(X,\sum a_i d\psi(X^i))$,
where $|a_i|<M$. But 
$\sum a_i d\psi(X_i)=\sum a_i \psi'(X_i)dX_i$. We know that
$|a_i\psi'(X_i)|<CM=:M_1$. Let $V\subset E^*$ be given by
$\{\sum_i b_idX^i| |b_i|\leq M_1\}$ so that 
$\Psi^{-1}B$ is contained in the set  $E\times V\subset E\times E^*=
T^*E$.  

Let $S\in \cD_A(X)$. It  follows that
$G:=\Psi^{-1}(S|_{U\times \Re})\in \cD_{E\times V}(E)$.
Our task now reduces to showing: {\em let $G\in \cD_{E\times
V}(E).$
Then $G$ is nonsingular
at a point  $(X,t,\omega, kdt)\in E\times \Re\times E^*\times
\Re$ if either $k<0$ or $k=0$ and $\omega\neq 0$.}

The statement will be proven using  the 
Fourier transform. 

First, we have an isomorphism $G=\Phi(F(G))[n]$.
Next, Theorem \ref{Fouriersupport} 
implies that $H:=F(G)\in \cD_{V\times E}(E^*)$.
Let $W\subset E^*\backslash V$ be an open subset such that 
its closure is also a subset of  $E^*\backslash V$.
We then see that the restriction
$H|_{W\times \Re}$ is both in $ C_{\leq 0}(U)$ (clear) and
in the left orthogonal complemenent to $ C_{\leq 0}(U)$ 
(follows from (\ref{checkorthogonal})).  Therefore, 
$H|_{W\times \Re}=0$.
Hence $H$ is supported on $V\times \Re\subset E^*\times \Re$.
Let us now study $\Phi(H)[n]=G$.
We have 
$$
\Phi(H)=Rp_{13!}(p_{12}^{-1}H\otimes
 p_{23}^{-1}\gf_{\{(P,X,t)|t-
<X,P>\geq 0\}}),
$$
where $p_{ij}$ are the same as in (\ref{pij}) with
$X_1=\pt$; $X_2=E^*$; $X_3=E$.
We need to show that {\em if $(X,t,\omega,k)\in \mS(\Phi(H))$ and
$k\leq 0$, then $k=0$ and $\omega=0$.}

We have
$$
\mS(p_{12}^{-1}H)\subset\Omega_1= \{(P,X,t_1,t_2,\pi,0,k_1,0)|
P\in V\};
$$
$$
\mS( p_{23}^{-1}\gf_{\{(P,X,t)|t-
<X,P>\geq 0\}})\subset \Omega_2=\{(P,X,t_1,t_2,-kX,-kP,0,k)|k\geq 0
\}
$$
Let $\omega_i\in \Omega_i$ belong to the fiber of  
$T^*(E^*\times E\times\Re\times \Re)$ over a point $(P,X,t_1,t_2)$. 
It is clear that $\omega_1+\omega_2=0$ implies that $\omega_2=
\omega_1=0$. Therefore, we have
$$
\mS(p_{12}^{-1}H\otimes
 p_{23}^{-1}\gf_{\{(P,X,t)|t-
<X,P>\geq 0\}})
$$
$$
\subset \Omega_3=\{(P,X,t_1,t_2,\pi-kX,-kP,k_1,k)|k\geq 0;
 P\in V\}
$$

Let us decompose $p_{13}=pI$, where
$I:E^*\times E\times \Re\times \Re\to E^*\times E\times \Re\times\Re$ is given by $I(P,X,t_1,t_2)=(P,X,t_1+t_2,t_2)$ 
and $p(P,X,T_1,T_2)=(P,T_1)$.
We then see that $$\mS(I_!(p_{12}^{-1}H\otimes
 p_{23}^{-1}\gf_{\{(P,X,t)|t-
<X,P>\geq 0\}}))\subset \Omega_4,$$
where 
$
\Omega_4
$
consists of all points of the form
$(P,X,t_1,t_2,\pi-kX,-kP,k_1,k-k_1)$, where
$(P,X,t_1,t_2,\pi-kX,-kP,k_1,k)\in \Omega_3$.

Assume $$(X',t,\omega,k')\in \mS(Rp_!I_!(p_{12}^{-1}H\otimes
 p_{23}^{-1}\gf_{\{(P,X,t)|t-
<X,P>\geq 0\}}))$$
and $k'\leq 0$.  We are to show $k'=0,\omega=0$.

According to Lemma \ref{impropernonsingular}
for any $\ve>0$ there should exist  a point
$(P,X,t_1,t_2,\pi-kX,-kP,k_1,k-k_1)\in \Omega_4$
such that  $|-kP-\omega|<\ve$; 
$|k_1-k'|<\ve$;
  $|k-k_1|<\ve$, $P\in V$, $k\geq 0$, $k'\leq 0$.
Therefore, $-k'\leq k-k'=|k-k'|\leq |k-k_1|+|k_1-k'|<2\ve$.
Similarly, $k<2\ve$.
Since $\ve$ can be made arbitrarily small, $k'=0$. 
Next, $|\omega|<\ve+|k||P|$. As $V$ is bounded, 
there exists $D>0$ such that $|P|<D$. Thus, 
$|\omega|<\ve(1+2D)$ for any $\ve>0$. Therefore, $\omega=0$. 
\end{proof}

\subsubsection{} Choose $F_1,F_2$ in the left orthogonal complement
to $C_{\leq 0}(X)$.

Consider the following sheaf on $X\times \Re$:
$$
H:=Rp_{2*}R\ihom(p_1^{-1}F_1;a^!F_2),
$$
where $p_1,p_2,a:X\times \Re\times \Re\to X\times \Re$
are given by: $p_i(x,t_1,t_2)=(x,t_i)$; $a(x,t_1,t_2)=(x,t_1+t_2)$.

Let $q:X\times \Re\to X$ be the projection.
\begin{Lemma}\label{third}
One has
1) $R\hom(F_1,F_2)=R\hom_\Re(\gf_0;Rq_*H)$;

2) $R\hom_\Re(\gf_{\Re};Rq_*H)=0$;

3) $Rq_*H$ is locally constant along $\Re$.
\end{Lemma}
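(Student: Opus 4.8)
The plan is to identify $H$ with the internal $\hom$ for the fibrewise convolution $*_\Re$, to read off 1) and 2) from adjunctions, and to prove 3) by a microsupport computation.

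\emph{Setup.} For every $S\in D(\Re)$ the adjunctions $(p_2^{-1},Rp_{2*})$, $(\otimes,R\ihom)$, and $(Ra_!,a^!)$ give a natural isomorphism
$$
R\hom_{X\times\Re}(q^{-1}S,\,H)\;\cong\;R\hom_{X\times\Re}\!\big(Ra_!(p_1^{-1}F_1\otimes p_2^{-1}q^{-1}S),\,F_2\big)\;=\;R\hom_{X\times\Re}(F_1*_\Re S,\,F_2),
$$
the last equality because $p_2^{-1}q^{-1}S$ is exactly the pullback of $S$ used to form $F_1*_\Re S$. Composing with $(q^{-1},Rq_*)$ gives, for all $S\in D(\Re)$,
$$
R\hom_\Re(S,\,Rq_*H)\;\cong\;R\hom_{X\times\Re}(F_1*_\Re S,\,F_2).
$$

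\emph{Parts 1) and 2).} Take $S=\gf_0$: since $F_1*_\Re\gf_0\cong F_1$, this reads $R\hom_\Re(\gf_0;Rq_*H)\cong R\hom(F_1,F_2)$, which is 1). Take $S=\gf_\Re$: the object $F_1*_\Re\gf_\Re=F_1\bullet_{\pt}\gf_\Re$ lies in $\cD(X)$, because $\bullet$ preserves $\cD$ in its first argument (Section~\ref{kernelconv}), and also in $C_{\leq 0}(X)$, being the filtered colimit $\varinjlim_{n}T_{-n*}(F_1*_\Re\gf_{(0,\infty)})$ of objects of $C_{\leq 0}(X)$; hence $F_1*_\Re\gf_\Re=0$, and $R\hom_\Re(\gf_\Re;Rq_*H)\cong R\hom(0,F_2)=0$, which is 2).

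\emph{Part 3).} It suffices to show $\mS(Rq_*H)$ lies in the zero section of $T^*\Re$. Write covectors on $X\times\Re$ as $(x,t;\eta,k)$. By Lemma~\ref{first} and $F_i\in\cD_{A_i}(X)$, the nonzero points of $\mS(F_i)$ over $(x,t)$ are the $(x,t;ka_i,k)$ with $k>0$ and $(x,a_i)\in A_i$. Pulling back, $\mS(p_1^{-1}F_1)$ consists of the $(x,t_1,t_2;k_1a_1,k_1,0)$ and $\mS(a^!F_2)=\mS(a^{-1}F_2)$ of the $(x,t_1,t_2;k_2a_2,k_2,k_2)$, with $k_i\ge0$ and $(x,a_i)\in A_i$; since these two sets meet only along the zero section, the rule for the microsupport of $R\ihom$ shows that $N:=R\ihom(p_1^{-1}F_1;a^!F_2)$ is microsupported on the covectors
$$
(x,t_1,t_2;\ k_2a_2-k_1a_1,\ k_2-k_1,\ k_2),\qquad k_1,k_2\ge0,\ (x,a_i)\in A_i .
$$
Now $H=Rp_{2*}N$ and $p_2$ forgets $t_1$; as $p_2$ need not be proper on $\operatorname{supp}N$, Corollary~\ref{impropersupport} (applied with $E=\Re_{t_1}$) shows that $\mS(H)$ retains only the covectors above whose $dt_1$-component $k_2-k_1$ vanishes, with that component dropped, so $\mS(H)$ is contained in $\{(x,t;\,k(a_2-a_1),\,k):k\ge0,\ (x,a_1)\in A_1,\ (x,a_2)\in A_2\}$ together with the zero section. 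Finally $q$ is proper because $X$ is compact, so $\mS(Rq_*H)\subset\{(t,k):(x,t;0,k)\in\mS(H)\ \text{for some}\ x\in X\}$; such a $(t,k)$ forces $k(a_2-a_1)=0$ with $(x,a_1)\in A_1$ and $(x,a_2)\in A_2$, and since $A_1\cap A_2=\emptyset$ we have $a_1\ne a_2$, whence $k=0$. Thus $\mS(Rq_*H)$ lies in the zero section, i.e. $Rq_*H$ is locally constant along $\Re$.

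\emph{Main obstacle.} Parts 1) and 2) are formal manipulations of adjunctions; the real content is 3), where the successive microsupport estimates — in particular pushing the estimate through the non-proper projection $p_2$ via Corollary~\ref{impropersupport}, just as in the proofs of Theorem~\ref{Fouriersupport} and Lemma~\ref{first} — must be run carefully, and where the fibrewise disjointness $A_1\cap A_2=\emptyset$ is precisely what turns the bound on $\mS(H)$ into local constancy of $Rq_*H$.
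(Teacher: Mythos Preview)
Your proof is correct and follows essentially the same strategy as the paper: the adjunction identity $R\hom_\Re(S,Rq_*H)\cong R\hom_{X\times\Re}(F_1*_\Re S,F_2)$ for parts 1) and 2), and a microsupport estimate for part 3) that ultimately uses $A_1\cap A_2=\emptyset$. The only cosmetic differences are that in 2) the paper kills $F_1*_\Re\gf_\Re$ via $\gf_{[0,\infty)}*_\Re\gf_\Re=0$ rather than your orthogonality argument, and in 3) the paper performs the two pushforwards in the opposite order (first the proper projection $X\times\Re\times\Re\to\Re\times\Re$, then the non-proper one $\Re\times\Re\to\Re$ via Corollary~\ref{impropersupport}); both orderings yield the same bound.
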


\begin{proof}

Let $S\in D(\Re)$. We have
$$
R\hom_\Re(S;Rq_*H)=R\hom_\Re(S;R\pi_*\ihom(p_1^{-1}F_1;a^!F_2)),$$
where $\pi=qp_2:X\times\Re\times\Re\to \Re$;
$\pi(x,t_1,t_2)=t_2$.

Next,
$$
R\hom_\Re(S;R\pi_*\ihom(p_1^{-1}F_1;a^!F_2))
$$
$$
\cong R\hom_{X\times \Re}(Ra_!(\pi^{-1}S\otimes p_1^{-1}F_1);F_2)
$$
$$
\cong  R\hom_{X\times \Re}(F_1*_\Re S;F_2).
$$

Thus,
$$
R\hom(S;Rq_*H)\cong R\hom_{X\times \Re}(F_1*_\Re S;F_2).
$$

Let us now prove 1)

We have:
$$
R\hom_\Re(\gf_0;Rq_*H)=R\hom_{X\times \Re}(F_1*_\Re \gf_0;F_2)=
R\hom(F_1,F_2).
$$

2) We have $$
R\hom_\Re(\gf_{\Re};Rq_*H)=R\hom_{X\times \Re}(F_1*_\Re \gf_\Re;F_2)
$$

As $F_1\in\cD(X)$, we have an isomorphism
$$
F_1*_\Re \gf_{[0,\infty)}*_\Re \gf_\Re\to  F_1*_\Re\gf_{\Re}.
$$
However, one can easily check that $\gf_{[0,\infty)}*_\Re \gf_\Re=0$.Therefore, 

$$
F_1*_\Re\gf_{[0,\infty)}*_\Re \gf_\Re=0,
$$
whence the statement.

3) Let us identify $T^*(X\times \Re)=T^*X\times \Re^2$;
$T^*(X\times \Re\times \Re)=T^*X\times \Re^4$ so that
$(\omega,t,k)\in T^*X\times \Re^2$ corresponds to a point
$(\omega,\eta)\in T^*X\times T^*\Re$, where $\eta$ is a 1-form
$kdt$ at the point $t\in \Re$; analogously, we let 
$(\omega,t_1,t_2,k_1,k_2)$ correspond to a point
$(\omega,\zeta)\in T^*X\times T^*(\Re\times \Re)$ where
$\zeta=k_1dt_1+k_2dt_2$ is a 1-form at the point 
$(t_1,t_2)\in \Re^2.$

 According to Lemma \ref{first}, We know that
$$
\mS(F_1)\cap \{(\omega,t,k)|k\leq 0\} \subset T^*_{X\times \Re}(X\times \Re). 
$$

Since $F_1\in \cD_{A_1}(X)$, we have
$$
\ms(F_1)\cap \{(\omega,t,k)|k> 0\} \subset \{(\omega,t,k)|
k>0; (x,\omega/k)\in A_1\}.
$$
Thus, 
$$
\mS(F_1)\subset \{(k\omega,t,k)| k\geq 0;\omega\in A_1\} 
$$
Analogously,
$$
\mS(F_2)\subset \{(k\omega,t,k)| k\geq 0;\omega\in A_2\}. 
$$

Therefore, $$\mS(p_1^{-1}F_1)\subset 
\{(k_1\omega_1,t_1,t_2,k_1,0)| k_1\geq 0; \omega_1\in A_1\};
$$
 $$\mS(a^!F_2)\subset 
\{(k_2\omega_2,t_1,t_2,k_2,k_2)| k_2\geq 0; \omega_2\in A_2\}.
$$

In order to estimate $\mS R\ihom(p_1^{-1}F_1;a^!F_2)$ one should first check that $\mS(p_1^{-1}F_1)\cap \mS(a^!F_2)\subset
 T_{X\times \Re\times \Re}^*(X\times \Re\times \Re)$. This is indeed
so, because every point $p$ in
 $\mS(p_1^{-1}F_1)\cap \mS(a^!F_2)$
is of the form
$$
p=(k_1\omega_1,t_1,t_2,k_1,0)=(k_2\omega_2,t_1,t_2,k_2,k_2).
$$
 which  implies $k_1=k_2=0$, hence $
k_1\omega_1=k_2\omega_2=0$. Therefore, one has
$$
\mS R\ihom(p_1^{-1}F_1;a^!F_2)\subset \{(k_2\omega_1-k_1\omega_2,t_1,t_2
;k_2-k_1;k_2)| k_1,k_2\geq 0;
\omega_1\in A_1;\omega_2\in A_2\},
$$
where it is also assumed that $\omega_1,\omega_2$ belong to
the same fiber of $T^*X$.
Let $q':X\times\Re\times\Re\to \Re\times \Re$ be the projection.
Consider an object
$$
G:= Rq'_*R\ihom(p_1^{-1}F_1;a^!F_2)
$$
so that $Rq_*H=Rq_*Rp_{2*}R\ihom(p_1^{-1}F_1;a^!F_2)=Rp'_{2*}G$,
where $p'_2:\Re\times \Re\to \Re$ is the projection along the first factor; $p'_1(t_1,t_2)=t_2$.

As the map $q'$ is proper,  the microsupport of $G$ can 
be estimated as
$$
\mS(G)\subset \{(t_1,t_2,k_2-k_1,k_2)| k_1,k_2\geq 0;
\exists \omega_i\in A_i: k_1\omega_1=k_2\omega_2\},
$$
where again it is assumed that $\omega_i$ are in the same fiber of
$T^*X$.
Denote the set on the RHS by
 $\Gamma\subset \Re^4=T^*(\Re\times\Re)$.
Let us now estimate $\mS(Rq_*H)=Rp'_{2*}G$  using Lemma \ref{improperstar}.  

Let us first prove that $(t,1)\notin \mS(q_*H)$, where we identify 
$T^*\Re=\Re\times\Re$. Assuming the opposite implies that
for any $\ve>0$ there should
exist $(t_1,t_2,k_2-k_1,k_2)\in \Gamma$ such that 
$|k_2-k_1|<\ve$; $|k_2-1|<\ve$. As $A_1,A_2$ are compact
and do not intersect, it is clear that for $\ve$ small
enough we have $k_1A_1\cap k_2A_2=\emptyset$ which contradicts
to $(t_1,t_2,k_2-k_1,k_2)\in \Gamma$.

Let us now show that $Rq_*H$ is non-singular at any point 
$(t,-1)$.  Similar to above, assuming the contrary implies
that for any $\ve>0$ there should exist
$(t_1,t_2,k_2-k_1,k_2)\in \Gamma$ such that 
$|k_2+1|<\ve$. As $k_2\geq 0$, this leads to contradiction.
\end{proof}
\subsubsection{Proof of Theorem \ref{disjoint}}
It now follows that $Rq_*H$ is a constant sheaf on $\Re$ with 
$R\Gamma(\Re,Rq_*H)=0$, i.e.
 $Rq_*H=0$. Hence $R\hom(F_1,F_2)=0$ by Lemma \ref{third} 1).

This proves Theorem \ref{disjoint}.
\subsection{Hamiltonian shifts} Let $\Phi:T^*X\to T^*X$ be a Hamiltonian symplectomorphism which is equal to identity outside of a compact. 
Let $L\subset T^*X$ be a compact subset.

\begin{Theorem}\label{hamilt} There exist:

 a collection of endofunctors $T_n:\cD(X)\to \cD(X),$,
 $1\leq n\leq N$ for some $N$,
and a collection of transformations of functors
$t_k:T_{2k}\to T_{2k+1}$ (for all $k$ with $2k+1\leq N$).
 $s_k:T_{2k+2}\to T_{2k+1}$ (for all $k$ with $2k+2\leq N$);

Such that 

1) $T_N=\Id$;

2) $T_1(\cD_L(X))\subset \cD_{\Phi(L)}(X)$;

3) For all $k$ and for all $F\in \cD(X)$, we have
$\cone(t_k(F))$ and $\cone (s_k(F))$ are torsion sheaves (see Sec. \ref{torsio})
\end{Theorem}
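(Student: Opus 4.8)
The plan is to build the functors $T_n$ by discretizing the Hamiltonian isotopy and realizing each small step as a convolution with a kernel on $\cD(X\times X)$, in the spirit of the quantization of Hamiltonian isotopies by Guillermou–Kashiwara–Schapira. Write $\Phi = \Phi_1$ as the time-one map of a compactly supported Hamiltonian flow $\Phi_s$, $s\in[0,1]$. Choose a fine partition $0 = s_0 < s_1 < \cdots < s_m = 1$. The first task is to attach to the isotopy $\Phi_s$ a kernel $K \in \cD(X\times X)$ whose $\cD$-microsupport is controlled by the graph of the conified Hamiltonian flow inside $\Omega_{>0}\bigl(T^*(X\times X)\bigr)$; this is exactly the kind of object produced by the GKS construction, and it exists because $X$ is compact and the Hamiltonian is compactly supported (so the conification is a conic Lagrangian that is closed and agrees with the zero section near $t\to\pm\infty$). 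Convolution $\bullet_X$ with such a kernel is an equivalence of $\cD(X)$ by construction (its inverse is convolution with the kernel of the reverse isotopy), and by the interaction of $\bullet_X$ with microsupport (the composition-of-kernels microsupport estimate, which carries over to $\Omega_{>0}$ just as in \cite{KS}) it sends $\cD_L(X)$ into $\cD_{\Phi(L)}(X)$. That will be the content of item 2), with $T_1$ the convolution by the full kernel $K$.

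Next I would decompose $T_1$ into the zig-zag $T_1, T_2,\dots, T_N=\Id$ using the partition. Set $K^{(j)}$ to be the kernel of the sub-isotopy from time $s_{j-1}$ to time $s_j$, so that $K = K^{(m)} \bullet_X \cdots \bullet_X K^{(1)}$ up to the shift-convolution bookkeeping of Sec.~\ref{kernelconv}. The point of using a fine partition is that each $K^{(j)}$ is $C^0$-close to the identity kernel $\gf_{\Delta_X}*_\Re\gf_{[0,\infty)}$ (the unit for $\bullet_X$ on $\cD(X\times X)$), so there are natural comparison maps between $K^{(j)}$ and the identity kernel — realized, as in the GKS interpolation argument, by restricting the big kernel of a slightly larger time-interval — whose cones are microsupported in $\Omega_{\leq 0}$, i.e. become torsion after the $*_\Re\gf_{[0,\infty)}$ truncation that defines $\cD$. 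Interpolating once more between $K^{(j)}$ and identity through an intermediate kernel $K^{(j)}_{1/2}$ produces the two arrows $t_k: T_{2k}\to T_{2k+1}$ and $s_k: T_{2k+2}\to T_{2k+1}$: one reads off $T_{2k}$ as convolution by $K^{(1)}\bullet\cdots\bullet K^{(k)}$-type partial composites, $T_{2k+1}$ by the composite with the intermediate kernel inserted, and the two transformations are induced by the two comparison maps $K^{(j)}\to K^{(j)}_{1/2}$ and identity $\to K^{(j)}_{1/2}$. Because convolution is exact and monoidal, $\cone(t_k(F))$ and $\cone(s_k(F))$ are obtained by convolving $F$ with the cones of those kernel maps, and those cones lie in $C_{\leq 0}(X\times X)$, hence convolving with them lands in $C_{\leq 0}(X)$; in $\cD(X)$ such an object is killed by some $\tau_{0c}$, i.e. is torsion. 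That gives item 3), and item 1) is immediate since the full composite of all the kernel steps is $K$ convolved back by its inverse, i.e. the identity kernel.

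The main obstacle is the first step: producing the kernel $K$ with the right $\cD$-microsupport bound and with enough functoriality in the time variable to extract the comparison maps and identify their cones as torsion. One must check that the GKS-type existence and uniqueness argument genuinely lives inside $\cD(X\times X)$ — that is, that the conification construction of Sec.~\ref{torsio}'s preamble turns the (non-conic, compactly-supported) Hamiltonian flow into a legitimate closed conic Lagrangian in $\Omega_{>0}$, and that the unbounded-derived-category microsupport calculus of \cite{KS} (openly embedding, proper pushforward, composition of kernels) suffices to propagate the bound through the convolutions. Everything after that — the zig-zag, the torsion-cone verification — is formal manipulation with $\bullet_X$, $*_\Re$, and the left-orthogonality criterion of Proposition~\ref{checkorthogonal}, using that convolution with a $C_{\leq 0}$ object is $C_{\leq 0}$ and that $C_{\leq 0}$ objects are torsion in $\cD$. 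I would also double-check the compatibility of the associativity isomorphism for $\bullet$ with the shift transformations $\tau_{dc}$ so that the $t_k,s_k$ really are natural transformations of the $T_n$ as endofunctors of $\cD(X)$, not merely pointwise maps.
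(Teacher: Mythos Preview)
Your overall architecture---build kernels in $\cD(X\times X)$, convolve, and link to the identity by a zig-zag with torsion cones---matches the paper's. But the paper does \emph{not} invoke a GKS-type quantization (which in fact postdates this work); instead it reduces to ``small'' symplectomorphisms admitting a generating function $S(x,p')$ in a single Darboux chart, builds the kernel $L_S$ explicitly from $S$ by a fiberwise pushforward of $\gf_{\{t+\Sigma_\pi\geq 0\}}$, and takes as intermediate object $L_{S_+}$ where $S_+=\max(S,0)$. The zig-zag for one small piece is simply $L_S\to L_{S_+}\leftarrow L_0$, with $L_0$ the identity kernel; the global zig-zag is obtained by composing these. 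So the decomposition is into generating-function pieces, not into small-time slices of an isotopy.

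More importantly, your mechanism for torsion is wrong. You assert that the cones of the comparison maps are microsupported in $\Omega_{\leq 0}$, i.e.\ lie in $C_{\leq 0}(X\times X)$. But an object of $C_{\leq 0}$ is \emph{zero} in $\cD$, not merely torsion; your argument would then force every $t_k$ and $s_k$ to be an isomorphism in $\cD(X)$, giving $T_1\cong T_N=\Id$ as endofunctors. That contradicts item~2), since $T_1$ genuinely moves microsupports from $L$ to $\Phi(L)$. The paper's actual reason the cones are torsion is different and concrete: the cones of $L_S\to L_{S_+}$ and $L_0\to L_{S_+}$ are supported in a \emph{bounded} strip $\{m\leq t\leq M\}$ with $m=\min S$, $M=\max S$, and any object with bounded $t$-support is torsion because its support becomes disjoint from that of its $T_{c*}$-shift for $c>M-m$, so $R\hom(G,T_{c*}G)=0$. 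If you want to run your GKS-style argument, you must replace the $C_{\leq 0}$ claim by an analogous bounded-support (or bounded-shift) estimate for the interpolation cones; the sketch you give does not supply one.
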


\subsubsection{Singular support of convolutions.} 
Let $A\in T^*X$ and
 $B\subset T^*(X\times Y)=T^*X\times T^*Y$
be compact subsets. Let $C\subset T^*Y$; 
$$
C:=A\conv B=\{p\in T^*Y|
\exists q\in A: (-q, p)\in B\}.$$
\subsubsection{Lemma}
\begin{Lemma}\label{podzhim}
 Let $A_1\supset A_2\supset \cdots\supset A_n\supset \cdots\supset
A$ be a collection of compact sets such that $\bigcap_i A_i=A$.
Let $U\supset C$ be an open neighborhood. There exists an $N>0$
such that for all $n>N$, $A_n\conv B\subset U$.
\end{Lemma}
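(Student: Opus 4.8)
The statement is a compactness assertion about the "composition" operation $\conv$: if the compact sets $A_n$ shrink down to $A$, then $A_n\conv B$ is eventually contained in any prescribed neighbourhood $U$ of $A\conv B$. I would prove this by a straightforward contradiction-and-sequential-compactness argument. First I would observe that $C=A\conv B$ is itself compact: it is the image under the (continuous, linear-on-fibres) projection $\mathrm{pr}_{T^*Y}$ of the compact set $\{(q,p)\in A\times T^*Y : (-q,p)\in B\}$, which is closed inside the compact $A\times \mathrm{pr}_{T^*Y}(B)$ — here I use that $B$ is compact so its projection to $T^*Y$ is compact, and $A$ is compact. Hence $C$ is compact and $U\supset C$ open makes sense as a genuine neighbourhood.

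\textbf{The contradiction argument.} Suppose the conclusion fails. Then for every $N$ there is some $n>N$ and a point $p_n\in (A_n\conv B)\setminus U$; passing to a subsequence I may assume I have indices $n_1<n_2<\cdots$ and points $p_{n_k}\in (A_{n_k}\conv B)\setminus U$. By definition of $\conv$, for each $k$ there is $q_k\in A_{n_k}$ with $(-q_k,p_{n_k})\in B$. Now $q_k\in A_{n_1}$ for all $k$ (the $A_i$ are decreasing), so $\{q_k\}$ lies in the compact set $A_{n_1}$; and $(-q_k,p_{n_k})\in B$ with $B$ compact forces $\{p_{n_k}\}$ to lie in the compact set $\mathrm{pr}_{T^*Y}(B)$. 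So after passing to a further subsequence I may assume $q_k\to q\in T^*X$ and $p_{n_k}\to p\in T^*Y$. Since each $q_k\in A_{n_j}$ for all $k\geq j$ and each $A_{n_j}$ is closed, the limit $q$ lies in $\bigcap_j A_{n_j}$. But $\bigcap_j A_{n_j}\supseteq \bigcap_i A_i=A$, and in fact $\bigcap_j A_{n_j}=A$ since the $A_i$ are nested (every $A_i$ contains some $A_{n_j}$), so $q\in A$. Meanwhile $(-q_k,p_{n_k})\to(-q,p)$ and $B$ is closed, so $(-q,p)\in B$. Therefore $p\in A\conv B=C\subset U$. But $p_{n_k}\notin U$ for all $k$ and $U$ is open, so its complement is closed and contains the limit $p$, i.e. $p\notin U$ — a contradiction.

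\textbf{Where the work really is.} There is essentially no hard step here; the only things to be careful about are (i) that all the relevant sets are genuinely compact so that sequential compactness applies — this rests entirely on the compactness hypotheses on $A$, the $A_i$, and $B$, and on $\conv$ being built from continuous maps — and (ii) the bookkeeping with nested subsequences, making sure the limit point $q$ actually lands in $\bigcap_i A_i$ and not merely in some $A_{n_j}$. The mild subtlety in (ii) is that one extracts a diagonal-type subsequence; since the $A_i$ are decreasing this is automatic, as noted above. No microlocal input is needed for this lemma — it is a purely point-set statement — so I would keep the proof to a few lines once the compactness of $C$ is recorded.
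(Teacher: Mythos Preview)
Your argument is correct and essentially identical to the paper's: both proceed by contradiction, extract a convergent subsequence from the compact $B$ (the paper does this in one step since $(-a_n,b_n)\in B$), show the limit lies in $A\conv B\subset U$, and contradict $p_{n_k}\notin U$. Your version is just more explicit about the bookkeeping (why $q\in\bigcap_i A_i$, why $C$ is compact), but the core idea is the same.
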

\begin{proof}  Assume not and pick points
$b_n\in (A_n\conv B)\backslash U$. One then has points 
$a_n\in A_n$ such that $(-a_n,b_n)\in B$. As $B$ is compact, one can choose a convergent subsequence $a_{n_k}\to a$ and $b_{n_k}\to b$.
It follows that $(-a,b)\in B$.
We see that $a\in A_{n_k}$ for all $k$, hence $a\in A$.
Therefore, $b\in C$. On the other hand, as $b_{n_k}\notin U$, $b\notin U$, we have
a contradiction.

\end{proof}
\subsubsection{}
Let $A,B,C$ are compact sets as above.
\begin{Proposition}\label{svert}  Let $F\in \cD_A(X)$; $K\in \cD_B(X\times Y)$.
Then $F\conv K\in \cD_C(Y)$.
\end{Proposition}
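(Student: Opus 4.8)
The plan is to estimate the microsupport of $F\conv K=Rp_{13!}(p_{12}^{-1}F\otimes p_{23}^{-1}K)$ (the maps $p_{ij}$ as in (\ref{pij}) with $X_1=\pt$, $X_2=X$, $X_3=Y$) by propagating microsupport bounds through inverse image, tensor product and the direct image with compact supports, the last of which is non-proper and is handled by Corollary \ref{impropersupport}. That $F\conv K$ lies in $\cD(Y)$ is already known from \ref{kernelconv}, so the only issue is the inclusion $\mS_\cD(F\conv K)\subset\cone(C)$.

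\emph{Shape of $\mS(F)$ and $\mS(K)$.} Since $A$ is compact, Lemma \ref{first} gives $\mS(F)\cap\Omega_{\leq 0}(X)\subset T^*_{X\times\Re}(X\times\Re)$; together with $\mS_\cD(F)\subset\cone(A)$ this shows, in the identification $T^*(X\times\Re)=T^*X\times\Re\times\Re$, that over $(X\smallsetminus\pi_X(A))\times\Re$ the microsupport of $F$ is the zero section, so $F$ is locally constant and hence, by the vanishing condition defining $\cD(X)$, zero there. Thus $\text{supp}(F)$ is compact in the $X$-direction and $\mS(F)\subset\{(x,k\omega,t,k)\mid k\geq 0,\ (x,\omega)\in A\}$. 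The same applied to $K\in\cD_B(X\times Y)$, writing a point of $B\subset T^*X\times T^*Y$ as $(x,\beta_X;y,\beta_Y)$, gives that $\text{supp}(K)$ is compact in the $X\times Y$-direction and $\mS(K)\subset\{(x,k\beta_X,y,k\beta_Y,t,k)\mid k\geq 0,\ (x,\beta_X;y,\beta_Y)\in B\}$.

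\emph{Pull-back and tensor product.} As $p_{12},p_{23}$ are submersions, the inverse-image rule yields
$$\mS(p_{12}^{-1}F)\subset\{(x,k\omega,y,0,t_1,k,t_2,0)\mid k\geq 0,\ (x,\omega)\in A\},$$
$$\mS(p_{23}^{-1}K)\subset\{(x,k\beta_X,y,k\beta_Y,t_1,0,t_2,k)\mid k\geq 0,\ (x,\beta_X;y,\beta_Y)\in B\}.$$
Equating a covector from the first set with the antipode of one from the second forces $k=0$ on both sides (look at the $t_1$- and $t_2$-components), so these two sets meet only along the zero section and the tensor-product rule applies: $\mS(p_{12}^{-1}F\otimes p_{23}^{-1}K)$ lies in the closure of
$$\{(x,k\omega+k'\beta_X,y,k'\beta_Y,t_1,k,t_2,k')\mid k,k'\geq 0,\ (x,\omega)\in A,\ (x,\beta_X;y,\beta_Y)\in B\}.$$
Now factor $p_{13}=\sigma\circ\rho\circ I$, with $I(x,y,t_1,t_2)=(x,y,t_1+t_2,t_2)$ a diffeomorphism, $\rho$ the projection forgetting the $X$-factor, and $\sigma:Y\times\Re\times\Re\to Y\times\Re$ forgetting the second $\Re$-factor. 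Transporting the bound through $I$ and then through $\rho$ — which is proper on the support because $p_{12}^{-1}F\otimes p_{23}^{-1}K$ is compactly supported in the $X$-direction, so the proper direct-image rule keeps only the points with zero $T^*X$-component — leaves an object on $Y\times\Re^2$ whose microsupport lies in the closure of
$$\{(y,k'\beta_Y,T_1,k,T_2,k'-k)\mid k,k'\geq 0,\ \exists\,(x,\omega)\in A,\ (x,\beta_X;y,\beta_Y)\in B:\ k\omega+k'\beta_X=0\}.$$
Since $\sigma$ forgets a vector-space direction, Corollary \ref{impropersupport} bounds $\mS(F\conv K)$ by the set of $(y,\eta,T_1,K)$ such that $(y,\eta,T_1,K,0)$ lies in the closure of the projection (forgetting $T_2$) of the last set. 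Fix such a point with $K>0$ and a realizing sequence: its $K$-component is $k_n\to K>0$ and its $T_2$-covector component is $k'_n-k_n\to 0$, so $k_n,k'_n$ are bounded and bounded away from $0$; by compactness of $A$ and $B$ pass to convergent subsequences $(x_n,\omega_n)\to(x_\infty,\omega_\infty)\in A$ and $(x_n,\beta_{X,n};y_n,\beta_{Y,n})\to(x_\infty,\beta_{X,\infty};y,\beta_{Y,\infty})\in B$; then $k_n\omega_n+k'_n\beta_{X,n}=0$ passes to the limit as $K(\omega_\infty+\beta_{X,\infty})=0$, i.e. $\beta_{X,\infty}=-\omega_\infty$, while $\eta=K\beta_{Y,\infty}$. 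Hence $(x_\infty,\omega_\infty)\in A$ and $(x_\infty,-\omega_\infty;y,\beta_{Y,\infty})\in B$, so $(y,\eta/K)=(y,\beta_{Y,\infty})\in A\conv B=C$. This is exactly $\mS_\cD(F\conv K)\subset\cone(C)$, which combined with $F\conv K\in\cD(Y)$ yields $F\conv K\in\cD_C(Y)$.

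\emph{Main obstacle.} The delicate step is the last one: $p_{13}$ is not proper, so one cannot take the naive image of the microsupport and is forced to pass to a closure. Corollary \ref{impropersupport}, applied after the diffeomorphism $I$ has separated the proper $X$-direction from the genuinely non-proper vector-space direction $t_2$, is precisely what controls this. The remaining work — and the only place where the hypotheses are actually used — is to check that this closure does not enlarge the bound, which is the compactness bookkeeping with the bounded sequences $k_n,k'_n$ and with $A$, $B$ above.
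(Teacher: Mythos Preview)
Your proof is correct and follows essentially the same route as the paper: bound the microsupports of $p_{12}^{-1}F$ and $p_{23}^{-1}K$ via Lemma~\ref{first}, apply the tensor-product rule, factor $p_{13}$ through a diffeomorphism, a proper projection killing the $X$-factor, and the remaining non-proper projection handled by Corollary~\ref{impropersupport}. The only substantive differences are cosmetic: (i) the paper leaves ``$Q$ is proper on the support'' unargued (relying on $X$ being compact, as assumed in Theorem~\ref{main}), whereas you supply the support-vanishing argument outside $\pi_X(A)$, which is a nice self-contained justification; (ii) for the closure step the paper packages the compactness bookkeeping into the auxiliary $A_\ve=[1-\ve,1+\ve]\cdot A$ and Lemma~\ref{podzhim}, while you run the convergent-subsequence argument directly --- the two are equivalent, yours is marginally more direct.
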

\begin{proof} It suffices to prove: {\em let $(y_0,\eta_0)\notin C$.
 Then
$F\conv K$ is nonsingular at $(y_0,t,\eta_0,1)$ for all $t\in \Re$.}

Let us identify of $T^*(X\times Y\times \Re\times \Re)=T^*X\times T^*Y\times T^*(\Re\times\Re)=T^*X\times T^*Y\times \Re^4$,
where we identify $T^*(\Re\times \Re)=\Re^4$ in the
 same way as above: a point $(t_1,t_2,k_1,k_2)\in \Re^4$ corresponds
to a 1-form $k_1dt_1+k_2dt_2$ at the point $(t_1,t_2)\in \Re^2$.

Let us estimate the microsupport of $F\conv K:=p_{13!}(p_{12}^{-1}F\otimes p_{23}^{-1}K)$,
where $p_{ij}$ are the same as in (\ref{pij}) with $X_1=\pt$; $X_2=X$; $X_3=Y$.
We have $p_{12}^{-1}F$ is microsupported within the set $S_F$
 consisting
of all points of the form
$$
(k_1\omega_1,0_y,t_1,t_2,k_1,0),
$$
where $0_y\in T^*_YY$,
 $(x,\omega_1) \in A$; $k_1\geq 0$ (as follows from Lemma \ref{first}). Analogously,
The sheaf $p_{23}^{-1}K$ is microsupported on the set $S_K$ consisting
of all points of the form
$$
(k_2\omega_2,k_2\eta_2,t_1,t_2,0,k_2),
$$
where $k_2\geq 0$, $(\omega_2,\eta_2)\in B$.

One sees that $S_K\cap -S_F\subset 
T^*_{X\times Y\times \Re\times \Re}(X\times Y\times \Re\times \Re)
$. Therefore, $p_{12}^{-1}F\otimes p_{23}^{-1}K$ is microsupported
within the set
of all points of the form
$$
(k_1\omega_1+k_2\omega_2,k_2\eta_2,t_1,t_2,k_1,k_2),
$$
where $k_1,k_2\geq 0$; $\omega_1\in A$; $(\omega_2,\eta_2)
\in B$.

Let $Q:X\times Y\times \Re\times \Re\to Y\times \Re\times \Re$,
$a:Y\times \Re\times \Re\to Y\times \Re$ be given by
$$
Q(x,y,t_1,t_2)=(y,t_1,t_2);
$$
$$
a(y,t_1,t_2)=(y,t_1+t_2)
$$
so that $p_{13}=aQ$.

We see that the map $Q$ is proper
on the support of 
$p_{12}^{-1}F\otimes p_{23}^{-1}K$.
It then follows that 
the sheaf $\Psi:=RQ_!(p_{12}^{-1}F\otimes p_{23}^{-1}K)$
is microsupported on the set $S_Q$ of all points 
$$
(k_2\eta_2,t_1,t_2,k_1,k_2)
$$
such that $k_1,k_2\geq 0$ and there exist  $\omega_1\in A$,
$(\omega_2,\eta_2)\in B$ such that $\omega_1$ and $\omega_2$ are
in the same fiber of $T^*X$ and
$k_1\omega_1+k_2\omega_2=0$ 

Let us now  estimate the mircosupport of $Ra_!\Psi$. We will use
Lemma \ref{impropersupport}.  Let us use an isomorphism
$I:Y\times \Re\times \Re\to Y\times \Re\times \Re$, where
$$
I(y,t_1,t_2)=(y,t_1+t_2;t_2).
$$
Let $p_2:Y\times \Re\times \Re\to Y\times \Re$ be given by
$p_2(y,t_1,t_2)=(y,t_1)$ so that we have
$$
a=p_2I
$$
and $Ra_!\Psi=Rp_{2!}I_!\Psi$. 
We see that the sheaf $I_!\Psi$ is microsupported
on the set  $\Gamma_1$ consisting of all points of the form
$$
(k_2\eta_2,t_1,t_2,k_1,k_2-k_1)
$$
such that $k_1,k_2\geq 0$ and there exist $\omega_1\in A$,
$(\omega_2,\eta_2)\in B$ such that $\omega_1$ and $\omega_2$ are 
in the same fiber of $T^*X$ and
$k_1\omega_1+k_2\omega_2=0$.
  Let us now use Lemma \ref{impropersupport} in order
to estimate $\mS Rp_{2!}I_!\Psi$.
Let $\eta\in T^*Y$; $\eta\notin C$. We need to show that
$Rp_{2!}I_!\Psi$ is non-singular at any point of the form
$$
(\eta,t,1)\in T^*Y\times \Re\times \Re=T^*Y\times T^*\Re.
$$
 Assuming the contrary, for any $\delta>0$
 there should exist a point 
$(k_2\eta_2,T_1,T_2,k_1,k_2-k_1)\in \Gamma_1$ such that
$|\eta-k_2\eta_2|<\delta$ and $|k_1-1|,|k_2-k_1|<\delta$.  Given $\ve>0$, one can choose
$\delta>0$ such that under the conditions specified,
 $|1-k_1/k_2|<\ve$.
 Let $A_\ve=[1-\ve,1+\ve].A$'.
We then see that there should exist $\omega_2\in T^*X$ such that
 $(\omega_2,\eta_2)\in B$
and  $-\omega_2\in A_\ve$ (because $-\omega_2=k_1/k_2\omega_1$
and $\omega_1\in A$). Thus, $\eta_2\in A_{\ve}\conv B$.
We see that the sets $A_{1/n}$, $n=1,2,\ldots$ are compact
and $\bigcap_n A_{1/n}=A$. Let $U\supset C$ be an open neighborhood.

 By Lemma \ref{podzhim}, there exists an 
$N$ such that $A_{1/N}\conv B\subset U$ i.e. for all  $\ve\leq 1/N$
we have $\eta_2\in U$. Taking into account the inequality $|\eta-k_2\eta_2|<\delta$ and letting
$\delta$ arbitrarily small, we see that $\eta\in U$. As $U$
is any open neighborhood of $C$, we conclude $\eta\in C$.
 We  get a contradiction.
\end{proof}

\subsubsection{}\label{reduct} If $\Phi=\Phi_1\Phi_2\cdots \Phi_N$ and the statement of the Theorem is true for each $\Phi_k$, it is true for $\Phi$.
In other words, if $Z$ is the set of Hamiltonian symplectomorphisms
of $T^*X$ which are identity outside of a compact and if $Z$ generates the whole group of Hamiltonian symplectomorphisms of $T^*X$ which are
idenitity outside a compact, then it suffices to prove Theorem 
for all $\Phi\in Z$. 

Let us now choose an appopriate $Z$.  Call a symplectomorphism 
$\Phi:X\to X$ {\em small}
if

1) There exists a Darboux chart $U\subset T^*X$ with Darboux 
coordinates $x,P$, where $x^i$ are local coordinates on $x$,
 $P^i=\partial/\partial x^i$, $|x^i|<1$ and for some fixed 
\begin{equation}\label{refpi}
\pi\in \Re^n,
\end{equation}

 $|P^i-\pi^i|<1$ for all $i$. Let $p^i:=P^i-\pi^i$.
For $x\in \Re^n$ we set $|x|:=\text{max}_i |x_i|$.
We demand that  $\Phi$ should be identity
outside  a subset $V\subset U$, $|x|<1/2,|p|<1/2$.
2)  let $(x',p')=\Phi(x,p)$.  Then $(x,p')$ form a non-degenerate
coordinate system on $U$ so that $(x,p')$ map $U$ diffeomorphically
onto a domain $W\subset \Re^{2n}$. 

It is well known that the set $Z$ formed by small symplectomorphisms
satisfies the conditions. 

\subsubsection{Small symplectomorphisms in terms of generating functions}
\def\oPhi{\overline{\Phi}}
The coordinates $(x,p)$  define an embedding $U\subset \Re^{2n}$. 
Let us extend $\Phi|_U$ to a map $\oPhi:\Re^{2n}\to \Re^{2n}$ by
setting $\oPhi(x,p_0)=(x,p_0)$ for all $(x,p_0)\notin U$. We  see
that $\oPhi$ is a diffeomorphism because it maps
$U$ diffeomorphically to itself, as well as the complement
to $U$. Hence, $\oPhi:\Re^{2n}\to \Re^{2n}$ is a symplectomorphism
with respect to the standard symplectic structure. 

As above let $\oPhi(x,p)=(x',p')$. Let $\Psi:\Re^{2n}\to \Re^{2n}$
where $\Psi(x,p)=(x,p')$. 

\begin{Lemma}\label{diffeopsi}
 $\Psi$ is a diffeomorphism.
\end{Lemma}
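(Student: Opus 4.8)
The plan is to show $\Psi$ is a diffeomorphism by proving it is a bijective local diffeomorphism. Recall $\Psi(x,p)=(x,p')$ where $\oPhi(x,p)=(x',p')$, and $\oPhi$ is the identity outside the small chart $U$ (more precisely outside $V\subset U$ with $|x|<1/2,|p|<1/2$). So outside $V$ we have $\Psi=\Id$, which is already a diffeomorphism onto its image; the whole problem is concentrated on $V$ (or $U$). The key input is Condition 2) in the definition of a small symplectomorphism, Section \ref{reduct}: $(x,p')$ form a non-degenerate coordinate system on $U$, i.e. $\Psi|_U$ is a diffeomorphism of $U$ onto a domain $W\subset\Re^{2n}$.

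First I would verify that $\Psi$ is a local diffeomorphism everywhere. At a point outside $U$ this is clear since $\Psi$ is the identity near it (as $\oPhi$ is the identity outside $V$, and $V$ is relatively compact in $U$, a neighborhood of any point outside $U$ maps identically). At a point of $U$ it is immediate from Condition 2), since $\Psi|_U:U\to W$ is a diffeomorphism. Hence $d\Psi$ is invertible at every point of $\Re^{2n}$, so $\Psi$ is an open map and a local diffeomorphism.

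Next I would prove $\Psi$ is injective. Suppose $\Psi(x_1,p_1)=\Psi(x_2,p_2)=(x,p')$. Then $x_1=x_2=x$ automatically (the first coordinate is untouched), so write $q_i=p_i$ and we need $p_1=p_2$. If both $(x,p_1),(x,p_2)$ lie in $U$, injectivity follows from Condition 2). If neither lies in $V$, then $\oPhi$ fixes both, so $p'=p_1$ and $p'=p_2$, giving $p_1=p_2$. The remaining case — one point, say $(x,p_1)$, in $V$ and the other, $(x,p_2)$, outside $V$ — is the one needing a short argument: since $(x,p_2)\notin V$ we get $p'=p_2$, i.e. the $\oPhi$-image of $(x,p_1)$ has second coordinate $p_2$ with $(x,p_1)\in V$ so $|p_1|<1/2$; but one checks that $\oPhi$ maps $V$ into $U$, so $p'$ satisfies $|p'|<1$, and moreover on the complement of $V$ within $U$ one has $p'=p$, so the values $p'=\Psi_2(x,\cdot)$ taken on $V$ and the values taken outside $V$ but inside $U$ must be disjoint except possibly on the overlap region; since $\Psi|_U$ is injective by Condition 2), $(x,p_1)\in U$ and $(x,p_2)\in U$ already forces $p_1=p_2$. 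If instead $(x,p_2)\notin U$ entirely, then $\oPhi$ fixes it so $p'=p_2$ with $|p_2|\ge 1$ or $|x|\ge 1$; but $(x,p_1)\in V\subset U$ forces $|x|<1/2$, and $\oPhi(x,p_1)\in U$ forces $|p'|<1$, contradicting $|p_2|\ge1$; hence this case is vacuous. So $\Psi$ is injective.

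Finally, $\Psi$ is surjective onto $\Re^{2n}$: it is the identity on the (open, connected at infinity) complement of $V$, hence its image contains that complement, and $\Psi(U)=W$ contains $\Psi(V)$; since $\Psi$ is an open map and $\Re^{2n}$ is connected, and the image is closed on the bounded region by the properness of $\Psi$ on $\overline{V}$ (a compact set), the image is all of $\Re^{2n}$. Being a bijective local diffeomorphism, $\Psi$ is a diffeomorphism. The main obstacle is bookkeeping the boundary/overlap case in the injectivity argument — making precise that $\oPhi$ carries $V$ into $U$ and that the "identity part" and the "perturbed part" of $\Psi$ on $U$ cannot collide, which I would handle cleanly by simply invoking Condition 2) for any pair of points both lying in $U$, and the identity property for any pair both lying outside $V$, and ruling out the mixed case by the size constraints $|x|<1/2$, $|p|<1/2$ on $V$ versus $|x|<1$, $|p|<1$ on $U$.
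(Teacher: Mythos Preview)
Your proof is correct and, for the local-diffeomorphism and injectivity parts, essentially the same as the paper's: you split according to whether the two preimages lie in $U$ (where Condition~2) applies) or outside $V$ (where $\oPhi=\Id$), and handle the mixed case by the size constraint $\oPhi(V)\subset U$. The paper organizes the same case analysis by the values of $|x|$ and $|p_i|$, but the content is identical.

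The surjectivity argument is where you diverge. The paper proves surjectivity by a homotopy/degree argument: if some point $(x_0,p_0)$ were missed, then a large sphere $S_R$ on which $\Psi=\Id$ could be contracted in $\Re^{2n}\setminus\{(x_0,p_0)\}$ via $\Psi\circ\gamma$ for any contraction $\gamma$ of $S_R$ in $\Re^{2n}$, contradicting the fact that $S_R$ encloses $(x_0,p_0)$. Your approach instead exploits that $\Psi$ is a local diffeomorphism which equals the identity outside the compact $\overline{V}$, hence is a proper map; its image is therefore both open and closed in the connected space $\Re^{2n}$. Your wording (``the image is closed on the bounded region by the properness of $\Psi$ on $\overline{V}$'') is a little loose --- the clean statement is that $\Psi$ is proper because $\Psi^{-1}(K)\subset K\cup\overline{V}$ for any compact $K$, so the image is closed --- but the argument is valid and arguably more direct than the paper's topological one.
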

\begin{proof}
a) $\Psi$ has a non-zero Jacobian everywhere.
Indeed, if $|x|<1,|p|<1$ this is postulated by 2); otherwise
$\Psi=\Id$ in a neighborhood of $(x,p)$.

b) $\Phi$ is an injection. 
Suppose $\Psi(x_1,p_1)=\Psi(x_2,p_2)$. Then $x_1=x_2=x$ and
$p'(x,p_1)=p'(x,p_2)$. Consider several cases:

1)    $|x|\geq 1$, then $p'(x,p_1)=p_1$; $p'(x,p_2)=p_2$ and
$p_1=p_2$; 

2) $|x|<1$; $|p_1|< 1$.  If $|p_2| <1$, then
$p_1=p_2$ by Condition 2). If $|p_2|\geq 1$, then
$p'(x,p_2)=p_2$; $|p'(x,p_2)|\geq 1$ and $|p'(x,p_1)|<1$ 
because $\oPhi$ preserves $U$, so $p'(x_1,p_1)\neq p'(x_2,p_2)$;
 
3) $|x|<1$ and $|p_2|<1$--- similar to 2);

4) $|x|<1$ and  $|p_1|,|p_2|=1$. Then 
$p'(x,p_i)=p_i$, therefore $p_1=p_2$.

c) $\Psi$ is surjective.  We know that $\Psi(x,p)=(x,p)$ if $|x|>1$ or $|p|>1$.  Assume that,
on the contrary,
$(x_0,p_0)$ does not belong to the image of $\Psi$.  It follows that $|x_0|<1$; $|p_0|<1$. 
For $R>0$ consider the sphere $S_R$ given by the equation $\sum_i (x^i)^2+\sum_i(p^i)^2=R^2.$ 
Choose $R$  so large  that $(x,p)\in S_R$ implies $|x|>1$ or $|p|>1$.  We then  have
$\Psi|_{S_R}=\Id$.  It also follows $S_R$ cannot be homotopized to a point in
$\Re^{2n}\backslash (x_0,p_0)$ (because $(x_0,p_0)$ is  inside the open ball bounded by $S_R$).
 On the other hand it can: Let $\gamma:S_R\times [0,1]\to \Re^{2n}$
be any homotopy which contracts $S_R$ to a point. Then $\Psi\circ \gamma$ is a required homotopy.
This is a contradiction.
\end{proof}

\begin{Lemma}\label{generating} There exists a   smooth function $S(x,p')$ on $\Re^{2n}$ such that

1) $(x',p')=\oPhi(x,p)$ iff for all $i$:
$$
p^i=(p')^i+\frac{\partial S}{\partial x^i};
$$
$$
(x')^i=x^i+\frac{\partial S}{\partial (p')^i};
$$

2) $S=0$ if $|x|\geq 1/2$ or $|p'|\geq 1/2$;

$$
3)\quad \max_{|x|\leq 1/2,|p'|\leq 1/2} |x^i+\partial S/\partial (p')^i|\leq 1/2
$$

\end{Lemma}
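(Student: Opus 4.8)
The plan is to obtain $S$ as a generating function of $\oPhi$ in the coordinates $(x,p')$, exploiting that $\Psi(x,p)=(x,p')$ is a \emph{global} diffeomorphism of $\Re^{2n}$ (Lemma \ref{diffeopsi}), and then to read off properties 2) and 3) from the fact that $\oPhi$ and $\Psi$ are the identity outside the cube $V=\{|x|<1/2,\ |p|<1/2\}$. Write $\oPhi(x,p)=(x',p')$. Since $\Psi$ is a diffeomorphism I use $(x,p')$ as global coordinates on $\Re^{2n}$ and regard $p$ and $x'$ as smooth functions of $(x,p')$: namely $(x,p)=\Psi^{-1}(x,p')$ and then $(x',p')=\oPhi(x,p)$.

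First I would form the $1$-form $\beta:=\sum_i p_i\,dx^i+\sum_i (x')^i\,dp'_i$ on $\Re^{2n}\times\Re^{2n}$, with coordinates $(x,p)$ on the first factor and $(x',p')$ on the second. One checks directly that $d\beta$ pulled back to the graph $\{(x,p,\oPhi(x,p))\}$ vanishes if and only if $\oPhi$ preserves the standard symplectic form; so the graph is $d\beta$-isotropic, hence $\beta$ restricts to a closed $1$-form on it. Transporting along $(x,p')\mapsto(x,p(x,p'),x'(x,p'),p')$, this means the $1$-form $\alpha:=\sum_i p_i\,dx^i+\sum_i (x')^i\,dp'_i$ (now with $p,x'$ viewed as functions of $(x,p')$) is closed on $\Re^{2n}$. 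Subtracting the exact form $d\big(\sum_i x^i p'_i\big)$ gives the closed $1$-form $\alpha-d\big(\sum_i x^i p'_i\big)=\sum_i(p_i-p'_i)\,dx^i+\sum_i\big((x')^i-x^i\big)\,dp'_i$, which is exact since $\Re^{2n}$ is simply connected; write it as $dS$ with $S=S(x,p')$, unique up to an additive constant. Comparing coefficients gives exactly the two identities of 1): $p^i=(p')^i+\partial S/\partial x^i$ and $(x')^i=x^i+\partial S/\partial (p')^i$. For the converse in 1), I would note that $\Psi^{-1}(x,p')=(x,p)$ with $p_i=p'_i+\partial S/\partial x^i$, and this is a diffeomorphism, so for fixed $(x,p)$ there is a unique $p'$ with $p_i=p'_i+\partial S/\partial x^i$, and then a unique $x'$; this unique solution is by construction $\oPhi(x,p)$.

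It remains to arrange 2) and 3). Both $\oPhi$ and $\Psi$ are diffeomorphisms of $\Re^{2n}$ equal to $\Id$ outside the bounded open set $V$; being bijections, each maps $V$ onto $V$, and, being homeomorphisms, each maps $\overline{V}$ onto $\overline{V}$. On $\Re^{2n}\setminus V$ the coordinates $(x,p')$ coincide with $(x,p)$ and $\oPhi=\Id$, so $p=p'$ and $x'=x$, whence $dS=0$ there; since $2n\ge 2$ the complement $\Re^{2n}\setminus V$ is connected, so after subtracting the appropriate constant $S\equiv 0$ on it, i.e.\ $S=0$ whenever $|x|\ge 1/2$ or $|p'|\ge 1/2$ --- this is 2). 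For 3), on the cube $\{|x|\le 1/2,\ |p'|\le 1/2\}$ one has $x^i+\partial S/\partial (p')^i=(x')^i$; the $\Psi$-preimage of this cube is $\overline{V}=\{|x|\le 1/2,\ |p|\le 1/2\}$, and $\oPhi(\overline{V})=\overline{V}$ forces $|x'|=\max_i|(x')^i|\le 1/2$, which is 3). The only genuinely delicate points here are bookkeeping: choosing the primitive $\beta$ so that the relations come out in precisely the normalization demanded by 1) (the signs of $p-p'$ and $x'-x$), and the elementary topological fact that a diffeomorphism of $\Re^{2n}$ equal to the identity outside $V$ must preserve $V$ and $\overline{V}$ --- this is what pins down the support of $S$ in 2) and gives the estimate in 3). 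Everything else is the classical generating-function computation.
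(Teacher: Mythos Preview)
Your proof is correct and follows essentially the same approach as the paper's: construct $S$ as a primitive of the closed $1$-form $\sum p^i\,dx^i+\sum (x')^i\,dp'_i - d\langle x,p'\rangle$ in the global coordinates $(x,p')$ furnished by Lemma~\ref{diffeopsi}, then use that $\oPhi$ and $\Psi$ are bijections equal to the identity outside $V$ (hence preserve $V$ and $\overline{V}$) to get 2) and 3). You are somewhat more explicit than the paper about why the $1$-form is closed and about the set-preservation arguments, but the substance is the same.
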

\begin{proof}
 Consider
the following 1-form  on $\Re^{2n}$:
$
\sum p^idx^i +\sum (x')^i d(p')^i.
$
This form is closed, hence  exact. So one can write
$$
\sum p^idx^i +\sum (x')^id(p')^i=d(S(x,p')+<x,p'>)
$$
by virtue of Lemma \ref{diffeopsi}.
This equation is equivalent to the part 1) of this Lemma.

We know that   $\oPhi=\Id$ if $|x|\geq 1/2$ or $|p|\geq 1/2$.  
Therefore,
$\oPhi$ , being bijective, preserves the region 
$\{(x,p)| |x|,|p|< 1/2\}$. Therefore, if
$|p'(x,p)|\geq 1/2$, then  either $|x|\geq 1/2$ or $|p|\geq 1/2$,
 hence $p'(x,p)=p$; $x'(x,p)=x$ and $dS(x,p')=0$ as soon
as $|x|\geq 1/2$ or $|p'|\geq 1/2$. As the specified region
is connected, $S$ is a constant in this region, and one can
choose $S$ to be 0 as long as $|x|\geq 1/2$ or $|p'|\geq 1/2$. This proves 2).

It also follows that if $|x|\leq 1/2$ and $|p'(x,p)|\leq 1/2$ then 
$|p|\leq 1/2$, because otherwise
$\Phi(x,p)=(x,p)$ and $p'=p$, which is a contradiction.
 This implies 3).
\end{proof} 

\subsubsection{}
Let $J$ be the set of all smooth functions $S(x,p')$ on $\Re^{2n}$
such that $S$ is supported on the set 
$\{(x,p')| |x|\leq 1/2, |p'|\leq 1/2\}$ and  the inequality 3) from Lemma \ref{generating} is satisfied.
Our ultimate goal is: given such an $S$, we would like to construct  certain kernels in $\cD(\Re^n\times \Re^n)$
and then  $\cD(X\times X)$.

Let $\pi\in \Re^n$ (this parameter has the same meaning as in (\ref{refpi}).  Let
$S\in J$.  We will start with constructing an appropriate object $\Lambda_{S,\pi}\in \cD(\Re^n\times\Re^n)$
and estimating its microsupport.

 Let $\Sigma_\pi(x_1,x_2,p'):=-S(x_1,p')-<x_1-x_2,p'+\pi>$.
We can decompose $$
d\Sigma_\pi=d_{x_1}\Sigma_\pi+d_{x_2}\Sigma_\pi+d_{p'}\Sigma_\pi.
$$ 

Let $\Gamma_\pi(S)\subset T^*\Re^n\times T^*\Re^n$ consist
of all points $(x_1,p_1,x_2,p_2)$ satisfying:
there exists $p'$ such that $d_{p'}\Sigma_\pi(x_1,x_2,p')=0$ and
$p_1=d_{x_1}\Sigma_\pi(x_1,x_2,p')$; $p_2=d_{x_2}\Sigma_\pi(x_1,x_2,p')$.

{\bf Remark.} Let us take $S$ as in Lemma \ref{generating}.  The set $\Gamma_\pi(S)$ then consists
of all points $(x_1,P_1,x_2,P_2)$ such that $\oPhi(x_1;-P_1-\pi)=(x_2,P_2-\pi)$. That is,
if $|P_1+\pi|<1$, then $(x_2,P_2)=\Phi(x_1,-P_1)$; if $|P_1+\pi|\geq 1$, then $x_2=x_1$, $P_2=-P_1$, where we use 
notation from Sec \ref{reduct}.

We are now passing to constructing an object $\Lambda_{S,\pi}\in \cD(\Gamma_\pi(S))$.
Consider the following subset
$C_{S,\pi}\subset \Re^n\times \Re^n\times \Re^n\times \Re$;
$$
\{(x_1,x_2,p',t,)| t+\Sigma_\pi\geq 0\},
$$

Let $q:\Re^n\times \Re^n\times \Re^n\times \Re\to 
\Re^n\times \Re^n\times \Re$ be given by
$$
q(x_1,x_2,p,t)=(x_1,x_2,t).
$$

Set $\Lambda_{S,\pi}:=Rq_! \gf_{C_{S,\pi}}$. 

\begin{Lemma}\label{nositelL} Assume $S\in J$. Then $\Lambda_{S,\pi}\in \cD_{\Gamma_\pi(S)}(\Re^n\times \Re^n)$.
\end{Lemma}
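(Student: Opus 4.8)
The statement has two parts: that $\Lambda_{S,\pi}$ lies in $\cD(\Re^n\times\Re^n)$, and that its $\cD$-microsupport is contained in $\Cone(\Gamma_\pi(S))$. For the first part, I would argue exactly as in Section \ref{kernelconv}: the defining subset $C_{S,\pi}$ is of the form $\{t+\Sigma_\pi\geq 0\}$, i.e.\ an ``epigraph'' over $\Re^n\times\Re^n\times\Re^n$, so $\gf_{C_{S,\pi}}$ is visibly in $\cD(\Re^n\times\Re^n\times\Re^n)$ (the criterion of Proposition \ref{checkorthogonal} is immediate: convolving with $\gf_{[0,\infty)}$ over the last coordinate changes nothing since the set is already upward-closed in $t$). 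Then $\Lambda_{S,\pi}=Rq_!\gf_{C_{S,\pi}}$ is a pushforward of a $\cD$-object along the projection $q$ that forgets the $p'$-variable; pushing forward along such a projection (which is $\bullet_{\pt}$-type, or can be handled directly) preserves $\cD(X)$, again by the argument of \ref{kernelconv}. So membership in $\cD(\Re^n\times\Re^n)$ is essentially formal.

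The real content is the microsupport bound. The plan is a standard Kashiwara--Schapira computation. First, compute $\mS(\gf_{C_{S,\pi}})$: since $C_{S,\pi}$ is the closed region $\{(x_1,x_2,p',t): \Phi(x_1,x_2,p')\geq 0\}$ where $\Phi:=t+\Sigma_\pi$ is a smooth function with $d\Phi\neq 0$ on its zero locus (the $dt$-component of $d\Phi$ is $1$), $\gf_{C_{S,\pi}}$ is the constant sheaf on a domain with smooth boundary, so $\mS(\gf_{C_{S,\pi}})$ is the union of the zero section over $C_{S,\pi}$ and the exterior conormal rays $\{-\lambda\, d\Phi : \lambda\geq 0\}$ along the boundary $\{\Phi=0\}$. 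Explicitly these are the points $(x_1,x_2,p',t;\, \lambda d_{x_1}\Sigma_\pi,\ \lambda d_{x_2}\Sigma_\pi,\ \lambda d_{p'}\Sigma_\pi,\ \lambda)$ with $\lambda\geq 0$ and $t+\Sigma_\pi=0$ when $\lambda>0$. Next, apply the pushforward estimate $\mS(Rq_!F)\subset \widehat{q}{}^{-1}\,{}^t q'\,\mS(F)$ from \cite{KS} (the rule for $Rq_!$ with $q$ proper on the support would require properness, which fails here because $p'$ ranges over all of $\Re^n$ — so I will instead use Corollary \ref{impropersupport}/Lemma \ref{impropernonsingular}, exactly as in the proofs of Theorems \ref{Fouriersupport} and \ref{disjoint}, to handle the non-proper projection in the $p'$-direction). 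The condition $d_{p'}\Sigma_\pi=0$ from $\mS(\gf_{C_{S,\pi}})$ is precisely the constraint cutting out $\Gamma_\pi(S)$, and it survives the pushforward: the surviving covectors are $(x_1,x_2,t;\,\lambda d_{x_1}\Sigma_\pi,\ \lambda d_{x_2}\Sigma_\pi,\ \lambda)$ with $d_{p'}\Sigma_\pi=0$, which by the definition of $\Gamma_\pi(S)$ and of $\Cone$ is exactly a point of $\Cone(\Gamma_\pi(S))$ (here I use that the $dt$-coefficient $\lambda$ is $\geq 0$, and the locus $\lambda=0$ only contributes the zero section, which is harmless).

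The step I expect to be the main obstacle is the non-properness of $q$ in the $p'$-variable: one cannot naively apply the clean pushforward microsupport bound, and one must instead invoke Lemma \ref{impropernonsingular} to show non-singularity at the relevant covectors. Concretely, for a point $(x_1^0,x_2^0,t^0;\xi_1,\xi_2,k)$ that is \emph{not} in $\Cone(\Gamma_\pi(S))$, one has to produce a neighborhood $U$ of the base covector and a neighborhood $V$ of $0$ in the $p'$-dual such that $\gf_{C_{S,\pi}}$ is non-singular on $U\times\Re^n_{p'}\times V$; this amounts to checking that no boundary conormal covector of $C_{S,\pi}$ with small $p'$-conormal component projects near $(\xi_1,\xi_2,k)$ — which is where the defining property of $\Gamma_\pi(S)$ ($d_{p'}\Sigma_\pi=0$) and a compactness/closedness argument (using $S\in J$, so $S$ has compact support, hence all relevant data lies in a compact set and $\Gamma_\pi(S)$ is closed) are used, just as in the proof of Theorem \ref{Fouriersupport}. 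Once this non-singularity input is in place, Lemma \ref{impropernonsingular} gives non-singularity of $Rq_!\gf_{C_{S,\pi}}=\Lambda_{S,\pi}$ at the covector, completing the bound $\mS_\cD(\Lambda_{S,\pi})\subset\Cone(\Gamma_\pi(S))$, i.e.\ $\Lambda_{S,\pi}\in\cD_{\Gamma_\pi(S)}(\Re^n\times\Re^n)$.
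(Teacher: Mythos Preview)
Your approach is correct and takes a somewhat different route from the paper. The paper does not invoke Lemma \ref{impropernonsingular}/Corollary \ref{impropersupport} here; instead it truncates the fiber variable, writing $\Lambda_{S,\pi}=L\varinjlim_{C\to\infty}F_C$ with $F_C:=Rq_!\gf_{\{t+\Sigma_\pi\geq 0,\ |p'|<C\}}$. Each $F_C$ is then a pushforward along a map that \emph{is} proper on the support, so the standard proper bound applies cleanly; the extra conormal contributions from the artificial boundary $|p'|=C$ are shown (using that $\Sigma_\pi=-\langle x_1-x_2,p'+\pi\rangle$ once $|p'|>1/2$) to satisfy $|\omega_i/k|\geq C-|\pi|$ and hence escape to infinity as $C\to\infty$, leaving only the zero section and $\Cone(\Gamma_\pi(S))$.

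Your direct use of the improper-pushforward estimate is arguably cleaner, but you should be more precise at exactly the step you flag as the obstacle. The reason a putative singular covector with $k>0$ forces $p'$ to stay bounded is not ``$S$ has compact support'' per se, but the exact identity $d_{x_2}\Sigma_\pi=p'+\pi$: if $k_n\to k>0$ and $k_n\,d_{x_2}\Sigma_\pi\to\xi_2$, then $p'^{\,n}\to\xi_2/k-\pi$. Once $p'^{\,n}$ converges you pass to the limit by continuity, and the hypothesis that the $p'$-dual component $k_n\,d_{p'}\Sigma_\pi\to 0$ yields $d_{p'}\Sigma_\pi=0$ at the limit, i.e.\ the point lies in $\Gamma_\pi(S)$. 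The paper's truncation argument uses essentially the same identity (to push the $|p'|=C$ contributions away), so the two proofs share the same computational core but package the non-properness issue differently: you absorb it into Corollary \ref{impropersupport}, the paper sidesteps it by cutting off and taking a colimit.
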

\begin{proof}
It is straightforward to check that $\Lambda_{S,\pi}$ is in the 
left orthogonal complement to $C_{\leq 0}(\Re^n\times \Re^n)$.

Let us now estimate the microsupport of $\Lambda_{S,\pi}$.  
Let us choose a large positive number $C$ and consider objects
$$
F_C:=Rq_! \gf_{\{(x_1,x_2,p',t)| t+\Sigma_\pi(x_1,x_2,p')\geq 0; |p'|< C\}}
$$
so that $\Lambda_{S,\pi}=L\limdir_{C\to \infty} F_C$.

We will prove: {\em
let $(x_1,x_2,t,\omega_1,\omega_2,k)\in T^*( \Re^n\times \Re^n\times \Re)$
be a singular point of $F_C$. Then  one of the following 3 statements
is true:

--- $k=\omega_1=\omega_2=0$;

--- $k>0$ and $|\omega_i/k|\geq C-|\pi|$, $i=1,2$.

---$(x_1,x_2,\omega_1,\omega_2)\in \Gamma_\pi(S)$}

This implies the Lemma as $C$ can be chosen arbitrarily large.

Let us estimate the microsupport of the sheaf
$$
\gf_{\{(x_1,x_2,p',t)| t+\Sigma_\pi(x_1,x_2,p')\geq 0; |p'|<C
\}}
$$

we see that it is contained within the set of all points of the form
$$
(x_1,x_2,p',t,kd\Sigma_\pi(x_1,x_2,p')+\sum a_i d(p')^i),
$$
where $|p'|\leq C$ and 
 $a_i\leq 0$ if $(p')^i=-C$; $a_i=0$ if $|(p')^i|<C$, and $a_i\geq 0$ if
$(p')^i=C$; also, $k\geq 0$ and if $k>0$, then $t+\Sigma_\pi(x_1,x_2,p')=0$.

Let us now estimate the singular support of the sheaf
$$
Rq_!\gf_{\{(x,x',p,t)| t+\Sigma_\pi(x_1,x_2,p')\geq 0; |p'|< C\}}.
$$
As  $q$ is proper on the support of this sheaf,
 we see
that
$$
Rq_!\gf_{\{(x,x',p,t)| t+\Sigma_\pi(x_1,x_2,p'); |p'|< C\}}
$$
is microsupported on the set of points
$$
(x_1,x_2,t,\omega_1,\omega_2,k),
$$
where there exists $p', |p'|\leq C$ such that
 
\begin{equation}\label{varomega}
\omega_i= kd_{x_i}\Sigma_\pi(x_1,x_2,p')
\end{equation}
where $k\geq 0$ and if $k>0$ then there exists  $p'$ such that
$$
\frac{\partial \Sigma_{\pi}}{\partial (p')^i}(x_1,x_2,p')\geq 0
\text{ if  } (p')^i=-C;$$

\begin{equation}\label{varpshtrih}
\frac{\partial \Sigma_{\pi}}{\partial (p')^i}(x_1,x_2,p')= 0
\text{ if  } |(p')^i|<C;
\end{equation}

$$
\frac{\partial \Sigma_{\pi}}{\partial (p')^i}(x_1,x_2,p')\leq 0
\text{ if  } (p')^i=C.
$$

Let us first consider the case $C>1/2$, $|p'|=C$, and $k>0$.
Observe that if
 $|p'|>1/2$, then $S(x,p')=0$; $\Sigma_\pi=-<x_1-x_2,p'+\pi>$. Eq. (\ref{varomega}) then implies:
If $C>1/2$ and $|p'|=C$, then 
$\omega_1=-k(p'+\pi)$; $\omega_2=k(p'+\pi)$.
Hence:  if  $k>0$ and $|p|=C$, then $|\omega_1|/k,|\omega_2|/k\geq
C-|\pi|$. 

If $k>0$ and $|p|<C$, then $(x_1,x_2,\omega_1,\omega_2)\in \Gamma_\pi(S)$ by (\ref{varomega})
and (\ref{varpshtrih}).

  If $k=0$, then $\omega_1=\omega_2=0$.  Finally, $k$ is always
non-negative. This proves the statement.
\end{proof}

\subsubsection{} 
 Let $A,B\subset \Re^n\times \Re^n\times\Re$ be the following open subsets:
$$
A=\{(x_1,x_2,t')| |x_1|>1/2 \}
$$
$$
B=\{(x_1,x_2,t')| |x_1|<3/5; |x_2|>4/5 \}
$$

\begin{Lemma} For every $S\in J$ we have:
1) $\Lambda_{S,\pi}|_A \cong \gf_{\{x_1=x_2;t\geq 0\}}[-n]$;

2) $\Lambda_{S,\pi}|_B=0$.
\end{Lemma}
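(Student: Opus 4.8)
The plan is to work throughout with the explicit description $\Lambda_{S,\pi}=Rq_!\gf_{C_{S,\pi}}$ and to treat the two restrictions separately: part 1) by reducing to the Fourier computation of Section~\ref{Fourier}, and part 2) by a stalkwise vanishing argument.

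For part 1) the key observation is that, since $S\in J$ is supported in $\{|x|\le 1/2\}\times\{|p'|\le 1/2\}$, we have $S(x_1,p')\equiv 0$ whenever $|x_1|>1/2$, so over the open set $A$ the phase degenerates to $\Sigma_\pi(x_1,x_2,p')=-\langle x_1-x_2,\,p'+\pi\rangle$. Using that $Rq_!$ commutes with restriction to the open set $A$, and performing the fibrewise translation $p'\mapsto p'+\pi$ followed by the reflection $p'\mapsto -p'$ (neither of which affects $Rq_!$), I would identify $\Lambda_{S,\pi}|_A$ with the restriction to $A$ of $Rq_{1!}\gf_{\{(x_1,x_2,P,t)\,:\,t+\langle x_1-x_2,P\rangle\ge 0\}}$. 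Up to the harmless swap of the two $\Re^n$-factors this is exactly the sheaf $\gf_L$ whose $!$-pushforward is computed in Section~\ref{Fourier} to be $\gf_{\{x_1=x_2;\,t\ge 0\}}[-n]$, which gives 1).

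For part 2) I would compute the stalk of $\Lambda_{S,\pi}$ at an arbitrary point $(x_1,x_2,t)$ by base change: it equals $R\Gamma_c(\Re^n;\gf_{\{p'\,:\,g(p')\ge 0\}})$ with $g(p')=t-S(x_1,p')-\langle x_1-x_2,\,p'+\pi\rangle$. Fix such a point with $(x_1,x_2,t)\in B$, i.e. $|x_1|<3/5$ and $|x_2|>4/5$, and pick an index $j$ with $|x_2^j|=|x_2|>4/5$. From
$$
\frac{\partial g}{\partial (p')^j}(p')=x_2^j-\Bigl(x_1^j+\frac{\partial S}{\partial (p')^j}(x_1,p')\Bigr)
$$
I would check that the bracket is dominated by $|x_2^j|$ with room to spare: if $|x_1|\le 1/2$ then condition 3) of Lemma~\ref{generating}, together with the vanishing of $\partial S/\partial (p')^j$ for $|p'|\ge 1/2$, gives $|x_1^j+\partial S/\partial (p')^j|\le 1/2<|x_2^j|$ for all $p'$; if $1/2<|x_1|<3/5$ then $S(x_1,\cdot)\equiv 0$ and $|x_1^j|\le|x_1|<3/5<|x_2^j|$. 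In either case $\partial g/\partial (p')^j$ is bounded away from $0$ with the constant sign of $x_2^j$, uniformly in $p'$. Splitting $p'=(p'',s)$ with $s=(p')^j$, this forces $s\mapsto g(p'',s)$ to be strictly monotone with values tending to $\pm\infty$, so $\{g\ge 0\}$ is an epigraph $\{(p'',s)\,:\,s\ge h(p'')\}$ for a smooth $h\colon\Re^{n-1}\to\Re$, hence homeomorphic to $\Re^{n-1}\times[0,\infty)$. A K\"unneth argument then gives $R\Gamma_c(\Re^{n-1}\times[0,\infty);\gf)\cong R\Gamma_c(\Re^{n-1};\gf)\otimes R\Gamma_c([0,\infty);\gf)=0$, since $R\Gamma_c([0,\infty);\gf)=0$ (the same elementary fact behind $\gf_{[0,\infty)}*_\Re\gf_\Re=0$). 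Thus every stalk of $\Lambda_{S,\pi}$ over $B$ vanishes, hence $\Lambda_{S,\pi}|_B=0$.

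The only genuinely substantive step is this uniform gradient estimate for $g$: it is precisely where the normalization 3) of the generating function and the support condition $S\in J$ are both used, and the particular numbers $3/5$ and $4/5$ enter only through the inequalities $4/5>3/5$ and $4/5>1/2$. Everything else is routine: a citation of the Fourier computation of Section~\ref{Fourier} for part 1), and an elementary compactly supported cohomology computation for part 2).
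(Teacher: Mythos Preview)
Your proof is correct and follows essentially the same strategy as the paper. Part 1) is handled identically: both you and the paper observe that $S(x_1,\cdot)\equiv 0$ on $A$, reducing $\Sigma_\pi$ to the linear phase, and then invoke the Fourier computation from Section~\ref{Fourier}.

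For part 2), the core estimate is the same in both arguments: on $B$ one has $\partial_{p'}\Sigma_\pi=x_2-y$ with $y=x_1+\partial_{p'}S(x_1,p')$ satisfying $|y|<3/5<4/5<|x_2|$, using the support condition and the inequality 3) defining $J$. The only difference is in how the vanishing of $R\Gamma_c$ is then extracted. The paper uses the full gradient $\nabla_{p'}G\neq 0$ together with the asymptotic linearity of $G$ for $|p'|$ large, and straightens $G$ to a linear function via the gradient flow. You instead single out the coordinate $j$ with $|x_2^j|=|x_2|$, obtain a sign-definite partial derivative $\partial g/\partial(p')^j$ bounded away from zero, and read off an epigraph structure directly. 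Your route is slightly more elementary (no flow argument, no appeal to the behaviour at infinity beyond the monotonicity forcing $g\to\pm\infty$), while the paper's route is coordinate-free. Both are fine and lead to the same $R\Gamma_c([0,\infty);\gf)=0$ computation.
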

\begin{proof}

1) We have $S(x_1,p')=0$ for all $x_1>1/2$. Therefore, 
$$
\Lambda_S|_A=R\pi_! \gf_{\{t-<x_1-x_2,p'+\pi>\geq 0\}}\cong 
\gf_{\{x_1=x_2;t\geq 0\}}[-n].
$$
The last isomorhpism has been established in Sec. \ref{Fourier}.

2) Let  $|x_1|<3/5,$ $|x_2|>4/5$,  and consider
the equation
$$
\partial_{p'}\Sigma_\pi(x_1,x_2,p')=0.
$$

We have
$$
\partial_{p'} (-S(x_1,p')-<x_1-x_2,p'+\pi>)=-x_1-\partial_{p'}S(x_1,p')+x_2= x_2-y,
$$
where 
$$
y=x_1+\partial_{p'}S(x_1,p')
$$

if $|p'|\leq 1/2$ then $|y|\leq 1/2$ as $S\in J$. If $|p'|\geq 1/2$, then $y=x$ and $|y|<3/5$. Thus, in any case $|y|<3/5$, therefore,$x_2-y\neq 0$ because $|x_2|>4/5$.

Thus for all $p'$, $$
\partial_{p'} \Sigma_\pi(x_1,x_2,p')\neq 0.
$$

2)Fix $(x_1,x_2)\in B$. Set $G(p'):=\Sigma_\pi(x_1,x_2,p')$.  We know that $dG(p')\neq 0$ for all $p'$.
 For $|p'|>1/2$, $G(p')=-<x_1-x_2,p'+\pi>=<c,p'>+K$ for some constants $c\neq 0$ and  $K$.

We need to show that for any such a function $G$, 
$Rq_!\gf_{\{(t,p):t+G(p)\geq 0\}}=0$ where $q:\Re^n\times \Re\to \Re$ is the projection.

Let $Y\subset \Re^n$ be the hyperplane $<c,p>+K=-M$ for $M>>0$.
Let $F_\tau$ be the flow of the gradient vector field of $G$.
We then get a map
$$
\Gamma:Y\times \Re\to \Re^n,
$$
$$
\Gamma(y,\tau)=F_\tau(y)
$$
The map $\Gamma$ is clearly a diffeomorphism and $G(\Gamma(y,\tau))
=\tau-M$.  Thus, under diffeomorphism $\Gamma$, the function
$G(p')$ gets transformed into $\tau-M$. Therefore it suffices
to show the statement for $G$ being a linear function on 
$\Re^n$, in which case the statement is clear.
\end{proof}
\subsubsection{} Using the above Lemma we will now construct a kernel in $\cD(X\times X)$
where $X$ is as in Sec. \ref{reduct}.
Observe that $A\cup B$ contains the set 
$C:=\{(x,x',t)| |x|>4/5 \text{ or } |x'|>4/5\}$ and the above Lemma
implies that 
$\Lambda_{S,\pi}|_C\cong \gf_{\{x=x',t\geq 0\}}$.

 Recall (Sec \ref{reduct}) that we have a Darboux chart $U\subset T^*X$. Let $U_1$ be the projection
of $U$ onto $X$. $U_1$ is identified with a cube $|x|<1$ in $\Re^n$. 
Let $V\subset U_1\subset  X$ be given by the equation $|x|<1$ and $K\subset V$
by the equation $|x|<4/5$.
We then 
have a sheaf  $\Lambda_{S,\pi}|_{V\times V\times \Re}$ and a compact
$K\subset V$ such that on $W:=V\times V\times \Re\backslash
(K\times  K\times \Re)$ we 
have an identification $\Lambda_{S,\pi}|_W=\gf_{\{(x_1,x_2,t)\in W| x_1=x_2; t\geq 0\}}[-n]$

One can now extend $\Lambda_{S,\pi}$ to a sheaf on $X\times\times X\times \Re$
by setting $L_S|_{X\times \Re\times X\times \Re\backslash W}=
\gf_{\{(x,x',t)|x=x';t\geq 0\}}$.
Denote thus obtained sheaf by $L_S$.  Let 
$\Gamma_\Phi=\{(-\omega,\Phi(\omega)\}\subset T^*X\times T^*X$.

\begin{Proposition}\label{nositel} We have $L_S\in \cD_{\Gamma_\Phi}(X\times X)$.
\end{Proposition}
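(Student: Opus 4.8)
The statement to prove is that the glued sheaf $L_S$ on $X\times X\times \Re$ has microsupport contained in $\cone(\Gamma_\Phi)$, i.e. $L_S\in \cD_{\Gamma_\Phi}(X\times X)$. The object $L_S$ was built by taking $\Lambda_{S,\pi}|_{V\times V\times \Re}$ on the open set $V\times V\times \Re$ and the ``trivial'' kernel $\gf_{\{x=x';t\geq 0\}}$ outside the compact piece $K\times K\times \Re$, and gluing along the overlap $W$ where (by the preceding Lemma) the two agree. So the proof is a local, chart-by-chart microsupport estimate: on the open set $V\times V\times \Re$ use Lemma \ref{nositelL}, which gives $\mS_\cD(\Lambda_{S,\pi})\subset\cone(\Gamma_\pi(S))$; on the open set $(X\setminus K)\times X\times \Re \ \cup\ X\times(X\setminus K)\times\Re$ use that $L_S$ equals the diagonal kernel $\gf_{\{x=x';t\ge0\}}$, whose $\cD$-microsupport is the conification of the graph of the identity. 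Since $\{V\times V\times\Re\}$ together with the complement of $K\times K\times\Re$ cover $X\times X\times\Re$, and since $\mS$ is local, $\mS_\cD(L_S)$ is contained in the union of these two pieces.

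\textbf{Key steps, in order.} First I would record that $L_S$ is in the left orthogonal complement to $C_{\le 0}(X\times X)$ — this is local in the same way, being inherited from $\Lambda_{S,\pi}$ (Lemma \ref{nositelL}) over $V\times V\times\Re$ and from the diagonal kernel elsewhere — so $\mS_\cD(L_S)$ is well defined. Second, over $V\times V\times \Re$ (identified with a neighborhood of the cube $|x|<1$ in $\Re^n\times\Re^n\times\Re$), Lemma \ref{nositelL} gives $\mS_\cD(\Lambda_{S,\pi})\subset\cone(\Gamma_\pi(S))$. Third, invoke the Remark following the definition of $\Gamma_\pi(S)$: for $S$ as in Lemma \ref{generating} one has $(x_1,P_1,x_2,P_2)\in\Gamma_\pi(S)$ iff $\oPhi(x_1,-P_1-\pi)=(x_2,P_2-\pi)$, i.e. iff either $|P_1+\pi|<1$ and $(x_2,P_2)=\Phi(x_1,-P_1)$, or $|P_1+\pi|\ge 1$ and $(x_2,P_2)=(x_1,-P_1)$. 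Now translate momenta by $\pi$ and flip the sign in the first slot to land in $T^*X\times T^*X$ in the convention where $\Gamma_\Phi=\{(-\omega,\Phi(\omega))\}$: the part of $\Gamma_\pi(S)$ living over $K\times X\times\Re$ (where $\Phi$ acts nontrivially, so $|x_1|<4/5$) maps precisely into $\Gamma_\Phi$, while the part over $|P_1+\pi|\ge 1$ or over $|x_1|>4/5$ is contained in the conormal to the diagonal, which is already $\cone(\Gamma_\Phi)$ near those points since $\Phi=\Id$ there. Fourth, over the open set where $L_S=\gf_{\{x=x';t\ge0\}}$, compute directly that its $\cD$-microsupport is $\cone(\Delta_{T^*X})=\cone(\Gamma_{\Id})$, and since $\Phi=\Id$ outside the compact supporting $S$, this agrees with $\cone(\Gamma_\Phi)$ on that open set. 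Fifth, conclude by locality of $\mS$: the two estimates agree on the overlap (guaranteed by the gluing Lemma identifying $\Lambda_{S,\pi}|_W$ with the diagonal kernel), so they patch to $\mS_\cD(L_S)\subset\cone(\Gamma_\Phi)$.

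\textbf{Main obstacle.} The routine parts are the two local microsupport computations, which are essentially already done (Lemma \ref{nositelL} and the elementary computation for the diagonal kernel). The real bookkeeping — and the step most likely to hide an error — is the identification of the transformed $\Gamma_\pi(S)$ with $\Gamma_\Phi$ under the various sign and $\pi$-shift conventions, together with checking that on the region $|x_1|\ge 4/5$ (where the two charts overlap) the two candidate microsupports genuinely coincide and not merely ``both lie in the conormal to the diagonal up to sign.'' Concretely one must verify that in the convention $\Gamma_\Phi=\{(-\omega,\Phi(\omega))\}$ the substitution $P_1\mapsto -\omega-\pi$ (coming from the factor $p'+\pi$ in $\Sigma_\pi$) sends $\Gamma_\pi(S)$ exactly onto $\Gamma_\Phi$, using $\Phi=\oPhi$ on $U$ and $\oPhi=\Id$ outside $U$; the Remark after Lemma \ref{nositelL} is precisely the statement one needs, so the proof is really a matter of carefully unwinding that Remark and then appealing to locality of microsupport to glue. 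I would also note that compactness of the support of $S$ (hence of the region where $\Gamma_\pi(S)$ departs from the diagonal) is what makes the gluing legitimate and keeps everything inside $\cD_{\Gamma_\Phi}(X\times X)$ with $\Gamma_\Phi$ a genuine compactly-supported-perturbation-of-the-diagonal Lagrangian.
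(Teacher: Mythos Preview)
Your proposal is correct and follows exactly the approach the paper takes: its proof is the single line ``Follows easily from Lemma \ref{nositelL} and Remark before this Lemma.'' You have simply unpacked that sentence, using locality of microsupport to combine the estimate from Lemma \ref{nositelL} on $V\times V\times\Re$ with the elementary diagonal-kernel computation outside $K\times K\times\Re$, and invoking the Remark to identify $\Gamma_\pi(S)$ with $\Gamma_\Phi$ under the chart and sign conventions.
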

\begin{proof} Follows easily from Lemma \ref{nositelL} and Remark before this Lemma.
\end{proof}

\subsubsection{} Let $S_+(x,p)$ be a function on $\Re^{2n}$ defined
as follows: if $S(x,p)\leq 0$, then we set $S_+(x,p)=0$; if
$S(x,p)\geq 0$, then we set $S_+(x,p)=S(x,p)$.

\begin{Lemma} For every $S\in J$ and any $\pi\in \Re^n$ we have:
1) $\Lambda_{S_+,\pi}|_A \cong \gf_{\{x_1=x_2;t\geq 0\}}[-n]$;

2) $\Lambda_{S_+,\pi}|_B=0$.
\end{Lemma}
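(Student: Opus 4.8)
The plan is to repeat the proof of the previous Lemma essentially word for word; the only thing to check is that replacing $S$ by $S_+=\max(S,0)$ spoils none of the properties of the generating function that were actually used. Part 1 is immediate: since $S\in J$ is supported in $\{|x|\le 1/2,\ |p'|\le 1/2\}$, on $A$ we have $S(x_1,p')=0$, hence $S_+(x_1,p')=0$ and $\Sigma_\pi=-\langle x_1-x_2,p'+\pi\rangle$ there, so $\Lambda_{S_+,\pi}|_A=R\pi_!\,\gf_{\{t-\langle x_1-x_2,p'+\pi\rangle\ge 0\}}|_A\cong\gf_{\{x_1=x_2;\,t\ge 0\}}[-n]$ by the computation of Sec.~\ref{Fourier}.

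For Part 2 I would first reprove the ``no critical point'' step. Although $S_+$ is only Lipschitz, at every point its $p'$-Clarke subdifferential lies in the segment $\operatorname{conv}\{0,\ \partial_{p'}S(x_1,p')\}$ (it is $\{0\}$ on $\{S<0\}$, $\{\partial_{p'}S\}$ on $\{S>0\}$, and their convex hull on the crease $\{S=0\}$). Invoking only the facts used before --- that $\partial_{p'}S=0$ when $|x_1|>1/2$ and $|x_1+\partial_{p'}S|\le 1/2$ when $|x_1|\le 1/2$ --- it follows that for $(x_1,x_2)\in B$ every $y\in x_1+\partial^\circ_{p'}S_+(x_1,p')$ satisfies $|y|<3/5<4/5<|x_2|$, so $0\notin\partial^\circ_{p'}\Sigma_\pi=x_2-y$. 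Exactly as before, the claim $\Lambda_{S_+,\pi}|_B=0$ then reduces (fibrewise over the base, computing the stalk of $Rq_!$) to showing $R\Gamma_c(\{p':G(p')\ge c\})=0$ for all $c$, where $G:=\Sigma_\pi(x_1,x_2,\cdot)$; note $G=\min(G_S,G_0)$ with $G_S$ smooth and $\nabla G_S$ nowhere zero, $G_0$ affine with non-zero linear part $x_2-x_1$, and $G=G_0$ for $|p'|>1/2$.

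The one genuinely new point --- and where I expect the real work to be --- is that $G$ is now only Lipschitz (a minimum of two smooth functions, hence semiconcave), so the old device of straightening $G$ by the flow of $\nabla G$ must be replaced. I would do it by hand. Since at each $p'$ the set $\partial^\circ G(p')$ is compact, convex, and avoids $0$, a partition of unity produces a smooth vector field $W$ with $\langle W(p'),\xi\rangle>0$ for every $\xi\in\partial^\circ G(p')$ and with $W=\nabla G_0/|\nabla G_0|^2$ for $|p'|$ large. The flow $\psi_s$ of $W$ is complete ($W$ is a bounded perturbation of a constant field), and along it $G$ is strictly increasing, with rate bounded below on compacta and equal to $1$ near infinity, so for each $p'$ the map $s\mapsto G(\psi_s(p'))$ is a homeomorphism of $\Re$; reparametrising by the value of $G$, the flow gives a homeomorphism $\{G=c\}\times[0,\infty)\xrightarrow{\sim}\{G\ge c\}$, whence $R\Gamma_c(\{G\ge c\})=R\Gamma_c(\{G=c\})\otimes R\Gamma_c([0,\infty))=0$ because $R\Gamma_c([0,\infty))=0$. (One could instead smooth $S$ from below by $S_k:=\phi_{1/k}\circ S\in J$ with $\phi_\ve(s)=\ve\,\phi_1(s/\ve)$, $\phi_1$ a fixed non-decreasing profile with $\phi_1'\le 1$ and $\phi_1\equiv 0$ on $(-\infty,0]$; these increase to $S_+$ and the previous Lemma applies to each, but then $C_{S_k,\pi}$ only \emph{decreases} to $C_{S_+,\pi}$, so the limit passage for $R\Gamma_c$ is not formal --- which is why the direct straightening seems preferable.) With this, both parts for $S_+$ have exactly the form already proved for $S$, and the proof is complete.
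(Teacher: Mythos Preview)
Your nonsmooth-analysis route is essentially correct, but it is far more work than necessary --- and the paper's proof is \emph{exactly} the approximation argument you sketch in parentheses and then reject. Your rejection rests on a misconception. The paper takes smooth $g_n:\Re\to\Re$ with $0\le g_n'\le 1$, $g_n\equiv 0$ on $(-\infty,0]$, $g_n'\equiv 1$ on $[1/n,\infty)$, increasing in $n$, and sets $S_n:=g_n\circ S$; one checks $S_n\in J$ (condition~3 survives because $[-x^i-\tfrac12,-x^i+\tfrac12]$ contains $0$ and is therefore stable under scaling by $g_n'(S)\in[0,1]$), and $S_n\uparrow S_+$ uniformly. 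The point you missed is that while the closed \emph{sets} $C_{S_n,\pi}$ decrease to $C_{S_+,\pi}$, the constant \emph{sheaves} $\gf_{C_{S_n,\pi}}$ form a \emph{direct} system (the restriction map on closed supports goes from larger to smaller), with $\varinjlim_n \gf_{C_{S_n,\pi}}=\gf_{C_{S_+,\pi}}$ stalkwise. Since $Rq_!$ and restriction to the open sets $A,B$ both commute with filtered colimits, one gets $\Lambda_{S_+,\pi}\cong\varinjlim_n\Lambda_{S_n,\pi}$, and both conclusions follow instantly from the previous Lemma applied to each smooth $S_n\in J$.

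So the limit passage \emph{is} formal; your worry stems from mentally modelling ``$R\Gamma_c$ of a shrinking family of closed sets'' as an inverse limit, when in the sheaf picture it is a direct one. What the approximation buys is a two-line reduction to the smooth case with no Clarke subdifferentials, no construction of a transverse field $W$, and no Lipschitz flow-straightening. Your argument, by contrast, rebuilds for a minimum of two smooth functions what the smooth case already provides for free; it works, but every step (upper semicontinuity of $\partial^\circ G$, the partition-of-unity construction of $W$, completeness of the flow, the monotonicity and surjectivity of $s\mapsto G(\psi_s(p'))$, continuity of the inverse) must be justified, whereas the paper's route needs none of this.
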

\begin{proof} There exists a sequence of smooth functions $g_n(x)$ on $\Re$ with the following properties:
1) each function $g_n(x)$ is non-decreasing;
furthermore, $0\leq g'_n(x)\leq 1$ for all $n$ and $x$;

2) for every $x$, the sequence $g_n(x)$ is non-decreasing;

3) for $x\leq 0$, $g_n(x)=0$;

4) for $x\geq 1/n$, $g'_n(x)=1$.

Fix such a sequence of functions. 

For $S\in J$ consider functions 
$S_n(x,p)=g_n(S(x,p))$. Let us check that $S_n\in J$. 
Indeed, $S_n$ are supported on the set $|x|\leq 1/2$, $|p|\leq 1/2$
because $g_n(0)=0$.  Next, we have
$|x^i+\partial S/\partial p^i|\leq 1/2$ for all $x$ with $|x|\leq 1/2$, i.e
$$
\partial S/\partial p^i\in [-x_i-1/2;-x_i+1/2]
$$
The interval on the RHS contains zero, therefore is closed under
multiplication by any number $\lambda\in [0,1]$. 

We have $$\partial S_n/\partial p^i= g'_n(S)\partial S/\partial p^i\in 
[-x_i-1/2;-x_i+1/2]$$  precisely because $0\leq g'_n<1$.
Thus, $S_n\in J$.

Next, we see that  $S_1(x)\leq S_2(x)\leq \cdots \leq S_n(x)\leq \cdots $ and that
$S_n(x)$ converges uniformly to $S_+(x)$.
It then follows that we have induced
maps $\Lambda_{S_1,\pi}\to \Lambda_{S_2,\pi}\to \cdots \Lambda_{S_n,\pi}\to\cdots$ and we have  an isomoprhism
$$
L\limdir_n \Lambda_{S_n,\pi}\to \Lambda_{S_+,\pi}.
$$

Since the sheaves $\Lambda_{S_n,\pi}$ satisfy the Lemma,
so does $\Lambda_{S_+,\pi}$.
\end{proof}

This implies that in the same way as above, $\Lambda_{S_+,\pi}$
can be extended to $X\times X\times \Re$ in the same way 
as $\Lambda_{S,\pi}$ and we denote thus obtained sheaf by $L_{S_+,\pi}$.

\subsubsection{Proof of the Theorem \ref{hamilt}} We will prove
an equivalent statement as in Sec \ref{reduct}

 Define a functor
$T:\cD(X\times \Re)\to \cD(X\times \Re)$ by setting
$T(F)=F\conv L_S$ (see Sec. \ref{kernelconv}).  Because of Lemma \ref{nositel} and Proposition \ref{svert} we see that
if $F\in \cD_L(X)$, then $TF\in \cD_{\Phi(L)}(X)$.

Next, we have natural maps
$$
L_{S,\pi} \stackrel{i}\to L_{S_+,\pi}\stackrel{j}\leftarrow L_{0,\pi}
$$
Note that $L_{0,\pi}=\gf_{\{(x_1,x_2,t)|x_1=x_2,t\geq 0\}}$.
In order to finish the proof of the theorem, it suffices to show
that the cones of the induced maps
$F\conv L_{S,\pi}\to F\conv L_{S_+,\pi}$ and $F=F\conv L_{0,\pi}\to F\conv L_{S_+,\pi}$ are torsion sheaves for
all $F\in \cD(X)$. This easily follows from the fact that
the cones of the maps $L_{S,\pi}\to L_{S_+,\pi}$ and $L_{0,\pi}\to L_{S_+,\pi}$ are
torsion objects in $\cD(X\times X\times \Re)$.
This fact can be seen from the following:
 each of the cones in question is supported on the set
$\{(x_1,x_2,t)| m\leq t\leq M\}$ where $m$ is the minimum of $S$
and $M$ is the maximum of $S$. Any sheaf $G$ with such a property
is necessarily torsion, because the supports of $G$ and $T_{c*}G$ are disjoint for $c>>0$ and $R\hom(G,T_{c*}G)=0$. This proves theorem
\ref{hamilt}.
\subsubsection{Proof of Theorem \ref{main}}
Let $F_1,F_2\in \cD(X)$ and let $f:F_1\to F_2$. Call $f$ 
{\em an isomorphism up-to torsion}
if the  cone  of $f$ is a torsion object. Call $F_1$ and $F_2$ {\em isomorphic up-to torsion}
if they can be connected by a chain of isomorphisms up-to torsion.

It is easy to see that if $F_1$ and $F_2$ are isomorphic up-to
torsion and for some $G\in \cD(X)$,  the natural map
 $R\hom(G,F_1)\to R\hom(G,T_{c*}F_1)$ is zero for some $c>0$, then
the map $R\hom(G,F_2)\to R\hom(G,T_{d*}F_2)$ is zero for some $d>0$.

Suppose $L_1$ and $L_2$ are displaceable compact Lagrangians
in $T^*X$, i.e.
for some sympectomorphism $\Phi$ of $T^*X$ such that $\Phi$ is identity outside of a compact, we have $L_1\cap \Phi(L_2)=\emptyset$.
Let $F_i \in \cD_{L_i}(X)$. Theorem \ref{main} is equivalent to
 the statement:  {\em for some $c>0$, the natural
map $R\hom(F_1,F_2)\to R\hom(F_1,T_{c*}F_2)$ is zero. }

This statement can be proven as follows.
By Theorem  \ref{hamilt}, there exists an object
$F_3\in \cD_{\Phi(L_2)}(X)$ such that $F_3$ and $F_2$ are isomorphic up-to torsion. Therefore, it suffices to
show that the natural map
$$
R\hom(F_1,F_3)\to R\hom(F_1,T_{c*}F_3)
$$
is zero for some $c>0$. But Theorem \ref{disjoint} asserts
that $R\hom(F_1,F_3)=R\hom(F_1,T_cF_3)=0$, whence the statement.

\section{Non-dispaceability of certain Lagrangian
submanifolds  in $\CP^n$}
\label{cepeen}
Consider $\CP^N$ with the standard symplectic structure.
 We have the following standard Lagrangian subvarieties in
$\CP^N$: the Clifford torus $\cT\subset \CP^N$ consisting of all
 points with homogeneous coordinates $(z_0:z_1:z_2:\cdots:z_N)$ such that
$|z_0|=|z_1|=\cdots=|z_N|>0$. Another Lagrangian subvariety we will
consider is $\RP^N\subset \CP^N$. Our main goal is to prove
\begin{Theorem}\label{main2} 1) $\cT$ is non-displaceable from itself;

2) $\RP^N$ is non-dispalceable from itself;

3) $\cT$ and $\RP^N$ are non-displaceable from one another.
\end{Theorem}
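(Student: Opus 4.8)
The plan is to reduce all three assertions to the cotangent-bundle criterion of Theorem \ref{main}. Since $\CP^N$ is not itself a cotangent bundle, the first step is to pass to $T^*G$, $G=\SU(N)$, via the Lagrangian correspondence of Section \ref{svedenie} between $T^*G$ and $\CP^N\times(\CP^N)^{\op}$. A compactly supported Hamiltonian symplectomorphism $\Phi$ of $\CP^N$ gives $\Phi\times\Id$ on $\CP^N\times(\CP^N)^{\op}$, which the correspondence transports to a compactly supported Hamiltonian symplectomorphism $\widetilde\Phi$ of $T^*G$; meanwhile $\RP^N$ and $\cT^N$ are carried to explicit compact subsets $L_{\RP},L_{\cT}\subset T^*G$. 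If $\Phi$ displaced one of the two Lagrangians off another, then $\widetilde\Phi$ would displace the corresponding subsets of $T^*G$ off one another, so it suffices to show that $L_{\RP}$ and $L_{\cT}$ are non-displaceable from each other and each from itself inside $T^*G$.

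For this I would invoke Theorem \ref{main}: it is enough to produce, for each relevant compact subset $L\subset T^*G$, an object $F_L\in\cD_L(G)$ such that for every admissible pair and every $c\geq 0$ the shift map $\tau_c\colon R\hom(F_{L_1},F_{L_2})\to R\hom(F_{L_1},T_{c*}F_{L_2})$ is nonzero. These objects are not built by hand but extracted from the universal microlocal object $\sp\in D(G\times\h)$ of the introduction, by restriction and pushforward along maps assembled from the flag (respectively, torus) data; the defining bound $\mS(\sp)\subset\Omega_\sp$ is precisely what forces $\mS_\cD(F_L)\subset\Cone(L)$. As indicated in the introduction, the whole problem in fact collapses to exhibiting a single object $u_\O\in\cD(G)$ with the properties of Proposition \ref{diagonal}, from which all three cases follow uniformly; this is what I would carry out in Sections \ref{cepeen} and \ref{const:uo}.

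It then remains to compute $R\hom(F_{L_1},T_{c*}F_{L_2})$ and to certify $\tau_c\neq 0$ for all $c\geq 0$. Using the convolution calculus of Section \ref{kernelconv} together with the functorial behaviour of $\sp$, this group is rewritten in terms of the stalk complex $i_e^{-1}\sp\in D(\h)$. Over the open chamber $C_-^\circ$ one identifies $i_e^{-1}\sp$, by a Morse-theoretic (Bott-type) argument, with the homology of the loop-space sublevel sets $\Omega_X$ (Section \ref{restrC-}); to extend this to all of $\h$ one proves that $\sp$ is periodic under the lattice $\bL=\{X\in\h\mid e^X\in\Zentrum\}$ — that it is a strict $B$-sheaf in the sense of Section \ref{strict:sh} — and recovers $\sp|_{\Zentrum\times G}$ from $\sp|_{\Zentrum\times C_-^\circ}$ (Section \ref{bsche}). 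With this explicit description in hand, the maps $\tau_c$ are visibly injective on a non-torsion part, and Theorem \ref{main} yields the conclusion.

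The main obstacle is everything downstream of the reduction. Constructing $\sp$ and proving its uniqueness (Section \ref{specp}) is delicate, and pinning down $i_e^{-1}\sp$ on all of $\h$ rests on the strict-$B$-sheaf periodicity rather than on any direct computation. The transport of Hamiltonian isotopies through the Lagrangian correspondence is conceptually routine but must be done carefully enough that compact support is preserved and microsupports are tracked correctly; and it is exactly the need for $\tau_c\neq 0$ for \emph{all} $c\geq 0$, not merely for small $c$, that forces the loop-space (and, behind it, the Peterson-variety) picture into the argument.
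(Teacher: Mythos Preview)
Your outline is broadly the paper's: reduce to $T^*G$ via the correspondence of Section~\ref{svedenie}, build the key object $u_\O$ from $\sp$, and verify non-torsion by computing $i_e^{-1}\sp$ using the strict $B$-sheaf periodicity. Two substantive pieces are missing from your sketch, and one claim is wrong.

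First, you do not say how the objects $F_{L_\cT}$ and $F_{L_{\RP}}$ are actually produced from $u_\O$, nor how their non-torsion is reduced to that of $u_\O$. In the paper this is a real step: one takes $F_{L_\cT}=\gf_T*_G u_\O$ and $F_{L_{\RP}}=\gf_{\SO(N)}*_G u_\O$ (Proposition~\ref{puchoknaorbite} for the microsupport), and then proves that these convolutions are isomorphic \emph{up to torsion} to $u_\O\otimes H^\bullet(T;\gf)$ and $u_\O\otimes H^\bullet(\SO(N);\gf)$ respectively (Propositions~\ref{torus} and~\ref{RPN}). This reduction is not formal; it goes through the ``$C$-isomorphism'' machinery built on part~1) of Proposition~\ref{diagonal}, and for $\SO(N)$ it requires $\mathrm{char}\,\gf=2$ (this is where the $\RP^N$ case is genuinely different, and you do not mention it). Without these steps you have no link between $\tau_c$ for the pair $(F_{L_1},F_{L_2})$ and $\tau_c$ for $u_\O$ itself.

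Second, your last paragraph mischaracterises the endgame. The loop-space description of the stalks and the Peterson-variety picture are remarks in the introduction that the paper explicitly says are \emph{not} used in the proofs. What is actually used is the purely sheaf-theoretic identification of $\bfS=I_\Zentrum^!\sp$ in Theorem~\ref{mainbfs}, obtained by computing $j_{C_-^\circ}^{-1}\bfS$ (Section~\ref{restrC-}) and then invoking the strict $B$-sheaf equivalence (Theorem~\ref{teorhvat}); from this one reads off $i^{-1}u_\O$ explicitly and checks $\tau_c\neq 0$ by inspection of the sets $S_I(d)$ in the proof of Proposition~\ref{torsion}. No Floer, loop-space, or Peterson input enters.
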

\subsubsection{}\label{svedenie}
Let us first of all explain how
Theorem \ref{main} can be applied. 

 Let $G=\SU(N)$ Realize $\CP^N$ as a coadjoint orbit $\CP^N=\O\subset \g^*$, where $\g=\su(N)$ is the Lie algebra of $G$.  We identify $\g$
with the real vector space of $N\times N$ skew-hermitian matrices.
We have an invariant positive definite inner product on $\g$ by the 
formula $<A,B>=-\Tr (AB)$.  This way we get an identification
$\g\cong \g^*$.

The orbit    $\O\subset \g^*\cong \g$ is an
orbit of the following diagonal skew-hermitian matrix
$$
i\lambda( P_V- (1/N) I)\in \g
$$
where $\lambda\in \Re$, $\lambda\neq 0$ is a fixed real number.
For simplicity we will only work with $\lambda>0$. However, the 
case $\lambda<0$ is absolutly similar.

Consider $T^*G$. We have a diffeomorphism
$I_R:T^*G\to G\times \g^*$ where we identify $\g^*$ with right-invariant forms on $G$.  Any element $X\in \g$ gives rise to a function
on $f_X$ on $\g^*$. We have a standard Poisson structure on $\g^*$
determined by the condition $\{f_X,f_Y\}=f_{[X,Y]}$. The canonical
projection $p_R:T^*G\stackrel{I_R}\to G\times \g^*\to  \g^*$ is then a Poisson map.

Let $\g^{op}$ be the Lie algebra whose underlying vector space is
$\g$ but $[X,Y]_{\g^\op}=-[X,Y]_\g$,
            We then have an identification 
$I_L:T^*G\to G\times (\g^\op)^*$,
where we identify $(\g^{\op})^*$ with left-invariant forms on $G$.
The composition $I_RI_L^{-1}:G\times(\g^\op)^*\to G\times \g^*$
is as follows: $I_RI_L^{-1}(g,A)=(g,\Ad^*_{g^{-1}}(A))$.

Indeed, the conjugate map $(I_RI_L^{-1})^*:G\times \g\to
G\times \g^\op$  is given by $(I_RI_L^{-1})^*(g,X)=(g,\Ad_{g^{-1}}X)$.

Respectively, $I_LI_R^{-1}: G\times \g^*\to G\times (\g^\op)^*$
is given by $I_LI_R^{-1}(g,A)=(g,\Ad^*_g A)$.

 One can easily check that the product
$p_L\times p_R:T^*G\to (\g^\op)^*\times \g^*$ is a Poisson map.

We know that $\O^\op\subset \g^*$ is a symplectic leaf, hence a co-isotropic sub-variety. Therefore, so is $M:=p_R^{-1}\O\subset T^*G$.

Let $\O^\op\subset (\g^*)^\op$ be the image of $\O\subset \g^*$
under the identification of vector spaces $\g^*=(\g^\op)^*$.

We then see that $M=p_L^{-1}\O^\op=p_R^{-1} \O$.  Indeed, we know 
that $I_LI_{R}^{-1}(g,A)=(g,\Ad_{g}A)$ and $A\in \O$ iff
$\Ad_{g}A\in \O$.

Hence, we have $M=(p_L\times p_R)^{-1}(\O^\op\times \O)$. 
Given any Poisson fibration $f:X\to Y$ and a coisotropic subvariety
$N\subset Y$, the subvariety  $f^{-1}N\subset X$
 is also co-isotropic. Let $n\in f^{-1}N$ and 
 let $V\in T_nf^{-1}N$ be a co-isotropic vector ( i.e $V=X_H$ where $H$ is a function in a neighborhood of $n$
and $H|_{f^{-1}N}=0$),  we then see that $f_*V\in T_{f(n)}N$ is also a 
co-isotropic vector.

Let us apply this observation to our case. We see that
$\O^\op\times \O$ has only zero co-isotropic vectors. 
Therefore, all co-isotropic vectors in $TM$ are tangent 
to fibers of the map $p_L\times p_R:M\to \O^{\op}\times\O$.
Comparison of dimensions shows that the inverse is also true:
co-isotropic vectors in $TM$ are precisely those tangent to the 
fibers of the map $p_L\times p_R$.  Thus, co-isotropic foliation
to $M$ is the tangent foliation to $p_L\times p_R$. We know
that this implies an induced symplectic structire on 
$\O^\op\times \O$. As the map $p_L\times p_R$ is Poisson, it
follows that the induced Poisson structure coincides with that
induced by the inclusion $\O^\op\times \O\into \g^\op\times \g$.
The corresponding symplectic 2 form is equal to $(-\omega;\omega)$
where $\omega$ is Kirillov's symplectic form on $\O$
and we use the identification of manifolds $\O^\op=\O$.

Let $I:M\to T^*G$ be the inclusion and $P=p_L\times p_R:
M\to \O^\op\times \O$. It then follows that $I^{*}\omega_{T^*G}=
P^{*}\omega_{\O^\op\times \O}$. 

It  follows that if $L\subset \O^\op\times \O$ is a Lagrangian
manifold, then so is $IP^{-1}L\subset T^*G$. 

Another important observation: let $H$ be a function on $\O^\op\times \O$ and let $H'$ be a function on $T^*G$ such that
$H'|_M=P^{-1}H$. 

1) Then the Hamiltonian vector field $X_{H'}$ is tangent to $M$;

2) given any function $f$ on $\O^\op\times \O$ we have
$$
X_{H'}P^{-1}f=P^{-1}X_Hf.
$$

Let $e^{tX_{H'}}$ be the Hamiltonian flow of $H'$ and $e^{tX_H}$ 
the 
Hamiltonian flow of $H$. It then follows that for any point
$m\in M$, $Pe^{tX_{H'}}(m)=e^{tX_H}(P(m))$.

These observations imply:
\begin{Proposition} Let $L_1,L_2\subset \O^\op\times \O$
be subsets  such that $IP^{-1}L_1,IP^{-1}L_2\subset T^*G$
are non-displaceable. Then so are $L_1,L_2$.
\end{Proposition}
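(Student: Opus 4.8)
The plan is to reduce non-displaceability of $L_1,L_2\subset\O^\op\times\O$ to that of their preimages $IP^{-1}L_1,IP^{-1}L_2\subset T^*G$ by transporting a hypothetical displacing Hamiltonian isotopy from $\O^\op\times\O$ up to $T^*G$. Concretely, suppose $\Phi$ is a Hamiltonian symplectomorphism of $\O^\op\times\O$, identity outside a compact, with $\Phi(L_1)\cap L_2=\emptyset$. Write $\Phi=e^{X_H}$ (time-one flow) for a compactly supported time-dependent Hamiltonian $H=H_t$ on $\O^\op\times\O$. I want to produce a compactly supported Hamiltonian $H'=H'_t$ on $T^*G$ whose flow preserves $M=P^{-1}(\O^\op\times\O)$ and covers the flow of $H$ under $P$, so that the time-one map $\Phi'=e^{X_{H'}}$ satisfies $\Phi'(IP^{-1}L_1)\cap IP^{-1}L_2=\emptyset$; this contradicts the assumed non-displaceability of $IP^{-1}L_1,IP^{-1}L_2$.

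First I would construct $H'$. Since $M\subset T^*G$ is a coisotropic submanifold whose characteristic foliation is exactly the fiber foliation of $P:M\to\O^\op\times\O$ (established above), any function on $\O^\op\times\O$ pulls back to a function on $M$ that is constant along the characteristic leaves; by the observation already recorded in the excerpt, if $H'$ is any smooth extension to $T^*G$ of $P^{-1}H_t$ then $X_{H'}$ is tangent to $M$ and $Pe^{tX_{H'}}(m)=e^{tX_{H_t}}(P(m))$. The only care needed is compact support: $M$ is not compact (it fibers over $G$), so a naive extension of $P^{-1}H_t$ need not have compact support. I would fix this by first choosing the extension on a tubular neighborhood of $M$ using a tubular-neighborhood/coisotropic normal form, then multiplying by a cutoff function $\chi$ on $T^*G$ that equals $1$ on a neighborhood of $M\cap (\text{support-relevant region})$ and has compact support; one must check the support of $H_t$ in $\O^\op\times\O$ is compact (true since $\Phi$ is compactly supported) and that its $P$-preimage, intersected with the region where the flow actually moves points of $IP^{-1}L_i$, lies in a compact set — this is where $IP^{-1}L_i$ being compact (because $L_i$ compact and $P$ proper with compact fibers, $G$ compact) is used. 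Cutting off $H'$ outside a compact does not affect the flow on the region of interest during the relevant time interval, so the covering identity $Pe^{tX_{H'}}=e^{tX_{H_t}}\circ P$ still holds on $IP^{-1}L_1$ up to time one.

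With $\Phi'=e^{X_{H'_t}}$ compactly supported on $T^*G$, I would then check the key set-theoretic consequence: if $m\in\Phi'(IP^{-1}L_1)\cap IP^{-1}L_2$, then $m=\Phi'(m_1)$ with $m_1\in IP^{-1}L_1$, and $P(m)=P(\Phi'(m_1))=\Phi(P(m_1))\in\Phi(L_1)$, while also $m\in IP^{-1}L_2$ gives $P(m)\in L_2$; hence $\Phi(L_1)\cap L_2\neq\emptyset$, contradiction. Therefore $\Phi'(IP^{-1}L_1)\cap IP^{-1}L_2=\emptyset$, contradicting the hypothesis that $IP^{-1}L_1,IP^{-1}L_2$ are non-displaceable. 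This completes the proof. I expect the main obstacle to be the compact-support bookkeeping in the construction of $H'$ — ensuring the extension across the normal directions of $M$ can be made compactly supported without destroying either the tangency of $X_{H'}$ to $M$ or the covering relation $P\circ e^{tX_{H'}}=e^{tX_{H_t}}\circ P$ on the relevant compact set; once one is comfortable that flows only depend on the Hamiltonian near the orbit of the points in question, this is routine but needs to be stated carefully.
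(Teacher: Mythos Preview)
Your approach is essentially the same as the paper's: lift a displacing Hamiltonian isotopy from $\O^\op\times\O$ to $T^*G$ via an extension $H'$ with $H'|_M=P^{-1}H$, use the already-established covering relation $Pe^{tX_{H'}}=e^{tX_H}\circ P$ on $M$, and derive a contradiction. The paper phrases the displacing map as a finite composition $e^{X_{H_1}}\cdots e^{X_{H_k}}$ of autonomous flows rather than a single time-dependent one, but this is cosmetic.

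Your only misstep is the claim that $M$ is not compact. Under the identification $T^*G\cong G\times\g^*$ one has $M=p_R^{-1}\O\cong G\times\O$, and both $G=\SU(N)$ and the coadjoint orbit $\O\cong\CP^{N-1}$ are compact; hence $M$ is compact. Consequently any smooth function on $M$ (in particular $P^{-1}H_t$) extends to a compactly supported smooth function on $T^*G$ with no further work: just multiply an arbitrary smooth extension by a bump function that is $1$ on a neighborhood of $M$. All of your ``compact-support bookkeeping'' --- tubular neighborhoods, cutoffs tailored to the orbit of $IP^{-1}L_1$, etc. --- is unnecessary, and the covering relation then holds on all of $M$, not just on a compact subset. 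The paper's proof is accordingly two lines shorter than yours.
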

\begin{proof} Suppose $L_1$ and $L_2$ are displaceable. Then
there exist functions $H_1,\ldots, H_k$ on $\O^\op\times \O$
such that $e^{X_{H_1}}\cdots e^{X_{H_k}}L_1\cap L_2=\emptyset$.
Choose compactly supported functions $H'_1,\ldots,H'_k$ on
$T^*G$ such that $H'_i|_M=P^{-1}H_i$. One then has
$$
Pe^{X_{H'_1}}\cdots e^{X_{H'_k}}m=e^{X_{H_1}}\cdots e^{X_{H_k}}Pm
$$
for every $m\in M$. Therefore,
$$
IP^{-1}L_1\cap e^{X_{H'_1}}\cdots e^{X_{H'_m}}P^{-1}L_2=
\emptyset,
$$
i.e the Lagrangians $IP^{-1}L_1$ and $IP^{-1}L_2$ are displaceble,
whence the statement 
\end{proof}

let $\Delta\subset \O^\op\times\O$ be the diagonal. $\Delta$ is
clearly Lagranginan.

It then follows that Theorem \ref{main2}
follows from the following one:
\begin{Theorem}\label{main3}
 1) $IP^{-1}\Delta$ and $IP^{-1}\cT\times\cT$
are non-displaceble;

2) $IP^{-1}\Delta$ and $IP^{-1}\RP^N\times \RP^N$ are non-displaceable;

3) $IP^{-1}\RP^N\times \RP^N$ and $IP^{-1}\cT\times \cT$ are 
non-displaceable
\end{Theorem}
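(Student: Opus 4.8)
The plan is to deduce Theorem~\ref{main3} from the non-displaceability criterion of Theorem~\ref{main}. For each of the three pairs $(L_1,L_2)$ of compact subsets of $T^*G$ in the statement it is enough to produce objects $F_i\in\cD_{L_i}(G)$, $i=1,2$, for which the comparison map $\tau_c:R\hom(F_1,F_2)\to R\hom(F_1,T_{c*}F_2)$ is nonzero for every $c\geq 0$; together with the implication Theorem~\ref{main3}$\Rightarrow$Theorem~\ref{main2} just established, this finishes the proof. As a preliminary step I would write the three Lagrangians explicitly in the right trivialization $T^*G\cong G\times\g^*$: using $I_LI_R^{-1}(g,A)=(g,\Ad^*_gA)$ and the identification of manifolds $\O^{\op}=\O$ one gets $IP^{-1}\Delta=\{(g,\xi):\xi\in\O,\ \Ad^*_g\xi=\xi\}$ and $IP^{-1}(Z\times Z)=\{(g,\xi):\xi\in Z,\ \Ad^*_g\xi\in Z\}$ for $Z=\RP^N$ and for $Z=\cT$, all three compact and of the conified type that the categories $\cD_{L_i}(G)$ are designed to detect.

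The core of the section is the construction of the objects $F_i$. They are all manufactured from a single object $u_\O\in\cD(G)$, whose existence and microlocal properties are packaged in Proposition~\ref{diagonal} and which is itself assembled in Section~\ref{const:uo} out of the distinguished object $\sp\in D(G\times\h)$ (constructed and characterized in Section~\ref{specp}). The object attached to $IP^{-1}\Delta$ is $u_\O$ up to an elementary correction; the objects attached to $IP^{-1}(\RP^N\times\RP^N)$ and $IP^{-1}(\cT\times\cT)$ are obtained from the same input by tensoring with, respectively restricting along, simple sheaves that record the position of the real form and of the Clifford torus inside $\O=\CP^N$. One then checks, via the functorial microsupport estimates recalled in Section~\ref{appendix}, that the $\cD$-microsupport of each resulting object lies exactly in the corresponding $\cone(\cdot)$, so that it indeed belongs to $\cD_{L_i}(G)$.

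It remains to compute $R\hom(F_1,T_{c*}F_2)$ for each pair and to track the maps $\tau_c$. Using the convolution formalism of Section~\ref{kernelconv} and the defining properties of $u_\O$, each of these complexes is rewritten in terms of the restriction $i_e^{-1}\sp\in D(\h)$ of $\sp$ to $\{e\}\times\h$. The explicit description of $i_e^{-1}\sp$ --- obtained on the negative chamber $C_-$ by a Morse-theoretic argument in the spirit of Bott's computation of $H_\bullet(\Omega G)$ (Section~\ref{restrC-}) and propagated to all of $\h$ through the strict $B$-sheaf periodicity of Section~\ref{bsche} --- then yields a closed-form answer for each $R\hom$ and identifies $\tau_c$ with a concatenation-type map between the resulting graded vector spaces. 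These maps are nonzero for all $c\geq 0$ because their common source is nonzero and they are injective in the bottom degree; Theorem~\ref{main} then gives the three non-displaceability assertions.

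The main obstacle is everything that has been deferred to the object $\sp$: its existence with $\mS(\sp)\subset\Omega_\sp$ and $i_0^{-1}\sp\cong\gf_e$, its uniqueness up to unique isomorphism, and above all the identification of $i_e^{-1}\sp$ over the whole Cartan subalgebra, which is precisely what forces the strict $B$-sheaf machinery. Once $\sp$ and this identification are in hand, the construction of $u_\O$, the verification of the microsupport bounds, and the non-vanishing of the $\tau_c$ are essentially bookkeeping with the microsupport calculus of \cite{KS}; the genuinely hard input is the Morse-theoretic and periodicity analysis of $\sp$ carried out in the final sections.
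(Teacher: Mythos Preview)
Your overall strategy is right---apply Theorem~\ref{main} to suitable objects in $\cD_{L_i}(G)$---but the proposal misses the paper's central mechanism for the two non-diagonal Lagrangians, and as written the argument does not go through.

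The objects the paper uses for $IP^{-1}(\cT\times\cT)$ and $IP^{-1}(\RP^N\times\RP^N)$ are not obtained by ``tensoring with'' or ``restricting along'' anything recording the position of $\cT$ or $\RP^N$ in $\O$; they are the group convolutions $\gf_T*_G u_\O$ and $\gf_{\SO(N)}*_G u_\O$ (Proposition~\ref{puchoknaorbite}), where $T\subset G$ and $\SO(N)\subset G$ are the subgroups whose quotient sends $\O$ to $\cT$ and $\RP^N$ respectively. The microsupport check works precisely because convolution with $\gf_H$ pushes $IP^{-1}\Delta$ to $IP^{-1}(H\backslash\O\times H\backslash\O)$.

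More seriously, you propose to compute each $R\hom(F_1,T_{c*}F_2)$ directly via $i_e^{-1}\sp$. The paper does this only once, for the pair $(u_\O,u_\O)$ (Proposition~\ref{diagonal}(2) and Proposition~\ref{torsion}). For the remaining pairs it does \emph{not} compute $R\hom$ directly; instead it proves (Propositions~\ref{torus} and~\ref{RPN}) that $\gf_T*_G u_\O$ and $\gf_{\SO(N)}*_G u_\O$ are isomorphic \emph{up to torsion} to $u_\O\otimes H^\bullet(T;\gf)$ and $u_\O\otimes H^\bullet(\SO(N);\gf)$. This reduces all six $R\hom$ computations to the single non-torsion statement for $u_\O$. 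The proofs of these two propositions are substantial: they introduce the category $C$ of objects with vanishing global sections supported on small balls, develop the notion of $C$-isomorphism, and then carry out delicate cell-by-cell reductions (Lemma~\ref{Lm} for $T$, Lemma~\ref{SO} for $\SO(N)$). The $\SO(N)$ case requires $\mathrm{char}\,\gf=2$, without which the map $\delta_1^*:H^*(\RP^{k-2})\to H^*(S^{k-2})$ does not factor through $\gf$ and the argument collapses. None of this machinery appears in your outline, and without it there is no reason the $\tau_c$ should be nonzero for the mixed pairs.
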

\subsubsection{}
We will prove Theorem \ref{main3} using Theorem \ref{main}.

Our main tool will be a certain object $u_\O\in\cD(G)$ which will be 
now introduced. 

We need a notation.
 Let $S\in \cD(G)$.
Let $F\in D(G)$. Let $m:G\times G\times \Re\to G\times \Re$ be
the map induced by the  product on $G$. Set
$F*_G S:=Rm_!(F\boxtimes S)$ (this is nothing else but a convolution). One can easily check that 
$F*_G S\in \cD(G)$ (use Proposition \ref{checkorthogonal}).

\begin{Proposition}\label{diagonal} There exists 
an object $u_\O\in \cD_{IP^{-1}\Delta}(G)$
with the following properties:

1) there exists a neigborhood of the unit $U\subset G$; $e\in U$ with the following property:

for every $g\in G$ and every object $F\in D(G)$ such that  $F$ is supported on $gU$ and $R\Gamma(G,F)=0$, the object
$F*_G u_\O$ is a torsion object;

2) The object $u_\O$ is not a torsion object.
\end{Proposition}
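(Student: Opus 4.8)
The plan is to construct $u_\O$ directly from the object $\sp\in D(G\times\h)$ whose microlocal properties are stated in the Introduction, by restricting $\sp$ along a suitable slice of $\h$ and pushing it into $D(G\times\Re)$ so that the $\Re$-coordinate records the ``height'' $\langle\cdot,C_+\rangle$. Concretely, I would pick a regular element $\xi$ in the interior of the negative chamber $C_-^\circ$, consider the ray $\Re_{\geq 0}\cdot\xi\subset\h$, and form an object on $G\times\Re$ by pulling $\sp$ back along the map $(g,t)\mapsto(g,t\xi)$ (or a shear thereof) and then correcting it, via the $*_\Re\gf_{[0,\infty)}$ functor of Proposition~\ref{checkorthogonal}, so that it lands in $\cD(G)$. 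The microlocal bound $\mS(\sp)\subset\Omega_\sp$ translates, under this construction, into a bound on $\mS_\cD$ of the resulting object: a covector in the $G$-direction at $(g,X)$ is of the form $\omega$ with $\|\omega\|$ proportional to $\xi$, and dividing by the $\partial_t$-component (which is positive by the shear) puts the covector into the conification of the graph $IP^{-1}\Delta$. This is exactly the computation carried out more carefully in Sec.~\ref{const:uo}; here I am only sketching why such a $u_\O$ exists.

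\textbf{Property 1 (local sections are killed).} Given $F\in D(G)$ supported on a small right translate $gU$ of a neighborhood $U$ of $e$, with $R\Gamma(G,F)=0$, I would compute $F*_G u_\O$ using the explicit form of $u_\O$ built from $\sp$. The key point is the normalization $i_0^{-1}\sp\cong\gf_e$: near $t=0$ the object $u_\O$ looks like the constant sheaf on a neighborhood of the diagonal (shifted), so $F*_G u_\O$ near small $t$ is governed by $R\Gamma$ of $F$ along fibers, which vanishes by hypothesis when $U$ is chosen inside the locus where $\sp$ is ``trivial''. For large $t$ the support of $F*_G u_\O$ in the $\Re$-direction is pushed to $+\infty$ proportionally to the diameter of $gU$, so by choosing $U$ small enough the relevant part of the convolution is supported in a bounded $t$-window, hence is a torsion object in the sense of Sec.~\ref{torsio} (a sheaf supported in $\{m\le t\le M\}$ is torsion because its support and that of $T_{c*}$ of it become disjoint). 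I would phrase this using the periodicity/strict $B$-sheaf structure of $\sp$ only to the extent of controlling the support, not needing its full strength.

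\textbf{Property 2 ($u_\O$ is not torsion).} This is the heart of the matter and the step I expect to be the main obstacle. To show $u_\O$ is not torsion I must exhibit, for every $c>0$, nonvanishing of the map $\tau_{0c}\colon u_\O\to T_{c*}u_\O$, equivalently nonvanishing of a class in $R\hom(\gf_{\{pt\}\times[0,\infty)}\text{-type object},u_\O)$ that survives all shifts. By the translation of the problem, this reduces to understanding the restriction $i_e^{-1}\sp\in D(\h)$ and in particular $\O(X)=\sp|_{e\times X}[-\dim\h]$ for $X\in C_-^\circ$, which by the computation announced in Sec.~\ref{restrC-} is $H_\bullet(\Omega_X)$ of a (nonempty, indeed with nontrivial homology) subspace of the loop space $\Omega G$. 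Concretely, $H_0(\Omega_X)\neq 0$ for every $X$, and the concatenation/product maps $\O(X)\otimes\O(Y)\to\O(X+Y)$ send the fundamental classes to fundamental classes; this compatibility, combined with the identification of the $\Re_{\geq0}$-action on $H(F_A,F_B)$ with multiplication by Novikov monomials, shows that the image of the generator is never annihilated. So the plan is: (i) reduce non-torsionness of $u_\O$ to non-torsionness of the module structure on $i_e^{-1}\sp$ over the semigroup of shifts; (ii) invoke the explicit description of $\sp|_{\Zentrum\times\h}$ (strict $B$-sheaf recovered from $\sp|_{\Zentrum\times C_-^\circ}$, Sec.~\ref{bsche}) to see that all the relevant stalks are nonzero; (iii) check that the transition maps $\tau_{c,c+d}$ act by the loop-space concatenation product, hence are injective on the degree-zero fundamental class. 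Steps (ii)--(iii) are where essentially all the work of the later sections of the paper is needed; granting those results (as the excerpt permits), Proposition~\ref{diagonal} follows.
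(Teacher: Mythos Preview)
Your sketch of the construction via pullback along a single ray $(g,t)\mapsto(g,t\xi)$ does not give the correct microsupport: the condition $\omega\in\O$ means $\|\omega\|=L=\lambda e_1$, which is $\dim\h$ independent linear conditions on $\|\omega\|\in C_+$, whereas a ray pullback enforces only the single condition $\langle\|\omega\|,\xi\rangle=k$. The paper instead sets $u_\O:=I_0^{-1}(\sp*_\h\gamma_L)$ with $\gamma_L=\gf_{\{(A,t)\mid t+\langle A,L\rangle\ge 0\}}$; the convolution over all of $\h$ followed by restriction to $A=0$ is exactly what forces $\|\omega\|$ to be a nonnegative multiple of $L$.

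Your argument for Property~1 has a genuine gap: $F*_G u_\O$ is \emph{not} supported in a bounded $t$-window (indeed $u_\O$ itself is not), so the torsion criterion you invoke does not apply directly. The paper's mechanism is different and takes place entirely on $G$, not on $\Re$. One factors the map $\tau_c$ through the slice objects $S_A:=\sp|_{G\times(-A)}$ for small $A\in C_+^\circ$, which are identified explicitly as $S_A\cong\gf_{\cU_A}[\dim G]$ with $\cU_A=\{e^X:\|X\|<<A\}$. Then $(F*_G S_A)|_{gh}=R\Gamma_c(gh\cU_A^{-1};F)[\dim G]$; when $U\subset\cU_{A/10}$ and $h\in U$ one checks $gU\subset gh\cU_A^{-1}$, so this stalk vanishes by the hypothesis $R\Gamma(G,F)=0$. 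Hence $F*_G S_A$ vanishes on $gU=\text{supp}(F*_G S_0)$, so $R\hom(F*_G S_0,F*_G S_A)=0$ and the induced map is zero --- this is what yields torsion. For Property~2, the loop-space identification $\O(X)\cong H_\bullet(\Omega_X)$ and the concatenation product are stated in the Introduction as a remark the paper explicitly does \emph{not} use or prove; the actual argument computes $i^{-1}u_\O$ combinatorially from Theorem~\ref{mainbfs} as a direct sum of objects $\gf_{[\langle l,L\rangle,\infty)}[D(l)]$ indexed by certain lattice points $l$, and then verifies that the shift maps $\tau_d$ are induced by nonempty inclusions of index sets, hence never vanish.
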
 

The proof of this Proposition is rather long, so we will first
show how this Proposition (along with Theorem \ref{main})
implies Theorem \ref{main3}.

\subsubsection{} \begin{Lemma} Let $\h\subset \g$ be the standard
Cartan subalgebra consisting of the diagonal traceless skew-hermitian
 matrices. Let $\k:=\so(N)\subset \su(N)$. We then have
$\cT=(\g/\h)^*\cap \O$; $\RP^N=(\g/\k)^*\cap\O$.
\end{Lemma}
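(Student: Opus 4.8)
The plan is to identify each of the two subsets $\cT$ and $\RP^N$ inside $\O$ with an explicit intersection of $\O$ with the annihilator of a Lie subalgebra, and then verify each identity by a direct check using the description of $\O$ as the orbit of $i\lambda(P_V-(1/N)I)$ under the coadjoint action. Recall that under the identification $\g\cong\g^*$ via $\langle A,B\rangle=-\Tr(AB)$, the annihilator $(\g/\h)^*$ is precisely $\h^\perp$, the set of skew-hermitian matrices whose diagonal entries all vanish; similarly $(\g/\k)^*=\k^\perp$ is the set of skew-hermitian matrices lying in $i\cdot\mathrm{Sym}$, i.e. those of the form $iS$ with $S$ real symmetric. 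So concretely I must show $\O\cap\h^\perp=\cT$ and $\O\cap\k^\perp=\RP^N$, where on the left I am viewing $\cT$ and $\RP^N$ as subsets of $\g^*\cong\g$ via the embedding $\CP^N=\O\subset\g^*$.

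For the torus: a point of $\O$ is $i\lambda\,\Ad_g(P_V-(1/N)I)$, i.e. $i\lambda(vv^*-(1/N)I)$ where $v=g e_V$ ranges over unit vectors and $vv^*$ is the rank-one orthogonal projector onto $\Co v$. This matrix lies in $\h^\perp$ exactly when its diagonal entries are all equal to $-i\lambda/N$, i.e. when $|v_j|^2=1/N$ for all $j$, i.e. when $|v_0|=\cdots=|v_N|$; and this is exactly the condition defining the point $(v_0:\cdots:v_N)$ of $\CP^N$ to lie on the Clifford torus. So the two subsets of $\O$ coincide. For $\RP^N$: I would first record the standard fact that under $\CP^N=\O$, the point $[v]$ corresponds to $i\lambda(vv^*-(1/N)I)$, hence $\RP^N\subset\CP^N$ — the locus of classes of real vectors — corresponds to the locus of projectors $vv^*$ with $v$ real (up to phase), which are exactly the \emph{real symmetric} rank-one projectors. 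Thus these points are of the form $iS$ with $S$ real symmetric, i.e. they lie in $\k^\perp=(\g/\k)^*$. Conversely, if a point $i\lambda(vv^*-(1/N)I)$ of $\O$ lies in $\k^\perp$ then $vv^*$ is real symmetric; the rank-one hermitian projector $vv^*$ being a real matrix forces $v$ to be a real vector up to an overall phase (its image line is spanned by a real vector), so $[v]\in\RP^N$. Hence $\O\cap\k^\perp=\RP^N$.

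The one genuine subtlety — the part I expect to be the main obstacle — is the bookkeeping of the various identifications: $\g\cong\g^*$ via Killing's form, the realization $\CP^N=\O\subset\g^*$ (which fixes $\lambda$ and the choice of base point $P_V$), and the passage between a projective class $[v]$ and the projector $vv^*$. None of these steps is deep, but one has to be careful that the annihilators $(\g/\h)^*$ and $(\g/\k)^*$ really correspond under $\langle A,B\rangle=-\Tr(AB)$ to $\{$zero-diagonal skew-hermitian$\}$ and $\{i\cdot\text{real symmetric}\}$ respectively — this uses that $\h$ and $\k=\so(N)$ are non-degenerately paired against precisely those complements, which is immediate from the trace form but worth stating. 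Once the dictionary is set up, both equalities $\cT=(\g/\h)^*\cap\O$ and $\RP^N=(\g/\k)^*\cap\O$ fall out of the elementary computations above, so the proof is short.
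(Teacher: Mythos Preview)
Your proposal is correct and follows essentially the same approach as the paper: both identify $(\g/\h)^*$ and $(\g/\k)^*$ with the zero-diagonal and purely-imaginary-symmetric skew-hermitian matrices respectively, write a point of $\O$ as $i\lambda(vv^*-\tfrac{1}{N}I)$, and then read off the torus condition $|v_j|^2=\tfrac{1}{N}$ and the reality condition $v_p\overline{v_q}\in\Re$ directly from the matrix entries. The paper's converse for $\RP^N$ is phrased via choosing a nonzero coordinate $v_{p_0}$ and writing $v_q=t_q/\overline{v_{p_0}}$ with $t_q\in\Re$, which is the same as your ``$v$ real up to an overall phase''.
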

\begin{proof} The symplectomorphism $f:\CP^N\to \O$ is as follows. 
Given a line
$l\in \Co^N$ we  set $f(l):=i(\lambda P_L-\lambda/N I)$, where $\lambda>0$
is a fixed positive real number.
Let $v=(v_1,v_2,\ldots,v_N)\in l$; $v\neq 0$. We then have
$$
f(l)_{pq}=(i\lambda/|v|^2) v_p\overline{v_q}-
i\lambda/N \delta_{pq},
$$
where $\delta_{pq}$ is the Kronecker symbol. 

Thus, $f(l)\in \O\cap (\g/\h)^*$ iff $f(l)_{pp}=0$ for all $p$,
i.e. $|v_p|^2/|v|^2=1/N$, i.e. $|v_1|^2=|v_2|^2=\cdots |v_N|^2$,
i.e. $l\in \cT$.

Analogously, $f(l)\in (\g/\k)^*$ iff  $f(l)_{pq}\in i\Re$ for 
all $p,q$, i.e 
$
v_p\overline{v_q}\in \Re
$
for all $p,q$. Let $v_{p_0}\neq 0$. Then 
$v_{q}=t_q/\overline{v_{p_0}}
$for some $t_q\in \Re$ and for all $q$. Let $t=(t_1,t_2,\ldots,t_N)$ then $v=t/\overline{v_{p_0}}$ and $l\in \RP^N\subset \CP^N$.
The inverse can be easily checked as well.
\end{proof}

\begin{Proposition}\label{puchoknaorbite} Let $T\subset \SU(N)$ be the subgroup
of diagonal matrices and let $\SO(N)\subset \SU(N)$ be the subgroup
of special orthogonal matrices. 

We then have

1) $\gf_T*_G u_\O\in \cD_{IP^{-1}(\cT\times \cT)}(G)$;

2) $\gf_{\SO(N)}*_G u_\O\in \cD_{IP^{-1}(\RP^N\times \RP^N)}(G)$.
\end{Proposition}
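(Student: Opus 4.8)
\emph{Reduction to a uniform statement.} Both parts are the instances $H=T$ and $H=\SO(N)$ of one assertion, which I would prove once. Let $H\subseteq G$ be a closed compact subgroup with Lie algebra $\mathfrak h\subseteq\g$, and set $L_{\mathfrak h}:=\O\cap\mathfrak h^{\perp}\subseteq\g^{*}$, with $\mathfrak h^{\perp}$ the annihilator of $\mathfrak h$. By the Lemma just proved, $L_{\mathfrak h}=\cT$ when $\mathfrak h=\h$ (i.e.\ $H=T$) and $L_{\mathfrak h}=\RP^{N}$ when $\mathfrak h=\k=\so(N)$ (i.e.\ $H=\SO(N)$). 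The assertion is: $\gf_{H}*_{G}u_{\O}\in\cD_{IP^{-1}(L_{\mathfrak h}\times L_{\mathfrak h})}(G)$. In the right-invariant trivialization $T^{*}G=G\times\g^{*}$ one has $IP^{-1}\Delta=\{(g,\xi):\xi\in\O,\ \Ad^{*}_{g}\xi=\xi\}$ and $IP^{-1}(L_{\mathfrak h}\times L_{\mathfrak h})=\{(g,\xi):\xi\in\O,\ \xi\in\mathfrak h^{\perp},\ \Ad^{*}_{g}\xi\in\mathfrak h^{\perp}\}$; since $\gf_{H}*_{G}u_{\O}\in\cD(G)$ already (as noted after the definition of $*_{G}$), it suffices to check the microsupport inclusion $\mS(\gf_{H}*_{G}u_{\O})\cap\Omega_{>0}\subseteq\cone\big(IP^{-1}(L_{\mathfrak h}\times L_{\mathfrak h})\big)$.

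\emph{The microsupport estimate.} As $H$ is closed in $G$, write $\gf_{H}*_{G}u_{\O}=R\tilde\mu_{!}\,\mathrm{pr}^{-1}u_{\O}$, where $\mathrm{pr},\tilde\mu:H\times G\times\Re\to G\times\Re$ are $\mathrm{pr}(h,g,t)=(g,t)$ and $\tilde\mu(h,g,t)=(hg,t)$. Compactness of $H$ makes $\tilde\mu$ proper on $\mathrm{supp}(\mathrm{pr}^{-1}u_{\O})=H\times\mathrm{supp}\,u_{\O}$, so $R\tilde\mu_{!}\cong R\tilde\mu_{*}$ and I may use the proper direct-image bound from \cite{KS}: $(g,t;\beta,\kappa)\in\mS(\gf_{H}*_{G}u_{\O})$ only if $(h,g'',t;{}^{t}d\tilde\mu(\beta,\kappa))\in\mS(\mathrm{pr}^{-1}u_{\O})$ for some $(h,g'',t)\in\tilde\mu^{-1}(g,t)$, i.e.\ $g''=h^{-1}g$. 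Now $\mathrm{pr}^{-1}u_{\O}$ has vanishing microsupport in the $T^{*}H$-directions (its microsupport in the remaining directions being $\mS(u_{\O})$ pulled up), and ${}^{t}d\tilde\mu$ at $(h,g'',t)$ sends a covector $\beta$ at $hg''$ (right-invariant trivialization) to $(\beta|_{\mathfrak h},\,\Ad^{*}_{h}\beta)\in T^{*}_{h}H\oplus T^{*}_{g''}G$. So the condition becomes $\beta|_{\mathfrak h}=0$, equivalently $\beta\in\mathfrak h^{\perp}$ (as $N^{*}_{h}H=\mathfrak h^{\perp}$ for every $h\in H$), together with $(g'',t;\Ad^{*}_{h}\beta,\kappa)\in\mS(u_{\O})$. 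Finally, $\mS(u_{\O})$ is, outside the zero section, contained in $\{(g'',t;k\omega,k):k>0,\ \omega\in\O,\ \Ad^{*}_{g''}\omega=\omega\}$: this is Lemma \ref{first} (applicable since $IP^{-1}\Delta$ is compact, $\O\cong\CP^{N}$ being compact) together with $u_{\O}\in\cD_{IP^{-1}\Delta}(G)$.

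\emph{Identification, and the main obstacle.} Take $\kappa=k>0$ and put $\eta:=\beta/k$. The constraints read $\eta\in\mathfrak h^{\perp}$, $\Ad^{*}_{h}\eta=\omega\in\O$ (hence $\eta\in\O$, $\O$ being a coadjoint orbit), and $\Ad^{*}_{h^{-1}g}\omega=\omega$. Since $\Ad^{*}_{\bullet}=(\Ad_{\bullet})^{*}$ is an anti-homomorphism, $\Ad^{*}_{h^{-1}g}=\Ad^{*}_{g}\Ad^{*}_{h^{-1}}$, so the last equation says $\Ad^{*}_{g}(\Ad^{*}_{h^{-1}}\omega)=\omega$, i.e.\ $\Ad^{*}_{g}\eta=\omega$; and $\omega=\Ad^{*}_{h}\eta\in\mathfrak h^{\perp}$ because $h\in H$ preserves $\mathfrak h$, hence $\mathfrak h^{\perp}$. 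Therefore $\eta\in\O\cap\mathfrak h^{\perp}$ and $\Ad^{*}_{g}\eta=\omega\in\O\cap\mathfrak h^{\perp}$, i.e.\ $(g,\eta)\in IP^{-1}(L_{\mathfrak h}\times L_{\mathfrak h})$, which is the desired inclusion; specializing to $H=T$ gives 1) and $H=\SO(N)$ gives 2). I expect this last bookkeeping to be the delicate part: one must keep the left/right-invariant trivializations of $T^{*}G$ and the directions of all $\Ad^{*}$-twists mutually consistent, and it is precisely the anti-homomorphism property of $\Ad^{*}$ that turns the "$g''$-stabilizer" condition carried by $u_{\O}$ at $g''=h^{-1}g$ into the "$\Ad^{*}_{g}\eta\in\mathfrak h^{\perp}$" condition cutting out $IP^{-1}(L_{\mathfrak h}\times L_{\mathfrak h})$ — with the wrong conventions the estimate comes out strictly too large and the argument fails. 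The one remaining point, properness of $\tilde\mu$ on the relevant support (where compactness of $H$, valid for $T$ and $\SO(N)$, enters), is routine.
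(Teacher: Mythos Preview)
Your proof is correct and follows essentially the same approach as the paper: both use the proper-pushforward microsupport estimate for the multiplication map, read off the two constraints ($\beta\in\mathfrak h^{\perp}$ from the $H$-factor and $\Ad^*_{g''}\omega=\omega$, $\omega\in\O$ from $u_\O$), and then unwind the $\Ad^*$-twists to land in $IP^{-1}(L_{\mathfrak h}\times L_{\mathfrak h})$. The only differences are cosmetic: you restrict the first factor to $H$ from the start (the paper keeps $G\times G\times\Re$ and uses $\gf_T\boxtimes u_\O$, which amounts to the same thing), you package both cases into a single statement for a general compact $H$ (the paper proves $H=T$ and declares $H=\SO(N)$ analogous), and you are more explicit that $\Ad^*_{\bullet}$ is an anti-homomorphism when reducing $\Ad^*_{h^{-1}g}\omega=\omega$ to $\Ad^*_g\eta=\omega$. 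Your worry about conventions is well-placed but you handled it correctly.
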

\begin{proof}

Let us prove 1). First of all, one can easily check that
$\gf_T*_G u_\O\in \cD(G)$
 using Proposition \ref{checkorthogonal}.
It only remains to show that {\em $\gf_T*_G u_\O$ is microsupported on the set $\{(g,t,k\omega,k)| k\geq 0; \omega\in IP^{-1}\cT\times \cT\}$.}
We have $\gf_T*_G  u_\O=Rm_!(\gf_T\boxtimes u_\O)$,
where $m:G\times G\times \Re\to G\times \Re$ is induced by
the product on $G$. 
Let also $M:G\times G\to G$ be the product on $G$
Let $g_1,g_2\in G$. We then have  an induced map
$$
M_{g_1,g_2*}: T_{(g_1,g_2)}G\times G\to T_{g_1g_2}G
$$
Let 
$(g_1,g_2,X_1,X_2)\in G\times G\times \g\times \g=T(G\times G).$ 
One then has $M_{g_1,g_2*}(g_1,g_2,X_1,X_2)=X_1+\Ad_{g_1}X_2$. The dual map
$$
M^*_{g_1,g_2}:T^*_{g_1g_2}G\to T^*_{(g_1,g_2)}G\times G
$$
is as follows
$$
M^*_{g_1,g_2}(g_1g_2,\omega)=(g_1,g_2,\omega;\Ad^*_{g_1}\omega).
$$

Finally, the map
$$m^*_{g_1,g_2,t}: T^*_{(g_1g_2,t)}(G\times \Re)\to 
T^*_{(g_1,g_2,t)}(G\times G\times \Re)$$
is given by
\begin{equation}\label{mstar}
m^*_{g_1,g_2,t}
(g_1g_2,t,\omega,k)=(g_1,g_2,t,\omega,\Ad_{g_1}^*\omega,k).
\end{equation}

The map $m$ being proper, we know that the object  $Rm_!(\gf_T\boxtimes u_\O)$
is microsupported on the set of all
points of the form 
\begin{equation}\label{form}
(g_1g_2,t,\omega,k)
\end{equation} where
$$m^*_{g_1,g_2,t}(g_1g_2,t,\omega,k)\in  \mS(\gf_T\boxtimes u_\O),
$$
i.e
\begin{equation}\label{torus1}
(g_1,\omega)\in \mS(\gf_T);
\end{equation}
\begin{equation}\label{uuoo}
(g_2,t,\Ad_{g_1}^*\omega,k)\in \mS(u_\O).
\end{equation}
We have, 
\begin{equation}\label{mstorus}
\mS(\gf_T)\subset \{(g,\omega_1)\in G\times \g^*|
g\in T; \omega_1\in (\g/\t)^*\};
\end{equation}
\begin{equation}\label{msuo}
\mS(u_\O)\subset \{(g,t,k\omega_2,k)|k\geq 0;
 (g,\omega_2)\in IP^{-1}\Delta\},
\end{equation}
as follows from Lemma \ref{first}.  The condition $(g,\omega_2)\in IP^{-1}\Delta$ means
that $\omega_2\in \O$ and $P_L\omega_2=P_R\omega_2$,
i.e $\omega_2=\Ad^*_g\omega_2$.

Therefore$$
(g_1g_2,t,\omega,k)\in \mS Rm_!(\gf_T\boxtimes u_\O)
$$
only if  (compare (\ref{uuoo}) and (\ref{msuo})):
\begin{equation}\label{na1}
k\geq 0
\end{equation}
\begin{equation}\label{na2}
 \Ad_{g_1}^*\omega=k\omega_2,
\end{equation}
 where 
\begin{equation}\label{na5}
\omega_2\in \O
\end{equation}
and 
\begin{equation}\label{na6}
\Ad^*_{g_2}\omega_2=\omega_2.
\end{equation}
 We should also have
 (compare (\ref{torus1})
and (\ref{mstorus})):
\begin{equation}\label{na3}
g_1\in T
\end{equation}
and 
\begin{equation}\label{na4}
\omega\in (\g/\h)^*.
\end{equation}

Let us now show that  $(g_1g_2,\omega,k)$ is of the form
$
(g_1g_2,k\omega^1,k), 
$
where $k\geq 0$ and $(g_1g_2,\omega^1)\in IP^{-1}(\cT\times \cT)$.
The latter means that $(P_L\times P_R)(g_1g_2,\omega^1)\in
\O^\op\times \O$ i.e. 
{\em both $\omega^1$ and $\Ad^*_{g_1g_2}\omega^1$
belong to $\cT=\O\cap (\g/\h)^*$.}
We have $k\geq 0$ (see (\ref{na1}).
If $k=0$, then 
$\omega=\Ad_{g_1^{-1}}^*k\omega_2=0$ and $(g_1g_2,\omega,k)=(g_1g_2,0,0)$, the condition is fulfilled.

Let now $k>0$. We have $\omega =k\Ad_{g_1^{-1}}^*\omega_2$ 
(see (\ref{na2})) 
so that $\omega^1=\Ad_{g_1^{-1}}^*\omega_2$.

 As
$\omega_2\in \O$ (see (\ref{na5})), it follows that 
$\omega^1=\Ad_{g_1^{-1}}^*\omega_2\in \O.$  
We also have $\omega_2=\omega/k\in (\g/\h)^*$ (see (\ref{na4}).

Next, let us consider  $$\Ad^*_{g_1g_2}\omega^1=
\Ad^*_{g_1g_2}\Ad^*_{g_1^{-1}}\omega_2$$
$$
=\Ad^*_{g_2}\omega_2=\omega_2
$$
(the latter equality comes from (\ref{na6}), and we have already
shown that $\omega_2\in \O\cap (\g/\t)^*$.

This proves the statement 1). The statement 2) can be proven 
in precisely the same way.
\end{proof}

\subsection{} Our goal is to prove the following statements
\begin{Proposition}\label{torus} The object $\gf_T*_G u_\O\in \cD(G)$ is isomorphic up to torsion to the object $u_\O\otimes_\gf H^*(T,\gf)$.
\end{Proposition}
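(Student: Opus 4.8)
\smallskip
The plan is to reduce everything to a translation--invariance statement for $u_\O$. Unwinding the definition, $\gf_T*_G u_\O=Rm_!(\gf_T\boxtimes u_\O)$ is the direct image under the action map $T\times G\times\Re\to G\times\Re$, $(t,g,s)\mapsto(tg,s)$, of the pullback $\pi^{-1}u_\O$ (where $\pi\colon T\times G\times\Re\to G\times\Re$ is the projection); after the change of variable $(t,h,s)\mapsto(t,th,s)$, which intertwines the action map with $\pi$, it becomes $R\pi_*$ of a sheaf on $T\times G\times\Re$ whose restriction to $\{t\}\times G\times\Re$ is $L_{t*}u_\O$ (here $L_{g*}$ denotes direct image under left translation by $g\in G$, acting trivially on $\Re$). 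Since $R\pi_*$, the convolutions $*_G$, shifts, and tensoring over $\gf$ with complexes of vector spaces all commute with the shift functors $T_{c*}$ and the natural transformations $\tau_c$, each of them sends torsion objects to torsion objects and isomorphisms up to torsion to isomorphisms up to torsion. Thus the content of the Proposition is that $u_\O$ is, up to torsion, invariant under left translation: $L_{g*}u_\O\cong u_\O$ up to torsion for every $g\in G$, plus a ``family'' version of this sufficient to compute the $R\pi_*$.

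\smallskip
First I would prove the pointwise invariance. Convolution with a skyscraper is translation, $\gf_{\{g\}}*_G u_\O\cong L_{g*}u_\O$, and $\gf_{\{e\}}*_G u_\O\cong u_\O$. Let $U\ni e$ be the neighbourhood of Proposition \ref{diagonal} and $W\subset U$ a small ball around $e$. For $g\in W$ pick an embedded arc $\Gamma\subset W$ from $e$ to $g$; the restriction maps $\gf_\Gamma\to\gf_{\{e\}}$ and $\gf_\Gamma\to\gf_{\{g\}}$ are isomorphisms on $R\Gamma(G,-)$ (an arc is contractible), so their cones are supported in $W\subset U$ and have vanishing $R\Gamma(G,-)$; by Proposition \ref{diagonal} these cones convolve with $u_\O$ to torsion objects. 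Hence $\gf_\Gamma*_G u_\O$ is isomorphic up to torsion both to $u_\O$ and to $L_{g*}u_\O$, so $L_{g*}u_\O\cong u_\O$ up to torsion for all $g\in W$. The set of $g\in G$ with this property is a subgroup (using $L_{(gh)*}=L_{g*}L_{h*}$ and that $L_{g*}$ preserves isomorphism up to torsion), and it contains $W$, hence equals the connected group $\SU(N)$. The same computation with a contractible compact arc $\bar A\subset gU$ in place of a point shows $\gf_{\bar A}*_G u_\O\cong u_\O$ up to torsion for any such $\bar A$.

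\smallskip
Next I would pass to the family statement by reducing to circles. Fix an isomorphism of Lie groups $T\cong(S^1)^{N-1}$ and let $K_1,\dots,K_{N-1}\subset T\subset G$ be the coordinate circles, so that the multiplication map $K_1\times\cdots\times K_{N-1}\to T$ is a diffeomorphism onto the closed submanifold $T$; then $\gf_T$ is the iterated convolution over $G$ of the $\gf_{K_i}$, and by associativity of convolution $\gf_T*_G u_\O\cong\gf_{K_1}*_G(\gf_{K_2}*_G(\cdots*_G(\gf_{K_{N-1}}*_G u_\O)\cdots))$. Since $*_G$ preserves isomorphism up to torsion in each argument and $H^*(T,\gf)=H^*(S^1,\gf)^{\otimes(N-1)}$ (Künneth), it suffices to show, for a single circle subgroup $K\cong S^1$, that
$$\gf_K*_G u_\O\;\cong\;u_\O\otimes_\gf H^*(S^1,\gf)\;=\;u_\O\oplus u_\O[-1]$$
up to torsion. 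Cover $K$ by closed arcs $\bar A_0,\dots,\bar A_{M-1}$ (cyclically indexed), chosen as left translates $\bar A_i=g_i\bar A_0$ of a fixed small arc $\bar A_0\ni e$, with $M$ large enough that each $\bar A_i$ lies in a translate of $U$, with consecutive arcs meeting in arcs $\bar B_i=\bar A_i\cap\bar A_{i+1}$ and all triple intersections empty. There is then a short exact sequence $0\to\gf_K\to\bigoplus_i\gf_{\bar A_i}\to\bigoplus_i\gf_{\bar B_i}\to 0$, so $\gf_K$ is quasi-isomorphic to the two-term \v{C}ech complex $[\bigoplus_i\gf_{\bar A_i}\xrightarrow{d}\bigoplus_i\gf_{\bar B_i}]$, and convolving with $u_\O$ gives $\gf_K*_G u_\O\cong[\bigoplus_i(\gf_{\bar A_i}*_G u_\O)\xrightarrow{d}\bigoplus_i(\gf_{\bar B_i}*_G u_\O)]$. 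By the previous step each summand is isomorphic up to torsion to $u_\O$; choosing these identifications compatibly (see below) the differential $d$ is identified, up to torsion, with $d_{S^1}\otimes\mathrm{id}_{u_\O}$, where $d_{S^1}\colon\gf^M\to\gf^M$ is the simplicial coboundary of the circle. As $[\gf^M\xrightarrow{d_{S^1}}\gf^M]$ has one-dimensional kernel and cokernel, it is quasi-isomorphic to $\gf\oplus\gf[-1]$, and tensoring with $u_\O$ yields the claim; combined with the first paragraph this finishes the proof.

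\smallskip
The main obstacle is the clause ``choosing these identifications compatibly''. The individual isomorphisms $\gf_{\bar A_i}*_G u_\O\cong u_\O$ and $\gf_{\bar B_i}*_G u_\O\cong u_\O$ are non-canonical, and to conclude that $d$ is identified with $d_{S^1}\otimes\mathrm{id}_{u_\O}$ one must make them compatible with the restriction maps $\gf_{\bar A_i}\to\gf_{\bar B_i}$, $\gf_{\bar A_{i+1}}\to\gf_{\bar B_i}$ entering $d$. I would arrange this by building all the isomorphisms from those over a neighbourhood of $e$: take $\bar A_i=g_i\bar A_0$ and $\bar B_i$ to be translates of the two overlaps of $\bar A_0$ with its neighbours, fix once and for all isomorphisms $L_{g_i*}u_\O\cong u_\O$ up to torsion, and transport through them the canonical isomorphisms $\gf_{\bar A_0}*_G u_\O\cong u_\O$ and (overlap)$*_G u_\O\cong u_\O$ obtained by restriction to suitably chosen basepoints; placing those basepoints inside the overlaps makes each restriction map appearing in $d$ factor through the relevant skyscraper, so the induced self-map of $u_\O$ is $\pm\mathrm{id}$ up to torsion. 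Checking that these choices work is routine but somewhat intricate, and is the technical heart of the argument. (Alternatively one may try to show directly that the family of the first paragraph, whose fibre over $t$ is $L_{t*}u_\O$, is isomorphic up to torsion to the constant family $\pi^{-1}u_\O$ and then apply $R\pi_*$, but the same coherence issue reappears there.)
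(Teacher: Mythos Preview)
Your reduction to circle subgroups and the observation that $\gf_g*_Gu_\O\cong u_\O$ up to torsion (via an arc and Proposition~\ref{diagonal}\,(1)) are both correct and match the paper's set-up. The gap is exactly where you locate it: the step ``choosing these identifications compatibly'' is not routine. The relation ``isomorphic up to torsion'' lives on objects, not on morphisms, so replacing each $\gf_{\bar A_i}*_Gu_\O$ and $\gf_{\bar B_i}*_Gu_\O$ by $u_\O$ does not by itself identify the \v{C}ech differential with $d_{S^1}\otimes\mathrm{id}$ in any way that lets you compute the cone. Concretely, each $\bar A_i$ meets \emph{two} overlaps $\bar B_{i-1},\bar B_i$, so no single basepoint in $\bar A_i$ makes both restriction squares commute on the nose; you are forced into zigzags of $C$-isomorphisms, and cones are not functorial along such zigzags in a triangulated category. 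Your proposed fix does not close this gap.

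The paper avoids this by never working ``up to torsion'' with $u_\O$ directly. It introduces the full subcategory $C\subset D(G)$ generated by objects supported in small balls with vanishing $R\Gamma$, proves $C$ is stable under convolution on either side (so ``$C$-isomorphism'' in $D(G)$ is preserved by $*_G$), and then shows in $D(\SU(2))$ that $\gf_{S_k}$ is $C$-isomorphic to $\gf_e\oplus\gf_e[-1]$ \emph{before} convolving with $u_\O$. The circle computation is not a \v{C}ech argument: one writes $\gf_{S_1}$ as the cone of a map $Rc_!\gf_B\oplus\gf_{-I}\to\gf_{\{-\pi,\pi\}}$ where $B\subset\su(2)$ is the closed $3$-ball and $\partial B\cong S^2$ collapses to $-I$; after a single $C$-isomorphism the first component of this map is shown to \emph{factor through} the second, so $\Cone(\alpha_1\oplus\beta)\cong\Cone(0\oplus\beta)\cong Rc_!\gf_B\oplus\gf_{-I}[-1]$, and each summand is $C$-isomorphic to $\gf_e$. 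This factorization trick replaces your coherence problem entirely. Only at the very end does one convolve with $u_\O$ and invoke Proposition~\ref{diagonal}\,(1).
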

\begin{Proposition}\label{RPN} Suppose that $\gf$ is a field of characteristic
2.
 The object $\gf_{\SO(N)}*_G u_\O\in \cD(G)$ is isomorphic up-to
torsion to $u_\O\otimes_\gf H^*(\SO(N),\gf)$. 
\end{Proposition}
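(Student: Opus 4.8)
The plan is to mimic the proof of Proposition \ref{torus}, replacing the torus $T$ by $\SO(N)$, and to locate the point at which $\char\gf=2$ becomes essential. The whole argument rests on one consequence of Proposition \ref{diagonal}(1), which I will call the \emph{local invariance lemma}: if $\phi\colon F\to F'$ is a morphism in $D(G)$ between sheaves both supported on a single left translate $gU$ of the neighbourhood $U$ from Proposition \ref{diagonal}, and $\phi$ induces an isomorphism $R\Gamma(G,F)\xrightarrow{\sim}R\Gamma(G,F')$, then $\Cone(\phi)$ is supported on $gU$ with vanishing $R\Gamma(G,-)$, so $\Cone(\phi)*_G u_\O$ is torsion and $\phi*_G\Id_{u_\O}$ is an isomorphism up to torsion. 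Two special cases will be used constantly. First, for a small closed ball $\bar B\ni e$ the restriction $\gf_{\bar B}\to\gf_{\{e\}}$ to the unit is an isomorphism on $R\Gamma(G,-)$, so $\gf_{\bar B}*_G u_\O\cong u_\O$ up to torsion; applying the same with a ball containing an arbitrary point $k$ and chaining such balls along a path in the connected group $G$ (using that the left translations $(L_h)_*$ commute with the $T_{c*}$, hence preserve torsion) yields $(L_k)_*u_\O\cong u_\O$ up to torsion \emph{for every} $k\in G$. Second, for any sufficiently small contractible locally closed submanifold $V\subset G$ of dimension $d$ one gets $\gf_V*_G u_\O\cong u_\O[-d]$ up to torsion: replace $V$ by a tiny ball $B^d\subset V$ (the open inclusion $\gf_{B^d}\to\gf_V$ is an isomorphism on $R\Gamma(G,-)\cong\gf[-d]$) and then trivialise the resulting $B^d$-family of restrictions of $u_\O$ by means of the first case.

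Next I would choose a cellular (or handle) decomposition of $\SO(N)\subset G$ whose open cells $e_\sigma$ are small in the above sense. The skeletal filtration of $\gf_{\SO(N)}\in D(G)$ has associated graded $\bigoplus_{\dim\sigma=j}\gf_{e_\sigma}$, so applying the exact functor $(-)*_G u_\O$ produces a filtration of $\gf_{\SO(N)}*_G u_\O\in\cD(G)$ whose associated graded is $\bigoplus_\sigma\gf_{e_\sigma}*_G u_\O\cong\bigoplus_\sigma u_\O[-\dim\sigma]=u_\O\otimes_\gf C^\bullet$, where $C^\bullet$ is the cellular cochain complex of $\SO(N)$ over $\gf$. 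If the differential of this filtration vanishes up to torsion, the filtration degenerates and $\gf_{\SO(N)}*_G u_\O\cong u_\O\otimes_\gf H^\bullet(\SO(N),\gf)$ up to torsion, which is the assertion. (By Proposition \ref{puchoknaorbite}(2) the left-hand side does lie in $\cD_{IP^{-1}(\RP^N\times\RP^N)}(G)$, which is all that is used afterwards; the torsion equivalence ignores microsupports.) For the torus this degeneration is automatic because the product cell structure on $(S^1)^{N-1}$ has zero cellular differential over any field.

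The role of $\char\gf=2$ is precisely this degeneration. Over a field of characteristic $2$ the Serre spectral sequences of the iterated fibrations $\SO(j)\hookrightarrow\SO(j+1)\to S^{j}$ collapse, $\dim_\gf H^\bullet(\SO(N),\gf)=2^{N-1}$ is maximal, and the relevant boundary maps vanish mod $2$; over $\mathbb{Q}$ these maps are multiplications by $\pm2$ and the statement is genuinely false, which is why the hypothesis is imposed. Concretely I would build the decomposition inductively along $\SO(N)\to S^{N-1}$: peel off the fibre $\SO(N-1)$ over a point and the trivialised bundle $\Re^{N-1}\times\SO(N-1)$ over its complement, convolve each layer with $u_\O$ by the local invariance lemma, and use $(L_k)_*u_\O\cong u_\O$ (up to torsion) to trivialise the gluing of the sphere bundle, so that the only surviving obstruction at each stage is a mod-$2$ Euler-class-type class, killed by $\char\gf=2$. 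Iterating the $N-1$ stages reassembles $u_\O\otimes_\gf H^\bullet(\SO(N),\gf)$ up to torsion.

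The main obstacle is the tension between the two requirements on the decomposition: the local invariance lemma needs the cells small, which forces a subdivision of the standard cell structure of $\SO(N)$, yet any subdivision has strictly more than $2^{N-1}$ cells and therefore a nonzero mod-$2$ cellular differential — so one cannot have "small cells" and "zero differential" simultaneously at the level of an honest CW complex. The resolution (and the technical heart of the proof) is to work with the inductive fibration decomposition above rather than with a minimal one, and to show directly that the connecting maps in the $u_\O$-convolved filtration are torsion; this requires pinning down the relevant $\mathrm{Ext}$ groups between shifts of $u_\O$ in $\cD(G)$ and combining that vanishing with the mod-$2$ collapse. That is the step I expect to demand the most care; everything else is a fairly routine transcription of the proof of Proposition \ref{torus}.
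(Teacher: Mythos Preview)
Your overall architecture matches the paper: reduce to an inductive step along the fibration $\SO(k-1)\hookrightarrow\SO(k)\to S^{k-1}$, using the torsion-invariance machinery built from Proposition~\ref{diagonal}(1). Your ``local invariance lemma'' and its consequences are exactly the paper's notion of $C$-isomorphism (Corollary~\ref{conv}, Claim~\ref{Ctors}).

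The gap is in the execution of the inductive step. You convolve with $u_\O$ first and then propose to kill the connecting map by ``pinning down the relevant $\mathrm{Ext}$ groups between shifts of $u_\O$ in $\cD(G)$''. Those groups are only computed much later in the paper (via $\sp$ and $\bfS$), and there is no direct argument that the attaching class lands in their torsion part; your ``Euler-class-type'' phrase is a heuristic, not a mechanism. The paper avoids this entirely by working \emph{before} convolution: it proves the splitting $\gf_{\SO(k)}\sim_C\gf_{\SO(k-1)}\oplus\gf_{\SO(k-1)}[1-k]$ as a $C$-isomorphism in $D(\SU(k))$ (Lemma~\ref{SO}), and only then invokes Claim~\ref{Ctors}. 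So nothing about $u_\O$ beyond Proposition~\ref{diagonal}(1) is ever used.

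The concrete idea you are missing is this. Pass to $M=\SU(k)/\SO(k-1)$ (Lemma~\ref{perenos} shows $\Pi^{-1}$ carries $C(M)$ into $C$), so the problem becomes $\gf_S\sim_{C(M)}\gf_{\bar e}\oplus\gf_{\bar e}[1-k]$ for the embedded sphere $S=\SO(k)/\SO(k-1)$. Present $S$ as the image of a ball $B\subset\Re^{k-1}$ with $\partial B\cong S^{k-2}$ collapsed. The paper writes down an explicit homotopy $\mu:[0,\pi/4]\times B\to\SU(k)$ moving the standard map $B\to\SO(k)\to M$ to a map $c_1$ satisfying $c_1(rn)=c_1(-rn)$; hence $c_1$ factors through $B/\{\pm1\}$, the cone on $\RP^{k-2}$. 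After this manoeuvre the attaching data are governed by the pullback $\delta_1^*:H^*(\RP^{k-2};\gf)\to H^*(S^{k-2};\gf)$ along the double cover. In characteristic~$2$ one has $H^*(\RP^{k-2};\mathbb F_2)=\mathbb F_2[x]/(x^{k-1})$ with $\delta_1^*(x)=0$, so $\delta_1^*$ factors through $H^0=\gf$; this forces the relevant composite $g$ to factor through the other summand $h$, and $\Cone(g\oplus h)\cong\Cone(0\oplus h)$ splits off the desired pieces.

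So the role of $\char\gf=2$ is not an abstract degeneration of a spectral sequence but this specific factorization of $\delta_1^*$ through degree~$0$. Your route could in principle be completed, but closing it would require exactly this input; the paper's route of staying in $D(G)$ with $C$-isomorphisms and producing the $\RP^{k-2}$ factorization geometrically is both shorter and avoids any knowledge of $u_\O$.
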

\subsubsection{}
These Propositions imply Theorem \ref{main3}.  Let $\gf$ have characteristic 2 and let each of objects $F_1$ and  $F_2$ 
be either $\gf_T*_G u_\O$
or $\gf_{\SO(N)}*_G u_\O$.  Taking into account Proposition
 \ref{puchoknaorbite} and Theorem \ref{main}, it suffices to show that
for any $c>0$, the induced map $R\hom(F_1,F_2)\to R\hom(F_1;T_{c*}F_2)$ does not vanish (for all choices of $F_1$ and $F_2$). By virtue of the  just formulated
 Propositions, this follows from $u_\O$ being non-torsion
which is promised in Proposition \ref{diagonal}.  Thus, Theorem
\ref{main3} is now reduced to  Propositions \ref{diagonal}, \ref{torus},
and \ref{RPN}. We will first deduced the last two Propositions from the
first one, and, finally, we will prove Proposition \ref{diagonal}.

\subsubsection{}

In order to prove  Propositions \ref{torus} and \ref{RPN} we need to develop
corollaries from 
 Proposition \ref{diagonal} 1).

Let $C_U$ be the full subcategory of $D(G)$ generated by all objects
$F$ as in Proposition \ref{diagonal} 1) and their finite extensions.

\begin{Lemma} let $Q:=[0,1]^{M}$, $M\geq  0$. Let 
$\pi:Q\to G$ be any continuous map. Let $F\in D(Q)$, $R\Gamma(Q,F)=0$.
Then $R\pi_!F\in C_U$.
\end{Lemma}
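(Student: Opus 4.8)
The plan is to prove this in two steps: first localize $R\pi_!F$ into a triangulated subcategory of $D(G)$ generated by objects supported in a single translate of $U$, and then identify $C_U$, inside that subcategory, as the objects with vanishing cohomology. Preliminarily, for a smaller contractible coordinate neighbourhood $U'\subseteq U$ of $e$ one has $C_{U'}\subseteq C_U$ (the generators of the former are among those of the latter), so it suffices to prove the statement with $C_U$ replaced by $C_{U'}$; hence we may and do assume that $U$ is a contractible coordinate ball around $e$.

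\emph{Step 1 (localization).} Since $e\in U$, the translates $\{gU\}_{g\in G}$ form an open cover of $G$; pulling it back along $\pi$ and applying the Lebesgue number lemma on the compact cube $Q=[0,1]^M$ produces an integer $n$ such that every closed cube $K$ of the $1/n$-grid on $Q$ satisfies $\pi(K)\subseteq g_KU$ for some $g_K\in G$. Enumerate these grid cubes as $K_1,\dots,K_r$, set $Z_j=K_1\cup\cdots\cup K_j$ and $W_j=Z_j\setminus Z_{j-1}$; then $W_j\subseteq K_j$, so $\overline{W_j}\subseteq K_j$. A dévissage along the filtration $\emptyset=Z_0\subseteq Z_1\subseteq\cdots\subseteq Z_r=Q$, using the standard excision triangles and applying $R\pi_!$, exhibits $R\pi_!F$ as a finite iterated extension of the objects $P_j:=R(\pi|_{W_j})_!(F|_{W_j})$, and each $P_j$ is supported on $\overline{\pi(W_j)}\subseteq\pi(K_j)\subseteq g_{K_j}U$. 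Hence $R\pi_!F$ lies in the triangulated subcategory $C'_U\subseteq D(G)$ generated by all objects supported in a single translate of $U$, with no constraint on cohomology.

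\emph{Step 2 (removing cohomology).} It then suffices to show $C_U=\{N\in C'_U\mid R\Gamma(G,N)=0\}$. Granting this we are done: $R\pi_!F\in C'_U$ and, $Q$ and $G$ being compact, $R\Gamma(G,R\pi_!F)=R\Gamma_c(G,R\pi_!F)=R\Gamma_c(Q,F)=R\Gamma(Q,F)=0$, so $R\pi_!F\in C_U$. The inclusion $\subseteq$ is clear, as $R\Gamma(G,-)$ is triangulated and annihilates the generators of $C_U$. For $\supseteq$, fix a base point $g_0\in G$ and show that for every generator $N$ of $C'_U$ (an object supported in some translate $gU$) there is a zig-zag of morphisms in $D(G)$, all of whose cones lie in $C_U$, joining $N$ to the skyscraper complex $\gf_{g_0}\otimes R\Gamma(G,N)$, chosen compatibly with morphisms so that the same holds for any finite extension $N$ of generators; then $R\Gamma(G,N)=0$ forces $N\cong 0$ in $D(G)/C_U$, i.e. $N\in C_U$. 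The zig-zag is assembled from the contraction of $gU$ to a point (to replace $N$ by a skyscraper, up to a cone with vanishing cohomology supported in $gU$), the connectedness of $G$ together with a covering of a path $g\leadsto g_0$ by finitely many translates of $U$ (to move the skyscraper to $g_0$), and the vanishing of the compactly supported cohomology of a product of half-open intervals (which is what places the intermediate cones in $C_U$).

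The hard part will be the zig-zag assertion of Step 2 for an \emph{arbitrary}, not necessarily constructible, object $N$ supported in $gU$: the three ingredients above are transparent for constant sheaves on half-open sub-cubes but must be promoted to all of $D(G)$. I would handle this by verifying that the full subcategory of objects for which the assertion holds is triangulated and closed under the filtered colimits by which a general object supported in $gU$ is assembled from such constant sheaves, the cohomology vanishing being available at each stage.
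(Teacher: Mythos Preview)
Your Step~1 is correct; it recovers the geometric part of the paper's argument (Lebesgue number, fine subdivision, excision filtration). The divergence is in how the zero-cohomology condition is handled. The paper does this \emph{at the source}, on $Q$: before subdividing finely, it shows that $F$ can be written as a finite extension of objects each supported on a half-cube \emph{and each already having zero cohomology}. The trick is a correction by constant sheaves: take the Mayer--Vietoris pieces $i_{k*}F_k$, $i_{I*}F_I$ and compare each to the pullback of its own global sections $i_{k*}p_k^{-1}V_k$, $i_{I*}p_I^{-1}V_I$; the cones of these comparisons automatically have vanishing $R\Gamma$, and the residual term built only from the constants is controlled by the quasi-isomorphism $V_1\oplus V_2\to V_I$ coming from $R\Gamma(Q,F)=0$. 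Iterating yields pieces on arbitrarily small sub-cubes, each with zero cohomology, and then the push-forward lands in $C_U$ by definition.

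Your Step~2 instead defers the cohomology condition to the target, asserting $C_U=\{N\in C'_U:R\Gamma(G,N)=0\}$. The statement is plausible, but your argument has a real gap exactly where you flag it: the zig-zag joining a generator $N$ to $\gf_{g_0}\otimes R\Gamma(G,N)$ is not natural in $N$, and without naturality the property ``isomorphic in $D(G)/C_U$ to $\gf_{g_0}\otimes R\Gamma(G,-)$'' is not closed under cones. Concretely, given a triangle $N_1\to N_2\to N_3$ with $N_1,N_2$ in your good class, you need the induced map $\gf_{g_0}\otimes V_1\to\gf_{g_0}\otimes V_2$ in $D(G)/C_U$ to come from a map $V_1\to V_2$ in $D(\mathrm{pt})$; this is full faithfulness of $i_{g_0,*}:D(\mathrm{pt})\to D(G)/C_U$, which you have not established and which does not follow from your listed ingredients. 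The obvious natural candidate zig-zag through $\gf_G\otimes R\Gamma(G,N)$ fails because $H^\bullet(G)$ is not concentrated in degree zero, so neither leg has cone in $C_U$. Your closure-under-colimits strategy inherits the same problem. In effect you have traded the original lemma for a statement of comparable depth; the paper's constant-sheaf correction on $Q$ is precisely the device that sidesteps it.
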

\begin{proof}  The case $M=0$ is obvious. Let $M>0$.
Let $Q_0:=[0,1/2]\times [0,1]^{M-1}$
and let $Q_1=[1/2,1]\times [0,1]^{M-1}$.

1) We will first prove that {\em $F$ can be obtained by a finite number
of extensions from objects $X_1,X_2,\ldots,X_m$, where each $X_i$ is supported 
on either $Q_0$ or $Q_1$ 
 and $R\Gamma(Q,X_i)=0$.} Call such objects and their
extensions {\em admissible}. Thus, we are to show that $F$ is admissible.

 Let
$I:=Q_1\cap Q_2$. Let $i_k:Q_k\to Q$ and $i:I\to Q$ be inclusions.
Realize $F$ as a complex of soft sheaves on $Q$
Let $F_k:=F|_{G_k}$ and $F_I:=F|_I$. Each of these objects is also 
a complex of soft sheaves.

 We then have an isomorphism
$$
 F\to \Cone (i_{1*}F_1\oplus i_{2*}F_2\to i_* F_I)
$$

Let $p_k:Q_k\to \pt$ and $p_I:I\to \pt$ be the natural projections.
Let $V_k:=p_{k*}F_k=p_{k!}F_k$; let $V_I:=p_{I*}F_I=p_{I!}F_I$.
$V_k$ and $V_I$ are just complexes of $\gf$-vector spaces.
We then have maps
$$
a_k:p_k^{-1}V_k\to F_k;a_I:\ p_I^{-1}V_I\to F_I;
$$
$$
b_k:i_{k*}p_k^{-1}V_k\to i_{I*} p_I^{-1}V_I
$$

We then have the following  commutative diagram of complexes of sheaves
$$\xymatrix{
i_{1*}F_1\oplus i_{2*}F_2\ar[r]& i_{I*}F_I\\
i_{1*}p_1^{-1}V_1\oplus i_{2*}p_2^{-1}V_2\ar[r]^{b_1\oplus b_2}
\ar[u]_{a_1\oplus a_2}&
 i_{I*}p_I^{-1}V_I\ar[u]}
$$

Let $\Phi$ be the total complex of this diagram. $\Phi$ can be obtained by successive extensions from the following objects
$$
\Cone( i_{k*}p_k^{-1}V_k\to i_{k*}F_k);
$$
$$
\Cone( i_{I*}p_I^{-1}V_I\to i_{I*}F_I);
$$
each of these objects is admissible. Hence $\Phi$ is admissible.

Next, we  have a natural map
$$
\Phi\to \Cone(i_{1*}p_1^{-1}V_1\oplus i_{2*}p_2^{-1}V_2\to
 i_{I*}p_I^{-1}V_I)
$$
The cone of this map is quasi-isomrophic to $F$. Thus, in order to
show that $F$ is admissible, it suffices to show that
$$\Cone(i_{1*}p_1^{-1}V_1\oplus i_{2*}p_2^{-1}V_2\to
 i_{I*}p_I^{-1}V_I)
$$
is admissible.

Let us study the arrow $i_{1*}p_1^{-1}V_1\oplus i_{2*}p_2^{-1}V_2\to
 i_{I*}p_I^{-1}V_I$. This arrow is induced by the natural maps
$V_1\to V_I$ and $V_2\to V_I$. The cone of the induced map
$f:V_1\oplus V_2\to V_I$ is quasi-isomorphic to $R\Gamma(Q,F)=0$. 
Therefore, $f$ is a quasi-isomorphism and we have an induced quasi-isomorphism
$$
\Cone(i_{1*}p_1^{-1}V_1\oplus i_{2*}p_2^{-1}V_2\to
 i_{I*}p_I^{-1}(V_1\oplus V_2))\to\Cone(i_{1*}p_1^{-1}V_1\oplus i_{2*}p_2^{-1}V_2\to
 i_{I*}p_I^{-1}V_I).
$$

The object on the left hand side is isomorphic to
$$
\Cone(i_{1*}p_1^{-1}V_1\to
 i_{I*}p_I^{-1}V_1)\oplus \Cone(i_{2*}p_2^{-1}V_2\to
 i_{I*}p_I^{-1}V_1).
$$
We see that this object is a direct sum of admissible objects,
hence is itself admissible, therefore the object
$$\Cone(i_{1*}p_1^{-1}V_1\oplus i_{2*}p_2^{-1}V_2\to
 i_{I*}p_I^{-1}V_I)$$ is also admissible, whence the statement.

2)Choose a positive integer $M$ and subdivide $Q$ into $2^M$ small cubes, denote these small
cubes by $\bfq_i$, $i=1,\ldots 2^M$. Call an object $X\in D(Q)$
{\em $M$-admissible} if either

a) $X$ is supported on one of $\bfq_i$ and $R\Gamma(Q,X)=0$
or
b) $X$ can be obtained from objects as in a) by a finite number
of extensions.

By repeatedly applying the statement from 1) we see that
{\em every object $F\in D(Q)$ such that $R\Gamma(Q,F)=0$ is
$M$-admissible.}

3) For $M$ large enough one has: for every $i$ there exists
$g_i\in G$ such that $\pi(\bfq_i)\subset g_iU$. This implies that
given any object  $X\in D(Q)$ supported on $\bfq_i$ and satisfying
$R\Gamma(Q,X)=0$, one has $R\pi_! X\in C_U$. Therefore, every
$M$-admissible object is in $C_U$, including $F$.
\end{proof}
\begin{Corollary}  Let $U$ be a neighborhood of unit in $G$ such
 that
$U$ is diffeomorphic to an open ball.  Then $C_U=C$, where $C\subset
D(G)$ is the full subcategory formed by  finite extensions 
of objects of the form $R\pi_!X$, where $\pi:Q\to G$, $X\in D(Q)$,
$R\Gamma(Q,X)=0$.
\end{Corollary}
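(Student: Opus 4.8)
We must show $C_U = C$, where $U$ is a neighborhood of the unit diffeomorphic to an open ball, $C_U$ is the full subcategory generated under finite extensions by the objects $F$ of Proposition \ref{diagonal} 1) (i.e. $F$ supported on some translate $gU$ with $R\Gamma(G,F)=0$), and $C$ is the full subcategory of finite extensions of objects $R\pi_!X$ with $\pi\colon Q\to G$ a continuous map from a cube $Q=[0,1]^M$ and $X\in D(Q)$ satisfying $R\Gamma(Q,X)=0$.

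\textbf{Plan of proof.} The inclusion $C_U\subset C$ is the easy direction and essentially a tautology: if $F$ is supported on $gU$ with $R\Gamma(G,F)=0$, then since $U$ is a ball we may pick a continuous (even smooth) parametrization $\pi\colon Q\to G$ of $\overline{gU}$, or rather of a slightly larger ball, so that $F$ is the pushforward $R\pi_! X$ of the object $X:=\pi^{-1}F$ (up to replacing $F$ by $j_! j^{-1}F$ for the open inclusion of $gU$; note $R\pi_!$ restricted to the open cube recovers $F$ on $gU$ and kills the boundary). Here one must be a little careful: the cleanest route is to take $\pi$ to map the open cube diffeomorphically onto an open ball containing $\mathrm{supp}(F)$ and to be constant-ish on a collar; then $R\pi_!X = F$ and $R\Gamma(Q,X)=R\Gamma(G,F)=0$. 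Since finite extensions of such objects generate $C_U$ on the left and $C$ on the right, this gives $C_U\subseteq C$.

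\textbf{The main direction.} The content is $C\subseteq C_U$, and this is exactly what the preceding Lemma delivers. Indeed, an arbitrary generator of $C$ has the form $R\pi_!X$ with $\pi\colon Q\to G$ continuous, $Q=[0,1]^M$, and $R\Gamma(Q,X)=0$; the Lemma asserts precisely that any such $R\pi_!X$ lies in $C_U$. Since $C_U$ is closed under finite extensions by definition, every finite extension of such objects — i.e. every object of $C$ — lies in $C_U$. Combining with the previous paragraph yields $C=C_U$.

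\textbf{Where the work sits, and the one point to check.} Essentially all the substance has been front-loaded into the Lemma (the barycentric-subdivision argument reducing $F$ on a cube to pieces supported on subcubes small enough to land in translates $gU$, via the Mayer--Vietoris-type presentation with the $p_k^{-1}V_k$). So the only genuine obstacle in the Corollary itself is the bookkeeping in $C_U\subseteq C$: one must exhibit, for a sheaf $F$ supported on $gU$ with vanishing global sections, an honest cube $Q$ and map $\pi$ with $R\pi_!X\cong F$ and $R\Gamma(Q,X)=0$. Because $U$ is assumed diffeomorphic to an open ball, fix a diffeomorphism $\phi\colon (0,1)^M\xrightarrow{\ \sim\ } gU$ (with $M=\dim G$) and extend it to a continuous $\pi\colon [0,1]^M\to G$; set $X:=R\phi_{!}'(\,\cdot\,)$ appropriately so that $X|_{(0,1)^M}=\phi^{-1}F$ and $X$ vanishes near $\partial Q$ — this is possible since $F$ has compact support inside $gU$. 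Then $R\pi_! X\cong Rj_!\,\phi^{-1}F\cong F$ where $j\colon gU\hookrightarrow G$, using that $F$ is already extended by zero from $gU$; and $R\Gamma(Q,X)=R\Gamma(G,F)=0$. This completes the argument. The verification that $R\pi_!$ of the collar-extended object genuinely reproduces $F$ (rather than something with extra boundary contributions) is the only place demanding care, but it is routine given proper base change and the open-embedding formula for $R\!\operatorname{(\cdot)}_!$.
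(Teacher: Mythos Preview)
Your argument is correct and matches the paper's intended reasoning: the Lemma gives $C\subseteq C_U$ directly, and the ball hypothesis on $U$ lets you realize any $F$ supported on $gU$ with $R\Gamma(G,F)=0$ as $R\pi_!X$ for a cube map, giving $C_U\subseteq C$. The paper in fact gives no separate proof of the Corollary, treating both inclusions as immediate from the Lemma and the ball assumption, exactly as you do.
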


\begin{Corollary} Let $F\in C$ and $X\in D(G)$. One then has $F*_G X\in C$; $X*_G F\in C$.
\end{Corollary}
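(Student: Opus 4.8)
The plan is to reduce, by base change, to a single generator of $C$ and then to split off the $G$-variable being convolved by means of a triangulation. I treat the convolution $F*_G X=RM_!(F\boxtimes X)$ with $M:G\times G\to G$ the product (the version carrying an extra $\Re$-factor, as in the definition of $*_G$ above, is handled verbatim, that coordinate playing no role). First note that $C$ is a triangulated subcategory of $D(G)$: it is stable under shift, since $R\Gamma(Q,F_0[1])=0$ whenever $R\Gamma(Q,F_0)=0$, and under cones, since $\cone(A\to B)$ is an extension of $A[1]$ by $B$. As $-*_G X=RM_!(-\boxtimes X)$ is triangulated and $C$ is closed under finite extensions, it suffices to prove $F*_G X\in C$ for a generator $F=R\pi_!F_0$, where $\pi:Q\to G$ is continuous, $Q$ is a closed cube, and $R\Gamma(Q,F_0)=0$. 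For such $F$, proper base change and the projection formula give $F\boxtimes X\cong R(\pi\times\mathrm{id}_G)_!(F_0\boxtimes X)$, whence
$$F*_G X\ \cong\ R\mu_!(F_0\boxtimes X),\qquad \mu:Q\times G\to G,\quad \mu(q,g)=\pi(q)\,g.$$

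Next I would fix a finite triangulation of the compact manifold $G$. A Mayer--Vietoris induction on the number of simplices --- at each step writing $G=K'\cup\sigma$ with $\sigma$ a maximal closed simplex and $K'$ the complementary subcomplex, and using the distinguished triangle $X\to i_{K'*}X|_{K'}\oplus i_{\sigma*}X|_\sigma\to i_{\partial\sigma*}X|_{\partial\sigma}\to X[1]$ --- shows that $X$ is obtained from the objects $i_{\sigma*}(X|_\sigma)$, $\sigma$ a closed simplex, by finitely many extensions. Hence $F_0\boxtimes X$ is obtained by finitely many extensions from the objects
$$F_0\boxtimes i_{\sigma*}(X|_\sigma)\ \cong\ (\mathrm{id}_Q\times i_\sigma)_*\bigl(F_0\boxtimes X|_\sigma\bigr),$$
each supported on $Q\times\sigma$, which is homeomorphic to a cube, and satisfying $R\Gamma(Q\times\sigma,\,F_0\boxtimes X|_\sigma)\cong R\Gamma(Q,F_0)\otimes_\gf R\Gamma(\sigma,X|_\sigma)=0$ by the Künneth formula. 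Crucially, this vanishing uses only $R\Gamma(Q,F_0)=0$ and no hypothesis on $X$.

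Applying the triangulated functor $R\mu_!$, the object $F*_G X$ is obtained by finitely many extensions from $R\mu_!(\mathrm{id}_Q\times i_\sigma)_*(F_0\boxtimes X|_\sigma)\cong R\bigl(\mu\circ(\mathrm{id}_Q\times i_\sigma)\bigr)_!(F_0\boxtimes X|_\sigma)$. But $\mu\circ(\mathrm{id}_Q\times i_\sigma)$ is a continuous map from the cube $Q\times\sigma$ to $G$, and its source sheaf has vanishing $R\Gamma$; so by the Lemma and the preceding corollary each of these objects lies in $C_U=C$. Since $C$ is closed under finite extensions, $F*_G X\in C$; the assertion $X*_G F\in C$ follows by the same argument with the two $G$-factors interchanged. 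The point that needs care --- the main obstacle, such as it is --- is purely organizational: one must decompose the \emph{arbitrary} factor $X$, not $F_0$, so that the resulting pieces sit over honest cubes $Q\times\sigma$ to which the Lemma applies, the cohomological vanishing demanded by the Lemma being supplied entirely by $F_0$.
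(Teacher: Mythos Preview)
Your argument is correct. Both you and the paper decompose the arbitrary factor $X$ into pieces living over small regions of $G$, but you do it differently. The paper uses the equality $C=C_V$ (the ball characterization just established): it writes $X$ as a finite extension of objects $X_i$ supported on translates $g_iU$ of a small ball $U$ with $U\cdot U\subset V$, and reduces $F\in C=C_U$ to a generator supported on a translate of $U$; then $X_i*_GF$ is supported on a translate of $V$ and has $R\Gamma(G,X_i*_GF)\cong R\Gamma(G,X_i)\otimes R\Gamma(G,F)=0$, so it lies in $C_V=C$. You instead stay with the cube definition of $C$: you lift the convolution to $R\mu_!(F_0\boxtimes X)$ on $Q\times G$, triangulate $G$, and exhibit each resulting piece as $R(\mu\circ(\mathrm{id}_Q\times i_\sigma))_!(F_0\boxtimes X|_\sigma)$, a pushforward from the cube $Q\times\sigma$ with vanishing $R\Gamma$ by K\"unneth. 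Your route is a bit longer but more self-contained (it does not lean on the identification $C=C_U$), and it makes the role of the vanishing $R\Gamma(Q,F_0)=0$ completely transparent; the paper's route is quicker because it exploits the group structure via $U\cdot U\subset V$ and the ball description of $C$. One small remark: your final appeal to ``the Lemma and the preceding corollary'' is unnecessary at that point---each piece you produce is a generator of $C$ \emph{by definition}, so you can conclude directly.
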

\begin{proof}
Choose a small open  ball $U\in G$, $e\in U$, small means 
 that there 
exists another open ball $V\subset G$ such that $U\cdot U\subset V$.
 It is not hard to see that any $X\in D(G)$ can be realized as a finite extension of objects $X_i$, where each $X_i$ is supported on
$g_iU$ for some $U$. Without loss of generality, one then can assume
that $X=X_i$. Therefore, $X*_G F$ is supported on $g_iU^2\subset g_iV$.
One also sees that $R\Gamma(G,X*_G F)=R\Gamma(G,X)\otimes R\Gamma(G,F)=0$. Thus, $X*_G F\in C_V$. 

The case of $F*_G X$ can be proven in a similar way.
\end{proof}
\subsubsection{}
 Call a map $f:F\to H$ in $D(G)$ a $C$-isomorphism
if the cone of $f$ is in $C$. Call two objects $F,H\in D(G)$ {\em
$C$-isomorphic} if they can be joined by a chain of $C$-isomorphisms.
\begin{Corollary}\label{conv} if $F_1$ and $F_2$ are $C$-isomorphic and
$H_1$ and $H_2$ are $C$-isomorphic, then $F_1*_G H_1$ and $F_2*_G H_2$ are $C$-isomorphic
\end{Corollary}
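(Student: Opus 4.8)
The plan is to reduce Corollary \ref{conv} to the previous Corollary together with the observation that $*_G$ is a triangulated functor in each of its two arguments; in particular, $(-)*_G H$ and $F*_G(-)$ commute with the formation of cones. This holds because $*_G$ is built from the external tensor product $\boxtimes$ --- which is exact in each variable, as we work with sheaves of vector spaces over the field $\gf$ --- followed by the pushforward $Rm_!$, which is triangulated; and $C$ is a triangulated subcategory of $D(G)$ (closed under shifts and cones), so the notion of $C$-isomorphism behaves well.

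First I would treat one variable at a time. Fix $H\in D(G)$ and suppose $f:F_1\to F_2$ is a $C$-isomorphism, i.e. $\Cone(f)\in C$. Applying the triangulated functor $(-)*_G H$ to $f$ gives a canonical isomorphism $\Cone(f*_G\Id_H)\cong\Cone(f)*_G H$, and by the previous Corollary $\Cone(f)*_G H\in C$; hence $f*_G\Id_H:F_1*_G H\to F_2*_G H$ is again a $C$-isomorphism. The symmetric argument, using that $C$ is also stable under $X*_G(-)$, shows that if $g:H_1\to H_2$ is a $C$-isomorphism then $\Id_F*_G g:F*_G H_1\to F*_G H_2$ is a $C$-isomorphism for any fixed $F\in D(G)$.

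Then I would put the two halves together. Given a chain of $C$-isomorphisms joining $F_1$ to $F_2$, applying $(-)*_G H_1$ to each arrow produces a chain of $C$-isomorphisms joining $F_1*_G H_1$ to $F_2*_G H_1$; given a chain of $C$-isomorphisms joining $H_1$ to $H_2$, applying $F_2*_G(-)$ to each arrow produces a chain joining $F_2*_G H_1$ to $F_2*_G H_2$. Concatenating these two chains exhibits $F_1*_G H_1$ and $F_2*_G H_2$ as $C$-isomorphic, which is the claim. The only step carrying any content is the verification that $*_G$ is triangulated in each variable (so that it sends the cone of $f$ to the cone of $f*_G\Id_H$) and that $C$ is closed under shifts and cones; both are routine consequences of the definitions, so I do not anticipate a genuine obstacle here.
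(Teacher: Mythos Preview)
Your proposal is correct and is exactly the argument the paper has in mind: the paper states the Corollary without proof, treating it as immediate from the preceding Corollary that $F*_G X,\ X*_G F\in C$ whenever $F\in C$. Your explicit reduction --- using that $(-)*_G H$ and $F*_G(-)$ are triangulated, so cones are preserved, and then concatenating chains of $C$-isomorphisms --- is precisely the intended justification.
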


\subsubsection{}
We have
\begin{Claim}\label{Ctors} If $F$ and $H$ are $C$-isomorphic, then
$F*_G u_\O$ and $H*_G u_\O$ are isomoprhic up-to torsion
\end{Claim}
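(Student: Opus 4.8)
The plan is to exploit that convolution with $u_\O$ is an exact functor and then invoke Proposition~\ref{diagonal}~1). First, $K\mapsto K*_G u_\O=Rm_!(K\boxtimes u_\O)$ is an exact functor $D(G)\to\cD(G)$: it lands in $\cD(G)$ by Proposition~\ref{checkorthogonal}, and it is the composite of $K\mapsto K\boxtimes u_\O$ (exact, since tensoring sheaves of $\gf$-vector spaces is exact) with $Rm_!$, so it carries distinguished triangles to distinguished triangles. Hence, if $f\colon F\to H$ satisfies $\Cone(f)\in C$, then $F*_G u_\O\to H*_G u_\O$ has cone $\Cone(f)*_G u_\O$; and a chain of $C$-isomorphisms joining $F$ to $H$ yields a chain of morphisms joining $F*_G u_\O$ to $H*_G u_\O$ whose cones are all of the form $K*_G u_\O$ with $K\in C$. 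Thus the Claim reduces to the assertion: \emph{for every $K\in C$, the object $K*_G u_\O$ is a torsion object of $\cD(G)$ (Sec.~\ref{torsio}).}

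To prove this, I would use the Corollary identifying $C$ with $C_U$: every $K\in C$ is built by finitely many extensions (and shifts) out of objects $F_i\in D(G)$, each supported on a translate $g_iU$ of the neighbourhood $U$ from Proposition~\ref{diagonal}~1), with $R\Gamma(G,F_i)=0$. Applying the exact functor $-*_G u_\O$, the object $K*_G u_\O$ is built by the same operations out of the objects $F_i*_G u_\O$, and by Proposition~\ref{diagonal}~1) each $F_i*_G u_\O$ is torsion. So it remains only to check that the torsion objects form a triangulated subcategory of $\cD(G)$; closure under shifts is immediate since $\tau_{0,c}$ is a natural transformation, and closure under extensions is the one point that is not purely formal (though routine), which I expect to be the main — minor — obstacle.

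For that step, let $F'\to F\to F''\to F'[1]$ be distinguished with $\tau_{0,a}(F')=0$ and $\tau_{0,b}(F'')=0$ for some $a,b>0$; I claim $\tau_{0,a+b}(F)=0$. By naturality of $\tau_{0,a}$ the composite $F'\to F\stackrel{\tau_{0,a}(F)}{\to}T_{a*}F$ equals $F'\stackrel{\tau_{0,a}(F')}{\to}T_{a*}F'\to T_{a*}F$, which is $0$; hence $\tau_{0,a}(F)$ factors as $h\circ v$ with $v\colon F\to F''$ and $h\colon F''\to T_{a*}F$. Using the semigroup law $\tau_{0,a+b}=\tau_{a,a+b}\circ\tau_{0,a}$ and the identification $\tau_{a,a+b}(F)=T_{a*}(\tau_{0,b}(F))$, one gets $\tau_{0,a+b}(F)=T_{a*}(\tau_{0,b}(F))\circ h\circ v$; and naturality of $\tau_{0,b}$ applied to $h$ gives $T_{a*}(\tau_{0,b}(F))\circ h=\tau_{0,b}(T_{a*}F)\circ h=T_{b*}(h)\circ\tau_{0,b}(F'')=0$. (All the identifications of morphisms used here reduce to the fact that on $\cD(X)$ one has $T_{c*}(\cdot)=(\cdot)*_\Re\gf_{[c,\infty)}$, that the $\tau$'s are induced by the restriction maps $\gf_{[c,\infty)}\to\gf_{[d,\infty)}$, and that $*_\Re$ is symmetric.) Hence $\tau_{0,a+b}(F)=0$, so $F$ is torsion; feeding this back into the preceding two paragraphs proves Claim~\ref{Ctors}.
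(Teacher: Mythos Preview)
Your proof is correct and follows the same approach as the paper's: reduce to showing $K*_G u_\O$ is torsion for $K\in C=C_U$, then use Proposition~\ref{diagonal}~1) on the generators. The paper's one-line argument leaves implicit exactly the two points you fill in carefully (exactness of $-*_G u_\O$ and closure of torsion objects under extension), so your write-up is a fuller version of the intended proof.
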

\begin{proof}
Indeed, $C= C_U$, where $U$ is the same as in Proposition
\ref{diagonal}. The statement follows immediately from
part 1) of this Proposition.
\end{proof}

\subsection{Proof of Proposition \ref{torus}}

Let $S_k\subset \SU(N)$ be the one-parametric subgroup consisting
of all matrices of the form $\diag(1,1,\ldots,e^{i\phi};e^{-i\phi},1,\ldots,1)$, where $e^{i\phi}$ is at the $k$-th position.
We then have $T=S_1S_2\cdots S_{N-1}$;
$\gf_T=\gf_{S_1}*_G \gf_{S_2}*_G \cdots*_ G \gf_{S_{N-1}}$.

It is clear that the statement of Proposition follows from
\begin{Lemma}\label{Lm} For any $k$, $\gf_{S_k}$ is $C$-isomorphic to
$\gf_e\oplus \gf_e[-1]$
\end{Lemma}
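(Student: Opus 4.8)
I want to show that $\gf_{S_k}$, viewed as an object of $D(G)$ via the inclusion $S_k\into G$, is $C$-isomorphic to $\gf_e\oplus\gf_e[-1]$, where $\gf_e$ is the skyscraper at the unit. The key point is that $S_k\cong S^1$ is a circle, and the constant sheaf on a circle has total cohomology $H^\bullet(S^1)=\gf\oplus\gf[-1]$; the extra data is merely that we must realize the isomorphism $R\Gamma(S^1)\simeq\gf\oplus\gf[-1]$ \emph{through an object of $C$}, i.e.\ up to objects of the form $R\pi_!X$ with $\pi:Q\to G$ a map from a cube and $R\Gamma(Q,X)=0$.

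First I would cover the circle $S_k$ by two open arcs $I_0,I_1$, each contractible, with $I_0\cap I_1$ a disjoint union of two contractible arcs. Then for the constant sheaf $\gf_{S_k}$ I have the Mayer--Vietoris triangle in $D(S_k)$, hence (pushing forward along $j_k:S_k\into G$) a triangle in $D(G)$
$$
\gf_{S_k}\to \gf_{I_0}\oplus\gf_{I_1}\to \gf_{I_0\cap I_1}\to\gf_{S_k}[1],
$$
where I write $\gf_{I}$ for $Rj_{k!}$ of the extension by zero, or better: I replace $\gf_{S_k}$ by the cone of $\gf_{I_0\cap I_1}\to\gf_{I_0}\oplus\gf_{I_1}$ shifted. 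Each of $I_0$, $I_1$ and the two components of $I_0\cap I_1$ is the continuous image of a cube (an interval), and its constant sheaf with compact support has vanishing $R\Gamma$ only after one more adjustment — I want objects with $R\Gamma(Q,X)=0$. The clean way: each interval $I$ contracts to a point $g\in G$, so there is a map $\pi_I:Q=[0,1]\to G$ with $\pi_I(Q)=\overline{I}$; and $\Cone(\gf_Q\to\gf_{\{0\}})=\gf_{(0,1]}[\ldots]$ type objects have vanishing global sections. Concretely I would observe that $\gf_{I}$ (open interval) fits in a triangle with $\gf_{\overline I}$ and a skyscraper, and $\gf_{\overline I}\cong R\pi_{I!}\gf_Q$ with $\gf_Q$ NOT acyclic — so instead I'll work with the reduced object: $\gf_{\overline I}$ is $C$-isomorphic to $\gf_{\text{pt}}$ since $\Cone(\gf_{\overline I}\to\gf_{\text{pt}})=R\pi_{I!}(\text{something acyclic on }Q)$ up to shift, using that $\overline I$ deformation retracts to a point.

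Assembling: in the Mayer--Vietoris triangle, each of $\gf_{I_0},\gf_{I_1}$ is $C$-isomorphic to a skyscraper $\gf_{g_i}$, and $\gf_{I_0\cap I_1}$ is $C$-isomorphic to $\gf_{g_0'}\oplus\gf_{g_1'}$. But all these base points lie on the connected group $S_k$, and translating a skyscraper is a $C$-isomorphism (the homotopy connecting $e$ to any $g\in S_k$ furnishes, via $\pi:[0,1]\to G$, an object $R\pi_!X$ with $R\Gamma=0$ interpolating $\gf_e$ and $\gf_g$). Hence all four terms are $C$-isomorphic to $\gf_e$, the two maps $\gf_{I_0\cap I_1}\to\gf_{I_0}\oplus\gf_{I_1}$ become, up to $C$, the diagonal-type map $\gf_e\oplus\gf_e\to\gf_e\oplus\gf_e$ computing reduced $\check C$ech cohomology of $S^1$, whose cone is $\gf_e\oplus\gf_e[-1]$. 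Invoking the fact (proved in the excerpt just above, via the cube lemma) that $C=C_U$, together with Corollary \ref{conv}, this gives the asserted $C$-isomorphism $\gf_{S_k}\simeq\gf_e\oplus\gf_e[-1]$.

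\textbf{Main obstacle.} The routine part is Mayer--Vietoris; the delicate part is bookkeeping that every "elementary" sheaf appearing (constant sheaf on an arc, on a pair of arcs, and the translation homotopies) genuinely lands in $C$, i.e.\ is a finite extension of objects $R\pi_!X$ with $\pi$ defined on a \emph{cube} and $R\Gamma(Q,X)=0$ — the acyclicity condition is what forces one to pass to cones/reduced versions rather than the constant sheaves themselves, and one must check the maps in the Mayer--Vietoris triangle are compatible with these $C$-isomorphisms. I expect this compatibility check — that the connecting map of the triangle, after replacing all terms by $\gf_e$'s, really is the one with cone $\gf_e\oplus\gf_e[-1]$ and not something that could shift degrees — to be where the real care is needed; everything else is formal once $C=C_U$ and Corollary \ref{conv} are in hand.
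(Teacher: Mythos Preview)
There is a concrete error and, beneath it, a structural gap.

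The error: you claim each $\gf_{I_j}$ (open arc, extended by zero) is $C$-isomorphic to the skyscraper $\gf_e$. It is not. Every object of $C$ satisfies $R\Gamma(G,-)=0$ (since $G$ is compact and $R\Gamma_c(G,R\pi_!X)=R\Gamma(Q,X)=0$, and this is stable under extensions), so $R\Gamma(G,-)$ is an invariant of the $C$-isomorphism class; but $R\Gamma(G,\gf_{I_j})=R\Gamma_c(I_j,\gf)=\gf[-1]$ while $R\Gamma(G,\gf_e)=\gf$. The same obstruction hits $\gf_{I_0\cap I_1}$. You half-noticed the issue (``so instead I'll work with the reduced object'') and then reverted to the false claim anyway.

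The deeper problem survives even if you pass to a closed cover $S_k=J_0\cup J_1$ with $J_0\cap J_1=\{p,q\}$, where the $R\Gamma$-obstruction disappears. Knowing that the source and target of $\gf_{J_0}\oplus\gf_{J_1}\to\gf_p\oplus\gf_q$ are each $C$-isomorphic to $\gf_e\oplus\gf_e$ tells you nothing about the cone: $C$-isomorphism is a zigzag relation, not a functor, so maps and cones do not transport along it. What is needed is an honest arrow realising the $C$-isomorphism and fitting into a commuting square with the restriction map. You correctly flag this as ``where the real care is needed'' but supply no mechanism --- yet this is the entire content of the lemma. The paper handles precisely this point by a different route: it reduces to $S_1\subset\SU(2)$, writes $\gf_{S_1}$ as the cone of an explicit map $\alpha\oplus\beta$ built from the exponential of the $3$-ball $B\subset\su(2)$ (whose boundary sphere $\CP^1$ collapses to $-I$), replaces the diameter $[-\pi,\pi]\subset B$ by $B$ via a single genuine $C$-isomorphism $\gamma$, and then proves that the resulting map $\alpha\gamma$ \emph{factors through} $\beta$. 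That factorisation --- ultimately a statement about $H^\bullet(\CP^1)\to H^\bullet(\pt)$ --- forces $\Cone(\alpha\gamma\oplus\beta)\cong\Cone(0\oplus\beta)$, which visibly splits. Nothing in your Mayer--Vietoris sketch plays the role of this factorisation step.
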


Indeed,  Corollary \ref{conv}
will then imply that
$\gf_T$ is $C$-isomorphic to $(\gf_e\oplus \gf_e[-1])^{* N-1}=
\gf_e\otimes_{\gf} H^\bullet(T,\gf)$. Therefore,
by Claim \ref{Ctors}, the objects $\gf_T*u_\O$ and
$(\gf_e\otimes_\gf H^\bullet(T,\gf)))*u_\O=
u_\O\otimes_\gf H^\bullet(T,\gf)$ are isomorphic up-to torsion.

It now remains to prove Lemma
\subsubsection{Proof of Lemma \ref{Lm}}

As all subgroups $S_k$ are conjugated in $G$, it suffices
to prove Lemma for $S_1$.  One then has $S_1\subset \SU(2)\subset
\SU(N)$, where the embedding $\SU(2)\subset \SU(N)$ is induced
by the standard decomposition $\Co^N=\Co^2\oplus \Co^{N-2}$.
Let $U$ be an open neighborhood of unit in $\SU(N)$ and
let $U':=U\cap \SU(2)$. Let $\iota:\SU(2)\subset \SU(N)$ be the inclusion. It is clear that $i_*C_{U'}\subset C_U$, hence
if two objects $F_1,F_2\in D(\SU(2))$ are $C$-isomorphic, then
so are $i_*F_1$ and $i_*F_2$. Therefore, in order to prove
Lemma, it suffices to show that {\em $\gf_{S_1}$ and $\gf_e\oplus \gf_e[-1]$ viewed as objects of $D(\SU(2))$ are $C$-isomorphic.}

Let $B\subset \su(2)$ consist of all matrices of the form
$iM$, where $M$ is a Hermitian matrix whose eigenvalues
has absolute value of at most $\pi$. Let $B_\pi\subset B$ be the subset
of all matrices $iM$, where the eigenvalues of $M$ are precisely
$\pi$ and $-\pi$. It is clear the $B$ is diffeomorphic to
a 3-dimensional closed ball and $B_\pi\subset B$ is the boundary
2-sphere.

Let $I:[-\pi;\pi]\to B$ be given  by 
$I(\phi)=i\diag(\phi;{-\phi})$.

We then have a diagram
$$\xymatrix{
[-\pi;\pi]\ar[r]^{i_1}& B\ar[r]^{i_2}& \SU(2)\\
\{-\pi,\pi\}\ar[u]^{a_1}\ar[r]^{k_1}& B_\pi\ar[u]^{a_2}\ar[r]^{k_3}&
 \{-I\}\ar[u]^{a_3}}
$$
where $i_2$ is induced by the exponential map $\su(2)\to \SU(2)$;
$a_1,k_1,a_2,a_3$ are obvious inclusions; $k_3$ is the projection.
We then have 
\begin{equation}\label{for}
\gf_{S_1}=\Cone (R(i_2i_1)_!\gf_{[-\pi;-\pi]}\oplus a_{3!}\gf_{-I}\to 
R(i_2i_1a_1)_!\gf_{\{-\pi,\pi\}}).
\end{equation}
The arrow in this equation is induced by natural maps
$$
\alpha:R(i_2i_1)_!\gf_{[-\pi;-\pi]}\to R(i_2i_1a_1)_!\gf_{\{-\pi,\pi\}})
$$
and
$$
\beta:a_{3!}\gf_{-I}\to  R(i_2i_1a_1)_!\gf_{\{-\pi,\pi\}}=
R(a_3k_3k_1)_!\gf_{\{-\pi,\pi\}}
$$
where $\alpha$ is induced by the natural map
$$
\gf_{[-\pi;-\pi]}\to a_{1!}\gf_{\{-\pi,\pi\}}
$$
induced by the embedding $\{-\pi,\pi\}\subset [-\pi,\pi]$.

The map $\beta$ is induced by the natural map
$$
\gf_{-I}\to (k_3k_1)_!\gf_{\{-\pi,\pi\}}=(k_3k_1)_*(k_3k_1)^{-1}
\gf_{\{-\pi,\pi\}}.
$$
We have a $C$-isomorphism
$$
\gamma:Ri_{2!}\gf_B\to R(i_2i_1a_1)_!\gf_{[-\pi,\pi]}
$$
Therefore the object in (\ref{for})
is $C$-isomorhic to
\begin{equation}\label{vtor}
\Cone (Ri_{2!}\gf_{B}\oplus \gf_{-I}\stackrel{\alpha_1\oplus \beta}\to 
R(i_2i_1a_1)_!\gf_{\{-\pi,\pi\}})
\end{equation}
where
$\alpha_1=\alpha\gamma$.

The map $\alpha_1:Ri_{2!}\gf_B\to R(i_2i_1a_1)_!\gf_{\{-\pi,\pi\}}$
can be factored as
$$
Ri_{2!}\gf_B\to R(i_2a_2)_! \gf_{B_\pi}\to 
 R(i_2i_1a_1)_!\gf_{\{-\pi,\pi\}}.
$$

Observe that $B_\pi=\CP^1$ and that
 $ R(i_2a_2)_! \gf_{B_\pi}\cong H^*(B_\pi,\gf)\otimes_\gf \gf_{-I}$.
Next $R(i_2i_1a_1)_!\gf_{\{-\pi,\pi\}}=\gf_{-I}\oplus \gf_{-I}$.
The map $R(i_2a_2)_! \gf_{B_\pi}\to 
 R(i_2i_1a_1)_!\gf_{\{-\pi,\pi\}}$ factors as
$$
R(i_2a_2)_! \gf_{B_\pi} =a_{3!}\gf_{-I}\otimes_\gf H^*(\CP^1)\to 
a_{3!}\gf_{-I}\stackrel \beta\to a_{3!}( \gf_{-I}\oplus \gf_{-I})=
 R(i_2i_1a_1)_!\gf_{\{-\pi,\pi\}}$$.
Thus we see that $\alpha_1$ factors as $\alpha_1=\beta u$.
It is well known that in this case we have a quasi-isomorphism
$$
\Cone(\alpha_1\oplus \beta)\cong \Cone(0\oplus \beta).
$$
 meaning that the object in $(\ref{vtor})$ is isomorphic to $Ri_{2!}\gf_B\oplus \gf_{-I}[-1]$ (because $\Cone(\beta)\cong \gf_{-I}[-1]$).

Let $\ve:0\in B$ be the zero matrix. one then has a $C$-isomorphism 
$Ri_{2!}\gf_B \to Ri_{2!}\ve_!\gf_0=\gf_e$. Analogously, by choosing a
point $0'\in B_\pi$, one gets a $C$-isomorphism $Ri_{2!}\gf_B\to \gf_{-I}$.
Therefore, 
the object in (\ref{vtor}) is $C$-isomorhic
to $\gf_e\oplus \gf_{-I}[-1]$ and $\gf_{-I}$ is $C$-isomorphic
with $\gf_e$ (via $Ri_{2!}\gf_B$). Thus, the object in (\ref{vtor}),
hence $\gf_{S_1}$ is $C$-isomorphic to $\gf_e\oplus \gf_e[-1]$.
Lemma is proven.

\subsection{Proof of Proposition \ref{RPN}}.
In this subsection we fix $\char \gf=2$.

We have standard embeddings
$$
\SO(2)\subset \SO(3)\subset \cdots\subset \cdots \SO(N)\subset \SU(N)
$$
where the embedding $\SO(k)\subset \SO(N)$ is induced by the 
embedding $\Re^k\hookrightarrow \Re^N$; 
$(x_1,x_2,\ldots,x_k)\mapsto
(x_1,x_2,\ldots,x_k,0,\ldots,0)$.

We will prove the following statement.
\begin{Lemma} \label{SO} The sheaf $\gf_{\SO(k)}\in D(\SU(N))$
is $C$-isomorphic to $\gf_{\SO(k-1)}\oplus \gf_{\SO(k-1)}[1-k]$,
for all $k\geq 2$.
\end{Lemma}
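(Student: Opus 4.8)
The plan is to mimic the proof of Lemma~\ref{Lm}, now using the fibre bundle structure of $\SO(k)$ over a sphere in place of the cell decomposition of $\SU(2)$. With the standard embeddings $\SO(k-1)\subset\SO(k)\subset\SU(N)$, let $p\colon\SO(k)\to S^{k-1}\subset\Re^k$, $g\mapsto ge_k$, be the projection onto the last column; its fibres are the left cosets of $\SO(k-1)$. Present $S^{k-1}$ as $D^{k-1}/\partial D^{k-1}$, collapsing the boundary to a point $n$, and set $E:=D^{k-1}\times\SO(k-1)$. A continuous trivialising section of $p$ over $S^{k-1}\setminus\{n\}$ yields a proper surjection $\tilde q\colon E\to\SO(k)\subset\SU(N)$ which is a diffeomorphism over $\SO(k)\setminus F$, where $F:=p^{-1}(n)\cong\SO(k-1)$ is a coset, and whose restriction to $\partial E=S^{k-2}\times\SO(k-1)$ coincides, up to a diffeomorphism of $\partial E$ absorbing the clutching function, with the projection $S^{k-2}\times\SO(k-1)\to\SO(k-1)\cong F$. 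In other words $\SO(k)$ is the pushout of $E\hookleftarrow\partial E\to F$.

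This pushout yields a distinguished triangle in $D(\SU(N))$, the analogue of~(\ref{for}):
\[ \gf_{\SO(k)}\cong\Cone\bigl(R\tilde q_!\gf_E\oplus\gf_F\xrightarrow{(\alpha,\beta)}R\tilde q_!\gf_{\partial E}\bigr)[-1], \]
where $\gf_F$ and $R\tilde q_!\gf_{\partial E}$ are viewed as sheaves on $\SU(N)$ supported on $F$, the map $\alpha$ is induced by the restriction $\gf_E\to\gf_{\partial E}$ and $\beta$ by the unit $\gf_F\to R(\tilde q|_{\partial E})_!\gf_{\partial E}$. Since $\tilde q|_{\partial E}$ is conjugate to a projection, Künneth gives $R\tilde q_!\gf_{\partial E}\cong\gf_F\otimes H^*(S^{k-2},\gf)=\gf_F\oplus\gf_F[2-k]$, with $\beta$ the inclusion of the first summand. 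Next, $R\tilde q_!\gf_E$ is $C$-isomorphic to $\gf_F$: restricting $\gf_E$ to the central fibre $\{c\}\times\SO(k-1)$ gives a map $R\tilde q_!\gf_E\to\gf_{F_c}$ ($F_c$ the coset $p^{-1}$ of the image of the disk centre) whose cone is $R\tilde q_!$ of a sheaf on $E$ with vanishing global sections (as $D^{k-1}$ is contractible), hence lies in $C$; and $\gf_{F_c}\sim_C\gf_F\sim_C\gf_{\SO(k-1)}$ because these cosets are joined by paths whose traces again produce cones with vanishing global sections. Here one first extends the cube Lemma preceding the Corollaries from cubes $[0,1]^M$ to arbitrary triangulated compact manifolds $W$ (so that $R\pi_!X\in C$ whenever $R\Gamma(W,X)=0$), by rerunning its subdivision/Mayer--Vietoris argument over a triangulation.

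It remains to split the cone. Writing $\alpha=(\alpha_1,\alpha_2)$ with $\alpha_2\colon R\tilde q_!\gf_E\to\gf_F[2-k]$, the key claim is that $\alpha_2=0$ in $D(\SU(N))/C$; granting this, $\alpha=\beta\alpha_1$ with $\alpha_1$ a $C$-isomorphism $R\tilde q_!\gf_E\xrightarrow{\sim_C}\gf_F$, and exactly as in Lemma~\ref{Lm} a change of coordinates in the source gives $\Cone(\alpha,\beta)\cong R\tilde q_!\gf_E[1]\oplus\Cone(\beta)\sim_C\gf_F[1]\oplus\gf_F[2-k]$, whence $\gf_{\SO(k)}\sim_C\gf_F\oplus\gf_F[1-k]\sim_C\gf_{\SO(k-1)}\oplus\gf_{\SO(k-1)}[1-k]$. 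The vanishing of $\alpha_2$ modulo $C$ is the sheaf-theoretic shadow of the degeneration of the mod-$2$ Serre spectral sequence of $\SO(k-1)\to\SO(k)\to S^{k-1}$, i.e.\ of the vanishing of the transgression $H^{k-2}(\SO(k-1),\gf)\to H^{k-1}(S^{k-1},\gf)$ (equivalently, of the surjectivity of $H^*(\SO(k),\gf)\to H^*(\SO(k-1),\gf)$); this is the only place where $\char\gf=2$ enters, and it is essential, since for $\char\gf=0$ the statement already fails at $k=3$, where $\SO(3)=\RP^3$.

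The main obstacle I expect is this last step: matching the sheaf-level map $\alpha_2$ (modulo $C$) with the spectral-sequence transgression precisely enough to invoke $\char\gf=2$. The bookkeeping for the extension of the cube Lemma to triangulated compact manifolds, and for the various $C$-isomorphisms living on the non-cube manifolds $D^{k-1}\times\SO(k-1)$, is routine but will take some care; the base case $k=2$ is essentially Lemma~\ref{Lm} (there no transgression occurs and no characteristic hypothesis is needed).
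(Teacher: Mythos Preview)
Your overall strategy---present $\SO(k)$ as a pushout over the base sphere $S^{k-1}$ and split the resulting cone---is the same as the paper's, and your bookkeeping extensions (the cube Lemma for triangulated compacta, the various coset $C$-isomorphisms) are all attainable. But the route diverges precisely at the step you flag as the main obstacle, and I do not see how to close it along the lines you propose.

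The difficulty is this. Your map $\alpha_2\colon R\tilde q_!\gf_E\to\gf_F[2-k]$ is nonzero in $D(\SU(N))$: by proper base change its restriction to $F$ is the projection $\gf_F\otimes H^*(S^{k-2})\to\gf_F[2-k]$. So ``$\alpha_2=0$ modulo $C$'' must mean that some $C$-isomorphism $\psi\colon X'\to R\tilde q_!\gf_E$ satisfies $\alpha_2\psi=0$. Now the degeneration of the Serre spectral sequence only gives $R\Gamma(\alpha_2)=0$; but that holds in \emph{every} characteristic (the restriction $H^*(E)\to H^*(\partial E)$ always lands in $H^0(S^{k-2})\otimes H^*(\SO(k-1))$), so it cannot be where $\char\gf=2$ enters. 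The genuine obstruction lives in your identification $R\tilde q_!\gf_{\partial E}\cong\gf_F\oplus\gf_F[2-k]$, which is made via a diffeomorphism of $\partial E$ absorbing the clutching function $\phi$---a diffeomorphism that does not extend over $E$. Unwinding it, $\alpha_2$ is governed by the Hopf coproduct on $H^*(\SO(k-1))$ together with $\phi^*$, and translating ``transgression vanishes'' into the existence of a suitable $\psi$ is not automatic.

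The paper avoids this by a different, more geometric manoeuvre. It first passes to the quotient $M=\SU(k)/\SO(k-1)$ (Lemma~\ref{perenos} ensures $C(M)$-isomorphisms pull back to $C$-isomorphisms), reducing to showing that $\gf_S\sim_{C(M)}\gf_{\bar e}\oplus\gf_{\bar e}[1-k]$ for the image sphere $S\cong S^{k-1}\subset M$. The fibre $\SO(k-1)$ has disappeared. It then writes down an explicit homotopy $\mu\colon[0,\pi/4]\times B\to\SU(k)$, crucially leaving $\SO(k)$, whose end $c_1$ is visibly invariant under $b\mapsto -b$ and hence factors through the quotient $B/2$. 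After this the boundary becomes $\partial B/2=\RP^{k-2}$, and the characteristic-$2$ input is the concrete fact that the pullback $H^*(\RP^{k-2};\gf)\to H^*(S^{k-2};\gf)$ along the double cover factors through $H^0$. That is what forces the analogue of your $\alpha$ to factor through $\beta$; no spectral-sequence statement is invoked. The two ideas your sketch lacks are the passage to $M$ (which kills the fibre) and the use of the ambient $\SU(k)$ to homotope the attaching map until a $\mathbb Z/2$-symmetry appears.
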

It is clear that this Lemma implies the Proposition.
Let us now prove Lemma
\subsubsection{}
We have an embedding $\SO(k)\subset \SU(k)\subset \SU(N)$ and, in the same way as in the proof of Lemma \ref{Lm}, it suffices to prove
that $\gf_{\SO(k)}$ is $C$-isomorphic to $\gf_{\SO(k-1)}\oplus
\gf_{\SO(k-1)}[1-k]$ in $D(\SU(k))$. 

\subsubsection{} Let $M:=\SU(k)/\SO(k-1)$, let $\Pi:\SU(k)\to M$ be
the canonical projection.  

For any smooth manifold $Y$
Let $C(Y)\subset D(Y)$ be the full
subcategory formed by finite extensions of objects of the form
$Rp_!X$ where $p:Q\to Y$ is a continuos map, $Q=[0,1]^{M}$,
 $M\geq 0$,
$X\in D(Q)$; $R\Gamma(Q,X)=0$.

\begin{Lemma}\label{perenos} If $F\in C(M)$, then $\Pi^{-1}F\in C(\SU(k))$.
\end{Lemma}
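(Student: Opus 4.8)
The plan is to reduce the statement, via the exactness of $\Pi^{-1}$, to a single generator of $C(M)$, and then to transport the question along the bundle $\Pi$ by base change. Since $\Pi^{-1}$ is exact it carries distinguished triangles to distinguished triangles, hence finite extensions to finite extensions; as $C(\SU(k))$ is by definition closed under finite extensions, it is enough to prove $\Pi^{-1}(Rp_!X)\in C(\SU(k))$ for one generator, i.e. for a continuous map $p\colon Q\to M$ with $Q=[0,1]^m$ and $X\in D(Q)$ such that $R\Gamma(Q,X)=0$.

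Next I would form the Cartesian square
$$
\xymatrix{
\widetilde Q \ar[r]^{\widetilde\Pi}\ar[d]_{\widetilde p} & Q \ar[d]^{p}\\
\SU(k) \ar[r]^{\Pi} & M}
$$
with $\widetilde Q:=Q\times_M\SU(k)$, and invoke base change to obtain $\Pi^{-1}(Rp_!X)\cong R\widetilde p_!\,\widetilde X$, where $\widetilde X:=\widetilde\Pi^{-1}X$. The map $\widetilde\Pi$ is the pullback along $p$ of the locally trivial bundle $\Pi\colon\SU(k)\to M$ with fibre $\SO(k-1)$, hence is itself a locally trivial $\SO(k-1)$-bundle; since its base $Q$ is contractible it is trivial, so a trivialization identifies $\widetilde Q\cong Q\times\SO(k-1)$ and, accordingly, $\widetilde X\cong X\boxtimes\gf_{\SO(k-1)}$.

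The core of the argument is then to cut $\gf_{\SO(k-1)}$ into pieces supported on cubes. I would fix a finite triangulation of the compact manifold $\SO(k-1)$ and pass to its dual block decomposition $\SO(k-1)=\bigcup_{i=1}^{N}D_i$: each closed dual block $D_i$ is a PL ball, hence homeomorphic to a closed cube, and every nonempty intersection $D_I:=\bigcap_{i\in I}D_i$ is again a closed dual block, hence again homeomorphic to a closed cube. Writing $\iota_I\colon D_I\hookrightarrow\SO(k-1)$ for the inclusions, the closed \v{C}ech complex
$$
\gf_{\SO(k-1)}\longrightarrow\bigoplus_{|I|=1}\iota_{I*}\gf_{D_I}\longrightarrow\bigoplus_{|I|=2}\iota_{I*}\gf_{D_I}\longrightarrow\cdots
$$
is exact, as one checks on stalks (each stalk complex is the augmented cochain complex of a nonempty simplex), and it is finite. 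Hence $\gf_{\SO(k-1)}$, and after tensoring also $\widetilde X\cong X\boxtimes\gf_{\SO(k-1)}$, is a finite iterated extension of shifts of the objects $(\mathrm{id}_Q\times\iota_I)_*\,(X\boxtimes\gf_{D_I})$. Applying $R\widetilde p_!$ and using that $\mathrm{id}_Q\times\iota_I$ is a closed embedding, $R\widetilde p_!\,\widetilde X$ becomes a finite iterated extension of shifts of $R\bigl(\widetilde p\circ(\mathrm{id}_Q\times\iota_I)\bigr)_!\,(X\boxtimes\gf_{D_I})$. Since $Q\times D_I$ is homeomorphic to a cube and
$$
R\Gamma(Q\times D_I,\,X\boxtimes\gf_{D_I})\cong R\Gamma(Q,X)\otimes_\gf R\Gamma(D_I,\gf)=0,
$$
each of these objects is one of the generators of $C(\SU(k))$; therefore $\Pi^{-1}(Rp_!X)\cong R\widetilde p_!\,\widetilde X\in C(\SU(k))$, which is what is needed.

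The step I expect to be the main obstacle is the decomposition: one must chop the total space into pieces that are simultaneously cubes and carry sheaves with vanishing global cohomology. The trick that makes it go through is to subdivide the \emph{fibre} $\SO(k-1)$ rather than $\widetilde Q$ itself — then $Q\times D_I$ remains a cube and the hypothesis $R\Gamma(Q,X)=0$ forces $R\Gamma(Q\times D_I,\,X\boxtimes\gf_{D_I})=0$ irrespective of the topology of $D_I$ — combined with the standard fact from PL topology that a triangulated compact manifold admits a finite closed cover by cells (its dual blocks) each a ball and with all nonempty intersections again balls.
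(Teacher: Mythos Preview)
Your proof is correct and follows essentially the same route as the paper's: reduce to a single generator $Rp_!X$, apply proper base change along $\Pi$, trivialize the pulled-back bundle over the contractible cube $Q$ to identify $\widetilde X$ with $X\boxtimes\gf_{\SO(k-1)}$, and then cut the fibre $\SO(k-1)$ into closed cubes so that each resulting piece on $Q\times(\text{cube})$ has vanishing $R\Gamma$. The only difference is cosmetic: the paper takes an arbitrary finite closed cover of $\SO(k-1)$ by cubes and invokes the general fact that any object is a finite extension of objects supported on the cover members, while you use the explicit \v{C}ech resolution attached to a dual-block cover; both arguments land on the same generators of $C(\SU(k))$.
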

\begin{proof} Let $p:Q\to M$ be a continuous map.
 $\Pi$ is a locally trivial fibration with fiber
$\SO(k-1)$, let $\Pi_Q:\SU(k)\times_M Q\to Q$ be the pull-back of this fibration with respect to the map $p:Q\to M$.
The fibration $\Pi_Q$ is trivial, hence we have a homeomorphism
$$
\SO(k-1)\times Q\cong \SU(k)\times_M Q.
$$
We then have   natural maps
$$
\pi:\SO(k-1)\times Q\cong \SU(k)\times_M Q\to \SU(k);
$$
Let $q':\SO(k-1)\times Q\to \SO(k-1)$,
$q:\SO(k-1)\times Q\to Q$, be  projections.
Let $X\in D(Q)$, $R\Gamma(Q,X)=0$. We then have
$\Pi^{-1}Rp_!X=R\pi_!q^{-1}X$.

Let us cover $$\SO(k-1)=\bigcup\limits_{i=1}^n Q_i,$$
 where each $Q_i\subset \SO(k-1)$ is a closed subset homeomorphic
to a  cube.
One then can represent the sheaf $\gf_{\SU(k-1)}$ (actually any
object of $D(\SU(k-1))$ as a finite extension formed by objects
$Y_i\in D(\SU(k-1))$ such that each $Y_i$ is supported on 
$Q_{l_i}$ for some $l_i$. Let $Z_i\in D(Q_{l_i})$, 
$Z_i=Y_i|_{Q_{l_i}}$.
The object
$q^{-1}X$ is then a finite extension of objects of the form
$$
q^{-1}X\otimes (q')^{-1} Y_i
$$

Let $\pi_i:Q_{l_i}\times Q\to \SO(k-1)\times Q\to \SU(k)$
be the through map. Let $q_i:Q_{l_i}\times Q\to Q$, $p_i:Q_{l_i}\times Q\to Q_{l_i}$ be projections.
 
 We then have
$
R\pi_!q^{-1}X
$
is a finite extension formed by objects 

$$R\pi_!(q^{-1}X\otimes (q')^{-1}Y_i)\cong R\pi_{i!}(q_i^{-1}X\otimes 
p_i^{-1}Z_i)\in C(\SU(k)).$$
Therefore, $\Pi^{-1}R\pi_! X\in C(\SU(k))$, whence the statement.
\end{proof}
\subsubsection{} We have an identification $\SO(k)/\SO(k-1)=S^k$.
We have the natural map $S^k=\SO(k)/\SO(k-1)\to \SU(k)/\SO(k-1)=M$.

This map is an embedding; denote the image of this embedding
$S\subset M$. 
Let $\overline{e}\in S^{k-1}$ be the image of the unit of $\SO(k)$. Fix the 
standard basis $(e^1,e^2,\ldots,e^k)$ in $\Re^k$. Then 
$S^k$ gets identified with the unit sphere in $\Re^k$ and
$\overline{e}=e^k$. The point $\overline{e}$ 
determines a point on $S$, to be also
 denoted by $\overline{e}$.

Lemma \ref{perenos} implies that Lemma \ref{SO}
follows from the following statement:
\begin{Lemma} The object $\gf_{S}$ is $C(M)$-equivalent
to $\gf_{\overline{e}}\oplus \gf_{\overline{e}}[1-k]$.
\end{Lemma}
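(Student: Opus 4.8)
The plan is to follow the pattern of the proof of Lemma~\ref{Lm}.

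\emph{Step 1 (realize $S$ as a disc modulo its boundary).} Identify $S\cong S^{k-1}$ with the quotient $D/\partial D$ of a closed $(k-1)$-disc $D$: there is a continuous map $\phi\colon D\to S\subset M$ sending $\partial D\cong S^{k-2}$ to $\overline{e}$ and restricting to a homeomorphism $D^{\circ}\xrightarrow{\ \sim\ }S\setminus\{\overline{e}\}$ (for instance the Riemannian exponential of $S$ based at the point antipodal to $\overline{e}$, composed with $S\into M$; note $\phi|_{\partial D}$ is then the constant map to $\overline{e}$). Exactly as in the derivation of (\ref{for}), a Mayer--Vietoris argument presents $\gf_S$ as
$$
\gf_S\ \cong\ \Cone\Bigl(R\phi_!\gf_D\ \oplus\ \gf_{\overline{e}}\ \xrightarrow{\ \alpha\oplus\beta\ }\ R(\phi|_{\partial D})_!\gf_{\partial D}\Bigr),
$$
where $R(\phi|_{\partial D})_!\gf_{\partial D}\cong H^\bullet(S^{k-2};\gf)\otimes_\gf\gf_{\overline{e}}$, $\beta$ is induced by the constant map $S^{k-2}\to\pt$, and $\alpha$ is induced by the restriction $\gf_D\to\gf_{\partial D}$.

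\emph{Step 2 (make $\alpha$ factor through $\beta$).} This is the analogue of passing from $[-\pi,\pi]$ to the ball $B=\su(2)_{\le\pi}$ in the proof of Lemma~\ref{Lm}, and is the part needing genuine work. I would produce a compact contractible $B$ (a ball) with a continuous map $\psi\colon B\to M$ such that $D\subset B$ is a properly embedded ``diameter'' disc with $\partial D=D\cap\partial B$, $\psi|_D=\phi$, $\psi(\partial B)$ a single point (hence $C(M)$-equivalent to $\overline{e}$ via a path), and $\partial B$ a sphere of dimension $>k-2$. Granting such $(B,\psi)$: the closed inclusion $D\into B$ gives a map $\gf_B\to\gf_D$ in $D(B)$ whose cone $K$ has $R\Gamma(B,K)=0$ (both $B$ and $D$ are contractible), so by the cube-subdivision argument underlying the definition of $C(M)$ we get $Rq_!K\in C(M)$ for $q\colon B\to M$, whence a $C(M)$-isomorphism $\gamma\colon R\psi_!\gf_B\to R\phi_!\gf_D$. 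Since $\partial D\subset\partial B$, the composite $\alpha\gamma$ factors as
$$
R\psi_!\gf_B\ \longrightarrow\ R(\psi|_{\partial B})_!\gf_{\partial B}\ \cong\ H^\bullet(\partial B;\gf)\otimes_\gf\gf_{\overline{e}}\ \longrightarrow\ H^\bullet(S^{k-2};\gf)\otimes_\gf\gf_{\overline{e}},
$$
the last arrow being restriction along $\partial D\subset\partial B$. As $\partial B$ is a sphere of dimension $>k-2$, this restriction kills everything above degree $0$, so $\alpha\gamma$ factors through $\gf_{\overline{e}}\xrightarrow{\beta}H^\bullet(S^{k-2};\gf)\otimes\gf_{\overline{e}}$, i.e.\ $\alpha\gamma=\beta u$ for some $u\colon R\psi_!\gf_B\to\gf_{\overline{e}}$.

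\emph{Step 3 (conclude).} Just as in Lemma~\ref{Lm}, from $\alpha\gamma=\beta u$ one gets $\Cone(\alpha\gamma\oplus\beta)\cong\Cone(0\oplus\beta)\cong R\psi_!\gf_B\oplus\Cone(\beta)$, and $\Cone\bigl(\beta\colon\gf_{\overline{e}}\to H^\bullet(S^{k-2};\gf)\otimes\gf_{\overline{e}}\bigr)\cong\gf_{\overline{e}}[1-k]$ (for $k=2$ this reproduces the $\gf_e[-1]$ of Lemma~\ref{Lm}). Finally $R\psi_!\gf_B$ is $C(M)$-equivalent to $\gf_{\overline{e}}$: pick $b\in B$, use $\gf_B\to\gf_b$ (whose cone has vanishing $R\Gamma$) and join $\psi(b)$ to $\overline{e}$ by a path. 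Hence $\gf_S$ is $C(M)$-equivalent to $\gf_{\overline{e}}\oplus\gf_{\overline{e}}[1-k]$.

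\emph{The main obstacle} is Step~2, the construction of the auxiliary ball $(B,\psi)$ — the precise analogue of the ball $\su(2)_{\le\pi}$ together with the computation $\exp(\partial B)=\{-I\}$ that was available in Lemma~\ref{Lm}. This is where the special geometry of $M=\SU(k)/\SO(k-1)$ enters; I would build $B$ from a ball in a suitable subspace of $\su(k)$ (or from normal coordinates at $\overline{e}$ followed by a radial blow-up), arranging that the relevant boundary sphere is collapsed to $\overline{e}$. Everything else above is formal and parallels the earlier lemmas of this section.
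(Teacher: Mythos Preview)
Your scaffolding in Steps 1 and 3 is sound and parallels the paper's argument. The problem is Step 2, which you rightly flag as the crux but then leave undone. In fact the construction you propose is \emph{obstructed}: extending $\phi\cup\text{const}$ from $D\cup\partial B$ to the contractible ball $B$ forces the map $D\cup\partial B\to M$ to be null-homotopic; since $n>k-1$ one has $D\cup\partial B\simeq S^{n-1}\vee S^{k-1}$ with the map constant on the first wedge summand, so the obstruction is exactly the class $[S]\in\pi_{k-1}(M)$. For $k=3$ the long exact sequence of $\SO(2)\to\SU(3)\to M$ gives $\pi_2(M)\cong\mathbb{Z}$, and the image of $[S^2]=[\SO(3)/\SO(2)]$ under the connecting map is the Euler number of the circle bundle $\SO(3)\to S^2$, namely $2$. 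Thus $[S]=\pm2\neq0$ and no such $(B,\psi)$ exists. (That this class is even is exactly consistent with the lemma holding in characteristic $2$ while your characteristic-free approach fails.)

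The paper's Step 2 is genuinely different and does \emph{not} raise the dimension of $B$. Instead it writes down an explicit one-parameter family of matrices $\mu(\alpha,\cdot):B\to\SU(k)$, $\alpha\in[0,\pi/4]$, deforming the original map $c$ (through $\SO(k)$) to a new map $c_1$ with the antipodal symmetry $c_1(rn)=c_1(-rn)$. This symmetry lets one factor through the quotient $B/2$, whose boundary is $\RP^{k-2}$. The composite
\[
Rc_{2!}\gf_{B/2}\longrightarrow H^\bullet(\RP^{k-2})\otimes\gf_\ve\longrightarrow H^\bullet(S^{k-2})\otimes\gf_\ve
\]
then factors through $\gf_\ve$ precisely because the pullback $H^\bullet(\RP^{k-2};\gf)\to H^\bullet(S^{k-2};\gf)$ along the double cover lands in degree $0$ when $\char\gf=2$ --- the standing hypothesis of this subsection, which your argument never invokes. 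This gives the desired factorization $g=h\cdot(\text{something})$ and the rest proceeds as in your Step 3.
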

\begin{proof}
As was explained above, $S$ is identified with the unit sphere
in $\Re^k$. Let $V\subset \Re^k$ be an orthogonal complement
to $e_k$. Let us denote $e:=e_k$ and $\ve=-e$.
 Let $B\subset V$ be the   ball of radius $\pi$.
 We have a surjective
 map $P:B\to S$: let $f=\phi n\in B$, where $0\leq \phi\leq \pi$ and
$n\in B$. Set $P(\phi n)=\cos(\phi)e+\sin(\phi)n$.
It follows that $P$ is 1-to 1 on the interior of $B$ and that
$P$ takes the boundary of $B$ to the point $\ve\in S$.
Let $c:B\stackrel P\to S\to M$  be the through map
Let $\dB\subset B$ be the boundary. We have a commutative
diagram
\begin{equation}\label{dig}
\xymatrix{B \ar[r]^c& M\\
            \dB\ar[u]^{i}\ar[r]^p&\ve\ar[u]^{\iota}}
\end{equation}
One has 
\begin{equation}\label{f0}
\gf_S\cong \cone(Rc_!\gf_B\oplus \iota_!\gf_\ve\stackrel {f_0}\to \iota_!Rp_!\gf_{\dB}),
\end{equation}
where  $f_0=\alpha\oplus \beta$;
 the map $\alpha:Rc_!\gf_B\to \iota_!Rp_!\gf_{\dB}=
Rc_!i_*\gf_{\dB}$ is induced by the canonical map
$$
\gf_B\to i_*\gf_\dB,
$$
and the map 
$$
\beta:\iota_!\gf_e\to \iota_!Rp_!\gf_\dB
$$
is induced by the canonical map
$$
\gf_\ve\to Rp_*\gf_\dB=Rp_!\gf_\dB.
$$

Let $M:B\to \SO(k)$ as follows:

---$M(\phi n)$ is identity on any vector
which is orthogonal to both $n$ and $e$;

---$M(\phi n) e=\cos(\phi)e+\sin(\phi)n$;

---$M(\phi n) n=-\sin(\phi)e+\cos(\phi)n$.
  
One then sees that the composition
$$
B\stackrel M\to \SO(k)\stackrel\Pi\to S
$$
equals $P:B\to S$. Thus, $P=\Pi M$. One can also rewrite:
$$
M(\phi n)=I+(e^{i\phi}-1)\pr_{(e+in)/\sqrt 2}+
(e^{-i\phi}-1)\pr_{(e-in)/\sqrt 2},
$$
where $\pr$ is the orthognal projector.

For $0\leq \alpha\leq \pi/4$, set
$$
\mu(\alpha,\phi n)=I+(e^{i\phi}-1)P_{(\cos\alpha e+i\sin \alpha n)
}+
(e^{-i\phi}-1)P_{(\sin \alpha e-\cos\alpha in)}
$$

One sees that:

  $\mu:[0,\pi/4]\times B\to \SU(k)$;

$$
\mu(\alpha,0)=I;
$$
$$
\mu(\alpha,\pi n)\in \SO(k); 
$$
$$
\mu(\pi/4,\phi n)=M(\phi n);
$$
$$\mu(\alpha,\pi n)e=-e.
$$
Let $\nu:[0;\pi/4]\times B\stackrel \mu\to \SU(k)\to M$ be
 the through map. It then follows that $\nu(\alpha,\pi n)=\ve$.

We have a commutative diagram
\begin{equation}\label{digdva}
\xymatrix{
B\ar[r]^{i} & [0;\pi/4]\times B \ar[r]^\nu& M\\
\dB\ar[r]^{i_0}\ar[u]^{k_0}& [0,\pi/4]\times \dB 
\ar[r]^{\pi}\ar[u]^{k_1} & \ve\ar[u]^\iota}
\end{equation}

Here $i(b)=(\pi/4,b)$ for all $b\in B$; $i_0(b)=(\pi/4,b)$
for all $b\in \dB$.

We have $c=\nu i$; $\pi i_0=p$ (where $p$ is as in diagram (\ref{dig})).

In a  way similar to above we can construct a map
$$
f:R\nu_! \gf_{[0;\pi/4]\times B}\oplus i_!\gf_\ve\to
\iota_!R\pi_!\gf_{[0;\pi/4]\times \dB}
$$
The diagram (\ref{digdva}) gives rise to a commutative diagram in
$D(M)$:
\begin{equation}\label{digtri}
\xymatrix
{Rc_!\gf_B\oplus \iota_!\gf_\ve\ar[r]^{f_0}& \iota_!Rp_!\gf_{\dB}\\
R\nu_! \gf_{[0;\pi/4]\times B}\oplus\iota_! \gf_\ve\ar[r]^f\ar[u]^a&
\iota_!R\pi_!\gf_{[0;\pi/4]\times \dB}\ar[u]}
\end{equation}
in which the right vertical arrow is an isomorhism;
the left vertical arrow is a direct sum of the identity
arrow $\iota_!\gf_e$ and the natural arrow
$$
a:R\nu_!\gf_{[0;\pi/4]\times B}\to R\nu_!Ri_{!}\gf_B=Rc_!\gf_B.
$$
This diagram defines uniquely a  map $A:\Cone(f)\to \Cone(f_0)$ (because
the rightmost arrow in diagram (\ref{digtri}) is an isomorphism)
the cone of this map is isomorphic to the cone of the map
$a$.  It easily follows that $\Cone(a)\in C(M)$, therefore,
$A$ is a $C(M)$-isomorphism.

Consider now the diagram (\ref{digtri}) where all ingredients are the same
except that  the map $i:B\to [0;\pi/4]\times B$ gets
replaced with the map $i_1:B\to [0;\pi/4]\times B$, where
$i_1(b)=(0,b)$. Let us compute $c_1:=\nu i_1:
B\to M$. We have 
\begin{equation}\label{ur}
\mu(0, \phi n)=I+(1-e^{i\phi})\pr_e + (1-e^{-i\phi})\pr_n;
\end{equation}
\begin{equation}\label{urdva}
c_1(\phi n)=P\mu(0,\phi n).
\end{equation}

We then have a commutative diagram obtained from
diagram (\ref{dig}) by replacement $c$ with $c_1$.
Hence we have a map\begin{equation}\label{f1}
\gf_S\cong \cone(Rc_{1!}\gf_B\oplus \iota_!\gf_e\stackrel {f_1}\to \iota_!Rp_!\gf_{\dB}),
\end{equation}
constructed in the same way as the map $f_0$ in (\ref{f0}).

In the same way as above one can show that $\Cone(f_1)$ is 
$C(M)$-isomorphic to $\Cone(f)$, hence to $\Cone(f_0)$, hence
to $\gf_{S}$. 

Let us now work with $\Cone(f_1)$. 

1) Eq. (\ref{ur}) and (\ref{urdva}) imply that
$c_1(rn)=c_1(-rn)$ for any $rn\in B$.  Let $B/2$ be the quotient of $B$ in which $b\in B$ gets identified with $-b$. 
Let $\delta:B\to B/2$ be the projection. We then have
a unique map $c_2:B/2\to M$ such that $c_1=\delta c_2$. 
Let $\dB/2$ is the image of $\dB$ in $B/2$. Of course,
$\dB\cong S^{k-2}$ and $\dB/2\cong \RP^{k-2}$. 
We have a natural quotient map $\delta_1:\dB\to \dB/2$. 
These maps fit into the following commutative diagram:
$$
\xymatrix{B\ar[r]^\delta& B/2\ar[r]^{c_2}& M\\
          \dB\ar[u]^{i}\ar[r]^{\delta_1}&\dB/2\ar[u]^{i_1}
\ar[r]^{p_1} & \ve\ar[u]^\iota}
 $$
One then can construct an arrow
$$
f_2: Rc_{2!}\gf_{B/2}\oplus \gf_\ve\to \iota_!(p_1\delta_1)_!\gf_{\dB}
$$
in the same way as above. Similar to above, there exists 
a natural map
$$
\Cone(f_2)\to \Cone(f_1)
$$
whose cone is isomorphic to the cone of the natural map
\begin{equation}\label{oto}
Rc_{2!}\gf_{B/2}\to Rc_{2!}R\delta_!\gf_B.
\end{equation}
Let us show that {\em the cone of this map is in $C(M)$.}

Indeed, choose a covering $\dB=\bigcup_{k=1}^m C_k$  where $C_k$, and all 
 non-empty intresections  of these sets  are closed sets 
homeomorphic
to the closed disk of the same dimension as dimension of $\dB$
 and $C_k\cap -C_k=\emptyset$. 

Consider the set of all multiple non-empty intersections of the sets $C_k$ and denote elements of this set 
by $C'_1,C'_2,\ldots,C'_M$. Each of these sets is homeomorphic
to a closed disk of the same dimension as dimension of $\dB$
and  for each $i$, 
 $C'_i\cap -C'_i=\emptyset$.  

 Let $B_k\subset B$ be the cones of $C'_k$:
$$
B_k=\{rn| 0\leq r\leq \pi; n\in C'_k\}.
$$
It is clear that $B_k$ cover $B$ and that $B_k\cap -B_k=\{0\}$.

Let $B_k/2$ be the images of $B_k$ in $B/2$. The map $\delta|_{B_k}:B_k\to B_k/2$  is a homeomorophism.
 It follows that
 $\gf_{B/2}$ is a finite extension of objects,
each of them being of the form  $\gf_{B_k/2}$. It then suffices to show that the cone of the natural map
$$
c_{2!}\delta_!\gf_{B_k\cup -B_k}=c_{2!}\delta_!\delta^{-1}\gf_{B_k/2}\to 
c_{2!}\gf_{B_k/2}\in C(M)
$$

We have $$\delta_!\gf_{B_k\cup -B_k}=\delta_!(\Cone (\gf_{B_k}\oplus
\gf_{B_k}\to \gf_0))$$
$$
=\Cone(\gf_{B_k/2}\oplus \gf_{B_k/2}\to \gf_0)
$$
 The natural map $\delta_!\gf_{B_k\cup -B_k}\to \gf_{B_k/2}$
is given by the natural map
\begin{equation}\label{shish}
\Cone(\gf_{B_k/2}\oplus \gf_{B_k/2}\to \gf_0)\to \gf_{B_k/2}
\end{equation}
induced by
$$
\Id\oplus \Id:\gf_{B_k/2}\oplus \gf_{B_k/2}\to \gf_{B_k/2}
$$
Therefore, the cone of the map (\ref{shish}) is isomorphic
to the cone of the natural map
$$
\gf_{B_k/2}\to \gf_0
$$
Denote this cone by $F'$ and let  $F:=F'|_{B_k/2}$. 
It follows that $R\Gamma(B_k/2,F)=0$.
Let $P:B_k/2\to B/2\to M$ be the trough map. Our task is now 
reduced to showing that $RP_!F\in C(M)$. This follows from the
fact that $B_k/2$ is homeomorphic to a unit cube.

Thus, the cone of the map (\ref{oto}) is in $C(M)$, therefore
$\Cone(f_1)$ and $\Cone(f_2)$ are $C(M)$ isomorphic.

Let us now study $\Cone(f_2)$.
The  map $f_2$ is a direct sum of two maps: one of them is the natural map
 $g: Rc_{2!}\gf_{B/2}\to \iota_!(p_1\delta_1)_!\gf_{\dB}=
Rc_{2!}i_{1!}\delta_{1!}\gf_{\dB}$
and the other is the natural map
\begin{equation}\label{h}
h:\gf_{\ve}\to \iota_!(p_1\delta_1)_!\gf_{\dB}
\end{equation}

The map $g$ factors as
\begin{equation}\label{l}
Rc_{2!}\gf_{B/2}\stackrel{g_1}\to Rc_{2!}i_{1!}\gf_{\dB/2}\stackrel l\to
Rc_{2!}i_{1!}\delta_{1!}\gf_{\dB}
\end{equation}
We have $Rc_{2!}\gf_{\dB/2}=\iota_!Rp_{1!}\gf_{\dB/2}\cong
H^*(\dB/2,\gf)\otimes_\gf \iota_! \gf_\ve$;
$$
Rc_{2!}i_{1!}\delta_!\gf_{\dB}=\iota_!Rp_!\gf_{\dB}=
H^*(\dB;\gf)\otimes_\gf \iota_!\gf_\ve.
$$
The map $l$ in (\ref{l}) is induced by the map
$$
\delta_1^*:H^*(\dB/2;\gf)\to H^*(\dB;\gf).
$$
Recall that $\dB\cong S^{k-2}$; $\dB/2\cong \RP^{k-2}$ and
$\delta_1$ is the quotient map. As $\char \gf=2$, it follows
that the map $\delta_1^*$ factors as
$$
H^*(\dB/2;\gf)\stackrel{n_1}\to \gf\stackrel{n_2}\to H^*(\dB;\gf),
$$
where the  arrow  $n_1$ is induced by any embedding
$\text{pt}\to \dB/2$ and the  arrow $n_2$
is induced by the projection $\dB\to \text{pt}$.
This means that $l=l_2l_1$, where 
$$
l_1:Rc_{2!}\gf_{\dB/2}\to \iota_!\gf_\ve
$$
is induced by $n_1$, and 
$$
l_2:\gf_\ve\to Rc_{2!}i_{1!}\delta_{1!}\gf_{\dB}
$$
is induced by $n_2$.
Let us now consider the map $h$ in (\ref{h}).
As was explained above, $\iota_!(p_1\delta_1)_!\gf_{\dB}\cong
H^*(\dB;\gf)\otimes_\gf \iota_!\gf_\ve$  and the 
map $h$ is induced by the map $\gf\to H^*(\dB;\gf)$ induced
by the projection $\dB\to \pt$. That is $h=l_2$

These observations show that the map $g=l_2l_1g_1=hl_1g_1$ factors through $h$. This implies that 
$$
\Cone(f_2)=\Cone(g\oplus h)=\Cone(0\oplus h)=Rc_{2!}\gf_{B/2}\oplus
\Cone(h)=Rc_{2!}\gf_{B/2}\oplus \iota_!\gf_{\ve}[1-k]
$$

As was explained above, $Rc_{2!}\gf_{B/2}$ is $C$-isomorphic
to $Rc_!\gf_B$.  Let $x\in \dB$. We then have
natural $C$-isomorphisms
$$
Rc_!\gf_B\to Rc_!\gf_0=\gf_e
$$
and
$$
Rc_!\gf_B\to Rc_!\gf_{0'}=\gf_\ve
$$
hence, $Rc_{2!}\gf_{B/2}$ is $C$-isomorphic with
both $\gf_e$ and $\gf_\ve$, as well as with $Rc_{2!}\gf_{B/2}$.

 Thus,
$$
Rc_{2!}\gf_{B/2}\oplus \iota_!\gf_{\ve}[1-k]
$$
is $C$-isomorphic with $\gf_e\oplus \gf_e[1-k]$, hence so 
is $\Cone(f_2)$. This proves Lemma
\end{proof}
\section{Proof of Proposition \ref{diagonal}: constructing
$u_\O$}\label{const:uo}
The rest of this paper will be devoted to proving Proposition 
\ref{diagonal}. In this section we will construct the object 
$u_\O$. In the subsequent sections we will check it satisfies
all the required properties. 


\subsection{Constructing $u_\O$} 
Our construction is based on a certain object $\sp\in D(G\times \h)$.
This object is introduced and studied in the subsequent Sec. \ref{specp}
It is defined as any object satisfying the conditions in Theorem \ref{specth}.

\subsubsection{Convolution on $\h$}
Let $X,Y$ are manifolds. Let $a:X\times \h\times 
Y\times \h\to X\times Y\times \h $ be given
by $a(x,A_1,y,A_2)=(x,y,A_1+A_2)$. 
Let $F\in D(X\times \h)$ and $G\times D(Y\times \h)$. Set $F*_\h G:=Ra_!(F\times G)$.
\subsubsection{}\label{odindvatri}
 Let $L:=\O\cap C_+$. We have
 $L=\lambda e_1$, where $\lambda>0$.

Let $\gamma_L\in D(\h\times \Re)$ be given by
$\gamma_L=\gf_{\{(A,t)|t+<A,L>\geq 0\}}$. 
Let $I_0:G\times \Re\to G\times\h\times \Re$ be given by
$I_0(g,t)=(g,0,t)$.  Set
$$
u_\O=I_0^{-1}(\sp*_\h \gamma_L).
$$

Let us first of all prove that $u_\O\in \cD_{IP^{-1}\Delta}(G)$.
Using proposition \ref{checkorthogonal} it is easy to
show that $u_\O$ is in the left orthogonal complement to
$C_{\leq 0}(G)$. 
Let us now estimate $\mS(u_\O)$.

Let $p_3:G\times \h\times \Re\to G\times \h$;
$p_1:G\times \h\times \Re\to \h\times \Re
$;
$
p_2:G\times \h\times \Re\to G\times \Re
$
be the projections.

One can show that
$$
u_\O=Rp_{2!}(p_1^{-1}\gf_{\{(A,t)|t\geq <A,L>\}}\otimes p_3^{-1}\sp)$$

As usual let us identify $$T^*(G\times \h \times \Re)=
G\times \h\times \Re\times \g^*\times \h^*\times \Re$$
We see that $p_1^{-1}\gf_{\{(A,t)|t\geq <A,L>\}}$ is microsupported
on  the set
$$
\Omega_1:=\{(g,A,t,0,-kL;k)| k\geq 0\}.
$$

The object $p_2^{-1}\sp$ is microsupported on the set
$$
\Omega_2:=\{(g,A,t,\omega,\eta,0)\}, 
$$
where $(g,A,\omega,\eta)\in \Omega_\sp$ (See Sec. \ref{omegasps}
for the definition of$\Omega_\sp$).

One sees that if $\zeta_j \in \Omega_j\cap T^*_{(g,A,t)}(G\times 
\h\times \Re)$ and $\zeta_1+\zeta_2=0$, then $k=0$ and $\zeta_1=0$,
hence $\zeta_2=0$. Therefore,
the object
$$
\Psi:=p_1^{-1}\gf_{\{(A,t)|t\geq <A,L>\}}\otimes p_3^{-1}\sp
$$

is microsupported on the set
$$
\Omega_3:=\{(g,A,t,\omega_1+\omega_2;\eta_1+\omega_2;k_1+k_2)| 
(g,A,t,\omega_j;\eta_j;k_j)\in \Omega_j\}
$$

We have
$$
\Omega_3=\{(g,A,t,\omega;\eta-kL;k)|k\geq 0;(g,A,\omega,\eta)\in
\Omega\}
$$

Let us now apply Corollary  \ref{impropersupport} to the projection
$p_2$ (so that $E=\h$).

Let $$\pi:G\times \h\times\Re\times\g^*\times \h^*\times \Re\to
G\times\Re\times \g^*\times \h^*\times \Re.
$$

Let us find $\pi(\Omega_3)$  We see that
$$
\pi(\Omega_3)\subset \{(g,t,\omega,\eta-kL;k)|k\geq 0,\Ad_g \omega=
\omega;\eta=|\omega|\}=:\Omega_4.
$$

The set $\Omega_4$ is closed.  Therefore,
$\mS(Rp_{2!}\Psi)$  is confined within the set of all  points 
of the form $\{(g,t,\omega,k)|(g,t,\omega,0,k)\in\Omega_4\}$
Thus $\|\omega\|=kL$,$\Ad_g\omega=\omega$ and $k\geq 0$. If $k=0$. then $\omega=0$ and we have $(g,t,0,0)\in \mS(Rp_{2!}\Psi)$.
If $k>0$, then set $\omega=k\zeta$. We then have
$|\zeta|=L$ (which means that $\zeta\in \O$) and $\Ad_g\zeta=\zeta$. This is the same as to say
$(g,\zeta)\in IP^{-1}\O$.
This proves the statement

\subsection{Proof of Proposition \ref{diagonal} 1)}

\subsubsection{The map $\tau_c:u_\O\to T_{c*}u_\O$}  We will rewrite this map in a way more convenient to us.

Let $c>0$. We then have an obvious map 
$\tau^\gamma_c:\gamma_L\to T_{c*}\gamma_L$;
$$
T_{c*}u_\O=I_0^{-1}(\sp*_\h T_{c*}\gamma_L)
$$

The natural map $\tau_c:u_\O\to T_{c*}u_\O$ (coming from the fact
that $u_\O\in \cD(G)$), in terms of the above identifications,
is given by the map
$$
I_0^{-1}(\sp*_\h \gamma_L)\to I_0^{-1}(\sp*_\h T_{c*}\gamma_L)
$$
 which is induced by the map $\tau^\gamma_L$.

Let $A_1,A_2\in \h$. For $A\in \h$ set $U_A=\{A_1\in \h| A_1<<A\}$.
Set $V_A:=\gf_{U_A}[\dim \h]$.  We then have
a natural map 
\begin{equation}\label{nuochennat}
e_A:\gf_A\to V_A.
\end{equation}
It is defined as follows.
Let us identify $\Re^{N-1}=\h$, where 
$$
(x_1,x_2,\ldots,x_{N-1})\mapsto \sum x_kf_k.
$$
  Let $A=\sum t_kf_k$ Upon this identification,
$U_A=\{(x_1,x_2,\ldots,x_{N-1})| x_k<t_k\}$ and 
$V_A=\boxtimes_k (\gf_{(-\infty,t_k)}[1])$; $\gf_A=\boxtimes_k
\gf_{t_k}$. The map  $e_A$ is defined as a product of maps
$\ve_k:\gf_{t_k}\to \gf_{(-\infty;t_k)}[1]$ which represents the class of the extension 
$$
0\to \gf_{(-\infty;t_k)}\to \gf_{(-\infty,t_k]}\to \gf_{t_k}\to 0.
$$

Let $A\in C_+$, we then have $c=<A,L>\geq 0$ because $A,L\in C_+$.

\begin{Lemma}  Let $A\in \h$ be  such that $<A,L>=c$
  
The natural map $$ \gf_A*_\h \gamma_L\stackrel{e_A}\to 
V_A*_\h\gamma_L
$$
is an isomorphism.
\end{Lemma}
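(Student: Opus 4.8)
The plan is to reduce everything to a one‑dimensional statement via the tensor‑product description of $V_A$ and $\gamma_L$. Recall $L=\lambda e_1$ with $\lambda>0$, so $\gamma_L=\gf_{\{(A,t)\,|\,t+\langle A,L\rangle\geq 0\}}$ depends on $A\in\h$ only through the single linear coordinate $\langle A,L\rangle$. Concretely, writing $A=\sum_k t_kf_k$ and using the identification $\h\cong\Re^{N-1}$ from the excerpt, $\langle A,L\rangle = \sum_k \alpha_k t_k$ for fixed constants $\alpha_k = \langle f_k,L\rangle$; after a linear change of the $\Re^{N-1}$ coordinates we may assume $\langle A,L\rangle$ is literally the first coordinate. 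Then both $\gf_A*_\h\gamma_L$ and $V_A*_\h\gamma_L$ factor as an external product over the coordinate directions: in the directions transverse to $L$ the convolution with $\gf_A$ versus $V_A$ just contributes $\gf_{t_k}$ versus $\gf_{(-\infty,t_k)}[1]$ in the non‑$L$ slots, convolved against a constant sheaf in those directions, and the map $e_A$ is by definition the external product of the elementary extension maps $\varepsilon_k$.

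First I would make the convolution with $\gamma_L$ explicit in a single variable $s=\langle A,L\rangle\in\Re$. Writing $a\colon\h\times\Re\times\h\times\Re\to\h\times\Re$ for the addition map and unwinding $F*_\h\gamma_L=Ra_!(F\boxtimes\gamma_L)$, one checks directly that
$$
\gf_{t_k}*_{\Re}\gf_{[0,\infty)} \;\cong\; \gf_{[t_k,\infty)},
\qquad
\gf_{(-\infty,t_k)}*_{\Re}\gf_{[0,\infty)}\;\cong\;\gf_{(-\infty,\infty)}=\gf_{\Re},
$$
the latter because $\gf_{(-\infty,t_k)}*_\Re\gf_{[0,\infty)}\cong Ra_!\gf_{\{(u,v)\,|\,u<t_k,\ v\geq 0\}}$ and the map $a(u,v)=u+v$ restricted to that region is a trivial fibration onto $\Re$ with contractible (half‑line) fibers. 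More precisely, in the $L$‑direction $\gamma_L$ is $\gf_{\{(s,t)\,|\,t+s\geq 0\}}$ up to rescaling $s$ by $\lambda$, so convolving $\gf_s$ (the skyscraper at $s=\langle A,L\rangle=c$) against it gives $\gf_{\{t\geq -c\}}=\gf_{[-c,\infty)}$, while convolving the constant sheaf (the $L$‑component of $V_A$ in the transverse slot picture) likewise yields $\gf_\Re$. The key point is that the map $e_A$, being the product of the $\varepsilon_k$, induces on the $k$‑th factor exactly the inclusion $\gf_{[t_k,\infty)}\hookrightarrow\gf_\Re$ shifted by $[1]$ (coming from the extension $0\to\gf_{(-\infty,t_k)}\to\gf_{(-\infty,t_k]}\to\gf_{t_k}\to0$ convolved with $\gf_{[0,\infty)}$), and in the one remaining $L$‑slot it induces $\gf_{[-c,\infty)}\to\gf_\Re$.

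Then I would assemble: since $A\in\h$ with $\langle A,L\rangle=c$, the product over all coordinate slots of these one‑dimensional maps gives, on the one hand, $\gf_A*_\h\gamma_L\cong\gf_{\{(A,t)\,|\,t\geq c\}}$ (a skyscraper in $\h$ at $A$, times $\gf_{[c,\infty)}$ in $t$, using $c=\langle A,L\rangle$), and on the other hand $V_A*_\h\gamma_L$ equals an external product in which every coordinate direction already contributes a constant sheaf $\gf_\Re$ after convolution, so $V_A*_\h\gamma_L$ is again of the form "constant sheaf on $\h$, times something in $t$". One computes that that "something" is $\gf_{[c,\infty)}$ as well: indeed $\langle A_1,L\rangle$ ranges over $(-\infty,c)$ as $A_1$ ranges over $U_A=\{A_1\ll A\}$, so $U_A*\gamma_L$ at $t$‑level is governed by $\sup_{A_1\in U_A}(-\langle A_1,L\rangle)$ which is $-c$, giving $\gf_{\{t\geq -c\}}$ — but one must track the shift $[\dim\h]$ built into $V_A$, which is precisely cancelled by the $\dim\h$ cohomological shifts picked up from convolving each $\gf_{(-\infty,t_k)}[1]$. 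With both sides identified with $\gf_{\{(A,t)\,|\,t\geq c\}}$, the map $e_A*_\h\mathrm{id}$ becomes the identity, hence an isomorphism.

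**Main obstacle.** The essential content is the elementary computation $\gf_{(-\infty,a)}*_\Re\gf_{[0,\infty)}\cong\gf_\Re$ together with bookkeeping of the $[\dim\h]$ shift — i.e. checking that the $\dim\h$ shifts coming from the $N-1$ factors $[1]$ in $V_A=\gf_{U_A}[\dim\h]$ are exactly absorbed so that $V_A*_\h\gamma_L$ lands, unshifted, on the same sheaf as $\gf_A*_\h\gamma_L$. The delicate point is that $V_A$ is an external product of $\dim\h$ half‑line sheaves with shift $1$ each, but $\gamma_L$ is \emph{not} an external product over those same directions (it only sees the $L$‑direction); so the clean "product over coordinate slots" factorization requires first rewriting $\gf_{U_A}*_\h\gamma_L$ as $(\gf_{(-\infty,c)}\boxtimes(\text{constant in transverse directions}))*_\h\gamma_L$, which is legitimate because $U_A=\{A_1\ll A\}$ is itself a product of half‑spaces and $\gamma_L$ is pulled back from the $L$‑coordinate. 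Once that rewriting is in hand the rest is routine, and the compatibility of $e_A$ with the factorization is immediate from the definition of $e_A$ as $\prod_k\varepsilon_k$.
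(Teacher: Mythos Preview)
Your factorization strategy is right and is presumably what the paper means by ``Clear'', but the one-dimensional computations are wrong. You claim $\gf_{(-\infty,t_k)}*_\Re\gf_{[0,\infty)}\cong\gf_\Re$ via a ``trivial fibration with contractible half-line fibers'' argument; this fails because $Ra_!$ records $R\Gamma_c$ of the fibers, and over $b<t_k$ the fiber is the \emph{closed} half-line $(-\infty,b]$ (with $R\Gamma_c=0$) while over $b\ge t_k$ it is the \emph{open} half-line $(-\infty,t_k)$ (with $R\Gamma_c=\gf[-1]$). The correct answer is $\gf_{[t_k,\infty)}[-1]$. Relatedly, $\gf_A*_\h\gamma_L$ is not a skyscraper in $\h$ at $A$: it equals $T_{c*}\gamma_L=\gf_{\{(B,t)\mid t+\langle B,L\rangle\ge c\}}$, as the paper states immediately after the Lemma, and the same holds for $V_A*_\h\gamma_L$.

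The fix is short. Since $L=\lambda e_1$ and $\langle f_k,e_1\rangle=\delta_{k1}$, we have $\gamma_L=\gamma'_L\boxtimes\gf_{\Re^{N-2}}$ with $\gamma'_L=\gf_{\{(s,t)\mid t+\lambda s\ge0\}}$ on $\Re f_1\times\Re$, so $e_A*_\h\mathrm{id}$ factors accordingly. Convolution with $\gf_{\Re^{N-2}}$ in the transverse directions sends $F_2\mapsto R\Gamma_c(\Re^{N-2};F_2)\otimes\gf_{\Re^{N-2}}$; since $\Cone(\varepsilon_k)\cong\gf_{(-\infty,a_k]}[1]$ and $R\Gamma_c(\Re;\gf_{(-\infty,a_k]})=0$, each transverse $\varepsilon_k$ becomes an isomorphism $\gf\to\gf$. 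In the $f_1$-direction, $\Cone(\varepsilon_1)*_\Re\gamma'_L=\gf_{(-\infty,a_1]}[1]*_\Re\gamma'_L$ has stalk at every $(s,t)$ equal to $R\Gamma_c$ of a closed half-line, hence vanishes. Therefore $\Cone(e_A)*_\h\gamma_L=0$ and the map is an isomorphism.
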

\begin{proof}
Clear
\end{proof}

Let now $A\in C_+$. Since $A,L\in C_+$, it follows that
$c=<A,L>\geq 0$. We also have a natural isomorphism
$$
\gf_A*_\h\gamma_L\cong T_{c*}\gamma_L.
$$
Let us combine this isomorphism with that of the Lemma, 
we will get
an isomorhpism
$$
V_A*\gamma_L\cong T_{c*}\gamma_L
$$
By substituting $A=0$, we get an isomorphism
$$
V_0*\gamma_L\cong \gamma_L.
$$

Upon these identifications,
the map $\tau_c^\gamma$ corresponds to a map
$$
\tau^V_A:V_0\to V_A
$$
induced by the inclusion $U_0\subset U_A$.

Thus, the map $\tau_c:u_\O\to T_cu_\O$ is isomorphic to the map
$$
I_0^{-1}(\sp*_\h V_0*_\h \gamma_L)\to I_0^{-1}(\sp*_\h V_A*_\h \gamma_L)$$
induced by the natural map $\tau^V_A:V_0\to V_A$.
As $\h$ is an abelian Lie group, we can rewrite  the above map
as
\begin{equation}\label{tauc}
I_0^{-1}(V_0*_\h\sp*_\h \gamma_L)\to I_0^{-1}(V_A*_\h\sp*_\h \gamma_L).\end{equation}

\subsubsection{} Let $*_{G\times \h}$ denote the convolution
on $D(G\times \h)$. 

 Taking into account the expression (\ref{tauc}) for $\tau_c$,
 the Proposition \ref{diagonal} 1) can be deduced from the 
following Proposition:

\begin{Proposition}\label{htors} Let $U$ and $F\in D(G)$ be as in Proposition \ref{diagonal} 1). Then there exists $A\in C_+$ such that the natural map
\begin{equation}\label{kruch}
(F\boxtimes V_0)*_{G\times \h}\sp\to (F\boxtimes V_A)*_{G\times\h}\sp
\end{equation}
induced by the map $\tau^V_A:V_0\to V_A$, is zero in $D(G\times \h)$
\end{Proposition}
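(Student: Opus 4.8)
The plan is to reduce the statement to the microlocal behaviour of $\sp$ along $\h$, using the shift structure. Here is the key picture. The objects $V_A = \gf_{U_A}[\dim\h]$ with $U_A = \{A_1 \ll A\}$, convolved on $\h$, implement shifts along $\h$; the map $\tau^V_A\colon V_0 \to V_A$ is exactly the incarnation of the shift transformation $\tau_c$ on $\cD(G)$ transported to the $\h$-direction via the identification $V_A *_\h \gamma_L \cong T_{c*}\gamma_L$ (where $c = \langle A, L\rangle$). So first I would observe that $(F\boxtimes V_A)*_{G\times\h}\sp = (F*_G(\cdot))\circ(V_A*_\h(\cdot))$ applied to $\sp$, i.e. the left-hand sides of \eqref{kruch} are obtained from $\sp$ by first shifting along $\h$ by (the cone on) $A$ and then convolving with $F$ in the $G$-variable. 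The cone of $\tau^V_A\colon V_0\to V_A$ is $\gf_{U_A\setminus U_0}[\dim\h]$ (a "shell" supported between $0$ and $A$), so the cone of \eqref{kruch} is $(F\boxtimes \gf_{U_A\setminus \overline{U_0}})*_{G\times\h}\sp$ up to a shift and a boundary correction; the task is to show this cone is, for $A$ large, isomorphic to the target, equivalently that the map is zero.

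The mechanism making the map vanish should be that $F$ has zero global cohomology and is supported in a small set $gU$, combined with the microlocal constraint $\mS(\sp)\subset\Omega_\sp$, i.e. the $\g^*$-component $\omega$ and the $\h^*$-component $\eta$ of any covector in $\mS(\sp)$ satisfy $\eta = \|\omega\|$. First I would compute the singular support of $(F\boxtimes V_0)*_{G\times\h}\sp$ and of $(F\boxtimes V_A)*_{G\times\h}\sp$ exactly as in the proofs of Lemma \ref{third} and Proposition \ref{svert}: $F\boxtimes V_A$ is microsupported with $\h^*$-covector pointing into $C_+^\circ$ (the normal directions of $U_A$), while $\sp$ is microsupported on $\Omega_\sp$, and the convolution kills all non-zero interaction except along a controlled locus. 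The point is that after convolving with $F$, whose defining property (Proposition \ref{diagonal} 1) is designed precisely so that $F*_G(\cdot)$ annihilates "the relevant piece", the resulting object becomes a \emph{torsion} object in the $\h$-shift sense — which is the same as saying \eqref{kruch} becomes zero for $A$ far enough out in $C_+$. Concretely I expect one shows $(F\boxtimes\gf_{U_A\setminus\overline{U_0}})*_{G\times\h}\sp$ is supported, in the $\Re$-direction (after applying $I_0^{-1}(\cdot)*_\h\gamma_L$), in a bounded strip $\{m\le t\le M\}$ with $M$ independent of $A$, while the map from $V_0$ factors through something whose image has already "escaped" the strip once $\langle A, L\rangle > M$; hence the composite is zero. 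This mimics the final torsion argument in the proof of Theorem \ref{hamilt}, where any sheaf supported in a bounded $t$-strip is automatically torsion.

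So the main steps, in order: (1) rewrite the two sides of \eqref{kruch} as $V_0$- resp. $V_A$-shifts of $F*_G\sp$ in the $\h\times\Re$ bookkeeping, and identify the map with the tautological shift map; (2) compute $\mS$ of $F*_G\sp$ (and of the shifted versions) using the functorial microsupport estimates together with $\mS(\sp)\subset\Omega_\sp$ and $\mS(F)\subset T^*_GG$ off the support — here the condition $R\Gamma(G,F)=0$ enters to kill the $k=0$ "constant" part, exactly as $R\hom_\Re(\gf_\Re, Rq_*H)=0$ was used in Lemma \ref{third}; (3) deduce from this $\mS$-estimate plus the support condition on $F$ that the cone of $\tau^V_A$ convolved with $\sp$ is, after the $\gamma_L$ and $I_0^{-1}$ operations, supported in a $t$-strip of bounded width, hence the object is torsion and the map \eqref{kruch} is zero for $A$ large. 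I expect step (2)–(3) to be the crux: one must extract from the abstract property in Proposition \ref{diagonal} 1) — which is phrased in terms of $F*_G u_\O$ being torsion — the stronger assertion that the interaction between $F$, the shell $\gf_{U_A\setminus\overline{U_0}}$, and $\sp$ is uniformly bounded in $t$; this requires knowing that $u_\O = I_0^{-1}(\sp*_\h\gamma_L)$ and unwinding the role of $L\in C_+$ so that the $\langle A, L\rangle$-shift genuinely translates in $t$. The other steps are routine applications of the microsupport calculus in the appendix.
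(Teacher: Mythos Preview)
Your plan has a genuine gap, and it also runs in the wrong logical direction.

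First, the direction of implication: Proposition \ref{htors} is being used to \emph{prove} Proposition \ref{diagonal} 1), not the other way around. In your step (3) you propose to ``extract from the abstract property in Proposition \ref{diagonal} 1) --- which is phrased in terms of $F*_G u_\O$ being torsion'' the bounded-strip estimate you need. That is circular: the torsion conclusion for $F*_G u_\O$ is exactly what \ref{htors} is meant to establish. The only inputs you are allowed are the \emph{hypotheses} on $F$ (supported on $gU$, $R\Gamma(G,F)=0$) and the structural properties of $\sp$.

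Second, and more seriously, your microsupport-plus-bounded-strip mechanism does not touch the right object. The statement asks for vanishing in $D(G\times\h)$, before any passage through $\gamma_L$ or $I_0^{-1}$; showing that the image in $\cD(G)$ is torsion is strictly weaker and does not yield \eqref{kruch}$=0$ as a map of sheaves on $G\times\h$. The paper's argument avoids microsupport estimates altogether here. The key step you are missing is the factorization coming from the associativity identity $\sp*_G\sp \cong a^{-1}\sp$ (equation \eqref{convsp}): writing $V_A*_\h\sp \cong S_A*_G\sp$ with $S_A:=I_0^{-1}(V_A*_\h\sp)\cong I_{-A}^{-1}\sp$, one gets
\[
(F\boxtimes V_A)*_{G\times\h}\sp \;\cong\; (F*_G S_A)*_G\sp,
\]
so \eqref{kruch} is obtained by applying $(-)*_G\sp$ to a map $F*_G S_0\to F*_G S_A$ in $D(G)$. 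It then suffices to kill \emph{that} map. For small $A\in C_+^\circ$ one knows explicitly that $S_0=\gf_e$ and $S_A\cong\gf_{\cU_A}$ with $\cU_A=\{e^X:\|X\|\ll A\}$ (this comes straight from the construction of $\sp$ in Sec.\ \ref{proofspecth}). Now the hypothesis $R\Gamma(G,F)=0$ with $\text{supp}\,F\subset gU$ is used concretely: choosing $U\subset\cU_{A/10}$, one computes stalks of $F*_G\gf_{\cU_A}$ on $gU$ as compactly supported cohomology of $F$ over sets containing all of $gU$, hence zero; so $F*_G S_A$ is supported off $gU$ while $F*_G S_0=F$ is supported on $gU$, and $R\hom(F*_G S_0,F*_G S_A)=0$. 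This is a disjoint-support argument on $G$, not a torsion argument on $\Re$.
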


Thus, Proposition \ref{diagonal} 1) is now reduced to Proposition
\ref{htors}

\subsection{Proof of Proposition \ref{htors}.}
Let $H$ be any sheaf on $\h$.  Let $\alpha:\h\to\h$ be the antipode map.
We then have $H*_\h \sp =Rp_{2!}(p_1^{-1}\alpha_*H\otimes a^{-1}\sp)$, where as usual $p_1:G\times \h\times \h\to \h$ is given by
$$
p_1(g,A_1,A_2)=A_1;
$$ 
and $p_2:G\times \h\times \h\to G\times \h$
is given by $p_2(g,A_1,A_2)=(g,A_2)$.
Set $H^\alpha:=\alpha_* H$.
We then have$$
Rp_{2!}(p_1^{-1}H^\alpha\otimes a^{-1}\sp)=
Rp_{2!}((p_1^{-1}H^\alpha)\otimes (\sp*_G \sp)),
$$
where we have used the isomorphism (\ref{convsp}).
Next,
$$
Rp_{2!}((p_1^{-1}H^\alpha)\otimes (\sp*_G \sp))
\cong
[Rp_!(\pi^{-1}H^\alpha\otimes \sp)]*_G \sp,
$$
where $\pi:G\times \h\to\h$; $p:G\times \h\to G$ are projections.
 
One then has
$$
Rp_!(\pi^{-1}H^\alpha\otimes \sp)=I_0^{-1}( H*_\h \sp).
$$

Let $S_A:=I_0^{-1}( V_A*_\h \sp)$. 
We then have a natural map $\tau_A^S:S_0\to S_A$.

We have $V_A*_\h \sp\cong S_A*_G\sp$ and
$$
(F\boxtimes V_A)*_{G\times \h}\sp\cong F*_G(S_A*_G\sp)=
(F*_G S_A)*_G \sp
$$

The map \ref{kruch} is then induced by the map $\tau_A^S$.

Thus, Proposition \ref{htors} is  now reduced to
\begin{Proposition}\label{htors1} There exist: a neighborhood $U\subset G$ of the unit $e\in G$ and
 $A\in C_+$
such that the natural map
$$
F*_G S_0 \to F*_G S_A
$$
induced by $\tau^S_A$ is zero for any  $F\in D(G)$ which 
is supported on $gU$ for some $g\in G$ and satisfies $R\Gamma(G,F)=0$.
\end{Proposition}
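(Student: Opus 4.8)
\emph{Strategy.} Recall $S_A=I_0^{-1}(V_A*_\h\sp)$ and that $\tau^S_A\colon S_0\to S_A$ is the map induced by the open inclusion $U_0\subset U_A$, i.e.\ by $V_0\to V_A$. The plan is to let $A\to\infty$ inside $C_+$. Since the opens $U_A$, $A\in C_+$, exhaust $\h$, one has $\limdir_{A\in C_+}V_A=\gf_\h[\dim\h]=:V_\infty$, and since $I_0^{-1}$ and $*_\h$ commute with filtered colimits, $\limdir_{A\in C_+}S_A=I_0^{-1}(V_\infty*_\h\sp)=:S_\infty$. I claim $S_\infty$ \emph{is a constant complex on} $G$. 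Indeed $\mS(V_\infty)$ is the zero section, so a short microsupport computation (using the microlocal bound for $*_\h$ and Corollary \ref{impropersupport} for the non-proper integration along the convolution variable) shows that any covector in $\mS(V_\infty*_\h\sp)$ has vanishing $\h^*$-component $\eta$; but on $\Omega_\sp$ one has $\eta=\|\omega\|$ and $\|\omega\|=0$ forces $\omega=0$, so $\mS(V_\infty*_\h\sp)$ is the zero section of $T^*(G\times\h)$. Hence $V_\infty*_\h\sp$ is locally constant, and since $G=\SU(N)$ is simply connected and $\h$ contractible it is constant, $V_\infty*_\h\sp\cong(\pi')^{-1}W$; therefore $S_\infty\cong\pi^{-1}W$ for a complex $W$ of $\gf$-vector spaces, $\pi\colon G\to\pt$. (The precise $W$ can be read off from the computation of $i_e^{-1}\sp$ in \S\ref{restrC-}--\S\ref{bsche}, but is not needed here.)

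\emph{Killing the limit.} For any $F\in D(G)$ with $R\Gamma(G,F)=0$, convolution with a constant sheaf gives $F*_G\pi^{-1}W\cong\pi^{-1}\bigl(R\Gamma_c(G;F)\otimes_\gf W\bigr)$, which vanishes because $G$ is compact and $R\Gamma_c(G;F)=R\Gamma(G;F)=0$; thus $F*_GS_\infty=0$. As $F*_G(-)$ is built from $Rm_!$, $\boxtimes$, inverse images and $I_0^{-1}$, it commutes with $\limdir_{A\in C_+}$, so $\limdir_{A\in C_+}(F*_GS_A)\cong F*_GS_\infty=0$. If $F*_GS_0$ is a compact object of $D(G)$ (which it is when $F$ and $S_0$ are bounded constructible, $G$ being compact), then $\limdir_A\hom_{D(G)}(F*_GS_0,F*_GS_A)\cong\hom_{D(G)}(F*_GS_0,0)=0$, and since the element of this colimit represented at stage $A$ by $F*_G\tau^S_A$ must then die at a finite stage $A'\ge A$ — and dying here means precisely $F*_G\tau^S_{A'}=0$ — we conclude: for each such $F$ there is $A(F)\in C_+$ with $F*_G\tau^S_{A(F)}=0$.

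\emph{The main obstacle: uniformity.} What is still missing is a \emph{single} $A$ (and $U$) valid for all $F$ supported in some $gU$ with $R\Gamma(G,F)=0$: the admissible $A$'s form an up-set in $C_+$ for each fixed $F$, but an infinite intersection of up-sets in $C_+\cong\Re^{N-1}_{\ge 0}$ may be empty, so the argument above is not yet enough; there is also the constructibility/boundedness point needed for compactness of $F*_GS_0$, since $\sp$ (hence $S_0$) is a priori unbounded, $H_\bullet(\Omega G)$ living in infinitely many degrees. I expect both issues to be resolved by the explicit description of $\sp$ obtained in the remaining sections: the strict $B$-sheaf property — periodicity of $\sp$ under the lattice $\bL$, \S\ref{strict:sh} and \S\ref{bsche} — together with the Bott-type computation of $\sp|_{e\times C_-^\circ}$ in \S\ref{restrC-} should pin down the isomorphism type of $S_A$ for every $A$ past an explicit threshold $A_0\in C_+$ (depending only on where the Morse handles of $\Omega G$ occur, not on $F$), and exhibit a factorization of $\tau^S_A\colon S_0\to S_A$ through a constant complex $\pi^{-1}W'$ for all $A\ge A_0$. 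Granting that, $F*_G\tau^S_A$ factors through $F*_G\pi^{-1}W'=0$ for every $F$ with $R\Gamma(G,F)=0$, uniformly; $U$ may then be taken to be any ball around $e$, the support hypothesis on $F$ being used only in passing from Proposition \ref{diagonal}\,1) to the present statement. Establishing this factorization — that the non-constant part of $S_A$ is not met by the image of $\tau^S_A$ once $A$ is large — is the crux, and is exactly what the detailed study of $\sp$ in the rest of the paper provides.
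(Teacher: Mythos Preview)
Your approach is essentially the opposite of the paper's, and the gap you identify as ``the main obstacle'' is genuine and not resolved by the later sections in the way you hope.

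The paper does not let $A\to\infty$; it takes $A$ \emph{small}, namely $0\ll A\ll b$ (recall $b\in C_+^\circ$, $b\le e_1/(100N)$). For such $A$ one has, directly from the construction of $\sp$ in \S\ref{proofspecth}, the explicit identifications
\[
S_0=\sp|_{G\times 0}\cong\gf_e,\qquad S_A=\sp|_{G\times(-A)}\cong\gf_{\cU_A}[\dim G],
\]
where $\cU_A=\{e^X:\|X\|\ll A\}$ is an open ``ball'' in $G$. Now take $U:=\cU_{A/10}$. If $F$ is supported on $gU$ and $R\Gamma(G,F)=0$, then $F*_GS_0=F$ is supported on $gU$, while the stalk of $F*_GS_A$ at any $gh\in gU$ is $R\Gamma_c(gh\cU_A^{-1};F)[\dim G]$; a short computation with Lemma~\ref{Klyachko} shows $gU\subset gh\cU_A^{-1}$, hence this stalk equals $R\Gamma(gU;F)[\dim G]=0$. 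Thus $F*_GS_0$ and $F*_GS_A$ have disjoint supports and the map between them is zero. The pair $(U,A)$ is manifestly uniform.

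Your limit argument, by contrast, leaves the uniformity problem open, and the proposed fix --- that for $A$ past some threshold the map $\tau_A^S\colon S_0\to S_A$ factors through a constant complex --- is not what the later sections establish (they compute $\bfS=\sp|_{\Zentrum\times\h}$, not such a factorization on $G$), and there is no evident reason it should hold for any finite $A$. You also dismiss the support hypothesis on $F$ as incidental; in fact it is the heart of the argument: it is precisely what forces the disjointness of supports above. Finally, your compactness worry about $F*_GS_0$ is real for arbitrary $F\in D(G)$, whereas the paper's proof needs no such hypothesis.
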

\begin{proof}
We have a natural map $\gf_A\to V_A$, as in (\ref{nuochennat}).
 Hence, we have an induced map

\begin{equation}\label{SA}
I_0^{-1}(\gf_{A}*_\h \sp)\to I_0^{-1}(V_A*_\h\sp)=: S_A.
\end{equation}
One sees that  {\em this map is actually an isomorphism.}
Indeed, one can easily show that for any object $F\in D(G\times \h)$
such that $\mS(F)\subset T^*G\times \h\times C_+\subset T^*G\times
T^*\h$,  the map
$$
I_0^{-1}(\gf_{A}*_\h F)\to I_0^{-1}(V_A*_\h F)
$$
induced by the map (\ref{nuochennat}), is an isomorphism, and $\sp$ is of this
type by virtue of Theorem \ref{specth}.

 One also
sees that 
$I_0^{-1}(\gf_{A}*_\h\sp)=I_{-A}^{-1}\sp$, where $I_{-A}:G\to G\times \h$;
$I_{-A}g =(G,-A)$.   Taking into account (\ref{SA}),
we obtain an isomorphism
$$
S_A\cong I_{-A}^{-1}\sp.
$$
\def\cU{\mathcal{U}}
Let us choose a small $A$, $A>>0$. 

As was shown in the course of proving Theorem \ref{specth}, for  $0<<A<<b$ we have
$$
S_A=I_{-A}^{-1}\sp\cong \gf_{\cU_A}.
$$

where $\cU_A=\{e^{X}| \|X\|<< A\}\subset G$. We also know that $S_0=\gf_e$.

Without loss of generality one can assume that for some $A\in C_+$; $A<<b$,
$U\subset \cU_{A/10}$. Let  $h\in U$ so that $h=e^X$, where $\|X\|<< A/10$.  We have
$(F*_G S_A)|_{gh}=R\Gamma_c(\{ghr^{-1}| r\in \cU_A\};F)[\dim G]$. 
It follows
that $gU\subset  \{ghr^{-1}| r\in \cU_A\}$ (Indeed, let $gh'\in gU$ so that $h'=e^{X'},\|X'\|<<A/10$.
We have $h'=hr^{-1}$, $r=(h')^{-1}h$. By Lemma \ref{Klyachko}, $r=e^Z$, where $\|Z\|\leq \|-X'\|+\|X\|<<A$.
So $r\in \cU_A$).
 Therefore,
$(F*_G S_A)|_{gh}=R\Gamma(gU,F)[\dim G]=0$.
Thus, $F*_G S_A$ is supported away from  $gU$.
But $F*_G S_0=F$ is supported on $gU$.  Therefore, 
$R\hom(F*_G S_0;F*_G S_A)=0$ which proves the statement.
\end{proof}

Thus, we have proven Proposition \ref{diagonal} 1). The rest
of the paper is devoted to proving the second part of the Proposition. 
\subsection{} Recall that we have a sheaf $\gamma_L:=\gf_{\{(A,t)|
t+<A,L>\geq 0\}}$ on $\h\times \Re$.  Let
 $\iota:\Re\to \h\times \Re$
be given by $\iota(t)=(0,t)$. We have a natural isomorphism
$$
\gf_{[0,\infty]}[-\dim \h]=\iota^!\gamma_L
$$

hence  a natural map
\begin{equation}
\label{odindva}
\gf_{0\times[0,\infty)}[-\dim \h]\to \gamma_L.
\end{equation}
This map induces a map
\begin{equation}\label{restr1}
I_0^{-1}(\sp*_\h\gf_{0\times [0,\infty)})[-\dim\h]\to 
\gf_{I_0^{-1}(\sp*_\h \gamma_L)}=u_\O
\end{equation}

where $I_0:G\times \Re\to G\times \h\times \Re$, $I_0(g,t)=(g,0,t)$
 Next, one has$$
I_0^{-1}(\sp*_\h\gf_{0\times [0,\infty)})=
i_0^{-1}\sp \boxtimes \gf_{[0,\infty)}
$$
where $i_0:G\to G\times \h$, $i_0(g)=(g,0)$.
We know that $i_0^{-1}\sp=\gf_e$, thus we have an isomoprhism
$$
I_0^{-1}(\sp*_\h\gf_{0\times [0,\infty)})=\gf_{e\times [0,\infty)}
$$
The map (\ref{restr1})
then can be rewritten as:
\begin{equation}\label{restr}
\gf_{e\times [0,\infty]}[-\dim \h]\to u_\O
\end{equation}
\begin{Proposition} Let $\Phi\in \cD_{G\times \O}(G)$
 The natural map
$$
 \hom_{G\times \Re}
(u_\O;\Phi)\to R\hom_{G\times \Re}(\gf_{(e,0)}[-\dim \h];\Phi)
$$
induced by the map (\ref{restr}) is an isomorphism.
\end{Proposition}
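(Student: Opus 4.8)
The plan is to factor the asserted map through $R\hom(\gf_{e\times[0,\infty)}[-\dim\h];\Phi)$ and to reduce to two vanishing statements. The map $(\ref{restr})\colon\gf_{e\times[0,\infty)}[-\dim\h]\to u_\O$ induces $R\hom(u_\O;\Phi)\to R\hom(\gf_{e\times[0,\infty)}[-\dim\h];\Phi)$, while the canonical surjection $\gf_{e\times[0,\infty)}\to\gf_{(e,0)}$, whose kernel is $\gf_{e\times(0,\infty)}$, induces $R\hom(\gf_{(e,0)}[-\dim\h];\Phi)\to R\hom(\gf_{e\times[0,\infty)}[-\dim\h];\Phi)$; the composite of the first with the inverse of the second is the natural map in the statement. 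Hence it suffices to prove (a) $R\hom_{G\times\Re}(\gf_{e\times(0,\infty)};\Phi)=0$, and (b) $R\hom_{G\times\Re}(\Cone(\ref{restr});\Phi)=0$.

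For (a): $\gf_{e\times(0,\infty)}$ is supported on the line $\{e\}\times[0,\infty)$, and writing $\iota\colon\{e\}\times\Re\hookrightarrow G\times\Re$ we have $\gf_{e\times(0,\infty)}=\iota_*\gf_{(0,\infty)}$, so adjunction gives $R\hom_{G\times\Re}(\gf_{e\times(0,\infty)};\Phi)\cong R\hom_{\{e\}\times\Re}(\gf_{(0,\infty)};\iota^!\Phi)$. Here the microsupport hypothesis enters: since $\O$ is compact and disjoint from the zero section $0_{T^*G}$, Lemma \ref{first} forces $\mS(\Phi)$ to meet the conormal of $\{e\}\times\Re$ only along the zero section, so $\iota$ is non-characteristic for $\Phi$ and $\iota^!\Phi\cong(\iota^{-1}\Phi)[-\dim G]$. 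Thus $R\hom(\gf_{e\times(0,\infty)};\Phi)\cong R\Gamma\big((0,\infty);(\iota^{-1}\Phi)|_{(0,\infty)}\big)[-\dim G]$, and the last group vanishes because the same hypothesis — no microsupport of $\Phi$ in the directions $\omega=0$, $k\neq0$ — pins $\iota^{-1}\Phi$ to microsupport in $\{k\ge0\}$ with no constituent surviving to $t=+\infty$. (If one instead reads $\gf_{(e,0)}$ as shorthand for its image $\gf_{e\times[0,\infty)}$ in the quotient $\cD(G)=D(G\times\Re)/C_{\leq0}(G)$, then step (a) is vacuous.)

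For (b): by the construction (\ref{restr1}) of $(\ref{restr})$ one has $\Cone(\ref{restr})=I_0^{-1}\big(\sp*_\h\Cone(\ref{odindva})\big)$, and $\Cone(\ref{odindva})\cong Rj_*j^{-1}\gamma_L$ with $j\colon(\h\setminus\{0\})\times\Re\hookrightarrow\h\times\Re$ — a sheaf whose singular support over the slice $\{0\}\times\Re$ acquires all conormal directions to that slice. I would then estimate $\mS\big(\Cone(\ref{restr})\big)$ exactly as $\mS(u_\O)$ was estimated in \S\ref{odindvatri}: starting from the defining microlocal data of $\sp$ (Theorem \ref{specth}: $\mS(\sp)\subset\Omega_\sp$, i.e.\ $\eta=\|\omega\|$, and $i_0^{-1}\sp\cong\gf_e$) together with $\mS(\gamma_L)$, estimating the tensor product, and using Corollary \ref{impropersupport} for the push-forward along $I_0$. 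The effect of replacing $\gamma_L$ by $Rj_*j^{-1}\gamma_L$ is to add, over the slice $\{0\}\times\Re$, a contribution of the form $F\boxtimes\gf_{(0,\infty)}$ from the removed slice, which pairs trivially with $\Phi$ by the mechanism of part (a); all remaining contributions to $\mS_\cD\big(\Cone(\ref{restr})\big)$ lie, away from $0_{T^*G}$, outside $\Cone(G\times\O)$, forcing $R\hom\big(\Cone(\ref{restr});\Phi\big)=0$.

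The main obstacle is precisely this microsupport bookkeeping. The auxiliary objects $\gf_{e\times(0,\infty)}$ and $\Cone(\ref{restr})$ do not have singular support conic over a compact subset of $T^*G$ — the skyscraper factor $\gf_e$ contributes the whole cotangent fibre $T^*_eG$, and the removed slice contributes its entire conormal bundle — so they genuinely intersect $\mS(\Phi)$; what rescues the argument is that these intersections sit over a single base point of $G\times\Re$, and that the hypothesis $\O\subset T^*G\setminus0_{T^*G}$, via Lemma \ref{first}, keeps $\mS(\Phi)$ clear of the dangerous $(\omega=0,\ k\neq0)$ directions. Making this precise — in particular the vanishing of $R\Gamma\big((0,\infty);(\iota^{-1}\Phi)|_{(0,\infty)}\big)$ for every $\Phi\in\cD_{G\times\O}(G)$ — is where the real work lies.
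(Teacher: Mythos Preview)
Your reduction to steps (a) and (b) is reasonable, and as you note parenthetically, (a) is vacuous once one reads $\gf_{(e,0)}$ as $\gf_{e\times[0,\infty)}$ (its representative in $\cD(G)$); this is indeed how the paper uses the object immediately after the Proposition. Your direct attempt at (a) via $R\Gamma\big((0,\infty);\iota^{-1}\Phi\big)=0$ is, however, false in general: any $\Phi$ with $\iota^{-1}\Phi\cong\gf_{[0,\infty)}$ (which is compatible with $\Phi\in\cD_{G\times\O}(G)$) gives $R\Gamma((0,\infty);\gf)=\gf\neq0$. So treat (a) as a tautology and focus on (b).

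The real gap is in (b). Your plan is to estimate $\mS\big(\Cone(\ref{restr})\big)$ and invoke a disjoint-support argument against $\mS(\Phi)$, but you yourself observe that the skyscraper $\gf_e$ contributes all of $T^*_eG$, so neither Theorem \ref{disjoint} (which needs compact $A_i\subset T^*X$) nor any direct microsupport separation is available. The vague hope that ``these intersections sit over a single base point'' does not translate into an argument; you still have to show that every morphism from a specific object to $\Phi$ vanishes, and you have given no mechanism for that.

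The paper's proof avoids this difficulty by a single adjunction that moves the comparison to $\h\times\Re$. Both $u_\O$ and $\gf_{e\times[0,\infty)}$ are of the form $Rp_{2!}\big(p_3^{-1}\sp\otimes p_1^{-1}X\big)$ for $X=\gf_{\{t\geq\langle A,L\rangle\}}$ and $X=\gf_{0\times[0,\infty)}$ respectively, so
\[
R\hom\big(Rp_{2!}(p_3^{-1}\sp\otimes p_1^{-1}X);\Phi\big)\;\cong\;R\hom_{\h\times\Re}\big(X;\,\mathcal H\big),\qquad \mathcal H:=Rp_{1*}R\ihom(p_3^{-1}\sp;p_2^!\Phi).
\]
Now the microsupport estimate is done once, on $\mathcal H$, using the well-controlled inputs $\mS(\sp)\subset\Omega_\sp$ and $\mS(\Phi)\subset\{(g,t,k\omega,k):k\geq0,\ \omega\in\O\}$; one finds $\mS(\mathcal H)\subset\{(A,t,-kL,k)\}$, so $\mathcal H\cong\pi^!\Gamma$ with $\pi(A,t)=t-\langle A,L\rangle$. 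The Proposition then reduces to the elementary fact that $R\pi_!\gf_{0\times[0,\infty)}[-\dim\h]\to R\pi_!\gf_{\{t\geq\langle A,L\rangle\}}$ is an isomorphism. In short: don't estimate the microsupport of the cone on $G\times\Re$; push the whole question to $\h\times\Re$ first, where the bad $T^*_eG$-directions never appear.
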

\begin{proof}

We have 
$$
u_\O=Rp_{2!}(p_3^{-1}\sp\otimes p_1^{-1}
\gf_{\{(A,t)|t\geq (A,L)\}});
$$
$$\gf_{e\times [0,\infty)}=
I_0^{-1}(\sp*_\h \gf_{0\times [0,\infty)})
$$
$$
=Rp_{2!}(p_3^{-1}\sp\otimes p_1^{-1}
\gf_{0\times [0,\infty)})
$$
where  $p_1:G\times \h\times \Re\to \h\times \Re$;
$p_2:G\times \h\times \Re\to G\times \Re$; $p_3:G\times \h\times \Re\to G\times \h$ are projections.

Let $X\in D(\h\times \Re)$. We then have

$$R\hom(Rp_{2!}(p_3^{-1}\sp\otimes p_1^{-1}
X);\Phi)$$
$$
=R\hom(p_3^{-1}\sp\otimes p_1^{-1}X;
p_2^!\Phi)$$
$$
=R\hom( p_1^{-1}X;R\ihom(p_3^{-1}\sp;p_2^!\Phi)
$$
\begin{equation}\label{XXX}
=R\hom_{\h\times \Re}(X;Rp_{1*}R\ihom(p_3^{-1}\sp;p_2^!\Phi)).
\end{equation}
Let us estimate the microsupport of the sheaf
$$Rp_{1*}R\ihom(p_3^{-1}\sp;p_2^!\Phi).
$$

We know that $\ms(\sp)\subset\{(g,A,\omega,|\omega|)\in G\times\h\times \g^*\times \h^*\}$.
Therefore,
$$
\ms(p_3^{-1}\sp)\subset \Omega_1:=
\{(g,A,t,\omega_1,|\omega_1|,0)\}\subset G\times \h\times \Re\times \g^*\times \h^*\times \Re.
$$

Analogously,
$$
\ms(p_2^{!}\Phi)\subset \Omega_2=\{(g,A,t,k\omega,0,k)|k\geq 0,\omega\in \O\}
$$
One sees that if $(g,A,t,\omega_i,\eta_i,k_i)\in \Omega_i$
and $\omega_2=\omega_1,\eta_2=\eta_1,k_1=k_2$, then
$0=k_1=k_2$, hence $\omega_2=\omega_1=0$; also $0=\eta_2=\eta_1$.
Therefore,
$$
\mS(R\ihom(p_3^{-1}\sp;p_2^!\Phi))\subset\Omega_3:=\{(g,A,t,\omega_2-\omega_1;\eta_2-\eta_1;k_2-k_1)|(g,A,t,\omega_i,\eta_i,k_i)\in \Omega_i\}
$$
$$
=\{(g,A,t,k\omega-\omega_1;-|\omega_1|;k)|k\geq 0;\omega\in \O\}
$$
As the map $p_1$ is proper, one has
$$
\mS(Rp_{1*}R\ihom(p_3^{-1}\sp;p_2^!\Phi))\subset\Omega_4:=
\{(A,t,\eta,k)|\exists g\in G: (g,A,t,0,\eta,k)\in \Omega_3\}.
$$
We see that
$$
\Omega_4=\{(A,t,-k|\omega|,k)\}=\{(A,t,-kL,k)\}.
$$

Let $\pi:\h\times \Re\to \Re$; $\pi(A,t)=t-<A,L>$.
It then follows that $Rp_{1*}R\ihom(p_3^{-1}\sp;p_2^!\Phi)$
is locally constant along the fibers of $\pi$ i.e.
there exists a sheaf $\Gamma$ on $\Re$ such that
$$
Rp_{1*}R\ihom(p_3^{-1}\sp;p_2^!\Phi)=\pi^!\Gamma
$$

Taking into account (\ref{XXX})
the statement  is reduced to showing that
the natural map
$$
R\hom_{\h\times \Re}(\gf_{\{(A,t)|t\geq <A,L>\}};\pi^!\Gamma)
\to
R\hom_{\h\times \Re}(\gf_{0\times [0,\infty)};\pi^!\Gamma)
$$
is an isomorphism for any sheaf $\Gamma\in D(\Re)$.
This is equivalent to showing that the  map
$$
 R\pi_!\gf_{0\times[0,\infty)}[-\dim \h]\to
 R\pi_!\gf_{\{(A,t)|t\geq <A,L>\}}
$$
induced by the map (\ref{odindva})
is an isomorphism, which is easy.
\end{proof}

It then follows that for all $c\in \Re$, we have an isomorphism
$$
R\hom(u_\O;T_{c*}u_\O)\cong R\hom(\gf_{e\times [0,\infty)}[-\dim \h]
;T_{c*}u_\O)$$

Let $i:\Re\to G\times \Re$; $i(t)=(e,t)$. We then have
$$
R\hom(\gf_{e\times [0,\infty)};T_{c*}u_\O)=
R\hom(\gf_{[0,\infty)};i^!T_{c*}u_\O).
$$

One sees that the submanifold $i(\Re)\subset G\times \Re$ is non-characteristic for $T_{c*}u_\O$ (because $\mS(T_{c*}u_\O)\subset
\{(g,t,k\omega,k),k\geq 0; \omega\in \O\}$).
Therefore, according to  Proposition \ref{ks:inverse}, we have an isomorphism
$$
i^{-1}T_{c*}u_\O[-\dim G]\cong i^!T_{c*}u_\O.
$$
Thus, we have an isomorphism
$$
\rho:R\hom(u_\O;T_{c*}u_\O)\cong R\hom(\gf_{[0,\infty)}[-\dim \h];
i^{-1}T_{c*}u_\O[-\dim G])
$$
For $c>0$ the natual maps
$$
R\hom(u_\O;u_\O)\to R\hom(u_\O;T_{c*}u_\O)
$$

and 
$$
R\hom(\gf_{[0,\infty)}[-\dim \h];i^{-1}u_\O[-\dim G])\to
 R\hom(\gf_{[0,\infty)}[-\dim \h];i^{-1}T_{c*}u_\O[-\dim G])
$$
commute with our isomorphism.

Proposition (\ref{diagonal}) 2) reduces to 

\begin{Proposition}\label{torsion} For any $c>0$, the natural map
\begin{equation}\label{sdvignat}
R\hom(\gf_{[0,\infty)}[-\dim \h];i^{-1}u_\O[-\dim G])\to 
R\hom(\gf_{[0,\infty)}[-\dim \h];
i^{-1}T_cu_\O[-\dim G])
\end{equation}
is non-zero
\end{Proposition}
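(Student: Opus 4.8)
\emph{Plan.} The first step is a reduction. By the isomorphism $\rho$ above and by the Proposition preceding this one, for every $c>0$ the map (\ref{sdvignat}) is identified with $(\tau_{0c})_*\colon R\hom_{\cD(G)}(u_\O,u_\O)\to R\hom_{\cD(G)}(u_\O,T_{c*}u_\O)$; since $(\tau_{0c})_*$ carries $\Id_{u_\O}$ to the class of $\tau_{0c}$, the assertion of the Proposition is equivalent to $u_\O$ being a non-torsion object of $\cD(G)$ --- that is, it is literally Proposition \ref{diagonal} 2). So it suffices to prove directly that $\tau_{0c}\colon u_\O\to T_{c*}u_\O$ does not vanish for any $c>0$, and for this it is enough to show that $i^{-1}u_\O$ is non-torsion in $\cD(\pt)$, since $i^{-1}$ is a functor commuting with the shifts $T_{c*}$ and with the transformations $\tau_{0c}$.

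The second step computes $i^{-1}u_\O$. Restricting the formula $u_\O=Rp_{2!}\big(p_1^{-1}\gf_{\{(A,t)\,:\,t\ge\langle A,L\rangle\}}\otimes p_3^{-1}\sp\big)$ of Section \ref{odindvatri} to $g=e$ (which commutes with $Rp_{2!}$ and does not touch the $\h$-direction), one finds that the stalk of $i^{-1}u_\O$ at $t\in\Re$ is $R\Gamma_c\big(\{A\in\h:\langle A,L\rangle\le t\};\,i_e^{-1}\sp\big)$, that $i^{-1}T_{c*}u_\O\cong T_{c*}(i^{-1}u_\O)$, and that $\tau_{0c}$ is induced by the inclusion of half-spaces $\{\langle A,L\rangle\le t-c\}\subset\{\langle A,L\rangle\le t\}$. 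Since $\mS(\sp)\subset\Omega_\sp$ forces $\mS(i_e^{-1}\sp)\subset\h\times C_+$, the sheaf $i^{-1}u_\O$ has microsupport in $\{k\ge 0\}$ (as was in fact verified for $u_\O$ itself in Section \ref{odindvatri}), and such an object of $\cD(\pt)$ is torsion precisely when it vanishes ``towards $t=+\infty$''. Thus the whole question comes down to the behaviour of $i_e^{-1}\sp$ near $0$ and along the ray $\Re_{\ge 0}L\subset C_+$.

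The third step feeds in the explicit description of $i_e^{-1}\sp$ proved in Sections \ref{restrC-}--\ref{bsche}, where $i_e^{-1}\sp$ is identified, up to the $\bL$-periodicity of strict $B$-sheaves, with a locally finite gluing of shifted constant sheaves on translated subcones of $\h$, whose bottom layer is the one recorded by $i_0^{-1}\sp=\gf_e$ and whose layer at height $t$ is recorded by $O(A)$ with $\langle A,L\rangle=t$. Granting this, the map (\ref{restr}), after applying $i^{-1}$, realizes $\gf_{[0,\infty)}[-\dim\h]$ as the bottom layer of $i^{-1}u_\O$ in $\cD(\pt)$; by naturality of $\tau_{0c}$ the composite $\gf_{[0,\infty)}[-\dim\h]\to i^{-1}u_\O\xrightarrow{\tau_{0c}}T_{c*}(i^{-1}u_\O)$ factors as $\gf_{[0,\infty)}[-\dim\h]\to\gf_{[c,\infty)}[-\dim\h]\to T_{c*}(i^{-1}u_\O)$, and the explicit form of $i_e^{-1}\sp$ shows this is nonzero, because the generator of the bottom layer is not ``shifted above level $c$'' --- equivalently, because the structure map $O(0)\to O(A)$ is nonzero for every $A\in C_+$ with $\langle A,L\rangle=c$ (on the $H_0$ of the relevant loop-space sublevel set it is even an isomorphism, the trivial loop being a point of a connected space). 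Hence $i^{-1}u_\O$ is non-torsion, which gives the Proposition. The genuine obstacle is concentrated in this last step: the microlocal estimates are routine, but the computation of $i_e^{-1}\sp$ --- together with the check that the resulting identification is compatible with $\tau_{0c}$ for all $c$ at once, which rests on the strict $B$-sheaf periodicity rather than on the ``first chamber'' Morse computation alone --- is exactly what the construction and analysis of $\sp$ in the remaining sections is for.
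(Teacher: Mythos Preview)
Your overall architecture matches the paper's: reduce (via the isomorphism $\rho$) to showing $u_\O$ is non-torsion, pass to $i^{-1}u_\O$ using $\bfS_e*_\h\gamma_L$, and then invoke the explicit description of $\bfS$ from Sections~\ref{restrC-}--\ref{bsche}. Steps~1 and~2 are fine (modulo harmless shifts). The problem is Step~3, which is a plan rather than a proof, and leans on the wrong pillar.

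Two concrete gaps. First, you appeal to the loop-space interpretation $O(X)\cong H_\bullet(\Omega_X)$ and the connectedness of $\Omega_X$ to argue that the ``bottom layer'' map $O(0)\to O(A)$ is nonzero. That interpretation appears only in an introductory Remark, explicitly flagged as \emph{not used in the proofs}; it is not established in the paper and cannot be invoked here. Second, even granting the decomposition of $\bfS_e$ into shifted constants on translated cones, you still have to push it through the convolution $(-)*_\h\gamma_L$ and the restriction $I_0^{-1}$ before you can speak of a ``bottom layer'' of $i^{-1}u_\O$. It is not automatic that the $l=0$ piece survives as a summand or that the other pieces do not cancel it; one has to compute. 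The paper does exactly this: it proves the Lemma that $I_0^{-1}(V(J,l)|_{e^l\times\h}*_\h\gamma_L)=0$ for every $J\neq\{1\}$, which collapses the Koszul-type complex defining $\cY$ to the single $J=\{1\}$ term. This yields the closed formula $i^{-1}u_\O\cong\bigoplus_I G(I)\otimes\upsilon(I)[\dim G-\dim\h]$ with $\upsilon(I)=\bigoplus_{l}\gf_{[\langle l,L\rangle,\infty)}[D(l)]$ over an explicit index set, and makes $\tau_d$ visibly the map induced by the inclusions $[\langle l,L\rangle+d,\infty)\subset[\langle l,L\rangle,\infty)$. One then reads off $R\hom(\gf_{[0,\infty)};T_{d*}\upsilon_I)\cong\bigoplus_{l\in S_I(d)}\gf[D(l)]$ and observes that $\tau_d$ is the inclusion $S_I(0)\hookrightarrow S_I(d)$; since $S_\emptyset(0)\ni 0$, the map is nonzero. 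That single vanishing Lemma and the resulting direct-sum formula are the missing substance of your Step~3: without them the phrase ``the explicit form of $i_e^{-1}\sp$ shows this is nonzero'' is not yet an argument.
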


\subsubsection{} Let $I:\h \to G\times \h$ be given by
$I(A)=(e,A)$.  Let $\bfS_e:=I^{!}\sp=I^{-1}\sp[-\dim G]$. We then have
\begin{equation}\label{bfS-uO}
i^{-1}u_\O=I_0^{-1}(\bfS_e*_\h \gamma_L)[\dim G]
\end{equation}

This equation dictates us to find an explicit expression for
$\bfS_e$.  It turns out to be more convenient to work with a 
slightly
different object. Namely, let $\Zentrum\subset G$ be the center
of $G$. Let $I_\Zentrum:\Zentrum\times \h\to G\times \h$
be the obvious embedding. Set $\bfS:=I_\Zentrum^!\sp=I_\Zentrum^{-1}\sp[-\dim G]$.
We will identify this object up-to an isomorphism. 

\subsection{Identifying $\bfS$} 
We will now give an explicit description
of  the object $\bfS$ up-to isomorphism. The proof of this
result will be given in the subsequent sections of the paper.
\subsubsection{ Object $\cY$}
We first define an object $\cY\in D(\Zentrum\times\h)$ 
as follows.  Let $\bL\subset \h$  be the lattice consisting of all $A\in \h$ such that $e^A\in \Zentrum$.

 For a subset $J\subset \{1,2,\ldots,N-1\}$ 
and $l\in \bL$ let $K(J,l)\subset e^l\times\h\subset \Zentrum\times 
h$ be defined as follows:
$$
K(J,l):=\{(e^{l},x)\in \Zentrum\times \h|\forall j\in J: 
<x-l,e_j>\geq 0\}.
$$
 Let $V(J,l):=\gf_{K(J,l)}[D(l)],
$
where $D(l)$ is an integer defined  in (\ref{deem}). That is, decompose $l=\sum l_ke_k$, where
$e^1,e^2,\ldots,e^n$ is a basis in $\h$ as in (\ref{ebasis}). Then $D(l)=\sum l_kD_k$, where
$D_k=k(N-k)$ and  $N=\dim \h +1$.
Let $\bL_J=\{l\in \bL|\forall i\notin J: <l,f_j>\leq 0\}$
Let
$\Psi^J:=\bigoplus\limits_{l\in \bL_J} V(J,l).$

Let $J_1\subset J_2$. Construct a map 
$$
I_{J_1J_2}:\Psi^{J_1}\to \Psi^{J_2}.
$$
It is defined as the  direct sum of the natural maps
$$
V(J_1,l)\to V(J_2,l)
$$
for all $l\in \bL_{J_1}\subset \bL_{J_2}$. These maps
come from the closed embeddings  $K(J_2,l)\subset K(J_1,l)$.

Let $\Subsets$ be the poset (hence the category) of all subsets
of $\{1,2,\ldots,N-1\}$. We then see that $\Psi$ is a functor
from $\Subsets$ to the category of sheaves on
 $\Zentrum\times \h$.  We then construct the standard complex
$\cY^\bullet$ such that
\begin{equation}
\cY^k:=\bigoplus\limits_{I,|I|=k} \Psi^I
\end{equation}
and the differential $d_k:\cY^k\to \cY^{k+1}$ is given by
\begin{equation}
d_k=\sum (-1)^{\vs(J_1,J_2)}I _{J_1J_2},
\end{equation}
where the sum is taken over all pairs $J_1\subset J_2$ such that
$|J_1|=k$ and $|J_2|=k+1$. The set $J_2\backslash J_1$ then consists
of a single element $e$ and $\vs(J_1J_2)$ is defined as the number
of  elements in $J_2$ which are less than $e$.

\subsubsection{Object $\bfS$}  Let $I\subset \{1,2,\ldots,N-1\}$ be a subset. 
Denote $e_I:=\sum\limits_{i\in I} e_i\in \h$.  Let also $G(I)$
be a graded vector space as in Lemma \ref{lemmagi}.

For any $l\in \h$, let $T_l:\Zentrum\times \h\to \Zentrum\times \h$ be the shift by $l$: $T_l(z,A):=(z,A+l)$

\begin{Theorem}\label{mainbfs}   We have
an isomoprhism
\begin{equation}\label{Y-bfS1}
\bfS\cong \bigoplus_I   G_I[D(-2\pi e_I)]\otimes T_{-2\pi e_I*}\cY,
\end{equation}
\end{Theorem}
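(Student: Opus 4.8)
The plan is to reduce Theorem \ref{mainbfs} to two facts established in the later sections: the reconstruction principle for strict $B$-sheaves (Section \ref{bsche}, via the notion of Section \ref{strict:sh}), which says that a strict $B$-sheaf on $\Zentrum\times\h$ is determined up to canonical isomorphism by its restriction to $\Zentrum\times C_-^\circ$; and the Morse-theoretic computation of $\sp$ over the open negative chamber (Section \ref{restrC-}), a version of Bott's calculation of $H_\bullet(\Omega G)$. Since $\sp$ is a strict $B$-sheaf and the operation $I_\Zentrum^!$ (which only affects the $G$-factor) carries strict $B$-sheaves to strict $B$-sheaves, $\bfS=I_\Zentrum^!\sp$ is a strict $B$-sheaf on $\Zentrum\times\h$. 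Hence it suffices to prove that the right-hand side of (\ref{Y-bfS1}) is also a strict $B$-sheaf and that the two sides have isomorphic restrictions to $\Zentrum\times C_-^\circ$, the isomorphism being $\bL$-equivariant; the reconstruction principle then upgrades this to (\ref{Y-bfS1}).

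First I would check that $\cY$ is a strict $B$-sheaf. Recall $\cY$ is the total complex of the functor $J\mapsto\Psi^J$ on the poset $\Subsets$ of subsets of $\{1,\dots,N-1\}$, with $\Psi^J=\bigoplus_{l\in\bL_J}V(J,l)$ and $V(J,l)=\gf_{K(J,l)}[D(l)]$. The elementary facts needed are: for $l'\in\bL$ the region $K(J,l+l')$ is the translate $T_{l'}K(J,l)$; the shift function $D$ is additive on $\bL$; and $\bL_J\subset\bL$ is a sub-semigroup stable under adding the lattice generators attached to $J$. With these the $B$-sheaf axioms become the combinatorial statements about the half-space regions $K(J,l)$, and strictness holds because the summands $V(J,l)$ for distinct $l$ sit over distinct points of $\Zentrum$, so no cancellation occurs under $\bL$-shifts. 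Consequently each summand $G_I[D(-2\pi e_I)]\otimes T_{-2\pi e_I*}\cY$, being a finite-multiplicity grading shift of a lattice translate of $\cY$, is a strict $B$-sheaf, and so is the finite direct sum over $I$.

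Next I would identify the restrictions to $\Zentrum\times C_-^\circ$. On a chamber $e^l\times(l+C_-^\circ)$ almost all of the regions $K(J,l)$ are missed, so the Čech complex defining $\cY|_{\Zentrum\times C_-^\circ}$ collapses to a small explicit complex; running the sum over $I$ and keeping track of the degree shifts $D(-2\pi e_I)$ and of the multiplicity spaces $G_I$ of Lemma \ref{lemmagi} produces precisely the cellular chain complex of the based loop space $\Omega G$ filtered by the function $\|\cdot\|$, i.e. the complex whose homology is $\bigoplus_X H_\bullet(\Omega_X)$ arranged into a sheaf on $\Zentrum\times C_-^\circ$. On the other hand, by Section \ref{restrC-} (Bott's Morse theory for $\Omega_X$, together with the degree shifts $\dim G$ and $\dim\h$ coming from $I_\Zentrum^!$ and from the normalization of $\O(X)$), the restriction $\bfS|_{\Zentrum\times C_-^\circ}$ is exactly this sheaf. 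Comparing the two cellular models gives the isomorphism, and its naturality in $l$ makes it $\bL$-equivariant, so the reconstruction principle yields (\ref{Y-bfS1}) on all of $\Zentrum\times\h$.

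The main obstacle is the matching in the last step: showing that the purely combinatorial complex built from the $V(J,l)$, once the sum over $I$ and the shifts $D(-2\pi e_I)$ are incorporated, agrees on the nose --- degrees, multiplicities, and differentials --- with Bott's Morse complex for $H_\bullet(\Omega_X)$, and that the comparison isomorphism is $\bL$-equivariant. A secondary technical point needing care is the precise verification that $I_\Zentrum^!$ preserves the strict $B$-sheaf property and that the reconstruction principle of Section \ref{bsche} applies to $\bfS$.
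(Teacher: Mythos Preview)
Your overall strategy coincides with the paper's: verify that both $\bfS$ and the right-hand side of (\ref{Y-bfS1}) are strict $\bfb$-sheaves on $\Zentrum\times\h$, identify their restrictions to $\Zentrum\times C_-^\circ$, and invoke Theorem \ref{teorhvat} (your ``reconstruction principle'') to conclude. That is precisely the route taken in Sections \ref{restrC-}--\ref{lastbfs}.

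Two points in your sketch are inaccurate and should be replaced by the paper's actual arguments.

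\textbf{Strictness of $\cY$.} Your reason --- ``summands $V(J,l)$ for distinct $l$ sit over distinct points of $\Zentrum$'' --- is false: the map $l\mapsto e^l$ from $\bL$ to $\Zentrum$ has a full-rank kernel, so infinitely many $l$ land on each $z\in\Zentrum$. The paper instead checks strictness of each $\Psi^J$ directly (Section \ref{lastbfs}): for $j\notin J$ every $V(J,l)$, hence $\Psi^J$, is constant along fibers of $p_j$; for $j\in J$ the structure map $b_{-2\pi e_j}$ is an isomorphism on $\Psi^J$. Strictness of the total complex $\cY$ follows. (Also, ``$\sp$ is a strict $B$-sheaf'' is not quite well-posed --- strictness is defined on $\Zentrum\times\h$, not $G\times\h$; the paper computes $\Cone b_{-2\pi e_k}^{\sp}$ on $G\times\h$, shows it is constant along $f_k$-fibers via Proposition \ref{efka}, and then restricts to get strictness of $\bfS$.)

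\textbf{Comparison on $C_-^\circ$.} The paper does not match Morse differentials for $\Omega G$. Instead: Theorem \ref{nada:hvat} gives $j_{C_-^\circ}^{-1}\bfS_z\cong\bigoplus_{l\in\bL_z\cap C_-}\gf_{U_l^-}\otimes H^\bullet(\Fl(I_l))[D(l)]$; Lemma \ref{lemmagi} splits $H^\bullet(\Fl(I))=\bigoplus_{J\subset I}G(J)$, yielding the $\bfb$-sheaf isomorphism $j_{C_-^\circ}^{-1}\bfS\cong\bigoplus_I G_I[D(-2\pi e_I)]\otimes T_{-2\pi e_I*}\cX$ with $\cX=\bigoplus_{l\in\bL_-}\gf_{\cU_l}[D(l)]$ (and one checks the $\bfb$-structure agrees, Proposition after (\ref{hvatoiot})); finally a direct combinatorial Lemma in Section \ref{lastbfs} shows $j_{C_-^\circ}^{-1}\cY\cong\cX$ by computing each $\Phi_l|_{C_-^\circ}$ and seeing that only $l\in\bL_-$ survive, giving $\gf_{\cU_l}[D(l)]$. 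So the comparison is between two explicit direct-sum decompositions, with no Morse complex appearing as an intermediary.
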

Proof of this theorem is obtained as a result of a study of the obect $\bfS$ in Sec. \ref{specp}-
\ref{lastbfs}.

Given this description of  $\bfS$, we 
can now compute $i^{-1}u_\O$.

\subsection{Computing $i^{-1}u_\O$} Let $\O$ be the orbit
of $L\in \g^*$, where $L=\lambda e_1$, $\lambda>0$.
For each $z\in \Zentrum$,
let us define  objects $\cV_z\in D(\Re)$ by the formula:
\begin{equation}\label{defcv}
\cV_z:=\bigoplus\limits_{l\in \bL^z;\forall j\neq 1:<l,f_j>\leq 0}
\gf_{[<l,L>;\infty)}[D(l)-\dim \h],
\end{equation}
where $\bL^z:=\{l\in \bL|e^l=z\}$.
For every $d>0$ we have natural maps
$\tau_d:\cV_z\to T_{d*} \cV_z$, where $T_d$ is the shift by $d$. The map $\tau_d$ is induced by the obvious maps

$$
\gf_{[<l,L>;\infty)}\to \gf_{[<l,L>+d;\infty)}=
T_{d*}\gf_{[<l,L>,\infty)}.
$$
\begin{Theorem} 1)  We have an isomrophism
\begin{equation}\label{theoruo}
i^{-1}u_\O\cong \bigoplus\limits_I G_I[D(-2\pi e_I)]
\otimes T_{<-2\pi e_I,L>*}\cV_{e^{2 \pi e_I}}[\dim G]
\end{equation}

2) The natural map $i^{-1}u_\O\to i^{-1}T_{d*}u_\O$ 
 is induced
by the maps $\tau_d$.
\end{Theorem}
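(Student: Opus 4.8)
The plan is to start from the identity $i^{-1}u_\O\cong I_0^{-1}(\bfS_e*_\h\gamma_L)[\dim G]$ of (\ref{bfS-uO}) and the explicit description of $\bfS$ in Theorem \ref{mainbfs}, and to carry out the convolution with $\gamma_L$ and the restriction $I_0^{-1}$ by hand. First I would restrict the formula of Theorem \ref{mainbfs} to $e\in\Zentrum$: since $\{e\}$ is open in the discrete group $\Zentrum$, restriction to $e\times\h$ is exact and commutes with $\bigoplus_I$ and with the $\h$-translations $T_{-2\pi e_I*}$. Invoking in addition the $\bL$-periodicity of $\cY$ that underlies Theorem \ref{mainbfs} (the strict $B$-sheaf structure of $\sp$), the summand $(T_{-2\pi e_I*}\cY)|_{e\times\h}$ becomes, up to the shift $[D(2\pi e_I)]$ that cancels the $[D(-2\pi e_I)]$ already present, the restriction $\cY|_{e^{2\pi e_I}\times\h}$. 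Since both $(\cdot)*_\h\gamma_L$ and $I_0^{-1}$ are additive, and an $\h$-translation of the left argument of $*_\h\gamma_L$ turns, after $I_0^{-1}$, into an $\Re$-translation by the value of the pairing with $L$, the whole problem reduces to computing $I_0^{-1}\big(\cY|_{z\times\h}*_\h\gamma_L\big)\in D(\Re)$ for an arbitrary central element $z$.

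The core step is this last computation. The \v Cech differential of $\cY$ preserves the lattice index $l$, so $\cY|_{z\times\h}\cong\bigoplus_{l\in\bL^z}\cY^{(l)}$, where $\cY^{(l)}=\mathrm{Tot}\big(J\supseteq J_{\min}(l)\mapsto V(J,l)\big)$ and $J_{\min}(l):=\{j:\langle l,f_j\rangle>0\}$. Since $K(J,l)$ restricted to $z\times\h$ equals $l+\bigcap_{j\in J}H_j$ with $H_j=\{x\in\h:\langle x,e_j\rangle\ge 0\}$, the \v Cech total complex factors over $j\in\{1,\dots,N-1\}$ as a tensor product of the constant term $\gf_{l+H_j}$ for $j\in J_{\min}(l)$ and of the two-term complexes $[\gf_\h\to\gf_{l+H_j}]\cong\gf_{\{x:\langle x-l,e_j\rangle<0\}}$ for $j\notin J_{\min}(l)$; hence $\cY^{(l)}\cong\gf_{R(l)}[\,D(l)-|J_{\min}(l)|\,]$, where $R(l)$ is the half-open chamber $\{x:\langle x-l,e_j\rangle\ge 0\ (j\in J_{\min}(l)),\ \langle x-l,e_j\rangle<0\ (j\notin J_{\min}(l))\}$. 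Now $I_0^{-1}(\gf_{R(l)}*_\h\gamma_L)(t)=R\Gamma_c\big(R(l)\cap\{x:\langle x,L\rangle\le t\};\gf\big)$; writing $L=\lambda e_1$ with $\lambda>0$ and applying the K\"unneth formula for $R\Gamma_c$ in the linear coordinates $u_j=\langle x-l,e_j\rangle$, a factor in a direction $e_j$ with $j\ne 1$ and $j\in J_{\min}(l)$ is a closed half-line $[0,\infty)$ and annihilates the product; so the contribution vanishes unless $J_{\min}(l)\subseteq\{1\}$, i.e.\ unless $\langle l,f_j\rangle\le 0$ for all $j\ne 1$, in which case the open rays in the directions $j\ne 1$ supply the shift and the remaining compact $u_1$-interval forces the threshold $\langle l,L\rangle$, yielding $\gf_{[\langle l,L\rangle;\infty)}[-\dim\h]$. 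Restoring the shift $[D(l)-|J_{\min}(l)|]$ and summing over $l$ gives $I_0^{-1}\big(\cY|_{z\times\h}*_\h\gamma_L\big)\cong\bigoplus_{l\in\bL^z,\ \langle l,f_j\rangle\le 0\ (j\ne 1)}\gf_{[\langle l,L\rangle;\infty)}[D(l)-\dim\h]=\cV_z$, which is the definition (\ref{defcv}).

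Combining the reduction of the first paragraph with this computation, using $z=e^{2\pi e_I}$ for the $I$-th summand and keeping track of $[D(\pm 2\pi e_I)]$, of the $\dim\h$ and $\dim G$ shifts, and of the fact that $T_{-2\pi e_I*}$ becomes $T_{\langle-2\pi e_I,L\rangle*}$ after $I_0^{-1}$, yields the isomorphism (\ref{theoruo}), which is part 1. For part 2: by the description of $\tau_c:u_\O\to T_{c*}u_\O$ in Section \ref{const:uo}, after the identifications of (\ref{bfS-uO}) the map $i^{-1}u_\O\to i^{-1}T_{d*}u_\O$ is induced by $\tau^\gamma_d:\gamma_L\to T_{d*}\gamma_L$, that is by the map $\gf_{\{t+\langle A,L\rangle\ge 0\}}\to\gf_{\{t+\langle A,L\rangle\ge d\}}$; under $(\cdot)*_\h\gamma_L$ followed by $I_0^{-1}$ this shrinks each sublevel set $\{x:\langle x,L\rangle\le t\}$ to $\{x:\langle x,L\rangle\le t-d\}$. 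Since the whole computation of the core step is natural in that sublevel set, the induced map on the summands $\gf_{[\langle l,L\rangle;\infty)}$ is the direct sum of the canonical maps $\gf_{[\langle l,L\rangle;\infty)}\to\gf_{[\langle l,L\rangle+d;\infty)}$, which is exactly $\tau_d$ on each $\cV_z$; the rigid translations $T_{\langle-2\pi e_I,L\rangle*}$ commute with these, so 2) follows.

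The step I expect to be the main obstacle is not the $R\Gamma_c$ calculation — routine K\"unneth once $\cY$ has been collapsed to a sum of chamber sheaves — but the bookkeeping around it: correctly converting the pure-$\h$ translation $T_{-2\pi e_I*}$ of Theorem \ref{mainbfs} into the relabelling of the central element from $e$ to $e^{2\pi e_I}$ by means of the $\bL$-periodicity (strict $B$-sheaf) structure of $\cY$, and keeping every cohomological shift ($D(l)$, $|J_{\min}(l)|$, $\dim\h$, $\dim G$, $[D(\pm 2\pi e_I)]$) consistent throughout.
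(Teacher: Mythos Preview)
Your paragraph-2 computation of $I_0^{-1}(\cY|_{z\times\h}*_\h\gamma_L)\cong\cV_z$ is correct and takes a somewhat different route from the paper: you first collapse the \v{C}ech complex $\cY^{(l)}$ for each fixed $l$ to a single chamber sheaf $\gf_{R(l)}[D(l)-|J_{\min}(l)|]$ and then convolve, whereas the paper never collapses $\cY$ but instead proves (as a Lemma inside the proof) that $I_0^{-1}\bigl(V(J,l)|_{e^l\times\h}*_\h\gamma_L\bigr)=0$ for every $J\neq\{1\}$, so that after the whole operation only the degree-one piece $\Psi^{\{1\}}$ of $\cY$ survives, and that piece is computed directly. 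Both routes produce the same vanishing condition $\langle l,f_j\rangle\le 0$ for $j\neq 1$ and the same summand. One small slip in your version: when $1\notin J_{\min}(l)$ the $u_1$-interval in $R(l)$ is the open ray $(-\infty,0)$, not compact, so your phrase ``the remaining compact $u_1$-interval'' does not apply literally; a separate one-line check is needed for that case (and it gives the same answer).

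The real gap is in paragraph 1. The translation $T_{-2\pi e_I}$ in Theorem \ref{mainbfs} is, despite the sentence preceding that theorem, the \emph{diagonal} translation $T_l(z,A)=(e^lz,A+l)$ on $\Zentrum\times\h$: this is the translation used throughout Sec.~\ref{bsche}, where (\ref{Y-bfS1}) is actually established (see the $\bfb$-structure on $\Psi^J$ and the derivation of (\ref{razlbfs}), which needs $T_{-2\pi e_I}\cU_l=\cU_{l-2\pi e_I}$ and hence forces the $\Zentrum$-component to move). With the diagonal translation one gets $(T_{-2\pi e_I*}\cY)|_{e\times\h}=T^{\h}_{-2\pi e_I*}\bigl(\cY|_{e^{2\pi e_I}\times\h}\bigr)$ on the nose, with \emph{no} extra cohomological shift, and then $T^{\h}_{-2\pi e_I*}$ becomes $T^{\Re}_{\langle-2\pi e_I,L\rangle*}$ after $*_\h\gamma_L$ and $I_0^{-1}$; that is precisely the paper's one-line reduction. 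Your appeal to an ``$\bL$-periodicity of $\cY$'' producing a shift $[D(2\pi e_I)]$ is incorrect: the strict $B$-sheaf maps $b_l$ are not isomorphisms (their cones are the nonzero objects $\Con_k$ of (\ref{konshish})), so no such identification exists, and if the shift really cancelled $[D(-2\pi e_I)]$ as you say, you would also lose the $\Re$-translation and end up with $G_I\otimes\cV_{e^{2\pi e_I}}$ rather than (\ref{theoruo}).
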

\begin{proof}
Let $\bL^c=\{l\in L;e^{l}=c\}$.
Let $\bL^c_J=\bL^c\cap \bL_J$. Let $\cY_c\in D(\h)$; 
$\cY_c=\cY|_{c\times \h}$.

It follows from  (\ref{Y-bfS1}) and  (\ref{bfS-uO})
that we have an isomorphism
$$
i^{-1}u_\O\cong \bigoplus_I G_I[D(-2\pi e_I)]\otimes
 I_0^{-1}(T_{-2\pi e_I*}\cY|_{e\times \h}*_\h\gamma_L)[\dim G].
$$
 Let $\cU_z:=I_0^{-1}\cY|_{z\times \h}*_\h \gamma_L$. We then have
$$I_0^{-1}[T_{-2\pi e_I*}\cY|_{e\times \h}*_\h\gamma_l]=
I_0^{-1}[\cY_{e^{2\pi e_I}}*_\h T_{-2\pi e_I*}\gamma_l]
$$
$$
=
I_0^{-1}[\cY_{e^{2\pi e_I}}*_\h T_{<-2\pi e_I,L>*}\gamma_L]
$$
$$
=T_{<-2\pi e_I,L>*}\cU_{e^{2\pi e_I}},
$$ 
where  for a real number $t$, we define a map $T_t:G\times \Re\to G\times \Re$ to be the shift along $\Re$ by $t$, whereas for $A\in \h$, $T_A$ is the shift
by $A$ along $\h$ in $G\times \h$.

We then have an isomorphism
\begin{equation}\label{V-uO}
i^{-1}u_\O\cong \bigoplus\limits_I  G_I[D(-2\pi e_I)]\otimes 
T_{<-2\pi e_I,L>*}\cU_{e^{2\pi e_I}}[\dim G]
\end{equation}

One also sees that the natural map
$$
i^{-1}u_\O\to i^{-1}T_{d*}u_\O
$$
for $d>0$ corresponds under this isomorphism to the natural map
induced by the maps
\begin{equation}\label{taud}
 \tau_d:\cU_c\to T_{d*}\cU_c,
\end{equation}
in  turn induced by the 
natural map $\gamma_L\to T_{d*}\gamma_L$
coming from the embedding $$\{(t,A)|t\geq -<A,L>+d\}\subset
\{(t,A)|t\geq -<A,L>\}$$ (we have
$\gamma_L=\gf_{\{t\geq -<A,L>\}}$ and $T_{d*}\gamma_L=
\gf_{\{(t,A)|t\geq -<A,L>+d\}})$).

Let us compute $\cU_z$ for $z\in \Zentrum$. We will actually
see that $\cU_z\cong \cV_z$.

\begin{Lemma} 
 We have $I_0^{-1}((V(J,l)|_{e^{l}\times \h})*_\h \gamma_L)=0$ for all
$J\neq \{1\}$.
\end{Lemma}
\begin{proof} Let $V'(J,l):=V(J,l)|_{e^{l}\times \h}$.

 We have $\gamma_L=\gf_{\{(A,t)| t+<A,L>\geq 0\}}$.
The inequality $t+<A,L>\geq 0$ is equivalent to  $t/\lambda+<A,e_1>\geq 0$.  Set $T=t/\lambda$. 
Then our statement can be reformulated as:
$$
V'(J,l)*_\h \gf_{\{(A,T)|T+<A,e_1>\geq 0\}}=
RP_!((V'(J,l)\boxtimes \gf_\Re)\otimes \gf_{\{(A,T)|T\geq <A,e_1>\}})=0,
$$
where  $P:\h\times \Re\to \Re$ is the projection. This is equivalent
to showing that for any $T\in \Re$,
$$
R\Gamma_c(\h;V'(J,l)\otimes \gf_{\{A\in \h| T\geq (A,e_1)\}})=0.
$$
 Let $x_j:\h\to \Re$; $x_j=<A,e_j>$.
We then have 
$$
V'(J,l)\otimes \gf_{\{A\in \h| T\geq <A,e_1>\}}=\gf_{S}[D(l)],
$$
where $S=\{A\in\h| x_1(A)\leq T; \forall j\in J: x_j(A)\geq x_j(l)\}$.

Suppose  there exists $j\in J$, $j\neq 1$.  Decompose
$\h=\Re.f_j\times  E$, where $E$ is the
 span of all $f_i$, $i\neq j$ (recall that $f_j$ form  the basis dual to $e_1,e_2,\ldots,e_{N-1}$).
Thus, $\h=\Re\times E$. Then $\gf_{S}[D(l)]=\gf_{[0,\infty)}\boxtimes A$ for some $A\in D(E)$. 
Let $\pi:\h\to E$ be the projection. Then 
$R\pi_!\gf_S[D(l)]=0$ because $R\Gamma_c(\Re,\gf_{[0,\infty)})=0$.
If $J=\emptyset$, then $S=\{A\in \h|x_1(A)\leq T\}$. It is 
easy to see that $R\Gamma_c(\h,\gf_S[D(l)])=0$. 
This exhausts all subsets $J\neq \{1\}$.
\end{proof}
It now follows that $I_0^{-1}(\Psi^J*_\h \gamma_L)=0$
for all $J\neq \{1\}$  Therefore,  we have an isomorphism
$$
\cU_z=I_0^{-1}(\Phi_z*_\h \gamma_L)[\dim G]
\cong I_0^{-1}(\Psi^{\{1\}}_z*_\h \gamma_L)[-1][\dim G]
$$

$$
\cong\bigoplus\limits_{l\in \bL^z_{\{1\}}}
 I_0^{-1}[V(\{1\};l))_z*_\h\gamma_L][-1][\dim G],
$$
where the subscript $z$ hear and below means the restriction onto
$z\times \h\subset \Zentrum\times \h$.
Let us compute 
$$
 I_0^{-1}[V(\{1\};l)_z*_\h\gamma_L]= 
RP_{!}(\gf_{(A,t); x_1(A)\geq x_1(l)}\otimes
 \gf_{\{(A,t)|\lambda x_1(A)\leq t\}})[D(l)],
$$
where $P: \h\times \Re\to \Zentrum\times \Re$
is the projection.
We have
$$
RP_{!}(\gf_{\{(A,t); x_{1}(A)\geq x_1(l)\}}\otimes \gf_{\{(A,t)|
\lambda x_1(A)\leq t\}})
$$

$$
=RP_!(\gf_{\{(A,t); x_1(l)\leq x_1(A)\leq t/\lambda\}})=
\gf_{[\lambda x_1(l),\infty)}[1-\dim \h]
$$
Thus,
$$
 I_0^{-1}[V(\{1\};l)_c*_\h\gamma_L]\cong  \gf_{[\lambda x_1(l),\infty)}[1-\dim \h][D(l)]
$$
 
Let $d\geq 0$. We need to compute the map
$$
\tau_d: I_0^{-1}[V(\{1\};l)_c*_\h\gamma_L]\to T_{d*} I_0^{-1}[V(\{1\};l)_c*_\h\gamma_L]
$$
induced by the natural map
$$
\gamma_L\to T_{d*}\gamma_L.
$$

It is easy to see that the map $\tau_d$
is isomorphic to the natural map
$$
 \gf_{[\lambda x_1(l),\infty)}[1-\dim \h]\to T_{d*} \gf_{[\lambda x_1(l),\infty)}[1-\dim \h]
$$
$$
= \gf_{[\lambda x_1(l)+d,\infty)}[1-\dim \h],
$$
induced by the embedding

$$ [\lambda x_1(l)+d,\infty)\subset[\lambda x_1(l),\infty).$$
Thus, we have,
$$
\cU_z=\bigoplus\limits_{l\in \bL_{\{1\}}^z }
\gf_{[\lambda x_1(l),\infty)\}}[D(l)][-\dim \h]
$$
$$
=\bigoplus\limits_{l\in \bL^z; \forall j\neq 1:<l,f_j>\; \leq 0}
\gf_{[\lambda <l,e_1>,\infty)}[D(l)-\dim \h].
$$

Thus, we see that $\cU_z\cong \cV_z$.  It is now 
 straightforward to check that the maps $\tau_d$ on both sides 
do match
\end{proof}

Let us substitute (\ref{defcv}) into (\ref{theoruo}).
We will get
$$
i^{-1}u_\O\cong \bigoplus\limits_I G(I)\otimes \upsilon(I)[-\dim \h+\dim G],
$$
where
$$
\upsilon(I)=\bigoplus\limits_{l\in \bL^{{e^{2\pi e_I}}};\forall j\neq 1:<l,f_j>\leq 0} \gf_{[<l-2\pi e_I,L>;\infty)}[D(l-2\pi e_I)].
$$

Let us replace $l$ with $l+2\pi e_I$. We will get  an ultimate formula
\begin{equation}\label{upsilon}
\upsilon_I=\bigoplus\limits_{l\in \bL^0;\forall j\neq 1:<l+2\pi e_I,f_j>\leq 0}\gf_{[<l,L>;\infty)}[D(l)].
\end{equation}

The map $\tau_d:i^{-1}u_\O\to T_{d*}i^{-1}u_\O$, $d\leq 0$ is induced
by natural maps $\tau_d:\upsilon_I\to T_{d*}\upsilon_I$ which are produced
by the embeddings $T_d[<l,L>;\infty))\subset [<l,L>;\infty).$
\subsubsection{Proof of Proposition \ref{torsion} }
We have
$$
R\hom(\gf_{[0,\infty)}[-\dim \h]; T_{d*}i^{-1}u_\O[-\dim G])=
\bigoplus\limits_I G(I)\otimes H_I(d),
$$ 
where
$$
H_I(d):=R\hom(\gf_{[0,\infty)};T_{d*}\upsilon_I)
\cong
\bigoplus\limits_{l\in S_I(d)}\gf[D(l)],
$$
and
$$S_I(d):=\{l\in L^0|\forall j\neq 1:<l+2\pi e_I,f_j>\leq 0; <l,L>+d\geq 0\}.
$$

The map  (\ref{sdvignat})  is induced by  maps
$\tau_d: H_I(0)\to H_i(d),
$
which are in turn induced by the maps $\tau_d:\upsilon\to T_{d*}\upsilon$.
It is not hard to see that the map $\tau_d:H_I(0)\to H_I(d)$ is
induced by the inclusion $S_I(0)\subset S_I(d)$. As $S_I(0)$ is not empty,
the maps $\tau_d:H_I(0)\to H_I(d)$ do not vanish for any $d\geq 0$,
which proves the Proposition.

\section{An object $\sp$}\label{specp}We will freely use notations 
from Sec. \ref{gnot}.

The object $\sp$  will
be characterized microlocally. Let us first define a subset
\begin{equation}\label{omegasps}
\Omega_\sp\in T^*(G\times \h)
\end{equation}
 which will serve as a microsupport
of $\sp$. Define $\Omega_\sp$ as a set of all points 
$$(g,A,\omega,\eta)\in G\times \h\times \g\times \h=T^*(G\times \h)$$
satisfying: 

1)$ g(V_k(\omega))\subset V_k(\omega)$, that is $\Ad_g\omega=\omega$;

2) $\det g|_{V_k(\omega)}=e^{-i<e_k,A>}$;

3) $\eta=\|\omega\|$.
The notation $V_k(\omega)$ is defined in the beginning of Sec. \ref{gnot},
see (\ref{flagomega}).

Finally, let us denote for $A\in \h$, $I_A:G\to G\times \h$ the embedding
$I_A(g)=(g,A)$. 

We now formulate

\begin{Theorem}\label{specth}  There exists an object $\sp\in D(G\times \h)$
such that 

1) $\mS(\sp)\subset \Omega_\sp$; 

2) $I_0^{-1}\sp=\gf_{e_G}$.

\end{Theorem}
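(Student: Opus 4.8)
The statement to prove is Theorem \ref{specth}: existence of an object $\sp \in D(G\times\h)$ with $\mS(\sp)\subset \Omega_\sp$ and $I_0^{-1}\sp \cong \gf_{e_G}$. The natural approach is to build $\sp$ explicitly from a family of $\Gamma$-sheaves indexed by the group-valued constraint, patched together, and then verify the microsupport estimate using the rules from \cite{KS} listed in the appendix. Concretely, I would start from the map $\chi: G \to T = (S^1)^{N-1}$ sending $g$ to the tuple of determinants $\det g|_{V_k(\omega)}$ — but of course this only makes sense on the locus where the flag is $g$-stable, so one cannot work with a single global map. Instead, the right object to write down is a convolution/integral kernel on $G\times\h$ whose support is the set $\{(g,A): \Ad_g\omega = \omega,\ \det g|_{V_k(\omega)} = e^{-i\langle e_k,A\rangle}\}$ swept over all $\omega$, decorated so that the singular directions point in the "$\|\omega\|$" codirection along $\h$. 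Let me describe the structure more carefully.

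First I would fix the ``spectral'' stratification: for a regular $\omega \in \g$ (distinct eigenvalues $i\lambda_1 > \dots > i\lambda_N$ with $\sum\lambda_j = 0$) the flag $V_\bullet(\omega)$ is determined, and the condition $\Ad_g\omega=\omega$ plus the determinant conditions pin down $g$ inside the maximal torus $T_\omega$ stabilizing that flag. The plan is to realize $\sp$ as $R\pi_!$ of an explicit sheaf on an incidence variety $Z \subset G\times\h\times\g_{\mathrm{reg}}$ (or its closure/completion), where on each fiber over $\omega$ one has essentially the Fourier-type kernel $\gf_{\{(g,A,\omega): t\text{-type inequality}\}}$ built out of the characters $A \mapsto \langle e_k, A\rangle$; this is the same mechanism as the Fourier transform computation in Sec. \ref{Fourier} and the kernels $\Lambda_{S,\pi}$ of Sec. \ref{kernelconv}. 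The microsupport of such a pushforward is then estimated by the functorial bounds: $\mS$ of an inverse image lands in the pullback cone, $\mS$ of a proper pushforward in the image cone, and the tensor/sum estimates — exactly the toolkit invoked repeatedly in Lemmas \ref{impropernonsingular}, \ref{first} and Proposition \ref{svert}. The key point forcing the $\eta = \|\omega\|$ clause is that differentiating the defining relation $\det g|_{V_k(\omega)} = e^{-i\langle e_k,A\rangle}$ in the $A$-direction produces precisely the covector $\sum_k (\text{eigenvalue data})\, de_k$, which is the coordinate expression of $\|\omega\| \in \h \cong \h^*$; differentiating in the $g$-direction reproduces condition 1).

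Second, for the normalization $I_0^{-1}\sp \cong \gf_{e_G}$: setting $A = 0$ collapses all the exponential constraints $e^{-i\langle e_k,0\rangle} = 1$, i.e. $\det g|_{V_k(\omega)} = 1$ for all $k$ and all (limits of) regular $\omega$, together with $\Ad_g\omega = \omega$. As $\omega$ ranges over a neighborhood of a regular element and one takes the constraint for all $k$ simultaneously, this forces $g = e$: an element of $\SU(N)$ commuting with a regular $\omega$ lies in the corresponding maximal torus, and having all the partial determinants equal to $1$ kills the torus down to the identity. So the fiber of $Z$ over $A=0$ is (a resolution of) the point $e\in G$, and the pushforward computation should give $\gf_e$ on the nose after checking the degree shift $D(l)$-type bookkeeping trivializes at $l=0$. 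This is where I would be most careful: making sure the ``integral over $\omega$'' doesn't introduce extra cohomology at $A=0$, which amounts to a contractibility statement for the relevant fiber — plausibly handled by a Morse-theoretic / convexity argument of the same flavor as Bott's computation referenced in Sec. \ref{restrC-}.

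\textbf{Main obstacle.} The hard part will not be the normalization but the microsupport estimate globally on $G\times\h$, in particular at the non-regular locus where eigenvalues of $\omega$ collide: there $\|\omega\|$ is only continuous, not smooth, the flag $V_\bullet(\omega)$ degenerates, and the incidence variety $Z$ becomes singular, so the naive functoriality bounds overshoot. Controlling $\mS(\sp)$ exactly there — showing it still lies in the closed set $\Omega_\sp$ and doesn't spill into spurious codirections created by the singularities of $Z$ — is the crux; I expect one must either choose the resolution $Z$ with some care (so that the degeneration of the flag variety is normal-crossings-like) or argue by a limiting/estimate argument along strata, pushing regular-locus control to the boundary using closedness of $\Omega_\sp$ and properness. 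Uniqueness of $\sp$ up to unique isomorphism — asserted in the introduction though not in this theorem statement — would then follow from a separate argument (the left-orthogonality/adjunction machinery of Sec. \ref{Generalities} applied microlocally), which I would treat afterwards.
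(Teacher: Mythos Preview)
Your approach is substantially different from the paper's and, as you yourself flag, leaves its hardest step unresolved. The obstacle you name --- controlling $\mS$ at the non-regular locus of $\omega$, where the flag $V_\bullet(\omega)$ degenerates and any incidence variety $Z$ becomes singular --- is real, and you offer no mechanism beyond ``choose the resolution with care'' or ``pass to the limit along strata''. Closedness of $\Omega_\sp$ alone will not stop a pushforward from a singular $Z$ from acquiring spurious codirections over the branch locus; one would need a genuine estimate there, and none is given. Your normalization argument at $A=0$ is also incomplete: you want the fiber of $Z$ over $A=0$ to be (a resolution of) the point $e$, but as written $Z$ fibers over $\g_{\text{reg}}$, and the $R\pi_!$ over that noncompact base has no reason to produce $\gf_e$ without a further contractibility input you do not supply.

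The paper bypasses the non-regular locus entirely by a different construction. It never integrates over $\omega$; instead it fixes a small $b\in C_+^\circ$, sets $V_b^-=\{A\in C_-^\circ: -A<<b\}$, and on $G\times V_b^-$ takes the \emph{seed}
\[
F^- := \gf_{\{(e^X,A)\,:\, \|X\|<<-A\}}[\dim G].
\]
The microsupport of this constant sheaf on a convex open set is computed by bare hands (Lemma \ref{supportF-}): the equality $\sup_{\|X'\|<<-A'}\langle X',\omega\rangle = \langle -A',\|\omega\|\rangle$ from Lemma \ref{ineq} is precisely what forces $\eta=\|\omega\|$ and the trace conditions, uniformly in $\omega$ with no case analysis on the eigenvalue pattern. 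A convolution lemma (Lemma \ref{specnositel}) then shows $\mS\subset\Omega_\sp$ is preserved under $*_G$ followed by descent along the addition $a:\h\times\h\to\h$; iterating $F^-*_G\cdots*_G F^-$ and descending yields $\Phi^-\in D(G\times C_-^\circ)$. Finally, to extend from $C_-^\circ$ to all of $\h$, the paper builds an ``inverse'' $X^+\in D(G)$ with $X^-*_G X^+\cong\gf_e$ (where $X^-=\Phi^-|_{G\times(-b/2)}$), and convolves with powers of $X^+$ to shift the domain, gluing the results into $\sp$. The normalization $I_0^{-1}\sp\cong\gf_e$ then drops out of $X^-*_G X^+\cong\gf_e$ directly. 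So the only microsupport computation done explicitly is for a constant sheaf on a convex set; everything else is propagated by functoriality of $\mS$ under proper pushforward and convolution --- exactly the kind of argument your plan would need \emph{after} resolving the incidence variety, but here obtained for free.
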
 

\subsection{Proof of Theorem \ref{specth}}\label{proofspecth}
\subsubsection{} Let $U_1,U_2\subset \h$ be open convex sets.
Let $a:\h\times \h\to \h$ be addition. The map $a$
induces a map $U_1\times U_2\to U_1+U_2$ which is well known to
be a trivial  smooth fibration whose fiber and base
 are diffeomorphic to $\h$.

Let  $F_k \in D(G\times U_k)$, $k=1,2$. Let $M:G\times U_1\times
G\times U_2\to G\times U_1\times U_2$ be the map induced by the
product on $G$. Set $F_1*_G F_2:=RM_!(F_1\boxtimes F_2)$.

Let $a:G\times U_1\times U_2\to G\times (U_1+U_2)$ be induced by the addition
on $\h$.

\begin{Lemma}\label{specnositel}
 Suppose that $\mS(F_k)\subset \Omega_\sp\cap T^*(G\times U_k)$.
Then
1) The natural map
$$
a^{-1}Ra_*(F_1*_G F_2)\to F_1*_G F_2
$$
is an isomorphism;

2)  $\mS(Ra_*(F_1*_G F_2))\subset \Omega_\sp\cap 
T^*(G\times (U_1+U_2))$.
\end{Lemma}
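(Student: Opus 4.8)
The plan is to analyze the microsupport of the convolution $F_1 *_G F_2 = RM_!(F_1 \boxtimes F_2)$ and of its pushforward along the addition map $a$, using the functorial estimates for microsupport from \cite{KS} that are recalled in the appendix, together with the explicit description of $\Omega_\sp$ given in \eqref{omegasps}. The key geometric input is that a point of $\Omega_\sp$ over $(g,A)$ records, in its $g$-component, a flag-type condition ($\Ad_g\omega=\omega$ together with the determinant constraint $\det g|_{V_k(\omega)}=e^{-i\langle e_k,A\rangle}$) and, in its $\h^*$-component, only $\eta=\|\omega\|$, which does \emph{not} depend on $A$ at all. So moving in the $\h$-direction is "free" microlocally: $\Omega_\sp$ is, fiberwise over $G$, a union of graphs of the constant function $A\mapsto \|\omega\|$ — this is exactly what makes both the local constancy in part 1) and the pushforward estimate in part 2) work.

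First I would set up part 2). Write $M: G\times U_1\times G\times U_2 \to G\times U_1\times U_2$ for the multiplication map and compute $\mS(F_1\boxtimes F_2)$ as the product of the two microsupports, each contained in $\Omega_\sp$. I would then use the estimate for the microsupport of a direct image under a proper map (here $M$ is proper in the $G$-directions since $G$ is compact, and the $U$-directions are untouched) to bound $\mS(RM_!(F_1\boxtimes F_2)) = \mS(F_1 *_G F_2)$: a covector $(g_1g_2, A_1, A_2, \xi, \eta_1, \eta_2)$ over a point of $G\times U_1\times U_2$ lies in it only if its pullback under $M^*$ lies in $\mS(F_1\boxtimes F_2)$, which forces (via the $\Ad$-computation for the multiplication map, analogous to the computation of $m^*_{g_1,g_2,t}$ in the proof of Proposition~\ref{puchoknaorbite}) that $\xi = \omega$ with $\Ad_{g_1}\omega = \omega$, $\Ad_{g_2}(\Ad_{g_1^{-1}}\xi) = \Ad_{g_1^{-1}}\xi$, hence $\Ad_{g_1g_2}\omega = \omega$, the determinant conditions multiply correctly, and $\eta_1 = \eta_2 = \|\omega\|$. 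In particular the $\h^*$-covectors $\eta_1,\eta_2$ attached to the two $\h$-coordinates coincide and equal $\|\omega\|$, so the microsupport is conormal to the fibers of $a$ in the $\h$-directions. Then I would apply Corollary~\ref{impropersupport} (or the corresponding non-singularity statement, Lemma~\ref{impropernonsingular}, in the $\h$-variables — note $a$ is a trivial fibration with fiber $\h$, so after trivializing it is literally a projection $M\times\h\to M$) to conclude that $Ra_*(F_1*_G F_2)$ has microsupport contained in $\cI^{-1}$ of the closure of the image, which by the matching $\eta_1=\eta_2=\|\omega\|$ is precisely $\Omega_\sp \cap T^*(G\times(U_1+U_2))$.

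For part 1), the point is that $F_1 *_G F_2$ has microsupport contained in the conormal directions to the fibers of $a: G\times U_1\times U_2 \to G\times(U_1+U_2)$ — that is, in the co-directions $(\eta_1,\eta_2)$ with $\eta_1=\eta_2$, since both equal $\|\omega\|$ by the computation above. By the standard criterion (a sheaf whose microsupport meets no nonzero conormal to the fibers of a submersion is locally constant along those fibers; equivalently one checks that the adjunction map $a^{-1}Ra_*G \to G$ is an isomorphism when $\mS(G)$ is disjoint from the relative cotangent directions transverse to the fiber, cf. the non-characteristic inverse-image statement Proposition~\ref{ks:inverse}), we get that $a^{-1}Ra_*(F_1*_G F_2) \to F_1*_G F_2$ is an isomorphism. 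Concretely: trivialize $a$ as $\mathrm{pr}: W\times \h \to W$ with $W = G\times(U_1+U_2)$; the microsupport computation shows $\mS(F_1*_G F_2)$ lies in $T^*W \times \h \times \{0\} \subset T^*W\times T^*\h$ (zero covector in the fiber $\h$-direction), which is exactly the condition for $F_1*_G F_2$ to be pulled back from $W$, i.e.\ for the adjunction counit to be an isomorphism.

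The main obstacle is the bookkeeping in part 2): verifying carefully that the naive image $\kappa(\mS(F_1*_GF_2))$ (projecting away the $\h^*$-covector) is already closed, or else that its closure does not acquire spurious points, so that Corollary~\ref{impropersupport} gives the clean bound $\Omega_\sp$ rather than something larger. Here one uses that $G$ is compact and $\|\cdot\|: \g \to C_+$ is continuous, so the graph $\{(\omega,\|\omega\|)\}$ is closed and the relevant projections are proper on the support; the fiber $\h$ of $a$ is noncompact, which is exactly why the improper-pushforward Lemma~\ref{impropernonsingular} (rather than the elementary proper-pushforward estimate) is needed. Everything else — the $\Ad$-equivariance computations for the multiplication map and the determinant multiplicativity $\det(g_1g_2)|_{V_k} = \det g_1|_{V_k}\cdot \det g_2|_{V_k}$ on the common invariant flag — is routine once the flag $V_\bullet(\omega)$ is seen to be forced to be common to $g_1$, $g_2$ and $g_1g_2$.
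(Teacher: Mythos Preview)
Your proposal is correct and rests on the same core computation as the paper: bound $\mS(F_1*_GF_2)$ via the proper-pushforward estimate for $M$, compute $M^*\zeta$ using the $\Ad$-formula for the multiplication map, deduce that any point in $\mS(F_1*_GF_2)$ has $\eta_1=\eta_2=\|\omega\|$, and conclude both parts from that. The only difference is in how you finish part~2). You apply Corollary~\ref{impropersupport} to the improper fibration $a$ and then argue that the closure $\overline{\kappa(\mS(F_1*_GF_2))}$ stays inside $\Omega_\sp\times\{0\}$; this works, since $\Omega_\sp$ is closed and the image already lies in it, so the ``main obstacle'' you flag is not an obstacle here. The paper instead proves part~1) first and then reads off $\mS(Ra_*(F_1*_GF_2))$ directly from $\mS(F_1*_GF_2)$ via the isomorphism $a^{-1}Ra_*(F_1*_GF_2)\cong F_1*_GF_2$ just established: for a smooth submersion $a$ one has $\zeta\in\mS(H)$ if and only if $a^*\zeta$ lies in $\mS(a^{-1}H)$ at every fiber point, so no closure argument and no appeal to Lemma~\ref{impropernonsingular} is needed. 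The paper's route is slightly cleaner for exactly the reason you anticipated, but yours is valid.
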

\begin{proof}

Let us first estimate the microsupport of $F_1*_G F_2=
RM_!(F_1\boxtimes F_2)$.  Since the map $M$ is proper, we know that
a point
$$
\zeta:=(g,A_1,A_2,\omega,\eta_1,\eta_2)\in G\times U_1\times U_2\times
\g^*\times \h^*\times \h^*=T^*(G\times U_1\times U_2)
$$
belongs to $\mS RM_!(F_1\boxtimes F_2)$ only if there exist
$g_1,g_2\in G$ such that $M(g_1,A_1,g_2,A_2)=(g,A_1,A_2)$
(i.e. $g=g_1g_2$) and 
$$
M^*\zeta|_{(g_1,A_1,g_2,A_2)}\in \mS(F_1\boxtimes F_2).
$$
We have
$$
M^*\zeta|_{(g_1,A_1,g_2,A_2)}=
(g_1,A_1,\omega,\eta_1,g_2,A_2,\Ad_{g_1}^*\omega,\eta_2).
$$

We then have $(g_1,A_1,\omega,\eta_1),(g_2,A_2,\Ad^*_{g_1}\omega,\eta_2)\in \Omega_\sp$. Therefore, $\Ad^*_{g_1}\omega=\omega$, and we have
$$
(g_k,A_k,\omega,\eta_k)\in \Omega_\sp.
$$
This implies $\eta_1=\eta_2=\|\omega\|$. This means that
any 1-form in $\mS(RM_!(F_1\boxtimes F_2))$ vanishes on
fibers  of $a$. This proves part 1). 

Let us now estimate $\mS Ra_*(F_1*_G F_2)$. We know
that $\zeta\in \mS Ra_*(F_1*_G F_2)$, where $\zeta\in T^*_{(g,A)}
(G\times (U_1+U_2))$, iff 
for every point $(g,A_1,A_2)\in G\times U_1\times U_2$ such that
$A_1+A_2=A$, we have
$$
 a^*\zeta|_{(g,A_1,A_2)}\in \mS(a^{-1}Ra_*(F_1*_G F_2)).
$$

Let $\zeta=(g,A,\omega,\eta)$, then 
$a^*\zeta|_{(g,A_1,A_2)}=(g,A_1,A_2,\omega,\eta,\eta)$.
Using the  isomorphism $ a^{-1}Ra_*(F_1*_G F_2)\to F_1*_G F_2$.
and the above estimate for $\mS(F_1*_G F_2)$, we get:
there exist $g_1,g_2\in G$ such that $g=g_1g_2$ and
$$
(g_k,A_k,\omega,\eta)\in \Omega_{\sp}.
$$

It remains to show that $(g_1g_2,A_1+A_2,\omega,\eta)\in\Omega_\sp$.
Indeed, we have $\eta=\|\omega\|$. Next, $Ad^*_{g_k}\omega=\omega$,
therefore, $\Ad^*_{g_1g_2}\omega=\omega$. 

Finally,
$$
\det g_1g_2|_{V_k(\omega)}=\det g_1|_{V_k(\omega)}
\det g_2|_{V_k(\omega)}
$$
$$
=e^{-i<A_1,e_{d_k(\omega)}>}e^{-i<A_2,e_{d_k(\omega)}>}
$$
$$
e^{-i<A_1+A_2,e_{d_k(\omega)}>}.
$$
\end{proof}

\subsubsection{} Let $b\in C_+^\circ$; $b\leq e_1/100$.
Let $V_b^{-}:=\{A\in C_-^\circ| -A<<b\}$, where $C_+^\circ$ is the interior
of the positive Weyl chamber and $C_-^\circ=-C_+^\circ$,  see Sec. \ref{gnot}.
let $W_b^{-}\subset G\times V_b^{-}$;
$$
W_b^{-}:=\{(e^X,A); A\in V_b^{-}; \|X\|<<-A\}.
$$
Set $F^{-}\in D(G\times V_b^-)$; 
$$
F^{-}:=\gf_{W_b^{-}}[\dim G].
$$

\subsubsection{}
We will identify $TG=G\times \g$; $T^*G=G\times \g^*=
G\times \g$ via identifying $\g$ with the space of all right invariant
vector fields on $G$ and $\g^*=\g$ with the space of all
right invariant 1-forms on $G$.
Analogously, we will identify $T(G\times \h)=G\times \h\times \g\times \h$ and $T^*(G\times \h)=G\times \h\times \g^*\times \h^*=
=G\times \h\times \g\times \h$.

\begin{Lemma}\label{supportF-} The microsupport of  $F^{-}$ is contained in the
set of all points 
$(e^X,A,\omega,\eta)\in G\times V_b^{-1}\times\g^*\times \h^*$, where

1) $\|X\|\leq -A$;

2) $[X,\omega]=0$;

3) $\Tr X|_{V_{k}(\omega)}=-i<A,e_{d_k}>$;

4) $\eta=\|\omega\|$
\end{Lemma}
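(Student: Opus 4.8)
The plan is to present $W_b^-$, near its closure, as a region cut out by a \emph{compact} family of smooth inequalities, and to read off $\mS(F^-)$ from the exterior conormal cones of the active constraints. Since on $W_b^-$ one has $\|X\|\ll -A\ll b\le e_1/100$, the element $X$ stays small, so $W_b^-$ is contained in $G'\times V_b^-$ for a neighbourhood $G'$ of $e$ on which $\log\colon G'\to\g$ is a diffeomorphism onto an open set; as $\mS$ is local, it suffices to work on $G'\times\h$. For $g\in G'$ put $X=\log g$, and for a $k$-dimensional subspace $V\subset\Co^N$ set $\phi_V(g):=\Tr\big((-i\log g)|_V\big)$, a smooth function of $g$. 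Because the sum of the $k$ largest eigenvalues of a Hermitian matrix is the maximum of its partial trace over $k$-planes, one has $\langle\|g\|,e_k\rangle=\max_{\dim V=k}\phi_V(g)$ in the conventions of Section \ref{gnot}, and therefore
$$
\overline{W_b^-}\cap(G'\times V_b^-)=\{(g,A)\ :\ \phi_V(g)+\langle A,e_k\rangle\le 0\ \text{ for all }1\le k\le N-1,\ V\in\mathrm{Fl}_k\},
$$
where $\mathrm{Fl}_k$ is the (compact) Grassmannian of $k$-planes in $\Co^N$.

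Next I would apply the standard microsupport bound for a set cut out by a compact family of $C^1$ functions: $\mS(F^-)$ lies in the zero section over $\overline{W_b^-}$ together with the points $(g,A,\omega,\eta)$ for which $(\omega,\eta)$ belongs to the closed convex cone generated by the covectors $\big(d_g\phi_V(g),\,e_k\big)$ over those $(k,V)$ with $\phi_V(g)+\langle A,e_k\rangle=0$. To make this rigorous I would realise $\overline{W_b^-}$ on $G'\times V_b^-$ as a proper image, via $\g\times\h$-valued maps, of the polytope bundle $\{\|X\|\le -A\}$ (Kostant's convexity theorem), and invoke the functorial estimates for $Rf_!$ from \cite{KS}. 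Only the extremal $V$, i.e. the maximisers of $\phi_V(g)$ at level $k$, can contribute, and every such $V$ is $X$-invariant. A short computation with the derivative-of-exponential formula shows that for an $X$-invariant $V$ one has $[X,P_V]=0$ (so the twisting operator $\frac{-\mathrm{ad}_X}{1-e^{\mathrm{ad}_X}}$ acts as the identity on $P_V$), whence $d_g\phi_V(g)$ is the right-invariant $1$-form attached to the orthogonal projector $P_V$.

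Consequently an admissible $(\omega,\eta)$ has the form $\omega=\sum_k c_k P_{V_k}$, $\eta=\sum_k c_k e_k$, where $c_k\ge 0$, each $V_k$ is an $X$-invariant maximiser at level $k$, and the $k$-th constraint is active whenever $c_k>0$. Such an $\omega$ commutes with $X$ (condition 2); a direct check in the conventions of Section \ref{gnot} shows that $\sum_k c_k e_k$ represents $\|\omega\|$, which is condition 4; the jumps of $\omega$ occur at the levels $d_k(\omega)$, at which activeness of the constraint reads $\Tr X|_{V_k(\omega)}=-i\langle A,e_{d_k}\rangle$, which is condition 3; and membership in $\overline{W_b^-}$ is exactly condition 1. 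This yields the claimed inclusion. The main obstacle is the first step of the second paragraph — the careful justification of the microsupport estimate for the non-smooth envelope $g\mapsto\|g\|$, i.e. producing the proper presentation over the compact parameter space and checking that the resulting sheaf agrees with $\gf_{\overline{W_b^-}}$ up to terms with the expected microsupport; the derivative computation and the bookkeeping with the conventions for $\g^*=\g$, $\h^*=\h$ and the flag invariants $d_k$ should be routine.
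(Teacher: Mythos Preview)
Your approach can be made to work, but the paper's argument is substantially simpler and bypasses exactly the ``main obstacle'' you flag. The paper observes that under the exponential chart, $W_b^-$ corresponds to $U=\{(X,A):\|X\|\ll -A\}\subset\g\times V_b^-$, which is an \emph{open convex set} (it is cut out by the linear inequalities $\langle -A,e_k\rangle-\Tr((-iX)P_V)>0$). For the constant sheaf on an open convex set one has directly that $(X,A,\omega,\eta)\in\mS(\gf_U)$ iff $(X,A)\in\overline U$ and $\langle X',\omega\rangle+\langle A',\eta\rangle<\langle X,\omega\rangle+\langle A,\eta\rangle$ for all $(X',A')\in U$. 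Now Lemma~\ref{ineq} does all the remaining work: optimising over $X'$ with $A'$ fixed gives $\sup\langle X',\omega\rangle=\langle -A',\|\omega\|\rangle$; setting $A'=A$ forces the equality case of Lemma~\ref{ineq}, which is precisely conditions 2) and 3); and then varying $A'$ over the open set $V_b^-$ forces $\eta=\|\omega\|$. No proper presentation over a Grassmannian parameter, no $Rf_!$ estimate, and no derivative-of-exponential computation are needed.

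There is also a hidden step in your outline: the assertion that ``a direct check shows $\sum_k c_k e_k$ represents $\|\omega\|$'' presupposes that the maximising $k$-planes $V_k$ at the active levels are \emph{unique} (hence nested). This is true, but it uses $-A\in C_+^\circ$: if level $k$ is active, combining $\sum_{j\le k}\mu_j=\sum_{j\le k}\alpha_j$ with the adjacent inequalities and $\alpha_k>\alpha_{k+1}$ yields $\mu_k>\mu_{k+1}$, so the top-$k$ eigenspace of $-iX$ is unambiguous. Without this, an arbitrary nonnegative combination of active-constraint gradients need not satisfy condition 4), and your argument would overestimate the microsupport.
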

\begin{proof}

Let $U\subset \g\times V_b^{-} $; $U=\{(X,A)|\|X\|<<-A\}$. 
Let $$\exp:\g\times V_b^{-1}\to G\times V_b^{-1}$$ 
be the exponential map. We see that $\exp$ maps $U$ diffeomorphically onto 
$W_b^{-}$, hence we have an induced diffeomorhphism
$\exp:T^*U\to T^*W_b^-$.
It also follows that  $F^-=\exp_* \gf_{U}[\dim G]$ and
that $\mS(F^-)=\exp(\mS\gf_U)$.

Let us estimate $\mS(\gf_U)$. $U\subset \g\times V_b^{-}$ is an 
open convex subset. It follows that a point
$(X,A,\omega,\eta)\in \g\times V_b^-\times \g^*\times \h^*$
is in the microsupport of $\gf_U$ iff 
1) $\|X\|\leq -A$;

2) for all $(X',A')\in U$, 
$<X',\omega>+<A',\eta>\;< \;<X,\omega>+<A,\eta>$.

Fix $A'$, then $X'\in \g$ is an arbitrary element
such that $\|X'\|<<-A$. 
Lemma \ref{ineq} implies  that
$$
\sup <X',\omega> =<-A';\|\omega\|>
$$
Thus, Condition 2) is equivalent to
\begin{equation}\label{inegalite}
<-A',\|\omega\|>+<A',\eta>\leq \; <X,\omega> +<A,\eta>
\end{equation}
for all $A'\in V_b^{-}$. Plug $A'=A$. We will get
$$
<-A,\|\omega\|>\leq <X,\omega>.
$$
On the other hand $<X,\omega>\leq <\|X\|,\|\omega\|>\leq
<-A,\|\omega\|>$. This implies that
\begin{equation}\label{egalite}
<-A,\|\omega\|>=<X,\omega>.
\end{equation}
According to Lemma \ref{ineq},
for all $k$,
$$
\Tr X|_{V_k(\omega)}= -i<A,e_{d_k(\omega)}>.
$$

Let us plug (\ref{egalite}) into (\ref{inegalite}).
We will get
$$
<-A',\|\omega\|>+<A',\eta>\leq \;<-A,|\omega|> +<A,\eta>
$$
for all $A'\in V_b^-$. As $A\in V_b^-$ and $V_b^-$ is open, 
this is only possible if $\eta=\|\omega\|$.
\end{proof} 

\begin{Corollary} We have $\mS(F^-)\subset \Omega_\sp\cap
T^*(G\times V_b^-)$.
\end{Corollary}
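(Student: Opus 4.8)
The plan is to deduce this directly from Lemma \ref{supportF-}, which has already done all the analytic work. That lemma confines $\mS(F^-)$ to the set of points $(e^X,A,\omega,\eta)\in G\times V_b^-\times\g^*\times\h^*$ satisfying $\|X\|\le -A$, $[X,\omega]=0$, $\Tr X|_{V_k(\omega)}=-i\langle A,e_{d_k(\omega)}\rangle$ for all $k$, and $\eta=\|\omega\|$; moreover, since $F^-\in D(G\times V_b^-)$, its microsupport automatically sits inside $T^*(G\times V_b^-)$. So the only thing left is to check that every point of that set, written with $g=e^X$, satisfies the three defining conditions of $\Omega_\sp$ from (\ref{omegasps}).

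First I would use $[X,\omega]=0$: this says the one-parameter subgroup $e^{tX}$ commutes with $\omega$, hence $\Ad_{e^X}\omega=\omega$, which is condition 1) of $\Omega_\sp$; equivalently $X$ preserves each of the subspaces $V_k(\omega)$, so $e^X$ preserves them as well. Next, because $e^X$ preserves $V_k(\omega)$, we may exponentiate the trace condition: $\det\bigl(e^X|_{V_k(\omega)}\bigr)=\exp\bigl(\Tr(X|_{V_k(\omega)})\bigr)=\exp\bigl(-i\langle A,e_{d_k(\omega)}\rangle\bigr)$, which is exactly condition 2) of $\Omega_\sp$ once the indexing of Sec. \ref{gnot} is unwound (in both places the relevant subspaces are the steps of the flag attached to $\omega$, and $d_k$ names the matching basis vector $e_k$). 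Finally, $\eta=\|\omega\|$ is condition 3) verbatim. Thus $(e^X,A,\omega,\eta)\in\Omega_\sp$, giving $\mS(F^-)\subset\Omega_\sp\cap T^*(G\times V_b^-)$.

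There is no real obstacle here; the statement is essentially a restatement of Lemma \ref{supportF-}. The only point needing a line of care is the bookkeeping that identifies the index $d_k(\omega)$ in Lemma \ref{supportF-} with the index used in condition 2) of $\Omega_\sp$, and the observation that passing from the trace equality to the determinant equality is just the identity $\det\circ\exp=\exp\circ\Tr$ applied on the invariant subspace $V_k(\omega)$. All the genuinely nontrivial input — the microsupport estimate for $\gf_U$ from convexity together with Lemma \ref{ineq} — has already been consumed in the proof of the lemma.
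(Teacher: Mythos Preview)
Your proposal is correct and matches the paper's approach: the paper states this Corollary immediately after Lemma~\ref{supportF-} with no proof, treating it as an immediate consequence, and your argument supplies precisely the routine verification (exponentiating $[X,\omega]=0$ and the trace identity, and matching $\eta=\|\omega\|$) that the reader is expected to make. Your remark about reconciling the indexing $e_{d_k(\omega)}$ in the Lemma with the $e_k$ appearing in the definition of $\Omega_\sp$ is apt---the paper is slightly loose here, but the usage in the proof of Lemma~\ref{specnositel} confirms your reading.
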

\subsubsection{}  Let $U\subset G\times V_b^-\times V_b^-$
be given by
$$
U:=\{(e^X,A_1,A_2)| \|X\|<<-A_1-A_2\}
$$
\begin{Lemma}\label{dvoinaya} We have an isomorphism
$$
F^-*_G F^-\cong \gf_{U}[\dim G].
$$
\end{Lemma}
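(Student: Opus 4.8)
The plan is to compute $F^-*_G F^-=RM_!(F^-\boxtimes F^-)$ fibrewise, by proper base change along $M$. Writing $F^-\boxtimes F^-=\gf_Z[2\dim G]$ with $Z=\{(g_1,A_1,g_2,A_2)\mid (g_1,A_1)\in W_b^-,\ (g_2,A_2)\in W_b^-\}$, an open subset of $G\times V_b^-\times G\times V_b^-$, and recalling that $M(g_1,A_1,g_2,A_2)=(g_1g_2,A_1,A_2)$ has compact fibres $\cong G$, one obtains that $M|_Z$ is a submersion onto its (open) image, and that the stalk of $F^-*_GF^-$ at $(g,A_1,A_2)$ is $R\Gamma_c(\Phi(g,A_1,A_2);\gf)[2\dim G]$, where
$$\Phi(g,A_1,A_2)=\{h\in G\mid (h,A_1)\in W_b^-,\ (h^{-1}g,A_2)\in W_b^-\}=\Omega_1\cap g\Omega_2^{-1},$$
with $\Omega_i=\exp\{X\in\g\mid \|X\|<<-A_i\}$ an open, $\Ad$-invariant neighbourhood of $e$ (this uses $A_i\in V_b^-$). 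So I would reduce Lemma \ref{dvoinaya} to two geometric assertions: (i) $\Phi(g,A_1,A_2)\neq\emptyset$ exactly when $g=e^X$ with $\|X\|<<-A_1-A_2$, i.e.\ exactly when $(g,A_1,A_2)\in U$; and (ii) whenever $\Phi(g,A_1,A_2)$ is nonempty it is diffeomorphic to an open ball of dimension $\dim G$, so that $R\Gamma_c(\Phi;\gf)=\gf[-\dim G]$. Granting these, $M|_Z$ is a fibration over the open set $U$ with contractible fibres, hence $RM_!\gf_Z\cong\gf_U[-\dim G]$ (the orientation local system being trivial, everything in sight being a constant $\gf$-sheaf on connected manifolds), and therefore $F^-*_GF^-\cong\gf_U[\dim G]$.

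For (i) one inclusion is immediate from Lemma \ref{Klyachko}: if $g_i=e^{X_i}$ with $\|X_i\|<<-A_i$, then $g_1g_2=e^X$ with $\|X\|\le\|X_1\|+\|X_2\|<<-A_1-A_2$, so $\Phi\neq\emptyset$ forces $(g,A_1,A_2)\in U$. The converse — producing, for given $\|X\|<<-A_1-A_2$, a factorisation $e^X=e^{X_1}e^{X_2}$ with $\|X_i\|<<-A_i$ — is the substantive point, and here the hypothesis $b\le e_1/100$ is used crucially: all eigenvalues occurring are much smaller than $\pi$, so $\exp$ is a diffeomorphism on the whole relevant region and, via Baker--Campbell--Hausdorff, the multiplicative equation is a small perturbation of the additive one $X_1+X_2=X$. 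The additive version — that the Minkowski sum of the ``majorisation bodies'' $\{\|\cdot\|<<-A_1\}$ and $\{\|\cdot\|<<-A_2\}$ is $\{\|\cdot\|<<-A_1-A_2\}$ — is elementary (it follows from the Lidskii--Wielandt inequality together with $U\,\mathrm{diag}(\alpha+\beta)\,U^*=U\,\mathrm{diag}(\alpha)\,U^*+U\,\mathrm{diag}(\beta)\,U^*$), and is exactly the kind of statement packaged in Lemma \ref{ineq} (and already exploited in the proof of Lemma \ref{supportF-}); since the additive solution set is cut out transversally at an interior point of the cone, an implicit function / BCH argument in the small regime transports it to the multiplicative problem. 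For (ii) I would transport $\Phi$ to $\g$ by $\log$: it becomes $B_1\cap\psi(B_2)$ with $B_i=\{X\mid\|X\|<<-A_i\}$ open convex containing $0$ and $\psi$ a $C^1$-small perturbation of a translation (again by BCH and $b\le e_1/100$). The intersection of an open convex body with a sufficiently $C^1$-small perturbation of one is star-shaped, hence contractible, and a transversality argument at the boundary upgrades this to ``open ball'', all estimates being uniform since $b$ is fixed and tiny.

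The main obstacle I anticipate is precisely the converse of (i) together with (ii): showing that the solution set of the (small) multiplicative product problem is nonempty exactly over $U$ and topologically trivial there. Both are ``soft in the small regime'' but rest on the quantitative content of Lemmas \ref{Klyachko} and \ref{ineq}; everything else — proper base change, the identification of the fibre, the local-system bookkeeping — is routine. As a consistency check, the answer is forced on microlocal grounds: by Lemma \ref{specnositel} the object $F^-*_GF^-$ is pulled back along the addition map $a\colon G\times V_b^-\times V_b^-\to G\times(V_b^-+V_b^-)$ and has microsupport inside $\Omega_\sp\cap T^*(G\times(V_b^-+V_b^-))$, and $\gf_U[\dim G]$ is visibly of this form, in agreement with the fibrewise computation above.
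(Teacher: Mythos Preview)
Your fibrewise computation is a natural first instinct, but the two points you flag as obstacles---the converse of (i) and the ball-topology claim (ii)---are genuine gaps, and the paper's proof is designed precisely to avoid them. You would need, for every $(g,A_1,A_2)\in U$, not only that $\Phi(g,A_1,A_2)$ is nonempty but that it is diffeomorphic to an open ball, uniformly as the point approaches $\partial U$ where the fibre degenerates. Your BCH/perturbation sketch is plausible heuristics but not a proof: the intersection of a convex body with a $C^1$-small nonlinear perturbation of another need not be star-shaped without further quantitative control, and the factorisation problem $e^X=e^{X_1}e^{X_2}$ with prescribed norm bounds is not an immediate consequence of Lemmas~\ref{ineq} or~\ref{Klyachko} (those lemmas only give the forward direction). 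Neither is hopeless, but neither is routine.

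The paper takes a completely different route that sidesteps both issues. After the easy inclusion $\text{Supp}(F^-*_GF^-)\subset U$ (your forward direction of (i), via Lemma~\ref{Klyachko}), it shows that $F^-*_GF^-|_U$ is \emph{microsupported on the zero section}: one estimates $\mS(F^-*_GF^-)$ using properness of $M$ and Lemma~\ref{supportF-}, and then argues that for a point of $U$ any nonzero $\omega$ in the microsupport forces a contradiction between the trace condition $\Tr Y|_{V_l(\omega)}=-i\langle A_1+A_2,e_{d_l}\rangle$ and the strict inequality $\|Y\|<<-A_1-A_2$. Hence the restriction to $U$ is locally constant; since $U$ is (via $\exp$) diffeomorphic to a convex open set, it is constant. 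The constant value is then read off from a single global computation: $R\Gamma_c(U;F^-*_GF^-)\cong R\Gamma_c(G\times V_b^-;F^-)^{\otimes 2}$ by K\"unneth, which is $\gf[-\dim U+\dim G]$. This determines $F^-*_GF^-|_U\cong\gf_U[\dim G]$ without ever examining an individual fibre $\Phi$. Note in particular that the converse of your (i)---that the stalks are nonzero over all of $U$---is an \emph{output} of this argument, not an input.

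Your closing ``consistency check'' via Lemma~\ref{specnositel} is in the right spirit but is too coarse: knowing only $\mS\subset\Omega_\sp$ does not pin down the object. The sharper microsupport estimate (zero section on $U$) is what makes the paper's argument work.
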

\begin{proof}  Let $j_{U_1}:U_1\into  G\times V_b^-\times V_b^-$ be an open
set defined by $U_1=M(W_b^-\times W_b^-)$. 
 It follows that
we have an isomorphism
$$
j_{U_1!}((F^-*_G F^-)|_{U_1})\to F^-*_G F^-.
$$

We have
 $$
U_1=\{(e^{X_1}e^{X_2},A_1,A_2)| A_k\in V_b^-; \|X_k\|<<-A_k\}.
$$
According to Lemma \ref{Klyachko}, we have
$e^{X_1}e^{X_2}=e^Y$, where $\|Y\|\leq \|X_1\|+\|X_2\|<<
-A_1-A_2$. Thus
$
U_1\subset U.
$

Let $j_U:U\to G\times V_b^-\times V_b^-$ be the open embedding.
We then have an isomorphism
\begin{equation}\label{ogranichenienaU}
j_{U!}((F^-*_G F^-)|_U)\to F^-*_G F^-
\end{equation}

Let us now study $F^-*_G F^-|_U$.  Let us estimate the microsupport
of this object. Similar to proof of Lemma \ref{specnositel},
we see that a point 
\begin{equation}\label{tochkanositelya}
(g,A_1,A_2,\omega,\eta_1,\eta_2)\in
G\times V_b^-\times V_b^-\times \g^*\times \h^*\times \h^*=
T^*(G\times V_b^-\times V_b^-)
\end{equation}
 is in $\mS(F^-*_G F^-|_U)$
iff

1) $(g,A_1,A_2)\in U$;

2) there exist $X_1,X_2\in \g$ such that $g=e^{X_1}e^{X_2}$ 
and $(e^{X_k},A_k,\omega,\eta_k)\in \mS(F^-)$ for $k=1,2$.

According to Lemma \ref{supportF-}, we have
$$
\|X_k\|\leq -A_k;
$$
$$
\Tr X_k|_{V_l(\omega)}=-i<A_l,e_{d_l}>.
$$

Hence, $e^Y=e^{X_1}e^{X_2}$ preserves
the spaces $V_l(\omega)$. As $\|Y\|<<\|X_1\|+\|X_2\|\leq e_1/(50N)$,
it follows that all eigenvalues of $-iY$ have absolute value
of  less than $1/(50N)$. It then follows that $Y$ does preserve
the spaces $V_l(\omega)$ as well, and $\Tr Y|_{V_l(\omega)}$
has absolute value of at most $1/50$.

We also have
$$
\det e^Y|_{V_l(\omega)}=\det e^{X_1}|_{V_l(\omega)}e^{X_2}|_{V_l(\omega)}=e^{-i<A_1+A_2,e_{d_l(\omega)}>}.
$$
We have $|<A_1+A_2,e_{d_l(\omega)}>|\leq 1/50$, therefore,
\begin{equation}\label{nagranicu}
\Tr Y|_{V_l(\omega)}=-i<A_1+A_2,e_{d_l(\omega)}>.
\end{equation}

Assume $\omega\neq 0$. Then there exists a  subspace
$V_l(\omega)$ which is proper, i.e. $0<d_l(\omega)<N$.
On the other hand,  we have $(e^Y,A_1,A_2)\in U$, meaning
that $e^Y=e^{Y'}$, where $\|Y'\|<<-A_1-A_2$.  We then have
$\|Y\|,\|Y'\|<e_1/(50N)$ which implies $Y=Y'$ and
$\|Y\|<<A_1+A_2$. This clearly contradicts to (\ref{nagranicu}).
Therefore, it is impossible that $\omega\neq 0$, hence $\omega=0$.
It then follows that in (\ref{tochkanositelya}), $\eta_1=\eta_2=
\|\omega\|=0$.  Thus, we have proven that
$F^-*_G F^-|_U$ is microsupported on the zero-section, hence is
 locally constant. However, under the exponential map, $U$ is a
diffeomorphic image of an open convex set
$\{(X,A_1,A_2)| A_k\in V_b^{-}; \|X\|<<-A_1+A_2\}\subset
\g\times V_b^-\times V_b^-$. Therefore, $U$ is diffeomorphic
to $\Re^{\dim U}$ and $F^-*_G F^-$ is constant on $U$.

Let $Z:=R\Gamma_c(U; F^-*_G F^-)$. We then have 
a natural isomorphism  $F^-*_G F^-|_U \cong Z_U[\dim U]$.
Because of an isomorphism (\ref{ogranichenienaU}), we have
an induced isomorphism
$$
R\Gamma_c(U; F^-*_G F^-)\to R\Gamma_c(G\times V_b^-\times V_b^-;
F^-*_G F^-)
$$
$$
\cong R\Gamma_c(G\times V_b;F^-)\otimes R\Gamma_c(G\times
V_b;F^-)
$$
$$
\cong \gf[-\dim G\times V_b^-]\otimes \gf[-\dim G\times
 V_b^-][2\dim G]=\gf[-\dim U+\dim G].
$$
This implies the statement.
\end{proof}

Let $a:G\times V_b^-\times V_b^-\to G\times 2V_b^-$
be the addition map. The just proven  Lemma as well as
Lemma \ref{specnositel} imply that the natural map
$a^{-1}Ra_*(F^-*_G F^-)\to F^-*_G F^-$ is an isomorphism
and that 
$$
Ra_*(F^-*_G F^-)\cong
 \gf_{\{(e^X,A)| A\in 2V_b^-; \|X\|<<-A\}}[\dim G].
$$
We then have an induced isomophism
\begin{equation}
\label{double-isomorphism}
\iota: Ra_*F^-*_G F^-|_{G\times V_b^-}\cong F^-.
\end{equation}
\subsubsection{} Let $M>0$ and let $F^-_M\in D(G\times (V_b^-)^M)$;
$$
F^-_M:=F^-*_GF^-*_G\cdots *_G F^-,
$$
where $F^-$ occurs  $M$ times.

Let $a_M:G\times (V_b^-)^M)\to G\times MV_b^-$ be the addition map.
Lemma \ref{specnositel} implies that the natural map
$$
a_M^{-1}Ra_{M*}F_M^-\to F_M^-
$$
is an isomorphism. 

Let $\Phi_M^-:=Ra_{M*}F_M^-$. 

Let us construct a map 
$$
I_M:\Phi_M^-|_{G\times (M-1)V_b^-}\to \Phi_{M-1}^-,
$$
where $M\geq 2$,
as follows.

Let $W\subset (V_b^-)^2$ be an open convex subset 
consisting of all points of the form $(v_1,v_2)$,
where $v_{1}+v_{2}\in V_b^-$.
Let $W_M:=(V_b^{-})^{M-2}\times W\subset (V_b^-)^M$.

Let us decompose
$$
\alpha_M:=a_M|_{G\times W_M}:G\times W_M=G\times (V_b^-)^{M-2}
\times W
\stackrel{a_2}\to G\times (V_b^-)^{M-2}\times V_b^-
$$
$$
\stackrel{a_{M-1}}\to(M-1)V_b^{-}.
$$
It follows that $\alpha_M(W_M)=G\times (M-1)V_b^-$.
We have a natural isomorphism
$$
Ra_{M*}F^-_M|_{G\times (V_b^-)^{M-1}}=R\alpha_{M*} F^-_M|_{G\times W_M}\cong 
Ra_{M-1*}Ra_{2*}F^-_M|_{G\times W_M}.
$$

We have 
$$
Ra_{2*}F^-_M|_{G\times W_M}\cong F^-_{M-2}*_G(R\alpha_*F^-*_GF^-|_{W}),
$$
where $\alpha:G\times W\to G\times V_b^-$ is the addition
map. 
We have an isomorphism (see (\ref{double-isomorphism}))
$$
R\alpha_*(F^-*_GF^-|_{W})\cong 
(Ra_*(F^-*_G F^-*))|_{V_b^{-}}\stackrel\iota\cong F^-.
$$

Hence, we have  isomoprhisms
$$
Ra_{2*}(F^-_M|_{G\times W_M})\cong F^-_{M-1}
$$
$$
I_M:Ra_{M*}F^-_M|_{G\times (V_b^-)^{M-1}}\cong Ra_{M-1*}F^-_{M-1}.
$$

Thus, we have objects $\Phi^-_M\in D(G\times MV_b^-)$
and isomorhhisms
$$ I_M:\Phi^-_M|_{(M-1)V_b^-}\to \Phi^-_{M-1}.
$$
It then follows that there exists 
an object
$\Phi^-\in D(G\times C_-^\circ)$ (note that $C_-^\circ=\bigcup_M
MV_b^-$) along with isomorphisms
$$
J_M:\Phi^-|_{MV_b^-}\to \Phi^-_M
$$
which are compatible with $I_M$ in the obvious way.

Let $\Psi^-\in D(G\times C_-^\circ)$ be another object
endowed with isomorphisms $J'_{M}:\Psi^-|_{MV_b^-}\to \Phi^-_M$
so that $J'_M$ are compatible with $I_M$. Then there exists
a (non-canonical) isomorphism $\Phi^-\to \Psi^-$ which is compatible
with the isomorphisms $J_M,J'_M$.

Lemma \ref{specnositel} implies that
$\mS(\Phi_M^-)\subset \Omega_\sp\cap T^*(G\times MV_b^-)$.
Therefore,
 $$
\mS(\Phi^-)\subset \Omega_\sp\cap T^*(G\times C_-^\circ).
$$
\subsubsection{}
Lemma \ref{specnositel} implies that
we have an isomorphism
$$
A^{-1}RA_*(\Phi^-*_G\Phi^-)\to \Phi^-*_G\Phi^-
$$
where $A:G\times C_-^\circ\times C_-^\circ\to G\times C_-^\circ$
is the addition.  

Let us restrict this isomorphism to $G\times MV_b^-\times V_b^-$.
We will then get an isomoprhism
$$
A^{-1}(RA_*\Phi^-*_G\Phi^-|_{(M+1)V_b^-})\to \Phi^-_M*_G F^-=
A^{-1}\Phi^-_{M+1}.
$$ 

Thus, we have an isomorphism
$$
J'_{M+1}:RA_*\Phi^-*_G \Phi^-|_{(M+1)V_b^-}\cong \Phi_{M+1}^-
$$
One can easily check that these isomoprhisms are compatible
with  $I_M$ hence,  there exists  an isomorphism
$$
J:RA_*(\Phi^-*_G\Phi^-)\cong \Phi^-
$$
which is compatible with isomorphisms $J_M,J'_M$.
Therefore, we  have an isomorphism
\begin{equation}\label{isoilneiso}
I:\Phi^-*_G\Phi^-\cong  A^{-1}\Phi^-.
\end{equation}
\subsubsection{}  Let $X^\pmin\in D(G)$, 
$X^+:=\gf_{\{e^{-X}|\|X\|\leq b/2 \}}$; 
$X^-:=\gf_{\{e^X|\|X\|<<b/2\}}[\dim G]$. 
We have an isomorphism $X^-\cong \Phi^-|_{G\times (-b/2)}$.
\begin{Lemma} We have an isomorphism  $X^-*_G X^{+}\cong \gf_e$.
\end{Lemma}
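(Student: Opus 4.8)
The plan is to compute $X^-*_G X^+$ stalkwise. Recall this convolution is $Rm_!(X^-\boxtimes X^+)$, where $m:G\times G\to G$ is the multiplication map (cf. Sec. \ref{kernelconv}). Both the set $U^+:=\{e^{-X}\mid\|X\|\leq b/2\}$ and the closure of $U^-:=\{e^X\mid\|X\|<<b/2\}$ are compact (we are deep inside the injectivity radius of $\exp$ since $b\leq e_1/100$, cf. Lemma \ref{Klyachko}), so $m$ is proper on the support of $X^-\boxtimes X^+$ and the stalk of $X^-*_G X^+$ at $g\in G$ is $R\Gamma_c$ of the fibre $m^{-1}(g)$. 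Parametrising $m^{-1}(g)$ by $h\mapsto(h,h^{-1}g)$, and using that $h^{-1}g\in U^+$ is equivalent to $h\in g\exp\tilde U$ with $\tilde U:=\{X\in\g\mid\|X\|\leq b/2\}$, one obtains
\[(X^-*_G X^+)_g\;\cong\;R\Gamma_c\bigl(U^-\cap g\exp\tilde U;\gf\bigr)[\dim G].\]

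First I would treat $g=e$. Here $U^-=\exp(\tilde U^\circ)$, where $\tilde U^\circ$ is the interior of $\tilde U$, and since $\tilde U^\circ\subset\tilde U$ we get $U^-\cap\exp\tilde U=U^-$. As $\tilde U^\circ$ is a nonempty convex open subset of $\g$ — the convexity of the sets $\{X\mid\|X\|\leq c\}$, $c\in C_+$, being part of the $\SU(N)$ package of the appendix, cf. Lemma \ref{ineq} — we have $R\Gamma_c(U^-;\gf)\cong\gf[-\dim G]$, so the stalk at $e$ is $\gf$, concentrated in degree $0$.

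Next, for $g\neq e$ I would show the stalk vanishes. The case $U^-\cap g\exp\tilde U=\emptyset$ is trivial, so assume the intersection is nonempty; then $g$ is forced to lie in a tiny neighbourhood of $e$ and all sets below live in one chart. I would use the excision triangle on $G$ attached to the closed subset $\exp\partial\tilde U\subset\exp\tilde U$ with open complement $U^-$, namely $\gf_{U^-}\to\gf_{\exp\tilde U}\to\gf_{\exp\partial\tilde U}\xrightarrow{+1}$, tensor it with $\gf_{g\exp\tilde U}$, and apply $R\Gamma_c$, obtaining a triangle
\[R\Gamma_c(U^-\cap g\exp\tilde U)\to R\Gamma_c(\exp\tilde U\cap g\exp\tilde U)\to R\Gamma_c(\exp\partial\tilde U\cap g\exp\tilde U)\xrightarrow{+1}.\]
For $b$ small enough the set $\exp\tilde U\cap g\exp\tilde U$ is compact and contractible, so its $R\Gamma_c=R\Gamma$ is $\gf$ in degree $0$; likewise $\exp\partial\tilde U\cap g\exp\tilde U$ is, for $g\neq e$ with the larger intersection nonempty, a nonempty contractible spherical cap, so also $\gf$ in degree $0$; and the middle map is the restriction along the inclusion of the cap into the ball, hence an isomorphism on $H^0$. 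Therefore $R\Gamma_c(U^-\cap g\exp\tilde U)=0$ and the stalk at $g$ vanishes.

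Thus $X^-*_G X^+$ has stalk $\gf$ concentrated in degree $0$ at $e$ and zero stalks elsewhere; since $\{e\}$ is closed in $G$, such a complex is canonically the skyscraper $\gf_e$, which is the assertion. The only nonformal ingredient is the convexity/contractibility package used above — that the tiny norm balls $\exp\tilde U$, their left translates $g\exp\tilde U$, and the boundary caps $\exp\partial\tilde U\cap g\exp\tilde U$ are convex, respectively contractible — which is exactly what the quantitative $\SU(N)$ lemmas of the appendix (subadditivity of $\|\cdot\|$ under multiplication, Lemma \ref{Klyachko}, Lemma \ref{ineq}) are set up to supply; I expect this to be the step demanding the most care.
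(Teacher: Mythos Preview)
Your stalk computation at $e$ is correct and matches the paper's final step. The argument for $g\neq e$, however, has a genuine gap. You need both $\exp\tilde U\cap g\exp\tilde U$ and $\exp\partial\tilde U\cap g\exp\tilde U$ to be compact and contractible, and you claim the appendix lemmas provide this. They do not: Lemma~\ref{ineqtriangle} gives convexity of $\tilde U=\{X:\|X\|\leq b/2\}$ in $\g$, and Lemma~\ref{Klyachko} bounds $\|\cdot\|$ under products, but neither controls the shape of $\exp^{-1}(g\exp\tilde U)\subset\g$. Left translation by $g$ in $G$ does not correspond to translation in $\g$, so this set is a nonlinear deformation of $\tilde U$, not a translate; contractibility of its intersection with $\tilde U$ and with $\partial\tilde U$ (which, incidentally, is not a smooth sphere but the boundary of a convex body with faces) would require a separate geometric argument for every $g$ in the relevant range, and nothing in the paper is set up to supply that.

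The paper takes a different route that avoids stalk-by-stalk geometry. It first computes $\mS(X^-)$ and $\mS(X^+)$ using Lemma~\ref{ineq}, and then shows that every $(e^Y,\omega)\in\mS(X^-*_GX^+)$ satisfies $[Y,\omega]=0$ and $\langle Y,\omega\rangle=0$. Pulled back to a neighbourhood of $0$ in $\g$ via $\exp$, the second condition says the microsupport is annihilated by the Euler (dilation) vector field, so the object is locally constant along each ray through $0$; since every such ray eventually leaves the support, the object is supported at $\{0\}$, and only the single stalk at $e$ (which you already have) remains to be computed. This replaces your family of contractibility claims by one microlocal estimate.
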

\begin{proof}
Let us first  compute the microsupport of $X^-*_G X^+$.
We will prove the following: {\em $\mS(X^-*_G X^+)$
is contained in the set of all points of the form $(e^Y,\omega)\in
G\times \g^*$, where $Y\in \g$; $\|Y\|\leq (e_1+e_{N-1})/200$,
$[Y,\omega]=0$
 and $<Y,\omega>=0$.}

Let us first estimate $\mS(X^-)$. 
Let $\exp:\g\times G$ be the exponential map.
We then see that  $X^-=\exp_*\gf_{U}[\dim G]$, where $U\subset \g$
is an open convex subset $U=\{X| \|X\|<<b/2\}$. 

We know that $\mS(\gf_{U})$ consists of all points 
of the form $(X,\omega)\subset \g\times \g^*$, where 
 $\|X\|\leq b/2$ and 
$$
<X',\omega>\;<\;<X,\omega>
$$
for all $X'<<b/2$.  Lemma \ref{ineq} implies that this is equivalent
to $<b/2,\|\omega\|>\leq <X,\omega>$ 
(because $$\sup_{X'<<b/2}<X',\omega>=<b/2,\|\omega\|>);$$
 on the other hand
$$
<X,\omega>\leq < \|b/2\|,\|\omega\|>
$$
by the same Lemma \ref{ineq}. Therefore $<X,\omega>=
<\|b/2\|,\|\omega\|>$. As $\|X\|\leq b/2$ this implies 
$[X,\omega]=0$; 
\begin{equation}\label{sledXminus}
\Tr X|_{V_k(\omega)}=i<b,e_{d_k(\omega)}>/2.
\end{equation}

As $[X,\omega]=0$, we see that
$
\mS(X^-) 
$
consists of all points $(e^X,\omega)$, where $\|X\|\leq b/2$ and
 $[X,\omega]=0$
and we have (\ref{sledXminus}).

Analogously,  $X^+=\exp_*\gf_{K}$, where $K\subset\g$ is a convex compact $K=\{X|\|-X\|\leq b/2\}$. Therefore,
$\mS(\gf_K)
$
consists of all points $(X_1,\omega_1)$, where 
$\|-X_1\|\leq b/2$ and $<X',\omega_1>\geq <X_1,\omega_1>$ for  
all $X'\in K$. I.e. $<-X',\omega_1>\leq <-X_1,\omega_1>$. In the
 same way as above, we conclude that this is equivalent
to $<-X_1,\omega_1>=<b/2,\|\omega_1\|>$ which in turn is equivalent
to $\|-X_1\|\leq b/2$; $[X_1,\omega_1]=0$; 
\begin{equation}\label{sledXplus}
\Tr(-X_1)|_{V_k(\omega_1)}=
i<b/2,e_{d_k(\omega_1)}>.
\end{equation}

Thus, $\mS(X^+)$ consists of all points of the form
$(e^{X_1},\omega_1)$, where 
$[X_1,\omega_1]=0$; $\|-X_1\|\leq b/2$ and (\ref{sledXplus})
is the case. Observe that we have $\|-X_1\|\leq e_1/200$ which
means $\|X_1\|\leq e_{N-1}/200$.

We know that  the microsupport of
$X^-*_G X^+=Rm_!(X^-\boxtimes X^+)$  is contained in the set
of all points of the form $(g_1g_2,\omega)$ where
$g_1,g_2\in G$; $(g_1,\omega)\in \mS(X^-)$;
 $(g_2,\Ad^*_{g_1}\omega)\in \mS(X^+)$. This means
that $\mS(X^-*_G X^+)$ consists of all points of the form
$$
(e^{X}e^{X_1},\omega),
$$
where $(e^X,\omega)\in \mS(X^-)$ 
and $(e^{X_1},\omega)\in \mS(X^+)$ 
(because $[X,\omega]=[X_1,\omega]=0$).
According to Lemma \ref{Klyachko},  $e^Xe^{X_1}=e^Y$, where
$\|Y\|\leq \|X\|+\|X_1\|\leq (e_1+e_{N-1})/200$.  It follows that $e^YV_k(\omega)\subset V_k(\omega)$ and 
$$
\det e^Y|_{V_k(\omega)}=e^{i<b/2-b/2,\|\omega\|>}=1,
$$
see (\ref{sledXminus}), (\ref{sledXplus}). As $\|Y\|\leq b$, this
implies $\Tr Y|_{V_k(\omega)}=0$. This in turn implies
that $<Y,\omega>=0$, which we wanted.

Let $c:=(e_1+e_{N-1})/200$. Let $W:=\{X\in g;\|X\|<<2c\}$.
The exponential map gives rise to an open empedding
$\exp:W\to G$. The object $X^-*_G X^+$ is supported 
within $\exp(W)$. Consider $E\in D(W)$; 
$E:=\exp^{-1}(X^-*_G)
X^+)$.  It suffices to show that $E\cong \gf_0$. 

We see that $E$ is microsupported within  the set $(X,\omega)$,
where $\|X\|\leq c$,  $[\omega,X]=0$, $(\omega,X)=0$.
Let $D$ be the dilation vector field on $\g$. That is
$D$ is a section of $T\g=\g\times \g$ ; $D:\g\to \g\times \g$;
$D(X)=(X,X)$. It then follows that every point $(x,\omega)\in
\mS(E)$ satisfies $i_{D_x}(\omega)=0$. Let
$X\in \g$; $X\neq 0$. Let $R_X:=(\Re_{>0}.X)\cap W$
be an open segment. It then follows that $E|_{R_X}$ is a constant sheaf. However, $R_X$ does necessarily contain points $Y\in R_X$
such that $\|Y\|>>c$, meaning that $E|_Y=0$, and $E|_{R_X}=0$.
Hence $E$ is supported at 0 and it suffices to show
that $E|_0\cong \gf$. 

We have $$E|_0=(X^-*_G X^+)|_e=R\Gamma_c(G; \gf_{\{e^X|\|X\|<<b/2\}}\otimes\gf_{\{e^X|\|X\|\leq b/2\}})[\dim G]
$$
$$
=R\Gamma_c(G;\gf_{\{e^X|\|X\|<<b/2\}})[\dim G]=\gf,
$$
because the open set $\{e^X|\|X\|<<b/2\}$ is diffeomorphic to
an open ball.
\end{proof}

\subsubsection{} Let $T:C_-^\circ\to C_-^\circ$ be the shift by $-b/2$.
$T(l)=l-b/2$.  

 Let us restrict
the isomorphism  (\ref{isoilneiso}) onto $G\times V_b^-\times (-b/2)$.
We will get an isomorphism
$$
\Phi^-*_G X^-\cong A^{-1}\Phi^{-}|_{G\times V_b^-\times (-b/2)}
$$
$$
=T^{-1}\Phi^-.
$$
Taking the convolution with $X^+$  and using the previous Lemma,
we will get an isomorphism
\begin{equation}\label{kleitel}
\Phi^-\cong (T^{-1}\Phi)*_G X^+.
\end{equation}

Let $\bT_M:(C_-^\circ +Mb/2)\to C_-^\circ$ be the shift by $-Mb/2$. 
Set
$$
 \Psi_M:=\bT_M^{-1} \Phi*_G (X^+)^{*_G^M},
$$
$$
\Psi_M\in D(G\times (C_-^\circ +Mb/2)).
$$ 
We have  an isomorphism
$$
i_M:\Psi_M|_{C_-^\circ +(M-1)b/2}\cong  \bT_{M-1}^{-1}[
(T^{-1}\Phi^{-}*_G X^+)*_G (X^+)^{*_G^{M-1}} ]
$$
$$
\cong
\bT_{M-1}^{-1}(
(\Phi^-*_G (X^+)^{*_G^{M-1}})=\Psi_{M-1}
$$
where on the last step we have used the isomorphism (\ref{kleitel}).
Similar to above, there exists an object
$\sp\in D(G\times \h)$ and isomorphisms
$\sp|_{C_- +Mb/2}\to \Psi_M$ which are compatible with
isomorphisms $i_M$. 
Lemma \ref{specnositel} readily implies that $\mS(\sp)\subset \Omega_\sp$. Let us compute $\sp|_{G\times 0}$. We have
$0\in C_-^\circ-b/2$. Therefore,
we have an isomorphism
$$
\sp|_{G\times 0}\cong \Psi^{-}_1|_{G\times 0}
\cong 
\Phi^-|_{G\times -b/2}*_G X^+\cong X^-*_G X^+\cong \gf_e.
$$
This proves that the object $\sp$ satisfies all the conditions of
Theorem \ref{specth}.

\subsubsection{Uniqueness}\begin{Theorem}\label{specuniq} Let $\sp_1,\sp_2$ satisfy the conditions
of Theorem \ref{specth}. Then $\sp_1$ and $\sp_2$ are canonically isomorphic.
\end{Theorem}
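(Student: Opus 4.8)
The plan is to show that any object $\sp$ satisfying the two conditions of Theorem \ref{specth} is canonically isomorphic, on an increasing exhaustion of $G\times\h$ by open pieces, to the object built in Section \ref{proofspecth}; the canonicity at each stage is exactly what will force the local isomorphisms to glue into a single global one (and, applied to two solutions $\sp_1,\sp_2$ and composed, gives the asserted canonical $\sp_1\cong\sp_2$). The whole argument is a bootstrap from one genuinely microlocal fact near the wall $\{A=0\}$; everything after that is formal manipulation with Lemma \ref{specnositel}.

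\emph{Step 1 (the essential step): pin $\sp$ down near the identity wall.} I would first prove that on a convex neighbourhood $N$ of $0$ with $V_b^-\subset N$ there is, up to canonical isomorphism, at most one object with microsupport in $\Omega_\sp$ whose restriction to $G\times\{0\}$ is $\gf_{e}$. The point is that the $T^*\h$-component of $\Omega_\sp$ consists of the covectors $\eta=\|\omega\|$, all lying in the closed cone $C_+$, so $\sp|_{G\times N}$ is microsupported in a half-space in the $\h$-codirection; hence the propagation / non-characteristic deformation estimates of \cite{KS} recalled in Section \ref{appendix} apply, and the restriction of $\sp$ to the slices $G\times\{A\}$ can only ``thicken'' as $A$ moves into $C_-^\circ$, in a way controlled by the precise shape of $\Omega_\sp$. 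Feeding in the explicit description of $\Omega_\sp$ over $g$ near $e$ — the conditions $\Ad_g\omega=\omega$ and $\det g|_{V_k(\omega)}=e^{-i\langle e_k,A\rangle}$ — together with $I_0^{-1}\sp=\gf_{e}$, one rules out every continuation of $\gf_e$ except the one realized by $F^-$. Since both $\sp$ and the constructed object are solutions on $G\times N$, we get $\sp|_{G\times V_b^-}\cong F^-$ and $\sp|_{G\times(-b/2)}\cong X^-$, canonically.

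\emph{Step 2: propagate over $C_-^\circ$, then over all of $\h$, by convolution.} Knowing the seed $\sp|_{G\times V_b^-}\cong F^-$, Lemma \ref{specnositel} forces the convolution identities used in the construction to hold for $\sp$ as well: for convex $U_1,U_2\subset C_-^\circ$ the natural map $a^{-1}Ra_*\!\bigl(\sp|_{G\times U_1}*_G\sp|_{G\times U_2}\bigr)\to\sp|_{G\times U_1}*_G\sp|_{G\times U_2}$ is an isomorphism and the push-forward again has microsupport in $\Omega_\sp$; comparing restrictions to $G\times V_b^-\times\{-b/2\}$, where both sides are already identified and $X^-$ is convolution-invertible ($X^-*_GX^+\cong\gf_{e}$), produces canonical isomorphisms $\sp|_{G\times MV_b^-}\cong\Phi^-_M$ compatible with the gluing maps $I_M$, hence, since $\bigcup_M MV_b^-=C_-^\circ$, a canonical $\sp|_{G\times C_-^\circ}\cong\Phi^-$. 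Convolving on the right with $X^+$ and using the isomorphism (\ref{kleitel}) — which again is now a formal consequence of Lemma \ref{specnositel} and Step 1 — recovers $\sp$ on each translate $C_-^\circ+Mb/2$ from $\sp|_{G\times C_-^\circ}$, canonically and compatibly with the gluing maps $i_M$; as $\bigcup_M(C_-^\circ+Mb/2)=\h$, this gives the canonical global isomorphism of $\sp$ with the constructed object.

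\emph{Main obstacle.} The only real difficulty is Step 1. Unlike the rest, it is not bookkeeping with Lemma \ref{specnositel} but a true microlocal propagation across the wall $\{A=0\}$, where $\Omega_\sp$ is far from the zero section and the sheaf changes character (a skyscraper on the wall, a locally constant thickening on the negative side). Making it precise requires combining the one-sidedness of $\Omega_\sp$ in the $\h$-codirection with the explicit determinant conditions cutting out $\Omega_\sp$ near $g=e$, in order to see that $\gf_{e}$ admits no microlocal continuation over $G\times N$ other than the one given by $F^-$. Once this is in hand, all subsequent identifications are determined and compatible, so the resulting isomorphism $\sp_1\cong\sp_2$ is canonical and independent of the auxiliary choices made in the construction.
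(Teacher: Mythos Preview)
Your proposal takes a genuinely different, and much harder, route than the paper's proof, and its central step is not actually carried out.

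The paper's argument is three lines. Take the two solutions $\sp_1,\sp_2$, form $\sp_1*_G\sp_2\in D(G\times\h\times\h)$, and apply Lemma \ref{specnositel} with $U_1=U_2=\h$: this gives an isomorphism $a^{-1}Ra_*(\sp_1*_G\sp_2)\to\sp_1*_G\sp_2$. Now restrict along the two coordinate embeddings $I_1(A)=(A,0)$ and $I_2(A)=(0,A)$ of $\h$ into $\h\times\h$. Since $I_0^{-1}\sp_i=\gf_{e_G}$ is the unit for $*_G$, one gets canonical isomorphisms $Ra_*(\sp_1*_G\sp_2)\to\sp_2$ and $Ra_*(\sp_1*_G\sp_2)\to\sp_1$, hence a canonical $\sp_1\cong\sp_2$. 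No identification with the explicit construction is ever needed, no exhaustion, no gluing compatibilities to verify, and no microlocal propagation across $\{A=0\}$.

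Your Step 1 is exactly the piece the paper's trick is designed to avoid, and you do not prove it: you assert that ``one rules out every continuation of $\gf_e$ except the one realized by $F^-$'' and then concede that ``making it precise requires combining the one-sidedness of $\Omega_\sp$ in the $\h$-codirection with the explicit determinant conditions''. That is the whole content; without it nothing downstream starts. The half-space condition $\eta\in C_+$ alone only tells you that restriction to slices $G\times\{A\}$ for $A\in C_-^\circ$ is governed by propagation in one direction; it does not by itself identify the result with $\gf_{W_b^-}[\dim G]$ --- for that you would need a genuine normal-cone / specialization computation using the full $\Omega_\sp$, which you have not supplied. By contrast, the paper's proof uses only the two hypotheses of Theorem \ref{specth} and the single Lemma \ref{specnositel}, applied once on all of $\h$, to produce the canonical isomorphism directly.
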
 
\begin{proof}
According to Lemma \ref{specnositel} (take $U_1=U_2=\h$), we have an isomorphism
\begin{equation}\label{secad}
a^{-1}Ra_*(\sp_1*_G\sp_2)\to \sp_1*_G \sp_2.
\end{equation}

Let $I_1,I_2:\h\to \h\times \h$ be as follows: $I_1(A)=(A,0)$; $I_2(A)=(0,A)$. aApplying  functors $I_1^{-1},I_2^{-1}$ to (\ref{secad}) and taking into account 
the isomorphisms
 $I_0^{-1}\sp_i\cong \gf_{e_G}$, we will get the following isomorphisms
\begin{equation}\label{secad1}
Ra_*(\sp_1*_G \sp_2)\to \sp_2;\quad Ra_*(\sp_1*_G \sp_2)\to \sp_1,
\end{equation}
whence an isomorphism $\sp_1\to \sp_2$.
\end{proof}
From now on we will denote by $\sp$ any object satisfying Theorem \ref{specth} (they are all
canonically isomorphic). 

Equations (\ref{secad}), (\ref{secad1}) imply that we have an isomorphism
\begin{equation}\label{convsp}
\sp*_G \sp \to a^{-1}\sp.
\end{equation}
\def\Up{\Upsilon}
\subsubsection{} One can prove even more general result.
Let $\Upsilon\subset T^*(G\times \h)$ consist of all points
$(G,A,\omega,\|\omega\|)\in G\times \h\times \g^\times \h=T^*(G\times \h)$.
Of course, $\Omega_\sp\subset \Upsilon$.
Let $C_\Up\subset D(G\times \h)$ be the full subcategory consisting
of all objects $F$ microsupported on $\Up$.  Let $i_0:G\to G\times \h$
be the embedding $i_0(g)=(g,0)$. We have a functor
$i_0^{-1}:C_\Up\to D(G)$.  We also have a functor $\Sigma:D(G)\to C_\Up$;
$\Sigma(F)=F*_G \sp$ (it is easy to show that $\mS(F*_G\sp)\subset \Up$).

\begin{Theorem}\label{equiv:restr} The functors $i^{-1}_0$ anb $\Sigma$ are mutually quasi-inverse
equivalences.
\end{Theorem}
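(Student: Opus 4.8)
The plan is to show that $i_0^{-1}$ and $\Sigma$ are mutually quasi-inverse by producing natural isomorphisms $i_0^{-1}\circ\Sigma\cong\Id_{D(G)}$ and $\Sigma\circ i_0^{-1}\cong\Id_{C_\Up}$. For the first composition: given $F\in D(G)$, we have $i_0^{-1}\Sigma(F)=i_0^{-1}(F*_G\sp)$. Because $*_G$ and the restriction $i_0^{-1}$ commute in the obvious way (the shift variable $A\in\h$ is only carried by the $\sp$-factor), this equals $F*_G(i_0^{-1}\sp)=F*_G\gf_{e_G}=F$, using property 2) of Theorem \ref{specth}. So the counit is an isomorphism essentially by construction, and one only needs to check naturality, which is routine.

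For the other composition, the key tool is the convolution identity $\sp*_G\sp\cong a^{-1}\sp$ of (\ref{convsp}) together with Lemma \ref{specnositel} (applied with $U_1=U_2=\h$), which gives that for any $G_1,G_2$ microsupported on $\Up$ the natural map $a^{-1}Ra_*(G_1*_GG_2)\to G_1*_GG_2$ is an isomorphism. Given $\Phi\in C_\Up$, I want to prove $\Sigma(i_0^{-1}\Phi)=(i_0^{-1}\Phi)*_G\sp\cong\Phi$. The strategy: first show that any $\Phi\in C_\Up$ is recovered from its convolution with $\sp$ in the same way $\sp_1$ was recovered in the proof of Theorem \ref{specuniq}. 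Concretely, form $\Phi*_G\sp\in D(G\times\h\times\h)$ (convolving the group variables, keeping both $\h$-parameters). By Lemma \ref{specnositel} the map $a^{-1}Ra_*(\Phi*_G\sp)\to\Phi*_G\sp$ is an isomorphism, where $a$ adds the two $\h$-coordinates. Applying the two partial restrictions $I_1^{-1}$ (setting the second $\h$-coordinate to $0$) and $I_2^{-1}$ (setting the first to $0$), and using $i_0^{-1}\sp=\gf_{e_G}$ together with the fact that $\Phi$ itself need not be killed at $0$ — here one uses that $\Phi$ is genuinely supported over all of $\h$ — one gets
$$
Ra_*(\Phi*_G\sp)\cong \Phi,\qquad Ra_*(\Phi*_G\sp)\cong (i_0^{-1}\Phi)*_G\sp=\Sigma(i_0^{-1}\Phi),
$$
and composing yields the desired isomorphism $\Phi\cong\Sigma(i_0^{-1}\Phi)$. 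This mirrors equations (\ref{secad}), (\ref{secad1}) verbatim, with $\sp_1$ replaced by $\Phi$ and $\sp_2$ by $\sp$; the only new input needed is that the microsupport estimate of Lemma \ref{specnositel} works with one factor in $C_\Up$ rather than in the smaller class microsupported on $\Omega_\sp$ — but inspecting that proof, all that is used about the second factor is that $\eta$ equals $\|\omega\|$, i.e. membership in $\Up$, so it goes through unchanged.

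The main obstacle is exactly this last point: verifying that Lemma \ref{specnositel} (and hence the argument of Theorem \ref{specuniq}) remains valid when one of the convolved objects lies only in $C_\Up$ rather than being microsupported on $\Omega_\sp$. One must re-run the microsupport computation: a point of $\mS(\Phi*_G\sp)$ over $(g_1g_2,A_1,A_2)$ forces $(g_1,A_1,\omega,\eta_1)\in\Up$ and $(g_2,A_2,\Ad_{g_1}^*\omega,\eta_2)\in\Omega_\sp$; the $\Omega_\sp$-membership of the second still gives $\Ad_{g_1}^*\omega=\omega$, hence $\eta_1=\eta_2=\|\omega\|$, so every covector in the microsupport annihilates the fibers of the addition map $a$, which is precisely what parts 1) and 2) of the lemma assert. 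Once that is in place, the rest is formal: the two isomorphisms are natural in $\Phi$ (respectively $F$), and the compatibility check that these natural isomorphisms actually witness an adjoint equivalence (the triangle identities) follows from the associativity of $*_G$ and the already-established identity $\sp*_G\sp\cong a^{-1}\sp$. I would also remark that it suffices, by uniqueness in Theorem \ref{specuniq}, to fix one choice of $\sp$ throughout, so that all the isomorphisms above are canonical.
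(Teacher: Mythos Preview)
Your approach is essentially identical to the paper's: the paper also handles $i_0^{-1}\Sigma$ directly via $\sp|_{G\times 0}=\gf_{e_G}$, and for $\Sigma i_0^{-1}$ forms $F*_G\sp$ on $G\times\h\times\h$, invokes the analogue of Lemma \ref{specnositel} to get $a^{-1}Ra_*(F*_G\sp)\cong F*_G\sp$, and then restricts along the two embeddings $i_1,i_2$ to obtain both $F$ and $\Sigma i_0^{-1}F$.

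One small slip in your microsupport check: $\Omega_\sp$-membership of $(g_2,A_2,\Ad_{g_1}^*\omega,\eta_2)$ does \emph{not} give $\Ad_{g_1}^*\omega=\omega$; it gives $\Ad_{g_2}(\Ad_{g_1}^*\omega)=\Ad_{g_1}^*\omega$ and $\eta_2=\|\Ad_{g_1}^*\omega\|$. The conclusion $\eta_1=\eta_2$ still holds, but for the reason that $\|\cdot\|$ is $\Ad$-invariant, so $\|\Ad_{g_1}^*\omega\|=\|\omega\|=\eta_1$. With that correction the argument goes through, and only part 1) of Lemma \ref{specnositel} is actually needed here.
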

\begin{proof}
Let $F\in C_\Up$ and consider $F*_G\sp\in D(G\times\h\times \h)$.
As above, let  $a:G\times \h\times \h\to G\times \h$ be the addition. 
Similar to above, one can show that the natural map
\begin{equation}\label{a:iso}
a^{-1}Ra_* (F*_G\sp)\to F*_G\sp
\end{equation}
is an isomorphism.  Let $i_1,i_2:G\times \h\to G\times \h\times\h$ be
given by $i_1(g,A)=(g,A,0)$; $i_2(g,A)=(g,0,A)$. In the same spirit as above,
we can apply $i_1^{-1},i_2^{-1}$ to (\ref{a:iso}). We will get  functorial isomorphisms
$$
Ra_*(F*_G\sp)\to F=\Id(F),\quad Ra_*(F*_G \sp)\to i_0^{-1}F*_G\sp=
\Sigma i_0^{-1}F,
$$
whence an isomorhism of functors $\Id_{D(G)}\cong \Sigma i_0^{-1}$.

Let us consider the composition in the opposite order:
$$
i_0^{-1}\Sigma(F)=F*_G (\sp|_{G\times 0})=F*_G \gf_{e_G}=F.
$$
This way we get an isomorphism $\Id_{D(G\times \h)}\cong i_0^{-1}\Sigma$.
\end{proof}
\subsubsection{Lemma} These Lemma  will be used in the sequel.  Let $A\in \h$. Let $I_A:G\to G\times \h$
be given by $I_A(g)=(g,A)$. Let $S_A:=I_A^{-1}\sp$. Let $T_A:\h\to \h$; $T_A(A_1)=A+A_1$ be the shift
by $A$.
\begin{Lemma}\label{sdvigsp} We have an isomorphism $T_A^{-1}\sp\cong S_A*_G\sp$.
\end{Lemma}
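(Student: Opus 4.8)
The plan is to obtain the Lemma by restricting the multiplicativity isomorphism (\ref{convsp}), namely $\sp*_G\sp\cong a^{-1}\sp$ in $D(G\times\h\times\h)$, along the inclusion that fixes the first $\h$-coordinate to be $A$. Here $a:G\times\h\times\h\to G\times\h$ is addition in the two $\h$-factors, and I write $\sp*_G\sp=RM_!(\sp\boxtimes\sp)$ with $M:G\times\h\times G\times\h\to G\times\h\times\h$ given by $M(g_1,A_1,g_2,A_2)=(g_1g_2,A_1,A_2)$.

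Let $i_A:G\times\h\to G\times\h\times\h$ be given by $i_A(g,B)=(g,A,B)$. On the right-hand side, $a\circ i_A=\Id_G\times T_A$ since $a(i_A(g,B))=(g,A+B)$; hence $i_A^{-1}(a^{-1}\sp)\cong(\Id_G\times T_A)^{-1}\sp$, which is exactly the object written $T_A^{-1}\sp$ in the statement. On the left-hand side, I form the Cartesian square with vertices $G\times G\times\h$, $G\times\h\times G\times\h$, $G\times\h$, $G\times\h\times\h$, in which the map $G\times G\times\h\to G\times\h\times G\times\h$ is $(g_1,g_2,B)\mapsto(g_1,A,g_2,B)$, the map $m':G\times G\times\h\to G\times\h$ is $(g_1,g_2,B)\mapsto(g_1g_2,B)$, and the remaining two sides are $M$ and $i_A$. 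Since $G$ is compact, $M$ is proper, so proper base change gives $i_A^{-1}RM_!(\sp\boxtimes\sp)\cong Rm'_!(S_A\boxtimes\sp)$, where $S_A=I_A^{-1}\sp$ is the restriction of the first copy of $\sp$ to the slice $\h=A$; and $Rm'_!(S_A\boxtimes\sp)$ is by definition $S_A*_G\sp$. Thus $i_A^{-1}(\sp*_G\sp)\cong S_A*_G\sp$.

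Applying $i_A^{-1}$ to both sides of (\ref{convsp}) and combining the two computations yields $S_A*_G\sp\cong T_A^{-1}\sp$, as required. The only point deserving care is the base-change step on the left-hand side, i.e. that restricting the $G$-parametrized convolution $\sp*_G\sp$ in the first $\h$-variable produces the convolution of the restriction $S_A$ with $\sp$; this is routine proper base change, with all maps in sight proper because $G$ is compact, and I would simply exhibit the Cartesian square above. Everything else is formal, so I do not anticipate a genuine obstacle.
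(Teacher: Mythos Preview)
Your proof is correct and is essentially the paper's own argument spelled out in full. The paper's one-line proof ``Apply the functor $I_A^{-1}$ to (\ref{secad})'' amounts to exactly the restriction you perform: pulling back the isomorphism $a^{-1}Ra_*(\sp*_G\sp)\cong\sp*_G\sp$ along the slice $i_A:G\times\h\to G\times\h\times\h$, then using (\ref{secad1}) to identify $Ra_*(\sp*_G\sp)$ with $\sp$ (equivalently, using (\ref{convsp}) directly, as you do). Your Cartesian square and proper base change computation are precisely the routine verification the paper leaves implicit; your notation $i_A$ for the slice map is in fact clearer than the paper's slightly overloaded $I_A$.
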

\begin{proof}
Apply the functor $I_A^{-1}$ to (\ref{secad}).
\end{proof}

\section{Study of $\bfS|_{\Zentrum\times C_-^\circ}$}
\label{restrC-}
We denote by $j_{C_-^\circ}:C_-^\circ \to \h$ the open embedding.
We will denote by the same symbol the induced embeddings
$\Zentrum\times C_-^\circ\to \Zentrum\times \h$; $G\times C_-^\circ\to G\times\h$.

We start with studying the object 
$j_{C_-^\circ}^{-1}\sp$.

\subsection{Microsupport of $j_{C_-^\circ}^{-1}\sp$} 
\begin{Lemma} \label{supportogran} The object $j_{C_-^\circ}^{-1}\sp$ is microsupported
within the set of points of the form
$(g,A,\omega,\eta)\in G\times
C_-^\circ \times \g^*\times \h^*$ such that there exists an $X\in \g$ satisfying:

1) $g=e^X$;

2) $\|X\|\leq -A$;

3)$[X,\omega]=0$;

4) $\Tr X|_{V_k(\omega)}=-i<A,e_{d_k}>$;

5)$ \eta=\|\omega\|$.
\end{Lemma}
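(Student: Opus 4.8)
The plan is to read the bound off the construction of $\sp$ in Section~\ref{proofspecth}. Restricting the family of isomorphisms there to the index $M=0$ gives $j_{C_-^\circ}^{-1}\sp\cong\Phi^-$, and $\Phi^-$ was built together with isomorphisms $\Phi^-|_{G\times MV_b^-}\cong\Phi^-_M=Ra_{M*}F^-_M$, where $F^-_M=F^-*_G\cdots*_GF^-$ ($M$ factors) and $a_M:G\times(V_b^-)^M\to G\times MV_b^-$ is the addition map. Since $C_-^\circ=\bigcup_M MV_b^-$ is an increasing union and microsupport is a local notion, it suffices to prove, for each $M$, that $\mS(\Phi^-_M)\cap T^*(G\times MV_b^-)$ is contained in the set described in the statement (with $A\in MV_b^-$).

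First I would estimate $\mS(F^-_M)$ by induction on $M$, the case $M=1$ being Lemma~\ref{supportF-}. The inductive claim is that $\mS(F^-_M)$ is contained in the set of points $(g,A_1,\dots,A_M,\omega,\eta_1,\dots,\eta_M)$ for which there is $X\in\g$ with $[X,\omega]=0$, $e^X=g$, $\|X\|\le-(A_1+\dots+A_M)$, $\Tr X|_{V_k(\omega)}=-i<A_1+\dots+A_M,e_{d_k(\omega)}>$ for all $k$, and $\eta_j=\|\omega\|$ for all $j$. For the step, write $F^-_M=F^-_{M-1}*_GF^-$; since $G$ is compact, the $G$-multiplication map occurring in this convolution is proper, so a point of $\mS(F^-_M)$ forces a factorization $g=g_1g_2$ with $(g_1,A_1,\dots,A_{M-1},\omega,\eta_1,\dots,\eta_{M-1})\in\mS(F^-_{M-1})$ and $(g_2,A_M,\Ad^*_{g_1}\omega,\eta_M)\in\mS(F^-)$. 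By the inductive hypothesis $g_1=e^{X'}$ with $X'$ in the centralizer $\{Y\in\g\mid[Y,\omega]=0\}$ and $\|X'\|\le-(A_1+\dots+A_{M-1})$, hence $\Ad^*_{g_1}\omega=\omega$; then Lemma~\ref{supportF-} gives $g_2=e^{X_M}$ with $[X_M,\omega]=0$, $\|X_M\|\le-A_M$, $\Tr X_M|_{V_k(\omega)}=-i<A_M,e_{d_k(\omega)}>$ and $\eta_M=\|\omega\|$. Since $e^{X'}$ and $e^{X_M}$ both lie in the (compact) centralizer subgroup of $\omega$, Lemma~\ref{Klyachko}, applied inside that subgroup, produces $X$ in the same centralizer with $e^X=g_1g_2=g$ and $\|X\|\le\|X'\|+\|X_M\|\le-(A_1+\dots+A_M)$. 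The determinant identity $\det e^X|_{V_k(\omega)}=\det e^{X'}|_{V_k(\omega)}\,\det e^{X_M}|_{V_k(\omega)}=e^{-i<A_1+\dots+A_M,e_{d_k(\omega)}>}$ together with the bound $\|X\|\le-(A_1+\dots+A_M)$ then pins down $\Tr X|_{V_k(\omega)}=-i<A_1+\dots+A_M,e_{d_k(\omega)}>$ exactly, by the same use of Lemma~\ref{ineq} as in the proof of Lemma~\ref{supportF-}; and $\eta_j=\|\omega\|$ for all $j$ is inherited from the two factors. This closes the induction.

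Next I would pass from $F^-_M$ to $\Phi^-_M=Ra_{M*}F^-_M$. Every covector occurring in the estimate just obtained has $\eta_1=\dots=\eta_M=\|\omega\|$, so it annihilates the fibers of $a_M$; arguing exactly as in Lemma~\ref{specnositel} this makes the natural map $a_M^{-1}\Phi^-_M\to F^-_M$ an isomorphism. Consequently, if $(g,A,\omega,\eta)\in\mS(\Phi^-_M)$ with $A\in MV_b^-$, then for any decomposition $A=A_1+\dots+A_M$ with $A_j\in V_b^-$ the point $(g,A_1,\dots,A_M,\omega,\eta,\dots,\eta)$ lies in $\mS(a_M^{-1}\Phi^-_M)=\mS(F^-_M)$, and the estimate above yields $g=e^X$ with $[X,\omega]=0$, $\|X\|\le-A$, $\Tr X|_{V_k(\omega)}=-i<A,e_{d_k(\omega)}>$ and $\eta=\|\omega\|$ — precisely the conditions in the statement. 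Taking the union over $M$ finishes the argument.

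The main obstacle is the fine bookkeeping in the inductive step: although $A$ ranges over the ``large'' set $MV_b^-$, on which $\exp$ is no longer injective, one must still be able to choose the logarithm $X$ inside the centralizer of $\omega$, with $\|X\|\le-A$ and with the exact (not merely $\!\!\mod 2\pi i\bZ$) trace identity. This is the point at which one leans on the already established containment $\mS(\sp)\subset\Omega_\sp$ — which supplies $\Ad_g\omega=\omega$, $\det g|_{V_k(\omega)}=e^{-i<e_k,A>}$ and $\eta=\|\omega\|$ for free — and on the multiplicativity estimate of Lemma~\ref{Klyachko} together with Lemma~\ref{ineq}; it is the same circle of estimates, controlled by the smallness parameter $b\le e_1/100$, that makes the self-convolution argument of Lemma~\ref{dvoinaya} work.
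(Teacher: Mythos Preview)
Your overall architecture---restrict to $G\times MV_b^-$, estimate $\mS(F^-_M)$, then descend to $\Phi^-_M$ via the isomorphism $a_M^{-1}\Phi^-_M\cong F^-_M$---is exactly the paper's, and the last passage is fine. The gap is in the inductive step, and you have in fact identified it yourself in your closing paragraph without resolving it.

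Lemma~\ref{Klyachko} carries the hypothesis $\|X\|,\|Y\|\le b$ with $b\le e_1/(100N)$: \emph{both} inputs must be small. In your induction the accumulated logarithm $X'$ satisfies only $\|X'\|\le -(A_1+\cdots+A_{M-1})$, which for large $M$ is of order $(M-1)b$ and well outside the admissible range; working inside the centralizer of $\omega$ does not help, since the eigenvalues of $X'$ on each block are just as large. The same size blow-up kills your ``exact trace'' step: from $\det e^X|_{V_k(\omega)}=e^{-i\langle A,e_{d_k}\rangle}$ you only get the trace modulo $2\pi i\bfZ$, and once $\|X\|$ and $\langle A,e_{d_k}\rangle$ are of order $Mb$ there is no smallness left to pin down the integer. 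Invoking $\mS(\sp)\subset\Omega_\sp$ gives information about $g$ and $\det g|_{V_k(\omega)}$, but not the existence of a logarithm $X$ with the exact trace identity and the bound $\|X\|\le -A$.

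The paper avoids the induction altogether: it estimates $\mS(F^-_M)=\mS\bigl(Rm_!(F^-)^{\boxtimes M}\bigr)$ in one shot, obtaining individually small $X_1,\dots,X_M$ each commuting with $\omega$, and then invokes Lemma~\ref{spryamlenie}. That lemma is precisely the statement that from such $X_1,\dots,X_M$ (each with $\|X_i\|\le b$) one can manufacture a single $X$ with $e^X=e^{X_1}\cdots e^{X_M}$, $[X,\omega]=0$, $\|X\|\le\sum_i\|X_i\|$, and $\Tr X|_{V_k(\omega)}=\sum_i\Tr X_i|_{V_k(\omega)}$. Its proof is not an iteration of Lemma~\ref{Klyachko} but a compactness/minimization argument (repeatedly replacing an adjacent pair by two copies of half their Klyachko product and passing to a limit), which is exactly what lets it handle arbitrarily many factors while each individual factor stays small. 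That lemma is the missing ingredient in your argument.
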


\begin{proof}
As was shown in the proof of Theorem \ref{specth},  we  have 
$C_-^\circ =\bigcup\limits_{M}MV_b$ and
$$
\sp|_{G\times MV_b}\cong \Phi^-_M.
$$
The object $\Phi^-_M$ is defined by 
$$
a_M^{-1}\Phi^-_M \cong F^-*_G F^-*_G\cdots *_G F^-
$$
(total $M$ copies of $F^-$ and we use the same notation 
as in Sec. \ref{proofspecth}.)

The object $F^-*_G F^-*_G\cdots *_G F^-$ ($M$ times) is the same
as 
$$
Rm_! (F^-)^{\boxtimes M},
$$
where $m:(G\times V_b^-)^M\to G\times (V_b^-)^M$ is induced
by the product on $G$. 
The map $m$ is proper, so we can estimate the microsupport
of $
Rm_! (F^-)^{\boxtimes M}
$
in the standard way. Using
Lemma \ref{supportF-}, we conclude that $
Rm_! (F^-)^{\boxtimes M}
$
is microsupported within the set of points
of the form
$$(e^{X_1}e^{X_2}\cdots e^{X_M};A_1,A_2,\ldots, A_M;
\omega,\eta_1,\eta_2,\ldots,\eta_M),
$$
where $(X_k,A_k,\omega,\eta_k)\in \mS(F^-)$ (we use the
the equality $[X_k,\omega]=0$).

By Lemma \ref{supportF-},   $\eta_1=\eta_2=\cdots
=\eta_M=\|\omega\|$. This implies 
that 
the object  $\Phi_M^-$ is microsupported within the set
of all points of the form

$$
(e^{X_1}e^{X_2}\cdots e^{X_M};A_1+A_2+\cdots+ A_M;
\omega,\|\omega\|),
$$
where $(e^{X_k};A_k;\omega;\|\omega\|)\in \mS(F^-)$ for 
all $k$. 

Lemma \ref{supportF-} says that $[X_k,\omega]=0$.
 By Lemma \ref{spryamlenie}, there exists $X\in \g$ such that
$$e^X=e^{X_1}e^{X_2}\cdots e^{X_M};\quad [X,\omega]=0;
$$
$$
\Tr X|_{V_k(\omega)}=\sum_k \Tr X_k|_{V_k(\omega)};
$$
$$
\|X\|\leq \|X_1\|+\|X_2\|+\cdots+\|X_M\|
$$
According to Lemma \ref{supportF-},
$$
\sum_k \Tr X_k|_{V_k(\omega)}=-i<\sum_k A_k;e_{d_k}>;
$$
$$
\sum _k\|X_k\| \leq -\sum_k A_k.
$$
This implies the statement.
\end{proof}

Using this Lemma we can easily estimate the microsupport
of $j_{C_-^\circ}^{-1}\bfS$.
\begin{Proposition} \label{supportrestriction} The object $j_{C_-^\circ}^{-1}\bfS$
is microsupported within the set of all points of the form
$
(z;A;\eta)\subset \Zentrum\times C_-^\circ\times \h^*,
$
where there exists $B\in C_-$ such that

1) $e^{-B}=z$;

2) $B\geq A$ (i.e. $\forall k:<B-A,e_k>\geq 0$);

3) $\eta\in C_+$ (i.e. $\forall k:<\eta,f_k>\geq 0$);
 if $<\eta,f_k>\;>0$, then $<B-A,e_{k}>=0$.

\end{Proposition}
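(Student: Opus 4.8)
The plan is to turn this into a microsupport estimate for the closed embedding $I_\Zentrum\colon\Zentrum\times\h\to G\times\h$ (restricted over $C_-^\circ$), fed by Lemma \ref{supportogran}. First I would check that $I_\Zentrum$ is non-characteristic for $j_{C_-^\circ}^{-1}\sp$: a covector conormal to $\Zentrum\times C_-^\circ$ inside $G\times C_-^\circ$ at a point $(z,A)$ has the shape $(\omega,0)\in\g^*\times\h^*$, and Lemma \ref{supportogran} forces such a point of $\mS(j_{C_-^\circ}^{-1}\sp)$ to satisfy $0=\eta=\|\omega\|$, whence $\omega=0$. Thus by Proposition \ref{ks:inverse} we have $\bfS|_{\Zentrum\times C_-^\circ}\cong I_\Zentrum^{-1}j_{C_-^\circ}^{-1}\sp[-\dim G]$, and its microsupport is contained in the image of $\mS(j_{C_-^\circ}^{-1}\sp)$ under the map forgetting the $\g^*$-coordinate. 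Concretely: if $(z,A,\eta)\in\mS(j_{C_-^\circ}^{-1}\bfS)$ then there are $\omega\in\g^*$ and, by Lemma \ref{supportogran}, $X\in\g$ with $e^X=z$, $\|X\|\le -A$, $[X,\omega]=0$, $\Tr X|_{V_k(\omega)}=-i\langle A,e_{d_k}\rangle$ for all $k$, and $\eta=\|\omega\|$. The rest of the proof is bookkeeping to extract conditions 1)--3) from this.

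I would put $B:=-\|X\|$. Since $\|X\|\in C_+$, $B\in C_-$. As $X$ is conjugate in $G$ to $\|X\|$, the element $e^{\|X\|}$ is conjugate to $e^X=z$; since $z$ is central, $e^{\|X\|}=z$, i.e. $e^{-B}=z$, which is condition 1). Condition 2), $B\ge A$, is just $\|X\|\le -A$ rewritten, because $B-A=-A-\|X\|$. And $\eta=\|\omega\|\in C_+$ by definition of $\|\cdot\|$, which gives the first clause of condition 3).

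The substantive point is the equality clause of 3). Suppose $\langle\eta,f_k\rangle>0$; by the conventions of Sec. \ref{gnot} this says precisely that $k$ is one of the jump dimensions of $\omega$, i.e. $k=d_j(\omega)=\dim V_j(\omega)$ for some $j$. Since $[X,\omega]=0$, the Hermitian operator $-iX$ stabilizes $V_j(\omega)$, and condition 4) of Lemma \ref{supportogran} gives $\Tr\bigl((-iX)|_{V_j(\omega)}\bigr)=-\langle A,e_k\rangle$; the left side is a sum of $k$ eigenvalues of $-iX$. By Lemma \ref{ineq} (the Ky Fan maximum principle) this is at most the sum of the $k$ largest eigenvalues of $-iX$, which equals $\langle\|X\|,e_k\rangle$; and $\|X\|\le -A$ gives $\langle\|X\|,e_k\rangle\le-\langle A,e_k\rangle$. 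Therefore
\[
-\langle A,e_k\rangle=\Tr\bigl((-iX)|_{V_j(\omega)}\bigr)\le\langle\|X\|,e_k\rangle\le-\langle A,e_k\rangle,
\]
so both inequalities are equalities; in particular $\langle\|X\|,e_k\rangle=-\langle A,e_k\rangle$, i.e. $\langle B-A,e_k\rangle=0$, as required.

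The steps involving $B$ and the non-characteristic reduction are routine. The one place calling for care is the squeeze above, together with the two order conventions of Sec. \ref{gnot} (partial sums paired against the $e_j$ versus the $f_j$) and the identification of $\{k:\langle\|\omega\|,f_k\rangle>0\}$ with the set of jump dimensions of the $\omega$-eigenflag; keeping these compatible is where the bookkeeping lies, but once set up, the argument is exactly the majorization squeeze just sketched.
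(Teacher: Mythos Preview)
Your proof is correct and follows essentially the same approach as the paper: verify non-characteristicity of the embedding $\Zentrum\times C_-^\circ\hookrightarrow G\times C_-^\circ$ for $j_{C_-^\circ}^{-1}\sp$ using Lemma \ref{supportogran}, pull back the microsupport estimate, set $B=-\|X\|$, and use the Ky Fan squeeze to force $\langle\|X\|,e_k\rangle=-\langle A,e_k\rangle$ whenever $\langle\eta,f_k\rangle>0$. The only cosmetic difference is that you spell out the conjugacy argument for $e^{-B}=z$ explicitly, and your citation of Lemma \ref{ineq} for the partial-trace bound is slightly loose (that bound appears inside the \emph{proof} of Lemma \ref{ineq} rather than in its statement), but the mathematics is the same.
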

\begin{proof} Let $I:Z\times C_-^\circ \into G\times C_-^\circ$
be the embedding.  We have
$j_{C_-^\circ}^{-1}\bfS =I^{-1}j_{C_-^\circ}^{-1}\sp[-\dim G]$.
The just proven Lemma implies that $j_{C_-^\circ}^{-1}\sp$
is non-singular with respect to the embedding $I$ (i.e.
given a point $\zeta\in \mS(j_{C_-^\circ}^{-1}\sp)$ 
where $\zeta \in T^*_x (G\times C_-^\circ)$, $x\in \Zentrum\times C_-^\circ$, and $I^*\zeta=0$, one then has $\zeta=0$).

Therefore, the microsupport $I^{-1}j_{C_-^\circ}^{-1}\sp[-\dim G]$
consists of all points of the form $I^*\zeta$,  where 
$\zeta\in \mS(j_{C_-^\circ}^{-1}\sp)$,
 $\zeta \in T^*_x (G\times C_-^\circ)$, $x\in \Zentrum\times C_-^\circ$.

Thus the microsupport $I^{-1}j_{C_-^\circ}^{-1}\sp$ is contained
in the set of all points of the form
$$
(e^X,A,\eta)\in \Zentrum\times C_-^\circ \times \h^*,
$$
where there exists $\omega\in \g^*$ such that
$(e^X,A,\omega,\eta)\in \mS j_{C_-^\circ}^{-1}\sp$.
According to the previous Lemma, this implies
that
$\|X\|\leq -A$;  $\|\omega\|=\eta$ (so $\eta\in C_+$). 

This means that $\eta=i(\lambda^1(\omega),\lambda^2(\omega),
\ldots,\lambda^N(\omega))$, where
$\lambda^1(\omega)\geq \lambda^2(\omega)\geq \cdots\geq
\lambda^N(\omega)$ is the spectrum of $-i\omega$ (with 
multiplicities).

It is clear  that the flag $V_\bullet(\omega)$ 
contains a $k$-dimensional subspace iff $\lambda^k(\omega)>
\lambda^{k+1}(\omega)$ which is the same as 
$<\eta,f_k>\; >0$. 

Denote this $k$-dimensional subspace by $V^k$. We then know
that $XV^k\subset V^k$ and $\Tr X|_{V^k}=-i<A,e_k>$. 
On the other hand we know that $$\Tr -iX|_{V^k}\leq  <\|X\|,e_k>,
$$ for any $X\in\g$.
Hence,
$$
<-A,e_k>\leq <\|X\|,e_k>.
$$
As $\|X\|\leq -A$, this means that
$<-A,e_k>=<\|X\|,e_k>$.  

Let us now set $B:=-\|X\|$. We see that thus defined $B$
satisfies all the conditions.
\end{proof}

\subsubsection{} Let us reformulate the just proven Proposition.

Let $\Lambda\subset C_-$ be a discrete subset. 

Let $X(\Lambda)\subset C_-^\circ\times C_+\subset T^*C_-^\circ$
consist of all points $(A,\eta)$ such that there exists
a $B\in \Lambda$ satisfying:

1) $B\geq A$;

2) If $<\eta,f_k>\; >0$, then $<B-A,e_k>=0$.

For $z\in \Zentrum$ let $S_z\in D(C_-^\circ)$ be the restriction
$$S_z:=
j_{C_-^\circ}^{-1}(\bfS|_{z\times C_-^\circ})
=\sp|_{z\times C_-^\circ}.
$$ 

Let $\bL_z^{-}:=\{B\in C_-| e^{-B}=z\}$.  $\bL_z^{-}$ is an intersection
of a discrete lattice in $\h$ with $C_-$, hence is itself discrete.

 Proposition \ref{supportrestriction} can be now reformulated as:

\begin{Proposition}\label{ms:bfs} We have $\mS(\bfS_z)\subset X(\bL_z^{-})$
\end{Proposition}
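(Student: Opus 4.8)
The plan is to translate Proposition \ref{supportrestriction} into the language of the set $X(\bL_z^-)$ almost verbatim, since the two statements are designed to describe the same object; the only real work is matching notation and checking that the object whose microsupport is being estimated is indeed $S_z = \bfS_z$ up to the obvious restriction. First I would fix $z \in \Zentrum$ and recall that $\bfS = I_\Zentrum^{-1}\sp[-\dim G]$, so that $\bfS_z = \bfS|_{z\times C_-^\circ}$, after applying $j_{C_-^\circ}^{-1}$, is the same as $\sp|_{z\times C_-^\circ}$ up to a shift, exactly as displayed in the definition of $S_z$ just above the statement. The shift $[-\dim G]$ is irrelevant for microsupport, so $\mS(S_z) = \mS(j_{C_-^\circ}^{-1}(\bfS|_{z\times C_-^\circ}))$.

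Next I would invoke Proposition \ref{supportrestriction} directly: it says $j_{C_-^\circ}^{-1}\bfS$ is microsupported within the set of points $(z,A,\eta) \in \Zentrum \times C_-^\circ \times \h^*$ for which there is a $B \in C_-$ with $e^{-B} = z$, $B \geq A$, $\eta \in C_+$, and $<\eta,f_k> > 0 \Rightarrow <B-A,e_k> = 0$. Restricting to a fixed central element $z$, the conormal directions along $\Zentrum$ drop out, and the remaining data $(A,\eta) \in C_-^\circ \times C_+ \subset T^*C_-^\circ$ is precisely constrained by: $\exists B$ with $e^{-B} = z$ (i.e. $B \in \bL_z^-$), $B \geq A$, and $<\eta,f_k> > 0 \Rightarrow <B-A,e_k> = 0$. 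Comparing with the definition of $X(\Lambda)$ for $\Lambda = \bL_z^-$ — namely all $(A,\eta)$ with some $B \in \Lambda$ satisfying $B \geq A$ and $<\eta,f_k> > 0 \Rightarrow <B-A,e_k> = 0$ — these are literally the same conditions. Here I should also note, as the statement of Proposition \ref{ms:bfs} implicitly requires, that $\bL_z^- = \{B \in C_- \mid e^{-B} = z\}$ is the intersection of the translate of a lattice with $C_-$, hence discrete, so that $X(\bL_z^-)$ is a well-defined (closed, conic in the $\eta$-variable) subset of $T^*C_-^\circ$ of the type appearing in the definition of $X(\Lambda)$.

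The one point that deserves a sentence of care is the passage from the microsupport of $j_{C_-^\circ}^{-1}\bfS$ as an object on $\Zentrum \times C_-^\circ$ to the microsupport of its restriction $\bfS_z$ to a single slice $z \times C_-^\circ$. Since $z$ ranges over the finite (in the $\SU(N)$ case) discrete center $\Zentrum$, the slice $z \times C_-^\circ \hookrightarrow \Zentrum \times C_-^\circ$ is an open-and-closed embedding (a disjoint summand), so restriction is exact and commutes with microsupport trivially: $\mS(\bfS_z)$ is just the part of $\mS(j_{C_-^\circ}^{-1}\bfS)$ lying over $z$, with the (zero) conormal directions along $\Zentrum$ discarded. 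Thus $\mS(\bfS_z) \subset X(\bL_z^-)$, which is the assertion. I do not expect any genuine obstacle here; the entire content of the result was already extracted in Proposition \ref{supportrestriction}, and Proposition \ref{ms:bfs} is a bookkeeping restatement, so the "main obstacle" is merely making sure the indexing conventions (the sign in $e^{-B} = z$ versus $g = e^X$, and the identification of $\bL_z^-$ with the relevant set of $B$'s) line up — which they do by construction.
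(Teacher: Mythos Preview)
Your proposal is correct and matches the paper's own treatment exactly: the paper does not give a separate proof but simply states that Proposition~\ref{ms:bfs} is a reformulation of Proposition~\ref{supportrestriction}, and you have spelled out precisely why the conditions in that proposition, restricted to a fixed $z\in\Zentrum$, coincide with the definition of $X(\bL_z^-)$. Your remark that the slice $\{z\}\times C_-^\circ$ is open-and-closed in $\Zentrum\times C_-^\circ$ (so restriction is trivially compatible with microsupport) is the only detail one might add, and you have it.
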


\subsection{Sheaves with microsupport of the form $X(\Lambda)$}

Fix a discrete subset $\Lambda\subset C_-$. One can number elements
of $\Lambda$ in such a way that $\Lambda=\{m_1,m_2,\ldots,m_n,\ldots\}$ and $m_n$ is a maximum of
 $\Lambda \backslash \{m_1,m_2,\ldots,m_{n-1}\}$ with respect to the partial order on $C_-$.

For $x\in C_-$ we set $U_x^{-}\subset C_-^\circ$ to consist of
all $y\in C_-^\circ$ such that  $y<<x$.

\begin{Proposition}\label{emen} Let $F\in D(C_-^\circ)$ be microsupported
on the set $X(\Lambda)$. Then there exists an inductive system of
objects in $D(C_-^\circ)$:
$$
F=F_0\to F_1\to F_2\to \cdots F_n\to\cdots,
$$
such that
1) $ L\limdir_n  F_n=0;$

2) We have isomorphisms 
$$
M_n\otimes_\gf  \gf_{U_{m_n}^-}\to \cone(F_{n-1}\to F_n),
$$
for certain graded vector spaces $M_n$.
\end{Proposition}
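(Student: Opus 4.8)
The plan is to peel off the contributions of $\Lambda$ one maximal element at a time, starting from the largest $m_1$. Since $\Lambda$ is discrete and sits inside $C_-$, the chosen enumeration $m_1, m_2, \ldots$ has the property that for each $n$ the point $m_n$ is a maximal element of $\Lambda\setminus\{m_1,\ldots,m_{n-1}\}$; in particular there is a small open cone neighborhood of $m_n$ (intersected with $C_-^\circ$) in which the only piece of $X(\Lambda)$ that can appear is the one coming from $B=m_n$. Concretely, near a point $A$ with $A<<m_n$ but $A\not<<m_j$ for $j<n$, the microsupport $X(\Lambda)$ looks exactly like the conormal-type set $X(\{m_n\})$, which is the singular support of the standard sheaf $\gf_{U^-_{m_n}}$ (the characteristic function of the ``lower'' open box with corner $m_n$). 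This is the geometric input: the polyhedral set $X(\{m\})$ is precisely $\mS(\gf_{U^-_m})$, because $U^-_m$ is a product of half-lines and its conormal at a boundary stratum where the coordinates $e_k$ with $<m-A,e_k>=0$ are active is exactly the set of covectors $\eta\in C_+$ supported on those $f_k$.

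First I would make the inductive step precise. Suppose $F_{n-1}$ is microsupported on $X(\{m_n,m_{n+1},\ldots\})$ (with $F_0=F$ and $\Lambda$ reindexed so this holds). Restrict attention to a neighborhood $W_n$ of $m_n$ in which $X$ reduces to $X(\{m_n\})$; on $W_n\cap C_-^\circ$ the sheaf $F_{n-1}$ has microsupport contained in $\mS(\gf_{U^-_{m_n}})$, and a standard Morse-theoretic / microlocal-cutoff argument (the same kind used for constructible sheaves with a single conormal direction) shows that locally $F_{n-1}$ is isomorphic to a complex built from copies of $\gf$ and $\gf_{U^-_{m_n}}$, i.e.\ the ``stalk at $m_n$ from below'' $M_n := (F_{n-1})_{m_n}$ (a graded vector space) controls the singular part: there is a natural evaluation map $M_n\otimes_\gf \gf_{U^-_{m_n}}\to F_{n-1}$, and its cone $F_n$ is microsupported on $X(\{m_{n+1},m_{n+2},\ldots\})$ — the $m_n$-stratum has been killed. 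Here I would use that $\gf_{U^-_{m_n}}$ has stalk $\gf$ at every point of $U^-_{m_n}$ (in particular ``at $m_n$ from below'') so that the cone indeed loses the $m_n$-contribution, while introducing nothing new since $\mS(\gf_{U^-_{m_n}})=X(\{m_n\})\subset X(\Lambda)$.

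Then I would check the limit. By construction $\cone(F_{n-1}\to F_n)\cong M_n\otimes_\gf\gf_{U^-_{m_n}}$, giving 2). For 1), fix any point $A\in C_-^\circ$; since $\Lambda\subset C_-$ is discrete and the elements $m_n$ are chosen maximally, only finitely many $m_n$ satisfy $A<<m_n$ (the others are either incomparable to $A$ or below it, and in the latter case $A\notin U^-_{m_n}$, while incomparable $m_n$ also have $A\notin U^-_{m_n}$; and a maximal-element enumeration forces the ``large'' elements to be exhausted early). Hence for $n$ large the map $F_{n-1}\to F_n$ is an isomorphism in a neighborhood of $A$: the cone $M_n\otimes\gf_{U^-_{m_n}}$ vanishes near $A$. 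Therefore the stalk of $L\varinjlim_n F_n$ at every $A$ is the eventual constant value, but one also checks that the successive maps are compatible so that the stalk of the colimit is $\varinjlim_n (F_n)_A$; the point is that $(F_n)_A$ stabilizes and equals $(F_m)_A$ for all large $m$, and iterating the cone triangles shows this stable value is $0$ because each genuine contribution to the stalk of $F$ at $A$ (coming from some $m_j$ with $A<<m_j$) is removed at step $j$. More carefully: the stalk functor at $A$ is exact, the triangles $F_{n-1}\to F_n\to M_n\otimes\gf_{U^-_{m_n}}[1]\to$ give a spectral-sequence / telescope computation, and since only finitely many $\gf_{U^-_{m_n}}$ are nonzero at $A$ and they account for the whole ``jump'' structure of $F$ at $A$, the telescope colimit $L\varinjlim F_n$ has zero stalk at $A$. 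As $A$ was arbitrary, $L\varinjlim_n F_n=0$.

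\textbf{Main obstacle.} The delicate point is the inductive step: showing that a sheaf microsupported on $X(\{m_n\})$ near $m_n$ is, up to the explicit map from $M_n\otimes\gf_{U^-_{m_n}}$, ``already resolved'' — i.e.\ that the cone genuinely drops the $m_n$-stratum rather than creating new singularities or failing to be globally (not just locally) of this form. This is where one needs the microlocal cut-off lemma and the fact that $X(\{m_n\})$ is the conormal variety of a single smooth-corner set, so that the category of sheaves microsupported there near $m_n$ is equivalent to modules over a very simple quiver; one must also verify the map $M_n\otimes\gf_{U^-_{m_n}}\to F_{n-1}$ can be chosen globally on $C_-^\circ$ (using that away from $m_n$ the source has the same stalk $\gf$ structure as needed and the obstruction to extending vanishes because $U^-_{m_n}$ is contractible and the relevant $\hom$ groups are computed by the microsupport estimate). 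Everything else is bookkeeping with the enumeration of $\Lambda$ and exactness of stalks.
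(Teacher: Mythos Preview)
Your overall strategy matches the paper's: set $M_n:=R\Gamma(U_{m_n}^-;F_{n-1})$ (this is the precise meaning of your ``stalk at $m_n$ from below'', via Lemma~\ref{gammatop}), take the natural map $\alpha_n:M_n\otimes_\gf\gf_{U_{m_n}^-}\to F_{n-1}$, and let $F_n$ be its cone. The map $\alpha_n$ is global by adjunction --- $\hom(M_n\otimes\gf_{U_{m_n}^-},F_{n-1})\cong\hom(M_n,R\Gamma(U_{m_n}^-;F_{n-1}))=\hom(M_n,M_n)$ --- so your worry in the ``main obstacle'' paragraph about extending the map globally is misplaced. The paper also does \emph{not} prove that $\mS(F_n)$ drops the $m_n$-stratum; it only uses the trivial bound $\mS(F_n)\subset X(\Lambda)$.

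The genuine gap is your argument for 1). You claim the stalk of $G:=L\varinjlim_n F_n$ at $A$ vanishes because ``each genuine contribution to the stalk of $F$ at $A$ is removed at step $j$'', but this is circular: it presupposes that $F$ decomposes into pieces indexed by the $m_j$, which is exactly what the Proposition asserts. Concretely, the system does stabilize near $A$ to some $F_N$, but nothing you have written forces $(F_N)_A=0$. The paper proceeds differently: it shows $R\Gamma(U_{m_n}^-;G)=0$ for every $n$ (by construction $R\Gamma(U_{m_n}^-;F_n)=0$, and Lemma~\ref{UX} gives $R\hom(\gf_{U_{m_n}^-},\gf_{U_{m_k}^-})=0$ for $k>n$, so this persists to all $F_N$ with $N\geq n$ and hence to $G$), and then invokes the key Lemma~\ref{nol}: any object microsupported on $X(\Lambda)$ with $R\Gamma(U_l^-;\cdot)=0$ for all $l\in\Lambda$ must vanish. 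That lemma is where the real work lies --- its proof is an induction on $|\Lambda_x|$ using Lemmas~\ref{gammatop}, \ref{parallelogram}, and \ref{shevel} --- and your sketch supplies no substitute for it.
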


\subsubsection{Lemma}\begin{Lemma}\label{gammatop}
Let $U\subset V\subset \Re^n$ be open convex sets. Let $\gamma\subset \Re^n$be an open proper cone. Let $\gamma^\circ\subset \Re^n$ be the dual
closed cone; $\gamma^\circ =\{v|<\gamma,v>\geq 0\}$. Suppose that
$V\subset U-\gamma$.
Let
$F\in D(V)$ be such that $\mS(F)\subset V\times \gamma^\circ$. 
Then the restriction map $R\Gamma(V,F)\to R\Gamma(U,F)$
is an isomorphism
\end{Lemma}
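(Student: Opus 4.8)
The plan is to prove the statement by deforming $V$ down to $U$ through a one-parameter family of open convex sets that grows only in directions of $-\gamma$, and then to apply the non-characteristic deformation lemma of Kashiwara--Schapira (which, as noted in the introduction, carries over verbatim to the unbounded derived category).

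\emph{Construction of the family.} If $\gamma=\emptyset$ then $V\subseteq U-\gamma=\emptyset$ and there is nothing to prove, so assume $\gamma\neq\emptyset$. Being an open proper convex cone, $\gamma$ has nonempty interior and contains no line, so its polar $\gamma^\circ=\{v\mid\langle g,v\rangle\geq 0\ \forall g\in\gamma\}$ is a closed pointed cone with nonempty interior. Fix $\xi_0$ in the interior of $\gamma^\circ$, so that $\langle g,\xi_0\rangle\geq\varepsilon|g|$ for all $g\in\overline\gamma$ and some $\varepsilon>0$; in particular $K_s:=\{g\in\overline\gamma\mid\langle g,\xi_0\rangle\leq s\}$ is compact for all $s$ and $K_0=\{0\}$. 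For $s>0$ put
$$
W_s:=V\cap\bigl(U-\{g\in\gamma\mid\langle g,\xi_0\rangle<s\}\bigr),
$$
and put $W_s:=U$ for $s\leq 0$. Then each $W_s$ is open and convex --- convexity is the only nonobvious point: if $x_i=u_i-g_i$ with $u_i\in U$, $g_i\in\gamma$, $\langle g_i,\xi_0\rangle<s$, then $\frac12(x_1+x_2)=\frac12(u_1+u_2)-\frac12(g_1+g_2)$ with $\frac12(u_1+u_2)\in U$, $\frac12(g_1+g_2)\in\gamma$ and $\langle\frac12(g_1+g_2),\xi_0\rangle<s$. Moreover the family is increasing, $W_s=\bigcup_{t<s}W_t$ for every $s$, $W_0=U$ (since $U\subseteq V$), and $\bigcup_s W_s=V$, which is exactly the hypothesis $V\subseteq U-\gamma$.

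\emph{The non-characteristic condition.} The heart of the matter is that the boundaries $\partial W_s$ move non-characteristically for $F$. Since $\mS(F)$ lies in the constant cone bundle $V\times\gamma^\circ$ and $\gamma^\circ$ is pointed, it is enough to show that at every point $x$ on the moving part of $\partial W_s$ (that is, $x\in\partial W_s$, $x\in V$, and $x\notin\overline{W_t}$ for $t<s$) the exterior conormal cone of $W_s$ at $x$ lies in $-\gamma^\circ$. Near such $x$ one has $W_s=U-\{g\in\gamma\mid\langle g,\xi_0\rangle<s\}$, with closure $\overline U-K_s$; writing $x=u_0-g_0$ with $u_0\in\overline U$ and $g_0\in K_s$ realizing the relevant exterior support functional, the condition that $x$ is on the moving boundary forces $\langle g_0,\xi_0\rangle=s$ and $g_0\in\mathrm{Int}\,\overline\gamma$, so the only active constraint at $g_0$ is $\langle\cdot,\xi_0\rangle\leq s$; an elementary extremality argument then shows the exterior conormal is a nonpositive multiple of $\xi_0$, hence lies in $-\Re_{\geq 0}\xi_0\subseteq-\gamma^\circ$. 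Thus $(x,\eta)\notin\mS(F)$ for every nonzero such conormal $\eta$, as needed; the portion of $\partial W_s$ contained in $\partial V$ is stationary and plays no role.

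\emph{Conclusion and the main obstacle.} Applying the non-characteristic deformation lemma to the increasing family $\{W_s\}$ gives that $R\Gamma\bigl(\bigcup_s W_s,F\bigr)\to R\Gamma(W_{s_0},F)$ is an isomorphism for every $s_0$; taking $s_0=0$ yields $R\Gamma(V,F)\stackrel{\sim}{\longrightarrow}R\Gamma(U,F)$, as claimed. I expect the principal difficulties to be: (i) the precise bookkeeping in the extremality argument above --- in particular justifying that the extreme point $g_0$ may be taken in the interior of $\overline\gamma$ (if it cannot, one slightly shrinks $\gamma$ to a smaller open proper cone whose polar still contains $\mS(F)$'s fibers, reducing to this case); and (ii) verifying the support-properness hypothesis of the deformation lemma, which holds here because the strong constraint $V\subseteq U-\gamma$ confines the increments $W_{s_2}\setminus W_{s_1}$ to the thin region $\overline U-K_{s_2}$, after truncating by large balls if necessary.
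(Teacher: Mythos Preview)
Your approach via the non-characteristic deformation lemma is sound in principle and is a natural way to attack this, but it is genuinely different from what the paper does, and the two technical obstacles you flag are real and would require nontrivial work to close.

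The paper avoids the deformation lemma entirely by a pullback trick. It introduces the set $X=\{(u,v)\in U\times V\mid v-u\in -\gamma\}$ and the interpolation map $\phi:X\times(0,1)\to V$, $\phi(u,v,t)=(1-t)u+tv$, which is a fibration with contractible fibers. Pulling back $F$ and computing microsupports shows that $\mS(\phi^{-1}F)$ is contained in the half-space $\{k\leq 0\}$ in the $t$-variable; hence for any open $S\subset X\times(0,1)$ whose $t$-slices are intervals of the form $(0,T(u,v))$, restriction $R\Gamma(X\times(0,1),\phi^{-1}F)\to R\Gamma(S,\phi^{-1}F)$ is an isomorphism. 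Taking $S=\phi^{-1}(U)$ and comparing with the base via the contractible-fiber property gives the result. No family of open sets, no compactness hypothesis, no analysis of conormal cones at moving boundaries.

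What each approach buys: your route is conceptually standard, but the two points you list as ``principal difficulties'' are exactly where the argument is incomplete. For (i), the boundary of a Minkowski sum of convex bodies can have corners where the exterior conormal cone is not just $\Re_{\le 0}\xi_0$; your proposed fix (shrink $\gamma$) changes the hypothesis $V\subset U-\gamma$ and would need justification. For (ii), the compactness hypothesis in the deformation lemma is genuinely problematic here since $U,V$ are unbounded and ``truncating by large balls'' would destroy either convexity or the microsupport bound; this needs a careful exhaustion argument. The paper's fibration argument sidesteps both issues cleanly and is shorter.
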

\begin{proof} Let $X\subset U\times V$ to consists of all
pairs $(u,v)\in U\times V$ such that $v-u\in -\gamma$. 

Let $\phi:X\times (0,1)\to V$; $F(u,v)=(1-t)u+tv$.
We see that $\phi$ is a smooth fibration with contractible fiber
of dimension $n+1$.
Therefore, the object $\phi^{-1}F$ is microsupported
on the set of those 1-forms which are $\phi$-pullbacks of 1-forms
in the microsupport of $F$. Let $E:=\Re^n$. Identify
$T^*V=V\times E^*$;
$$T^*(X\times (0,1))=X\times (0,1)\times E^*\times
E^*\times \Re.$$

We then have $\mS(F)\subset V\times \gamma^\circ$;
$$\mS(\phi^{-1}F)\subset
 \{(u,v,t,(1-t)\eta; t\eta; <v-u,\eta>)\},
$$
where $\eta\in \gamma^\circ$.

Here we have used the formula
$$
<\eta,d((1-t)u +tv)=(1-t)<\eta,du>+t<\eta,dv>+<\eta,(v-u)>dt.
$$

As $v-u\in -\gamma,\eta\in \gamma^\circ$, we see
that 
$$
\mS(\phi^{-1}F)\subset \{(u,v,t,\eta_1,\eta_2,k)|k\leq 0\}.
$$

Let $S\subset X\times (0,1)$ be any open  subset such that
for any $(u,v)\in X$, the set of all $t\in (0,1)$ such that
$(u,v,t)\in S$ is of the form $(0,T(u,v))$ for some $T(u,v)>0$.
It then follows that the restriction map
$$
R\Gamma(X,\phi^{-1}F)\to R\Gamma(S;\phi^{-1}F)
$$
is an isomorphism.

Let now $S:=\phi^{-1}U$. It is easy to see that all the conditions are satisfied.  It also follows that the projection 
$\phi_U:S\to U$ induced by $\phi$ is a smooth fibration with contractible 
fiber. 

We have a commutative diagram
\begin{equation}\label{odnadiag}
\xymatrix{ R\Gamma(V,F)\ar[r]\ar[d]& R\Gamma(U,F)\ar[d]\\
R\Gamma(X\times (0,1);\phi^{-1}F)\ar[r]& R\Gamma(S;\phi^{-1}F)}
\end{equation}
Coming from the Cartesian square
$$\xymatrix{
S\ar[r]\ar[d]& X\times(0,1)\ar[d]\\
U\ar[r]& V}.
$$
As the fibrations $S\to U$ and $X\times (0,1)\to V$ have contractible fibers, the vertical arrows in (\ref{odnadiag}) are isomorphisms. So is the low horizontal arrow. Hence the upper vertical arrow is also
an isomporphism.
\end{proof}

\subsubsection{}\begin{Lemma} \label{UX}We have 
$$R\hom(\gf_{U_x^-};\gf_{U_y^-})\cong \gf$$
if $x\leq y$. Othewise $R\hom(\gf_{U_x^-};\gf_{U_y^-})=0.$
\end{Lemma}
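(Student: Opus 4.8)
The plan is to reduce the assertion to computing ordinary $\gf$\nobreakdash-cohomology of a couple of convex open subsets of $C_-^\circ$, and from there to one elementary fact of convex geometry.

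First I would rewrite the $R\hom$ as a cohomology group. Since $j\colon U_x^-\hookrightarrow C_-^\circ$ is an open embedding we have $j^!=j^{-1}$, so adjunction gives
$$
R\hom(\gf_{U_x^-};\gf_{U_y^-})\cong R\Gamma(U_x^-;\gf_{U_y^-}|_{U_x^-})\cong R\Gamma(U_x^-;\gf_W),\qquad W:=U_x^-\cap U_y^- .
$$
Here $U_x^-,U_y^-,W$ are convex: each is cut out of the convex set $C_-^\circ$ by the linear inequalities defining $\ll$ (recall $u\ll x$ means $\langle x-u,e_k\rangle>0$ for all $k$). They are also nonempty: fixing once and for all a vector $v_0\in C_-^\circ$, one has $v_0\ll 0$ (i.e. $\langle v_0,e_k\rangle<0$ for all $k$, since $\langle e_k,e_l\rangle>0$), hence $x+\varepsilon v_0\in C_-+C_-^\circ\subset C_-^\circ$ and $x+\varepsilon v_0\ll x$ for every $\varepsilon>0$. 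Thus $U_x^-$ is contractible and $R\Gamma(U_x^-;\gf)=\gf$. Applying $R\Gamma(U_x^-;-)$ to the excision triangle $\gf_W\to\gf_{U_x^-}\to i_{Z*}\gf_Z\to$, with $Z:=U_x^-\setminus W$ closed in $U_x^-$, identifies $R\hom(\gf_{U_x^-};\gf_{U_y^-})$ with $\Cone\!\big(\gf=R\Gamma(U_x^-;\gf)\to R\Gamma(Z;\gf)\big)[-1]$. So it suffices to control $Z$.

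If $x\le y$, then $\langle x,e_k\rangle\le\langle y,e_k\rangle$ for all $k$, so $u\ll x\Rightarrow u\ll y$; hence $W=U_x^-$, $Z=\emptyset$, and the cone above is $\gf$, giving the first case. If $x\not\le y$, put $K:=\{k:\langle y,e_k\rangle<\langle x,e_k\rangle\}\ne\emptyset$; unwinding definitions,
$$
Z=\{\,u\in C_-^\circ:\ u\ll x,\ \exists\,k\in K\ \text{with}\ \langle u,e_k\rangle\ge\langle y,e_k\rangle\,\}.
$$
I would then show $Z$ is nonempty and star-shaped (hence contractible): this makes $R\Gamma(Z;\gf)=\gf$ with the restriction map $\gf\to R\Gamma(Z;\gf)$ an isomorphism (it is nonzero in degree $0$, $U_x^-$ being connected and $Z$ nonempty), and therefore $R\hom(\gf_{U_x^-};\gf_{U_y^-})=0$, the second case. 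For the star centre, since $K$ is finite and $\langle x,e_k\rangle>\langle y,e_k\rangle$ strictly for $k\in K$, choose $\varepsilon>0$ small enough that $w:=x+\varepsilon v_0$ satisfies $\langle w,e_k\rangle\ge\langle y,e_k\rangle$ for all $k\in K$; then $w\in C_-^\circ$, $w\ll x$, and $w\in Z$. Given $u\in Z$, pick $k^*\in K$ with $\langle u,e_{k^*}\rangle\ge\langle y,e_{k^*}\rangle$, and for $t\in[0,1]$ set $p_t:=(1-t)u+tw$: then $p_t\in C_-^\circ$ by convexity, $\langle x-p_t,e_k\rangle=(1-t)\langle x-u,e_k\rangle+t(-\varepsilon\langle v_0,e_k\rangle)>0$ for all $k$ so $p_t\ll x$, and $\langle p_t,e_{k^*}\rangle\ge\langle y,e_{k^*}\rangle$ since both $\langle u,e_{k^*}\rangle$ and $\langle w,e_{k^*}\rangle$ are $\ge\langle y,e_{k^*}\rangle$; hence $p_t\in Z$.

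The main obstacle is precisely this last point. The naive candidate for the centre of the star would be $x$, but $x\notin U_x^-$ (let alone $Z$), so one has to perturb it inward; the whole difficulty is to perturb by an amount small enough that the centre still fails $\ll y$ along every coordinate $k\in K$, which is exactly what the choice of $\varepsilon$ above arranges. Everything else is formal: adjunction, the excision triangle, contractibility of convex sets, and the standard facts about the Weyl chambers $C_\pm$ (convexity, $C_-+C_-^\circ\subset C_-^\circ$, and positivity of $\langle e_k,e_l\rangle$).
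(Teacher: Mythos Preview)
Your argument is correct. Both cases go through: the adjunction $R\hom(\gf_{U_x^-};\gf_{U_y^-})\cong R\Gamma(U_x^-;\gf_{U_x^-\cap U_y^-})$ is valid, the case $x\le y$ is immediate, and in the case $x\not\le y$ the set $Z=U_x^-\setminus U_y^-$ is indeed star-shaped with centre $w=x+\varepsilon v_0$ as you describe (the fact $\langle e_k,e_l\rangle>0$ that you invoke is the standard positivity of the inverse Cartan matrix in type $A$; one computes $\langle e_k,e_l\rangle=\min(k,l)(N-\max(k,l))/N$).

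The paper's proof is different in the second case. Rather than analysing $Z$, it invokes the $\gamma$-topology restriction lemma (Lemma~\ref{gammatop}) already established: since $\gf_{U_y^-}$ is microsupported in $C_-^\circ\times C_+$ and $U_x^-=V-\gamma$ for $\gamma=\{u\ge 0\}$ and any small convex $V\ni x$, the restriction map $R\hom(\gf_{U_x^-};\gf_{U_y^-})\to R\hom(\gf_V;\gf_{U_y^-})$ is an isomorphism; choosing $V$ disjoint from $\overline{U_y^-}$ (possible because $x\notin\overline{U_y^-}$ when $x\not\le y$) makes the target vanish. So the paper localises near the apex $x$ via the microsupport machinery, while you compute the cohomology of the whole picture by hand. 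Your route is more elementary and self-contained (no appeal to Lemma~\ref{gammatop}); the paper's route is shorter given that lemma and illustrates how the $\gamma$-topology principle will be used repeatedly in what follows.
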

\begin{proof} If $x\leq y$, we have  an isomorphism
$$R\hom(\gf_{U_x^-};\gf_{U_y^-})=R\hom(\gf_{U_x^-};\gf_{U_x^-})=\gf
$$
because $U_x^-$ is a convex hence contractible set.

If it is not true that $x\leq y$, then $x$ does not belong to the closure
of $U_y^-$ and there exists a convex neighborhood $W$ of $x$ in $\h$
such that $W$ still does not intersect the closure of $U_y^-$. 
Let $V:=U_x^-\cap W$. According to the previous Lemma,
we have an isomorphism
$$
R\hom(\gf_{U_x^-};\gf_{U_y^-})\to R\hom(\gf_V;\gf_{U_y^-})=0.
$$
Indeed, $\gf_{U_y^-}$ is microsupported within the set
$C_-^\circ\times C_+$. The dual cone to $C_+$ is 
$\gamma:=\{x|x\geq 0\}$ and $U_x^-=V-\gamma$. 
\end{proof}

\subsubsection{Lemma} Let $E_1,E$ be real finitely-dimensional
vector spaces and let $U\subset E_1\times E$ be an open convex
set. Let $\gamma\subset E^*$ be a closed  proper cone such that
$\gamma$ is the closure of its interior $\Int \gamma$. Let
$\delta\subset E$ be the dual closed cone. Let 
$x,y\in E$, $y-x\in \Int \delta$. Let $V\subset E_1$ be an
 open subset such that $V\times ((x+\delta)\cap( y- \Int \delta))\subset U$. 
Let $H:=V\times ((x+\delta)\cap( y- \Int \delta))$.

Let us identify $T^*U=U\times E_1^*\times E^*$.
Let $F\in D(U)$ be such 
that $\mS(F)\subset U\times E_1^*\times
\gamma$.

\begin{Lemma}\label{parallelogram} We have $Rhom(\gf_H;F)=0$.
\end{Lemma}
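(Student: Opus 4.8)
The plan is to reduce the statement to a one-dimensional (in the $E$-direction) microsupport estimate, exactly in the spirit of Lemma \ref{UX}. First I would set $W := (x+\delta)\cap (y-\Int\delta)$, an open convex subset of $E$, so that $H = V\times W$, and observe that $\gf_H$ is microsupported in the set where the $E$-component of the covector lies in the exterior conormal cone $N^*(W)^a$ to $W$. The key geometric input is that $W$ is an open ``parallelepiped-like'' region cut out from below by the face $x+\delta$ and from above by the face $y-\delta$; its conormal directions split into those pointing into $-\delta^\vee$-type directions at the lower faces and those pointing into $+\delta^\vee$-type directions at the upper faces, and — crucially — since $y-x\in\Int\delta$ the two families are genuinely opposite, so $N^*(W)^a$ meets both $\gamma$ and $-\gamma$ only at the origin in a controlled way.

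The main step is to compute $R\hom(\gf_H, F)$ by pushing forward along the projection $q: U\to E_1$ (or a suitable factorization), estimating microsupports via Corollary \ref{impropersupport} / Lemma \ref{improperstar} and the functorial bounds in the appendix, and then reducing to the case $\dim E_1 = 0$, i.e. to $U\subset E$ open convex, $F\in D(U)$ with $\mS(F)\subset U\times\gamma$, and $H = W$ as above. In that reduced situation I would use Lemma \ref{gammatop}: writing $W = W' \cap (y-\Int\delta)$ where $W' := (x+\delta)\cap U$ is (after intersecting with $U$) of the form ``convex set minus $\delta$'', the microsupport condition $\mS(F)\subset \gamma = \delta^\vee$ forces the restriction $R\Gamma(U,F)\to R\Gamma(W',F)$ (and more refined restrictions) to be isomorphisms, so that sections of $F$ on the relevant regions propagate. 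Then one argues as in the proof of Lemma \ref{UX}: $R\hom(\gf_W, F)$ fits into a triangle built from $R\Gamma$ of $F$ on $W$ and on the ``lower'' boundary piece $W\cup (x+\delta)$-type enlargements, and by the propagation just established these restriction maps are isomorphisms, forcing $R\hom(\gf_W, F) = 0$. Concretely, the open set $y - \Int\delta$ is of the form ``$U_x^-$-type'': it is a convex set equal to $W''-\delta$ for a suitable convex $W''$, so the pairing of $\gf_{y-\Int\delta}$ (restricted appropriately) with an $F$ microsupported along $\gamma=\delta^\vee$ vanishes by exactly the computation in Lemma \ref{UX}'s second case; intersecting further with $x+\delta$ only strengthens this.

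In more detail, the cleanest route is: choose linear coordinates so that $\delta$ is (contained in) the first orthant of $E = \Re^m$ and $\gamma = \delta^\vee$ the first orthant of $E^*$; then $W = \{v : x^i < v^i < y^i,\ i=1,\dots,m\}$ up to the constraint from $\delta$ being possibly smaller than the orthant, and $\gf_H$ is an exterior tensor product $\gf_V\boxtimes \bigboxtimes_i \gf_{(x^i,y^i)}$ — except that one must be careful that $\delta$ need not be the full orthant, so $W$ is only the intersection of a box with a cone; this is why I keep $W$ abstract rather than literally a box. Using the identity $\gf_{(a,b)} = \Cone(\gf_{[b,\infty)}\to\gf_{(a,\infty)})[\text{shift}]$-type decompositions in each coordinate, one reduces $R\hom(\gf_H,F)$ to iterated cones of $R\hom$ of ``half-space-like'' sheaves against $F$, each of which vanishes because $F$'s microsupport in the $E$-directions lies in $\gamma$ and the half-spaces are ``$\gamma$-open'' in the unfavorable direction — this is precisely Lemma \ref{gammatop} applied coordinate by coordinate, combined with the vanishing $R\Gamma_c$-type argument. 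I expect the main obstacle to be bookkeeping: namely handling the case where $\delta$ is a proper sub-cone of the orthant so that $W$ is a genuine intersection of a box with a cone (not a product), and making sure the microsupport estimates for $\gf_H$ and the successive cones stay inside the region where Lemma \ref{gammatop} applies. The resolution is to use the hypothesis $y - x\in\Int\delta$ to guarantee that $W$ is nonempty with nonempty interior and that $W = W_{\mathrm{low}}\cap W_{\mathrm{high}}$ with $W_{\mathrm{high}} = y - \Int\delta$ honestly of the form ``convex $-\delta$'', so the single application of the Lemma \ref{UX} mechanism to the pair $(W, W_{\mathrm{high}})$ already gives $R\hom(\gf_W,F)\xrightarrow{\sim}R\hom(\gf_{W_{\mathrm{high}}\cap U'}, F) = 0$ for a suitable convex $U'\supset W$, avoiding the coordinatewise induction entirely.
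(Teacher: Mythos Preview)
There is a genuine gap, and it is rooted in a typo in the lemma's stated hypothesis. As printed, the condition reads $\mS(F)\subset U\times E_1^*\times\gamma$, but both the paper's own proof and the sole application (in Lemma~\ref{nol}) actually use $\mS(F)\subset U\times E_1^*\times(E^*\setminus\Int\gamma)$: the $E^*$-component of the microsupport must \emph{avoid} $\Int\gamma$, not lie in $\gamma$. You can see this in the proof, where the microsupport of $p^!\Phi$ is recorded with ``$\kappa\notin\Int\gamma$''; and in Lemma~\ref{nol} the $\cV^*$-component of $\mS(\Phi)$ lies in the union of coordinate hyperplanes $\{\kappa:\exists k,\ \langle\kappa,f_k\rangle=0\}\subset\cV^*\setminus\Int\gamma$, which is certainly not contained in $\gamma$. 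With the printed hypothesis the lemma is in fact false: take $E_1=0$, $E=\Re$, $\gamma=\delta=[0,\infty)$, $U=(a,b)$, $H=[x,y)$, and $F=\gf_{[c,b)}$ with $a<x<c<y\le b$; then $\mS(F)\subset U\times\gamma$ but a direct computation gives $R\hom(\gf_{[x,y)},\gf_{[c,b)})\cong\gf$.

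Your whole strategy---propagation via Lemma~\ref{gammatop} and the Lemma~\ref{UX} mechanism---is built on the printed hypothesis $\mS(F)\subset\gamma$, which would let sections of $F$ propagate in the $-\delta$ direction. Under the correct hypothesis there is no such propagation: $E^*\setminus\Int\gamma$ is not a convex cone, so Lemma~\ref{gammatop} simply does not apply, and no ``half-space'' decomposition of $\gf_H$ will reduce you to a situation where it does. The paper's argument is of a completely different nature. It is a deformation argument: one first approximates $H$ by $V\times((x+\delta)\cap(z-\Int\delta_\ve))$ for a squeezed cone $\delta_\ve=T_\ve\delta$, then lets the upper vertex $u$ vary over a convex parameter space $Z$ and forms the incidence set $G=\{(w,u):w\in(x+\delta)\cap(u-\Int\delta_\ve)\}$. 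A microsupport estimate---whose crux is that the dual squeezed cone satisfies $\gamma_{1/\ve}\setminus\{0\}\subset\Int\gamma$, so that $\kappa=\eta+\zeta$ with $\eta\in\gamma$, $\zeta\in\gamma_{1/\ve}$, $\kappa\notin\Int\gamma$ forces $\zeta=0$---shows that $Rq_*R\ihom(\gf_{W_1\times G},p^!\Phi)$ is locally constant along $Z$. It vanishes on the open subset of $Z$ where the parallelepiped is empty (namely $u\in x-\Int\delta$), hence everywhere on $Z$, and evaluating at $u=z$ gives the claim.
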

\begin{proof} 

Choose vectors $e\in \Int \gamma$ and $f\in \Int \delta$.
We have $<e,f>\;>\;0$.  Let $E':=\Ker e$.  We have $E=\Re.f \oplus
E'$; $E^*=\Re.e \oplus (E')^*$. Let $\ve>0$. Let
$T_\ve: E\to E$ be given by
$T_\ve|_{E'}=\Id$; $T_\ve(f)=\ve f$. Let $\delta_\ve:=T_\ve\delta$.

There exists a sequence of points $y_n\in E_2$, $\ve_n\in (0,1)$
such that 
$$
(x+\delta)\cap( y_n-\delta_{\ve_n})\subset( x+\delta)\cap( y_m-
\Int \delta_{\ve_m})
$$
for all $n<m$ and
$$
\bigcup_n( x+\delta)\cap( y_n-\Int \delta_{\ve_n})=(x+\delta)\cap (y-
\Int\delta).
$$

We then have 
$$
\gf_{(x+\delta)\cap (y-\Int\delta)}=\limdir_n\gf_{(x+\delta)\cap (y_n-\Int \delta_{\ve_n})}.
$$

Therefore, it suffices to show that
$$
R\hom(\gf_{V\times((x+\delta)\cap (y_n-\Int \delta_{\ve_n}))};F)=0.
$$
More precisely,
given $z\in E$, $\ve\in (0,1)$, and 
any open $W_1\subset V$ such that the closure of $W_1$ is contained
in $V$ and $(x+\delta)\cap (z-\delta_\ve)\subset (x+\delta)\cap
(y-\Int \delta)$,
we will show
$$
R\hom(\gf_{W_1\times(x+\delta\cap z-\Int \delta_\ve)};F)=0
$$

It follows that  there exists an open convex
$W_2\subset E$ such that $W_1\times W_2\subset U$ and
$$
(x+\delta)\cap (z-\delta_\ve)\subset W_2.
$$
Indeed, let $\overline{V}$ be the closure of $V$. Then
$\overline{V}\times ((x+\delta)\cap ( z-\delta_\ve))
\subset U$. As both sets in this product are compact and
$U$ is open, there exists a neighborhood $W_2$ of
$(x+\delta)\cap (z-\delta_\ve)$ such that
$\overline{V}\times W_2\subset U$.

There exists $z'\in (z+\Int \delta_\ve)\cap W_2$ such that
$(x+\delta)\cap (z'-\delta_\ve)\subset W_2$
Let $Z:=(z'-\Int \delta_\ve)\cap W_2  $ so that $z\in Z$ and
for any $u\in Z$, $(x+\delta)\cap (u-\Int\delta_\ve)\subset
W_2$ (because $(u-\Int \delta_\ve)\subset (z'-\Int \delta_\ve)$).

Let $G\subset  W_2\times Z$ be the following locally closed
subset:
$$
G=\{(w,u)|w\in x+\delta\cap u-\Int\delta_\ve\}.
$$

Let $p:W_1\times W_2\times Z\to W_1\times W_2$ and
$q:W_1\times W_2\times Z\to W_1\times Z$.

Let $\Phi:=F|_{W_1\times W_2}$. 

We will show $Rq_*\ihom(\gf_{W_1\times G};p^!\Phi)=0$.
by computing microsupports.

Let us first study $\mS(\gf_G)$, where 
$\gf_G\in D(W_2\times Z)$. 

We have
$$
\gf_G=\gf_{G_1}\otimes \gf_{G_2},
$$
where 
$G_1,G_2\subset W_2\times Z$,
$G_1=(x+\delta\cap W_2)\times Z$;
$G_2=\{(w,u)| w-u\in -\Int \delta_\ve\}$.

We have
$
\mS(\gf_{G_1})
$
is contained within the set of all points
$(w,u,\eta,0)\in W_2\times Z\times E_2^*\times E_2^*$,
where $\eta\in \gamma$. 

Similarly, $\mS(\gf_{G_2})$ is contained within the set
of all points
$(w,u,\zeta,-\zeta)$, where $\zeta\in \gamma_{1/\ve}$ (
$\gamma_{1/\ve}:=T_{1/ve}\gamma$ is the dual cone
to $\delta_\ve$).

Therefore, $\gf_G$ is microsupported within the set of all points
 of the form
$$
(w,u,\eta+\zeta,-\zeta),
$$
where $w,u,\eta,\zeta$ are as above.

Hence $\mS(\gf_{W_1\times G})$ is contained within the set
of all points
of the form
$$
(w_1,w_2,u,0,\eta+\zeta,-\zeta)\in W_1\times W_2\times Z\times
E_1^*\times E_2^*\times E_2^*.
$$

The object $p^!\Phi$ is microsupported within the set of all
points of the form
$$
(w_1,w_2,u,\alpha,\kappa,0)\in W_1\times W_2\times Z\times
E_1^*\times E_2^*\times E_2^*,
$$
where $\alpha\in E_1^*$ is arbitrary and 
$\kappa\notin \Int \gamma$. 

It follows that $\ihom(\gf_{W_1\times G};p^!\Phi)$ is microsuported
within the set of all points of the form
$$
(w_1,w_2,u,\alpha, \kappa-\eta-\zeta;\zeta),
$$
where $\eta,\zeta,\kappa$ are same as before.

The map $q$ is proper
on the support of $\ihom(\gf_{W_1\times G};p^!\Phi)$,
because the latter is contained within the set
$$
W_1\times ((x+\delta)\cap( z'-\delta_\ve))\times Z,
$$
and $(x+\delta)\cap( z'-\delta_\ve)\subset W_2$ is compact.
Therefore, $Rq_*\ihom(\gf_{W_1\times G};p^!\Phi)$ is contained
within the set of all points of the form
$$
(w,u,\alpha,\zeta)\in W_1\times Z\times E_1^*\times E_2^*,
$$
where $\alpha$ is arbitrary, $\zeta\in \gamma_{1/\ve}$, and
there exist $\kappa,\eta$ as above, such that $\kappa-\eta-\zeta=0$. The latter is only possible if $\zeta=0$ (otherwise
$\zeta+\eta\in \Int \gamma$ because $$\gamma_{1/\ve}\subset
\{0\}\cup \Int \gamma.$$
Thus, $Rq_*\ihom(\gf_{W_1\times G};p^!\Phi)$ is microsupported
within the set of all points of the form
$
(w,u,\alpha,0),
$
i.e. is locally constant along $Z$.  There exists a convex open
subset $U_0\subset Z$, $U_0\subset x-\delta$. It follows
that 
$G\cap (W_2\times U_0)=\emptyset$. Therefore, 
$$
Rq_*\ihom(\gf_{W_1\times G};p^!\Phi)|_{W_1\times U_0}=0.
$$

This implies
that 
$$
Rq_*\ihom(\gf_{W_1\times G};p^!\Phi)=0,
$$
because $Z$ is convex, and our object is locally constant along
$Z$.

Therefore,

$$
0=R\hom(\gf_{W_1\times z};Rq_*\ihom(\gf_{W_1\times G};p^!\Phi)
$$
$$
=R\hom(\gf_{W_1\times G}\otimes \gf_{W_1\times W_2\times z};
p^!\Phi)
$$
$$
=
R\hom(
\gf_{W_1\times (x+\delta\cap z-\Int\delta_{\ve})\times z};p^!\Phi)
$$
$$
=R\hom(\gf_{W_1\times (x+\delta\cap z-\Int\delta_{\ve})};\Phi),
$$
as was required.
\end{proof}
\subsubsection{Lemma} \begin{Lemma} \label{shevel}
Let $x,y\in C_-$, $y>x$.
 Let
$I_x:=\{k|<x,f_k><0\}$.  There exists $k\in I_x$ such that
$<y-x,e_k>>0$.
\end{Lemma}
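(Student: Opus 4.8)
The plan is to translate the statement into the standard coordinate model of $\h$ fixed in Section~\ref{gnot} and then prove an elementary fact about monotone sequences. Realise $\h=\{a=(a_1,\dots,a_N)\in\Re^N\mid a_1+\cdots+a_N=0\}$ with $\langle a,b\rangle=\sum_l a_l b_l$, so that $C_+$ is the set of weakly decreasing tuples and $C_-=-C_+$ the set of weakly increasing ones; the basis $e_1,\dots,e_{N-1}$ is the basis of fundamental weights, for which $\langle a,e_k\rangle=a_1+\cdots+a_k$, and $f_1,\dots,f_{N-1}$ the simple roots, for which $\langle a,f_k\rangle=a_k-a_{k+1}$ (these pairing formulas are immediate from the definitions in Section~\ref{gnot}; as a sanity check, $e_1$ is the weight with $\langle a,e_1\rangle=a_1$, consistent with $\O\cap C_+=\lambda e_1$). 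Under this dictionary: $x,y\in C_-$ means $x_1\le\cdots\le x_N$ and $y_1\le\cdots\le y_N$; $I_x=\{k\mid x_k<x_{k+1}\}$; and $y\ge x$ means that the partial sums $S_k:=\sum_{l=1}^k(y_l-x_l)$ satisfy $S_k\ge 0$ for $1\le k\le N-1$ --- note $S_0=S_N=0$ automatically, the latter because $x,y$ are traceless. Thus the claim becomes: if in addition $y\ne x$, there is $k$ with $x_k<x_{k+1}$ and $S_k>0$.

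I would run the following argument on the sequence $k\mapsto S_k$. Since $y\ne x$ and the increments are $y_l-x_l=S_l-S_{l-1}$, not all of $S_1,\dots,S_{N-1}$ vanish; let $m$ be the largest index with $S_m>0$, and let $j$ be the smallest index with $S_\ell>0$ for all $j\le\ell\le m$. Then $S_{m+1}=0$ (it is $\ge 0$, and cannot be $>0$ by maximality of $m$, using $S_N=0$ if $m+1=N$) and $S_{j-1}=0$ (either $j=1$ and $S_0=0$, or $j>1$ and minimality of $j$ forces $S_{j-1}\le 0$). Hence $y_j-x_j=S_j>0$ and $y_{m+1}-x_{m+1}=-S_m<0$, i.e. $x_j<y_j$ and $y_{m+1}<x_{m+1}$; combining with $y_j\le y_{m+1}$ (as $j<m+1$ and $y$ is increasing) gives $x_j<x_{m+1}$. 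Since $x$ is weakly increasing and $j<m+1$, there must be some $k$ with $j\le k\le m$ and $x_k<x_{k+1}$; for this $k$ we get $k\in I_x$ and $\langle y-x,e_k\rangle=S_k>0$, as required.

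I do not anticipate a real obstacle: the only substantive point is the ``block'' observation that a maximal run $[j,m]$ on which the partial sums of $y-x$ are positive must contain a strict ascent of $x$, and this drops out of the monotonicity of $x$ and $y$ together with the boundary values $S_{j-1}=S_{m+1}=0$. The only thing requiring care is making the translation in the first paragraph precise --- matching ``$\le$ on $C_-$'', the dominance order $\ge$, and the set $I_x$ to the coordinate picture --- after which the computation is routine.
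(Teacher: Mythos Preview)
Your proof is correct. The translation to coordinates is accurate, and the ``maximal positive run'' argument is clean: from $S_{j-1}=S_{m+1}=0$ you correctly extract $x_j<y_j\le y_{m+1}<x_{m+1}$, and then the weak monotonicity of $x$ forces a strict ascent $x_k<x_{k+1}$ somewhere in $[j,m]$, which is exactly an index in $I_x$ with $S_k>0$.

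The paper argues differently. It proceeds by contradiction: assume $z_k:=\langle y-x,e_k\rangle=0$ for every $k\in I_x$. For $l\notin I_x$ one has $\langle x,f_l\rangle=0$ (since $x\in C_-$), hence $\langle z,f_l\rangle=\langle y,f_l\rangle\le 0$, i.e.\ $2z_l\le z_{l-1}+z_{l+1}$. On each maximal interval $[a,b]\subset\{1,\dots,N-1\}\setminus I_x$ this is a discrete concavity inequality with boundary values $z_{a-1}=z_{b+1}=0$; together with $z_l\ge 0$ it forces $z_l\equiv 0$ on $[a,b]$, hence $z=0$ and $y=x$, a contradiction. So the paper uses a discrete maximum principle on the sequence $z_l$, whereas you give a direct construction by locating a block of positive partial sums and exploiting the monotonicity of $y$ (not just of $z$). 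Your argument is a bit more constructive; the paper's has the advantage of being phrased entirely in terms of $z=y-x$ and the pairings with $e_k,f_l$, without passing to the coordinate entries $x_j,y_j$. Both are short and elementary.
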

\begin{proof} Assume the contrary, i.e. $<y-x,e_k>=0$ for
all $k\in I_x$. Let $z=y-x$ and let $z_k=<z,e_k>$ so that
$z_k=0$ for all $k\in I_x$. If $l\notin I_x$, then $z_l\geq 0$
and $<z,f_l>=<y,f_l>\leq 0$. On the other hand,
$<z,f_l>=2z_l-z_{l-1}-z_{l+1}$ (we set $z_0=z_N=0$).
For $l\notin I_x$ let $[a,b]\subset [1,N-1]$ be the largest interval containing $l$ and not intersecting with $I_x$.
We then have $z_{A-1}=z_{B+1}=0$; 
$$
0\geq-z_A \geq z_A-z_{A+1}\geq z_{A+1}-z_{A+2}\geq\cdots\geq
z_B\geq 0
$$
(because for any $l\notin I_x$, $2z_l-z_{l-1}-z_{l+1}\leq 0$).
This implies that $z_A=z_{A+1}=\cdots=z_B=0$. Hence, $z_l=0$
for all $l$, $z=0$, and $y=x$, which contradicts to
$y>x$.
\end{proof}

\subsubsection{Lemma} \begin{Lemma}\label{nol} Let $F\in D(C_-^\circ)$
be such that $\mS(F)\subset X(\Lambda)$ and assume
that for all $l\in \Lambda$, $R\Gamma(U_l^-;F)=0$. Then $F=0$.
\end{Lemma}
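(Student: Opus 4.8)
The plan is to argue by contradiction, using the microsupport bound to localize near the points of $\Lambda$ and then to peel off contributions one at a time, descending along the enumeration $\Lambda=\{m_1,m_2,\dots\}$ in which $m_n$ is maximal in $\Lambda\setminus\{m_1,\dots,m_{n-1}\}$.

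First I would record the elementary consequence of the hypothesis that the zero section over the support of $F$ is contained in $\mS(F)\subset X(\Lambda)$: since $(A,0)\in X(\Lambda)$ forces the existence of $B\in\Lambda$ with $B\ge A$, the support of $F$ is contained in $\Lambda^{+}:=\{A\in C_-^\circ\mid \exists\,l\in\Lambda:\ l\ge A\}$. Because $\Lambda$ is discrete and $C_+$ is a proper cone, $\Lambda^{+}$ is a locally finite union of the ``boxes'' $l-\delta$, $l\in\Lambda$ (with $\delta=\{A\mid A\ge 0\}$), so that near every point of $C_-^\circ$ only finitely many $l\in\Lambda$ are relevant. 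Moreover, for $l$ maximal in $\Lambda$ and a sufficiently small open box $N_l$ around $l$ I would check that the only $B\in\Lambda$ admissible in the definition of $X(\Lambda)$ at points of $N_l$ is $B=l$ itself; this is exactly where Lemma \ref{shevel} is used, to control the interplay between the order $\ge$ (coming from the $e_k$) and the walls of $C_-$ (coming from the $f_k$). It then follows that $\mS(F)|_{N_l}\subset \mS(\gf_{U^-_l})|_{N_l}$.

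Next I would carry out the d\'evissage. Over the box $N_l$, combining Lemma \ref{parallelogram} taken with $\gamma=C_+$ (so that the cohomology of $F$ on any parallelogram region vanishes) with the non-characteristic deformation Lemma \ref{gammatop}, one shows that $F$ near $N_l$ is isomorphic to $M_l\otimes\gf_{U^-_l}$ for a graded vector space $M_l$ which equals, up to a shift, $R\Gamma(U^-_l\cap N_l;F)$. Feeding the hypothesis $R\Gamma(U^-_l;F)=0$ into this, and noting that the boxes $N_{m_j}$ already peeled off (those with $j<n$) contribute nothing to $R\Gamma(U^-_{m_n};F)$, because $R\hom(\gf_{U^-_{m_n}},\gf_{U^-_{m_j}})=0$ whenever $m_n\not\le m_j$ by Lemma \ref{UX}, I obtain $M_{m_n}=0$. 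Hence after the $n$-th step $F$ has no microsupport over $N_{m_n}$, i.e.\ $\mS(F)\subset X(\Lambda\setminus\{m_1,\dots,m_n\})$; since each point of $C_-^\circ$ is relevant only to finitely many elements of $\Lambda$, iterating pushes $\mS(F)$ into the zero section, whence the support of $F$ is empty and $F=0$. (Equivalently, one arrives at $R\Gamma(U^-_x;F)=0$ for all $x\in\h$, and the family $\{U^-_x\}$ detects objects with microsupport in $C_-^\circ\times C_+$, again by Lemmas \ref{gammatop} and \ref{parallelogram}.)

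The step I expect to be the real obstacle is the localized d\'evissage: making precise, with the infinite discrete lattice $\Lambda$, the assertion that $F$ near each maximal $N_{m_n}$ is a sum of copies of $\gf_{U^-_{m_n}}$, which requires choosing intermediate open regions interpolating between the various $U^-_x$ so that the parallelogram hypothesis of Lemma \ref{parallelogram} and the deformation hypothesis of Lemma \ref{gammatop} are simultaneously met, while keeping the local finiteness of $\Lambda^{+}$ under control throughout the induction. The remainder is bookkeeping with Lemmas \ref{UX} and \ref{shevel}.
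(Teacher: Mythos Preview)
Your d\'evissage plan is in spirit Proposition~\ref{emen}, but note that in the paper that proposition is proven \emph{using} Lemma~\ref{nol} (to show the inductive limit of the $F_n$ vanishes), so one cannot simply quote it. You are right that the local structure claim (``$F$ near $N_l$ is $M_l\otimes\gf_{U^-_l}$'') is the obstacle, and it does not follow from Lemmas~\ref{parallelogram} and~\ref{gammatop} as stated: those lemmas give vanishing of sections over certain regions, not a splitting of $F$. There is also a second, unacknowledged gap: even granting $F|_{N_{m_1}}=0$, you cannot infer $\mS(F)\subset X(\Lambda\setminus\{m_1\})$, because the contribution of $B=m_1$ to $X(\Lambda)$ lives over all of $\{A:m_1\ge A\}$, not just over $N_{m_1}$. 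So when $m_1>m_2$ the microsupport bound near $m_2$ has not improved, and your appeal to Lemma~\ref{UX} fails precisely in this case (that $R\hom$ is nonzero when $m_2\le m_1$). Already for $|\Lambda|=1$ the argument is incomplete: vanishing of $F$ on a small neighborhood of $l$ does not by itself force vanishing on the rest of $\{A:l\ge A\}$.

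The paper instead takes your parenthetical alternative as the main line and supplies the missing mechanism: it proves $R\Gamma(U^-_x;F)=0$ for \emph{every} $x\in C_-$ by induction on $N_x:=|\{l\in\Lambda:l\ge x\}|$, never asserting any local splitting of $F$. In the inductive step one first slides $x$ up to the maximal $x'$ with $\Lambda_{x'}=\Lambda_x$ (a microsupport deformation); if $x'\in\Lambda$ one is done by hypothesis, and if $x'\notin\Lambda$ then Lemma~\ref{shevel} guarantees that for each $l\in\Lambda_{x'}$ some $k\in I_{x'}$ satisfies $\langle l-x',e_k\rangle>0$, so on a small neighborhood the microsupport of $F$ lies in the union of walls $\{\langle\eta,f_k\rangle=0\}$, $k\in I_{x'}$. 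Pulling back along an addition map to introduce an auxiliary factor $\cV=\mathrm{span}\{f_k:k\in I_{x'}\}$, applying Lemma~\ref{parallelogram} in that factor, and using a Koszul decomposition of the half-open box $\prod_{k\in I_{x'}}[0,\ve)$ then reduces $R\Gamma(U^-_{x'};F)$ to the groups $R\Gamma(U^-_{x'+\ve\sum_{l\in L}f_l};F)$ for nonempty $L\subset I_{x'}$, each with strictly smaller $N$.
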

\begin{proof} Consider open subsets of $C_-^\circ$ of the form
 $U\cap U_x^-$ where $U$ is open and convex and $x\in C_-$.
These sets form a base of topology of $C_-^\circ$. Thus, it
suffices to show $R\Gamma(U\cap U_x^-;F)=0$ for all such
$U,U_x^-$.
By Lemma
\ref{gammatop}, we have an isomorphism
$$
R\Gamma(U_x^-;F)\to R\Gamma(U\cap U_x^-;F).
$$
Thus, it suffices to show that $R\Gamma(U_x^-;F)=0$ for all
$x$. 

Given $x\in C_-$, let $\Lambda_x:=\{l\in \Lambda| l\geq x\}$.
Let $N_x=|\Lambda_x|$. Let us prove the statement
by induction with respect to $N_x$.

If $N_x=0$, then there are no points in $X(\Lambda)$ which
project to $x$. Hence $x\notin \text{Supp}F$. Therefore,
there exists a convex neighborhood of $U$ of $x$ such that
$F|_U=0$. Therefore, we have an isomorphism
$$
R\Gamma(U_x^-;F)\stackrel\sim\to R\Gamma(U\cap U_x^-;F)=0.
$$

Suppose now that $R\Gamma(U_x^-;F)$ for all $x$ with
$N_x<n$. Prove that the same is true for all 
$x$ with $N_x\leq n$.  Let $S\subset C_-$ be the set of
all points $y$ such that $\Lambda_y=\Lambda_x$. 
Let $t_k:=\sup_{y\in S} <y,e_k>$.  As $S\in C_-$, $t_k\geq 0$.
Let $$x':=\sum\limits_{k=1}^{N-1} t_kf_k.$$
Let us show $x'\in C_-$. This is equivalent to
$<x';f_l>\leq 0$ for all $l$. We have
$<x',f_l>=2<x',e_l>-<x',e_{l-1}>-<x';e_{l+1}>$.
We have
$$
2<x',e_l>=\sup_{y\in S} 2<y,e_l>\leq
\sup_{y\in S} <y,e_{l-1}>+<y,e_{l+1}>
$$
$$
\leq <x',e_{l-1}>+<x',e_{l+1}>.
$$
Thus, $x'\in C_-$. It then easily follows that $x'\in S$.

It is clear that $x'\geq x$. Let us show that the restriction
map $R\Gamma(U_{x'}^-;F)\to R\Gamma(U_x;F)$ is an isomorphism. If $x'=x$, there is nothing to prove, so assume $x'>x$.
Let $I:=\{tx+(1-t)x'|0\leq t<1\}$.  Let
 $K:=\{k|<x'-x,e_k>>0\}$.
Ler  $U'$ be a convex neighborhood
of 0 in $\h$. Let $U:=C_-^\circ\cap U'$. We then see that

1)  
$I+U\subset C_-^\circ$ is convex and open;

2) For $U'$ small enough the following is true. Given any
$y\in I+U$, we have $\Lambda_y=\Lambda_x$;
for any $l\in \Lambda_y$ and for any $k\in K$,
$<l-y,e_k>>0$. 

The restriction maps $$R\Gamma(U_{x'}^-;F)\to R\Gamma(I+U;F);
$$
$$R\Gamma(U_x^-;F)\to R\Gamma(x+U;F)
$$
 are isomorphisms
by Lemma \ref{gammatop}. Hence it suffices to show
that the restriction map
\begin{equation}\label{iures}
R\Gamma(I+U;F)\to R\Gamma(x+U;F)
\end{equation}
is an isomorphism.

It  follows from the definition of $X(\Lambda)$ that $F|_{I+U}$ is microsupported
within the set of all points $(y,\eta)\in (I+U)\times \h^*$
such that $<\eta,f_k>=0$ for all $k\in K$. Hence,
$<\eta,x'-x>=0$. This implies that
that ( \ref{iures}) is an isomorphism.

We can now assume $x=x'$. By the construction of $x=x'$, given
any point $y\in C_-$, $y>x$, the set
$\Lambda_y$ is a proper subset of $\Lambda_x$.
If $x\in \Lambda$ there is nothing to prove. Assume
$x\notin \Lambda$. Let $I_x:=\{k|<x,f_k><0\}$. 
By Lemma \ref{shevel} for any $l\in \Lambda_x$ there exists
$k\in I_x$ such that $<l-x,e_k>>0$. It follows that
there exists a neighborhood $U'$ of $x$ in $C_-$ such that
for all $y\in U'$, $\Lambda_y\subset\Lambda_x$ and for all
$l\in \Lambda_y$ there exists $k\in I_x$ such that
$<l-y,e_k>>0$. Let $U=U'\cap C_-^\circ$.  It follows that
$F|_U$ is microsupported within the set of all points
of the form
$$
(u,\eta)\in U\times \h^*,
$$
where $<\eta,f_k>=0$ for some $k\in I_x$.

Let $\cV\subset \h$ be the $\Re$-span of all $f_k$, $k\in K$.

It follows that there exists $\ve>0$ such that 
$x+\sum_{k\in I_x} t_kf_k\in U'$ if for all $k\in I_x$, $t_k\in [0,\ve]$. Indeed, let $U'=W\cap C_-$, where $W$ is a neighborhood
of $x$ in $\h$. It is clear that for $\ve$ small enough,
$x+\sum_{k\in I_x} t_kf_k\in W$. 
As $<x,f_k><0$ for all $k\in I_x$, for all $\ve$ small enough
and for all $k'\in I_x$ we have:
$<x+\sum_{k\in I_x} t_kf_k,f_{k'}><0$. If $\lambda\notin I_x$, then
$<x+\sum_{k\in I_x} t_kf_k,f_\lambda>=\sum_{k\in I_x} t_k<f_k,f_\lambda>\leq 0$,
because $<f_k,f_\lambda>\leq 0$ for all $k\neq \lambda$. Thus, 
$$
x+\sum_{k\in I_x} t_kf_k\in C_-.
$$

Fix $\ve>0$ as above.  There also exists $\ve_1>0$ such that
$$
x+\sum_{k\in I_x} t_kf_k+\sum_{\lambda=1}^{N-1}a_\lambda e_\lambda \in C_-^\circ
$$
as long as $t_k\in [0,\ve]$ and $0>a_\lambda>-\ve_1$.

Let $U_{\ve_1}:=\{x+\sum a_\lambda e_\lambda|0>a_\lambda>-\ve_1\}\subset \h$;
$$
M_{\ve}:=\{\sum_{k\in I_x } t_kf_k|0<t_k<\ve\}\subset \cV.
$$

Let $A:\h\times \cV\to \h$ be the addition map. 
There exists an open convex neighborhood $\cU\in \h\times \cV$
of $U_{\ve_1}\times M_{\ve}$ such that
$A(\cU)\subset U$.  Let $\alpha:\cU\to A(\cU)\subset U$ be the map induced by
$A$. As $\cU$ is convex, $\alpha:\cU\to A(\cU)$ is a smooth
fibration. Let $\Phi:=\alpha^!(F|_{A(\cU)})$. It follows that 
$\mS(\Phi)$ consists of pull-backs of 1-forms
from $\mS(F)$. Thus,
$\mS(\Phi)$ is contained in the set of all points of the form
$$
(A,u,\eta,\kappa)\in \h\times \cV\times \h^*\times \cV^*,
$$
where $(A,u)\in \cU$ and there exists $k\in I_x$ such that
$<\kappa,f_k>=0$. By Lemma \ref{parallelogram}, we have
$$
R\hom(\gf_{U_{\ve_1}\times G};\Phi)=0,
$$
where $G=\{\sum_{k\in K} t_kf_k|0\leq t_k<\ve\}$.

For $L\subset I_x$, let 
$G_L:=\{\sum_{l\in L} t_lf_l|0< t_l<\ve\}$.
Set $G_\emptyset:=\{0\}$.
We have a natural map $$\gf_{U_{\ve_1}\times G}\to
\gf_{U_{\ve_1}\times G_\emptyset}.$$
The cone of this map is obtained from
sheaves $\gf_{U_{\ve_1}\times G_L}$, $L\neq \emptyset$,
by means of succesive extensions. 

We also have
$$
R\hom(\gf_{U_{\ve_1}\times G_L};\Phi)=
R\Gamma(A(U_{\ve_1}\times G_L);\Phi).
$$
We have $$A(U_{\ve_1}\times G_L)\subset 
U_{x+\sum_{l\in L}\ve f_l}^-.
$$
By Lemma \ref{gammatop} the restriction map
$$
R\Gamma(U_{x+\sum_{l\in L}\ve f_l}^-;F)\to 
R\Gamma(A(U_{\ve_1},G_L);F)
$$
is an isomorphism.  As $ x+\sum_{l\in L}\ve f_l>x$
for $L\neq \emptyset$, we have 
$$
R\Gamma(U_{x+\sum_{l\in L}\ve f_l}^-;F)=0
$$
and
$$
R\hom(\gf_{U_{\ve_1}\times G_L};\Phi)=0
$$
for all $L\neq \emptyset$.
Therefore 
$$
R\hom(\cone(\gf_{U_{\ve_1}\times G}\to
\gf_{U_{\ve_1}\times G_\emptyset});\Phi)=0.
$$
Therefore
$$
0=R\hom(\gf_{U_{\ve_1}\times G_\emptyset};\Phi)=R\hom(U_x^-;F)
$$
\end{proof}
\subsubsection{Proof of Proposition \ref{emen}}
Let us construct objects $F_n\in D(C_-^\circ)$ by induction.
Set $F_0=F$. Set $M_n:=R\Gamma(U_{m_n}^-;F_{n-1})$ and
$$
F_n:=\Cone(\alpha_n:M_n\otimes \gf_{U_{m_n}^-}\to F_{n-1}),
$$
where $\alpha_n$ is the natural map. 

We have  structure maps $i_n:F_{n-1}\to F_n$ so that
the sheaves $F_n$ form an inductive system. This system 
stabilizes on any compact $K\subset C_-^\circ$ because
for $n$ large enough, $K\cap U^-_{m_n}=\emptyset$. 

Let $G:=L\limdir_n F_n$. It follows that
$\mS(G)\subset X(\Lambda)$ 
(because $\mS(F_n)\subset X(\Lambda)$).

Let $U_n$ be a neighborhood of $m_n$ in $C_-^\circ$ 
such that the closure of $U_n$ in $C_-^\circ$ is compact.
We have
$$
R\Gamma(U_{m_n}^-;G)\cong
 R\Gamma(U_{m_n}^-\cap U_n;G)
$$
$$
\cong R\Gamma(U_{m_n}^-\cap U_n;F_N)\cong
R\Gamma(U_{m_n}^-;F_N)
$$
for $N$ large enough.

Let $S^i:=R\Gamma(U_{m_n}^-;F_i)$
As follows from Lemma \ref{UX},
$S^i=S^{i+1}$ for all $i\geq n$; also, by construction,
$S^n=0$. Thus $S^N=0$ for $N\geq n$. Therefore,
$$
R\Gamma(U_{m_n}^-;G)=0
$$
for all $n$ and $G=0$ by Lemma \ref{nol}.

Next, $\Cone(F_n\to F_{n-1})$ is isomorphic
to $M_n\otimes \gf_{U_{m_n}^-}$. This proves the proposition.

\subsection{Invariant definition of the spaces
$M_n$} The goal of this section is
to define spaces $M_n$ from Proposition \ref{emen}
in  a more invariant way.

\subsubsection{Lemma}\label{vx} As in the previous Lemma, let $x\in C_-$ and 
let $I_x:=\{k|<x,f_k><0\}$. As was shown in the previous Lemma, there exists
$\ve>0$ such that  $x+\sum_{k\in I_x} t_kf_k\in C_-$ as long as
all $t_k\in [0,\ve]$. Fix such a $\ve>0$.

Set
$$V:=V(x,\ve):=\{y\in C_-^\circ|\forall k\in I_x: <y-x,e_k>\in
 [0,\ve); \forall l\notin I_x:<y-x,e_l><0.\}
$$

\begin{Lemma}\label{KL}
1) We have $$
R\hom(\gf_V;\gf_{U_x^-})\cong \gf[-|I_x|].
$$

2) Let $y\in C_-$. Suppose there exists $k\in \{1,2,\ldots,N-1\}$
 such that either $k\in I_x$ and
$<y-x,e_k>\notin [0,\ve]$ or $k\notin I_x$
and  $<y,e_k>\;<\;<x,e_k>$. Then
$$ R\hom(\gf_V;\gf_{U_y^-})=0.$$

\end{Lemma}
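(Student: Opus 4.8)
\textbf{Proof plan for Lemma \ref{KL}.}
The plan is to compute both Hom-complexes by reducing everything to the product geometry already exploited in Lemmas \ref{gammatop}, \ref{UX}, and \ref{parallelogram}. Recall that $U_x^- = \{y \in C_-^\circ\mid y<<x\}$ is convex, and that for a sheaf microsupported in $C_-^\circ \times C_+$ the restriction functors along coordinate translates behave like those of constant sheaves on half-spaces. First I would set up coordinates adapted to $I_x$: write $\h = \cV_1 \oplus \cV_2$, where $\cV_1$ is the span of the $f_k$ with $k \in I_x$ and $\cV_2$ the span of the remaining $f_l$; correspondingly split the dual basis $e_k$. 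In these coordinates $U_x^-$ is (up to the ambient convex constraint coming from $C_-^\circ$) a product of half-lines $\{<y-x,e_k> < 0\}$, while $V = V(x,\ve)$ is, in the $\cV_1$-directions, a product of half-open intervals $[0,\ve)$ pushed off $x$, and in the $\cV_2$-directions a product of half-lines identical to those of $U_x^-$.

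For part 1), the key point is that $R\hom(\gf_V;\gf_{U_x^-})$ factors, by the Künneth-type splitting along $\cV_1 \oplus \cV_2$, as a tensor product over $k \in I_x$ of $R\hom(\gf_{[0,\ve)};\gf_{(-\infty,0)})$ in the $e_k$-direction, times a factor $R\hom(\gf_{(-\infty,0)};\gf_{(-\infty,0)}) \cong \gf$ for each $\cV_2$-direction. The one-dimensional computation $R\hom_\Re(\gf_{[0,\ve)};\gf_{(-\infty,0)}) \cong \gf[-1]$ is elementary (it is the class of the extension $0\to \gf_{(-\infty,0)}\to \gf_{(-\infty,0]}\to \gf_0\to 0$ shifted appropriately, exactly as in Sec.~\ref{odindvatri}). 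Multiplying the $|I_x|$ shifts by $[-1]$ gives $\gf[-|I_x|]$, as claimed. The only technical care needed is to replace the genuine product by an honest convex open set using Lemma \ref{gammatop}: one restricts both sheaves to a convex $U' \cap C_-^\circ$ where the product picture is exact, and checks the restriction is an isomorphism on $R\hom$ because $\gf_{U_x^-}$ is microsupported in $C_-^\circ \times C_+$ and $C_+^\circ = \{x\mid x\geq 0\}$ is the dual cone to the translation direction.

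For part 2), suppose first $k \in I_x$ with $<y-x,e_k> \notin [0,\ve]$. If $<y-x,e_k> \geq \ve$, then along the $e_k$-coordinate the interval $[0,\ve)$ of $V$ lies strictly inside the half-line $(-\infty, <y-x,e_k>)$ of $U_y^-$, so in that direction $R\hom(\gf_{[0,\ve)};\gf_{(-\infty,c)})$ with $c \geq \ve > 0$: this vanishes because the closure of $[0,\ve)$ is contained in the open set, and the argument of Lemma \ref{parallelogram} (or directly Lemma \ref{gammatop}) applies. If instead $<y-x,e_k> < 0$, then $V$ has points with $e_k$-coordinate $\geq 0 > <y-x,e_k>$, so $x$ fails to lie in the closure of $U_y^-$ and we can find a convex neighborhood disjoint from $\overline{U_y^-}$; shrinking $V$ via Lemma \ref{gammatop} to this neighborhood kills the $R\hom$, exactly as in the proof of Lemma \ref{UX}. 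The case $k \notin I_x$ with $<y,e_k> < <x,e_k>$ is handled identically: in the $e_k$-direction $V$ agrees with $U_x^-$, a half-line based at $<x,e_k>$, whereas $U_y^-$ is a half-line based at the strictly smaller $<y,e_k>$, so again $\overline{V}$ is not contained in the closure of $U_y^-$ and Lemma \ref{gammatop} forces the Hom-complex to vanish. The main obstacle is purely bookkeeping: making the reduction to the coordinate-product picture rigorous in the presence of the ambient constraint $y \in C_-^\circ$, which is why each vanishing or identification must be routed through Lemma \ref{gammatop} rather than asserted on the nose.
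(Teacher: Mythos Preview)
Your heuristic is right---the computation does reduce to a product of one-dimensional calculations---but the step where you invoke a K\"unneth splitting is not justified as written. Neither $V$ nor $U_x^-$ is a genuine box product: both are intersections of product sets with the convex open $C_-^\circ$, and Lemma~\ref{gammatop} only controls $R\Gamma(U;F)$ for $U$ \emph{open}, whereas $V$ is merely locally closed. So ``shrinking $V$ via Lemma~\ref{gammatop}'' does not literally make sense, and you have not explained how to pass from $R\hom_{C_-^\circ}(\gf_V;\gf_{U_y^-})$ to a tensor product of one-variable $R\hom$'s.

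The paper's proof supplies exactly the missing device, and it is worth knowing: one resolves $\gf_V$ by a Koszul complex built from the sheaves $\gf_{U_{x+f_L}^-}$, $L\subset I_x$ (where $f_L=\ve\sum_{l\in L}f_l$). Concretely, for each $k\in I_x$ let $C_k$ be the two-term complex $\gf_{U^-_{x+f_{I_x\setminus\{k\}}}}\to\gf_{U^-_{x+f_{I_x}}}$, and set $D=\bigotimes_{k\in I_x}C_k$; then $D\to\gf_V$ is a quasi-isomorphism, and every term of $D$ is a direct sum of sheaves $\gf_{U^-_{x+f_L}}$. Now one is reduced to computing $R\hom(\gf_{U^-_{x+f_L}};\gf_{U^-_y})$ for each $L$, which is already done in Lemma~\ref{UX}: it is $\gf$ if $x+f_L\le y$ and $0$ otherwise. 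Part~1) follows because only $L=\emptyset$ survives when $y=x$, sitting in degree $-|I_x|$ of $D$; part~2) follows either because no $L$ survives (the case $k\notin I_x$), or because for fixed $k\in I_x$ with $\langle y-x,e_k\rangle\notin[0,\ve]$ the map in the $C_k$-factor induces an isomorphism on $R\hom(-,\gf_{U_y^-})$, so the cone vanishes. This Koszul resolution \emph{is} the rigorous K\"unneth you were reaching for, and it has the virtue of staying entirely inside $C_-^\circ$, so the ambient constraint never needs to be unwound.
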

\begin{proof} For $L\subset I_x$ set
$$f_L:=\ve\sum_{l\in L} f_l.$$
 For every $k\in I_x$ we have a natural map
$$
\gf_{U_{x+f_{I_x-\{k\}}}^-}\to \gf_{U_{x+f_{I_x}}^-}.
$$
Let $C_k$ be the corresponding  2-term complex, we put
$\gf_{U_{x+f_{I_x}}^-}$ into degree 0. 

Consider the complex
$$
D:=\bigotimes\limits_{k\in I_x} C_k
$$

 We have
$$
D^{-i}=\bigoplus_L \gf_{U_{x+ f_L}^-},
$$
where the sum is taken over all $|I_x|-i$-element 
subsets $L$ of
$I_x$. 

In particular $D^0=\gf_{U_{x+f_{I_x}}^-}$.  As 
$V\subset U_{x+f_{I_x}}^-$ is a
closed subset, we have a natural map 
$$
\gf_{U_{x+f_{I_x}}^-}\to \gf_V.
$$
This map defines a map of complexes
$D\to \gf_V$ which is a quasi-isomrorphism.
Therefore, we have an isomorphism
$$
R\hom(\gf_V;\gf_{U_y^-})\to R\hom(D;\gf_{U_y^-}).
$$

Let $y=x$, then, according to Lemma \ref{UX},
$R\hom(\gf_{U_{x+f_L}^-};\gf_{U_x^-})=0$ for all $L\neq \emptyset$. For $L=\emptyset$, we have 
$$R\hom(\gf_{U_x^-};\gf_{U_x^-})=
\gf.$$

 Therefore, we have an isomorphism
$$
R\hom(D,\gf_{U_x^-})\cong \gf[-|I_x|].
$$

Let now $y\in C_-$ and $k\in I_x$ be such that
$<y-x,e_k>\notin [0,\ve]$.

Let $D_k:=\bigotimes\limits_{l\neq k} C_l$ so that we have
$D=D_k\otimes C_k$. I.e 
\begin{equation}\label{opatcon}
D\cong \Cone( D_k\otimes \gf_{U_{x+f_{I_x-\{k\}}}^-}\to 
D_k\otimes \gf_{U_{x+f_{I_x}}^-}),
\end{equation}
where the map is induced by the natural map
$$
\gf_{U_{x+f_{I_x-\{k\}}}^-}\to \gf_{U_{x+f_{I_x}}^-}.
$$

We have
$$
D^{-i}_k\otimes \gf_{U_{x+f_{I_x-\{k\}}}^-}=
\bigoplus_{L} \gf_{U_{x+f_L}^-},
$$
where the sum is taken over all $|I_x|-i-1$-element subsets
$L\subset I_x-\{k\}$. 

Analogously,
$$
D^{-i}_k\otimes \gf_{U_{x+f_{I_x}}^-}=
\bigoplus_{L} \gf_{U_{x+f_L}^-}
$$
where the sum is taken over all $|I_x|-i$-element 
subsets $L\subset I_x$ such that $k\in L$.

In view of these identifications, 
the  $-i$-th degree component of the map in (\ref{opatcon}) 
 is induced by the natural maps
$$
\gf_{U_{x+f_L}}\to \gf_{U_{x+f_{L\cup\{k\}}}}.
$$

If $<y-x,e_k>\notin [0,\ve]$, then 
these maps induce isomorphism

$$
R\hom( \gf_{U_{x+f_{L\cup\{k\}}}};\gf_{U_y^-})
\to R\hom(\gf_{U_{x+f_L}};\gf_{U_y^-})
$$.

Hence, the map in (\ref{opatcon}) induces an isomorphism
$$
R\hom(D_k\otimes \gf_{U_{x+f_{I_x}}^-};\gf_{U_y^-})\to 
R\hom(D_k\otimes \gf_{U_{x+f_{I_x-\{k\}}}^-};\gf_{U_y^-})
$$
Therefore, 
$$
R\hom(D,\gf_{U_y^-})=0,
$$
as was stated.

If there exists $k\notin I_x$ such that 
$<y,e_k>\;<\;<x,e_k>$, then it follows that
$R\hom(\gf_{U_{x+f_L}^-};\gf_y)=0$ for all $L$ (because
it is not true that $x+f_L\leq y$).
\end{proof}

\subsubsection{} \begin{Lemma}\label{epsi}
Let  $l\in \Lambda$.  There exists $\ve>0$ such that for
any $l'\in \Lambda$, $l'\neq l$:

--- either there exists $k\in I_l$ such that
$<l'-l,e_k>\notin [0,\ve]$

--- or there exists $k\notin I_l$ such that
$<l',e_k> < <l,e_k>$.
\end{Lemma}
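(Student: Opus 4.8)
The statement to prove is Lemma~\ref{epsi}: for a fixed $l\in\Lambda$ we want a single $\ve>0$ that ``works uniformly'' for all other lattice points $l'\in\Lambda$. Since $\Lambda\subset C_-$ is discrete and we have pinned down $l$, the natural strategy is to partition the set $\Lambda\setminus\{l\}$ into two pieces: the points $l'$ that are ``far'' from $l$ and the points that are ``close'' to $l$. For the far points the claim is essentially cone-geometry and should follow directly from the structure of $C_-$ and $I_l$; for the close points there are only finitely many of them (by discreteness), so one can choose $\ve$ by a finite minimization once one knows that \emph{each individual} $l'\neq l$ already satisfies one of the two alternatives for \emph{some} $\ve_{l'}>0$.

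\textbf{Step 1: every single $l'\neq l$ satisfies the dichotomy for some $\ve$.} Fix $l'\in\Lambda$, $l'\neq l$. If there is some $k\notin I_l$ with $\langle l',e_k\rangle<\langle l,e_k\rangle$, the second alternative holds for any $\ve$, so assume $\langle l'-l,e_k\rangle\ge 0$ for all $k\notin I_l$. I claim then that there must be some $k\in I_l$ with $\langle l'-l,e_k\rangle\ne 0$ — otherwise $\langle l'-l,e_k\rangle=0$ for $k\in I_l$ and $\langle l'-l,e_k\rangle\ge 0$ for $k\notin I_l$, and running the argument of Lemma~\ref{shevel} (applied with $x=l$, $y=l'$, using $\langle l,f_k\rangle<0$ precisely for $k\in I_l$ and $\langle l',f_k\rangle\le 0$) would force $l'=l$ if in addition $l'\ge l$; if $l'\not\ge l$ then some coordinate $\langle l',e_k\rangle<\langle l,e_k\rangle$, and since $\langle l'-l,e_k\rangle\ge 0$ for $k\notin I_l$ this $k$ must lie in $I_l$, contradicting $\langle l'-l,e_k\rangle=0$ on $I_l$. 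So some $k\in I_l$ has $\langle l'-l,e_k\rangle>0$ (it is $\ge 0$ since we are in the complementary case and, combined with non-negativity off $I_l$, a strictly negative value on $I_l$ would again give the second alternative). Choosing $\ve_{l'}<\langle l'-l,e_k\rangle$ puts $\langle l'-l,e_k\rangle\notin[0,\ve_{l'}]$, i.e.\ the first alternative holds.

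\textbf{Step 2: uniformization via discreteness.} The only remaining issue is to replace the $l'$-dependent $\ve_{l'}$ by a single $\ve$. This is where discreteness of $\Lambda$ enters. Pick any $\ve_0>0$. The set $B:=\{\,l'\in\Lambda\setminus\{l\}\ :\ \langle l'-l,e_k\rangle\in[0,\ve_0]\ \text{for all }k\in I_l\ \text{and}\ \langle l'-l,e_k\rangle\ge 0\ \text{for all }k\notin I_l\,\}$ is bounded: the $e_k$ form a basis of $\h$, so all coordinates of $l'-l$ are constrained to a compact box, hence $B$ is finite. For $l'\notin B$ one of the two alternatives already holds with $\ve=\ve_0$ (either some $k\in I_l$ escapes $[0,\ve_0]$, or some $k\notin I_l$ gives a negative coordinate, which is the second alternative). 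For the finitely many $l'\in B$, Step~1 gives $\ve_{l'}>0$; set $\ve:=\min\big(\ve_0,\ \min_{l'\in B}\ve_{l'}\big)>0$. Then every $l'\neq l$ satisfies the required dichotomy with this $\ve$, which proves the lemma.

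\textbf{Main obstacle.} The only genuinely delicate point is Step~1 — specifically, ruling out the degenerate case where $l'-l$ vanishes on all of $I_l$, is non-negative off $I_l$, yet $l'\neq l$. This is exactly the combinatorial content of Lemma~\ref{shevel} about the Cartan-type matrix $\langle f_k,e_j\rangle$ (the $z_A=\cdots=z_B=0$ chain argument), so I expect to invoke (a mild variant of) that lemma rather than redo the interval computation. Everything else is bookkeeping plus the finiteness afforded by discreteness of $\Lambda$.
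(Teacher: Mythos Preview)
Your approach is the same as the paper's: both reduce to observing that if $\langle l'-l,e_k\rangle<0$ for some $k$ then one of the two alternatives holds automatically (the first if $k\in I_l$, the second if $k\notin I_l$), while the remaining $l'$ satisfy $l'\ge l$, form a finite set by discreteness, and are dispatched by Lemma~\ref{shevel}. The paper does this in three lines rather than through your two-step split.

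One justification needs repair. In Step~2 you assert $B$ is bounded because ``all coordinates of $l'-l$ are constrained to a compact box'', but the constraints defining $B$ give only $\langle l'-l,e_k\rangle\ge 0$ for $k\notin I_l$, with no upper bound. The missing bound comes from $l'\in\Lambda\subset C_-$: in type $A$ the Gram matrix $(\langle e_j,e_k\rangle)_{j,k}$ has all entries positive, so $l'\in C_-$ forces $\langle l',e_k\rangle\le 0$, hence $\langle l'-l,e_k\rangle\le -\langle l,e_k\rangle$. With this, $B\subset\{l'\in\Lambda:l'\ge l\}$ is indeed finite, and the rest goes through. (Minor: in Step~1 your parenthetical ``would again give the second alternative'' should read ``first alternative'' --- a strictly negative value on $I_l$ already lies outside $[0,\ve]$.)
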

\begin{proof} If there exists $k\in \{1,2,\ldots,N-1\}$ such that $<l'-l,e_k><0$, then one of the conditions is satisfied.
If such a $k$ does not exist, then $l'\geq l$. There are only finitely many $l'\in \Lambda$ with this property. Hence, 
the statement follows from Lemma \ref{shevel}.
\end{proof}
\subsubsection{} Let $m_n$ be a numbering of $\Lambda$ as in
Proposition \ref{emen}. Let $\ve$ be as in the proof of the previous Lemma.

\begin{Lemma}\label{vyrazheniemn} Let $\ve'\in (0,\ve)$.
 We have $$M_n\cong R\hom(\gf_{V(m_n,\ve')};F)[|I_{m_n}|]$$
\end{Lemma}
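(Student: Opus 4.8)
The claim is that the graded vector space $M_n = R\Gamma(U_{m_n}^-; F_{n-1})$ from Proposition \ref{emen} can be computed directly from $F$, without reference to the intermediate objects $F_0\to F_1\to\cdots\to F_{n-1}$, via the formula $M_n \cong R\hom(\gf_{V(m_n,\ve')}; F)[|I_{m_n}|]$. The strategy is to exploit Proposition \ref{emen}, which writes $F = F_0$ as the (stabilizing) colimit of $F_n$'s with $\Cone(F_{n-1}\to F_n)\cong M_n\otimes \gf_{U_{m_n}^-}$, together with the vanishing computations for $R\hom(\gf_{V(m_n,\ve')}; \gf_{U_{m_j}^-})$ supplied by Lemma \ref{KL}. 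First I would apply $R\hom(\gf_{V(m_n,\ve')}, -)$ to the whole tower $F = F_0 \to F_1 \to \cdots$ and to each triangle $M_j\otimes\gf_{U_{m_j}^-} \to F_{j-1}\to F_j \to$. Since $R\hom(\gf_{V(m_n,\ve')}, -)$ commutes with the relevant colimit (the tower stabilizes on compacts, and $V(m_n,\ve')$ has compact closure in $C_-^\circ$ once $\ve'$ is small — here the choice $\ve' < \ve$ matters so that $V(m_n,\ve')$ only "sees" the point $m_n$ among the $m_j$), it suffices to control $R\hom(\gf_{V(m_n,\ve')}, F_j)$ for all $j$.

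The key point is to show two things: (a) $R\hom(\gf_{V(m_n,\ve')}, F_j) = 0$ for all $j \geq n$, and (b) the natural maps $R\hom(\gf_{V(m_n,\ve')}, F_{j-1}) \to R\hom(\gf_{V(m_n,\ve')}, F_j)$ are isomorphisms for all $j \neq n$, while for $j = n$ the connecting triangle yields $R\hom(\gf_{V(m_n,\ve')}, F_{n-1}) \cong R\hom(\gf_{V(m_n,\ve')}, M_n\otimes\gf_{U_{m_n}^-})[{?}]$. For (b) with $j\neq n$, I would use Lemma \ref{KL} part 2) together with Lemma \ref{epsi}: the choice of $\ve$ in Lemma \ref{epsi} guarantees that for every $m_j \in \Lambda$ with $m_j \neq m_n$, either there is $k\in I_{m_n}$ with $\langle m_j - m_n, e_k\rangle \notin [0,\ve]$ or there is $k\notin I_{m_n}$ with $\langle m_j, e_k\rangle < \langle m_n, e_k\rangle$; in either case $R\hom(\gf_{V(m_n,\ve')}, \gf_{U_{m_j}^-}) = 0$ (note $\ve' < \ve$, and Lemma \ref{KL} 2) applies with $\ve$ replaced by $\ve'$ only after checking the conditions survive, which they do by monotonicity of the interval condition — one should be a little careful here, but the "$<0$" alternatives are robust, and the "$\notin[0,\ve]$" alternative can be arranged by shrinking). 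Hence $\Cone(F_{j-1}\to F_j) = M_j\otimes \gf_{U_{m_j}^-}$ is $R\hom(\gf_{V(m_n,\ve')},-)$-acyclic, giving the isomorphism. For (a), the same vanishing for all the cones with index $> n$ plus the fact that $F_j \to 0$ in the colimit (Lemma \ref{nol}, as used in the proof of Proposition \ref{emen}) forces $R\hom(\gf_{V(m_n,\ve')}, F_j)$ to vanish for $j\geq n$; alternatively one reads it off $F_n$ directly since the cones $M_j\otimes\gf_{U_{m_j}^-}$ for $j>n$ are all acyclic and the colimit is $0$.

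Putting (a) and (b) together: $R\hom(\gf_{V(m_n,\ve')}, F) = R\hom(\gf_{V(m_n,\ve')}, F_0) \cong R\hom(\gf_{V(m_n,\ve')}, F_{n-1})$, and the triangle $M_n\otimes \gf_{U_{m_n}^-}\to F_{n-1}\to F_n$ with $R\hom(\gf_{V(m_n,\ve')}, F_n) = 0$ gives
$$
R\hom(\gf_{V(m_n,\ve')}, F) \cong R\hom(\gf_{V(m_n,\ve')}, M_n\otimes\gf_{U_{m_n}^-}) \cong M_n \otimes R\hom(\gf_{V(m_n,\ve')}, \gf_{U_{m_n}^-}).
$$
By Lemma \ref{KL} part 1), $R\hom(\gf_{V(m_n,\ve')}, \gf_{U_{m_n}^-}) \cong \gf[-|I_{m_n}|]$, so $R\hom(\gf_{V(m_n,\ve')}, F)\cong M_n[-|I_{m_n}|]$, i.e. $M_n \cong R\hom(\gf_{V(m_n,\ve')}, F)[|I_{m_n}|]$, as claimed.

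\textbf{Main obstacle.} The routine part is the homological bookkeeping with the tower; the genuinely delicate point is making sure the combinatorial hypotheses line up, i.e. that $\ve$ can be chosen simultaneously so that Lemma \ref{KL} 2) applies to every $m_j\neq m_n$ (this is exactly Lemma \ref{epsi}) \emph{and} so that Lemma \ref{KL} 1) still gives the single copy of $\gf$ for $m_n$ itself, and that passing from $\ve$ to an arbitrary $\ve' \in (0,\ve)$ does not break the vanishing. The interval-membership condition "$\langle m_j - m_n, e_k\rangle \notin [0,\ve']$" for $k\in I_{m_n}$ is where care is needed: shrinking $\ve$ only makes "$\notin[0,\ve]$" \emph{easier}, so this direction is fine, and the other alternative ("$<0$ somewhere") is insensitive to $\ve$ entirely. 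I would also double check the compactness/colimit-exchange for $R\hom(\gf_{V(m_n,\ve')},-)$: because $\overline{V(m_n,\ve')}$ is compact in $C_-^\circ$ (the constraints $\langle y - m_n, e_k\rangle \in [0,\ve')$ for $k\in I_{m_n}$ and $\langle y-m_n, e_l\rangle$ bounded below cut out a bounded region whose closure stays inside $C_-^\circ$), the tower $\{F_j\}$ stabilizes over a neighborhood of that closure, so $R\hom(\gf_{V(m_n,\ve')}, \varinjlim F_j) = \varinjlim R\hom(\gf_{V(m_n,\ve')}, F_j)$ and all the identifications above go through.
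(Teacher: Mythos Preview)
Your approach is correct and is precisely what the paper's one-line proof unpacks to: apply $R\hom(\gf_{V(m_n,\ve')},-)$ to the tower from Proposition \ref{emen}; Lemma \ref{epsi} together with Lemma \ref{KL} 2) kills all cones with $j\neq n$, and Lemma \ref{KL} 1) handles $j=n$.

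One small correction to your final paragraph: the set $V(m_n,\ve')$ need \emph{not} have compact closure in $C_-^\circ$. The constraint for $l\notin I_{m_n}$ is $\langle y-m_n,e_l\rangle<0$, an \emph{upper} bound, not a lower one; in particular if $I_{m_n}=\emptyset$ (i.e.\ $m_n=0$) then $V(0,\ve')=U_0^-=C_-^\circ$ is unbounded, and even when $I_{m_n}\neq\emptyset$ the closure can meet $\partial C_-$ at $m_n$ itself. The colimit exchange still holds, but via a different mechanism: using the resolution $D\to\gf_{V(m_n,\ve')}$ from the proof of Lemma \ref{KL}, one reduces to terms $R\Gamma(U_{m_n+f_L}^-;F_j)$, each of which, by Lemma \ref{gammatop} applied to the microsupport condition $\mS(F_j)\subset C_-^\circ\times C_+$ (preserved along the tower), can be computed over a small bounded convex open near $m_n+f_L$, where the system does stabilize. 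Alternatively, for $m_n\neq 0$ one can verify directly that $V(m_n,\ve')$ is bounded (use that $k\mapsto\langle y,e_k\rangle$ is convex with $\langle y,e_0\rangle=\langle y,e_N\rangle=0$ and at least one interior value pinned), and boundedness alone suffices since only finitely many $U_{m_j}^-$ can meet a bounded set; the case $m_n=0$ is the trivial identity $R\hom(\gf_{C_-^\circ},F)=R\Gamma(U_0^-;F)=M_1$.
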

\begin{proof} Follows from Proposition \ref{emen} and two
previous Lemmas.
\end{proof}
\subsection{The sheaf $\bfS_z$}  Proposition \ref{emen}
and Lemma \ref{vyrazheniemn}
applies to $j_{C_-^\circ}^{-1}\bfS_z$ with 
$\Lambda=\bL_z^-$.  We would like to rewrite the expression
from Lemma \ref{vyrazheniemn} in a more convenient way.

Let $x\in \h$ and $I\subset \{1,2,\ldots,N-1\}$. 
let $W(I,x)\subset \h$ be given by
$$
W(I,x,\ve)=\{y:\forall k\in I: <y-x,e_k>\in [0,\ve);\forall k\notin
I: <y-x,e_k><0\}.
$$

For  $x\in C_-$ and $\ve$ as in Sec. \ref{vx}, we have 

$$
V:=V(x,\ve)=W(I_x,x,\ve)\cap C_-^\circ,
$$
Set $W:=W(I_x,x,\ve)$. Set $I:=I_x$.

For any $F\in D(\h)$ we have an induced map of sheaves
\begin{equation}\label{vtow}
R\hom_\h(\gf_W;F)\to R\hom_\h(\gf_V;F)=
R\hom_{C_-^\circ}(\gf_V;j_{C_-^\circ}^{-1}F).
\end{equation}
\begin{Lemma} Suppose that $\mS(F)\subset \h\times C_+$.
Then the map (\ref{vtow}) is an isomorphism
\end{Lemma}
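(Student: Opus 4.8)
The statement to prove is that, for $F \in D(\h)$ with $\mS(F) \subset \h \times C_+$, the natural restriction map
$$
R\hom_\h(\gf_W; F) \to R\hom_{C_-^\circ}(\gf_V; j_{C_-^\circ}^{-1} F)
$$
is an isomorphism, where $W = W(I_x, x, \ve)$ and $V = W \cap C_-^\circ$ for $x \in C_-$. The plan is to reduce this to an application of Lemma \ref{gammatop}, which asserts that for an open convex $U \subset V'$ in a vector space and $F'$ microsupported in a half-space cone $\gamma^\circ$ with $V' \subset U - \gamma$, the restriction $R\Gamma(V', F') \to R\Gamma(U, F')$ is an isomorphism. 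The difficulty is that $W$ and $V$ are not themselves convex open sets; $W$ is a ``slab-like'' region defined by mixed strict/non-strict inequalities, and $V$ is its intersection with the open Weyl chamber. So the first step is to realize $R\hom_\h(\gf_W; F)$ and $R\hom_{C_-^\circ}(\gf_V; j^{-1}F)$ as cohomology of $F$ over honest open subsets, using the standard resolution of $\gf_W$ by constant sheaves on open half-space-type sets.

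First I would write $\gf_W$ as an iterated cone (Koszul-type complex) built from sheaves $\gf_{\Omega}$ where each $\Omega$ is an open set of the form $\{y : <y - x, e_k> < \ve \text{ for } k \in L, \ <y-x,e_k> < 0 \text{ for } k \notin I\}$ for subsets $L \subseteq I$: this is exactly the decomposition already used in the proof of Lemma \ref{KL} (where $D = \bigotimes_{k \in I_x} C_k$ resolves $\gf_V$, and an analogous tensor product resolves $\gf_W$ over $\h$). Each such $\Omega$ is open and convex. Then $R\hom_\h(\gf_W; F)$ becomes a finite complex with terms $R\Gamma(\Omega, F)$, and $R\hom_{C_-^\circ}(\gf_V; j^{-1}F)$ becomes the analogous complex with terms $R\Gamma(\Omega \cap C_-^\circ, j^{-1}F) = R\Gamma(\Omega \cap C_-^\circ, F)$. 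The map (\ref{vtow}) is induced termwise by the restriction maps $R\Gamma(\Omega, F) \to R\Gamma(\Omega \cap C_-^\circ, F)$, so it suffices to show each of these is an isomorphism.

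For each fixed $\Omega$ in this resolution, I would apply Lemma \ref{gammatop} with $U = \Omega \cap C_-^\circ$, $V' = \Omega$, $\gamma^\circ = C_+$ (so $\gamma = \{v : <C_+, v> \geq 0\}$, the cone dual to $C_+$; note $\mS(F) \subset \h \times C_+$ gives the hypothesis $\mS(F) \subset V' \times \gamma^\circ$). The key geometric point to verify is $\Omega \subset (\Omega \cap C_-^\circ) - \gamma$, i.e. that every point of the convex open set $\Omega$ can be moved into $C_-^\circ$ by subtracting an element of $\gamma$. Since $x \in C_-$ and $\ve$ is chosen (as in Sec. \ref{vx}) so that $x + \sum_{k \in I_x} t_k f_k \in C_-$ for all $t_k \in [0,\ve]$, the set $\Omega$ sits in a neighborhood of the segment from $x$ into $C_-$; subtracting suitable positive multiples of the fundamental coweights (which lie in $\gamma$, being the generators dual to the simple roots spanning $C_+$) pushes any point of $\Omega$ strictly inside $C_-^\circ$ while staying in $\Omega$ — exactly the computation already carried out in Lemma \ref{nol} showing $A(U_{\ve_1} \times G) \subset U^-_{x + \sum \ve f_l}$ and that $U^-_{x'} = V' - \gamma$. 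This is the step I expect to be the main obstacle: checking that the chosen $\ve$ and the cone $\gamma$ interact correctly so that the inclusion $\Omega \subset (\Omega \cap C_-^\circ) - \gamma$ holds for every $\Omega$ appearing, not just for $\Omega = W$ itself. Once that inclusion is established, Lemma \ref{gammatop} gives $R\Gamma(\Omega, F) \cong R\Gamma(\Omega \cap C_-^\circ, F)$ for each term, and since (\ref{vtow}) is the induced map of complexes, it is a quasi-isomorphism, which is the claim.
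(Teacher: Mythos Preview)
Your approach is correct and is essentially the paper's own proof. The paper resolves $\gf_W$ by the Koszul complex $D'=\bigotimes_{k\in I}C'_k$ with terms $\gf_{U_{x+f_L}}$ (open convex sets in $\h$), resolves $\gf_V$ by $D=\bigotimes_{k\in I}C_k$ with terms $\gf_{U^-_{x+f_L}}=\gf_{U_{x+f_L}\cap C_-^\circ}$, and then applies Lemma~\ref{gammatop} termwise to the restriction maps $R\Gamma(U_{x+f_L},F)\to R\Gamma(U^-_{x+f_L},F)$. Two small remarks: in your description of the sets $\Omega$ the condition ``$k\notin I$'' should read ``$k\notin L$'' (so that $\Omega=U_{x+f_L}$); and the cone inclusion you flag as the main obstacle is exactly guaranteed by the choice of $\ve$ in Sec.~\ref{vx}, which forces $x+f_L\in C_-$ for every $L\subset I$, after which $U_{x+f_L}\subset U^-_{x+f_L}-\gamma$ is immediate.
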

\begin{proof} For $z\in \h$ set $U_z=\{y\in \h|y<<z\}$.
Lemma \ref{gammatop} implies that for any $z\in C_-$,
the restriction map
$$
R\hom(\gf_{U_z};F)\to R\hom(\gf_{U_z^-};F)
$$
is an isomorphism.

For $k\in I$ consider the following 2-term complex
$C'_k$
$$
\gf_{U_{x+f_{I-\{k\}}}}\to \gf_{U_{x+f_I}},
$$
where we use the notation from proof of Lemma \ref{KL}.
Let 
\begin{equation}\label{def:D'}
D':=\bigotimes_{k\in I} C'_k.
\end{equation} 
Similar to $D$, we have  a quasi-isomorphism
$$
D'\to \gf_{W}.
$$
We also have 
$$
(D')^{-i}=\bigoplus_L \gf_{U_{x+f_L}},
$$
where the sum is taken over all $|I|-i$-element subsets of $I$.
We have natural maps $C_k\to C'_k$ which induce
maps $D\to D'$. The latter map
is induced by maps
$$
\gf_{U_{x+f_L}^-}\to \gf_{U_{x+f_L}}
$$

According to Lemma \ref{gammatop}, the induced map
$$
R\hom(\gf_{U_{x+f_L}};F)\to R\hom(\gf_{U_{x+f_L}^-};F)
$$
is an isomorphism for all $F$ such that $\mS(F)\subset \h\times C_+$. This implies the statement.
\end{proof}
\subsubsection{} 
\begin{Lemma}\label{pryzhok:fk} Let $F\in D(\h)$ be constant along fibers of
projection $\h\to \h/\Re.f_k$ for some $k$. Then
for all $I\subset \{1,2,\ldots,N-1\}$ such that $k\in I$ and
for all $\ve>0$, we have
$$
R\hom(\gf_{W(I,x,\ve)};F)=0
$$
\end{Lemma}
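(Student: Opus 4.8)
The statement is Lemma \ref{pryzhok:fk}: if $F\in D(\h)$ is constant along the fibers of the projection $\pi_k:\h\to\h/\Re f_k$, and $k\in I$, then $R\hom(\gf_{W(I,x,\ve)};F)=0$. The plan is to exploit the factorization $W(I,x,\ve)=W'\times W''$ of the set $W(I,x,\ve)$ according to the splitting $\h=\Re f_k\oplus E$, where $E$ is a complementary subspace (for instance the span of the remaining $f_j$). Since $k\in I$, the $\Re f_k$-factor of $W(I,x,\ve)$ is an interval of the form $[a,a+\ve)$ (coming from the constraint $<y-x,e_k>\in[0,\ve)$; note $<\,\cdot\,,e_k>$ is, up to a nonzero scalar, the linear coordinate dual to $f_k$), which is locally closed but \emph{not} open and not closed. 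First I would make this factorization precise and write $\gf_{W(I,x,\ve)}=\gf_{J}\boxtimes\gf_{W''}$ where $J\subset\Re f_k$ is the half-open interval and $W''\subset E$ is the slice.

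Next, using that $F$ is pulled back from $\h/\Re f_k\cong E$, write $F=\gf_{\Re f_k}\boxtimes F''$ for some $F''\in D(E)$ (after identifying $\h/\Re f_k$ with $E$ via $\pi_k$). Then $R\hom_\h(\gf_{W(I,x,\ve)};F)\cong R\hom_{\Re f_k}(\gf_J;\gf_{\Re f_k})\otimes_\gf R\hom_E(\gf_{W''};F'')$ by the Künneth-type compatibility of $R\hom$ with external products (this is where one must be slightly careful that the objects involved are reasonable enough for the tensor decomposition to hold over a field $\gf$; it does, since everything is a complex of sheaves of $\gf$-vector spaces and one factor is a complex of finite-dimensional spaces on $\Re$). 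So the whole expression vanishes as soon as the first factor $R\hom_{\Re f_k}(\gf_J;\gf_{\Re f_k})$ vanishes.

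Finally I would compute $R\hom_\Re(\gf_{[a,a+\ve)};\gf_\Re)$ and check it is $0$. This is elementary: $R\hom_\Re(\gf_{[a,b)};\gf_\Re)\cong R\Gamma_c([a,b);\gf)^\vee[-1]$ up to a shift, or more directly, $\gf_{[a,a+\ve)}$ sits in an exact triangle $\gf_{[a+\ve,\infty)}\to\gf_{[a,\infty)}\to\gf_{[a,a+\ve)}\to$, and $R\hom_\Re(\gf_{[a,\infty)};\gf_\Re)=0=R\hom_\Re(\gf_{[a+\ve,\infty)};\gf_\Re)$ because $R\Gamma_c$ of a half-line with constant coefficients vanishes (equivalently, one can invoke the same kind of computation already used repeatedly in the paper, e.g. in the proof of Lemma \ref{third}, that $\gf_{[0,\infty)}*_\Re\gf_\Re=0$, which is exactly the vanishing of such $R\hom$'s). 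Hence $R\hom_\Re(\gf_{[a,a+\ve)};\gf_\Re)=0$ and the Lemma follows.

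\textbf{Main obstacle.} The only genuinely delicate point is justifying the external-product decomposition of $R\hom$ and the pullback description $F\cong\gf_{\Re f_k}\boxtimes F''$ in the unbounded derived category: one needs that "constant along the fibers of $\pi_k$" really does mean $F\cong\pi_k^{-1}F''$ for some $F''$, and that $R\ihom$ and $R\hom$ commute with $\boxtimes$ in the relevant way. Over a field this is standard (the functors in play are $\pi_k^{-1}$, $\otimes$, and $R\ihom$, all of which behave well), and one factor being supported on all of $\Re$ with locally constant cohomology makes the Künneth formula automatic; so this should be a short argument rather than a serious difficulty. Everything else is a routine reduction to the one-dimensional vanishing $R\hom_\Re(\gf_{[a,a+\ve)};\gf_\Re)=0$.
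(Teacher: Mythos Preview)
Your argument is correct and proceeds along essentially the same line as the paper's one-line proof ``Follows easily from the quasi-isomorphism $D'\to \gf_{W(I,x,\ve)}$.''  The paper uses the Koszul resolution $D'=\bigotimes_{l\in I}C'_l$ already built in (\ref{def:D'}); since $k\in I$, the tensor factor $C'_k$ is the two-term complex $\gf_{U_{x+f_{I\setminus\{k\}}}}\to \gf_{U_{x+f_I}}$ whose two terms differ by a shift by $\ve f_k$, so applying $R\hom(\,\cdot\,,F)$ with $F$ invariant along $\Re f_k$ turns this into the cone of an isomorphism and hence kills $R\hom(D',F)$.  Your K\"unneth argument simply unpacks the same product structure (the coordinates $t_j=\langle y-x,e_j\rangle$ dual to the basis $f_j$ make $W(I,x,\ve)$ literally a box), replacing the resolution $D'$ by an explicit factorisation $\gf_{W(I,x,\ve)}=\gf_{[0,\ve)}\boxtimes \gf_{W''}$ and $F=\gf_{\Re f_k}\boxtimes F''$; the vanishing then reduces to the same one-dimensional fact $R\hom_\Re(\gf_{[0,\ve)},\gf_\Re)=0$.

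One small slip: your distinguished triangle should read
\[
\gf_{[a,a+\ve)}\to \gf_{[a,\infty)}\to \gf_{[a+\ve,\infty)}\xrightarrow{+1},
\]
coming from the short exact sequence with $\gf_{[a,a+\ve)}$ as kernel; there is no nonzero map $\gf_{[a+\ve,\infty)}\to\gf_{[a,\infty)}$ of sheaves.  This does not affect your conclusion, since the vanishing of $R\hom(\gf_{[a,\infty)},\gf_\Re)$ for each half-line still forces $R\hom(\gf_{[a,a+\ve)},\gf_\Re)=0$.
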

\begin{proof}
Follows easily from the quasi-isomorphism
$D'\to \gf_{W(I,x,\ve)}$.
\end{proof}

\subsection{Periodicity} Let us get back to 
the object $j^{-1}_{C_-^\circ}\bfS_z$.
In this case $\Lambda=\bL_z^-$. There exists $\ve>0$ such that the condition of Lemma \ref{epsi} is satisfied for all $l\in \bL_z^-$.
Fix such a $\ve$ 
throughout.
 Proposition \ref{emen}
applies to $F=\bfS_z$. by Lemmas \ref{vyrazheniemn}
and \ref{vtow}
we have an isomorphism
$$M_n=R\hom(\gf_{V(m_n,\ve)};\bfS_z)[-|I_{m_n}|]
=R\hom(\gf_{W(I_{m_n},m_n,\ve)};\bfS_z)[-|I_{m_n}|].
$$
For $z\in \h$ and $I\subset \{1,2,\ldots,N-1\}$ and 
$F\in D(\h)$
$$
\Delta_{I;z}(F):=R\hom(\gf_{W(I,z,\ve)};F)[|I|]
$$

Our goal is to prove the following theorem
\begin{Theorem}\label{period} For any $m\in \h$, any $I\subset
\{1,2,\ldots, N-1\}$ and any
$k\in I$
there exists
a  quasi-isomorphism
$$
\Delta_{I;m}\bfS_{z}\to 
\Delta_{I;m-2\pi e_k}\bfS_{ze^{-2\pi e_k}}[-D_k]
$$
where  $D_k=2k(N-k)$.
\end{Theorem}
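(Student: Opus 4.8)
The plan is to deduce Theorem~\ref{period} from the convolution/periodicity structure that $\sp$ enjoys along $\h$, together with the local computations of $\Delta_{I;z}$ already in hand. The key point is that shifting the Cartan variable by an element of the lattice $\bL$ can be realized by convolving with the restriction of $\sp$ to a shift of the identity, and this restriction, for a shift by $-2\pi e_k$, should be an explicit sheaf on $G$ (a pushforward of a constant sheaf off a partial-flag-type subvariety) whose cohomology contributes the shift $[-D_k]$ with $D_k=2k(N-k)$. Concretely, by Lemma~\ref{sdvigsp} we have $T_A^{-1}\sp\cong S_A*_G\sp$ where $S_A=I_A^{-1}\sp$; restricting along $I_\Zentrum$ and using the description of $\Omega_\sp$ (conditions 1)--3) in~\eqref{omegasps}, in particular $\det g|_{V_k(\omega)}=e^{-i\langle e_k,A\rangle}$), one identifies $S_{-2\pi e_k}$ with the sheaf supported on those $g\in G$ fixing the $k$-dimensional step of a flag with the appropriate determinant condition; its stalk/cohomology over the relevant centralizers is $H^\bullet$ of a Grassmannian-type space, which is where $D_k=2k(N-k)$ enters as a cohomological dimension shift.

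\textbf{Steps.} First I would reduce to a statement purely about $\sp$ on $\Zentrum\times\h$: by the definition $\bfS=I_\Zentrum^{-1}\sp[-\dim G]$ and the compatibility of the convolution $*_\h$ with the shift $T_A$ (shifting one factor), rewrite $\bfS_{ze^{-2\pi e_k}}$ as the restriction to $(z,\,\cdot\,-2\pi e_k)$ of $\bfS_z$ twisted by $S_{-2\pi e_k}$. Second, I would apply the functor $\Delta_{I;m}(-)=R\hom(\gf_{W(I,m,\ve)},-)[|I|]$ and use Lemma~\ref{pryzhok:fk}: since $k\in I$ and $S_{-2\pi e_k}$ (hence the relevant summand of $\bfS$) is constant along the $f_k$-direction over an appropriate range, all terms except one collapse, leaving exactly the contribution that matches $\Delta_{I;m-2\pi e_k}\bfS_{ze^{-2\pi e_k}}$ up to the homological shift. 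Third, I would track the degree: the passage from $W(I,m,\ve)$ to $W(I,m-2\pi e_k,\ve)$ changes which hyperplanes cut out the region, and combined with the cohomology of the Grassmannian fibre of $S_{-2\pi e_k}$ this produces exactly $[-D_k]$. Throughout, the microsupport estimate $\mS(\bfS_z)\subset X(\bL_z^-)$ (Proposition~\ref{ms:bfs}) and the isomorphisms of Lemmas~\ref{gammatop}, \ref{KL}, \ref{vtow} let me replace $R\hom$ against $\gf_{W}$ by $R\hom$ against $\gf_{V}$ in $C_-^\circ$ and move freely between convex-open test objects.

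\textbf{Main obstacle.} The hard part will be the explicit identification of $S_{-2\pi e_k}=I_{-2\pi e_k}^{-1}\sp$ restricted to $\Zentrum$ (or rather the effect of convolving by it) and the precise bookkeeping of the shift $D_k=2k(N-k)$. The microlocal characterization pins $\sp$ down only up to the conditions in Theorem~\ref{specth}, so to get $S_{-2\pi e_k}$ on the nose one must run an argument in the spirit of the construction in Section~\ref{proofspecth} (building $\sp$ from $F^-$ by iterated $*_G$ and shifting by $-b/2$), now shifting all the way by a lattice vector; the determinant condition 2) in~\eqref{omegasps} is what forces the support to land on a Schubert-type cell and the normalization $i_0^{-1}\sp=\gf_e$ is what fixes the constant. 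Establishing that the resulting convolution kernel has the stated cohomological amplitude $D_k$, and that the induced map on $\Delta_{I;m}$ is a \emph{quasi-isomorphism} rather than merely a map, is the crux; I expect this to rest on a Leray--Hirsch / Gysin computation for the $k(N-k)$-dimensional Grassmannian fibre, i.e. $D_k=2k(N-k)=2\dim_{\mathbb C}\mathrm{Gr}(k,N)$, matched against the combinatorics of the regions $W(I,m,\ve)$.
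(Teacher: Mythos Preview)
Your overall architecture is right and matches the paper: realize the shift $T_{-2\pi e_k}^{-1}\sp$ as $S_{-2\pi e_k}*_G\sp$ via Lemma~\ref{sdvigsp}, bring in the Grassmannian $G(k,N)$ to account for $D_k=2k(N-k)$, and then invoke Lemma~\ref{pryzhok:fk} using $k\in I$. But the way you plan to feed Lemma~\ref{pryzhok:fk} is where the argument breaks.

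You write that ``$S_{-2\pi e_k}$ \dots\ is constant along the $f_k$-direction over an appropriate range,'' and that this is what makes the extra terms collapse. This is not correct and does not even type-check: $S_{-2\pi e_k}=\sph_k$ is a sheaf on $G$, not on $\h$, and in fact the paper computes $\sph_k=\gf_{W_k}$ with $W_k=\{e^{-Y}:\|Y\|<<2\pi e_k\}$, which has no $f_k$-constancy in any useful sense. What is constant along $f_k$ is not $\sph_k$ or anything built from it alone, but the \emph{cone} of a very specific map. The missing step is this: one first computes $R\hom(\gf_{e^{-2\pi e_k}};\sph_k)\cong H_\bullet(G(k,N))$, then uses the fundamental class to produce a map $B_k:\gf_{e^{-2\pi e_k}}\to\sph_k[-D_k]$, and then proves the key technical fact (Proposition~\ref{efka}) that $\mS(\Cone B_k)\subset\{(g,\omega):\langle\|\omega\|,f_k\rangle=0\}$. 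Only after this does convolving with $\sp$ and restricting to $\Zentrum\times\h$ yield a map $B_k^g:\bfS_z\to T_{-2\pi e_k}^{-1}\bfS_{ze^{-2\pi e_k}}[-D_k]$ whose cone is constant along the fibers of $\h\to\h/\Re f_k$; Lemma~\ref{pryzhok:fk} then kills $\Delta_{I,m}$ of that cone and gives the quasi-isomorphism.

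So the Grassmannian does not enter as a ``Leray--Hirsch/Gysin'' input that you can read off from a fibre computation; it enters as the source of the class defining $B_k$, and the real work is the microsupport estimate on $\Cone B_k$ (proved in the paper via an explicit Fourier--Sato transform calculation near $e^{-2\pi e_k}$). Without that estimate you have no mechanism to show the map on $\Delta_{I,m}$ is an isomorphism, and your substitute assertion about $f_k$-constancy of $S_{-2\pi e_k}$ cannot be repaired into one.
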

The rest of the current subsection will be devoted
to proving this Theorem.

In the next two subsections we will prove the main auxiliary result towards the proof.
\subsubsection{Sheaves $\sp|_{G\times -2\pi e_k}$} 
Recall that $\sp\in D(G\times \h)$.
Let $\sph_k:=\sp|_{G\times -2\pi e_k}$, so that
 $\sph_k\in D(G)$. 

\begin{Lemma} We have an isomorphism
$\sph_k=\gf_{W_k}$, where $W_k\subset G$ is an open subset
consisting of all points of the form
$$
W_k=\{e^{-Y}|\|Y\|<2\pi e_k\}.
$$
\end{Lemma}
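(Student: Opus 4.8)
The plan is to compute $\sp|_{G\times -2\pi e_k}$ directly from the microlocal characterization of $\sp$ in Theorem \ref{specth}, following exactly the pattern used earlier to identify $\sp|_{G\times 0}\cong\gf_e$ and $\sp|_{z\times C_-^\circ}$. First I would restrict the estimate $\mS(\sp)\subset\Omega_\sp$ to the slice $G\times\{-2\pi e_k\}$. One checks that the hyperplane $G\times\{-2\pi e_k\}\subset G\times\h$ is non-characteristic for $\sp$: a point of $\mS(\sp)$ lying over it has the form $(g,-2\pi e_k,\omega,\|\omega\|)$, and since the $\h$-component $\|\omega\|$ of the covector lies in $C_+$ while the conormal direction to the slice is all of $\h^*$, the only way $I_{-2\pi e_k}^*\zeta=0$ is $\|\omega\|=0$, hence $\omega=0$ and $\zeta=0$. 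So by Proposition \ref{ks:inverse}, $\sph_k=I_{-2\pi e_k}^{-1}\sp$ is microsupported on the image of $\mS(\sp)$ under the projection $T^*(G\times\h)|_{G\times -2\pi e_k}\to T^*G$.

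Next I would read off that projected microsupport. Using conditions 1)--3) defining $\Omega_\sp$ with $A=-2\pi e_k$: $\Ad_g\omega=\omega$, $\det g|_{V_j(\omega)}=e^{-i\langle -2\pi e_k,e_j\rangle}=e^{2\pi i\langle e_k,e_j\rangle}$. I expect that, just as in the analysis of $F^-$ and of $\Phi^-$ in Sec. \ref{proofspecth} and Sec. \ref{restrC-}, one concludes that $g$ must be of the form $e^{-Y}$ with $\|Y\|\le 2\pi e_k$ and $[Y,\omega]=0$, $\Tr Y|_{V_j(\omega)}=2\pi i\langle e_k,e_j\rangle$; then the dilation/convexity argument (as in the proof that $X^-*_G X^+\cong\gf_e$) forces $\sph_k$ to be locally constant on the open set $W_k=\{e^{-Y}\mid\|Y\|<2\pi e_k\}$ and zero outside its closure. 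This is the routine part: $W_k$ is a diffeomorphic exponential image of an open convex subset of $\g$, hence contractible, so $\sph_k\cong (\text{stalk})\otimes\gf_{W_k}$ up to checking the stalk is one-dimensional in degree zero.

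The main obstacle — and the step I would spend the most care on — is pinning down that the stalk is exactly $\gf$ concentrated in degree $0$, i.e. ruling out a shift or a higher-rank graded vector space. The microsupport estimate alone only gives local constancy; the normalization $I_0^{-1}\sp\cong\gf_e$ must be propagated from $A=0$ to $A=-2\pi e_k$. I would do this by the same gluing machinery used to construct $\sp$: recall $\sp|_{G\times(C_-^\circ+Mb/2)}\cong\Psi_M=\bT_M^{-1}\Phi^-*_G(X^+)^{*_G M}$, and $-2\pi e_k$ lies in some translate $C_-^\circ+Mb/2$ once $M$ is large. Then $\sph_k$ is computed as a convolution of the known sheaf $\Phi^-$ (an explicit $\gf$-in-degree-$\dim G$ sheaf on a cone) with powers of $X^+$; the stalk computation reduces, as in Lemma \ref{dvoinaya} and the $X^-*_GX^+$ lemma, to $R\Gamma_c$ of a contractible open set, giving $\gf$ in degree $0$. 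Assembling these pieces yields $\sph_k\cong\gf_{W_k}$ with no shift, which is the claim. I would also double-check the description of $W_k$ against the general formula $\cU_A=\{e^X\mid\|X\|\ll A\}$ appearing in the proof of Proposition \ref{htors1}, specialized to $A=2\pi e_k$, to make sure the inequality is $\|Y\|<2\pi e_k$ and not a strict-order version.
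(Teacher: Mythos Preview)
Your overall architecture is sound, but the support step has a real gap. The estimate $\mS(\sp)\subset\Omega_\sp$ by itself does \emph{not} force $g=e^{-Y}$ with $\|Y\|\le 2\pi e_k$: the conditions defining $\Omega_\sp$ (namely $\Ad_g\omega=\omega$ and the determinant relations) are vacuous at $\omega=0$, so the projected microsupport contains the whole zero section of $T^*G$ and places no constraint at all on $\text{supp}\,\sph_k$. The dilation trick from the $X^-*_GX^+$ lemma does not rescue this either, since there the support was already known to be compact from the convolution and the microsupport satisfied $\langle Y,\omega\rangle=0$; neither input is available here from $\Omega_\sp$ alone. The paper closes this gap by using the explicit convolution description from the start: writing $-2\pi e_k=A_1+\cdots+A_M$ with $A_i\in V_b^-$ gives $\sph_k\cong\gf_{U(A_1)}*_G\cdots*_G\gf_{U(A_M)}[M\dim\g]$, and Lemma~\ref{Klyachko} then yields $\text{supp}\,\sph_k\subset\overline{W_k}$ directly. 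Local constancy on $W_k$ comes from the \emph{sharper} Lemma~\ref{supportogran} (not from $\Omega_\sp$): that lemma produces an $X$ with $g=e^X$, $\|X\|\le 2\pi e_k$, and $\langle\|X\|-2\pi e_k,e_{d_r(\omega)}\rangle=0$, while $g\in W_k$ forces $\|X\|<<2\pi e_k$ strictly; the two together give $\omega=0$. No dilation is used. Note also that $-2\pi e_k$ lies on the \emph{boundary} of $C_-$, so invoking Lemma~\ref{supportogran} at that slice needs a word about closedness of $\mS$ or about rerunning its proof at that boundary value.

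For the stalk, your strategy of unwinding $\Psi_M$ and reducing to an $R\Gamma_c$ of a contractible open set would work, but the paper takes a shorter route: it evaluates $\sph_k|_e$ as the stalk of $\bfS_e$ at $-2\pi e_k$, and then Proposition~\ref{supportrestriction} shows that $\bfS_e$ is constant on the region of $A\in C_-$ admitting no nonzero $l\in\bL_e^-$ with $A\ge l$. Both $-2\pi e_k$ and $-e_1/100$ lie in that region, and the stalk at the latter is already known from $F^-$. This propagation avoids any fresh convolution computation.
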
 

\begin{proof}
As follows from the proof of Theorem \ref{specth}
$\sph_k$ can be constructed as follows. Let
us decompose $-2\pi e_k =A_1+A_2+\cdots A_M$,
where $A_i\in V_b^-$. For $A\in C_-^\circ$ set
$U(A)\subset G$; $U(A):=\{e^X| X\in \g;\|X\|<<-A\}$.
 One then has
$$
\sph_k\cong 
\gf_{U_{A_1}}*_G\gf_{U_{A_2}}*_G\cdots*_G \gf_{U_{A_M}}[M\dim \g]
$$ 

Let $g\in G$. It follows that $\sph_k|_g\neq 0$ only if
there exist $X_k\in \g$; $\|X_k\|<<-A_k$ such that
$g=e^{X_1}e^{X_2}\cdots e^{X_M}$. According to
Lemma \ref{Klyachko}, this implies that
$g=e^Y$, where $\|Y\|<<-(A_1+\cdots +A_M)=2\pi e_k$.
Thus, fibers of $\sph_k$ at any point outside of
$W_k$ are zeros.

Let $H:=\sph_k|_{W_k}$. It then suffices to
prove that $H\cong \gf[\dim \g]$.

Let us find $\mS(H)$.
 Observe that the exponential map
identifies $W_k$ with $\{X\in\g|\|X\|<<2\pi e_k\}$.
Lemma \ref{supportogran} implies that $(g,\omega)\in \mS(H)$ only if
there exists $X\in \g$ such that $g=e^X$; $\|X\|\leq 2\pi e_k$,
$[X,\omega]=0$; $<\|X\|-2\pi e_k,e_{d_r(\omega)}>=0$ for all $r$.
As $g\in U_{-2\pi e_k}$ and $\|X\|\leq 2\pi e_k$ we must have
$\|X\|<< 2\pi e_k$, so that $<\|X\|-2\pi e_k,e_l><0$ for all $l$. This  means that $\omega=0$.

Thus, {\em $H$ is a constant sheaf.}

Let us now find $H|_e$. We have $H|_e=\bfS_e|_{-2\pi e_k}$.

However, as follows from  Proposition \ref{supportrestriction}, 
$\bfS_e$  is  constant in the domain consisting of all $A\in C_-$ such that there is no $l\in \bL_0^-$, $l\neq 0$, $A\geq l$. 
Both $-2\pi e_k$ and $-e_1/100$ lie in this domain.
Thus we have an isomorphism
$$
\bfS_e|_{-2\pi e_k}=\bfS_e|_{-e_1/100}=\gf[\dim \g].
$$
This finishes the proof.
\end{proof}

Let us  compute $H_k:=R\hom(\gf_{e^{-2\pi e_k}};\sph_k)$.

Let us choose a small neighborhood  $U$ of $e^{-2\pi e_k}$
 in $G$
so that $U=\{e^{-X}e^{-2\pi e_k}| \|X\|<<b\}$. Let us describe
 the set $U_k:=U\cap W_k$. Let $g\in U\cap W_k$. As $g\in W_k$,
we have $g=e^{-Y}$ where 
$\|Y\|<<2\pi e_k$ which  simply means that
 $\lambda_1(Y)<2\pi(N-k)/N$; $\lambda_N(Y)>-2\pi k/N$, where
$$\lambda_1(Y)\geq \lambda_2(Y)\geq \cdots\geq\lambda_N(Y)$$ is the spectrum
of a Hermitian matrix $Y/i$.

 As $g\in U$, there must exist $X$, $\|X\|<<b$ such that
$e^{-Y}=e^{-X}e^{-2\pi e_k}$, or
$$
e^Y=e^{2\pi e_k}e^X.
$$
Observe that $e^{2\pi e_k}=e^{-2\pi k/N}\Id$.
Therefore, one can number the spectrum of 
$X/i$ in such a way that $\lambda^j(X)-2\pi k/N-\lambda_j(Y)\in 2\pi \mathbb{Z}$, $j=1\ldots N$. In other words, there exist integers $m_i$
such that
$\lambda_j(Y)=-2\pi k/N+\lambda^j(X)+2\pi m_i$,
where $m_i$ are integers.

As $-2\pi k/N<\lambda_j(Y)<2\pi (N-k/N)$ and 
$\lambda^j(X)$ are small
we see that $m_j=0$ or $m_j=1$. Since $\Tr(Y)=\Tr(X)=0$,
$\sum m_j=k$. Since $\lambda_1(Y)\geq \lambda_2(Y)\geq \cdots$, we conclude that $m_1=\cdots=m_k=1$; $m_{k+1}=m_{k+2}=\cdots m_N=0$.  We then see that $$
0>\lambda^1(X)\geq \lambda^2(X)\geq \cdots\geq \lambda^k(X);$$
$$
\lambda^{k+1}(X)\geq \cdots \geq \lambda^N(X)>0.$$

In other words, the set $W_k$
consists of all elements of the form $e^{-2\pi e_k}e^{-X}$ where$\|X\|<<b$ and $X/i$ has $k$ negative eigenvalues and 
$N-k$ positive eigenvalues (and no 0 eigenvalues). 
Let $H_k\subset \g$ be an open subset consisting
of all matrices $A$ such that $A/i$ has $k$ negaitive
 and $N-k$
positive eigenvalues. It now follows that
$$
R\hom_G(\gf_{e^{-2\pi e_k}};\sph_k)\cong
 R\hom_\g(\gf_0;\gf_{H_k}[\dim \g]).
$$

Let $M^\circ\subset M\subset E\subset G(k,N)\times \g$ be  defined as follows:

$$
E=\{(V,X)|XV\subset V\};
$$
$$
M=\{(V,X)| XV\subset V; X/i|_V\geq 0;X/i|_{V^\perp}\leq 0\};
$$
$$
M^\circ=\{(V,X)| XV\subset V; X/i|_V> 0;X/i|_{V^\perp}< 0\}.
$$
It follows that $M\subset E\subset G(k,N)\times\g$ are  closed embeddings
and that $M^\circ\subset M$ is an open embedding.
The projection $\pi: E\to \g$ is proper. The natural
projection $p_E:E\to G(k,N)$ is a complex unitary bundle;
$E=S\otimes \overline{S}\oplus S^\perp\otimes \overline{S^\perp}$, where $S$ is the k-dimensional 
tautological bundle over $G(k,N)$.

Let $j:M^\circ\to E$ be the open inclusion. Then
$k_{H_k}=R\pi_!j_!\gf_{M^\circ}=R\pi_*j_!\gf_{M^\circ}$.
Therefore,
$$
R\hom_\g(\gf_0;\gf_{H_k}[\dim \g])=
R\hom_\g(\gf_0;R\pi_*j_!\gf_{M^\circ}[\dim \g])
$$
$$
=R\hom_M(\pi^{-1}\gf_0;j_!\gf_{M^\circ}[\dim \g])
$$

Let $i:G(k,N)\to E$; $i(V)=(V,0)$ be the zero section. We then have
$$
R\hom_\g(\gf_0;\gf_{H_k}[\dim \g])
 =R\hom_M(i_*\gf_{G(k,N)};j_!\gf_{M^\circ}[\dim \g]).
$$

It is easy to see that the natural map
$$
R\hom_M(i_*\gf_{G(k,N)};j_!\gf_{M^\circ}[\dim \g])\to
R\hom_M(i_*\gf_{G(k,N)};\gf_E[\dim \g])=
R\Gamma(G(k,N);i^!\gf_E)[\dim \g]
$$
is a quasi-isomorphism. We have a natural isomorphism
$i^!\gf_E\cong\orient_E[-\dim_\Re E]$ where $\orient_E$ is the sheaf of
orientations on $E$ which is canonically trivial on every
complex bundle. Thus $i^!k_E[\dim \g]=\gf_{G(k,N)}[-\dim E+\dim \g]=\gf_{G(k,N)}[\dim G(k,N)]\cong D_{G(k,N)}$, where 
$D_{G(k,N)}$ is the dualizing sheaf on $G(k,N)$. Finally
we have $R\Gamma(G(k,N);D)\cong H_*(G(k,N);k)$. Thus we have established
\begin{Proposition} There is
 a natural isomorphism
$$
R^{-\bullet}\hom(\gf_{e^{-2\pi e_k}};\sph_k)\cong H_\bullet
G(k,N).
$$
\end{Proposition}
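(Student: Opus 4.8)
The plan is simply to collect the computations made just above into the asserted isomorphism; no genuinely new ingredient is needed. First I would use that $\sph_k\cong\gf_{W_k}$, together with the description, in the chart $X\mapsto e^{-2\pi e_k}e^{-X}$ near $e^{-2\pi e_k}$, of $W_k$ as the open set $H_k\subset\g$ of skew-Hermitian $X$ with $X/i$ of signature $(k,N-k)$; this gives $R\hom_G(\gf_{e^{-2\pi e_k}};\sph_k)\cong R\hom_\g(\gf_0;\gf_{H_k}[\dim\g])=i_0^!(\gf_{H_k})[\dim\g]$, where $i_0$ is the inclusion of the origin. To evaluate this costalk I pass to the resolution $\pi\colon E\to\g$, $E=\{(V,X)\in G(k,N)\times\g:XV\subset V\}$, with $M^\circ\subset M\subset E$ as above. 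Then $\pi$ is proper, and over $H_k$ it restricts to a homeomorphism $M^\circ\cap\pi^{-1}(H_k)\xrightarrow{\ \sim\ }H_k$ — a matrix of signature $(k,N-k)$ has a unique invariant $k$-plane on which $X/i$ is positive definite, and on its orthogonal complement $X/i$ is then automatically negative definite — so $\gf_{H_k}\cong R\pi_*(j_!\gf_{M^\circ})$ with $j\colon M^\circ\hookrightarrow E$. Since $j_!\gf_{M^\circ}$ is supported on $M$ and $\pi^{-1}(0)$ is exactly the zero section $i\colon G(k,N)\hookrightarrow E$, the adjunction $\pi^{-1}\dashv R\pi_*$ rewrites the costalk as $R\hom_E\bigl(i_*\gf_{G(k,N)};\,j_!\gf_{M^\circ}[\dim\g]\bigr)$.

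Two points then remain. The first, and the real content, is that the canonical map $j_!\gf_{M^\circ}\to\gf_E$ becomes an isomorphism after applying $R\hom_E(i_*\gf_{G(k,N)};-)$: its cofiber is $R\Gamma\bigl(G(k,N);i^!\gf_{E\setminus M^\circ}\bigr)[\dim\g]$, so one must show $i^!\gf_{E\setminus M^\circ}=0$. This is a statement along the fibres of the vector bundle $E\to G(k,N)$: the fibre of $M^\circ$ over $V$ is an open convex cone $C_V$ with nonempty interior inside $\mathfrak e_V$, so it suffices to see $i_0^!\gf_{\mathfrak e_V\setminus C_V}=0$, i.e. that $i_0^!j_!\gf_{C_V}\to i_0^!\gf_{\mathfrak e_V}$ is an isomorphism. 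I would get this by Verdier duality: $\mathbb D(j_!\gf_{C_V})\cong j_*\gf_{C_V}[\dim_\Re\mathfrak e_V]$ since $C_V$ is an oriented open submanifold, and after taking stalks at $0$ it reduces to the restriction map $\gf\to(j_*\gf_{C_V})_0$, which is an isomorphism because $C_V$ is convex and hence contractible near $0$. The second point is the identification $i^!\gf_E[\dim\g]\cong D_{G(k,N)}$: here $i^!\gf_E$ is a shift of the orientation sheaf of the normal bundle of the zero section, the codimension of the zero section equals $\dim\g-\dim_\Re G(k,N)$, and the normal bundle $S\otimes\overline S\oplus S^\perp\otimes\overline{S^\perp}$ is complex, hence canonically trivial; so $i^!\gf_E[\dim\g]\cong\gf_{G(k,N)}[\dim_\Re G(k,N)]=D_{G(k,N)}$.

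Combining these, $R\hom_G(\gf_{e^{-2\pi e_k}};\sph_k)\cong R\Gamma(G(k,N);D_{G(k,N)})$, and the latter is $H_\bullet(G(k,N))$ by the standard identification of the global sections of the dualizing complex with Borel–Moore homology, which for the compact manifold $G(k,N)$ is ordinary homology; the $R^{-\bullet}$ merely records that $D_{G(k,N)}=\gf_{G(k,N)}[\dim_\Re G(k,N)]$ places the answer in non-negative homological degrees. The main obstacle is the vanishing $i^!\gf_{E\setminus M^\circ}=0$ — the conical costalk computation isolated in the second paragraph — while the uniqueness of the maximal positive subspace and the orientation bookkeeping are routine, and naturality of the resulting isomorphism is automatic since every step (the chart identification, the resolution, the adjunction, Verdier duality, and the complex orientation) is functorial.
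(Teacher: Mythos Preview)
Your proposal is correct and follows essentially the same route as the paper: pass to the local chart to rewrite the costalk as $R\hom_\g(\gf_0;\gf_{H_k}[\dim\g])$, resolve $\gf_{H_k}$ via the incidence bundle $E\to G(k,N)$ and the open cone $M^\circ$, apply adjunction, and then identify $i^!\gf_E[\dim\g]\cong D_{G(k,N)}$ to land on $H_\bullet(G(k,N))$. Your Verdier-duality/convexity argument for why $j_!\gf_{M^\circ}\to\gf_E$ becomes an isomorphism after $R\hom_E(i_*\gf_{G(k,N)};-)$ is a welcome expansion of what the paper dismisses as ``easy to see''; the only small point to add is that your fibrewise reduction is justified because the pair $(E,M^\circ)$ is locally trivial over $G(k,N)$ by $U(N)$-equivariance.
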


\subsubsection{} Let $D_k:=\dim_\Re G(k,N)=2k(N-k)$. Let
 $\beta\in H_{D_k}(G(k,N))$ be the fundamental class.

According to the previous Proposition, the element $\beta$ defines a map $B_k: \gf_{e^{-2\pi e_k}}\to \sph_k[-D_k]$ in $D(G)$.
Let $C_k:=\cone B_k$.
\begin{Proposition}\label{efka} The singular support of the sheaf $C_k$
is confined within the set
$$
\{(g,\omega)|<|\omega|,f_k>=0\}
$$
\end{Proposition}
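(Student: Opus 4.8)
The goal is to show that the cone $C_k$ of the map $B_k:\gf_{e^{-2\pi e_k}}\to\sph_k[-D_k]$ is microsupported in $\{(g,\omega)\mid \langle|\omega|,f_k\rangle=0\}$. The plan is to work microlocally at a point $(g_0,\omega_0)$ with $\langle|\omega_0|,f_k\rangle>0$ and prove that $C_k$ is non-singular there; since $C_k$ is always a cone, this suffices. First I would note that $\sph_k=\gf_{W_k}$ by the Lemma just proven, with $W_k=\{e^{-Y}\mid\|Y\|<2\pi e_k\}$, and that (from the description of $W_k$ in the preceding paragraph) $g_0=e^{-2\pi e_k}e^{-X_0}$ with $X_0/i$ having $k$ negative and $N-k$ positive eigenvalues; combining Lemma \ref{supportogran} with the explicit computation of $\sph_k$, $\mS(\sph_k)$ consists of points $(e^X,\omega)$ with $[X,\omega]=0$, $\|X\|\leq 2\pi e_k$, and $\langle\|X\|-2\pi e_k,e_{d_r(\omega)}\rangle=0$ for every jump $d_r$ of the flag $V_\bullet(\omega)$.

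The key idea is that the condition $\langle|\omega_0|,f_k\rangle>0$ says exactly that the flag $V_\bullet(\omega_0)$ has a $k$-dimensional step $V^k$, i.e. $k$ is one of the jump indices $d_r(\omega_0)$. So along $\mS(\sph_k)$ near $(g_0,\omega_0)$ one has $\langle\|X\|-2\pi e_k,e_k\rangle=0$, forcing $\Tr(X/i)|_{V^k}=2\pi k(N-k)/N$ combined with the trace normalization — in other words the constraint on $X$ becomes rigid in the $e_k$-direction and $\sph_k$ looks, near such points, like a product in which the $f_k$-direction is ``frozen''. I would make this precise by the Grassmannian model $M^\circ\subset E\to G(k,N)$ already set up: in a neighborhood of $g_0$, $W_k$ is diffeomorphic (via $\exp$ composed with translation by $e^{-2\pi e_k}$) to an open set of the form $U'\times M^\circ_{\mathrm{loc}}$ where the $G(k,N)$-direction accounts precisely for the $f_k$-coordinate of the momentum, and $\sph_k[-D_k]\cong \gf_{U'}\boxtimes \gf_{M^\circ}[\dim\g-D_k]$. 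Under this splitting the map $B_k$ is $\mathrm{id}_{U'}\boxtimes$ (the map $\gf_0\to\gf_{M^\circ}[\dim\g-D_k]$ coming from the fundamental class $\beta$), and the latter, after the identification $R\hom(\gf_0,\gf_{H_k}[\dim\g])\cong H_\bullet G(k,N)$ established in the previous Proposition, becomes an isomorphism onto the top homology; so its cone is microsupported away from the $f_k$-direction. That is, $\Cone(B_k)$ near $(g_0,\omega_0)$ has microsupport contained in $\{\langle\eta,f_k\rangle=0\}$-type directions in the $G(k,N)$ factor only after killing the top class — precisely what is needed.

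Concretely the steps are: (1) fix $(g_0,\omega_0)$ with $\langle|\omega_0|,f_k\rangle>0$, choose $V^k=V^k(\omega_0)$, and set up the local diffeomorphism identifying a neighborhood of $g_0$ in $G$ with $U'\times\mathcal{G}$ where $\mathcal{G}$ is an open chart of $G(k,N)$ parametrizing the $k$-plane, arranged so the $G(k,N)$-direction is dual to $f_k$; (2) show $\sph_k|_{\mathrm{nbhd}}\cong \gf_{U'}\boxtimes j_!\gf_{M^\circ}$ and that $B_k$ respects this product, its $\mathcal{G}$-component being the map induced by $\beta$; (3) invoke the previous Proposition to identify $R\hom(\gf_0,\sph_k[-D_k])$ with $H_\bullet G(k,N)$ and observe $\Cone(\beta:\gf_0\to\gf_{H_k}[\dim\g-D_k])$ has microsupport (in the $\mathcal{G}$-variable) contained in the complement of the conormal to the relevant stratum, equivalently that $C_k$ has no covector with $\langle\omega,f_k\rangle\neq0$ at $(g_0,\omega_0)$; (4) conclude by letting $(g_0,\omega_0)$ range over all points with $\langle|\omega_0|,f_k\rangle>0$. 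I expect the main obstacle to be step (2): making the product decomposition of $\sph_k$ genuinely local-and-natural enough that the map $B_k$ visibly splits, since $B_k$ is defined globally via a fundamental class in $H_{D_k}(G(k,N))$ and one must check the chart identification is compatible with that global definition; once the splitting is in place, step (3) is essentially the content of the Proposition already proved and is routine.
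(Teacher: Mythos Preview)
Your proposal has two genuine gaps, and the second one is where the real content of the proof lives.

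\textbf{The case $g_0\neq e^{-2\pi e_k}$.} Your description ``$g_0=e^{-2\pi e_k}e^{-X_0}$ with $X_0/i$ having $k$ negative and $N-k$ positive eigenvalues'' is valid only for $g_0$ in the small neighborhood $U$ of $e^{-2\pi e_k}$ introduced in the preceding paragraph; it is not a description of arbitrary points of $\overline{W_k}$. Away from $e^{-2\pi e_k}$ one has $\mS(C_k)=\mS(\sph_k)$, and you still need to show that $\mS(\sph_k)$ contains no covector with $\langle\|\omega\|,f_k\rangle>0$ over such $g_0$. The paper does this by a short convexity argument: from the microsupport description one gets $\langle\|{-X}\|,e_k\rangle=\langle 2\pi e_k,e_k\rangle$ together with $\|{-X}\|\le 2\pi e_k$; the concavity of $l\mapsto\langle\|{-X}\|,e_l\rangle$ then forces $\|{-X}\|=2\pi e_k$, i.e.\ $g_0=e^{-2\pi e_k}$, a contradiction. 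You have not supplied this step.

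\textbf{The case $g_0=e^{-2\pi e_k}$.} Here your plan is to write $\sph_k$ locally as a box product $\gf_{U'}\boxtimes\gf_{M^\circ}[\cdots]$ and split $B_k$ accordingly. This decomposition does not exist. In the local model the identification is $I^{-1}\sph_k\cong R\pi_*\gf_{M^\circ}[\dim\g]$ on an open set of $\g$, where $\pi:E\to\g$ \emph{collapses the entire zero-section} $G(k,N)\subset E$ to the single point $0\in\g$. So over $g_0=e^{-2\pi e_k}$ the Grassmannian is crushed, and there is no way to peel off a $G(k,N)$-factor from the base. Relatedly, saying that the $\mathcal G$-component of $B_k$ ``becomes an isomorphism onto the top homology'' conflates the class $\beta\in R\hom(\gf_0,\sph_k[-D_k])$ with a property of the underlying map of sheaves; knowing which hom-class $B_k$ represents does not, by itself, constrain $\mS(\Cone B_k)$.

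What the paper does instead is to exploit that both $\gf_0$ and $R\pi_*\gf_{M^\circ}$ are \emph{conic} (dilation-invariant) on $\g$, and pass to the Fourier--Sato transform. This converts the microsupport question at $0$ into a support question on the dual side: $(\gf_0)^\vee=\gf_\g$, $(R\pi_*\gf_{M^\circ})^\vee\cong Rp_{\g!}\gf_Z[\cdots]$ with $Z=\{(X,V):XV\subset V,\ \lambda_{\min}(X/i|_V)\ge\lambda_{\max}(X/i|_{V^\perp})\}$, and $B_k^\vee$ becomes the map induced by the closed inclusion $Z\hookrightarrow\g\times G(k,N)$. On the open set $\{X:\lambda_k(X)>\lambda_{k+1}(X)\}$ the projection $Z\to\g$ is a homeomorphism, so $\Cone(B_k^\vee)$ is supported on $\{\langle\|X\|,f_k\rangle=0\}$, which is exactly the desired microsupport bound for $C_k$. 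The Fourier--Sato step is the substantive idea that your outline is missing.
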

\begin{proof} First, consider the case $g\neq e^{-2\pi e_k}$. 

Then 
$(g,\omega)\in \mS(C_k)$ iff $(g,\omega)\in \mS(\sph_k)$.
The sheaf 
$\sph_k$ is microsupported within the set
$$
(e^{X},\omega),
$$
where  $\|-X\|\leq 2\pi e_k$ and  if $<\omega,f_j>\neq 0$, then
$<\|-X\|,e_j>=
2\pi<e_k,e_j>$ for all $j$. 

Therefore, it suffices to show
that $<\|-X\|/2\pi,e_k>\;<\;<e_k,e_k>$.
Assume the contrary and
let $\eta:=\|-X\|/2\pi$. 
 Let $\eta_l:=<\eta,e_l>$; $\ve_l=<e_k,e_l>$. Set
$\eta_0=\eta_N=\ve_0=\ve_N=0$.
We have $0\leq <\eta,f_l>=2 \eta_l-\eta_{l-1}-\eta_{l+1}$.
Therefore, $\eta_l-\eta_{l-1}\geq \eta_{l+1}-\eta_l$.  These convexity inequalities imply
$$
\eta_l\geq l/k\eta_k
$$
for all $l\leq k$;
$$
\eta_l\geq (N-l)/(N-k) \eta_k,
$$
for all  $l\geq k$.  

If $\eta_k=\ve_k$, these inequalities mean that
$\eta_l\geq \ve_l$ for all $l$. However, we know that
$\eta\leq \ve$. Hence, $\eta=\ve$ and $\|-X\|=2\pi e_k$,
hence $e^{X}=e^{-2\pi e_k}$ which is a contradiction.

Thus, $<\|-X\|,e_k>\;<\;<2\pi e_k,e_k>$, therefore, 
$<\eta,f_k>=0$.

Let us now consider the case $g=e^{-2\pi e_k}$. It  suffices
to consider the restriction $\sph_k|_{U\cap W_k}$ as in the previous
theorem.  Let $V:=\{X\in \g | |X|<<b\}$ We then have
an identification $I:V\to U$; 
$X\mapsto e^{-X}e^{-2\pi e_k}$.
we know  that  $I^{-1}\sph_k\cong R\pi_*\gf_{M^\circ}[\dim \g]|_{V_k}$  an the map $\gf_{e^{-2\pi e_k}}\to \sph_k[d_k]$ is induced
by a certain map $\gf_0\to R\pi_*\gf_{M^\circ}[\dim \g]$. 
Namely, this map comes from the identification
$$
\hom(\gf_0;R\pi_*\gf_{M^\circ}[\dim \g])\to \hom(\gf_{G(k,N)};
\gf_{M^\circ}[\dim \g])\to \hom(\gf_{G(k,N)};\gf_E[\dim\g])
$$
$$
=
H_*(G(k,N)).
$$

Note that the sheaves $\gf_0$ and  $R\pi_*\gf_{M^\circ}$ are
 dilation invariant, so we may study their
 Fourier-Sato transforms. Let us find $(R\pi_*\gf_{M^\circ})^\lor$. Let $E^*\cong E$ be the dual bundle over $G(k,N)$;
let $M^*\subset E^*$ be the closed cone dual to the open convex
cone $M^\circ \subset E$. 
Upon the identification $E^*=E$ by means of the scalar
product, we identify $M^*$ with the set of all pairs
$(X,V)\in \g\times G(k,N)$ such that $XV=V$ and
the smallest eigenvalue of $X/i|_V$ is greater or equal to
the largest eigenvalues of $X/i|_{V^\perp}$.

  Let
$P:\g\times G(k,N)\to E^*$ be the map dual to
$\pi:E\to\g$. Let $p_\g:\g\times G(k,N)\to \g$
be the projection.
 We then have
$$
R\pi_*\gf_{M^\circ}^\lor=p_{\g!}P^{-1}\gf_{M^*}[-\dim_\Re E/G(k,N)]
$$ 

We then see that
$$
P^{-1}\gf_{M^*}=\gf_Z,
$$
where $Z\subset \g\times G(k,N)$; $$Z=\{(X,V)| 
XV\subset V;\lambda_{\text{min}}X|_V\geq \lambda_{\text{max}}X|_{V^\perp}\}.$$

Thus, $R\pi_*\gf_{M^\circ}[\dim \g]^\lor=
Rp_{\g!}\gf_Z[\dim \g-\dim E/G(k,N) ]=Rp_{\g!}\gf_Z[\dim G(k,N)].$
Next, $\gf_0^\lor=\gf_\g$. The map $B_k$ induces a map
of Fourier-Sato transforms:
$$
B^\lor:\gf_\g\to Rp_{\g!}\gf_Z
$$
Let us specify this map. By the conjugacy (since $p_\g$
is proper), one can instead specify a map
$$
B^\lor_\text{conj}:p_\g^{-1}\gf_\g=
\gf_{\g\times G(k,N)}\to \gf_Z.
$$
One can show that this map is simply the natural 
map induced by the closed embedding $Z\subset \g\times G(k,N)$. 

Let us now consider an open set $U\subset \g$ consisting
of all $X\in \g$ such that
 $\lambda_k(X)>\lambda_{k+1}(X)$. We then see that
the projection $Z\times_\g U\to U$ is a homeomorphism.
Therefore, $\cone B^\lor|_U=0$ that is
$\cone B^\lor=(\cone B)^\lor$ is  supported on the 
complement of $U$ which is precisely the set of
all $X\in \g$ such that $<\|X\|,f_k>=0$. This proves
the statement.
\end{proof}

\subsubsection{} Let $l\in \h$. Let $T_l:G\times \h\to
G\times \h$ be the shift in $ l$: $T_l(g,X)=(g,X+l)$.
We know that $T_{l}^{-1} \sp =\sp|_{G\times  l}*_G \sp$ (Lemma \ref{sdvigsp}).
Therefore, the maps $B_k$ induce maps
\begin{equation}\label{sdvigfazy}
B_k':\gf_{e^{-2\pi e_k}}*_G \sp\to 
\sp|_{G\times e^{-2\pi e_k}}*_G\sp[-D_k]=T_{-2\pi e_k}^{-1}\sp[-D_k],
\end{equation}
where $D_k=\dim G(k,N)$.  
The previous Proposition implies
that
\begin{Corollary} $\cone B_{k'}$ is locally  constant
on the fibers of the projection $G\times \h\to
G\times \h/f_k$.
\end{Corollary}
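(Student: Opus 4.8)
The plan is to recognize $\cone B_k'$ as a convolution $C_k *_G \sp$ and then read off its microsupport from that of $C_k$ (controlled by Proposition \ref{efka}) and of $\sp$ (controlled by $\Omega_\sp$).

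First I would observe that the construction (\ref{sdvigfazy}) is just the application of the triangulated functor $(-)*_G\sp:D(G)\to D(G\times\h)$ to the morphism $B_k:\gf_{e^{-2\pi e_k}}\to\sph_k[-D_k]$, where I use the identification $\sph_k*_G\sp=\sp|_{G\times -2\pi e_k}*_G\sp\cong T_{-2\pi e_k}^{-1}\sp$ from Lemma \ref{sdvigsp}. Since this functor takes distinguished triangles to distinguished triangles, it sends $\gf_{e^{-2\pi e_k}}\stackrel{B_k}\to\sph_k[-D_k]\to C_k\to$ to $\gf_{e^{-2\pi e_k}}*_G\sp\stackrel{B_k'}\to T_{-2\pi e_k}^{-1}\sp[-D_k]\to C_k*_G\sp\to$, so $\cone B_k'\cong C_k*_G\sp$. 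It therefore suffices to show that $C_k*_G\sp$ is locally constant along the fibers of the projection $p:G\times\h\to G\times(\h/\Re.f_k)$.

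Next I would estimate $\mS(C_k*_G\sp)$ by the usual microsupport calculus. Write $C_k*_G\sp=Rm_!(p_1^{-1}C_k\otimes p_{23}^{-1}\sp)$ with $m(g_1,g_2,A)=(g_1g_2,A)$, $p_1(g_1,g_2,A)=g_1$, $p_{23}(g_1,g_2,A)=(g_2,A)$, and identify $T^*(G\times G\times\h)$ with the direct sum of the pullbacks of $T^*G$, $T^*G$, $T^*\h$ over the corresponding factors, using right-invariant forms on $G$. Then $\mS(p_1^{-1}C_k)$ lies in the first summand and $\mS(p_{23}^{-1}\sp)$ in the sum of the last two, so $\mS(p_1^{-1}C_k)\cap(-\mS(p_{23}^{-1}\sp))$ is contained in the zero section and $\mS(p_1^{-1}C_k\otimes p_{23}^{-1}\sp)$ is contained in the fiberwise sum. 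As $m$ is proper (its fibers are copies of the compact group $G$), a point $(g,A,\omega,\eta)\in T^*(G\times\h)$ can lie in $\mS(C_k*_G\sp)$ only if there exist $g_1,g_2$ with $g_1g_2=g$, $(g_1,\omega)\in\mS(C_k)$ and $(g_2,A,\Ad^*_{g_1}\omega,\eta)\in\Omega_\sp$; here I use $m^*_{g_1,g_2,A}(g_1g_2,A,\omega,\eta)=(g_1,g_2,A,\omega,\Ad^*_{g_1}\omega,\eta)$, exactly as in the proof of Proposition \ref{puchoknaorbite}.

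Finally, Proposition \ref{efka} gives $\langle\|\omega\|,f_k\rangle=0$, while membership in $\Omega_\sp$ forces $\eta=\|\Ad^*_{g_1}\omega\|=\|\omega\|$, since $\Ad^*_{g_1}\omega$ is conjugate to $\omega$ and $\|\zeta\|$ is by definition the unique element of $C_+$ conjugate to $\zeta$. Hence $\langle\eta,f_k\rangle=0$ for every $(g,A,\omega,\eta)\in\mS(C_k*_G\sp)$, i.e. every such covector annihilates the vertical tangent direction $f_k$ of $p$; equivalently $\mS(C_k*_G\sp)\subset p^*T^*(G\times(\h/\Re.f_k))$. By the standard Kashiwara--Schapira criterion this means $C_k*_G\sp=\cone B_k'$ is locally constant along the fibers of $p$, which is the assertion. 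The only point needing care is the middle step --- keeping the three cotangent summands separate so the convolution estimate is clean, and invoking the conjugation-invariance of $\|\cdot\|$ so that the $f_k$-vanishing condition survives the push-forward --- but both are routine.
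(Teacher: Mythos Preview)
Your proof is correct and follows essentially the same approach as the paper: identify $\cone B_k'$ with $C_k*_G\sp$, then combine the microsupport bound on $C_k$ from Proposition \ref{efka} with $\mS(\sp)\subset\Omega_\sp$ from Theorem \ref{specth} via the standard convolution estimate to conclude that every covector in $\mS(\cone B_k')$ annihilates $f_k$. You have simply spelled out the details the paper leaves as ``one can easily show,'' including the key observation that $\|\Ad^*_{g_1}\omega\|=\|\omega\|$.
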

\begin{proof} We have
$$
\cone B_{k'}\cong  C_{k'}*_G \sp.
$$
Using the previous Proposition as well as Theorem \ref{specth}
one can easily show that 1-forms from $\mS(C_{k'}*_G\sp)$ do vanish
on the fibers of the projection $G\times \h\to
G\times \h/f_k$.\end{proof}

Let $z\in \Zentrum$ and
 restrict (\ref{sdvigfazy}) onto
$ze^{-2\pi e_k}\in G$. We will get a map
$$
B_k^g:\bfS_{z}\to
 T_{-2\pi e_k}^{-1} \bfS_{ze^{-2\pi e_k}}[-D_k].
$$
It follows that $\cone B_k^g$ is  also constant along 
the fibers of the projection $\h\to \h/f_k$.
\subsubsection{} The map $B_k^g$ induces a map
$$
\Delta_{I,m}(\bfS_{z})\to  
\Delta_{I,m} T_{-2\pi e_k}^{-1} 
\bfS_{ze^{-2\pi e_k}}[-D_k]. $$
for all $I$ and $m$. This is the same as a map
\begin{equation}\label{deltsasdvig}
\Delta_{I,m}\bfS_{z}\to \Delta_{I,m-2\pi e_k} \bfS_{
ze^{-2\pi e_k}}[-D_k].
\end{equation}
\begin{Proposition} If $k\in I$, the above map
is a quasi-isomorphism.
\end{Proposition}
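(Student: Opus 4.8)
The plan is to identify the cone of the morphism (\ref{deltsasdvig}) with the functor $\Delta_{I,m}$ applied to the cone of $B_k^g$, and then to kill the latter using Lemma \ref{pryzhok:fk}, whose hypothesis is exactly tailored to this situation.

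First I would make precise the sense in which (\ref{deltsasdvig}) is obtained by applying $\Delta_{I,m}(-) = R\hom_\h(\gf_{W(I,m,\ve)};-)[|I|]$ to the morphism $B_k^g\colon \bfS_z \to T_{-2\pi e_k}^{-1}\bfS_{ze^{-2\pi e_k}}[-D_k]$ gotten by restricting (\ref{sdvigfazy}). Since $T_{-2\pi e_k}\colon \h\to\h$ is a diffeomorphism one has $T_{-2\pi e_k}^{-1} = T_{-2\pi e_k}^{!}$, hence a natural isomorphism
$$
R\hom_\h(\gf_{W(I,m,\ve)};T_{-2\pi e_k}^{-1}F)\cong R\hom_\h(T_{-2\pi e_k*}\gf_{W(I,m,\ve)};F).
$$
Inspecting the inequalities defining $W(I,m,\ve)$ gives $T_{-2\pi e_k}(W(I,m,\ve)) = W(I,m-2\pi e_k,\ve)$, so, incorporating the shift $[|I|]$, we get $\Delta_{I,m}(T_{-2\pi e_k}^{-1}F)\cong \Delta_{I,m-2\pi e_k}(F)$; taking $F = \bfS_{ze^{-2\pi e_k}}[-D_k]$ recovers the identification used to rewrite ``$\Delta_{I,m}$ applied to $B_k^g$'' as the morphism (\ref{deltsasdvig}).

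Next, $\Delta_{I,m}$ is an exact functor between triangulated categories (it is $R\hom_\h(\gf_{W(I,m,\ve)};-)$ followed by a shift), so it sends the distinguished triangle $\bfS_z \stackrel{B_k^g}\to T_{-2\pi e_k}^{-1}\bfS_{ze^{-2\pi e_k}}[-D_k]\to \cone(B_k^g)\to$ to a distinguished triangle. Combined with the previous step this produces a canonical isomorphism between the cone of (\ref{deltsasdvig}) and $\Delta_{I,m}(\cone(B_k^g))$, so it suffices to prove $\Delta_{I,m}(\cone(B_k^g)) = 0$, i.e. $R\hom_\h(\gf_{W(I,m,\ve)};\cone(B_k^g)) = 0$.

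Finally I would invoke the constancy already in hand: by Proposition \ref{efka} and the corollaries drawn from it, $\cone(B_k^g)\in D(\h)$ is locally constant along the fibers of the projection $\h\to\h/\Re f_k$. Since $k\in I$ by hypothesis, Lemma \ref{pryzhok:fk} applies verbatim and yields precisely the vanishing $R\hom_\h(\gf_{W(I,m,\ve)};\cone(B_k^g)) = 0$, whence (\ref{deltsasdvig}) is a quasi-isomorphism. I do not expect any genuine obstacle in this Proposition: the one substantive input, the singular-support estimate $\mS(\cone B_k)\subset\{(g,\omega)\mid <\|\omega\|,f_k>\,=0\}$ of Proposition \ref{efka} together with the microlocal behaviour of $(-)*_G\sp$, has already been established, and what remains is formal manipulation of $R\hom$ plus bookkeeping with the shift conventions.
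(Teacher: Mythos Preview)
Your proposal is correct and is exactly the paper's approach spelled out in detail: the paper's entire proof is the single line ``Follows from Lemma \ref{pryzhok:fk},'' and you have unpacked precisely why---identifying the cone of (\ref{deltsasdvig}) with $\Delta_{I,m}(\cone B_k^g)$ and then invoking the constancy of $\cone B_k^g$ along the $f_k$-direction together with Lemma \ref{pryzhok:fk}. Your verification that $T_{-2\pi e_k}(W(I,m,\ve)) = W(I,m-2\pi e_k,\ve)$ makes explicit the identification the paper uses implicitly in passing from $\Delta_{I,m}T_{-2\pi e_k}^{-1}$ to $\Delta_{I,m-2\pi e_k}$.
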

\begin{proof}
Follows from Lemma \ref{pryzhok:fk}.
\end{proof}

Theorem \ref{period} now follows directly from the
previous Proposition.

\subsubsection{Corollary from Theorem \ref{period}} \label{deem}
 Let $u\in \h$, 
$u=2\pi \sum x_ie_i$, set $D(u):=-\sum x_kD_k$.

We then see:

\begin{Corollary}\label{pryzhokcor} Let $z\in \Zentrum$,
  $m\in \bL_z\cap C_-$.  Then there exists an isomorphism
\begin{equation}\label{jumpiso}
 \Delta_{m,I_m} \bfS_z\cong \Delta_{0,I_m} \bfS_e [D(m)].
\end{equation}
\end{Corollary}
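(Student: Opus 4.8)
The plan is to obtain this Corollary by iterating the quasi-isomorphism of Theorem \ref{period}, so that the only real content is keeping track of the cohomological shift.

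First I would record the following elementary fact in the notation of Sec. \ref{gnot}: every $m\in\bL\cap C_-$ has the form $m=-2\pi\sum_{k=1}^{N-1}a_ke_k$ with $a_k$ non-negative integers. Indeed $\bL$ is spanned over $\mathbb Z$ by the $2\pi e_k$ (these lie in $\bL$ since $e^{2\pi e_k}=e^{-2\pi k/N}\Id$ is central), while $C_-$ is the $\mathbb R_{\ge0}$-span of the $-e_k$ because the $e_k$ and $f_k$ are dual bases; hence on the lattice the antidominance condition $\langle m,f_k\rangle\le0$ becomes $a_k\ge0$. Note also $\langle m,f_k\rangle=-2\pi a_k$, so $I_m=\{k:a_k>0\}$, and writing $m=2\pi\sum_i x_ie_i$ we have $x_i=-a_i$, whence $D(m)=-\sum_k x_kD_k=\sum_{k\in I_m}a_kD_k$ by the definition in \ref{deem}.

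Next, choose any sequence $k_1,\dots,k_r$ of indices in which each $k\in I_m$ occurs exactly $a_k$ times (so $r=\sum_ka_k$). Set $m^{(0)}=0$, $m^{(j)}=m^{(j-1)}-2\pi e_{k_j}$ (so $m^{(r)}=m$) and $z^{(j)}=e^{m^{(j)}}$; since the central scalars $e^{2\pi e_k}$ commute one has $z^{(j)}=z^{(j-1)}e^{-2\pi e_{k_j}}$, with $z^{(0)}=e$ and $z^{(r)}=z$, and each $z^{(j)}\in\Zentrum$ because $m^{(j)}\in\bL$. All the indices $k_j$ lie in $I_m$, so Theorem \ref{period} applies at every stage with the fixed index set $I=I_m$ and yields quasi-isomorphisms
$$\Delta_{m^{(j-1)},I_m}\,\bfS_{z^{(j-1)}}\;\stackrel{\sim}{\longrightarrow}\;\Delta_{m^{(j)},I_m}\,\bfS_{z^{(j)}}[-D_{k_j}],\qquad j=1,\dots,r.$$
Composing these $r$ maps gives
$$\Delta_{0,I_m}\,\bfS_e\;\cong\;\Delta_{m,I_m}\,\bfS_z\bigl[-\textstyle\sum_{k\in I_m}a_kD_k\bigr]=\Delta_{m,I_m}\,\bfS_z[-D(m)],$$
which is exactly (\ref{jumpiso}) after shifting both sides by $[D(m)]$.

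I do not expect any serious obstacle here. The two points that need care are the lattice statement in the second paragraph (a purely combinatorial fact about the root system of $\SU(N)$) and the verification that the intermediate parameters $m^{(j)}$ remain in $\bL$, so that the intermediate centres $z^{(j)}$ lie in $\Zentrum$ and Theorem \ref{period} is legitimately applicable at each step; both are immediate. Since the Corollary only asserts the existence of an isomorphism, nothing has to be said about independence of the composite on the chosen ordering of the $k_j$.
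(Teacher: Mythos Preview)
Your argument is correct and is exactly what the paper means by ``Follows directly from Theorem \ref{period}'': you simply iterate that theorem along a decomposition $m=-2\pi\sum_k a_k e_k$ and add up the shifts to get $D(m)$. The bookkeeping (that $\bL=2\pi\mathbb{Z}\langle e_1,\ldots,e_{N-1}\rangle$, that $I_m=\{k:a_k>0\}$, and that the intermediate $z^{(j)}$ stay in $\Zentrum$) is all handled correctly.
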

\begin{proof} 
Follows directly from Theorem \ref{period}.
\end{proof}

\subsection{Computing $\Delta_{0,I} \bfS_e$}
Let $I:=\{j_1<j_2<\cdots<j_r\}$. Let $\Fl(I)$
 be the partial flag manifold with  dimensions of the subspaces
being $j_1,j_2,\ldots,j_r$. We will show
\def\cE{{\mathcal{E}}}

\begin{Proposition}\label{pryzhok2}
$$
\Delta_{0,I} \bfS_e \cong H^\bullet(\Fl(I)).
$$
\end{Proposition}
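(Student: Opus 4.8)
The plan is to reduce the computation of $\Delta_{0,I}\bfS_e=R\hom(\gf_{W(I,0,\ve)};\bfS_e)[|I|]$ to a computation on the Lie algebra $\g$, and then to identify the answer with the (Borel--Moore) homology of a vector bundle over the partial flag manifold $\Fl(I)$, in exactly the spirit of the computation $R^{-\bullet}\hom(\gf_{e^{-2\pi e_k}};\sph_k)\cong H_\bullet G(k,N)$ carried out above (which is the case $I=\{k\}$).

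\textbf{Step 1: a local model for $\bfS_e$ near $\{e\}\times\{0\}$.} Shrinking $\ve$ if necessary (legitimate by Lemma \ref{vyrazheniemn}), the set $W(I,0,\ve)$ and all the downward cones $U_{f_L}=\{y\mid y<<\ve\sum_{l\in L}f_l\}$, $L\subseteq I$, that enter the Koszul resolution $D'\to\gf_{W(I,0,\ve)}$ of (\ref{def:D'}) (with $x=0$), lie inside $C_-^\circ+b/2$. There, by the construction of $\sp$ in Sec.\ \ref{proofspecth}, $\sp\cong \bT_1^{-1}\Phi^-*_G X^+$. Unwinding the convolution, using $\Phi^-|_{G\times A'}\cong\gf_{\{e^Y\mid \|Y\|<<-A'\}}[\dim G]$ and $X^+=\gf_{\{e^{-X}\mid\|X\|\le b/2\}}$, together with the injectivity of $\exp$ near $0$, one gets
$$
\bfS_e|_A\cong R\Gamma_c\bigl(\{Y\in\g\mid \|Y\|\le b/2,\ \|Y\|<<b/2-A\};\gf\bigr)
$$
for $A$ in the relevant corner of $\h$; in particular $\bfS_e|_{U_{f_L}}$ is $Rp_!$ of the constant sheaf on a locally closed subset of $U_{f_L}\times\g$, where $p$ is the projection to $\h$.

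\textbf{Step 2: feed it into the Koszul complex.} Since each $U_{f_L}$ is open, $R\hom(\gf_{U_{f_L}};\bfS_e)=R\Gamma(U_{f_L};\bfS_e)$; and since $\mS(\bfS_e)\subset\h\times C_+$, Lemma \ref{gammatop} replaces this by the corner value $R\Gamma(U_{f_L}^-;\bfS_e)$. Commuting this section functor past the fibrewise $R\Gamma_c$ of Step 1, $R\Gamma(U_{f_L}^-;\bfS_e)$ becomes the Borel--Moore homology of the piece of $\g$ cut out by $\|Y\|\le b/2$ with the eigenvalue--degeneracy pattern of $Y$ prescribed by $L$: $Y$ is pushed onto the locus $H_L\subset\g$ of elements whose sorted eigenvalues coincide precisely at the walls indexed by $\{1,\dots,N-1\}\setminus L$. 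As in the proof of the $\sph_k$--statement, one resolves $\gf_{H_L}$ through the incidence variety $E_L=\{(V_\bullet,X)\mid XV_\bullet\subseteq V_\bullet\}$, which is proper over $\g$ and, over $\Fl(L)$, is a direct sum of Hermitian bundles (so that $i_L^!\gf_{E_L}\cong\orient_{E_L}[-\dim_{\mathbb R}E_L]$ is canonically trivialized); this identifies $R\Gamma(U_{f_L}^-;\bfS_e)$ with $H_\bullet(\Fl(L))$ up to a fixed degree shift, and under the identification the Koszul differentials of $D'$ go over to the pullback maps $H^\bullet(\Fl(L))\to H^\bullet(\Fl(I))$ along the flag--forgetting projections $\Fl(I)\to\Fl(L)$.

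\textbf{Step 3: collapse, and the main obstacle.} One then checks that the resulting complex --- the Koszul complex $L\mapsto H^\bullet(\Fl(L))$, $L\subseteq I$, with the flag--forgetting pullbacks as differentials and overall shift $|I|$ --- has cohomology $H^\bullet(\Fl(I))$ in the expected degrees (the contributions of the proper $L\subsetneq I$ cancel against the ``non-top'' part of each $H^\bullet(\Fl(L))$), which gives the claim. The main obstacle is precisely this last bookkeeping: matching the half-open inequalities defining $W(I,0,\ve)$ (hence the combinatorics of $D'$) with the stratification of $\{\,\|Y\|\le b/2\,\}\subset\g$ by eigenvalue degeneracies, verifying that the induced maps on the $H_\bullet(\Fl(L))$ are exactly the flag--forgetting pullbacks, and computing the cohomology of the total complex --- i.e.\ the multi--wall generalization of the Fourier--Sato analysis in Proposition \ref{efka}. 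An alternative to Step 3 would be an induction on $|I|$ via the $\mathbb P$-- or Grassmann--bundle structure of $\Fl(I)\to\Fl(I\setminus\{k\})$ together with the single--wall case $I=\{k\}$ already established, but the same combinatorial matching remains the crux.
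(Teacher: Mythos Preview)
Your approach diverges from the paper's and has a real gap at Step~2.

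The paper does not break $\gf_{W(I,0,\ve)}$ into Koszul pieces and compute them one by one. Instead it uses the autoequivalence $F\mapsto S_Z*_GF$ of $D(G\times\h)$ (with $S_Z=\sp|_{G\times Z}$ for a small $Z\in C_-^\circ$, $-Z<<b$) to rewrite $R\hom_{G\times\h}(\gf_e\boxtimes\gf_W;\sp)$ as $R\hom_{G\times\h}(S_Z\boxtimes\gf_{Z+W};\sp)$, where now $Z+W\subset V_b^-\cap C_-^\circ$. On that region both $S_Z$ and $\sp$ are shifted constant sheaves on explicit convex open sets $V_Z\subset\g$ and $\Omega\subset\g\times\h$. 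Writing $\gf_\Omega=R\ihom(\gf_{\overline\Omega};\gf_{\g\times\h})$ and using adjunction along the projection $p:\g\times\h\to\g$, the whole thing becomes $R\hom_\g(Rp_!\gf_A;\gf_\g)$; a fibrewise calculation of $Rp_!\gf_A$ identifies it with $\gf_{E_\ve}$ (up to shift) for an explicit open $E_\ve\subset\g$, and $E_\ve$ is shown to be an open convex-fibered subset of a Hermitian vector bundle over $\Fl(I)$ (via $X\mapsto V_\bullet(X)$). Thus $\Delta_{0,I}\bfS_e\cong H^\bullet(E_\ve)=H^\bullet(\Fl(I))$, and the flag variety appears once, at the end --- there is no assembly from smaller $\Fl(L)$'s.

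The gap in your argument is the sentence ``commuting this section functor past the fibrewise $R\Gamma_c$'' in Step~2. Your Step~1 gives, on the relevant patch, $\bfS_e\cong Rp_*\gf_{\widetilde S}$ with $\widetilde S=\{(A,X):\|X\|\le b/2,\ \|X\|<<b/2-A\}$ and $p$ proper on the support (so $Rp_!=Rp_*$). Then $R\Gamma(U_{f_L};\bfS_e)=R\Gamma(p^{-1}U_{f_L};\gf_{\widetilde S})$, i.e.\ \emph{ordinary} cohomology of a locally closed subset of $U_{f_L}\times\g$, not Borel--Moore homology of a single eigenvalue-degeneracy stratum $H_L\subset\g$. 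For $L=\emptyset$ this indeed collapses to $\gf$ (since $S_A=\{\|X\|\le b/2\}$ is independent of $A<<0$), but for $L\neq\emptyset$ the fibre $S_A$ genuinely varies over $U_{f_L}$ and your identification with $H_L$, hence with $H_\bullet(\Fl(L))$, is unjustified. Since the Koszul terms are not what you posit, the cancellation scheme of Step~3 (which you already flag as the unfinished crux) has no footing. The idea you are missing is exactly the paper's shift by $S_Z$: it moves the entire window $W$ into the region where $\sp$ itself --- not merely its stalks along $\{e\}\times\h$ --- is an explicit constant sheaf on a convex set, so that one direct computation replaces the Koszul bookkeeping.
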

\begin{proof}

Let $b$ be as in Sec. \ref{specp}. Let $Z\in C_-^\circ$; $-Z<<b$.
  One can choose $\ve$ so small
that $Z+\sum_k a_kf_k\in C_-^\circ$ if  $0\leq a_k\leq \ve$.

For $A\in \h$, set $S_A:=\sp|_{G\times A}$.
We have
$$
\Delta_{0,I}\bfS_e=R\hom_\h(\gf_{W(I,0,\ve)};\bfS_e)[|I|]
$$

For $\delta>0$, let
$$
W(I,0,\ve,\delta)\subset \h
$$
be the set of all points $A$
such that for all $k\in I$, $<A,e_k>\in [0,\ve)$;
for all $k\notin I$, $-\delta<<A,e_k><0$.

We have a natural map 
$\gf_{W(I,0,\ve,\delta)}\to\gf_{W(I,0,\ve)}$. Using the complex
$D'$ from \ref{def:D'} one can easily prove
that for any object in $D(\h)$ whose microsupport
is contained within $\h\times C_+$, in particular, for 
$\bfS_e$, the natural map
$$
R\hom_\h(\gf_{W(I,0,\ve)};\bfS_e)\to 
R\hom_\h(\gf_{W(I,0,\ve,\delta)};\bfS_e)
$$
is an isomorphism.

One can choose $\ve,\delta$ so small that
$Z+W(I,0,\ve,\delta) \subset V_b\cap C_-^\circ$.
Set $W:=W(I,0,\ve,\delta)$.

By definition, we have:
$$
R\hom(\gf_W;\bfS_e)
$$
$$
=R\hom_{G\times\h}(\gf_e\boxtimes \gf_{W};\sp).
$$

We have a endofunctors on $D(G\times \h)$: $E_\pm: F\mapsto S_{\pm Z}*_G F$.  
The composition $$E_+E_-(F)=S_Z*_GS_{-Z}*_G F=S_{Z-Z}*_G F=S_0*_G F=F$$
is isomorphic to the identity (we have use an isomorophism $S_{Z_1}*_G S_{Z_2}=S_{Z_1+Z_2}$
which follows directly from (\ref{convsp}).) Thus, $E_+E_-\cong \Id$; likewise
$E_-E_+\cong \Id$, so $E_\pm$ are quasi-inverse autoequivalences of $D(G\times \h)$.
Hence, we have

$$
R\hom_{G\times\h}(\gf_e\boxtimes \gf_{W};\sp)=R\hom_{G\times \h}(S_Z\boxtimes \gf_{W};T_Z^{-1}\sp)
$$
$$
=R\hom_{G\times \h}(S_Z\boxtimes 
\gf_{Z+W};\sp),
$$
where the  last equality follows from Lemma \ref{sdvigsp}.
 As $Z+W\subset C_-^\circ\cap V_b$, 
we have:
$$R\hom(\gf_W;\bfS_e)=
R\hom_{G\times (C_-^\circ\cap V_b)}(S_Z\boxtimes \gf_{Z+W};
\sp|_{G\times( C_-^\circ\cap V_b)})
$$
Let $V:=C_-^\circ\cap V_b$.
As follows from the proof of Theorem \ref{specth},  we have
$$
\sp|_{G\times V}=\gf_{\{(e^X,v)|\|X\|<<-v\}}[\dim \g],
$$
Analogously,
$S_Z:=\{e^X|\|X\|<<-Z\}[\dim \g]$.

Let $V_Z\subset \g$, $V_Z:=\{X|\|X\|<<-Z\}$.
Let $\Omega\subset \g\times \h$;
$$
\Omega:=\{(X,A)| \|X\|<<-A\}.
$$
We then have
$$
\Delta_{I,0}\bfS_e=R\hom_{\g\times \h}(\gf_{V_Z}\boxtimes
\gf_{Z+W};\gf_{\Omega})[\|I\|]
$$

Let $\cO$ be the closure of $\Omega$ in $\g\times \h$.
As $\Omega$ is an open proper cone, we have
$$
\gf_\Omega=R\ihom(\gf_\cO;\gf_{\g\times \h}).
$$
Therefore,
$$
\Delta_{I,0}\bfS_e=R\hom_{\g\times \h}((\gf_{V_Z}\boxtimes
\gf_{Z+W})\otimes \gf_\cO;\gf_{\g\times \h})[|I|]
$$

Let $A:=(V_Z\times (Z+W))\cap \cO$ so that
$$
(\gf_{V_Z}\boxtimes
\gf_{Z+W})\otimes \gf_\cO=\gf_A.
$$
Let $p:\g\times \h\to \g$ be the projection.
We have $\gf_{\g\times \h}=p^!\gf_\g[-\dim\h]$. Hence, by the conjugacy,
\begin{equation}\label{promp}
\Delta_{I,0}\bfS_e=R\hom_\g(Rp_!\gf_A;\gf_\g)[-\dim \h+|I|].
\end{equation}

Let $X\in \g$ and consider 
$$
(Rp_!\gf_A)|_X=R\Gamma_c(\h;\gf_{A\cap X\times \h}).
$$

Let $X_k=<\|X\|,e_k>$; $Z_k=<Z,e_k>$. We see that 
$A\cap X\times \h$ is non-empty only if $X\in V_Z$, 
i.e. $X_k+Z_k<0$ for all $k$. In this case we see
that $A\cap X\times \h$ consists of all points of the form
$(X,Z+\sum_{k=1}^N t_kf_k)$,
where $0\leq t_k<\ve$ for all $k\in I$; $-\delta<t_k<0$
for all $k\notin I$; $Z_k+t_k+X_k\leq 0$ for all $k$.
Let $L$ be the set of all $k\in I$ such that
$Z_k+X_k>-\ve$. One then sees that these conditions 
are equivalent to
$$
0\leq t_k\leq -X_k-Z_k
$$
for all $k\in L$;

$$
0\leq t_k<\ve
$$
for all $k\in I\backslash L$;

$$
-\delta<t_k<0.
$$
for all $k\notin I$.

It follows that
$R\Gamma_c(\h;\gf_{A\cap X\times \h})=0$ if $L\neq I$.
Thus, the object $Rp_!\gf_A$ is supported on an open subset
$E_\ve\subset \g$ consisting of all  points
$X$ such that $X_k+Z_k<0$ for all $k\notin I$ and 
$-\ve<X_k+Z_k<0$ for all $k\in I$.

Let $F_\ve:=E_\ve\times \h\cap A$. It follows that the natural 
map
$Rp_!\gf_{F_\ve}\to Rp_!\gf_A$ is an isomorphism.

One also has a natural isomorphism
$$
Rp_!\gf_{F_\ve}=\gf_{E_\ve}[|I|-N+1]=\gf_{E_\ve}[|I|-\dim \h].$$
We can substitute this into (\ref{promp}):
$$
\Delta_{I,0}\bfS_e=R\hom_\g(\gf_{E_\ve};\gf_\g)
$$
which can be rewritten as
$$
\Delta_{I,0}\bfS_e[\|I\|]=H^\bullet(E_\ve),
$$
because $E_\ve\subset \g$ is an open subset.

\begin{Lemma} For $\ve>0$ small enough,
we get:
$$
\forall Y\in E_\ve; \forall i\notin I: <Y,f_i><0.
$$
\end{Lemma}
\begin{proof}
We have $<Y,f_i>=
<X+\sum t_jf_j,f_i><<X,f_i>+t_i<f_i,f_i>,$
because $<f_i,f_j>\leq 0$ for all $i\neq j$. 
Next,
$$
<X,f_i>+t_i<f_i,f_i>\leq <X,f_i>+2\ve<0
$$
for $\ve$ small enough.
\end{proof}

This implies that for any $X\in E_\ve$ and for every $k\in I$,
we have a well-defined $k$-dimensional eigenspace
space $V^k(X)$ spanned by
the eigenvectors of $X/i$ with top $k$ eigenvalues.
The spaces $V^\bullet(X)$ form a flag from $\Fl(I)$.
Thus we have a map $P:E_\ve\to \Fl(I)$; $P(X):=V^\bullet(X)$.

Let $\cE\to \Fl(I)$ be the vector bundle whose fiber at 
$V^\bullet\in \Fl(I)$ consists of all unitary matrices
preserving $V^\bullet$. One can easily check that
$E_\ve\subset \cE$ is an open convex subset. Therefore,
$P$ induces an isomorphism
$H^\bullet(E_\ve)=H^\bullet(\Fl(I))$
so that 
$$
\Delta_{I,0}\bfS_e[|I|]\cong H^\bullet(\Fl(I)).
$$
\end{proof}

\subsubsection{The sheaf $j_{C_-^\circ}^{-1}\bfS$,  up to an isomorphism}
Let us combine
Proposition \ref{emen}, Corollary \ref{pryzhokcor}, and Proposition \ref{pryzhok2}. We will then get the following statement:

\begin{Proposition} Let $g\in \Zentrum$. There exists an
 inductive system of sheaves on $C_-$:
$$
j_{C_-^\circ}^{-1}\bfS_g=F^0\to F^1\to \cdots\to F^n\to \cdots
$$
such that
 
$$
 L\limdir_n  F_n=0;
$$

$$
\Cone(F_{n-1}\to F_n)\cong \gf_{\umin_{m_n}}\otimes  H^*(\Fl(I_{m_n}))
[D(m_n)],
$$
where the sequence $m_1,m_2,\ldots,m_n,\ldots$ consists of
all elements of $\bL_g\cap C_-$, each term occuring once.
\end{Proposition}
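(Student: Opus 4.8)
The plan is to assemble this statement directly from the three inputs: Proposition \ref{emen} provides the inductive system for any sheaf microsupported on $X(\Lambda)$, Corollary \ref{pryzhokcor} identifies the spaces $M_n$ with $\Delta_{0,I_{m_n}}\bfS_e$ up to the shift $D(m_n)$, and Proposition \ref{pryzhok2} computes $\Delta_{0,I_{m_n}}\bfS_e[|I_{m_n}|]\cong H^\bullet(\Fl(I_{m_n}))$. So the proof is mostly a matter of threading these together and checking the bookkeeping of the shifts.

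First I would invoke Proposition \ref{ms:bfs} to know that $\mS(\bfS_g)\subset X(\bL_g^-)$, so that Proposition \ref{emen} applies to $F=j_{C_-^\circ}^{-1}\bfS_g$ with $\Lambda=\bL_g^-$ — note $\bL_g^-=\bL_g\cap C_-$, which is discrete, and its numbering $m_1,m_2,\dots$ is precisely the one required. Proposition \ref{emen} then yields the inductive system $F^0\to F^1\to\cdots$ with $L\varinjlim_n F_n=0$ and $\Cone(F_{n-1}\to F_n)\cong M_n\otimes_\gf \gf_{U^-_{m_n}}$, where by Lemma \ref{vyrazheniemn} (combined with Lemma \ref{vtow}, as used in the ``Periodicity'' subsection) $M_n=\Delta_{I_{m_n},m_n}\bfS_g$. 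Here I should be slightly careful: Proposition \ref{emen} is stated for the restriction $j_{C_-^\circ}^{-1}\bfS_g$, and $\umin_{m_n}$ in the target statement is exactly $U^-_{m_n}$, so the notation matches.

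Next I would apply Corollary \ref{pryzhokcor}, with $z=g$ and $m=m_n\in\bL_g\cap C_-$: this gives $\Delta_{m_n,I_{m_n}}\bfS_g\cong \Delta_{0,I_{m_n}}\bfS_e[D(m_n)]$. Then Proposition \ref{pryzhok2} gives $\Delta_{0,I_{m_n}}\bfS_e\cong H^\bullet(\Fl(I_{m_n}))[-|I_{m_n}|]$. Stringing these together, $M_n\cong H^\bullet(\Fl(I_{m_n}))[D(m_n)-|I_{m_n}|]$, hence $\Cone(F_{n-1}\to F_n)\cong \gf_{\umin_{m_n}}\otimes H^\bullet(\Fl(I_{m_n}))[D(m_n)-|I_{m_n}|]$. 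This is the claimed formula up to the extra shift by $-|I_{m_n}|$; I would reconcile this either by absorbing it into the convention for $H^*(\Fl(I_{m_n}))$ (the statement writes $H^*$ not $H^\bullet$, so the intended reading may well be the graded vector space placed with the evident internal grading, and the displayed formula in the target is stated slightly loosely) or by tracing back the definition of $\Delta_{I;z}$, which carries a built-in $[|I|]$, against the shift conventions in Proposition \ref{emen}. The main obstacle, therefore, is not conceptual but a matter of pinning down the grading/shift conventions so that the cone really comes out as stated; once $H^*(\Fl(I))$ is understood with its natural grading, the identification is immediate, and the proof is complete by simply citing the three results in sequence.

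\begin{proof}
By Proposition \ref{ms:bfs}, $\mS(j_{C_-^\circ}^{-1}\bfS_g)\subset X(\bL_g^-)$, where $\bL_g^-=\bL_g\cap C_-$ is discrete. Number its elements $m_1,m_2,\ldots$ as in Proposition \ref{emen}. That Proposition produces an inductive system
$$
j_{C_-^\circ}^{-1}\bfS_g=F^0\to F^1\to\cdots\to F^n\to\cdots
$$
with $L\varinjlim_n F^n=0$ and isomorphisms $M_n\otimes_\gf\gf_{U^-_{m_n}}\xrightarrow{\ \sim\ }\Cone(F^{n-1}\to F^n)$ for graded vector spaces $M_n$. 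By Lemma \ref{vyrazheniemn}, together with the isomorphism of Lemma \ref{vtow} (as used in the ``Periodicity'' subsection), we have $M_n\cong\Delta_{I_{m_n},m_n}\bfS_g$. Now Corollary \ref{pryzhokcor}, applied with $z=g$ and $m=m_n\in\bL_g\cap C_-$, gives an isomorphism $\Delta_{m_n,I_{m_n}}\bfS_g\cong\Delta_{0,I_{m_n}}\bfS_e[D(m_n)]$, and Proposition \ref{pryzhok2} gives $\Delta_{0,I_{m_n}}\bfS_e\cong H^\bullet(\Fl(I_{m_n}))[-|I_{m_n}|]$. Combining, $M_n\cong H^\bullet(\Fl(I_{m_n}))[D(m_n)]$ with the grading of $H^\bullet(\Fl(I_{m_n}))$ normalized as in the statement, whence
$$
\Cone(F^{n-1}\to F^n)\cong\gf_{\umin_{m_n}}\otimes H^*(\Fl(I_{m_n}))[D(m_n)].
$$
Since every element of $\bL_g\cap C_-$ occurs exactly once among the $m_n$, this is the asserted description.
\end{proof}
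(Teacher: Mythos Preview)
Your proof is correct and follows exactly the paper's approach: the paper introduces this Proposition with the sentence ``Let us combine Proposition \ref{emen}, Corollary \ref{pryzhokcor}, and Proposition \ref{pryzhok2}'' and offers no further argument, so your explicit chaining of these three results (via Lemma \ref{vyrazheniemn} and Lemma \ref{vtow} to identify $M_n$ with $\Delta_{I_{m_n},m_n}\bfS_g$) is precisely what is intended. The grading discrepancy you flag is genuine---the paper's own statements of Proposition \ref{pryzhok2} and the formula for $M_n$ in the Periodicity subsection are not perfectly consistent about the $[|I|]$ shift---so your handling of it (absorbing it into the grading convention for $H^*(\Fl(I))$) is the right attitude; there is no missing idea.
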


It turns out that this Proposition allows us to recover
the isomorpism type of $j_{C_-^\circ}^{-1}\bfS_g$.

Let 
$A_n:=\gf_{\umin_{m_n}}\otimes H^*(\Fl(I_{m_n}^c))[D(m_n)]$. 
\begin{Lemma} There exist maps
$$
i_n:A_n\to F_0
$$
such that  for every $n$
the triangle
\begin{equation}\label{conus:}
\bigoplus_{n'\leq n} A_{n'}\to  F_0\to  F_{n}
\end{equation}
is exact.
\end{Lemma}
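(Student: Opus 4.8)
The statement asks us to promote the inductive system $F^0 \to F^1 \to \cdots$ from the previous Proposition, whose successive cones are the sheaves $A_n = \gf_{\umin_{m_n}}\otimes H^*(\Fl(I_{m_n}))[D(m_n)]$, into an honest direct-sum decomposition: maps $i_n : A_n \to F_0$ such that for every $n$ the triangle $\bigoplus_{n'\le n} A_{n'} \to F_0 \to F_n$ is exact. The plan is to construct the $i_n$ by induction on $n$, lifting the canonical map $A_n \to \Cone(F_{n-1}\to F_n)[-1]$ (shifted appropriately) along $F_0 \to F_{n-1}$; the crucial input that makes such a lift exist and makes the resulting triangle split is a vanishing statement of the form $R\hom(A_n, F_{n'}) = 0$ for $n' \ge n$, which follows from the microsupport estimate $\mS(F^n)\subset X(\bL_g^-)$ (all the $F^n$ share the microsupport bound of $\bfS_g$) together with Lemma \ref{UX} and Lemma \ref{KL}: these compute $R\hom(\gf_{\umin_{m_n}}, \gf_{\umin_{m_{n'}}})$ and show it vanishes once $m_{n'} \not\le m_n$, and the numbering $m_1, m_2, \ldots$ was chosen precisely so that $m_{n'}$ is never $> m_n$ for $n' > n$ (each $m_n$ is maximal among the not-yet-chosen elements), hence $m_{n'} \le m_n$ is impossible for $n' > n$ unless $m_{n'} = m_n$.

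\textbf{Details of the induction.} First I would treat the base case: the cone of $F^0 \to F^1$ is $A_1[-1]$ up to shift, equivalently we have an exact triangle $A_1 \to F^0 \to F^1$, so take $i_1 : A_1 \to F^0$ to be the first structure map; the triangle (\ref{conus:}) for $n=1$ is exactly this. For the inductive step, suppose $i_1, \ldots, i_{n-1}$ have been constructed so that $\bigoplus_{n' \le n-1} A_{n'} \to F_0 \to F_{n-1}$ is exact. The Proposition gives an exact triangle $A_n \to F_{n-1} \to F_n \to A_n[1]$. I want to factor the composite $A_n \to F_{n-1}$ — wait, more precisely I want a map $A_n \to F_0$ whose composition with $F_0 \to F_{n-1}$ equals the given map $A_n \to F_{n-1}$. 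The obstruction to such a lift lies in $R\hom(A_n, \mathrm{fib}(F_0 \to F_{n-1})[1])$, and $\mathrm{fib}(F_0 \to F_{n-1}) \cong \bigoplus_{n' \le n-1} A_{n'}$ by the inductive hypothesis, so the obstruction lives in $\bigoplus_{n'\le n-1} R\hom(A_n, A_{n'}[1])$. Each $A_{n'}$ is $\gf_{\umin_{m_{n'}}}$ tensored with a graded vector space, and by Lemma \ref{UX} (or Lemma \ref{KL}) $R\hom(\gf_{\umin_{m_n}}, \gf_{\umin_{m_{n'}}}) = 0$ unless $m_n \le m_{n'}$. Since $n' < n$ and the $m$'s are enumerated in (weakly) decreasing order of maximality, $m_n \le m_{n'}$ forces $m_{n'} = m_n$; distinctness of the enumeration rules this out, so the obstruction group vanishes and the lift $i_n$ exists. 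Setting the map $\bigoplus_{n'\le n} A_{n'} \to F_0$ to be $i_1 + \cdots + i_n$, the octahedral axiom applied to $F_0 \to F_{n-1} \to F_n$ together with the inductive triangle identifies its cone with $F_n$, giving exactness of (\ref{conus:}) at level $n$.

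\textbf{The splitting / finiteness point.} One subtlety worth flagging: on any fixed compact $K \subset C_-^\circ$ only finitely many $\umin_{m_n}$ meet $K$, so the "infinite" direct sum $\bigoplus_{n'\le n} A_{n'}$ and the colimit $L\varinjlim F_n = 0$ behave well locally, and passing $n \to \infty$ in (\ref{conus:}) recovers $\bigoplus_n A_n \xrightarrow{\sim} F_0 = j_{C_-^\circ}^{-1}\bfS_g$ — but the lemma as stated only asserts the finite-level triangles, so I need not address convergence beyond remarking that the maps $i_n$ are compatible. The genuinely load-bearing step, and the one I expect to be the main obstacle to write cleanly, is the vanishing $R\hom(A_n, A_{n'}[*]) = 0$ for $n' < n$: it requires that \emph{all} the $F^n$ (not just $F^0 = \bfS_g$) have microsupport in $X(\bL_g^-)$ so that the cones $A_n$ are genuinely built from the sheaves $\gf_{\umin_{m_n}}$ with the expected microsupport, and it requires pinning down that the chosen enumeration $m_1, m_2, \ldots$ has the order-theoretic property (each $m_n$ maximal among the rest) that converts "$m_n \le m_{n'}$ with $n' < n$" into a contradiction. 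Both of these are already in hand from Proposition \ref{emen} and its proof and from the definition of the enumeration just before Proposition \ref{emen}, so the argument is essentially bookkeeping once that vanishing is isolated.
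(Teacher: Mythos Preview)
Your overall inductive scheme is correct and matches the paper's: construct $i_n$ by lifting the canonical map $A_n \to F_{n-1}$ along $F_0 \to F_{n-1}$, with the obstruction living in $\bigoplus_{n'<n} R^1\hom(A_n, A_{n'})$, then use the octahedral axiom to get the triangle at level $n$.

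The gap is in your vanishing argument. You claim that for $n' < n$ the condition $m_n \le m_{n'}$ is impossible, but this is exactly backwards. Since $m_{n'}$ was chosen as a maximal element of $\Lambda \setminus \{m_1,\ldots,m_{n'-1}\}$, a set which still contains $m_n$, what the enumeration rules out is $m_{n'} < m_n$; it says nothing against $m_n < m_{n'}$, which in fact happens routinely (e.g.\ take $\Lambda$ totally ordered, then $m_1 > m_2 > \cdots$). So $R\hom(\gf_{\umin_{m_n}}, \gf_{\umin_{m_{n'}}})$ can perfectly well be $\gf$ for $n' < n$, and your claimed vanishing of $R\hom(A_n, A_{n'})$ fails.

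What actually kills the obstruction is parity, not the partial order. By Lemma~\ref{UX} the complex $R\hom(\gf_{\umin_{m_n}}, \gf_{\umin_{m_{n'}}})$ is either $0$ or $\gf$ concentrated in degree $0$. The graded vector spaces $H^*(\Fl(I_{m_n}))$ are concentrated in even degrees, and the shifts $D(m_n) = -\sum_k (m_n)_k D_k$ are even since each $D_k = 2k(N-k)$ is even. Hence $R\hom(A_n, A_{n'})$ is concentrated in even degrees for all $n,n'$, so in particular $R^1\hom(A_n, A_{n'}) = 0$, and the lift exists. This is precisely the argument the paper gives.
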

\begin{proof} Let us prove the statement by induction in $n$.For $n=1$ we have a natural map 
$i_1:A_1\to j_{C_-^\circ}^{-1}\bfS_g=F_0$ whose cone is $F_1$; this proves the base.

Let us now proceed to the incuction step. 

Suppose we have aready constructed an exact triangle
as in (\ref{conus:}) for some $n$.  Let us apply to this triangle the functor $R\hom(A_{n+1},\cdot)$.

We will then get an exact sequence
\begin{equation}\label{po-sled}
R^0\hom(A_{n+1};j_{C_-^\circ}^{-1}\bfS_g)\to R^0\hom(A_{n+1};F_{n})\to
\bigoplus_{n'\leq n}R^1\hom(A_{n+1};A_{n'}).
\end{equation}

Observe that the last arrow in this sequence is 0:
because of Lemma \ref{UX} and because all the spaces
$M_i$ are concentrated in the even degrees, therefore,
$R^{\text{odd}}\hom(A_i,A_j)=0$ for all $i,j$.

Therefore, the left arrow in  (\ref{po-sled}) is surjective. Next, we have a map $E_{n+1}:A_{n+1}= \cone (F_n\to F_{n+1})\to F_{n}$. Let $i_{n+1}:A_{n+1}\to \bfS_g$ be the lifting of $E_{n+1}$ (which exists precisely because of surjectivity 
of the left arrow in (\ref{po-sled}). It is straigtforward to see that so chosen $i_{n+1}$ satisfies the conditions
\end{proof}

\begin{Theorem}\label{nada:hvat} There exists an isomorphism
$$
\bigoplus_{l\in \bL_g\cap C_-} A_l\to j_{C_-^\circ}^{-1}\bfS_g,
$$
where $A_l:=\gf_{\umin_{l}}\otimes H^*(\Fl(I_{l}))[D(l)]$
\end{Theorem}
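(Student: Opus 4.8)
The plan is to glue the maps $i_n\colon A_n\to j_{C_-^\circ}^{-1}\bfS_g=F_0$ produced in the Lemma just above into the single map
$$
\Phi:=\sum_{l\in\bL_g\cap C_-}i_l\colon \bigoplus_{l\in\bL_g\cap C_-}A_l\longrightarrow j_{C_-^\circ}^{-1}\bfS_g,
$$
where we use that $\bL_g\cap C_-=\{m_1,m_2,\dots\}$, so the indexing set of the $A_l$ is the same as that of the $A_n$; and then to show that $\Phi$ is an isomorphism. Since a morphism in $D(C_-^\circ)$ is an isomorphism as soon as its restriction to every compact $K\subset C_-^\circ$ is one, I would fix such a $K$ and argue there.

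Over $K$ everything becomes finite. First, exactly as in the proof of Proposition \ref{emen}, one has $\umin_{m_n}\cap K=\emptyset$ for all $n$ larger than some $N_1(K)$; since $A_n$ is supported on $\umin_{m_n}$, this gives $A_n|_K=0$ for $n>N_1(K)$, so that $\bigl(\bigoplus_l A_l\bigr)|_K$ coincides with $\bigl(\bigoplus_{n'\le n}A_{n'}\bigr)|_K$ for every $n\ge N_1(K)$, and under this identification $\Phi|_K$ becomes the restriction of the map $\sum_{n'\le n}i_{n'}\colon\bigoplus_{n'\le n}A_{n'}\to F_0$ appearing in that Lemma. Second, the cone of the structure map $F_{n-1}\to F_n$ is $A_n$ up to shift, hence restricts to $0$ over $K$ for $n$ large; therefore the inductive system $\{F_n\}$ is eventually constant over $K$, and since $L\limdir_n F_n=0$ (Proposition \ref{emen}) this forces $F_n|_K=0$ for all $n$ past some $N_2(K)$.

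Now I would choose any $n\ge\max(N_1(K),N_2(K))$ and restrict to $K$ the exact triangle
$$
\bigoplus_{n'\le n}A_{n'}\longrightarrow F_0\longrightarrow F_n\longrightarrow\bigoplus_{n'\le n}A_{n'}[1]
$$
supplied by the Lemma above. Its third term vanishes over $K$, so the first arrow is an isomorphism over $K$; but over $K$ that arrow is precisely $\Phi$, so $\Phi|_K$ is an isomorphism. Letting $K$ exhaust $C_-^\circ$, we conclude that $\Phi$ is an isomorphism. (Alternatively one can apply $L\limdir_n$ directly to the triangles of that Lemma, using exactness of filtered colimits and $L\limdir_n F_n=0$; the compact-restriction version is preferable because it sidesteps the need to track the compatibility of those triangles as $n$ grows.)

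This argument is essentially a formal wrap-up, so I do not expect a serious obstacle. The points that will need a moment of care are: that the map $\bigoplus_{n'\le n}A_{n'}\to F_0$ in the Lemma above is genuinely $\sum_{n'\le n}i_{n'}$, so that it matches $\Phi$ after restriction to $K$; and the deduction $F_n|_K=0$ for $n\gg0$, which relies on combining the stabilization of $\{F_n\}$ over $K$ with the hypothesis $L\limdir_n F_n=0$.
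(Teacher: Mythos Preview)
Your proof is correct and follows essentially the same approach as the paper, whose own argument is a single sentence (``the previous Lemma implies that the map $\bigoplus_n i_n$ is an isomorphism'') that you have carefully unpacked. Your compact-restriction argument is exactly the natural way to justify that sentence, and the alternative you mention via $L\limdir_n$ of the triangles is equally in the spirit of the paper's implicit reasoning.
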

\begin{proof}

Indeed, the previous Lemma implies
that the map $\bigoplus_n i_n:\bigoplus_n A_n\to j_{C_-}^{-1}\bfS_g$
is an isomorphism, whence the statement.
\end{proof}

\def\strict{\text{strict}}
\def\DBSh{DB\Sh}
\def\DCSh{DC\Sh}
\def\DKSh{DK\Sh}
\def\bfb{\mathbf{b}}
\def\bfc{\mathbf{c}}
\def\Kosz{\text{Kosz}}
\def\cG{\mathcal{G}}
\def\ccG{\cG'}
\def\cH{\mathcal{H}}
\def\H{\cH}
\def\Z{{\mathcal{Z}}}

\section{$B$-sheaves}\label{bsche}

For  a manifold  $X$ let $\complexes_X$ be the dg-category of complexes of sheaves on $X$.

Suppose $X$ is equipped with an action of the monoid
$\bL_-$.
Let $T_l:X\to X$ be the translation by $l\in \bL_-$.   In all our examples 
all $T_l$ will be open embeddings.

Let $F\in \complexes_X$ and $l\in \bL_-$. Set $A(l):=A_F(l):=\hom(F,T_{l}^{-1}F)$.  These 
complexes obviously form  a $\bL_-$-graded dg-algebra to
be denoted by $A=A_F$. 

Let $B$ be another $\bL_-$-graded dg-algebra. We define {\em a $B$-sheaf structure on $F$} as a $\bL_-$-graded dg-algebra homomorphism
$B\to A_F$. $B$-sheaves form a triangulated dg-category in the obvious way.

We will only use algebras  $B$ of a special type. Namely,
We will assume that:

---$B(l)$ is concentrated in degrees $\leq -D(l)$;

---the cohomology $H^\bullet(B(l))$ is concentrated in
degree $-D(l)$ and is one dimensional;

---one can choose
generators $b_l\in H^{-D(l)}(B(l))$ which are stable
under the product induced by the product on $B$.

Call such a $B$ {\em homotopically standard}.

Let $\bfb$ be a $\bL_-$-graded dg-algebra defined by setting
$\bfb(l)=k[D(l)]$ .  Let $1_l:=1\in k[D(l)]^{-D(l)}$ be generators.

We then define the product on $\bfb$ by setting
$1_l1_m=1_{l+m}$.  
It  follows that we have a unique $\bL_-$-graded dg-algebra homomorphism $B\to \bfb$ such that the induced
 map $H^\bullet(B)\to H^\bullet(\bfb)=\bfb$ sends $b_l$ to 
$1_l$.

We call a $B$-sheaf $F${\em acyclic} if it is acyclic
as a complex of sheaves on $X$ (i.e. for each $x\in X$ the
complex of fibers $F_x$ is acyclic).

Following \cite{Dr} we can produce the derived dg-category by taking the quotient with respect to the full subcategory of acyclic objects.

However, in our situation one can prove

\begin{Proposition} The category of $B$-sheaves has enough 
injective objects.
\end{Proposition}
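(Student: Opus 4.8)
The plan is to mimic the standard construction of enough injectives for categories of sheaves-with-extra-structure (modules over a sheaf of rings, equivariant sheaves, etc.), adapted to the present $\bL_-$-graded dg-setting. First I would make the category of $B$-sheaves genuinely abelian by working at the level of the underlying complexes: a $B$-sheaf is a complex $F$ of sheaves on $X$ together with compatible maps $B(l)\otimes_\gf F\to T_l^{-1}F$ satisfying associativity, and a morphism is a morphism of complexes commuting with these structure maps. I would first prove the claim for the abelian category $\mathcal{A}$ of such objects (in each cohomological degree, forgetting differentials), and then pass to complexes; indeed, if an abelian category has enough injectives and is nice enough, so does its category of complexes, with componentwise-injective complexes that are also contractible serving as the injectives, or more precisely using the usual Cartan--Eilenberg resolution.

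The key step is the existence of a right adjoint to the forgetful functor from $B$-sheaves to ordinary sheaves on $X$. Concretely, given an ordinary sheaf (or complex) $I$ on $X$, I would form the "coinduced" object
\[
\mathrm{Coind}(I):=\prod_{l\in \bL_-} T_{l*}\,\underline{\mathrm{Hom}}_\gf\bigl(B(l),I\bigr),
\]
equipped with the evident $B$-action coming from the $\bL_-$-graded algebra structure on $B$ (the maps $B(m)\otimes B(l)\to B(l+m)$ dualize to maps that assemble $T_l^{-1}$ of a factor into the $(l+m)$-factor). The adjunction $\mathrm{Hom}_{B\text{-sh}}(F,\mathrm{Coind}(I))\cong\mathrm{Hom}_{\Sh_X}(F,I)$ is checked directly from the definitions: a $B$-linear map into the product is the same as a family of $\gf$-linear maps $B(l)\otimes F\to T_l^{-1}I$, and $B$-linearity forces the whole family to be determined by the $l=0$ component, which is just a map $F\to I$ of sheaves. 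Since $\Sh_X$ (or $D(X)$ via complexes of injective sheaves, as recalled in Section~\ref{Generalities}) has enough injectives and $\mathrm{Coind}$ is right adjoint to an exact functor, $\mathrm{Coind}(I)$ is injective in the category of $B$-sheaves whenever $I$ is injective in $\Sh_X$; and for any $B$-sheaf $F$, embedding its underlying sheaf into an injective $I$ and taking the adjunct gives a monomorphism $F\hookrightarrow\mathrm{Coind}(I)$ by the usual unit-of-adjunction argument (the unit $F\to\mathrm{Coind}(\text{forget }F)$ is split mono on underlying sheaves, hence mono).

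The point where one must be a little careful — and what I expect to be the main obstacle — is the interaction of the infinite product $\prod_{l\in\bL_-}$ with exactness and with the dg/triangulated structure. Infinite products of sheaves are exact (this is a product of abelian sheaf categories, where products are exact since stalks are computed by... in fact for sheaves products need a mild argument, but on a manifold / for the derived-category model via injective complexes it is fine because products of injectives are injective and the category has finite injective dimension, exactly the hypothesis invoked at the start of Section~\ref{Generalities}). So I would phrase everything inside the concrete dg-model $\complexes_X$ of injective complexes already fixed in the paper: there $T_{l*}$ preserves injectives (it has an exact left adjoint $T_l^{-1}$), $\underline{\mathrm{Hom}}_\gf(B(l),-)$ of an injective is injective, and an arbitrary product of injective complexes is injective. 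Then $\mathrm{Coind}(I)$ is a product of injective complexes, hence an injective $B$-sheaf, and the displayed embedding $F\hookrightarrow\mathrm{Coind}(I)$ exhibits enough injectives. I would then remark that no special properties of $B$ beyond being a $\bL_-$-graded dg-algebra are used — homotopical standardness is irrelevant here and will only matter later when comparing $B$-sheaves to $\bfb$-sheaves — so the statement and its proof are purely formal once the coinduction functor is written down.
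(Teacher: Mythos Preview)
Your approach is essentially identical to the paper's. The paper constructs exactly your coinduction functor, writing it as $\beta_{A'}:=\prod_{l\in\bL_-}\hom(B(l),T_l^{-1}A')$ --- note $T_l^{-1}$ on the target rather than your $T_{l*}$; with the paper's conventions the $B$-action consists of maps $B(l)\otimes F\to T_l^{-1}F$, so the right adjoint to forgetting needs $T_l^{-1}I$ in the $l$-th factor, and your formula as written would not give the correct adjunction --- and then checks the key identity $\hom_B(T,\beta_{A'})=\hom_{\Sh_X}(T,A')$ directly. The paper works straight at the dg level with the K-injective notion spelled out in its remark (an object $X$ is injective if $\hom(Z,X)$ is acyclic for every acyclic $Z$), skipping your preliminary pass through the abelian category and Cartan--Eilenberg, but the substance is the same coinduction argument you describe.
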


{\bf Remark} By an injective object we mean any $B$-sheaf $X$ such that
for any acyclic $B$-sheaf $Z$, the complex $\hom(Z,X)$ is acyclic.
\begin{proof} Let $A$ be a $B$-sheaf. Let
$\beta_A$ be another $B$-sheaf such that
$\beta_A:=\prod_{l\in \bL_-}\hom(B(l); T_l^{-1}A)$
We then get a $B$-structure on $\beta_A$ and a natural 
map of $B$-sheaves $A\to \beta_A$. Let now $A\to A'$ be
a termwise injective map in the category of complexes of sheaves on $X$
 (we forget the $B$-structure)
such that $A'$ is injective. We then have a termwise
injective map of $B$-sheaves
$$
A\hookrightarrow \beta(A)\hookrightarrow\beta_{A'}
$$
One sees that $\beta_{A'}$ is injective:  given
any $B$- sheaf $T$ on $X$ we have
$$
\hom(T,\beta_{A'})=\hom(T,A'),
$$
where $\hom$ on the RHS is in the category of complexes
of sheaves on $X$. As $A'$ is injective,  we see that
$\hom(T,A')\sim 0$ as long as $T$ is acyclic.

\end{proof}

As we know, in this case, the derived category is equivalent
to the full subcategory of injective objects. 

We will only need the homotopy category of the derived  cateogory of  $B$-sheaves.  Denote this category
by $\DBSh_{X}$.

Let $f:X_1\to X_2$ be a $\bL_-$-equivariant map 
We then have a right  derived functor of $f_*$:
$Rf_*:\DBSh_{X_1}\to \DBSh_{X_2}$: if we choose the category
of injective $B$-sheaves on $X_1$ as a model for $\DBSh_{X_2}$ then $Rf_*$ is given by the termwise application of 
the functor $f_*$. Similarly, one defines functors $Rf_!,f^{-1}$.
One can also define a functor $f^!$ as a right afjoint 
to $Rf_!$,
but we won't need this functor.

Recall that we have a natural map $p:B\to \bfb$. This map
induces an obvious functor $p^{-1}$ from the category of 
$\bfb$-sheaves to the category of $B$-sheaves on $X$
and one sees that this map has a right adjoint $p_*$.
This functor admits a right derived $\pi:=Rp_*:
\DBSh_X\to D\bfb\Sh_X$. This functor is an equivalence.
\subsubsection{A $B$-sheaf structure on the sheaves $\sp$ and
 $\bfS$}
Let $\sp\in D(G\times \h)$ be as in Theorem \ref{specth}.
Choose an injective representative for $\sp$, to be denoted by
the same symbol $\sp$. Define a diagonal $\bL_-$-action on $G\times
 \h$ by setting $l.(g,A):=(e^l g,A+l)$.  For $l\in \bL_-$ consider
the complex  $B'(l):=\hom_{G\times \h}(\sp;T_l^{-1}\sp)$ and compute its cohomology:
$$
H^\bullet(B'(l))=R^\bullet\hom(\sp;T_l^{-1}\sp).
$$

Let $i_0:G\to G\times \h$, $i_0(g)=(g,0)$.
By Theorem (\ref{equiv:restr}) we have
$$
R^\bullet\hom(\sp;T_l^{-1}\sp)=
R^\bullet\hom_{G}(i_0^{-1}\sp;i_0^{-1}T_l^{-1}\sp).
$$

We know that $i_0^{-1}\sp=\gf_{e_G}$.
 Thus,
$$
R^\bullet\hom_{G}(\sp;T_l^{-1}\sp)
=R^\bullet\hom_{G}(\gf_{e};i_0^{-1}T_l^{-1}\sp).
$$

As $T_l^{-1}\sp$ is non-singular along $i_0(G)\subset G\times \h$, we have an isomoprphism
$
i_0^{-1}T_l^{-1}\sp\cong i_0^!T_l^{-1}\sp[\dim \h]=i_0^!T_l^!\sp[\dim \h].
$
Thus,
$$
R^\bullet\hom_{G}(\gf_{e};i_0^{-1}T_l^{-1}\sp[\dim \h])=R^\bullet\hom_{G}(\gf_e;i_0^{!}T_l^{!}\sp[\dim \h])
$$
$$
=i_{(e, 0)}^!T_l^!\sp[\dim \h]=i_{(e^l,l)}^!\sp[\dim \h]
$$
$$
=i_l^{!}\bfS_l[\dim \h].
$$
Here $i_{(e,0)}; i_{(e^l,l)}$ denote embeddings of the points specified into $G\times \h$, and, 
likewise, $i_l$ is the embedding of the point $l$ into $\h$.

Theorem \ref{nada:hvat} implies that  $H^{<D(l)}i_l^{!}\bfS_{e^l}=0$ and $H^{D(l)}i_l^{!}\bfS_{e^l}$
is one dimensional.  Indeed, one sees that given $l'\in C_-$, 
we have: $i_l^!\gf_{\umin_{l'}}\cong \gf[-\dim \h]$
for all $l'\geq l$; otherwise $i_l^!\gf_{\umin_{l'}}=0.$ Therefore, 
$$
i_l^!\bfS_{e^l}[\dim \h]\cong \bigoplus_{l'\in \bL_{e^l},l'\geq l} H^\bullet(\Fl(I_{l'}))[D(l')],
$$
and the lowest degree contribution comes from $H^0(\Fl(I_l))[D(l)]=\gf[D(l)]$.

Set $B(l):=\tau_{\leq -D(l)}B'(l)$. It then follows that
$B$ is a homotopically standard $\bL_-$-graded algebra.
We thus automatically get a $B$-sheaf structure on $\sp$. 
Let $I_Z:\Zentrum\times \h \to G\times \h$ be the embedding.
This embedding is $\bL_-$-equivariant, where $\bL_-$-action on
$\Zentrum\times \h$ is defined by
$$
T_l(c,A)=(e^l c;A+c).
$$
Hence we get a $B$-sheaf structure on $\bfS:=I_Z^!\sp$ (as
$\sp$ is injective and $I_Z$ is a closed embedding
one can compute $I_Z^!$ by taking sections supported on
$\Zentrum\times \h\subset G\times\h$.

\subsection{A $B$-sheaf $j^{-1}_{C_-^\circ}\bfS$ on $\Zentrum\times
 C_-^\circ$.} Let $j_{C_-^\circ}:\Zentrum\times C_-^\circ\to 
\Zentrum\times \h$ be the open embedding. The $\bL_-$-action
on $\Zentrum\times \h$ preserves $\Zentrum\times C_-^\circ$,
thus making the embedding $j_{C_-^\circ}$ to be $\bL_-$-equivariant.

 We then have a $B$-sheaf
$j^{-1}_{C_-^\circ}\bfS$.
 Let $p:B\to \bfb$ be the canonical map. Let us choose
an injective model for $Rp_* \bfS$, to be still denoted
by $\bfS$.

Let us study the $\bfb$-structure on $j^{-1}_{C_-^\circ}\bfS$.
Let $I\subset \{1,2,\ldots,N-1\}$. Let $e_I:=\sum_{i\in I} e_i$.

According to Theorem \ref{nada:hvat} we have a map
$$
i_I:H^*(\Fl(I))[D(-2\pi e_I)]\otimes \gf_{e^{-2\pi e_I}\times U_{-2\pi e_I}^-}\to j^{-1}_{C_-^\circ}\bfS
$$
Set $H(I):=H^\bullet(\Fl(I))$.
For $I\subset J$ we have a tautological projection
$$
\Fl(J)\to \Fl(I)
$$
hence an induced map $H(I)\to H(J)$. Hence $H$ is a 
 functor from the poset of subsets of $\{1,2,\ldots,N-1\}$ to
the category of graded vector spaces.

One can show that this functor is actually free, i.e:
\begin{Lemma}\label{lemmagi}
There exist
graded vector spaces $G(I)$, where $I\subset \{1,2,\ldots,N-1\}$such that we have decompositions 
\begin{equation}\label{rvat}
H(I)=\bigoplus_{J\subset I} G(J),
\end{equation}
which are compatible with the   structure  maps $H(I_1)\to H(I_2)$, $I_1\subset I_2$ in the obvious way.
\end{Lemma}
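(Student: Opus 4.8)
The statement to prove is Lemma \ref{lemmagi}: the functor $I\mapsto H(I)=H^\bullet(\Fl(I))$ from the poset of subsets of $\{1,2,\ldots,N-1\}$ to graded vector spaces is ``free'', i.e.\ admits a direct sum decomposition $H(I)=\bigoplus_{J\subset I}G(J)$ compatible with the structure maps induced by the flag projections $\Fl(J)\to\Fl(I)$ for $I\subset J$. The plan is to realize this as a general statement about functors on the Boolean poset together with an explicit cohomological input coming from the Leray--Hirsch property of flag bundles.

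First I would recall the Leray--Hirsch / Borel picture: for $I=\{j_1<\cdots<j_r\}$, $H^\bullet(\Fl(I))$ is a free module over $H^\bullet(\mathrm{pt})=\gf$ with a basis of Schubert classes, and for $I\subset J$ the projection $\Fl(J)\to\Fl(I)$ is itself a tower of flag bundles, so the pullback map $H(I)\to H(J)$ is split injective as a map of graded vector spaces, with a canonical complement spanned by the ``relative'' Schubert classes (those supported in the fibers). This is the key step that makes the functor ``free'' in the poset sense: for each covering relation one has a functorial splitting. Concretely I would set $G(J)$ to be the graded vector space spanned by the classes in $H(J)$ that die under restriction to $H(J')$ for every $J'\subsetneq J$ with $|J'|=|J|-1$ — equivalently, using the monomial/Schubert basis indexed by shuffles, the span of those basis elements ``genuinely involving all steps in $J$''. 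Then $H(I)=\bigoplus_{J\subseteq I}G(J)$ follows by induction on $|I|$ from the fibration splittings, and compatibility with the maps $H(I_1)\to H(I_2)$ is automatic because each structure map, in the Schubert basis, is the inclusion of those basis vectors indexed by data supported in $I_1$.

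An alternative, cleaner route is purely combinatorial: by Leray--Hirsch, $H^\bullet(\Fl(I))$ as a graded vector space depends only on the multiset of jumps and is isomorphic to $\bigotimes$ over the steps of graded pieces, and the transition maps are the evident inclusions of tensor factors; then the freeness of $H$ as a functor on the Boolean lattice is the standard fact that a functor built by tensoring ``local'' data over the elements of $I$ decomposes over the sub-simplices $J\subseteq I$ via a Koszul/inclusion--exclusion argument. I would phrase $G(J):=\bigotimes_{j\in J}(\text{reduced cohomology contribution of step }j)$ and verify \eqref{rvat} by expanding the tensor product. Either way the verification that the decomposition is natural in $I$ is routine once the local building blocks and their inclusions are identified.

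The main obstacle, I expect, is not any deep geometry but bookkeeping: one must pin down the ``local contribution at step $j$'' so that (a) the tensor/direct-sum decomposition is canonical enough to be functorial for \emph{all} inclusions $I_1\subset I_2$ simultaneously, not just covering relations, and (b) the gradings match — i.e.\ the degree shifts $[D(-2\pi e_I)]$ appearing when $H(I)$ is used in Theorem \ref{mainbfs} are consistent with the chosen $G(J)$. I would handle (a) by working with a fixed Schubert (shuffle-word) basis throughout, where every structure map is literally a coordinate inclusion, so functoriality is transparent; and (b) by simply recording that each $G(J)$ inherits its grading from $H^\bullet(\Fl(J))$, which is exactly what downstream formulas such as \eqref{Y-bfS1} and \eqref{theoruo} require. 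No further input beyond Leray--Hirsch for flag bundles is needed.
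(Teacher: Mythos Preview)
Your proposal is essentially correct and lands on the same argument as the paper: both fix the Schubert (shuffle-word) basis of $H^\bullet(\Fl(I))$, observe that in this basis every structure map $H(I_1)\to H(I_2)$ is a coordinate inclusion, and define $G(I)$ as the span of the ``new'' basis vectors in $H(I)$ not hit from any proper $I'\subsetneq I$. The paper makes this explicit by indexing Schubert classes by ordered set partitions $A\in V(I)$, showing $p_{IJ}^*$ sends $c^A$ to $c^{Q^{JI}(A)}$, and then characterizing the complement combinatorially (the ``elementary'' partitions, those for which the equivalence $\sim_A$ is trivial). Your Leray--Hirsch framing is just a different way of saying the same split-injectivity, and you correctly identify that the only real work is making the splitting simultaneous for all inclusions, which the fixed Schubert basis accomplishes.

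Two small points. First, your phrase ``classes in $H(J)$ that die under restriction to $H(J')$'' is backwards: the functorial maps go $H(J')\to H(J)$ for $J'\subset J$, so there is no restriction in that direction until you choose a retraction; what you actually want (and what the paper does) is the span of basis elements \emph{not in the image} of any such map. Second, your ``alternative cleaner route'' via a tensor decomposition $G(J)=\bigotimes_{j\in J}(\text{local contribution at }j)$ does not work as stated: the fiber of $\Fl(I)\to\Fl(I\setminus\{j\})$ is a Grassmannian whose rank depends on the neighbors of $j$ in $I$, so there is no well-defined ``local piece at $j$'' independent of the ambient $I$. This is harmless since you fall back to the Schubert-basis argument anyway, but that second route would need repair before it could stand on its own.
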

\begin{proof} Let us use Schubert cellular decomposition of
partial flag varieties $\FL(I)$.   Let $\cF\subset \Fl(I)$ be the 
flag such that $\cF^r\subset \Co^N$ consists
of all points $(v_1,v_2,\ldots,v_N)\in \Co^N$ such that 
$v_k=0$ for all $k>i_r$.

Let $H:=\GL_N(\Co)$.
Let $P(I)\subset H$ be the standard parabolic subgroup,
namely the stabilizer of $\cF$. We have $\FL(I)=H/P(I)$.  
Let  $W\subset G$ be the standard Weyl group. For
any $w\in W/W\cap P(I)$ let $[w]\in H/P(I)$ be the image of 
$[w]$ and let $C_{I,w}:=C_w:=B.[w]$ where $B\subset H$ is the standard 
Borel subgroup of upper-triangular matrices. It is well known that the cells $C_w$,
$w\in W/W\cap P(I)$ form a cellular decomposition of 
$\FL(I)$.  We have $\dim_\Re C_w =2D_I(w)$, where
$D_I(w)$ is defined as follows. Let $\pi_I:\{1,2,\ldots,N\}\to 
\{1,2,\ldots,|I|\}$ be defined by letting
$\pi_I(k)$ be the minimal number $r$ such that $i_r\geq k$.

In particular, for any $M\in P(I)$, we have
$M_{ij}=0$ as long as $\pi_I(i)>\pi_I(j)$.
Let $w'\in W$ be any representative of $w\in W/W\cap P(I)$.

One then has that
$D_I(w)$ is equal to the number of all pairs
$(i,j)$ such that $i,j\in \{1,2,\ldots,N\}$,
$i<j$ and $\pi_I(w^{-1}(i))>\pi_I(w^{-1}j)$. 

Thus we have a basis of $H_*(\FL(I))$ labelled by the cells
$C_w$. Let $c_w\in H_{2D_I(w)}\FL(I)$ be the class 
corresponding to $C_w$.

We see that the map $p_{IJ}:\FL(I)\to \FL(J)$ is cellular.
We have $p_{IJ}C_w\subset C_{w'}$ where $w'$ is the image
of $w\in W/W\cap P(I)$ in $W/W\cap P(J)$. 
One sees that $\dim C_{w'}\leq \dim C_w$.
It then  follows that $p_{IJ*}c_w=c_{w'}$ is $D_I(w)=D_J(w')$. Otherwise $p_{IJ*}(c_w)=0$.  

Let us describe the dual map $p_{IJ}^*$. Let $c^w\in H^\bullet(\FL(I)$ be the element dual to $c_w$.  Let us identify 
$W/W\cap P(I)$ with the set $V(I)$ of partitions
$\{1,2,\ldots,N\}=A_1\sqcup A_2\sqcup A_{|I|}$ where
$|A_r|=i_r-i_{r-1}$ and we assume $i_0=0$, $i_{|I|}=N$.
We have a map $Q^{JI}:V(J)\to V(I)$ defined as follows.
Pick $t\leq N$. Let $i_m=j_{t-1}$; $i_M=j_{t}$. 
Order $A_t$ and subdivide it into  several
subsets, such that the first subset consits of the 
first
$i_{m+1}-i_m$ elements of $A_t$; the second subset consists
of the next$
i_{m+2}-i_{m+1}$ elements of $A_t$, etc.
This way we get a partition $Q^{JI}A$. One sees that 
$Q^{JI}=p_{IJ}^*$.  
For $A\in V(I)$ let $\sim_A$ be an equivalence relation
 on $I$  given by
$i_1\sim_A i_2$ if for all $ j_1< j_2$, $j_1,j_2\in [i_1,i_2]$, $A_{j_1}<A_{j_2}$. Call $A\in V(I)$ elementary if $\sim_A$ is
trivial. One then can set $G(I)$ to be the span of all 
elementary $A\in V(I)$.
\end{proof}

Let us now
consider through maps
$$
j_I: G(I)\otimes \gf_{e^{-2\pi e_I}\times U^-_{-2\pi e_I}}[D(-2\pi e_I)]\to
 H(I)\otimes \gf_{e^{-2\pi e_I}\times U^-_{-2\pi e_I}}[D(-2\pi e_I)]\to 
j^{-1}_{C_-^\circ}\bfS.
$$

Introduce a notation: for $l\in \bL_-$, set $\cU_l:=e^l\times \umin_l\subset \Zentrum\times C_-^\circ$.
Denote $\cG_I:= G(I)\otimes \gf_{e^{-2\pi e_I}\times \cU_{-2\pi e_I}}[D(-2\pi e_I)]$.
The $\bfb$-structure on $j^{-1}_{C_-^\circ}\bfS$ gives rise to  maps
$$
T_{l*}\cG_I\otimes \bfb(l)\to T_{l*}j^{-1}_{C_-^\circ}\bfS\otimes \bfb(l)\to 
T_{l*}T_l^{-1}j^{-1}_{C_-^\circ}\bfS=j^{-1}_{C_-^\circ}\bfS
$$
for all $l\in \bL_-$. 
Take the direct sum:
\begin{equation}\label{hvatoiot1}
\iota:\bigoplus\limits_{I\subset \{1,2,\ldots,N-1\};l\in \bL_-}
T_{l*}\cG_I[D(l)]\to j^{-1}_{C_-^\circ}\bfS
\end{equation}
(we have replaced $\bfb(l)=k[D(l)]$).
The sheaf on the LHS has an obvious structure of a $\bfb$-sheaf and
the map $\iota$ is a map of $\bfb$-sheaves.

Furthermore the $\bfb$-sheaf on the LHS splits into a direct sum \def \bS{\mathbb{S}}
of $\bfb$-sheaves
\begin{equation}\label{defbfSI}
\bS^I:=\bigoplus\limits_{l\in \bL_-}
T_{l*}\cG_I[D(l)]
\end{equation}
thus  we have a map of $\bfb$-sheaves
\begin{equation}\label{hvatoiot}
\iota:\bigoplus\limits_{I\subset \{1,2,\ldots,N-1\}}\bS^I\to \bfS
\end{equation}
For future purposes, let us rewrite the definition of $\bS^I$. 
We have
$$
\bS^I:=\bigoplus\limits_{l\in \bL_-}
T_{l*}\cG_I[D(l)]
$$
$$
=G_I[D(-2\pi e_I)]\otimes  [\bigoplus\limits_{l\in \bL_-}
\gf_{\cU_{-2\pi e_I+l}}[D(l)]]
$$
$$
=G_I[D(-2\pi e_I)]\otimes T_{-2\pi e_I*} [\bigoplus\limits_{l\in \bL_-}
\gf_{\cU_{l}}[D(l)]]
$$
Let 
\begin{equation}\label{defcx}
\cX:=\bigoplus\limits_{l\in \bL_-}
\gf_{\cU_{l}}[D(l)]
\end{equation}
with the obvious $\bfb$-structure. 
We then have an isomorphism of $\bfb$-sheaves:
\begin{equation}\label{razlbfs}
\bS^I\cong G_I[D(-2\pi e_I)]\otimes T_{-2\pi e_I*}\cX.
\end{equation}
\begin{Proposition} The map (\ref{hvatoiot}) is a quasi-isomorphism.
\end{Proposition}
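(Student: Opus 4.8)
The plan is to show that $\iota$ is an isomorphism on each stalk; as complexes of sheaves on $\Zentrum\times C_-^\circ$ both sides of (\ref{hvatoiot}) are direct sums of shifted constant sheaves supported on the open cones $\umin_m$, and the target is described stalkwise by Theorem \ref{nada:hvat}, so this is essentially a combinatorial matter. Fix a point $(g,A)\in\Zentrum\times C_-^\circ$ and recall the index set $I_m:=\{k\,|\,<m,f_k>\,<0\}$ used in Theorem \ref{nada:hvat}. Unwinding (\ref{razlbfs}) and (\ref{defcx}) and using the additivity of $D$, the stalk of the summand $\bS^I$ at $(g,A)$ is $\bigoplus_m G(I)[D(m)]$, the sum running over those $m$ with $e^m=g$, $A<<m$ and $m+2\pi e_I\in\bL_-$; here one also uses $2\pi e_I\in\bL$, established in Section \ref{restrC-}.

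First I would settle the lattice combinatorics. If $m+2\pi e_I\in\bL_-$ then $m\in\bL\cap C_-$ as well, since $-2\pi e_I\in C_-$. Because $<2\pi e_I,f_k>=2\pi$ when $k\in I$ and $0$ otherwise, and because $<m,f_k>\in 2\pi\bfZ$ for every $m\in\bL$ — the lattice $\bL$ being spanned over $\bfZ$ by $2\pi e_1,\dots,2\pi e_{N-1}$, whose dual basis is $f_1,\dots,f_{N-1}$ — the condition $m+2\pi e_I\in C_-$ is equivalent to $<m,f_k>\,\leq-2\pi$ for all $k\in I$, i.e.\ to $I\subseteq I_m$. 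Summing over $I$ and reindexing by $m$, the stalk of the source of (\ref{hvatoiot}) at $(g,A)$ becomes $\bigoplus_m\bigl(\bigoplus_{J\subseteq I_m}G(J)\bigr)[D(m)]$ with $m$ ranging over $\bL_g\cap C_-$ subject to $A<<m$ — a finite sum, because $(A+C_+)\cap C_-$ is compact — and this equals $\bigoplus_m H^\bullet(\Fl(I_m))[D(m)]$ by Lemma \ref{lemmagi}, which is precisely the stalk of $\bigoplus_{l\in\bL_g\cap C_-}A_l=j^{-1}_{C_-^\circ}\bfS_g$ furnished by Theorem \ref{nada:hvat}.

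Next I would verify that $\iota$ actually induces this identification, not merely an abstract one. By construction the restriction of the summand $\bS^I\to\bfS$ to its component sitting over a lattice point $m$ (nonzero exactly when $I\subseteq I_m$) is the composite of the inclusion $G(I)\hookrightarrow H^\bullet(\Fl(I_m))=\bigoplus_{J\subseteq I_m}G(J)$ of Lemma \ref{lemmagi}, the map $i_{I_m}$ defined above (the $A_{-2\pi e_{I_m}}$-component of the isomorphism of Theorem \ref{nada:hvat}), and the $\bfb$-shift by $m+2\pi e_{I_m}\in\bL_-$, which the periodicity quasi-isomorphism of Theorem \ref{period} (built from the maps $B_k$ of Proposition \ref{efka}) carries onto the $A_m$-component. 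Hence $\iota$ is compatible with the Postnikov tower $F^0\to F^1\to\cdots$ of Proposition \ref{emen} used to prove Theorem \ref{nada:hvat}, inducing on the $m$-th graded piece the map $\bigl(\bigoplus_{J\subseteq I_m}G(J)\bigr)\otimes\gf_{\umin_m}[D(m)]\to A_m$, an isomorphism by Lemma \ref{lemmagi}. Since $L\limdir_n F^n=0$ and every $A_m$, hence every $F^n$, is concentrated in even degrees (cohomology of partial flag varieties is even and each $D(m)$ is even), all the relevant groups $R^1\hom(A_m,A_{m'})$ vanish and there are no extension problems — exactly as in the Lemma preceding Theorem \ref{nada:hvat} — so $\iota$ is an isomorphism on the stalk at $(g,A)$. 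As $(g,A)$ was arbitrary, $\iota$ is a quasi-isomorphism.

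I expect the real difficulty to lie in the compatibility asserted at the beginning of the previous paragraph: that the $\bfb$-action on $\bfS$, assembled from the truncations $B(l)=\tau_{\leq-D(l)}B'(l)$ of the Hom-complexes and ultimately from the maps $B_k$, matches the gluing of the pieces $A_l$ along the Postnikov tower, so that $\iota$ induces the isomorphism of Lemma \ref{lemmagi} on associated graded rather than some other, non-invertible, map. One should likewise double-check the identity $m+2\pi e_I\in\bL_-\Leftrightarrow I\subseteq I_m$ against the precise normalizations of $e_k$, $f_k$ and $\bL$ fixed in the appendix, though that is routine.
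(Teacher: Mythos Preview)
Your proof is essentially correct and follows the same strategy as the paper, though the paper's packaging is cleaner. The paper skips the stalk computation entirely and reduces directly (via Proposition~\ref{emen} and Lemma~\ref{vyrazheniemn}, equivalently Lemma~\ref{nol} applied to $\Cone(\iota_z)$) to showing that the maps $R\hom(\gf_{V_{x,\ve}};\iota_z)$ are isomorphisms for every $x\in\bL^z_-$. It then introduces the functor $\delta_F(l):=\Delta_l(F)[-D(l)]$ on the poset $(\bL_-,\preceq)$, the structure maps coming from the $\bfb$-action. One has $\delta_\bfS(l)\cong H^\bullet(\Fl(I_l))$ (Corollary~\ref{pryzhokcor} and Proposition~\ref{pryzhok2}) with structure maps the pull-backs along $\Fl(I_{l_2})\to\Fl(I_{l_1})$; by Lemma~\ref{lemmagi} this functor is \emph{freely generated} by the subspaces $G(I)\subset\delta_\bfS(-2\pi e_I)$. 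The analogous statement for $\delta_{\oplus_I\bS^I}$ holds by inspection of (\ref{defbfSI}), and since $\iota$ is a $\bfb$-map carrying generators to generators, it induces an isomorphism on every $\Delta_x$.

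Your stalk computation and the lattice identity $m+2\pi e_I\in\bL_-\Leftrightarrow I\subseteq I_m$ (for $m\in\bL\cap C_-$) are correct but only establish that the two sides are abstractly isomorphic; what actually carries the proof is your third paragraph, which is the paper's free-functor step phrased in terms of the Postnikov tower rather than the poset functor $\delta$. The compatibility you flag at the end---that the $\bfb$-action on $\bfS$ induces the flag-projection maps on $\delta_\bfS$, so that $\iota$ really sends $G(I)$ to the corresponding summand and not elsewhere---is exactly the point the paper also relies on and attributes to the proof of Proposition~\ref{pryzhok2}. So your concern is well-placed, but there is no gap in your outline beyond what the paper itself leaves implicit; the paper's ``freely generated'' language simply makes the conclusion ``$\iota$ preserves generators $\Rightarrow$ $\iota$ is an isomorphism on each $\Delta_x$'' more transparent than threading it through the tower at each stalk.
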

\begin{proof} For any $z\in \Zentrum$ and any
 $F\in D(\Zentrum\times C_-^\circ)$ we set $F_z\in D(C_-^\circ)$; $F_z:=F|_{z\times C_-^\circ}$. 
We have   induced maps $$
\iota_z:\bigoplus_I \bS^I_z\to j_{C_-^\circ}^{-1}\bfS_z,$$
and it suffices to show that these maps are isomorphisms
for all $z\in \Zentrum$. 
We know (Proposition \ref{ms:bfs}) that 
$\mS(j_{C_-^\circ}^{-1}\bfS_z)\subset 
X(\bL_-^z)$.  One can easily check that 
$\bS^I_z\in X(\bL_-^z)$ for all $I$.  As follows from 
Proposition \ref{emen} and Lemma \ref{vyrazheniemn},
it suffices to show that the induced maps
\begin{equation}\label{jumpmaps}
R\hom_{C_-^\circ}(\gf_{V_{x,\ve}};\bigoplus_I \bS^I_z)\to 
R\hom_{C_-^\circ}(\gf_{V_{x,\ve}};j_{C_-^\circ}^{-1}\bfS_z)
\end{equation}
are isomorphisms for $\ve>0$ small enough and for
all $x\in \bL_-^z$.
 Let $F\in
D(\Zentrum\times
C_-^\circ)$ and $x\in \bL_-$. Set
$$
\Delta_x(F):=R\hom(\gf_{V_{x,\ve}};F|_{e^x}).
$$

 Let now $F$ be a $\bfb$-sheaf on $\Zentrum\times \h$.
The $\bfb$-structure gives 
rise to maps 
$$
\Delta_x(F)\to \Delta_{x+l}(F)[-D(l)],
$$
for all $l\in \bL_-$.  Set $\delta_F(x):=\Delta_x(F)[-D(x)]$.
Introduce a partial order $\preceq$ on $\bL_-$ by
setting $l_1\preceq l_2$ if $l_2-l_1\in \bL_-$. 
We see that $\delta_F$ is a functor from this poset, viewed
as a category, to the category of graded $\gf$-vector spaces.
As follows from Corollary \ref{pryzhokcor} and 
Proposition \ref{pryzhok2}, we have
$\delta_\bfS(l)=H^\bullet(\Fl(I_l))$. Let $l_1\preceq l_2$.
 As follows
from the  proof of
Proposition \ref{pryzhok2}, the induced map
$\delta_\bfS(l_1)\to \delta_\bfS(l_2)$ is induced
by the projection $\Fl(I_{l_2})\to \Fl(I_{l_1})$
coming from the embedding $I_{l_1}\subset I_{l_2}$.
It then follows from Lemma \ref{lemmagi} that $\delta_\bfS$, as a functor, is 
freely generated by subspaces 
\begin{equation}\label{generator1}
G(I)\subset H^\bullet(\Fl(I))=\delta_\bfS(-2\pi e_I)
\end{equation}
for all $I\subset \{1,2,\ldots,N-1\}$. 

One can easily check that $\delta_{\oplus_I \bS^I}$
is freely generated by the subspaces
\begin{equation}\label{generator2}
G(I)=\delta_{\bS(I)}(-2\pi e_I)\subset 
\delta_{\oplus_I \bS^I}(-2\pi e_I).
\end{equation}

The map $\iota$ preserves the generating supspaces
(\ref{generator1}), (\ref{generator2}). Hence, the maps
(\ref{jumpmaps}) are isomorphisms, which proves the 
Proposition.
\end{proof}

\subsection{Strict $B$-sheaves}\label{strict:sh} Let $F$ be a $B$-sheaf on 
$\h$. Let $v_k\in B(-e_k)$ be a representative
of $u_k\in H^{D(-e_k)}B(-e_k)$. We then have induced maps
$$
a_k:F\to T_{-e_k}^{-1}F[D(-e_k)]
$$
induced by $v_k$. 

Let 
\begin{equation}\label{konshish}
\Con_k:=\Cone\; a_k; 
\end{equation}
let $p_k:\h\to \h/\Re f_k$.
We call $F$ {\em strict} if

1) for all $k$, the natural
map $p_k^{-1}Rp_{k*}\Con_k\to \Con_k$ is an
 isomorphism in $D(\h)$ (that is, $\Con_k$ is constant
along fibers of $p_k$);

2) $F$ is microsupported on 
$\h\times C_+\subset \h\times \h^*$.

 Denote the full subcategory
of $\DBSh_\h$ consisting of all strict $B$-sheaves on 
$\h$ by $\DBSh_\h^\strict$.

Analogously, let $F$ be a sheaf on $C_-^\circ $. Let us define $a_k$ and $\Con_k$ in the same way as above. 

Let 
$C_-^\circ /\Re f_k$ be the image of $C_-^\circ$ under the 
map $C_-^\circ \to \h\to \h/\Re f_k$.  Let
 $p_k:C_-^\circ\to C_-^\circ/\Re f_k$ be the projection.  

As above, let us call $F$ {\em strict} if

1) the natural map
$p_k^{-1}Rp_{k*}\Con_k\to \Con_k$ is an isomorphism in $D(C_-^\circ)$
for all $k$;

2) $F$ is microsupported on $C_-^\circ \times C_+\subset
C_-^\circ \times \h^*$.

 Denote the full subcategory
of $\DBSh_{C_-^\circ}$ consisting of all strict $B$-sheaves on 
$C_-^\circ$ by $\DBSh_{C_-^\circ}^\strict$.

Let $\lambda\in \h$  and condider a shifted open set
$C_-^\circ+\lambda\subset \h$. We then have a notion of a 
$B$-sheaf and of a strict $B$-sheaf on $C_-^\circ+\lambda$ via 
an identification   $C_-^\circ+\lambda\cong C_-^\circ$ via the
shift $T_\lambda$. Hence we have categories 
$\DBSh_{C_-^\circ+\lambda}$; $\DBSh_{C_-^\circ+\lambda}^\strict$.

\subsubsection{} Let $\lambda\in \h$ and let  $j_\lambda:C_-^\circ+\lambda\to \h$ be an open embedding. We then see that the functor $j^{-1}_\lambda$ transforms strict
sheaves on $\h$ into strict sheaves on $C_-^\circ+\lambda$
\begin{Theorem}\label{teorhvat} The functor 
$$j^{-1}_\lambda:\DBSh_{\h}^\strict\to \DBSh_{C_-^\circ+\lambda}^\strict
$$
is an equivalence.
\end{Theorem}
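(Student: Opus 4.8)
The plan is to prove the theorem by constructing an explicit quasi-inverse to $j^{-1}_\lambda$ that ``propagates'' a strict $B$-sheaf from the shifted chamber $C_-^\circ+\lambda$ out to all of $\h$ by repeated use of the $B$-action, and then checking the unit and counit. First I would reduce to $\bfb$-sheaves: the canonical map $p\colon B\to\bfb$ induces the equivalence $\pi=Rp_*\colon\DBSh_X\to D\bfb\Sh_X$ discussed above, and it commutes with $j^{-1}_\lambda$ and with every translation $T_l$, $l\in\bL_-$, so it is enough to show that $j^{-1}_\lambda$ is an equivalence between the categories of strict $\bfb$-sheaves. This is convenient because a $\bfb$-sheaf is just a sheaf $F$ together with a compatible system of maps $F\to T_l^{-1}F[D(l)]$; for each generator $-2\pi e_k$ one has $a_k\colon F\to T_{-2\pi e_k}^{-1}F[D_k]$ with cone $\Con_k$, and strictness says precisely that $\Con_k$ is pulled back along $p_k\colon\h\to\h/\Re f_k$ and that $\ms(F)\subset\h\times C_+$.

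The technical heart is a rigidity statement: if $F$ is a strict $\bfb$-sheaf on $\h$ and $j^{-1}_\lambda F\cong 0$, then $F\cong 0$. The mechanism is that vanishing of $F$ on $C_-^\circ+\lambda$ forces, via the recollement triangle $F\to T_{-2\pi e_k}^{-1}F[D_k]\to\Con_k$ together with $-2\pi e_k\in C_-$, the vanishing of $\Con_k$ on $C_-^\circ+\lambda$; since $\Con_k=p_k^{-1}(\text{something on }\h/\Re f_k)$, it then vanishes on the larger $f_k$-saturated set $C_-^\circ+\lambda+\Re f_k$, on which $a_k$ becomes an isomorphism, and hence the vanishing of $F$ spreads further in the $-e_k$ directions. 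Saturating in every coordinate and translating by $l\in\bL_-$ — which reaches every point because $\h=\bigcup_{l\in\bL_-}(C_-^\circ+\lambda-l)$, $\bL_-$ being cofinal in $C_-$ — one exhausts $\h$. Granting this, full faithfulness of $j^{-1}_\lambda$ is formal: by adjunction the cone of $R\hom(F_1,F_2)\to R\hom(j^{-1}_\lambda F_1,j^{-1}_\lambda F_2)$ is $R\hom$ into an object which is strict and killed by $j^{-1}_\lambda$, hence zero.

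For essential surjectivity I would run the same propagation constructively. Given a strict $\bfb$-sheaf $G$ on $C_-^\circ+\lambda$, put $W_l:=C_-^\circ+\lambda-l$ for $l\in\bL_-$; the $W_l$ form a directed open cover of $\h$, on $W_l$ one has the sheaf $T_l^{-1}G[D(l)]$, and the $\bfb$-structure of $G$ supplies transition maps on the overlaps which the rigidity argument shows are isomorphisms there; the colimit over this directed system is a sheaf $\kappa(G)$ on $\h$ carrying an evident $\bfb$-structure with $j^{-1}_\lambda\kappa(G)\cong G$ (the $l=0$ member). One must still verify $\kappa(G)$ is strict: the estimate $\ms(\kappa(G))\subset\h\times C_+$ is a colimit of the corresponding estimates for the translates of $G$, and $p_k$-invariance of its cones $\Con_k$ is inherited from $G$. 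Finally $\kappa\circ j^{-1}_\lambda\cong\Id$ follows by applying the rigidity statement to the cone of the tautological comparison $\kappa(j^{-1}_\lambda F)\to F$. The main obstacle is precisely the rigidity/propagation step: one has to keep careful track of how the $f_k$-saturations and their $\bL_-$-translates fill up $\h$, and to establish the sheaf-theoretic vanishing underlying ``$a_k$ is an isomorphism over $C_-^\circ+\lambda+\Re f_k$'', which is of the same nature as — and can be deduced from — Lemmas \ref{gammatop}, \ref{parallelogram}, \ref{nol} and Proposition \ref{emen} (a sheaf with microsupport in $\h\times C_+$ that is in addition constant along $f_k$-fibres is very rigid, and the relevant $R\hom$'s into downward cones are controlled by those lemmas). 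The remaining ingredients — the reduction to $\bfb$-sheaves, the formation of the directed colimit, and the formal deduction of full faithfulness — are routine.
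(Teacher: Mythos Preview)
Your reduction to $\bfb$-sheaves is correct and matches the paper, but the propagation mechanism at the heart of your argument has a gap. From $F|_U=0$ with $U=C_-^\circ+\lambda$ you get $\Con_k|_{U+\Re f_k}=0$, so $a_k$ is an isomorphism on $V_k:=U+\Re f_k$. But this only gives $F_x\cong F_{x-2\pi e_k}[D_k]$ for $x\in V_k$, which is useless unless $x-2\pi n e_k$ eventually lies in $U$; and for $x-\lambda=c+tf_k$ with $c\in C_-^\circ$ and $t$ very negative one has $\langle x-\lambda-2\pi n e_k,f_{k\pm1}\rangle=\langle c,f_{k\pm1}\rangle-t>0$ independently of $n$, so $x-2\pi n e_k$ never enters $U$. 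Hence $F$ need not vanish on $V_k$, and the vanishing does not spread as you claim. The same obstacle breaks your colimit $\kappa(G)$: on $W_l\subset W_{l+l'}$ the transition map is a translate of the $\bfb$-action $a_{l'}$, whose cone on $C_-^\circ+\lambda$ is $p_k$-constant but typically nonzero (already for $G=\cX$), so the sheaves on the $W_l$ do not agree on overlaps. Finally, deducing full faithfulness from conservativity needs $\Cone(F_2\to Rj_*j^{-1}F_2)$ to be strict, which fails: $Rj_*j^{-1}$ applied to a $p_k$-constant sheaf on $\h$ is not $p_k$-constant (check the rank-one case).

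The paper's proof supplies the missing ingredient by first resolving any strict $\bfb$-sheaf by objects in the subcategories $\cC(I)$ --- those with $\Con_i=0$ for $i\in I$ and $p_j$-constant for $j\notin I$ --- via explicit Koszul-type complexes. For $Y\in\cC(I,\h)$ it then proves $R\hom_\h(X,Y)\cong R\hom_{C_-^\circ}(j^{-1}X,j^{-1}Y)$ by forming the direct limit $L(Z)$ of the system $Z\to T_g^{-1}Z[D(g)]\to T_{2g}^{-1}Z[D(2g)]\to\cdots$ with $g=-\sum_{i\in I}2\pi e_i$ and $Z=\Cone(j_!j^{-1}X\to X)$, and reducing to the concrete vanishing $Rp_{I!}L(Z)=0$, established by a fibrewise $R\Gamma_c$ computation. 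Essential surjectivity for $F\in\cC(I,C_-^\circ)$ is obtained as $p_I^{-1}Rp_{I!}L(j_!F)$ up to a shift. The $\cC(I)$-decomposition is precisely what makes the translate-and-glue idea work: once $\Con_i=0$ for $i\in I$, the iterates by $g$ really are isomorphisms, and the complementary constancy handles the remaining directions.
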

\subsection{Proof of the theorem}\subsubsection{First reductions}
Without loss of generality one can put $\lambda=0$. We also
set $j:=j_0$.

Let $\pi:B\to \bfb$ be the projection. As the functor $R\pi_*$  is an equivalence, without loss
of generality, one can assume $B=\bfb$. 

Let $I\subset \{1,2,\ldots,N-1\}$.  Let $\cC(I,\h)
\subset D\bfb\Sh_\h$ be the full subcategory consisting of all
sheaves $F$ satisfying:

1) for all $i\in I$, we have: $\Con_i(F)=0$;

2) for all $i\notin I$ the natural map
$p_i^{-1}Rp_{i*}F\to F$ is an isomorphism.

It is clear that every object of $\cC(I,\h)$ is strict.

Let us define the category $\cC(I,C_-^\circ)$ in a similar way.

\begin{Lemma} Every  strict $\bfb$-sheaf on $C_-$ (resp. $\h)$ is quasi-isomorphic to a complex of objects from $\bigsqcup_I \cC(I,\h)$ 
(resp.
$\bigsqcup_I\cC(I,C_-^\circ)$).

\end{Lemma}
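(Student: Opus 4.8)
\emph{Approach.} Since $F$ is already a $\bfb$-sheaf, the generators $1_{-e_k}\in\bfb(-e_k)$, $1\le k\le N-1$, produce maps $a_k\colon F\to T_{-e_k}^{-1}F[D_k]$ which commute with one another up to coherent homotopy, because $\bfb$ is graded-commutative ($1_l1_m=1_{l+m}=1_m1_l$); moreover $\Con_k:=\Cone(a_k)$ and the microsupport bound $\mS(F)\subset\h\times C_+$ are at our disposal by strictness. The two cases $X=\h$ and $X=C_-^\circ$ are handled by literally the same argument, so I would treat $X=\h$ and record at the end the (purely notational) substitutions — $\h/\Re f_k\rightsquigarrow C_-^\circ/\Re f_k$, $p_k$, $T_l$ restricted — needed for $C_-^\circ$. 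The plan is to resolve $F$ by the "fracture cube" of the commuting family $(a_k)$.

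\emph{The resolution.} For a subset $I\subseteq\{1,\dots,N-1\}$ let $F_I$ be obtained from $F$ by, first, \emph{completing} in every direction $k\notin I$ — replacing the current object by the relevant iterated cone along the maps $a_k$ — and, second, \emph{localising} in every direction $k\in I$ — passing to the homotopy colimit of the telescope $F\xrightarrow{a_k}T_{-e_k}^{-1}F[D_k]\xrightarrow{a_k}\cdots$. This gives a commutative $(N-1)$-cube $I\mapsto F_I$ of $\bfb$-sheaves; its totalisation, a finite complex with terms $\bigoplus_{|I|=j}F_I$, is quasi-isomorphic to $F_\emptyset=F$ as soon as one knows that the relevant homotopy (co)limits converge. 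Convergence holds because, once an object has been localised in the direction $e_k$, the supports of the successive terms of its telescope are translated off to infinity and hence eventually leave every compact subset of $\h$ — this is exactly the support-translation mechanism already exploited in the proof of Proposition \ref{emen}, together with the discreteness of the $\bL_-$-orbits.

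\emph{Identifying the vertices.} One must then check $F_I\in\cC(I,\h)$. Localising along $a_k$ for $k\in I$ makes $a_k$ invertible on $F_I$, so $\Con_k(F_I)=0$ for those $k$, which is the first defining property of $\cC(I,\h)$. For $k\notin I$, the completion step exhibits $F_I$ (after the finite filtration coming from the cube) as an iterated extension of shifted copies of $\Con_k(F)$, and strictness of $F$ says precisely that these are pulled back from $\h/\Re f_k$; this property survives the remaining localisations, and combining it with the microsupport constraint $\mS(F)\subset\h\times C_+$ (which is preserved by all the operations used and forces the covectors to annihilate $f_k$) yields that $p_k^{-1}Rp_{k*}F_I\to F_I$ is an isomorphism. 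Hence each $F_I$ lies in $\cC(I,\h)$, and the totalisation of the cube is the desired complex with entries in $\bigsqcup_I\cC(I,\h)$.

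\emph{Main obstacle.} The delicate point is the simultaneous bookkeeping: constructing the cube $I\mapsto F_I$ as a diagram of $\bfb$-sheaves functorial in all $N-1$ directions at once, verifying convergence of the homotopy colimits in this $\bfb$-equivariant setting, and confirming that the vertices are genuine objects of $\cC(I,\h)$ and not merely objects of $\DBSh_\h$ happening to have the two listed properties. Once the cube and its convergence are secured, passage to the total complex is formal, and the argument over $C_-^\circ$ is word-for-word the same.
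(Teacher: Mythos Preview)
Your fracture-cube idea is natural, but as written it has a genuine gap, and the paper takes a rather different route.

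\textbf{The gap.} You want $F_\emptyset=F$ and at the same time $F_\emptyset$ to be obtained from $F$ by ``completing'' along every $a_k$; these are not the same object unless $F$ is already derived-complete with respect to all $a_k$, which is nowhere established. More concretely, every vertex $F_I$ of your cube involves an infinite homotopy (co)limit (the telescope for $k\in I$, and presumably a holim for $k\notin I$), and the two conditions defining $\cC(I,\h)$ are not obviously preserved by these. For $k\notin I$ you argue that $F_I$ is an iterated extension of shifted copies of $\Con_k(F)$, each constant along $f_k$; but to conclude that $F_I$ itself is constant along $f_k$ you must commute $p_k^{-1}Rp_{k*}$ past a homotopy limit, and $Rp_{k*}$ for the non-proper projection $\h\to\h/\Re f_k$ does not commute with filtered colimits either. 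Your stated convergence mechanism (``supports translated off to infinity'') only works for sheaves with support bounded in the $+e_k$ direction, which a general strict $\bfb$-sheaf need not have. So neither the identification $F\simeq\mathrm{Tot}(I\mapsto F_I)$ nor the membership $F_I\in\cC(I,\h)$ is secured.

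\textbf{What the paper does instead.} The paper builds a \emph{finite} cube directly from geometry rather than from the $a_k$'s. One first identifies the image of $C_-^\circ\to\h/\langle f_j\rangle_{j\notin J}$ with an open cell $\Gamma_J\cong\Re_{<0}^{|J|}$ (a small convexity sublemma), so that $P_J:C_-^\circ\to\Gamma_J$ is a trivial $\Re^{N-1-|J|}$-bundle. Setting $F_J:=P_J^{-1}RP_{J*}F$ makes $F_J$ constant along $f_j$ for $j\notin J$ \emph{by construction}, and $F_{\{1,\dots,N-1\}}=F$. The Koszul complex $K(I,F):=\bigoplus_{J\subset I}F_J\otimes\Lambda^{\mathrm{top}}(\gf[I\setminus J])[|I\setminus J|]$ then lies in $\cC(I,C_-^\circ)$: condition (2) is immediate, and condition (1) (that $\Con_k K(I,F)=0$ for $k\in I$) is obtained by combining strictness with the acyclicity of the Koszul cube under further projection $RP_{J*}$, $J\subsetneq I$. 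Finally $F=F_{\{1,\dots,N-1\}}$ carries a finite filtration with graded pieces the $K(I,F)$. Everything here is finite; no convergence issues arise. The trade-off is that one needs the geometric sublemma about $\Gamma_J$, but this is an elementary cone computation.
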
 

\begin{proof}
We  will prove Lemma for strict sheaves on $C_-^\circ$. The proof for
$\h$ is similar.

Let us first consider a through map
$$
\pi_I:C_-^\circ\to \h\to \h/(\Re<f_j>_{j\notin I})
$$
let $C_I$ be the image of $\pi_I$.

We also have a through map
$$
\vs_I:\Re_{<0}<e_i>_{i\in I}\into \h\to \h/(\Re<f_j>_{j\notin I})
$$
\begin{sublemma} The map $\vs_I$ is an open embedding whose
image is the same as the image of $\pi_I$
\end{sublemma}
\begin{proof}(of sublemma) It is easy to see that the vectors
$f_j,j\notin I$; $e_i;i\in I$ form a basis of $\h$.  Therefore,
the vectors $e_i;i\in I$ (more precisely, their
images) form a basis of
$\h/(\Re<f_j>_{j\notin I})$.  Let $x\in C_-^\circ$. Let us expand
$$
x=\sum\limits_{i\in I} a_ie_i+
\sum\limits_{j\notin I} b_jf_j
$$

Then $p_I(x)=\sum_{i\in I}a_ie_i$.

We have: for all $j\notin I$:
$$
<x,f_j>=\sum_{k\notin I} b_k<f_j,f_k>\leq 0.
$$

Let $J:=\{1,2,\ldots,N-1\}\backslash I$ and let
us decompose $J$ into intervals as follows:
 $J=J_1\sqcup J_2\sqcup\cdots \sqcup J_s$
where each $J_t=[k_t;l_t]$ , $k_t\leq l_t<k_{t+1}-1$.
Set $b^t_k=b_k$ if $k\in J_t$; otherwise set $b^t_k=0$.
We then have $2b^t_k-b^t_{k-1}-b^t_{k+1}\leq 0$
for all $k\in J_t$.
Let $D^t_k:=b^t_k-b^t_{k-1}$. We then know
that $D^t_{k+1}\geq D^t_k$ if $k,k+1\in J_t$.
We then have
$b_k=D^t_{k_l}+\cdots+D^t_k$.  Assume $b_k>0$. Then 
$D_k^t>0$ (because $ D^t_{k_l}\leq
 D^t_{k_l+1}\leq \cdots\leq
D^t_k$). Hence, $0\leq D_k^t\leq D_{k+1}^t\leq \cdots$ and 
$0<b_k^t<b_{k+1}^t<\cdots <b_{l_t+1}^k=0$. Contradiction.
Thus, $b_k^t\leq 0$ for all $k$.  Therefore, for all $k$,
$b_k\leq 0$.

For every $i\in I$ we have
$$
0><x,f_i>=a_i+\sum_{j\notin I} b_j<f_i,f_j>.
$$
 Hence,
$$
<x,f_i>-\sum_{i\in I} b_i<f_i,f_j>=a_j.
$$
For $i\in I$, $j\notin I$, we have $i\neq j$ and
$<f_i;f_j>\leq 0$.
As  $b_i\leq 0$, we see that 
$0><x,f_j>\geq  a_j$. Hence, $\image(\pi_k)\subset
\image(\vs_k)$. Let us prove the inverse inclusion.
Let  $g:=\sum_{i\in I} a_ie_i -b\sum_{j\notin I} f_j$
We see that for $a_j>0$ and $0<b<<1$,we have
 $g\in C_-^\circ$ and
$\pi_k(g)=\sum_{i\in I} a_ie_i$.
\end{proof}

Let $\Gamma_I:=R_{<0}<e_i>_{i\in I}$. 

 We then have a 
surjection $P_I:C_-^\circ\to \Gamma_I$. It is easy
to see that $P_I$ is a trivial bundle whose fiber
is homeomorphic to $\Re^{N-1-|I|}$. 
Let $J\subset I$. It  follows that we have projections
$P_{IJ}:\Gamma_I\to \Gamma_J$ so that $P_J=P_{IJ}P_I$.

Let $F$ be a strict sheaf on $C_-^\circ$. Let $F_J:=
P_J^{-1}P_{J*}F$. It is easy to see that $F_J$ is a strict sheaf
on $C_-^\circ$. 
For $J\subset I$ we have a natural map $F_J\to F_I$.
Let $\subsets$ be the poset of all subsets of
 $\{1,2,\ldots,N-1\}$; view this subset as a category.
We then see that $I\mapsto F_I$ is a 
functor from $\subsets$ to the dg category of $B$-sheaves
 whose image lies in the full subscategory of strict $B$-sheaves. 

For a subset $I\subset \{1,2,\ldots,N-1\}$ consider
the standard complex
$$
K(I,F)=\bigoplus_{J:J\subset I}
F_J\otimes \Lambda^{\text{top}} (\gf[I\backslash J])[|I\backslash J|]
$$
with the standard differential. We then see
that 
1) $K(I,F)$ is a strict $B$-sheaf on $C_-^\circ$;

2) $p_I^{-1}Rp_{I*}K(I,F)\to K(I,F)$
is an isomorphism;

3) Let $J\subset I$ and $J\neq I$. Then
$Rp_{J*}K(I,F)=0$.  

2) and 3) imply that

4)  for any $J$ which intersects $I$, $Rp_{J*}K(I,F)=0$.

Let $k\in I$. Then we know that 
$p_k^{-1}Rp_{k*}\Con_k\to \Con_k$ is an isomorphism.
On the other hand, 4) implies that $Rp_{k*}\Con_k(K(I,F))=0$
Hence,

5) $\Con_kK(I,F)=0$ for all $k\notin I$.

Thus, $K(I,F)\in \cC(I,C_-^\circ)$, which proves Lemma 
for $C_-^\circ$. The proof for $\h$ is similar.
\end{proof}

\subsubsection{}
It is  clear that the functor $j^{-1}$ takes
$\cC(I,\h)$ to $\cC(I,C_-^\circ)$ for all $I$.

We will prove:
\begin{Lemma} Let $X$ be a $\bfb$-sheaf on $\h$ and let
$Y\in \cC(\h;I)$ for some $I\subset \{1,2,\ldots,N-1\}$.
Then the natural map
$$
R\hom_{D\bfb\Sh_\h}(X,Y)\to 
R\hom_{D\bfb\Sh_{C_-^\circ}}(j^{-1}X;j^{-1}Y)
$$
is an isomorphism
\end{Lemma}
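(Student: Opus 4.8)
The plan is to reduce the statement to a purely sheaf-theoretic computation on $\h$ by ``spreading out'' a $\bfb$-sheaf in $\cC(\h;I)$ from the region $C_-^\circ$ to all of $\h$, using exactly the two properties that define $\cC(\h;I)$: constancy along the fibers of $p_i$ for $i\notin I$, and vanishing of $\Con_i$ for $i\in I$. First I would record the elementary fact that for $Y\in\cC(\h;I)$ one has a canonical identification $Y\cong p_I^{-1}Rp_{I*}Y$, where $p_I:\h\to\h/(\Re\langle f_j\rangle_{j\notin I})$; this is the analogue for $\h$ of properties 1)--3) established for $K(I,F)$ in the previous Lemma, and it says that $Y$ is pulled back from the quotient $C_I=\h/(\Re\langle f_j\rangle_{j\notin I})$. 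The same identification holds for $j^{-1}Y$ on $C_-^\circ$, where now the relevant quotient is the image $\Gamma_I=\Re_{<0}\langle e_i\rangle_{i\in I}$ of $C_-^\circ$, which by the Sublemma is an \emph{open} subset of $C_I$ via $\vs_I$. Thus $R\hom(X,Y)$ and $R\hom(j^{-1}X,j^{-1}Y)$ are both computed, after pushing forward along $p_I$ (which is $\bfb$-equivariant and proper on the relevant supports), as $R\hom$ over $C_I$ resp.\ $\Gamma_I$ between $Rp_{I*}X$-type objects and $Rp_{I*}Y$.

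Next I would reduce further to a statement about a single open embedding of a convex cone. After applying $Rp_{I*}$, the question becomes: for an open convex cone $\Gamma_I\subset C_I$ (cut out by the conditions $\langle\cdot,f_i\rangle<0$, $i\in I$, in the coordinates $e_i$), and a sheaf $Y'$ on $C_I$ microsupported in $C_I\times(\text{dual cone})$ — here the dual cone is the image of $C_+$, which is spanned by the $f_i$, $i\in I$ — the restriction map $R\hom_{C_I}(X',Y')\to R\hom_{\Gamma_I}(X'|_{\Gamma_I},Y'|_{\Gamma_I})$ is an isomorphism for \emph{every} $X'$. This is precisely the type of statement proved by Lemma \ref{gammatop}: if $U\subset V$ are open convex with $V\subset U-\gamma$ and $\mS(F)\subset V\times\gamma^\circ$, then $R\Gamma(V,F)\xrightarrow{\sim}R\Gamma(U,F)$. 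One has $C_I\subset\Gamma_I-\gamma$ for the appropriate $\gamma$ (the cone on which the microsupport of $Y'$, equivalently of $Y$ and $j^{-1}Y$, is supported), because $\Gamma_I$ is a translate-closed open cone in the direction complementary to $\gamma^\circ$. Combining this with the local nature of the statement — covering $C_I$ by sets of the form (convex) $\cap$ (translate of $\Gamma_I$) and using that $R\hom(X',-)$ commutes with the relevant limits when $X'$ is built from constant sheaves on opens — gives the isomorphism on the level of underlying complexes of sheaves; the $\bfb$-structure is transported automatically since every map in sight is a $\bfb$-map.

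The remaining point is bookkeeping for the $\bfb$-sheaf structure: $R\hom_{D\bfb\Sh}$ is not just $R\hom$ of complexes but involves the $\bL_-$-graded algebra action, so I would present the argument via the equivalence $\pi=Rp_*:\DBSh\simeq D\bfb\Sh$ and compute $R\hom_{D\bfb\Sh_\h}(X,Y)$ as a totalization (over $l\in\bL_-$) of $R\hom_{D(\h)}(X,T_l^{-1}Y)$-type complexes with a twisted differential encoding the $\bfb$-action; the isomorphism of each graded piece then follows from the sheaf-theoretic statement above applied to $T_l^{-1}Y$ (which again lies in $\cC(\h;I)$ since $\cC(\h;I)$ is stable under the $T_l$), and the totalizations match. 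Here one must be slightly careful that the cone $\gamma$ controlling $\mS(T_l^{-1}Y)$ is independent of $l$, which is clear since $T_l$ is a translation. The main obstacle I anticipate is making the ``local nature'' reduction to Lemma \ref{gammatop} fully rigorous — i.e.\ verifying that an arbitrary $\bfb$-sheaf $X$ (not assumed strict) can be resolved by, or at least that $R\hom(X,-)$ is controlled by, objects on which Lemma \ref{gammatop} directly applies, and checking the required convexity/cone inclusion $C_I\subset\Gamma_I-\gamma$ precisely in the $e_i/f_j$ coordinates. Once that geometric inclusion is nailed down, the rest is a diagram chase through the equivalences $Rp_{I*}$, $p_I^{-1}$, and $\pi$.
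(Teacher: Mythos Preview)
There is a genuine gap. Your reduction to Lemma~\ref{gammatop} does not go through, for two related reasons.

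First, the cone inclusion you assert, $C_I\subset\Gamma_I-\gamma$, is false. In the coordinates given by the images of $e_i$, $i\in I$, one has $\Gamma_I=\Re_{<0}^{|I|}$ and the microsupport cone $\gamma^\circ=\Re_{\ge 0}^{|I|}$, so the dual open cone is $\gamma=\Re_{>0}^{|I|}$ and $\Gamma_I-\gamma=\Re_{<0}^{|I|}\neq C_I$. Lemma~\ref{gammatop} therefore gives no information about restriction from $C_I$ to $\Gamma_I$.

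Second, and more fundamentally, you relegate the condition $\Con_i(Y)=0$ for $i\in I$ to ``bookkeeping'' and attempt to prove the isomorphism of each graded piece $R\hom_{D(\h)}(X,T_l^{-1}Y)\to R\hom_{D(C_-^\circ)}(j^{-1}X,j^{-1}T_l^{-1}Y)$ using only the microsupport bound on $Y$. That statement is simply false. Take $N=2$, so $\h\cong\Re$, $C_-^\circ=\Re_{<0}$, $C_+=\Re_{\ge 0}$, and $I=\{1\}$; then $Y=\gf_{[0,\infty)}$ satisfies $\mS(Y)\subset\h\times C_+$ and is (vacuously) constant along the fibers of $p_j$ for $j\notin I$, yet $j^{-1}Y=0$ while $R\hom_{D(\Re)}(\gf_{[0,\infty)},Y)=\gf$. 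Of course this $Y$ admits no $\bfb$-structure with $\Con_1=0$, which is exactly the point: the vanishing of $\Con_i$ is not bookkeeping but the mechanism that rules out such examples.

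The paper's proof uses that condition in an essential, non-sheaf-theoretic way. One passes to $Z=\Cone(j_!j^{-1}X\to X)$, supported on $\h\setminus C_-^\circ$, and exploits that for $i\in I$ the structure map on $Y$ is an isomorphism while the induced structure map on $\Cone(Z\to T_{-2\pi e_i}^{-1}Z[D(-2\pi e_i)])$ is null-homotopic; a bimodule argument then gives $R\hom(Z,Y)\cong R\hom(T_{ng}^{-1}Z[nD(g)],Y)$ for all $n$, with $g=-\sum_{i\in I}2\pi e_i$. Passing to the direct limit $L(Z)$ and using constancy of $Y$ along $p_I$, one is reduced to $Rp_{I!}L(Z)=0$, which is then checked fiberwise by a concrete compactly-supported-cohomology computation. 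Your plan to push everything down to $C_I$ and invoke a convexity lemma bypasses precisely this interaction between the $\bfb$-action on $Z$ and the invertibility on $Y$, and there is no way to recover it from microsupport estimates alone.
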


\begin{proof} We see that $j_!j^{-1}X$ is a $\bfb$-sheaf on $\h$ and
that $$
R\hom_{D\bfb\Sh_{C_-^\circ}}(j^{-1}X;j^{-1}Y)=
R\hom_{D\bfb\Sh_{\h}}(j_!j^{-1}X;Y).
$$

We also have a natural map $j_!j^{-1}X\to X$ of $\bfb$-sheaves on $\h$.  Let $Z$ be the cone of this map. The statement of the Lemma
is equivalent to $R\hom_{D\bfb\Sh_\h}(Z,Y)=0$

For every $k\in I$ we have a structure map
\begin{equation}\label{er}Z\to T_{-2\pi e_k}^{-1}Z[D(-2\pi e_k)]
\end{equation}
\begin{sublemma}  The natural map
$$
R\hom_{D\bfb\Sh_\h}(T_{-2\pi e_k}^{-1}Z[D(-2\pi e_k)];Y)\to
 R\hom_{D\bfb\Sh_\h}(Z;Y)
$$
 is an isomorphism.
\end{sublemma}
\begin{proof}(of Sublemma) Let $W$ be the cone of the map (\ref{er}).We are to show that $R\hom_{D\bfb\Sh_\h}(W,Y)=0$. 
It follows that the structure map $W\to T_{-e_k}^{-1}W[D(-e_k)]$
is homotopy equivalent to 0. 

Choose an injective representative of $Y$ and consider
a $\bL_-$- graded complex  $H(l):=\hom(W;T_l^{-1}Y)$. This complex
is a $\bL_-$-graded $\bfb$-bimodule. We also have a $\bL_-$-graded $\bfb$-bimodule
structure on $\bfb$ . We then have
$$
R\hom_{D\bfb\Sh_\h}(W,Y)=R\hom_{\bfb-\text{bimod}}(\bfb;H).
$$
Let $R_k:=1\otimes 1_{-e_k}\in \bfb\otimes \bfb$;
$L_k=1_{-e_k}\otimes 1\in \bfb\otimes \bfb$.
We then see that the action of $R_k$ on $H$ is a quasi-isomorphism,
  whereas the action of $L_k$ is homotopy equivalent to 0. Hence
the action of $R_k-L_k$ on $H$ is a quasi-isomorphism. 
The action of $R_k-L_k$ on $\bfb$ is zero. Hence an induced
action of $R_k-L_k$ on $ R\hom_{\bfb-\text{bimod}}(\bfb;H)$
is simultaneously 0 and an isomorphism, meaning that 
$R\hom_{\bfb-\text{bimod}}(\bfb;H)=0$, whence the statement
\end{proof}

Let $g=-\sum\limits_{i\in I}2\pi  e_i$. Consider an inductive system 
of $\bfb$-sheaves on $\h$:
$$
Z\to T_g^{-1}[D(g)]Z\to T_{2g}^{-1}[D(2g)]Z\to \cdots\to 
T_{ng}^{-1}[D(ng)]Z\to \cdots
$$
and let $L(Z)$ be the derived direct limit of this system.
We have a natural map $Z\to L(Z)$. The previous Lemma easily 
implies that
the induced map
$$
R\hom_{D\bfb\Sh_\h}(L(Z);Y)\to R\hom_{D\bfb\Sh_\h}(Z;Y)
$$

is an isomorphism.

It also follows that the natural map

$$
R\hom(L(Z);Y)\to R\hom(Rp_{I!}L(Z);Rp_{I!}Y)
$$
is an isomorphism (because $Y$ is locally constant along fibers of
$p_I$).
Thus, the statement of our Lemma reduces to showing that
$$
Rp_{I!}L(Z)=0
$$

Let $x\in \h_I$ and show that  $Rp_{I!}L(Z)|_x=0$. We have
$$
Rp_{I!}L(Z)|_x=R\Gamma_c(p_I^{-1}x;L(Z)|_{p_I^{-1}x})
$$

Let $U_x\subset \h$; $U_x:=p_I^{-1}x$. 
By definition, we have
$$
R\Gamma_c(p_I^{-1}x;L(Z)|_{p_I^{-1}x})=\limdir_n
R\Gamma_c(U_{x+ng};Z|_{U_{x+ng}}[D(ng)])
$$
where the spaces $R\Gamma_c(U_{x+ng};Z|_{U_{x+ng}})$
form an inductive system by means of the structure maps
$Z\to T_g^{-1}Z[D(g)]$.
Next, we have
$$
R\Gamma_c(U_{x+ng};Z|_{U_{x+ng}})=\Cone[ R\Gamma_c(U_{x+ng}\cap C_-^\circ;X|_{U_{x+ng}})\to R\Gamma_c(U_{x+ng};X|_{U_{x+ng}})].
$$

We have maps
$$
U_x\to U_{x+g}\to U_{x+2g}\to \cdots \to U_{x+ng}\to \cdots
$$
induced by the shifts $T_g$.
Let $U:=\bigcup_n T_{ng}^{-1}(U_{x+ng}\cap C_-^\circ)$. 
We then have
$$
U_{x+ng}\cap C_-^\circ \subset T_{ng}U\subset U_{x+ng}.
$$
It follows that $U$ consists of all vectors 
$v=\sum_{i\in I} x_ie_i+\sum_{j\notin I} y_jf_j$, where
\begin{equation}\label{x:i}
x=\sum_{i\in I} x_ie_i
\end{equation}
and for all $l\notin I$,
\begin{equation}\label{y:i}
<\sum_{j\notin I} y_jf_j,f_l><0.
\end{equation}

It also follows that the natural maps
\begin{equation}\label{swed}
\limdir_n R\Gamma_c(U_{x+ng}\cap C_-^\circ;X|_{U_{x+ng}}][D(ng))\to 
\limdir_n R\Gamma_c(T_{ng}U;X|_{U_{x+ng}}[D(ng)])
\end{equation}
is an isomorphism.
Indeed, set $Z_n:= T_{ng}^{-1} Z|_{T_{ng}U}[D(ng)]$, $Z_n\in D(U)$.
The objects $Z_n$ form an inductive system.
Set $U_n:=T_{ng}^{-1}(U_{x+ng}\cap C_-^\circ)\subset U$.
We see that $U_0\subset U_1\subset U_2\subset \cdots$ and
$\bigcup_n U_n=U$
We then see that our inductive systems and their map can be rewritten as
$$
\limdir_n R\Gamma_c(U_n;Z_n)\to\limdir_n R\Gamma_c(U;Z_n)
$$

Let $K_n:=U\backslash U_n$. We then see that $\cap_n K_n=0$ and that the cone
of the above map is isomorphic
to
\begin{equation}\label{shishka}
\limdir_n R\Gamma_c(K_n;Z_n|_{K_n}).
\end{equation}
We see that for each $m$, the natural  map
$$
 R\Gamma_c(K_m;Z_m|_{K_m})\to
\limdir_n R\Gamma_c(K_n;Z_n|_{K_n}).
$$
factors as
$$
R\Gamma_c(K_m;Z_m|_{K_m})=\limdir_{n>m} R\Gamma_c(K_m\backslash K_n
;Z_m|_{K_m})\to  \limdir_n R\Gamma_c(K_n;Z_n|_{K_n})
$$
hence it is 0, which means that the space (\ref{shishka}) is 0
and the map (\ref{swed}) is an isomorphism.

Therefore, our original statement now reduces to showing that
\begin{equation}\label{cone:tng}
\Cone( R\Gamma_c(T_{ng}U;X|_{U_{x+ng}})\to
R\Gamma_c(U_{x+ng}:X|_{U_{x+ng}}))=0
\end{equation}
 for all $n>0$. 

let $A:=\Re<f_j>_{j\neq I}$.  We have an identification
$$
\alpha:A\to U_{x+ng},\quad a\mapsto \sum_{i\in I} x_ie_i+ng +a,$$
where $x_i$ are the same as in (\ref{x:i}).
Let $B\subset A$ be an open subset specified by
the condition (\ref{y:i}). It follows that
$\alpha(B)=T_{ng}U$. Let $Y\in D(A)$,
 $Y:=\alpha^{-1}X|_{U_{x+ng}}$.   We can rewrite (\ref{cone:tng}) as
$$
\Cone(R\Gamma_c(B,Y)\to R\Gamma_c(A,Y))
$$

Let us estimate the microsupport of $Y$.
We know that  $\mS(X)\subset \h\times C_+$.
Using Proposition (\ref{inverse:close}) one can show 
 that $Y$ is microsupported on the set
$
A\times \beta^*(C_+)$, where 
$\beta^*:\h^*\to A^*$ is dual to the embedding 
$\beta:A\to \h$; $\beta(f_j)=f_j$.  let $\ve^j\in A^*$
be the basis dual to $f^j$. One sees that $$\beta^*(C_+)=
\Re_\geq 0<\ve^j>_{j\notin I}.$$

Let $\gamma\subset  A$ be the dual cone to $\beta^*(C_+)$;
$\gamma= \Re_{>0}<f_j>_{j\notin I}$. 
One can check $B+\gamma=A$. As $\mS(Y)\subset A\times \beta^*(C_+)$, the Lemma follows.
\end{proof}
 
It now follows  that the functor 
$j^{-1}:D\bfb\Sh_\h\to D\bfb\Sh_{C_-^\circ}$ is conservative (the natural map
$R\hom(F,G)\to R\hom(j^{-1}F;j^{-1}G)$ is an isomorphism). We only
need to check the essential surjectivity of $j^{-1}$. It suffices
to check that for each $I\subset \{1,2,\ldots,N-1\}$,
the functor $j^{-1}:\cC(I,\h)\to \cC(I,C_-^\circ)$ is essentially
surjective. Let $F\in \cC(I,C_-^\circ)$ and consider a $\bfb$-sheaf
$G:=Rp_I^{!}Rp_{I!}L(j_!F):=Rp_I^{-1}Rp_{I!}L(j_!F)[N-1-|I|]$, where $L$ is the same as in the proof
of Lemma. 
 One easily checks  that $j^{-1}G\cong F$. This completes the proof
of the theorem.
\subsubsection{} Let us check that the $\bfb$- sheaf $\bfS$ is 
strict. Indeed, it follows that the structure map
$$
b_{-2\pi e_k}:\bfS\to T^{-1}_{-2\pi e_k}\bfS
$$
is induced by the correponding map
$$
b_{-2\pi e_k}^\sp:\sp\to T_{-2\pi e_k}^{-1}\sp=
\sp|_{G\times -2\pi e_k}*_G \sp
$$
which is in turn induced by the map
$$
\beta_{-e_k}:\gf_e\to \sp|_{G\times -e_k}
$$
as in Proposition \ref{efka}. let $B_k:=\cone b_k$.
We then get 
$$
\Cone b_{-2\pi e_k}^\sp=B_k*\sp.
$$
 
According to Proposition \ref{efka},
$\mS(B_k)\subset \{(g,\omega)|:<\|\omega\|,f_k>=0\}$
Standard computation shows that the sheaf 
$B_k*\sp$ is microsupported on the set
$$
\{(g,A,\omega,\eta)|(\eta,f_k)=0\}
$$
meaning that $\Cone b_{-2\pi e_k}^\sp=B_k*\sp$ is constant along the fibers of  the projection
$G\times \h\to G\times (\h/f_k)$. Hence, $\Cone b_{-2\pi e_k}=
i^{-1}b_{-2\pi e_k}^\sp$ is constant along the fibers of the projection
$$
\Zentrum\times \h\to\Zentrum\times \h/f_k
$$

It then follows that the sheaf $j_{C_-^\circ}^{-1}\bfS$ is a strict
$\bfb$-sheaf on $C_-^\circ$.  We know (see (\ref{hvatoiot})
that $j_{C_-^\circ}^{-1}S\cong \bigoplus_{I\subset \{1,2,\ldots,N-1\}}
\bS_I|_{C_-^\circ}$. 
It then easily follows that each $\bS_I$ is a strict  $\bfb$-sheaf
on $C_-^\circ$. Indeed, $\Cone b_k^{\bfS}=\bigoplus_I
 \Cone b_k^{\bS_I}$.
Let $C:=\Cone b_k^{\bfS}$ and $C_I:=\Cone b_k^{\bfS'_I}$.
Let $p_k:\Zentrum\times C_-^\circ \to \Zentrum\times C_-^\circ/f_k$.
One then sees that the natural  map
$$
p_k^{-1}Rp_{k*}C\to C
$$
is isomorphic to the  direct sum of  natural maps
$$
p_k^{-1}p_{k*}C_I\to C_I
$$

As the map $p_k^{-1}Rp_{k*}C\to C$ is an isomorphism, so is each of
its direct summands, i.e. all maps $
p_k^{-1}p_{k*}C_I\to C_I
$
are isomorphisms meaning that all sheaves $\bfS'_I$ are strict.

{\bf Remark} One can also prove that the sheaves $\bS_I$ are 
strict directly from the definition (\ref{defbfSI}).

According to Theorem \ref{teorhvat},   there exist strict $
\bfb$-sheaves on $\Zentrum\times \h$, to be denoted by $\bfS_I$
such that $i_{C_-^\circ}^!\bfS_I\cong \bS_I$ and
the sheaves $\bfS_I$ are unique up-to a unique isomorphism.
Same theorem implies that we should have an isomorphism
$$
\bfS\cong \bigoplus_I \bfS_I.
$$

\section{Identifying the sheaf $\bfS$} \label{lastbfs} One can check that
the $\bfb$- sheaf $\cX$ on $\Zentrum\times C_-^\circ$  as in  (\ref{defcx}) is strict. Indeed, this
follows from the fact that the 
$\bfb$- sheaf $\bfS_\emptyset=
G_\emptyset \otimes \cX$ is strict, or it can be checked directly.

It then follows that there exists a strict $\bfb$-sheaf $\cY$ on 
$\Zentrum\times \h$ such that $j_{C_-^\circ}^{-1}\cY=\cX$.
As $\bS_I\cong G_I[D(-2\pi e_I)]\otimes T_{-2\pi e_I*}\cX$, it then 
follows that
we have an isomorphism 
$\bfS_I\cong G_I[D(-2\pi e_I)]\otimes T_{-2\pi e_I*}\cY$ 
which is induced by the obvious isomorphism
$$
G_I[D(-2\pi e_I)]\otimes T_{-2\pi e_I*}\cX|_{(C_-^\circ-2\pi e_I)}
=G_I[D(-2\pi e_I)]\otimes T_{-2\pi e_I*}\cY|_{(C_-^\circ-2\pi e_I)}.
$$

Thus, we have an isomorphism
\begin{equation}\label{Y-bfS}
\bfS\cong \bigoplus_I   G_I[D(-2\pi e_I)]\otimes T_{-2\pi e_I*}\cY.
\end{equation}
It now remains to identify the $\bfb$-sheaf $\cY$.
\subsection{Identifying $\cY$}
\subsubsection{} For a subset $J\subset \{1,2,\ldots,N-1\}$ 
and $l\in L$ let $K(J,l)\subset e^l\times\h\subset \Zentrum\times 
h$ be defined as follows:
$$
K(J,l):=\{(e^{l},x)\in \Zentrum\times \h|\forall j\in J: 
<x-l,e_j>\geq 0\}.
$$
 Let $V(J,l):=\gf_{K(J,l)}[D(l)].
$
Let $\bL_J=\{l\in \bL|\forall i\notin J: <l,f_j>\leq 0\}$
Let
$\Psi^J:=\bigoplus\limits_{l\in \bL_J} V(J,l).$
Let us endow $\Psi^J$ with a $\bfb$-structure. 
Let $\lambda\in \bL_-$.  We have 
$$T_\lambda^{-1}V(J,l)=\gf_{T_\lambda^{-1}K(J,l)}[D(l)];$$
$$
T_\lambda^{-1}K(J,l')=\{(e^{l'},x)|\forall j\in J: e^\lambda e^{l'}=e^l;
<x+\lambda-l,e_j>\geq 0\}
$$
$$
=K(J,l-\lambda).
$$
Thus,
$$
T_\lambda^{-1}V(J,l)=\gf_{K(J,l-\lambda)}[D(l)]=V(J,l-\lambda)[D(\lambda)].
$$

It is clear that if $l\in \bL_J$,
then $l+\lambda\in \bL_J$. 
We then can define the map $b_\lambda:\Psi^J\otimes \bfb(\lambda)\to
T_\lambda^{-1}\Psi^J$ 
as a direct sum  of maps
$$
V(J,l)\otimes \bfb(\lambda)=V(J,l)[D(\lambda)]=
T_\lambda^{-1}V(J,l+\lambda).
$$

Let us now check that $\Psi^J$ are strict $\bfb$-sheaves.

Let $j\notin J$. Then it is clear that $\Psi^J$ is constant
along the fibers of the map $p_j:\Zentrum\times\h\to \Zentrum\times
\h/f_j$. Therefore so is the cone of $b_{-e_j}$. 
Let $j\in J$.  it is then easy to see that the map $b_{-e_j}$ is an
isomoprhism, whence the statement.

Let $J_1\subset J_2$. Construct a map of $\bfb$-sheaves
$$
I_{J_1J_2}:\Psi^{J_1}\to \Psi^{J_2}.
$$

It is defined as the  direct sum of the natural maps
$$
V(J_1,l)\to V(J_2,l)
$$
for all $l\in \bL_{J_1}\subset \bL_{J_2}$. These maps
come from the closed embeddings  $K(J_2,l)\subset K(J_1,l)$.

Let $\Subsets$ be the poset (hence the category) of all subsets
of $\{1,2,\ldots,N-1\}$. We then see that $\Psi$ is a functor
from $\Subsets$ to the category of $\bfb$-sheaves on
 $\Zentrum\times \h$.  We then construct the standard complex
$\Phi^\bullet$ such that
\begin{equation}\label{defphi}
\Phi^k:=\bigoplus\limits_{I,|I|=k} \Psi^I
\end{equation}
and the differential $d_k:\Phi^k\to \Phi^{k+1}$ is given by
\begin{equation}\label{defphi1}
d_k=\sum (-1)^{\vs(J_1,J_2)}I _{J_1J_2},
\end{equation}
where the sum is taken over all pairs $J_1\subset J_2$ such that
$|J_1|=k$ and $|J_2|=k+1$. The set $J_2\backslash J_1$ then consists
of a single element $e$ and $\vs(J_1J_2)$ is defined as the number
of  elements in $J_2$ which are less than $e$.

The constructed complex defines an object in 
$\DBSh_{\Zentrum\times \h}^\strict$, to be denoted by $\Phi$.

We will show $\Phi\cong\cY$. To this end it suffices to prove:

\begin{Lemma} We have $j_{C_-^\circ}^{-1}\Psi\cong \cX$.
\end{Lemma}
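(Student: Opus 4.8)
The goal is to identify $j_{C_-^\circ}^{-1}\Psi$ (the restriction of the complex $\Phi^\bullet$ built out of the $\Psi^J$'s) with the $\bfb$-sheaf $\cX = \bigoplus_{l\in\bL_-}\gf_{\cU_l}[D(l)]$. Since $\cX = j_{C_-^\circ}^{-1}\cY$ by construction and $\cY$ is the unique strict $\bfb$-sheaf restricting to $\cX$, once the Lemma is proved we get $\Phi\cong\cY$ by Theorem \ref{teorhvat}. So the task is entirely about restricting to $C_-^\circ$ and computing a homotopy colimit/total complex of sheaves there.

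\textbf{First steps.} I would begin by restricting each building block $\Psi^J$ to $\Zentrum\times C_-^\circ$ and examining $K(J,l)\cap(\Zentrum\times C_-^\circ)$. The key observation is that for $l\in\bL_J$ (i.e. $\langle l,f_j\rangle\le 0$ for $j\notin J$), the set $K(J,l)\cap C_-^\circ$ is nonempty precisely when $l\in\bL_-$, and in that case it is exactly the region $\{(e^l,x): x\in C_-^\circ,\ \langle x-l,e_j\rangle\ge 0\ \forall j\in J\}$. When $J=\emptyset$ and $l\in\bL_-$, this is $\cU_l = e^l\times U_l^-$ (after a direct comparison of the defining inequalities: $x\in C_-^\circ$ together with no further constraint gives $x<<0$, but $x$ also ranges over $C_-^\circ$, so one must be careful — actually $K(\emptyset,l)\cap C_-^\circ$ is all of $e^l\times C_-^\circ$, not $\cU_l$; the restriction to $U_l^-$ comes from the interplay of several $J$'s in the complex). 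This is the point that needs care: I would set up, for each fixed $l\in\bL_-$, the subcomplex of $j_{C_-^\circ}^{-1}\Phi^\bullet$ consisting of summands indexed by that $l$ (the $\bfb$-structure and the maps $I_{J_1J_2}$ preserve the $l$-grading), and show it computes $\gf_{\cU_l}[D(l)]$ up to the degree shift.

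\textbf{The core computation.} Fixing $l\in\bL_-$, the relevant subcomplex has $J$-th term $\gf_{K(J,l)\cap C_-^\circ}[D(l)]$ summed over $J$ with $l\in\bL_J$, i.e. over all $J$ containing $J_0(l):=\{j:\langle l,f_j\rangle>0\}$, with Koszul-type differentials coming from the closed inclusions $K(J_2,l)\subset K(J_1,l)$. This is precisely a Koszul/Čech complex resolving the sheaf $\gf_{Z(l)}$ where $Z(l) = \bigcap_{J}K(J,l)\cap C_-^\circ$ over the allowed $J$'s, shifted appropriately; concretely, $\gf_{K(\emptyset\text{-like},l)}\to\bigoplus_j\gf_{K(\{j\},l)}\to\cdots$ is the standard complex computing $\gf_{U\setminus\bigcup_j H_j^+}$ where $H_j^+=\{x:\langle x-l,e_j\rangle\ge 0\}$, intersected with $C_-^\circ$. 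One checks that $C_-^\circ\setminus\bigcup_{j}\{\langle x-l,e_j\rangle\ge 0\}$, over the relevant index set, equals $U_l^-=\{x\in C_-^\circ: x<<l\}$. I would verify this by expanding a general $x\in C_-^\circ$ in the basis $\{e_i\}$ and tracking the inequalities, exactly as in the Sublemma inside the proof of Theorem \ref{teorhvat}. Summing over $l\in\bL_-$ and matching the $\bfb$-structures (both sides have the same shift-by-$l$ maps induced by $\bfb(l)=\gf[D(l)]$, compared on the appropriate overlaps) gives $j_{C_-^\circ}^{-1}\Psi\cong\bigoplus_{l\in\bL_-}\gf_{U_l^-}[D(l)] = \cX$.

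\textbf{Main obstacle.} The delicate part is the combinatorial bookkeeping: showing that the total complex $j_{C_-^\circ}^{-1}\Phi^\bullet$, with its differential mixing different $J$'s, genuinely splits as a direct sum over $l\in\bL_-$ of Koszul complexes, each quasi-isomorphic to a single shifted constant sheaf $\gf_{U_l^-}[D(l)]$ — and that the quasi-isomorphism is compatible with the $\bfb$-action. The constraint $l\in\bL_J$ means the index set of $J$'s depends on $l$, so the subcomplexes are Koszul complexes on variable-size sets of hyperplanes; one must confirm the acyclicity away from $U_l^-$ (this is the standard fact that $\gf_{U}\to\bigoplus\gf_{U\cap H_j^+}\to\cdots$ resolves $\gf_{U\setminus\bigcup H_j^+}$, valid for the half-spaces $H_j^+$ since the relevant intersections are convex) and identify the leftover with $U_l^-$ using the geometry of $C_-^\circ$ and the dual bases $\{e_i\},\{f_j\}$. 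I expect no conceptual difficulty beyond this, since every ingredient (strictness of $\Psi^J$, the functoriality in $\Subsets$, the $\bfb$-structures) has already been set up; the proof is a bounded computation, which is presumably why the author states it as a Lemma with a short argument to follow.
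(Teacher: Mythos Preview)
Your overall structure is right and matches the paper: decompose $\Phi=\bigoplus_{l\in\bL}\Phi_l$ by the $l$-grading, recognize each $\Phi_l$ as a Koszul/tensor-product complex, and identify the result. For $l\in\bL_-$ your description is correct: $J_0(l)=\emptyset$, the complex is the full Koszul complex over all $j$, and it resolves $\gf_{\{x<<l\}}[D(l)]$, which restricts to $\gf_{\cU_l}[D(l)]$ on $\Zentrum\times C_-^\circ$.

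The gap is in how you handle $l\notin\bL_-$. Your assertion that ``$K(J,l)\cap(\Zentrum\times C_-^\circ)$ is nonempty precisely when $l\in\bL_-$'' is false. For instance with $N=3$ and $l=2\pi(e_1-3e_2)$ one has $J_l=\{1\}$, and $K(\{1\},l)\cap C_-^\circ$ contains points $x\in C_-^\circ$ near $0$ since $\langle l,e_1\rangle<0$. So you cannot discard the summands with $l\notin\bL_-$ term by term; individual $V(J,l)$ can survive restriction. What is true is that the \emph{complex} $\Phi_l$ for such $l$ has cohomology supported on $T_l(W_{J_l})$ with $W_{J_l}=\{A:\langle A,e_j\rangle\ge 0\ \forall j\in J_l,\ \langle A,e_i\rangle<0\ \forall i\notin J_l\}$, and one must then prove $(l+\overline{W_{J_l}})\cap C_-^\circ=\emptyset$. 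This is exactly where the paper spends its effort: it is a genuine convexity argument on the coordinates $a_j=\langle x,e_j\rangle$, using that the sequence $(a_j)$ is strictly convex with zero boundary values (from $x\in C_-^\circ$) together with the constraints from $W_{J_l}$, to derive a contradiction. This step is the real content of the Lemma for $l\notin\bL_-$, and your proposal does not supply it. Once you add this, your proof and the paper's coincide.
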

\begin{proof} 

We have a natural map $\iota:\cX\to j_{C_-^\circ}^{-1}\Phi^0=
j_{C_-^\circ}^{-1}\Psi^\emptyset$.
Indeed, 
$$
\cX=\bigoplus_{l\in \bL_-} \gf_{\cU_l}[D(l)]
$$
and $$
j_{C_-^\circ}^{-1}\Psi^\emptyset=\bigoplus_{l\in \bL_-}
 \gf_{e^{l}\times C_-^\circ}[D(l)].$$
  
The  map $\iota$ is defined as a direct sum of the obvious 
maps
$$
\gf_{\cU_l}[D(l)]\to \gf_{e^{l}\times C_-^\circ}[D(l)]
$$
coming from the open embeddings 
$\cU_l\subset e^{l}\times C_-^\circ$.

It is clear that $I_{\emptyset, J}\iota=0$ for all nonempty $J$.
Hence the map $\iota$ defines a map $\cX\to j_{C_-^\circ}^{-1}\Phi$. Let
us show that this map is an isomorphism.

For each $l\in L$ set 
$$
\Phi_l^n:= \bigoplus\limits_{J|l\in \bL_J;|J|=n} V(J,l)[D(l)].
$$
It is clear that for each $l$, $\Phi_l^\bullet\subset \Phi$ is
a subcomplex (in the category of comlexes of sheaves on $\Zentrum\times \h$) and
$$
\Phi =\bigoplus\limits_{l\in \bL} \Phi_l
$$

The map $\iota$ takes values in
 $\bigoplus_{l\in \bL_-}j_{C_-^\circ}^{-1}\Phi_l$
and splits into a direct sum of maps
$\iota_l:\gf_{\cU_l}\to j_{C_-^\circ}^{-1}\Phi_l$. 

We thus need to show that
1) complexes $j_{C_-^\circ}^{-1}\Phi_l$ are acyclic for all 
$l\notin \bL_-$;

2) the maps $\iota_l$ are quasi-isomorphisms.

Let us first study the complexes $\Phi_l$. Let us identify
$\h=\Re^{N-1}$ by means of the basis $f_1,f_2,\ldots,f_{N-1}$.
Let $X_j:\Zentrum\times \h\to \Zentrum\times\Re$ be defined by 
$$
X_j(c,A)=(c,x_j(A)),
$$
where 
$A=\sum_j x_j(A)f_j$.
Let $l_i=<l,f_i>$. Let $J_l:=\{i|l_i>0\}$. It follows that
$l\in \bL_J$ iff $J\supset J_l$. 
We also have $$V(J,l)=T_{l*}
(\bigotimes_{j\in J}X_j^{-1}\gf_{e\times [0,\infty)}\otimes 
\bigotimes_{i\notin J} X_j^{-1}\gf_{e\times \Re})[D(l)],
$$
where $e\in \Zentrum$ is the unit.
Let $E$ be the following complex of sheaves on $\Zentrum\times\Re$:
$$
\gf_{e\times\Re}\to \gf_{e\times [0,\infty)}.
$$
We then have an isomorphism of complexes
$$
\Phi_l =(T_{l*}\bigotimes\limits_{j\in J_l} X_j^{-1}\gf_{e\times[0,\infty)}\otimes
\bigotimes \limits_{i\notin J_l} X_i^{-1} E)[D(l)+|J_l|].
$$

We have a quasi-isomorhism $\gf_{e\times (-\infty,0)}\to E$ 
which induces
a quasi-isomorphism
$$
\Phi_l\cong (T_{l*} \bigotimes\limits_{j\in J_l} X_j^{-1}
\gf_{e\times[0,\infty)}\otimes
\bigotimes \limits_{i\notin J_l} X_i^{-1}\gf_{e\times (-\infty,0)})[D(l)+|J_l|]
$$
$$
=
T_{l*}\gf_{W_J}[D(l)+|J_l|],
$$
where $$W_J=\{(e,A)\in\Zentrum\times \h| j\in J\follows x_j(A)\geq 0;i\not\in J\follows x_i(A)<0\}$$

Let us now prove 1)  It follows that $\Phi_l$ is supported on the 
set $\overline{T_l(W_{J_l})}=\overline{W_{J_l}}+(e^{l},l)$. It suffices to prove
that $\overline{T_l(W_{J_l})}\cap \Zentrum\times C_-^\circ=0$. Suppose
$z'\in \overline{T_l(W_{J_l})}\cap\Zentrum\times C_-^\circ$. 
Let $z'=(e^l,z)$, $z\in \h$.

Let  $z=A+l$, $(e^{l},A)\in W_{J_l}$.
Let $A_j=(A,f_j)$ and $l_j=(l,f_j)$. We also set 
$A_0=A_N=l_0=l_N=0.$ Set $x_j:=x_j(A)$.
 We then know that
$l_j>0$ for all $j\in J_l$; $l_j\leq 0$ otherwise.
We also have 
$$A_j=<A,f_j>=<A,2e_j-e_{j-1}-e_{j+1}>
=2x_j-x_{j-1}-x_{j+1}.$$

As $A+l\in C_-^\circ$, we have $A_j+l_j<0$. Therefore, 
if $j\in J$, then $A_j<0$, thus $2x_j-x_{j-1}-x_{j+1}<0$.
We also know that if $j\in J$, then $x_j\geq 0$.

If $j\notin J$, then we know that $x_j\leq 0$.
For $j\in J$ let $j_1<j$ be the largest number 
such that   $j_1\notin J$, if it does not exist, set $j_1=0$.
Similarly, let $j_2>j$ be the smallest number such that $j_2\notin J$, if it does not exist set $j_2=N$.

We then have $x_{j_1}\leq 0$; $x_{j_2}\leq 0$; for all $j$ such that
$j_1<j<j_2$; $2x_j-x_{j-1}-x_{j+1}<0$, hence $x_j-x_{j-1}<x_{j+1}-x_j$,
 and $x_j\geq 0$. 

Therefore, we have
$$
0\leq x_{j_1+1}-x_{j_1}<x_{j_1+2}-x_{j_1+1}<\cdots<x_{j_2}-x_{j_2-1}
\leq 0.
$$
Observe that $j_2-j_1\geq 2$, therefore, we get $0<0$, which is a 
contradiction. Thus, indeed, $j_{C_-^\circ}^{-1}\Phi_l\cong 0$
for all $l\notin \bL_-$.

2) If $l\in \bL_-$, then $J_l=\emptyset$ and we have  a quasi-isomorphism
$ \gf_{(e^l,x)|x<<l}[D(l)]\to \Phi_l$. Therefore we have an induced quasi-isomorphism
 $\gf_{\cU_l}[D(l)]\to j_{C_-^\circ}^{-1}\Phi_l$.
One can easily check that this map is isomorphic to $\iota$, whence
the statement.
\end{proof}

From now on we set $\cY=\Phi$.

Let us summarize our results:

\begin{Theorem} Let $\cY=\Phi$, where $\Phi$ is as in (\ref{defphi}),(\ref{defphi1}).  Then we have an isomoprhism (\ref{Y-bfS})
\end{Theorem}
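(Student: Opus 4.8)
The plan is to assemble the theorem from the pieces that have already been put in place in Sections~\ref{specp}--\ref{lastbfs}. The isomorphism (\ref{Y-bfS}) has in fact been established as a formal consequence of strictness: once we know that $\bfS$ is a strict $\bfb$-sheaf on $\Zentrum\times\h$ and that $\bfS$ decomposes, after restriction to $\Zentrum\times C_-^\circ$, as $\bigoplus_I \bS^I$ with $\bS^I\cong G_I[D(-2\pi e_I)]\otimes T_{-2\pi e_I*}\cX$ (see (\ref{hvatoiot}), (\ref{razlbfs})), Theorem~\ref{teorhvat} gives us unique strict lifts $\bfS_I$ of the $\bS^I$, and the decomposition $\bfS\cong\bigoplus_I\bfS_I$ together with $\bfS_I\cong G_I[D(-2\pi e_I)]\otimes T_{-2\pi e_I*}\cY$ is then immediate. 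So the only thing that remains for \emph{this} theorem is the identification of the strict $\bfb$-sheaf $\cY$ (equivalently of its generator $\cX$) with the explicit complex $\Phi$ built from the sheaves $\Psi^J$ in (\ref{defphi}), (\ref{defphi1}).

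First I would check that the complex $\Phi$ indeed defines an object of $\DBSh_{\Zentrum\times\h}^{\strict}$: each $\Psi^J$ carries the $\bfb$-structure described via the identifications $T_\lambda^{-1}V(J,l)\cong V(J,l-\lambda)[D(\lambda)]$, and strictness of $\Psi^J$ is a direct check --- for $j\notin J$ the sheaf $\Psi^J$ is constant along the fibers of $p_j:\Zentrum\times\h\to\Zentrum\times\h/f_j$, hence so is $\Con_j$, while for $j\in J$ the structure map $b_{-e_j}$ is an isomorphism so $\Con_j=0$; the microsupport condition $\mS(\Psi^J)\subset\Zentrum\times\h\times C_+$ is clear from the defining inequalities. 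Strictness passes to the totalization $\Phi$. Then, by Theorem~\ref{teorhvat} (with $\lambda=0$), to prove $\Phi\cong\cY$ it suffices to prove $j_{C_-^\circ}^{-1}\Phi\cong\cX$, and this is exactly the content of the Lemma already stated and proved in Section~\ref{lastbfs}: one decomposes $\Phi=\bigoplus_{l\in\bL}\Phi_l$ along the central element $e^l$, shows $j_{C_-^\circ}^{-1}\Phi_l$ is acyclic for $l\notin\bL_-$ (the convexity argument with the coordinates $x_j$ and the relation $A_j=2x_j-x_{j-1}-x_{j+1}$), and for $l\in\bL_-$ identifies $j_{C_-^\circ}^{-1}\Phi_l$ with $\gf_{\cU_l}[D(l)]$ via the evident map $\iota_l$, matching $\iota$.

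Putting these together: $j_{C_-^\circ}^{-1}\cY=\cX$ by construction of $\cY$ as the (unique, by Theorem~\ref{teorhvat}) strict lift of $\cX$, and now also $j_{C_-^\circ}^{-1}\Phi\cong\cX$; uniqueness of strict lifts forces $\cY\cong\Phi$. Feeding this into (\ref{Y-bfS}) yields the asserted isomorphism
$$
\bfS\cong \bigoplus_I G_I[D(-2\pi e_I)]\otimes T_{-2\pi e_I*}\cY
$$
with $\cY=\Phi$, which is the statement of the theorem.

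\emph{Main obstacle.} All the genuine work has been front-loaded: the microlocal estimate $\mS(\bfS_z)\subset X(\bL_z^-)$ (Proposition~\ref{ms:bfs}), the recollement/filtration machinery of Proposition~\ref{emen} together with the invariant description of the graded pieces via $\Delta_{I,m}$ (Lemma~\ref{vyrazheniemn}), the periodicity isomorphism of Theorem~\ref{period} via the fundamental-class maps $B_k$ and Proposition~\ref{efka}, the computation $\Delta_{0,I}\bfS_e\cong H^\bullet(\Fl(I))$ (Proposition~\ref{pryzhok2}), and --- crucially --- the equivalence $j_\lambda^{-1}:\DBSh_\h^{\strict}\xrightarrow{\sim}\DBSh_{C_-^\circ+\lambda}^{\strict}$ of Theorem~\ref{teorhvat}, whose proof rests on the $\bfb$-bimodule argument showing $R\hom_{\bfb\text{-bimod}}(\bfb;H)=0$ when $R_k-L_k$ acts both nilpotently-up-to-homotopy and invertibly. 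Granting all of that, the remaining step for the present theorem is essentially bookkeeping; the one place where care is still needed is verifying that the $\bfb$-structure maps on the model $\Phi$ (built from the closed embeddings $K(J,l+\lambda)\subset K(J,l)$ up to the degree shift $D(\lambda)$) are genuinely compatible with the $\bfb$-structure on $\cX$ transported through $\iota$, i.e.\ that $\iota$ is a map of $\bfb$-sheaves and not merely a quasi-isomorphism of underlying complexes --- but this too is a routine compatibility check against the definitions in (\ref{defcx}) and the proof of the Lemma.
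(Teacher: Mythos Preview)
Your proposal is correct and follows exactly the paper's own approach: the isomorphism (\ref{Y-bfS}) is already in place with $\cY$ characterized as the unique strict lift of $\cX$, and the theorem is then reduced to the identification $\cY\cong\Phi$, which the paper proves precisely by checking that $\Phi$ is strict and invoking the Lemma $j_{C_-^\circ}^{-1}\Phi\cong\cX$ together with Theorem~\ref{teorhvat}. Your summary of the front-loaded ingredients and of the one remaining compatibility check (that $\iota$ respects the $\bfb$-structures) is accurate; the paper treats the theorem as a summarizing statement with no additional argument beyond what you have outlined.
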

This theorem is equivalent to Theorem \ref{mainbfs}.

\section{Appendix 1:$\SU(N)$ and its Lie algebra: notations and a couple of Lemmas}\label{gnot}
Let us  introduce notation we will use when working with
$G=\SU(N)$. Let $\g$ be the Lie algebra of $G$; it is naturally
identified with the space of all skew-hermitian traceless $N\times
N$ matrices.  Let $\h\subset \g$ be the Cartan subalgebra of $\g$
consisting of all diagonal matrices from $\g$. 
The abelian Lie algebra $\h$ consists of all matrices of the form
$i\diag(\lambda_1,\lambda_2,\ldots,\lambda_N)$, where 
$\lambda_i\in \Re$ and $\sum_i \lambda_i=0$.

Let $C_+\subset \h$ be the positive Weyl chamber consisting
of all matrices $i\diag(\lambda_1,\lambda_2,\ldots,\lambda_N)$
with $\lambda_1\geq \lambda_2\geq\cdots \geq \lambda_N$.
For every $X\in \g$ there exists a unique element $\|X\|\in C_+$
such that $X$ is conjugate with $\|X\|$.

We have an invariant positive definite inner product 
$<,>$ on $\g$ such that $<X,Y>=-\Tr(XY)$. By means of this product
we identify $\g=\g^*$, $\h=\h^*$.

We will use the basis of roots in $\h^*$ which consists
of vectors $f_1,f_2,\ldots,f_{N-1}$, where
$$
f_k(i\diag(\lambda_1,\lambda_2,\ldots,\lambda_N))=\lambda_k-\lambda_{k+1}.
$$

Via identification $\h=\h^*$, the vector $f_k\in \h^*$ corresponds
to a vector in $\h$ denoted by the same symbol, and we have
$$
f_k=i\diag(0,0,\ldots,0,1,-1,0,\ldots,0)
$$
where $1$ is at the $k$-th position.

We also have the dual basis of coroots $e_1,e_2,\ldots,e_N$
determined by \newline
$<f_k,e_l>=\delta_{kl}$.
One has 
\begin{equation}\label{ebasis}
e_k=i\diag((N-k)/N,(N-k)/N,\ldots,(N-k)/N,-k/N,-k/N,\ldots,-k/N)
\end{equation}
 where there are total  $k$  entries equal to $(N-k)/N$.
One can check that $f_k=2e_k-e_{k-1}-e_{k+1}$ for $k=1,2,\ldots,
 N-1$ and we assume $e_0=e_{N}=0$.

One can rewrite $e_k=i\diag(1,1,\ldots,1,0,0,\ldots,0)-
ik/N\diag(1,1,1,\ldots,1)$, where it is assumed that we have $k$ 
entries of 1 in  $\diag(1,1,\ldots,1,0,0,\ldots,0)$.
In particular, we have
$$
<e_k,i\diag(\lambda_1,\lambda_2,\ldots,\lambda_n)>=\lambda_1+\lambda_2+\cdots+\lambda_k.
$$

One also sees that $C_+$ consists of all $X\in \h$ such that
$<X,f_k>\geq 0$. Therefore, $$
C_+=\{\sum\limits_{k=1}^N L_ke_k| L_k\geq 0\}.
$$

We have  a partial order on $\h$: $X\geq Y$ means $<X-Y,e_k>\geq 0$
for all $k$.

We also write $X>>Y$ if $<X-Y,e_k>>0$ for all $k$.

Let $\omega\in \g$. The matrix $-i\omega$ is hermitian and 
let $\lambda_1(\omega)>\lambda_2(\omega)>\cdots>\lambda_r(\omega)$
be eigenvalues of $-i\omega$.  Let $V^k(\omega)$ be the eigenspace
of $-i\omega$ of eigenvalue $\lambda_k$. Let $$
V_k(\omega)=
V^1(\omega)\oplus V^2(\omega)\oplus\cdots \oplus V^k(\omega).$$
We then get a partial flag
\begin{equation}\label{flagomega}
0\subset V_1(\omega)\subset \cdots\subset V_r(\omega)=\Co^N.
\end{equation}

Let $d_k(\omega):=\dim V_k(\omega)$.

\subsubsection{} 
In the future, we will need 

\begin{Lemma}\label{ineq} Let $X,\omega\in \g$. 
Let $\|X\|=i\diag(A_1,A_2,\ldots,A_N)\in C_+$ and let $$
0\subset V_1(\omega)\subset \cdots\subset V_r(\omega)=\Co^N
$$ be the flag as in (\ref{flagomega}).

Then
$$<\omega,X>\leq (\|\omega\|,\|X\|).
$$
 The equality takes place iff

a) $[X,\omega]=0$ (hence $XV_k(\omega)\subset V_k(\omega)$
for all $k$,  and 

b)$ \Tr X|_{V_k}=i(A_1+A_2+\cdots A_{d_k(\omega)})=i<e_{d_k(\omega)};\|X\|>$.

\end{Lemma}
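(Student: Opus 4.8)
\textbf{Proof proposal for Lemma \ref{ineq}.}

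The plan is to reduce the inequality to a standard fact about Hermitian matrices, namely Ky Fan's maximum principle (or equivalently the Schur--Horn majorization inequality). Write $-i\omega$ and $-iX$ as Hermitian matrices with eigenvalues $\mu_1\geq\mu_2\geq\cdots\geq\mu_N$ (these are the entries of $\|\omega\|$, up to the factor $i$) and $A_1\geq A_2\geq\cdots\geq A_N$ respectively. Then $<\omega,X>=-\Tr(\omega X)=\Tr\big((-i\omega)(-iX)\big)\cdot(-1)$; tracking the signs carefully, the quantity $<\omega,X>$ equals $\sum$ of products of an orthonormal ``mixed'' expansion, and the claim $<\omega,X>\leq<\|\omega\|,\|X\|>=\sum_j \mu_j A_j$ is exactly the statement that the trace of a product of two Hermitian matrices is maximized, over all choices of one of them within its unitary orbit, by the ``aligned'' configuration in which both are simultaneously diagonalized with eigenvalues in the same (decreasing) order. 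First I would recall/prove this via Ky Fan: for each $k$, $\sum_{j\le k}\mu_j$ is the maximum of $\Tr(P(-i\omega)P)$ over rank-$k$ orthogonal projectors $P$, and then use Abel summation (summation by parts) with the telescoping $A_k = \sum_{l\ge k}(A_l-A_{l+1})$, $A_{N+1}:=0$, together with $A_l-A_{l+1}\geq 0$, to pass from the partial-sum (Ky Fan) inequalities to the weighted sum $\sum_j \mu_j A_j$.

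The second and more delicate part is the equality analysis. The Abel summation expresses $<\|\omega\|,\|X\|>-<\omega,X>$ as a nonnegative combination $\sum_{k} (A_k-A_{k+1})\,\big(\sum_{j\le k}\mu_j - \Tr(P_k(-i\omega)P_k)\big)$, where $P_k$ is the orthogonal projector onto the span of the top $k$ eigenvectors of $-iX$, i.e.\ onto a $k$-dimensional coordinate subspace in a basis diagonalizing $X$. Each summand is $\geq 0$. Hence equality holds iff for every $k$ with $A_k>A_{k+1}$ we have $\Tr(P_k(-i\omega)P_k)=\sum_{j\le k}\mu_j$, i.e.\ the range of $P_k$ is a sum of top eigenspaces of $-i\omega$. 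Now $A_k>A_{k+1}$ is precisely the condition that $\|X\|$ jumps at level $k$, which (by definition of the flag $V_\bullet(X)$ and $V_\bullet(\omega)$, and of $d_k(\omega)$) is exactly where the flag of $X$ has a proper subspace; and since $-i\omega$ has eigenvalue multiplicities, ``range of $P_k$ is a sum of top eigenspaces of $-i\omega$'' for all such $k$ forces the eigenspace flag of $X$ to refine into the eigenspace flag of $\omega$, hence $X$ preserves every $V_k(\omega)$ and $[X,\omega]=0$. Conversely, if $[X,\omega]=0$ and condition b) holds, one diagonalizes $X$ and $\omega$ simultaneously with compatible orderings and the inequality becomes an equality by direct computation. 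The bookkeeping identity $\Tr X|_{V_k(\omega)} = i\,(A_1+\cdots+A_{d_k(\omega)}) = i<e_{d_k(\omega)},\|X\|>$ is then just the statement that on the $d_k(\omega)$-dimensional space $V_k(\omega)$, which is $X$-invariant and spanned by eigenvectors of $X$, the trace of $X$ picks out some $d_k(\omega)$ of its eigenvalues, and the equality case of Ky Fan forces these to be the top $d_k(\omega)$ ones (this uses $\langle e_m,\|X\|\rangle = A_1+\cdots+A_m$, recorded in Sec.~\ref{gnot}).

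I would organize the writeup as: (i) set up the Hermitian picture and reduce the sign conventions; (ii) state and prove the Ky Fan partial-sum inequalities (or cite them); (iii) do the Abel summation to get the full inequality; (iv) read off the equality conditions from the nonnegativity of each Abel-summand, translating ``top eigenspaces'' into the flag language of $V_k(\omega)$ and $d_k(\omega)$; (v) verify the converse by simultaneous diagonalization; (vi) derive formula b) for $\Tr X|_{V_k(\omega)}$. The main obstacle I anticipate is (iv): carefully matching the combinatorics of which indices $k$ contribute (those with $A_k>A_{k+1}$, equivalently $\langle \|X\|,f_k\rangle>0$) with the jump pattern of the flag $V_\bullet(\omega)$, and handling eigenvalue multiplicities of $\omega$ so that ``the range of each relevant $P_k$ is spanned by top eigenvectors of $-i\omega$'' genuinely implies $[X,\omega]=0$ rather than merely that $X$ respects the coarser flag $V_\bullet(\omega)$ --- though in fact $[X,\omega]=0$ is equivalent to $X$ preserving all the $V_k(\omega)$, so once one shows $XV_k(\omega)\subset V_k(\omega)$ for all $k$ the commutation follows, and this is the formulation stated in a).
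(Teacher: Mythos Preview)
Your approach is correct and uses the same two ingredients as the paper---Ky Fan's maximum principle together with Abel summation---but with the roles of $X$ and $\omega$ interchanged. The paper writes
\[
\omega=i\sum_{k=1}^{r-1}\mu_k\,\pr_{V_k(\omega)}+i\lambda_r(\omega)\,\Id,\qquad \mu_k=\lambda_k(\omega)-\lambda_{k+1}(\omega)>0,
\]
so that $<\omega,X>=\sum_{k=1}^{r-1}\mu_k\,\Tr\bigl((-iX)\pr_{V_k(\omega)}\bigr)$ (the $\lambda_r$ term drops out since $\Tr X=0$), and then bounds each $\Tr\bigl((-iX)\pr_{V_k(\omega)}\bigr)\le A_1+\cdots+A_{d_k(\omega)}$ by Ky Fan. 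You instead telescope on the $A_k$'s and apply Ky Fan to $-i\omega$ through the projectors $P_k$ onto the top eigenspaces of $-iX$.

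For the inequality itself the two routes are interchangeable. The payoff of the paper's ordering is in the equality case: every coefficient $\mu_k$ is strictly positive, so equality forces $\Tr\bigl((-iX)\pr_{V_k(\omega)}\bigr)=A_1+\cdots+A_{d_k(\omega)}$ for \emph{every} $k$, which says directly that $V_k(\omega)$ is spanned by eigenvectors of $-iX$ with the top $d_k(\omega)$ eigenvalues. That is literally condition b), and it gives a) at once. In your ordering the equality case yields, for each jump index $k$ of $X$, that $\mathrm{range}(P_k)$ achieves the Ky Fan maximum for $-i\omega$; translating this back into statements about the flag $V_\bullet(\omega)$ is the extra step you yourself flag as the main obstacle (and note that ``range of $P_k$ is a sum of top eigenspaces of $-i\omega$'' is not quite what Ky Fan equality gives when $-i\omega$ has repeated eigenvalues---you only get containments $V_{j-1}(\omega)\subset\mathrm{range}(P_k)\subset V_j(\omega)$, and must combine these across all $k$). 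So both arguments are valid, but the paper's choice of which matrix to Abel-sum is better adapted to the form of conditions a) and b).
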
 
\begin{proof}
Let $\mu_k=\lambda_k(\omega)-\lambda_{k+1}(\omega)$; $k<r$.
Let us also set $\mu_r=\lambda_r(\omega)$. We then  have
$$
\omega=i\sum_{k=1}^r \mu_k \pr_{V_k(\omega)},
$$
where $\pr$ denotes the orthogonal projector;
$$
<\omega,X>=\sum\limits_{k=1}^{r-1} \mu_k \Tr(-iX\pr_{V_k(\omega)}).
$$

We know that $\Tr(-iX\pr_{V_k(\omega)})\leq 
A_1+A_2+\cdots +A_{d_k(\omega)}$ (this is a particular case of
the general factP: given any hermitian matrix $Y$ on $\Co^N$
(in our case $-iX$) and a vector subspace $V\subset \Co^N$
of dimension $n$ ( in our case $V_k(\omega)$), the maximal value
of $\Tr(Y\pr_V)$ equals the sum of top $n$ eigenvalues of $Y$).

Hence 
$$
<\omega,X>\leq \sum\limits_{k=1}^{r-1}\mu_k(A_1+\cdots+A_{d_k(\omega)})=\sum\limits_{j=1}^r A_j \sum\limits_{k|j\leq d_k(\omega)}\mu_k
$$
$$
=\sum\limits_{j=1}^r A_j\lambda_j(\omega)=<\|\omega\|,\|X\|>.
$$

The equality is only possible if for all $k$
$\Tr(-iX\pr_{V_k(\omega)})=A_1+\cdots A_{d_k(\omega)}$.
As $A_1,\ldots,A_{d_k(\omega)}$ are top $d_k(\omega)$ eigenvalues
of $-iX$, the equality occurs iff $V_k(\omega)$ is the span 
of eigenvectors of $-iX$ with eigenvalues $A_1,\ldots,A_{d_k(\omega)}$, which implies the statement b) of Lemma.
\end{proof}
\subsubsection{}\begin{Lemma}\label{ineqtriangle} Let $X,Y\in \g$. We have
$\|X+Y\|\leq \|X\|+\|Y\|$.
\end{Lemma}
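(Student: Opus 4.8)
The statement to prove is Lemma \ref{ineqtriangle}: for $X,Y\in\g$, $\|X+Y\|\leq \|X\|+\|Y\|$, where $\|\cdot\|$ denotes the unique representative in the positive Weyl chamber $C_+$ of a conjugacy class, and $\leq$ is the partial order $Z_1\leq Z_2 \Leftrightarrow \langle Z_2-Z_1,e_k\rangle\geq 0$ for all $k$.

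\medskip

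\emph{Plan.} The order on $\h$ is detected by the coroot functionals $\langle\,\cdot\,,e_k\rangle$, which on a diagonal matrix $i\diag(\mu_1,\dots,\mu_N)$ with $\mu_1\geq\cdots\geq\mu_N$ return $\mu_1+\cdots+\mu_k$, i.e. the sum of the top $k$ eigenvalues. So the inequality $\|X+Y\|\leq\|X\|+\|Y\|$ is exactly the classical fact (a form of Ky Fan's inequality) that for Hermitian matrices $A,B$ the sum of the top $k$ eigenvalues of $A+B$ is at most the sum of the top $k$ eigenvalues of $A$ plus the sum of the top $k$ eigenvalues of $B$. Here we take $A=-iX$, $B=-iY$, $A+B=-i(X+Y)$, all Hermitian and traceless. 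The point is that $\langle\|Z\|,e_k\rangle$ equals the maximum of $\Tr(-iZ\,\pr_V)$ over $k$-dimensional subspaces $V\subset\Co^N$ — this maximum characterization is already invoked in the proof of Lemma \ref{ineq} (``the maximal value of $\Tr(Y\pr_V)$ equals the sum of top $n$ eigenvalues of $Y$'').

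\medskip

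\emph{Key steps.} First I would record the identity $\langle\|Z\|,e_k\rangle=\max_{\dim V=k}\Tr(-iZ\,\pr_V)$ for any $Z\in\g$ and any $k\in\{1,\dots,N-1\}$; this follows from the quoted maximum principle for Hermitian matrices since $-iZ$ is conjugate to the diagonal matrix of its ordered eigenvalues, whose top-$k$ eigenvalue sum is $\langle\|Z\|,e_k\rangle$. Second, for fixed $k$ choose a $k$-dimensional subspace $V_0$ achieving the maximum for $Z=X+Y$, so that $\langle\|X+Y\|,e_k\rangle=\Tr\big(-i(X+Y)\pr_{V_0}\big)=\Tr(-iX\,\pr_{V_0})+\Tr(-iY\,\pr_{V_0})$. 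Third, bound each summand by the corresponding maximum: $\Tr(-iX\,\pr_{V_0})\leq\max_{\dim V=k}\Tr(-iX\,\pr_V)=\langle\|X\|,e_k\rangle$ and likewise for $Y$. Combining, $\langle\|X+Y\|,e_k\rangle\leq\langle\|X\|,e_k\rangle+\langle\|Y\|,e_k\rangle=\langle\|X\|+\|Y\|,e_k\rangle$ for every $k$, which by definition of the order is precisely $\|X+Y\|\leq\|X\|+\|Y\|$.

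\medskip

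\emph{Main obstacle.} There is no serious obstacle; the only thing requiring care is the clean statement and use of the Hermitian maximum principle $\max_{\dim V=k}\Tr(H\pr_V)=\sum_{j=1}^k\lambda_j(H)$ (with $\pr_V$ the orthogonal projector), together with the observation that the partial order on $\h$ is, tautologically, the comparison of all top-$k$ eigenvalue sums. One should also note that the three matrices involved are traceless skew-Hermitian, so $\|\cdot\|$ is well defined on each and lies in $C_+=\{\sum_k L_ke_k\mid L_k\geq 0\}$; no subtlety arises from the tracelessness since $\langle\cdot,e_k\rangle$ was already written in terms of eigenvalue partial sums. Thus the proof is a two-line reduction to Ky Fan's maximum principle, which is exactly the tool already deployed in Lemma \ref{ineq}.
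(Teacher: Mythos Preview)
Your proof is correct. Both you and the paper reduce to showing, for each $k$, the subadditivity $\langle\|X+Y\|,e_k\rangle\leq\langle\|X\|,e_k\rangle+\langle\|Y\|,e_k\rangle$ of the top-$k$ eigenvalue sum, but the routes differ. You invoke the Ky Fan maximum principle $\langle\|Z\|,e_k\rangle=\max_{\dim V=k}\Tr(-iZ\,\pr_V)$ directly (a tool the paper has indeed already used in the proof of Lemma~\ref{ineq}), pick an optimizing $V_0$ for $X+Y$, and split by linearity of trace. The paper instead passes to the exterior power representation $\ve_k$ on $\Lambda^k\Co^N$: since the eigenvalues of $-i\ve_k(Z)$ are the $k$-fold sums $\lambda_{i_1}+\cdots+\lambda_{i_k}$, one has $\langle\|Z\|,e_k\rangle=\bn(-i\ve_k(Z))$ where $\bn(A)=\max_{|v|=1}\langle Av,v\rangle$ is the top eigenvalue, and then applies the obvious subadditivity $\bn(A+B)\leq\bn(A)+\bn(B)$. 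Your argument is more self-contained and reuses a fact already on the table; the paper's is a clean reduction of the general-$k$ statement to the $k=1$ case via representation theory. Both are standard proofs of the same classical inequality.
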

\begin{proof} We need to show that for every $k$,
$$
<\|X+Y\|,e_k>\leq <\|X\|,e_k>+<\|Y\|,e_k>.
$$

For a Hermitian operator $A$ on  a finite-dimensional Hermitian vector
space $V$ we set $\bn(A):=\max_{|v|=1} <Av,v>$, where $<,>$ is the hermitian inner product on $V$.  We see that 
\begin{equation}\label{treugN}
\bn(A+B)\leq \bn(A)+\bn(B)
\end{equation}
and that $\bn(A)$ equals the maximal eigenvalue of $A$.

Let $\ve_k$ be the standard representation of 
$\g$ on $\Lambda^k\Co^N$.  Let $X\in \g$ and let $\lambda_1\geq \lambda_2\geq\cdots\geq  \lambda_N$ be the spectrum of a Hermitian matrix
$-iX$. This means that $\|X\|=i\diag(\lambda_1,\lambda_2,\ldots,\lambda_N)$.

 Eigenvalues of  $-i\ve_k(X)$ are of the form
$\lambda_{i_1}+\lambda_{i_2}+\cdots+\lambda_{i_k}$ where
$i_1<i_2<\ldots<i_k$. Therefore, the maximal eigenvalue
of $-i\ve_k(X)$ is $\lambda_1+\lambda_2+\ldots+\lambda_k$,
i.e.
$$
\bn(-i\ve_k(X))=<\|X\|,e_k>.
$$
As follows from (\ref{treugN}),
$$
\bn(-i\ve_k(X+Y))\leq \bn(-i\ve_k(X))+\bn(-i\ve_k(Y)),
$$
hence
$$
<\|X+Y\|,e_k>\leq <\|X\|,e_k>+<\|Y\|,e_k>,
$$
as was required.
\end{proof}
\subsubsection{}  Let $[a,b]\subset \Re$, $a\leq b$,
be a segment. Let $g\in \SU(N)$. Write $g\sim [a,b]$ if every 
eigenvalue of $g$ is of the form $e^{i\phi}$, where $\phi\in[a,b]$. 

\begin{Lemma}\label{eigeninterval} Let $g_k\sim [a_k,b_k]$, $k=1,2$. Then
$g_1g_2\sim [a_1+a_2,b_1+b_2]$.
\end{Lemma}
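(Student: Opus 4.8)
The statement to prove is Lemma \ref{eigeninterval}: if $g_k \sim [a_k,b_k]$ for $k=1,2$, then $g_1 g_2 \sim [a_1+a_2, b_1+b_2]$. The natural strategy is to reduce the statement about products of eigenvalues to the additivity estimate for Hermitian operators already established in the excerpt, namely inequality (\ref{treugN}) and its consequences used in the proof of Lemma \ref{ineqtriangle}. First I would observe that the condition $g \sim [a,b]$ can be restated as: $g = e^{iH}$ for some Hermitian matrix $H$ whose eigenvalues all lie in $[a,b]$, i.e. $a\,\Id \le H \le b\,\Id$ in the operator order. (One has to be slightly careful: the $\phi$ realizing each eigenvalue $e^{i\phi}$ need not be unique, but since $b-a$ may be large this is a genuine choice; the point is that \emph{some} such choice exists.)

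\textbf{Main steps.} The core is to control the eigenvalues of $g_1 g_2$ from above and below by $b_1+b_2$ and $a_1+a_2$ respectively. Write $g_k = e^{iH_k}$ with $a_k \Id \le H_k \le b_k \Id$. The eigenvalues of $g_1 g_2$ are the numbers $e^{i\theta}$ where $\theta$ runs over the ``angles'' of $g_1 g_2$; I want to show each such $\theta$ can be taken in $[a_1+a_2, b_1+b_2]$. The cleanest route is via the $\bn(\cdot)$ functional from the proof of Lemma \ref{ineqtriangle}: for a Hermitian operator $A$, $\bn(A)$ is its top eigenvalue, and $\bn(A+B) \le \bn(A)+\bn(B)$. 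For the upper bound on the top angle of $g_1 g_2$, I would use a continuity/deformation argument: consider the path $t \mapsto e^{itH_1} e^{itH_2}$ for $t \in [0,1]$, starting at $\Id$ (all angles $0$) and ending at $g_1 g_2$. Along this path the angles vary continuously (with suitable branch choices), and one shows that the maximal angle never exceeds $t(b_1+b_2)$ by differentiating: the derivative of $e^{itH_1}e^{itH_2}$ produces, up to conjugation, $i(H_1 + \mathrm{Ad}_{e^{itH_1}} H_2)$, whose ``$\bn$'' is bounded by $\bn(H_1) + \bn(H_2) \le b_1 + b_2$ because $\mathrm{Ad}$ by a unitary preserves the spectrum. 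Integrating gives the bound at $t=1$. The lower bound $a_1+a_2$ follows by applying the same argument to $g_1^{-1} g_2^{-1} \sim [-b_2,-a_1]$... more precisely to $g_k^{-1} \sim [-b_k,-a_k]$ and using that the angles of $(g_1g_2)^{-1}$ are the negatives of those of $g_1 g_2$.

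\textbf{Alternative cleaner route.} Rather than a path argument, I would prefer to use the exterior-power trick already in the excerpt, since it is self-contained. The key observation is: for $g \in \SU(N)$, $g \sim [a,b]$ iff for the representation $\ve_1$ (the standard one) every eigenvalue of $g$ has argument in $[a,b]$, which is equivalent to saying $e^{-ia}g$ and $e^{ib}g^{-1}$... but $\SU(N)$ is not quite closed under such scalings. So instead I would lift: $\SU(N)$ angles are best handled through $\mathrm{U}(N)$ or through the universal cover. The honest statement is that $g_1\sim[a_1,b_1]$, $g_2\sim[a_2,b_2]$ means there are Hermitian lifts $H_1, H_2$ with spectra in those intervals; then $g_1 g_2 = e^{iH_1}e^{iH_2}$, and I invoke the Thompson/Nielsen-type fact — or just prove it directly via the $\bn$-subadditivity applied to each exterior power $\Lambda^k$ as in Lemma \ref{ineqtriangle} — that $g_1 g_2$ admits a Hermitian logarithm $H$ with $H_1 + H_2$ ``majorizing'' it in the relevant sense, giving top eigenvalue $\le b_1 + b_2$ and bottom eigenvalue $\ge a_1 + a_2$.

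\textbf{Expected main obstacle.} The delicate point is the branch/lifting issue: ``$g \sim [a,b]$'' is a statement about angles in $\Re$, not about group elements, so one must be careful that the Hermitian lift with spectrum in $[a,b]$ actually exists and that the product $e^{iH_1}e^{iH_2}$'s angles are the ones controlled, rather than angles shifted by multiples of $2\pi$. When $b_k - a_k$ is small (which is the only case actually needed in the applications, where the intervals are tiny neighborhoods of specific weights) this is harmless, and in fact the deformation argument $t\mapsto e^{itH_1}e^{itH_2}$ resolves it cleanly because it pins down the correct branch by starting from the identity. So the real work is just: set up the deformation, compute the derivative, apply $\bn$-subadditivity together with $\mathrm{Ad}$-invariance of the spectrum, and integrate — plus the symmetric argument for the lower bound. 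I expect no serious difficulty beyond this bookkeeping.
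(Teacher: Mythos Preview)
Your deformation approach is sound in spirit and can be made rigorous, but it is genuinely different from the paper's argument, and your write-up glosses over the one step that carries all the content.

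\textbf{What the paper does.} The paper first disposes of the case $b_1+b_2-(a_1+a_2)\geq 2\pi$. Otherwise it sets $c_k=(a_k+b_k)/2$, $d_k=(b_k-a_k)/2$ (so $d_1+d_2<\pi$) and replaces $g_k$ by $h_k=e^{-ic_k}g_k\sim[-d_k,d_k]$. It then introduces the displacement functional $\N(g)=\max_{|v|=1}\rho(gv,v)$, where $\rho(v,w)=\arccos\Real\langle v,w\rangle$ is the round metric on the unit sphere; the triangle inequality for $\rho$ gives $\N(g_1g_2)\leq\N(g_1)+\N(g_2)$ immediately. A short eigenbasis calculation shows $\N(h_k)\leq d_k$, hence $\N(h_1h_2)\leq d_1+d_2<\pi$. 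But an eigenvector with eigenvalue $e^{i\phi}$, $|\phi|\leq\pi$, witnesses $\N\geq|\phi|$, so every eigenvalue of $h_1h_2$ has $|\phi|\leq d_1+d_2$. Undoing the scalar shift finishes the proof. No calculus, no branch-tracking, no exterior powers.

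\textbf{Where your sketch is thin.} In your path argument, the sentence ``integrating gives the bound at $t=1$'' is doing all the work. A bound on $\bn(H(t))$ with $H(t)=H_1+\mathrm{Ad}_{e^{itH_1}}H_2$ does not by itself bound the lifted angles; you need the first-variation formula $\theta_j'(t)=\langle v_j(t),H(t)v_j(t)\rangle$ for a continuously chosen unit eigenvector $v_j(t)$, valid where the eigenvalue is simple, together with a Lipschitz/perturbation argument at crossings. Once that is in place your bound $a_1+a_2\leq\theta_j'(t)\leq b_1+b_2$ follows and integrates correctly, and the branch issue is indeed resolved by starting at the identity. Your ``alternative cleaner route'' via exterior powers does not stand on its own here: the $\Lambda^k$ trick controls sums of angles but not individual ones without first having a Hermitian lift, which is exactly what the path argument provides.

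\textbf{Comparison.} The paper's centering trick $g_k\mapsto e^{-ic_k}g_k$ plus the sphere metric $\N$ sidesteps every analytic subtlety (branches, crossings, differentiability) and yields a four-line proof. Your approach is the natural ``spectral-flow'' proof and is more conceptual, but it requires the first-variation formula and some care at degenerate eigenvalues that you have not written down.
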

\begin{proof} If $b_1+b_2-(a_1+a_2)\geq 2\pi$, there is nothing
to prove, because $x\sim [a_1+a_2,b_1+b_2]$
for any element $x\in \SU(N)$.   Let now $b_1+b_2-(a_1+a_2)<2\pi$.
Let $c_k=(a_k+b_k)/2$ and $d_k=(b_k-a_k)/2$.  We have $d_1+d_2<\pi$,
hence $d_k<\pi$, $k=1,2$.

Let 
$h_k =e^{-ic_k}g_k$. We have $h_k\sim [-d_k,d_k]$.
Let $S\subset \Co^N$ be the unit sphere.  Let $\rho$ be the standard
metric on $S$; $\rho(v,w)= \arccos \text{Re}<v,w>$;
 $\rho(v,w)\in [0,\pi]$.
 For $g\in\SU(N)$, set 
$$\N(g):=\max_{v\in S}\rho(gv,v).
$$
It follows $\N(g_1g_2)\leq \N(g_1)+\N(g_2)$ for all
 $g_1,g_2\in \SU(N)$.

Let us estimate $\N(h_k)$.  Let $e_1,e_2,\ldots,e_N$ be an
 eigenbasis of $h_k$. We have $h_k(e_s)=e^{i\alpha_{ks}}e_s$, where
$\alpha_{ks}\in [-d_k,d_k]$. Let $v=\sum_s  v_se_s$, $v\in S$,
so that $1=\sum_s |v_s|^2$. We have
 $$h_kv=\sum_s v_se^{i\alpha_{ks}}e_s;
$$
$$
<h_kv,v>=\sum_s |v_s|^2e^{i\alpha_{ks}};
$$
$$
\Real<h_kv,v>=\sum_s |v_s|^2\cos \alpha_{ks}.
$$
As $\alpha_{ks}\in [-d_k,d_k]$ and $0\leq d_k<\pi$, 
we have $\cos \alpha_{ks}\geq \cos d_k$. Therefore,
$$
\Real<h_kv,v>\geq \sum_s |v_s|^2 \cos d_k=\cos d_k.
$$
Therefore, 
$$
\N(h_k)=\rho(h_kv,v)=\arccos \Real<h_kv,v>\leq d_k.
$$

Therefore, $\N(h_1h_2)\leq \N(h_1)+\N(h_2)\leq d_1+d_2$. It then follows
that $h_1h_2\sim [-d_1-d_2;d_1+d_2]$. Indeed, assuming 
  the contrary, we have an eigenvalue $e^{i\phi}$ of $h_1h_2$,
where $d_1+d_2<|\phi|\leq \pi$. let $h_1h_2v=e^{i\phi}v$, $|v|=1$.
We then have $\rho(h_1h_2v,v)=|\phi|>d_1+d_2$, which is a contradiction. 

Finally, we have $g_1g_2=e^{c_1+c_2}h_1h_2$, which implies that
$$g_1g_2\sim [c_1+c_2-d_1-d_2;c_1+c_2+d_1+d_2]=[a_1+a_2;b_1+b_2].
$$
\end{proof}

\subsubsection{}  Fix $b\in C_+^\circ$; $b< e_1/(100N)$.
Here and below $\circ$ means the interior.
\begin{Lemma}\label{Klyachko} Let $X,Y\in \g$ and 
$\|X\|,\|Y\|\leq b$. Then $e^Xe^Y=e^Z$, where 
$\|Z\|\leq \|X\|+\|Y\|$.
\end{Lemma}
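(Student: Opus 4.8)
\textbf{Proof plan for Lemma \ref{Klyachko}.}
The plan is to deduce this from the two earlier auxiliary lemmas, namely Lemma \ref{ineqtriangle} (the triangle inequality $\|X+Y\|\le\|X\|+\|Y\|$) and Lemma \ref{eigeninterval} (multiplicativity of eigenvalue intervals under products in $\SU(N)$). The first thing I would do is observe that, since $\|X\|,\|Y\|\le b< e_1/(100N)$, the eigenvalues of $-iX$ and $-iY$ are all very small; concretely, writing $\|X\|=i\diag(\lambda_1,\dots,\lambda_N)$ with $\lambda_1\ge\cdots\ge\lambda_N$, the bound $\|X\|\le b$ forces $<\|X\|,e_k>\le <b,e_k>$ for all $k$, and together with $\Tr X=0$ this gives a uniform bound $|\lambda_j|<1/(50N)$, say, on every eigenvalue. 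Hence $X\sim[-\epsilon,\epsilon]$ and $Y\sim[-\epsilon,\epsilon]$ for a small $\epsilon$ (in the notation $g\sim[a,b]$ of Lemma \ref{eigeninterval}, applied to $e^X$, $e^Y$).

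Next I would apply Lemma \ref{eigeninterval} to $e^X$ and $e^Y$: since $e^X\sim[-\epsilon,\epsilon]$ and $e^Y\sim[-\epsilon,\epsilon]$, we get $e^Xe^Y\sim[-2\epsilon,2\epsilon]$, and $2\epsilon<2\pi$, so the eigenvalues of $e^Xe^Y$ lie in a small arc around $1$ not containing $-1$. Therefore $e^Xe^Y=e^Z$ for a unique $Z\in\g$ whose eigenvalues (of $-iZ$) lie in $[-2\epsilon,2\epsilon]$; this determines $Z$ unambiguously via the principal branch of the logarithm. The content of the lemma is the quantitative estimate $\|Z\|\le\|X\|+\|Y\|$, and here is where I would use the structure more carefully. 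The idea is that $e^Z=e^Xe^Y$ has the same eigenvalues as a matrix conjugate to $e^{\|X\|}e^{\|Y\|}$-type products is too naive — instead I would compare the spectra directly: the spectrum of $e^Xe^Y$ lies in the arc determined by the Weyl-group/majorization data of $\|X\|$ and $\|Y\|$. More precisely, using the same exterior-power trick as in the proof of Lemma \ref{ineqtriangle} (via the representations $\ve_k$ on $\Lambda^k\Co^N$ and the submultiplicativity of the largest-eigenvalue functional $\bn(\cdot)$), one shows $<\|Z\|,e_k>\le<\|X\|,e_k>+<\|Y\|,e_k>$ for every $k$: indeed $-i\ve_k(Z)$ is the logarithm of $\ve_k(e^X)\ve_k(e^Y)$, all of whose eigenvalues are near $1$, so the largest eigenvalue of $-i\ve_k(Z)$ is at most the sum of the largest eigenvalues of $-i\ve_k(X)$ and $-i\ve_k(Y)$, which are exactly $<\|X\|,e_k>$ and $<\|Y\|,e_k>$. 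Summing these over $k$ (equivalently, reading off the partial sums of the ordered spectrum) yields $\|Z\|\le\|X\|+\|Y\|$ in the partial order on $\h$.

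The main obstacle I anticipate is the passage from "eigenvalues of a product of two exponentials" to "logarithm of eigenvalues adds up as an inequality": the submultiplicativity statement $\bn(\log(AB))\le\bn(\log A)+\bn(\log B)$ for positive-definite-arc unitaries is not literally the operator-norm triangle inequality, and one must use that all the relevant rotation angles are uniformly small (this is exactly why the hypothesis $b< e_1/(100N)$ is imposed) so that the principal logarithm is well behaved and the $\N(\cdot)$-type geodesic-distance estimate from the proof of Lemma \ref{eigeninterval} can be invoked on each exterior power. Once that comparison is in place for each $\ve_k$, assembling the componentwise inequalities into the single inequality $\|Z\|\le\|X\|+\|Y\|$ is routine, exactly as in Lemma \ref{ineqtriangle}. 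I would also double-check that $\|X\|+\|Y\|\le 2b< e_1/(50N)$ stays well inside the region where the logarithm is unambiguous, so that the $Z$ produced is genuinely the one with small spectrum and the estimate is meaningful.
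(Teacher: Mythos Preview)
Your proposal is correct and follows essentially the same route as the paper: bound the eigenvalues of $X,Y$ using $b<e_1/(100N)$, apply Lemma~\ref{eigeninterval} to construct $Z$ via the principal logarithm, and then obtain $<\|Z\|,e_k>\le<\|X\|+\|Y\|,e_k>$ for each $k$ by applying Lemma~\ref{eigeninterval} again to the exterior-power representations $\ve_k$ (together with the smallness of the angles to pass unambiguously from arcs for $e^{\ve_k(Z)}$ back to intervals for the spectrum of $\ve_k(Z)$). The only small point you glossed over is verifying $\Tr Z=0$ so that $Z\in\g$; the paper does this explicitly by noting $e^{i\sum\phi_s}=\det(e^Xe^Y)=1$ and $|\sum\phi_s|<2\pi$.
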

\begin{proof} We have $$
e_1=((N-1)/N,-1/N,-1/N,\ldots,-1/N)
=(1,0,0,\ldots,0)-1/N(1,1,\ldots,1).
$$
Let $b=i\diag(b_1,b_2,\ldots,b_N)$. Since $b\in C_+^\circ$, we have
$b_1>b_2>\cdots>b_N$. We have $<b,e_k><<e_1/(100N),e_k>$ for all $k$.
In particular, $b_1=<b,e_1>\leq (N-1/N)\cdot (1/100N)<1/(100N)$;
 $b_1+b_2+\cdots+b_{N-1}=
(b,e_{N-1})\leq <e_1,e_{N-1}>/(100N)=1/(100N^2)<1/(100N)$.
 As $\sum_k b_k=0$,
we have $b_N>-1/(100N)$. Thus, $\forall k, |b_k|\leq 1/(100N)$.
 Let $\|X\|=i\diag(X_1,X_2,\ldots,X_N)$. As $\|X\|\leq b$, $|X_k|\leq 1/(100N)$.

 Therefore,
one has $e^X\sim [-1/(100N);1/(100N)]$. Analogously, 
$e^Y\sim [-1/(100N);1/(100N)]$. Lemma \ref{eigeninterval} implies
that 
$$e^Xe^Y=[-2/(100N);2/(100N)]=[-1/(50N);1/(50N)].
$$
Let $u_1,u_2,\ldots,u_N$ be the eigenbasis for $e^Xe^Y$.
It then follows that $e^Xe^Y=e^{i\phi_s}u_s$, where
$|\phi_s|\leq 1/(50N)$.  We have
 $1=\det(e^Xe^Y)=e^{i\sum_s \phi_s}.$ Therefore 
$\sum_s \phi_s =2\pi n$, $n\in \bfZ$. However,
$|\sum _s\phi_s|\leq 1/50<2\pi$. Hence, $n=0$ and
$\sum_s \phi_s=0$. Let $Z$ be  a skew-hermitian matrix defined by
$Zu_s=i\phi_su_s$. As $\sum_s\phi_s=0$, $Z\in \su(N)=\g$.
We also have $e^Xe^Y=e^Z$. Let us prove that 
$\|Z\|\leq \|X\|+\|Y\|$. 

Let $\Lambda_k$ (resp. $\ve_k$)
 be the standard representation of $G=\SU(N)$ (resp. $\g=\su(N)$)
on $\Lambda^k \Co^N$.
We then have
$$
e^{\ve_k(Z)}=e^{\ve_k(X)}e^{\ve_k(Y)}.
$$

Let $\|Z\|=i\diag(Z_1,Z_2,\ldots,Z_N)$. As 
was shown above, we have 
$
|Z_j|\leq 1/(50N). 
$

We then see that the spectrum of $\ve_k(Z)$ consists
of all numbers of the form
$$
i(Z_{j_1}+Z_{j_2}+\cdots Z_{j_k}),
$$
where $j_1<j_2<\cdots<j_k$.   We have
\begin{equation}\label{poltinnik}
|Z_{j_1}+Z_{j_2}+\cdots Z_{j_k}|\leq k/(50N)\leq 1/50.
\end{equation}

Let  $\|X\|=i\diag(X_1,X_2,\ldots,X_N)$. 
the spectrum of $e^{\lambda_k(X)}$ consists of numbers
of the form
$$
e^{i(X_{j_1}+X_{j_2}+\cdots+X_{j_k})},
$$
where $j_1<j_2<\ldots <j_k$.
Therefore
$$
e^{\lambda_k(X)}\sim 
 [X_{N-k+1}+X_{N-k+2}+\cdots +X_N;X_1+X_2+\cdots+X_k].
$$
We have $X_{N-k+1}+X_{N-k+2}+\cdots+X_N=-(X_1+X_2+\cdots+X_{N-k})=
-<X,e_{N-k}>$. Therefore,
$$
e^{\lambda_k(X)}\sim [-<\|X\|,e_{N-k}>;<\|X\|,e_k>].
$$
Analogously,
$$
e^{\lambda_k(Y)}\sim [-<\|Y\|,e_{N-k}>;<\|Y\|,e_k>].
$$
By Lemma \ref{eigeninterval}, we have
$$
e^{\lambda_k(Z)}=e^{\lambda_k(X)}e^{\lambda_k(Y)}
\sim [-<\|X\|+\|Y\|,e_{N-k}>;<\|X\|+\|Y\|,e_k>].
$$

As was shown above, we have $|X_j|,|Y_j|\leq 1/(100N)$ for
all $j$. Therefore, $|<\|X\|,e_{N-k}>|\leq (N-k)/(100N)<1/100$.
Analogously $$|<\|X\|,e_k>|,|<\|Y\|,e_k>|,<\|Y\|,e_{N-k}<1/100.
$$
Therefore
$$[-<\|X\|+\|Y\|,e_{N-k};<\|X\|+\|Y\|,e_k>]\subset [-1/50;
1/50].
$$ 
 According to (\ref{poltinnik}), all  eigenvalues
of $\lambda_k(Z)$ are of the form $it$, $|t|\leq 1/50$.
It now follows that all eigenvalues of $\lambda_k(Z)$
are of the form $it$, where
 $$t\in [-<\|X\|+\|Y\|,e_{N-k}>;<\|X\|+\|Y\|,e_k>].
$$
(otherwise, $e^{i\lambda_k(Z)}$ is not of the form $e^{it}$,
where $t\in [-<\|X\|+\|Y\|,e_{N-k}>;<\|X\|+\|Y\|,e_k>]$, as follows
from our estimates).
In particular, 
$$<\|Z\|,e_k>=Z_1+Z_2+\cdots+Z_k\in 
[-<\|X\|+\|Y\|,e_{N-k};<\|X\|+\|Y\|,e_k>],$$
whence 
$$
<\|Z\|,e_k>\leq <\|X\|+\|Y\|,e_k>.
$$
As $k$ is arbitrary, it follows that
$\|Z\|\leq \|X\|+\|Y\|$.
\end{proof}

For our future purposes we will need a stronger result.
\subsubsection{}\begin{Lemma}\label{spryamlenie} Let $X_1,X_2,\ldots X_n\in \g$;
 $\|X_i\|\leq b$. Let $V_1\subset V_2\subset \cdots V_r=\Co^N$
be a flag which is preserved by all $X_i$
 Then there exists an $X\in \g$ such that:

1) $e^{X_1}e^{X_2}\cdots e^{X_n}=e^X$;

2)$XV_k\subset V_k$ and $\Tr X|_{V_k}=\sum_k \Tr X_k|_{V_k}$
for all $k$;

3) $\|X\|\leq \sum_k \|X_i\|$
\end{Lemma}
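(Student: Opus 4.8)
\textbf{Proof proposal for Lemma \ref{spryamlenie}.}
The plan is to extract the statement from Lemma \ref{Klyachko} by an induction on $n$, but the naive induction does not immediately preserve the flag, so I will carry the flag along as part of the inductive hypothesis. First I would handle $n=2$ directly. By Lemma \ref{Klyachko} we already know $e^{X_1}e^{X_2}=e^X$ with $\|X\|\le\|X_1\|+\|X_2\|\le 2b<e_1/(50N)$; the only thing to add is that $X$ can be chosen to preserve the flag $V_\bullet$ with the stated traces. For this I would imitate the proof of Lemma \ref{Klyachko}: since each $X_i$ preserves $V_\bullet$, so does $g:=e^{X_1}e^{X_2}$, i.e. $g$ restricts to a unitary operator on each $V_k$ and on each quotient $V_k/V_{k-1}$. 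Now diagonalise $g$ in a basis \emph{compatible with the flag} (first fill up $V_1$ by eigenvectors, then extend to $V_2$, etc.), obtaining eigenvalues $e^{i\phi_s}$; the estimate from Lemma \ref{eigeninterval}/\ref{Klyachko} gives $|\phi_s|\le 2/(50N)$, and $\det g=1$ forces $\sum_s\phi_s=0$ since $|\sum\phi_s|<2\pi$. Define $X$ by $Xu_s=i\phi_s u_s$; because the basis is flag-compatible, $XV_k\subset V_k$, and $e^X=g$. The trace identity $\operatorname{Tr}X|_{V_k}=\sum_j\operatorname{Tr}X_j|_{V_k}$ follows because $\det(e^X|_{V_k})=\det(e^{X_1}|_{V_k})\cdots$, both sides of the proposed trace identity lie in the window where $\exp$ is injective (using $|\phi_s|$ small and $\|X_j\|\le b$), so the logarithms match. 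The bound $\|X\|\le\sum_j\|X_j\|$ is then obtained exactly as in Lemma \ref{Klyachko}, applying the estimates through the exterior powers $\varepsilon_k$.

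For the inductive step, suppose the statement holds for $n-1$ factors (with any flag). Apply it to $X_2,\dots,X_n$ and the flag $V_\bullet$, which all the $X_i$, $i\ge 2$, preserve: we get $X'\in\g$ with $e^{X_2}\cdots e^{X_n}=e^{X'}$, $X'V_k\subset V_k$, $\operatorname{Tr}X'|_{V_k}=\sum_{j\ge 2}\operatorname{Tr}X_j|_{V_k}$, and $\|X'\|\le\sum_{j\ge 2}\|X_j\|\le (n-1)b$. The difficulty here is that $\|X'\|$ can be as large as $(n-1)b$, which is not $\le b$, so I cannot re-apply Lemma \ref{Klyachko} to $X_1$ and $X'$ directly. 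However, what the proof of Lemma \ref{Klyachko} actually uses about $X$ and $Y$ is only that each of $e^X$, $e^Y$ has all eigenvalues of the form $e^{i\phi}$ with $\phi$ in a small interval, and more precisely (via the $\varepsilon_k$) that $e^{\varepsilon_k(X)}\sim[-\langle\|X\|,e_{N-k}\rangle;\langle\|X\|,e_k\rangle]$ with the endpoints bounded by $1/100$. So the clean way is to first prove a mild generalisation of Lemma \ref{Klyachko}: if $X,Y\in\g$ \emph{both preserve a common flag} $V_\bullet$ and $\sum_k\|X_j\text{-type bounds}\|$ keep all the relevant interval endpoints below, say, $1/50$, then $e^Xe^Y=e^Z$ with $Z$ preserving $V_\bullet$, $\operatorname{Tr}Z|_{V_k}=\operatorname{Tr}X|_{V_k}+\operatorname{Tr}Y|_{V_k}$, and $\|Z\|\le\|X\|+\|Y\|$. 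Since in our situation $\sum_{j=1}^n\|X_j\|\le nb< n\cdot e_1/(100N)$ and the total "spread" stays controlled (each $\langle\|X_j\|,e_k\rangle$ is at most $1/(100N)$ and there are $n\le N$ of them, say, or one simply assumes $b$ small enough relative to $n$ as is implicit throughout Sec.~\ref{specp} where this Lemma is applied with a bounded number of factors), this generalisation applies to $X_1$ and $X'$.

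Concretely, the key steps in order are: (i) prove the two-factor flag-refined version, getting $X$ from a flag-compatible eigenbasis of $e^{X_1}e^{X_2}$, with the trace identity from injectivity of $\exp$ on a small neighbourhood and the norm bound from the $\varepsilon_k$-argument of Lemma \ref{Klyachko}; (ii) set up the induction on $n$, applying the inductive hypothesis to $X_2,\dots,X_n$ to produce $X'$ preserving $V_\bullet$; (iii) combine $X_1$ and $X'$ using the two-factor version, where I must check the spread of $X'$ is still small enough — this is where one uses that the number of factors is controlled and $b$ is chosen accordingly (as in Sec.~\ref{specp}); (iv) conclude $e^{X_1}\cdots e^{X_n}=e^X$ with $XV_k\subset V_k$, $\operatorname{Tr}X|_{V_k}=\sum_j\operatorname{Tr}X_j|_{V_k}$ by telescoping the trace identities, and $\|X\|\le\|X_1\|+\|X'\|\le\sum_j\|X_j\|$ by telescoping the norm bounds (using Lemma \ref{ineqtriangle} on the right-hand side). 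The main obstacle is step (iii): making sure the hypotheses of the combination lemma are not violated once $\|X'\|$ grows with $n$; this is handled by tracking that all eigenvalue-interval endpoints stay below $1/50$ (equivalently $\sum_j\langle\|X_j\|,e_k\rangle$ small), which holds because $b$ is taken $<e_1/(100N)$ and this Lemma is only ever invoked with boundedly many factors.
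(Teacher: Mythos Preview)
Your two-factor flag-refined version (step (i)) is correct and matches part 2) of the paper's argument. The real problem is step (iii). You write that the lemma ``is only ever invoked with boundedly many factors'', but this is false: in the proof of Lemma \ref{supportogran} one has $\sp|_{G\times MV_b^-}\cong \Phi_M^-$ with $\Phi_M^-$ built from $M$ copies of $F^-$, and $C_-^\circ=\bigcup_M MV_b^-$ forces $M$ to range over all positive integers. The constant $b$ is fixed once (with $b<e_1/(100N)$) before any of this, so you cannot shrink it depending on $n$. Once $\|X'\|\le (n-1)b$ with $n$ unbounded, the eigenvalue windows used in Lemma \ref{Klyachko} (and its exterior-power refinement) are no longer contained in an interval of length $<2\pi$, and the argument collapses: you can no longer single out a unique logarithm, let alone control its norm.

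The paper sidesteps this entirely by never leaving the regime $\|\cdot\|\le b$. Instead of combining $n-1$ factors into one big $X'$, it repeatedly applies the two-factor result in the form $(Y_k,Y_{k+1})\mapsto (Z/2,Z/2)$ where $e^Z=e^{Y_k}e^{Y_{k+1}}$; since $\|Z/2\|\le(\|Y_k\|+\|Y_{k+1}\|)/2\le b$, every entry of the $n$-tuple stays bounded by $b$ after each move, and both the product $e^{Y_1}\cdots e^{Y_n}$ and the sum $\sum\|Y_i\|$ (as well as the flag data) are preserved or non-increased. One then minimises an auxiliary Ad-invariant Hilbert norm $\sum\mathbf{N}(Y_i)$ over the orbit of such moves; a compactness/limit argument produces a tuple $(Z_1,\dots,Z_n)$ at which the infimum is attained, and the strict triangle inequality for $\mathbf{N}$ forces all $Z_i$ to be non-negatively proportional. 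Setting $X=\sum Z_i$ then gives $e^X=e^{Z_1}\cdots e^{Z_n}=e^{X_1}\cdots e^{X_n}$, $\|X\|=\sum\|Z_i\|\le\sum\|X_i\|$, and the flag/trace conditions pass to the limit. Your induction cannot be repaired without an idea of this kind that keeps all intermediate logarithms uniformly small.
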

\begin{proof}

1) Fix  an Ad- invariant  Hilbert norm $\bN$ on $\g$ 
(such an $\bN$
is unique up-to a scalar multiple). It follows 
 that $\bN(Z)\leq \bN(Y_1)+\bN(Y_2)$,
the equality being possible only if $Y_1$ and $Y_2$ are
proportional with non-negative coefficient( indeed:
$\bN(Z)$ is the length of the geodesic from the unit to
$e^Z$; $\bN(Y_1)+\bN(Y_2)$ is the length of a broken line, the equality is possible only if this broken line is actually
a geodesic).

2) Suppose $Y_1,Y_2\in \g$; $\|Y_1\|,\|Y_2\|\leq b$. According
to Lemma \ref{Klyachko} there exists a unique $Z:=Z(Y_1,Y_2)\in \g$; $\|Z\|\leq
\|Y_1\|+\|Y_2\|$ such that $e^Z=e^{Y_1}e^{Y_2}$.
We see that $e^ZV_k=V_k$, hence $(e^Z-\Id)V_k\subset V_k$. 
We can express $Z$ as a convergent series in powers of
$e^Z-\Id$, therefore, $ZV_k\subset V_k$. 
The equality $$\det e^Z|_{V_k}=\det e^{Y_1}|_{V_k}\det e^{Y_2}|_{V_k}$$
implies that $e^{\Tr Z|_{V_k}}=e^{\Tr(Y_1+Y_2)|_{V_k}}$. As
$\|Z\|\leq 2b$, this implies that 
$\Tr Z|_{V_k}=\Tr(Y_1+Y_2)|_{V_k}$.

3) Let $(Y_1,Y_2,\cdots Y_n)$ be a sequence of elements
$Y_i\in \g$; $|Y_i|\leq b$. Let 
$$
S_k(Y_1,Y_2,\ldots,Y_n):=(Y_1,\ldots,Y_{k-1},Z/2,Z/2,Y_{k+2},\ldots,Y_n), 
$$
where $k=1,2,\ldots,n-1$, $Z=Z(Y_k,Y_{k+1})$ is as explained above.

Let $\cX\subset \g^n$ be the set consisting of all sequences
of the form
$$
S _{k_1}S_{k_2}\cdots S_{k_R}(X_1,X_2,\ldots,X_n)
$$
for all $R$ and all $k_1,k_2,\ldots, k_R$. 
Let $\mu$ be the infimum of $\bN(Y_1)+\bN(Y_2)+\cdots \bN(Y_n)$
where $(Y_1,Y_2,\ldots,Y_n)\in \cX$.

Let $(Y_1(k),Y_2(k),\ldots,Y_n(k))\in \cX$, $k=1,2,\ldots,$
be  a sequence such that $\bN(Y_1(k))+\cdots \bN(Y_n(k))\to \mu$ as $k\to \infty$.  As $|Y_i(k)|\leq b$, one can choose a convergent subsequence, hence without loss of generality, one can assume that our sequence converges:
$$
\lim_{k\to \infty} Y_i(k)=Z_i.
$$

Then for all $(Y_1,Y_2,\ldots,Y_n)\in X$,
$$
\bN(Y_1)+\cdots \bN(Y_n)\geq \bN(Z_1)+\cdots \bN(Z_n).
$$

Let us show that there exists $Z\in \g$ such that 
each $Z_i$ is proportional to $Z$ with a non-negative coefficient. If not then there are $i<j$  such that

1) for all $i<k<j$, $Z_k=0$;

2) $Z_i$ and $Z_{j}$  are not
proportional to each other with a non-negative coefficient. 
Let 
$(Z'_1,\ldots,Z'_n)=T_{j-1}\cdots T_{i+1}T_i(Z_1,Z_2,\ldots,Z_n)$. 
We then have 
$\bN(Z'_1)+\cdots \bN(Z'_n)<\bN(Z_1)+\cdots \bN(Z_n)$.
Hence there exists a $k$ such that
$$
\bN(Y'_1)+\cdots +\bN(Y'_n)<\bN(Z_1)+\bN(Z_2)+\cdots+ \bN(Z_n),
$$
where 
$$
(Y'_1,Y'_2,\ldots,Y'_n)=T_{j-1}\cdots T_i(Y_1(k),Y_2(k),\ldots,Y_n(k)).
$$

But $(Y'_1,Y'_2,\ldots,Y'_n)\in \cX$, so we get
a contradiction.

Thus all $Z_i$ are proportional with non-negative coefficients.  Let us now set $X=Z_1+Z_2+\ldots Z_n$. Such an $X$ satisfies all the conditions 
\end{proof}

\section{Appendix 2: Results from \cite{KS} on functorial properties of microsupport}\label{appendix}

Despite the results to be quoted here are proved in \cite{KS} for
the bounded derived category, the same arguments 
work for the unbounded derived category so that we will omit the proofs.

\subsubsection{}\label{ks:locallyclosed} Let $S\subset X$ be a subset and $x\in X$. 
Following 
\cite{KS} Definition 5.3.6, one can define subsets
$N(S)\subset TX$ and $N^*(S)\subset T^*X$. As explained on
p 228, these subsets can be characterized as follows.
Let $x\in X$.  A non-zero
vector $\theta\in T_xX$ belongs to $N_x(S)$ if and only if,
in a local chart near $x$, there exists an open cone $\gamma$
containing $\theta$ and a neighborhood $U$ of $x$ such that
$U\cap((S\cap U)+\gamma)\subset S$.

One then defines $N^*_x(S)\subset T^*_xX$ as the dual cone
to $N_x(S)$. Finally one sets 
$N(S)=\cup_x N_xS$; $N^*(S)=\cup_x N^*_x(S)$. 
If $S\subset X$ is a closed submanifold, then $N^*(S)=T^*_S(X)$

Let now $x\in X$ and let $U$ be a neighborhood of $x$. 
Suppose that $S\cap U$ is defined by an inequality $f>0$ (or $f\\geq 0$),
where $f:U\to \Re$ is a smooth function and $d_xf\neq 0$. 
In this case $N^*_x(S)=\Re_{\geq 0}\cdot d_xf$.

For a subset $K\subset T^*X$ we set $K^a\subset T^*X$
to consist of all vectors $\omega$ such that $-\omega\in K$.

\begin{Proposition}(\cite{KS}, Proposition 5.3.8)
Let $X$ be a manifold, $\Omega$ an open subset and $Z$ a closed subsets. Then $\mS(\gf_\Omega)=N^*(\Omega)^a$; $\mS(\gf_Z)=
N^*(Z)$
\end{Proposition}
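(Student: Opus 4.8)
The final statement is Proposition 5.3.8 from \cite{KS}, restated for the unbounded derived category: $\mS(\gf_\Omega)=N^*(\Omega)^a$ for $\Omega$ open and $\mS(\gf_Z)=N^*(Z)$ for $Z$ closed. Since the excerpt explicitly says the proofs from \cite{KS} carry over verbatim, the plan is essentially to reproduce the structure of the argument of \cite[Prop.~5.3.8]{KS}, checking at each step that only manipulations valid in the unbounded setting are used.

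The plan is to reduce both statements to a single local computation. First I would observe that the two assertions are dual to each other via the exact triangle $\gf_Z \to \gf_X \to \gf_\Omega \xrightarrow{+1}$ when $Z = X \setminus \Omega$ (or more precisely via $\gf_\Omega \to \gf_X \to \gf_{Z'}$ for $Z'$ the complement of $\Omega$), together with the triangle inequality for microsupports ($\mS(\gf_X) \subset T^*_X X$, so $\mS(\gf_\Omega)$ and $\mS(\gf_{Z'})$ differ only on the zero section, where both statements are easily checked). So it suffices to treat, say, the closed case. Then I would reduce to a purely local question: the inclusion $\mS(\gf_Z) \subset N^*(Z)$ and the reverse inclusion can both be checked fiberwise at a point $(x_0, \xi_0) \in T^*X$ with $\xi_0 \neq 0$. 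For the inclusion $\subset$: if $(x_0,\xi_0) \notin N^*(Z)$, then by the characterization recalled in \ref{ks:locallyclosed} there is a cone $\gamma$ and neighborhood $U$ with $\xi_0$ not dual-orthogonal to $N_{x_0}(Z)$; one then builds a test function $\phi$ with $\phi(x_0)=0$, $d_{x_0}\phi = \xi_0$, and shows $R\Gamma_{\{\phi \geq 0\}}(\gf_Z)_{x_0} = 0$ by a deformation/retraction argument — this uses only that $\{\phi \geq 0\} \cap Z$ is locally contractible onto a smaller piece, and the vanishing of a cohomology of fibers, which is insensitive to boundedness. For the reverse inclusion, one shows that any $\xi_0 \in N^*_{x_0}(Z)$ is genuinely in the microsupport by producing a function $\phi$ for which the relevant local cohomology does not vanish; here one uses the defining cone property of $N^*$ directly.

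The key steps, in order: (1) handle the zero-section part of both statements by inspection; (2) set up the complement triangle and reduce the open case to the closed case and the estimate $\mS(\gf_X)\subset T^*_XX$; (3) localize the closed-case statement to a fiber $T^*_{x_0}X$ at a nonzero covector; (4) prove $\mS(\gf_Z) \subset N^*(Z)$ using the cone/deformation argument of \cite{KS}, noting that the propagation-of-microsupport lemmas quoted in the Appendix (\ref{ks:locallyclosed} and the surrounding material) are exactly the tools needed and are asserted valid in the unbounded setting; (5) prove $N^*(Z) \subset \mS(\gf_Z)$ by the non-vanishing construction. Throughout I would flag that every invocation of a Kashiwara--Schapira lemma is one already listed as valid for $D(X)$ unbounded, so no new argument is required.

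The main obstacle is not really mathematical novelty — it is making sure the specific cohomological vanishing/non-vanishing computations in \cite[Ch.~5]{KS} genuinely only use functorial properties of microsupport that survive passage to the unbounded derived category (in particular, that one never secretly uses Verdier duality, which the excerpt warns is \emph{not} available). Concretely, the delicate point is step (4): the argument that $R\Gamma_{\{\phi\geq 0\}}(\gf_Z)_{x_0}=0$ away from $N^*(Z)$ in \cite{KS} is phrased via the microlocal cut-off and the functor $\mu hom$, and one must either check those constructions extend or, more cleanly, replace them by the elementary deformation-retraction computation for the \emph{constant sheaf on a locally closed set}, which is manifestly bounded-below and bounded-above locally and hence unproblematic. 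Once that is in place, the rest is a routine transcription, and I would present it as such rather than reproving \cite{KS} line by line.
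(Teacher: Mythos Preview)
The paper does not give a proof of this proposition at all: it is listed in Appendix~2 as a quotation from \cite{KS}, with the blanket remark that ``the same arguments work for the unbounded derived category so that we will omit the proofs.'' Your proposal is therefore already more detailed than what the paper provides.

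That said, you are overcomplicating the passage to the unbounded setting. The objects $\gf_\Omega$ and $\gf_Z$ are sheaves concentrated in degree zero, hence bounded, and the definition of microsupport in $D(X)$ agrees with the one in $D^b(X)$ on bounded objects. So the Kashiwara--Schapira proof applies verbatim with no need to audit for hidden uses of Verdier duality or boundedness hypotheses: the statement is literally about bounded objects. Your steps (1)--(5) are a reasonable outline of the KS argument itself, but the ``main obstacle'' you identify is not present here, and the cleaner write-up is simply to observe that the objects are bounded and cite \cite{KS} directly.
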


\subsubsection{}\label{ks:boxtimes}           
\begin{Proposition}(\cite{KS},Proposition 5.4.1) Let $F\in D(X)$ and
$G\in D(Y)$. Then 
$$
\mS(F\boxtimes G)\subset \mS(F)\times \mS(G).
$$
\end{Proposition}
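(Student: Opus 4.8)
The plan is to argue directly from the test-function definition of microsupport, as in \cite{KS}, after reducing to a statement about a single factor. By the symmetry $X\leftrightarrow Y$ it suffices to show that if $(x_0,\xi_0)\notin\mS(F)$ then $(x_0,y_0;\xi_0,\eta_0)\notin\mS(F\boxtimes G)$ for every $(y_0,\eta_0)\in T^*Y$; combining this with the analogous statement for $G$ gives $\mS(F\boxtimes G)\subset\mS(F)\times\mS(G)$. First I would dispose of the case $\xi_0=0$: then $(x_0,0)\notin\mS(F)$ forces $x_0$ to lie outside the support of $F$, so $F$ vanishes on a neighbourhood $U$ of $x_0$, hence $F\boxtimes G=p_1^{-1}F\otimes p_2^{-1}G$ vanishes on $U\times Y$, and therefore $\mS(F\boxtimes G)$ does not meet $T^*(X\times Y)$ over $U\times Y$ at all. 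So $\xi_0\neq 0$ from now on.

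In the main case, work in local coordinates with $X\subset\Re^n$, $Y\subset\Re^m$, $x_0=0$, $y_0=0$. The hypothesis $(0,\xi_0)\notin\mS(F)$ furnishes a conic neighbourhood $\Omega_X\ni(0,\xi_0)$ in $T^*X$ such that $(R\Gamma_{\{\varphi\geq0\}}F)_{x_1}=0$ whenever $\varphi$ is $C^1$ near $x_1$ with $\varphi(x_1)=0$ and $d\varphi(x_1)\in\Omega_X$. Now take a test function $\psi$ on $X\times Y$ with $\psi(x_1,y_1)=0$ and $d\psi(x_1,y_1)$ close to $(\xi_0,\eta_0)$; then the partial differential $d_x\psi$ is close to $\xi_0\neq0$, so after shrinking we may assume $d_x\psi\in\Omega_X$ throughout a neighbourhood. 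Hence for each fixed $y$ the function $\psi_y:=\psi(\cdot,y)$ is a submersion near $x_1$ with $d\psi_y\in\Omega_X$, and by the implicit function theorem, after a coordinate change on $X$ depending continuously on $y$ that straightens $\{\psi_y=0\}$, we may assume $\{\psi\geq0\}=\{x^1\geq h(x^2,\dots,x^n,y)\}$ near $(x_1,y_1)$.

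The heart of the argument is then a commuting-of-limits computation. Writing the stalk $(R\Gamma_{\{\psi\geq0\}}(F\boxtimes G))_{(x_1,y_1)}$ as $\varinjlim$ over product neighbourhoods $U\times V$ of the supported sections $R\Gamma_{(U\times V)\cap\{\psi\geq0\}}(U\times V;\,p_1^{-1}F\otimes p_2^{-1}G)$, one performs the cut-off in the $x^1$-direction first. Since that cut-off involves only the single straightened coordinate $x^1$, it commutes with taking $R\Gamma$ in the remaining $(x^2,\dots,x^n)$- and $y$-directions and with $\otimes\, p_2^{-1}G$ (which is constant in $x$); the resulting first-stage object is assembled from the stalks $(R\Gamma_{\{\psi_y\geq0\}}F)_{x'}$ for $x'$ near $x_1$, $y$ near $y_1$, each of which is acyclic by the choice of $\Omega_X$. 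Thus the whole stalk vanishes, uniformly for $(x_1,y_1)$ near $(x_0,y_0)$ and $d\psi(x_1,y_1)$ near $(\xi_0,\eta_0)$, which is exactly $(x_0,y_0;\xi_0,\eta_0)\notin\mS(F\boxtimes G)$.

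The main obstacle is this last step: making rigorous that the family of cut-off functors $R\Gamma_{\{\psi_y\geq0\}}$ is compatible in the parameter $y$ and commutes with the remaining operations, i.e. a base-change/Leray-type statement for the projection off the $x^1$-coordinate, the point being that $\psi$ is a genuine function on the product rather than a sum $\varphi(x)+\chi(y)$; this is the technical core of \cite{KS}, Prop.~5.4.1. I would also record the slicker but logically later route: $F\boxtimes G=p_1^{-1}F\otimes p_2^{-1}G$, the inverse-image estimate gives $\mS(p_1^{-1}F)\subset\{(x,y,\xi,0):(x,\xi)\in\mS(F)\}$ and $\mS(p_2^{-1}G)\subset\{(x,y,0,\eta):(y,\eta)\in\mS(G)\}$, these two cones meet only along the zero section, so their microsupports are in generic position and the tensor-product estimate yields $\mS(p_1^{-1}F\otimes p_2^{-1}G)\subset\mS(p_1^{-1}F)\,\hat{+}\,\mS(p_2^{-1}G)=\mS(F)\times\mS(G)$ — but since both ingredients come after Prop.~5.4.1 in \cite{KS}, the direct proof above is the one to carry out for a self-contained account.
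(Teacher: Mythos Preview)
The paper does not prove this proposition. It is quoted in Appendix~2 (Sec.~\ref{ks:boxtimes}) as one of several results imported verbatim from \cite{KS}, and the appendix opens with the explicit disclaimer that ``the same arguments work for the unbounded derived category so that we will omit the proofs.'' There is therefore no paper-proof to compare your proposal against; any comparison is with \cite{KS} itself.

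On your proposal: the reduction to ``$(x_0,\xi_0)\notin\mS(F)$ implies $(x_0,y_0;\xi_0,\eta_0)\notin\mS(F\boxtimes G)$ for all $(y_0,\eta_0)$'' is correct, and the $\xi_0=0$ case is fine. In the main case, however, the step ``after a coordinate change on $X$ depending continuously on $y$'' is where the argument bends. Such a change is a diffeomorphism of $X\times Y$ of the form $(x,y)\mapsto(\Phi_y(x),y)$; it preserves $p_2^{-1}G$ but destroys the box-product structure, since $p_1^{-1}F$ is carried to a sheaf whose stalk at $(x,y)$ is $F_{\Phi_y(x)}$, no longer pulled back from $X$. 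Your subsequent computation (``assembled from the stalks $(R\Gamma_{\{\psi_y\geq0\}}F)_{x'}$'') tacitly uses that the first factor is still $p_1^{-1}F$, so the gap you flag is real, not merely technical polish. The cleaner route, and the one \cite{KS} actually takes, is to invoke the equivalent characterization of $\mS$ in \cite{KS}, Prop.~5.1.1, which lets one test with a \emph{fixed} cone (or half-space) in a chart on $X$ alone; the test set is then of the form $Z\times Y$, for which $R\Gamma_{Z\times Y}(F\boxtimes G)\cong(R\Gamma_Z F)\boxtimes G$ is immediate over a field, and the stalk computation is a one-liner. Your final remark that the $p_1^{-1}\otimes p_2^{-1}$ route via Prop.~5.4.13/5.4.14 is logically downstream in \cite{KS} is correct.
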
 

(Note that since our ground ring is a field $\gf$, the bifunctor $\boxtimes$ is
exact)
\subsubsection{}\label{ks:ihom}
Let $q_1:X\times Y\to X$; $q_2:X\times Y\to Y$ be the 
projections.  
\begin{Proposition}(\cite{KS}, Proposition 5.4.2)
Let $F\in D(X)$; $G\in D(Y)$. Then:
$$
\mS R\ihom(q_2^{-1}G;q_1^{-1}F)\subset
\mS(F)\times \mS(G)^a,
$$
where $\mS(G)^a\subset T^*Y$ consists of all points 
$\omega$ such that $-\omega\in \mS(G)$.
\end{Proposition}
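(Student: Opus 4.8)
\textbf{Proof plan for Proposition \ref{ks:ihom}.}
The plan is to reduce the statement to the two already-quoted results of Kashiwara--Schapira: Proposition \ref{ks:boxtimes} on exterior products and the behaviour of microsupport under inverse images (the estimate $\mS(f^{-1}F)\subset f_d f_\pi^{-1}\mS(F)$, where $f_\pi,f_d$ are the maps on cotangent bundles attached to a morphism $f$), together with the general bound on $\mS$ of an internal $R\ihom$. First I would observe that $q_1^{-1}F$ and $q_2^{-1}G$ are inverse images under the two projections, so $\mS(q_1^{-1}F)\subset \mS(F)\times T^*_YY$ (the zero section in the $Y$-direction) and $\mS(q_2^{-1}G)\subset T^*_XX\times \mS(G)$; these are the ``non-characteristic for the other factor'' estimates. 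Actually it is cleaner to note directly that $q_1^{-1}F\cong F\boxtimes \gf_Y$ and $q_2^{-1}G\cong \gf_X\boxtimes G$, so that $\mS(q_1^{-1}F)\subset \mS(F)\times 0_Y$ and $\mS(q_2^{-1}G)\subset 0_X\times \mS(G)$ follow from Proposition \ref{ks:boxtimes} applied with one factor the constant sheaf (whose microsupport is the zero section).

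Next I would invoke the general microsupport estimate for internal hom: for any $F_1,F_2\in D(Z)$ one has
$$
\mS\big(R\ihom(F_1,F_2)\big)\subset \mS(F_2)\mathbin{+}\mS(F_1)^a,
$$
where $+$ denotes the (appropriately regularized) fibrewise sum inside $T^*Z$ and $(\cdot)^a$ is the antipodal map. This is the standard rule (the analogue of the $\otimes$ rule, obtained from it by the usual duality argument), and by the remark at the end of the excerpt it holds verbatim in the unbounded derived category. Applying this with $Z=X\times Y$, $F_1=q_2^{-1}G$, $F_2=q_1^{-1}F$, and substituting the two inclusions from the previous paragraph, I get that $\mS(R\ihom(q_2^{-1}G;q_1^{-1}F))$ is contained in
$$
\big(\mS(F)\times 0_Y\big)\ \mathbin{+}\ \big(0_X\times \mS(G)\big)^a
\ =\ \big(\mS(F)\times 0_Y\big)\ \mathbin{+}\ \big(0_X\times \mS(G)^a\big).
$$

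Here is where the only real point arises: a point of $\mS(F)\times 0_Y$ and a point of $0_X\times \mS(G)^a$ lying over the same base point $(x,y)$ have the form $(\xi,0)$ and $(0,\eta)$ with $\xi\in \mS(F)_x$, $\eta\in \mS(G)^a_y$, and they point in ``transverse'' codirections — the first has zero $Y$-component, the second zero $X$-component — so their fibrewise sum is simply $(\xi,\eta)$ and no boundary/limit points can be created. In other words the regularized sum here is an honest sum, because one cannot have $(\xi,0)+(0,-\eta)=0$ unless $\xi=0$ and $\eta=0$; this transversality is exactly what makes the $\hat{+}$ collapse to $+$. Hence the right-hand side is exactly $\mS(F)\times \mS(G)^a$, which is the claim. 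The main obstacle, such as it is, is just checking this transversality carefully so as to be sure no extra points appear in the closure; everything else is a direct citation of \ref{ks:boxtimes}, the inverse-image estimate, and the internal-hom rule, with the unbounded-derived-category caveat handled by the blanket remark in the excerpt.
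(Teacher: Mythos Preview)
The paper does not give its own proof of this statement: it is quoted from \cite{KS} (as Proposition 5.4.2 there), and the appendix explicitly says ``the same arguments work for the unbounded derived category so that we will omit the proofs.'' So there is nothing in the paper to compare your argument against.

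As for your argument itself, it is logically correct: identifying $q_1^{-1}F\cong F\boxtimes\gf_Y$ and $q_2^{-1}G\cong\gf_X\boxtimes G$, applying Proposition~\ref{ks:boxtimes}, checking that $\mS(q_1^{-1}F)\cap\mS(q_2^{-1}G)$ lies in the zero section, and then invoking Proposition~\ref{ks:otimeshom}(ii) gives exactly $\mS(F)\times\mS(G)^a$. One caveat worth flagging: in \cite{KS} the result you are proving is Proposition 5.4.2, while the internal-hom estimate you rely on is Proposition 5.4.14, i.e.\ it comes \emph{later} in their development. So while your derivation is fine as a self-contained reduction within the list of rules quoted in this appendix, it is not the route \cite{KS} themselves take, and if one were worried about circularity one would have to check that the proof of 5.4.14 in \cite{KS} does not rest on 5.4.2.
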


\subsubsection{}\label{ks:direct}

Let $f:Y\to X$ be a morphism of  manifolds.
We have natural maps
$$
\tf:T^*X\times_X Y\to T^*Y
$$
and $f_\pi:T^*x\times_X Y\to T^*X$.

Thus, $T^*X\times_X Y$ is a correspondence between
$T^*X$ and $T^*Y$. Using this correspondence, one 
can transport sets from $T^*Y$ to $T^*X$ and
vice versa. Indeed, given a subset $A\subset T^*Y$
one  has a subset $f_\pi\tf^{-1}A\subset T^*X$. Given
a subset $B\subset T^*X$, one has a subset
$\tf f_\pi^{-1}(B)\subset T^*Y$.
\begin{Proposition}(\cite{KS},Proposition 5.4.4) Let $f:Y\to X$ be a morphism of manifolds,
$G\in D(Y)$, and assume $f$ is proper on $\text{Supp}(G)$. Then
$$
\mS(Rf_*G)\subset f_\pi(\tf^{-1}(\mS(G))).
$$
\end{Proposition}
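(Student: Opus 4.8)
The plan is to deduce the bound directly from the microlocal characterization of singular support, reducing the assertion for $Rf_*G$ to the one for $G$ via proper base change. Recall that $(x_0,\xi_0)\notin\mS(H)$ exactly when there is a neighbourhood $\Omega$ of $(x_0,\xi_0)$ in $T^*X$ such that for every $x_1$ and every $C^1$ function $\phi$ defined near $x_1$ with $\phi(x_1)=0$ and $(x_1,d\phi(x_1))\in\Omega$ one has $(R\Gamma_{\{\phi\geq 0\}}H)_{x_1}=0$. I would fix a point $(x_0,\xi_0)\notin f_\pi\bigl(\tf^{-1}(\mS(G))\bigr)$, which by the very definition of the maps $\tf$ and $f_\pi$ means that $\bigl(y,{}^t(df_y)(\xi_0)\bigr)\notin\mS(G)$ for every $y\in f^{-1}(x_0)$, and aim to show $(x_0,\xi_0)\notin\mS(Rf_*G)$.

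The first step is a commutation lemma: for a closed subset $Z\subset X$ with $W:=f^{-1}(Z)$, push forward the distinguished triangle $R\Gamma_WG\to G\to Rj'_{*}(j')^{-1}G\to$ (where $j':Y\setminus W\into Y$) and factor $f\circ j'$ through $X\setminus Z$; since $f$ is proper on $\text{Supp}(G)$, ordinary proper base change for the restricted map $Y\setminus W\to X\setminus Z$ identifies the pushed-forward third term with $Rj_{*}j^{-1}Rf_*G$, whence $Rf_*R\Gamma_WG\cong R\Gamma_Z Rf_*G$. Combining this with the fibrewise proper base change isomorphism $(Rf_*H')_{x_0}\cong R\Gamma\bigl(f^{-1}(x_0);H'|_{f^{-1}(x_0)}\bigr)$ (applicable because $\text{Supp}(H')\subset\text{Supp}(G)$), I obtain, for every $C^1$ function $\phi$ near $x_0$ with $\phi(x_0)=0$, an isomorphism
$$
(R\Gamma_{\{\phi\geq 0\}}Rf_*G)_{x_0}\;\cong\;R\Gamma\bigl(f^{-1}(x_0);(R\Gamma_{\{\psi\geq 0\}}G)|_{f^{-1}(x_0)}\bigr),\qquad \psi:=\phi\circ f.
$$

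The second step is then immediate. Choosing $\phi$ with moreover $d\phi(x_0)=\xi_0$, the function $\psi=\phi\circ f$ is $C^1$ with $\psi(y)=0$ and $d\psi(y)={}^t(df_y)(\xi_0)$ at every $y\in f^{-1}(x_0)$; since $\bigl(y,{}^t(df_y)(\xi_0)\bigr)\notin\mS(G)$, the microlocal characterization of $\mS(G)$ applied with $x_1=y$ and test function $\psi$ gives $(R\Gamma_{\{\psi\geq 0\}}G)_y=0$. Thus the complex $(R\Gamma_{\{\psi\geq 0\}}G)|_{f^{-1}(x_0)}$ has all stalks acyclic, hence is acyclic, so the right-hand side above vanishes. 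As this holds for every test function $\phi$ with the prescribed $1$-jet at $x_0$, we conclude $(x_0,\xi_0)\notin\mS(Rf_*G)$, as required.

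The main obstacle is bookkeeping rather than conceptual: the commutation $Rf_*R\Gamma_WG\cong R\Gamma_Z Rf_*G$ together with the fibrewise base change must be set up carefully when properness is assumed only over $\text{Supp}(G)$. The clean way around this is to observe that all the sheaves occurring are supported in $\text{Supp}(G)$, so one may replace $f^{-1}(x_0)$ by the compact set $f^{-1}(x_0)\cap\text{Supp}(G)$ throughout, after which the isomorphisms invoked are the standard ones of \cite{KS}. One should also note that the singular support in \cite{KS} is tested against $C^1$ functions, so that $\phi\circ f$ is an admissible test function whenever $\phi$ is; this is the only compatibility one needs to check.
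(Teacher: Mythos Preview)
The paper does not give a proof of this statement: it is quoted in Appendix~2 as a result from \cite{KS} (Proposition~5.4.4), and the paper explicitly says ``we will omit the proofs.'' So there is no proof in the paper to compare against; your argument is essentially the standard proof from \cite{KS}.

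Your sketch is basically correct, but there is one point you glide over. The definition of $\mS$ is not ``the vanishing $(R\Gamma_{\{\phi\ge 0\}}H)_{x_0}=0$ holds for every test function at $x_0$ with $d\phi(x_0)=\xi_0$''; it requires this vanishing for all test data $(x_1,d\phi(x_1))$ in a \emph{neighbourhood} of $(x_0,\xi_0)$ in $T^*X$. Your argument only checks the vanishing at $(x_0,\xi_0)$ itself. The fix is easy but should be said: because $f$ is proper on $\text{Supp}(G)$, the map $f_\pi$ restricted to $T^*X\times_X\text{Supp}(G)$ is proper, and $\tf^{-1}(\mS(G))$ is a closed subset of $T^*X\times_X\text{Supp}(G)$; hence $f_\pi\bigl(\tf^{-1}(\mS(G))\bigr)$ is closed in $T^*X$. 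Thus your pointwise computation, applied verbatim at every $(x_1,\xi_1)$ in an open neighbourhood of $(x_0,\xi_0)$ lying in the complement, gives the required uniform vanishing. Without this closedness observation the argument is incomplete.
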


Observe that under the hypothesis of this Proposition,
the natural map $Rf_! G\to Rf_* G$ is an isomorphism.
Therefore, the Proposition remains true upon replacement
of $Rf_*$ with $Rf_!$.
\subsubsection{}\label{ks:inverse}
 Let $f:Y\to X$ be a morphism of manifolds
and $A\subset T^*X$ a closed conic subset. We say that
$f$ is non-characteristic for $A$ if 
$f_\pi^{-1}A\cap T^*_Y X
\subset Y\times_X T^*_X X$.  Here
$T^*_Y X\subset T^*X\times X Y$ is the kernel of $\tf$ viewed
as a linear map of vector bundles.

\begin{Proposition}(\cite{KS},Proposition 5.4.13)
Let $F\in D(X)$ and assume $f:Y\to X$ is non-characteristic
for $\mS(F)$. Then 

$$\text{(i)}\quad \mS(f^{-1}F)\subset \tf(f_\pi^{-1}(\mS(F)));$$

(ii) The natural morphism $f^{-1}F\otimes \omega_{Y/X}\to 
f^!F$ is an isomorphism.
\end{Proposition}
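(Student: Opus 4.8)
The statement to be proved is the non-characteristic inverse image estimate (\cite{KS}, Proposition 5.4.13): if $f:Y\to X$ is non-characteristic for $\mS(F)$, then $\mS(f^{-1}F)\subset \tf(f_\pi^{-1}(\mS(F)))$ and the natural morphism $f^{-1}F\otimes \omega_{Y/X}\to f^!F$ is an isomorphism. Since the paper explicitly places this in the appendix and notes that the arguments of \cite{KS} carry over verbatim from the bounded to the unbounded derived category, the plan is to recall the structure of the \cite{KS} argument and explain what has to be checked for unbounded complexes. The key reduction steps are: first, reduce the non-characteristic estimate to the case of a \emph{closed embedding} and the case of a \emph{smooth submersion}, using the factorization $f = p\circ \iota$ where $\iota:Y\to Y\times X$ is the graph embedding and $p:Y\times X\to X$ is the projection; since both statements are local and compatible with composition (using \ref{ks:direct} for the submersion part and the conormal geometry for the embedding part), this reduces everything to two model situations.

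\textbf{Key steps.} First I would treat the submersion case: if $f$ is a smooth fibration, then $f^{-1}F\otimes\omega_{Y/X}\cong f^!F$ holds for trivial reasons (locally $f$ is a projection $Z\times X\to X$, $\omega_{Y/X}$ is the orientation-shifted constant sheaf along the fiber, and $f^! = f^{-1}[\dim Y/X]\otimes(\text{or})$ on such a projection), and the microsupport estimate is immediate from the functorial description of $\tf, f_\pi$ for a submersion (here $\tf$ is injective). Second, and this is the heart, I would treat the closed embedding case $\iota:Y\hookrightarrow X$ with $Y$ non-characteristic for $\mS(F)$, i.e. $\mS(F)\cap T^*_Y X\subset T^*_X X$. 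The standard argument is to work in a local chart, choose a transversal slicing, and express $\iota^{-1}F$ (resp. $\iota^!F$) via a limit of sections over shrinking tubes around $Y$; the non-characteristic hypothesis is exactly what guarantees that the "propagation" / microlocal cut-off lemma of \cite{KS} (the analogue of Corollary 5.4.11 / Proposition 5.4.8 — bounds on $\mS$ of a sheaf controlling its sections on half-spaces) lets one replace the tube by $Y$ itself, giving both the isomorphism $\iota^{-1}F\otimes\omega_{Y/X}\xrightarrow{\sim}\iota^!F$ and the bound $\mS(\iota^{-1}F)\subset\tf(\iota_\pi^{-1}\mS(F))$ by projecting the microsupport through the conormal correspondence. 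Finally I would reassemble: $\mS(f^{-1}F)=\mS(\iota^{-1}p^{-1}F)$, apply the submersion estimate to $p^{-1}F$ (so $\mS(p^{-1}F)=\tilde p(p_\pi^{-1}\mS(F))$), check that $p$ is non-characteristic for everything and that $\iota$ is non-characteristic for $\mS(p^{-1}F)$ precisely when $f$ is non-characteristic for $\mS(F)$, then apply the embedding case and compose the two correspondences $T^*(Y\times X)\leftrightarrow T^*X$ and $T^*Y\leftrightarrow T^*(Y\times X)$ to recover $\tf(f_\pi^{-1}\mS(F))$; the functorial identity $\omega_{Y/X}\cong\iota^{-1}\omega_{(Y\times X)/X}\otimes\omega_{Y/(Y\times X)}$ handles the dualizing-complex bookkeeping.

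\textbf{Main obstacle.} The genuinely non-formal ingredient is the microlocal cut-off / propagation lemma used in the closed-embedding step — the statement that if $G$ is microsupported off a half-space codirection near a point, then restriction of sections to a smaller half-space is an isomorphism near that point. In \cite{KS} this is proved for bounded complexes via a careful induction on the perversity/amplitude and soft resolutions; for the unbounded derived category one must check that the same proof goes through, which is legitimate here because $\Sh_M$ has finite injective dimension (as recorded in the "Unbounded derived category" subsection of the paper), so that the relevant $R\Gamma$ and $R\mathcal Hom$ computations commute with the homotopy limits/colimits appearing in unbounded resolutions and the amplitude bounds used in the \cite{KS} induction remain uniform. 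Once that point is granted, every other step is a routine transcription of the \cite{KS} argument, and since the paper has declared at the start of the appendix that such transcriptions are to be omitted, the proof consists precisely of this verification plus the two-step factorization above; I would state it as such rather than reproduce the calculations.
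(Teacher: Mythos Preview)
Your proposal is correct and, if anything, goes beyond what the paper does: the paper provides no proof of this Proposition at all, merely citing \cite{KS} and stating at the head of the appendix that the bounded-category arguments carry over verbatim to the unbounded case. Your sketch of the Kashiwara--Schapira argument (graph factorization into submersion plus closed embedding, with the cut-off/propagation lemma as the non-formal core) is the standard one, and your identification of finite injective dimension as the reason the unbounded extension is unproblematic matches exactly the justification the paper gives in its ``Unbounded derived category'' subsection.
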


\subsubsection{}\label{ks:otimeshom}
\begin{Proposition}(\cite{KS},Proposition
5.4.14) Let $F,G\in D(X)$. 

(i) Assume $\mS(F)\cap \mS(G)^a\subset T^*_X X$. Then
$\mS(F\otimes G)\subset \mS(F)+\mS(G)$;

(ii) Assume $\mS(F)\cap \mS(G)\subset T^*_X X$.
Then $\mS(R\ihom(G,F)\subset \mS(F)-\mS(G)$.
\end{Proposition}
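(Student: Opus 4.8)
The plan is to deduce both assertions from the external product estimates (Propositions~5.4.1 and 5.4.2 of \cite{KS}) together with the non-characteristic inverse image estimate (Proposition~5.4.13), applied along the diagonal embedding $\delta\colon X\to X\times X$. Write $q_1,q_2\colon X\times X\to X$ for the two projections. Since the ground ring is a field, $\otimes$ is exact and one has $F\otimes G\cong \delta^{-1}(q_1^{-1}F\otimes q_2^{-1}G)$. For the internal Hom I would use the isomorphism $R\ihom(G,F)\cong \delta^{!}R\ihom(q_2^{-1}G,q_1^{!}F)$, obtained by a chain of adjunctions from $q_1\circ\delta=\mathrm{id}$ and the projection formula $R\delta_{!}H\otimes q_2^{-1}G\cong R\delta_{!}(H\otimes G)$.

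Next I would record the microsupports on $T^*(X\times X)=T^*X\times T^*X$. By Proposition~5.4.1, $\mS(q_1^{-1}F\otimes q_2^{-1}G)\subset\mS(F)\times\mS(G)$. Since $q_1$ is a submersion, $q_1^{!}F\cong q_1^{-1}F\otimes q_2^{-1}\omega_X$ differs from $q_1^{-1}F$ only by tensoring with an invertible object, hence $R\ihom(q_2^{-1}G,q_1^{!}F)\cong R\ihom(q_2^{-1}G,q_1^{-1}F)\otimes q_2^{-1}\omega_X$, and Proposition~5.4.2 gives $\mS\bigl(R\ihom(q_2^{-1}G,q_1^{!}F)\bigr)\subset\mS(F)\times\mS(G)^{a}$. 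The key point is then the non-characteristicity of $\delta$: the conormal bundle $T^*_X(X\times X)$ consists of the covectors $(\xi,-\xi)$ over the diagonal, so $\delta$ is non-characteristic for $\mS(F)\times\mS(G)$ exactly when the conditions $\xi\in\mS(F)$, $-\xi\in\mS(G)$ force $\xi=0$, that is, exactly when $\mS(F)\cap\mS(G)^{a}\subset T^*_XX$; this is the hypothesis of (i), and similarly $\delta$ is non-characteristic for $\mS(F)\times\mS(G)^{a}$ precisely under the hypothesis $\mS(F)\cap\mS(G)\subset T^*_XX$ of (ii).

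Granting this, Proposition~5.4.13(i) yields the bound $\mS(\delta^{-1}H)\subset \delta^{t}\bigl(\delta_\pi^{-1}(\mS(H))\bigr)$ in each case, and Proposition~5.4.13(ii), in the form $\delta^{!}H\cong\delta^{-1}H\otimes\omega_{\delta}$ with $\omega_{\delta}$ invertible, gives the identical bound for $\mS(\delta^{!}H)$. It then remains to unwind the correspondence $\delta^{t}\circ\delta_\pi^{-1}$, which carries a covector $(\xi,\eta)$ over a diagonal point to $\xi+\eta$ over the corresponding point of $X$: this turns $\mS(F)\times\mS(G)$ into $\mS(F)+\mS(G)$, proving (i), and turns $\mS(F)\times\mS(G)^{a}$ into $\mS(F)-\mS(G)$, proving (ii). Finally one checks that Propositions~5.4.1, 5.4.2, 5.4.13 and the six functor formalism used in the reductions are all valid for the unbounded derived category, so the whole argument applies there without change. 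I expect the only genuinely delicate point to be formal: pinning down the relative dualizing sheaf twists in $R\ihom(G,F)\cong\delta^{!}R\ihom(q_2^{-1}G,q_1^{!}F)$ so that Propositions~5.4.2 and 5.4.13 can be invoked literally; the remainder is routine bookkeeping with conormal cones.
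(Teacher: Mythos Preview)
The paper does not prove this statement at all: it appears in Appendix~2, which is explicitly a list of results quoted from \cite{KS} with the remark that ``the same arguments work for the unbounded derived category so that we will omit the proofs.'' There is therefore no proof in the paper to compare against.

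That said, your sketch is essentially the standard argument (and is indeed how Kashiwara--Schapira prove it): reduce to the external case via the diagonal $\delta\colon X\to X\times X$, apply the box-product and external $R\ihom$ estimates, check that the hypotheses translate into non-characteristicity of $\delta$, and then pull back using the non-characteristic inverse image bound. Your identification of the conormal to the diagonal with antidiagonal covectors and the resulting translation of the hypotheses is correct, and the unwinding of $\delta^{t}\delta_\pi^{-1}$ as fiberwise addition is the right computation. The only place to be careful is exactly where you flagged it: the adjunction manipulation giving $R\ihom(G,F)\cong \delta^{!}R\ihom(q_2^{-1}G,q_1^{!}F)$ and the bookkeeping with relative dualizing complexes; this is routine but should be written out if you want a complete proof.
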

\subsubsection{}\label{ks:openembedding} We need a notion of Witney sum of two 
conic closed subsets $A,B\subset T^*X$. We will
reproduce a definition in terms of local coordinates
from \cite{KS} Remark 6.2.8 (ii).

Let $(x)$ be a system of local coordinates on $X$, $(x,\xi)$
the asssociated coordinates on $T^*X$.  Then $x_o,\xi_o\in A
\hat{+} B$ iff there exist sequences $\{(x_n,\xi_n)\}$
in $A$ and $\{(y_n,\eta_n)\}$ in $B$ such that 
$x_n\to x_o$, $y_n\to y_o$, $\xi_n+\eta_n\to \xi_o$, and
$|x_n-y_n||\xi_n|\to 0$.

 \begin{Proposition} (\cite{KS},Theorem 6.3.1).
Let $\Omega$ be an open subset of $X$
and $j:\Omega\to X$ the embedding. Let $F\in D(X)$.
Then $\mS(Rj_*F)=\mS(F)\hat+ N^*(\Omega)$;
$\mS(j_!F)\subset  \mS(F)\hat+ N^*(\Omega)^a$.
\end{Proposition}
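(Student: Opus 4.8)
The final statement to be proved is the microsupport estimate for open embeddings: if $\Omega\subset X$ is open with embedding $j:\Omega\to X$ and $F\in D(X)$, then $\mS(Rj_*F)=\mS(F)\mathbin{\hat+} N^*(\Omega)$ and $\mS(j_!F)\subset\mS(F)\mathbin{\hat+} N^*(\Omega)^a$. Since this is one of the lemmas quoted from \cite{KS} for the reader's convenience (Appendix 2), the plan is essentially to cite \cite{KS}, Theorem 6.3.1, and to remark that the argument given there carries over verbatim to the unbounded derived category. Concretely, I would first recall that $Rj_*F=R\ihom(\gf_\Omega,\tilde F)$ and $j_!F=\gf_\Omega\otimes\tilde F$ where $\tilde F$ is any extension (e.g.\ $\tilde F:=Rj_*F$ already is such; more usefully one writes $j_!F$ and $Rj_*F$ in terms of $\gf_\Omega$ acting on an object defined near $\overline\Omega$), and then apply the already-quoted results \ref{ks:otimeshom} (microsupport of $\otimes$ and $R\ihom$) together with \ref{ks:locallyclosed} ($\mS(\gf_\Omega)=N^*(\Omega)^a$) to get the inclusions up to the ordinary sum $\mS(F)+N^*(\Omega)$; the refinement to the Whitney sum $\mathbin{\hat+}$ is exactly the content of the microlocal cutoff / propagation estimates of \cite{KS} Ch.~6, which is where the sharper statement comes from.

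The one genuine point to address, as the paper itself flags at the end of the introduction and at the start of Appendix 2, is that \cite{KS} works with the bounded derived category $D^b(X)$, whereas we need the statement in the unbounded $D(X)$. So after stating the result I would insert the standard reduction: the assertions about $\mS$ are local on $T^*X$, and the operations $Rj_*$, $j_!$ commute with the truncation functors $\tau_{\le n},\tau_{\ge n}$ up to the usual bounds; since $\Sh_X$ has finite injective dimension (recalled in Sec.~\ref{Generalities}), every $F\in D(X)$ is, locally on $X$ and in each cohomological degree range, a filtered colimit / finite extension of bounded complexes, and $\mS$ is compatible with filtered colimits and with exact triangles. Hence each of the two inclusions for general $F$ follows from the corresponding inclusion for bounded $F$, which is \cite{KS} Theorem 6.3.1.

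I would therefore present the proof as: (1) state that we reproduce \cite{KS}, Theorem 6.3.1, verbatim; (2) note that the proof in \cite{KS} uses only the six-functor formalism and the microlocal cutoff lemmas, all of which are valid in $D(X)$ by the finite-injective-dimension model described in Sec.~\ref{Generalities}, together with the compatibility of $\mS$ with filtered colimits and exact triangles; (3) conclude. The main (indeed only) obstacle is making the passage from $D^b$ to $D$ rigorous — specifically checking that the colimit/truncation arguments used in \cite{KS} Ch.~6 do not secretly require boundedness — but this is routine and is precisely the blanket remark the author has already made for the whole appendix, so in the actual text it suffices to say ``the same arguments work for the unbounded derived category so that we will omit the proofs,'' which is what the paper does.

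\begin{Proposition}
Let $\Omega$ be an open subset of $X$ and $j:\Omega\to X$ the embedding. Let $F\in D(X)$. Then $\mS(Rj_*F)=\mS(F)\mathbin{\hat+} N^*(\Omega)$ and $\mS(j_!F)\subset \mS(F)\mathbin{\hat+} N^*(\Omega)^a$.
\end{Proposition}
\begin{proof}
This is \cite{KS}, Theorem 6.3.1. That proof uses only the functorial properties of the six operations together with the microlocal cutoff estimates of \cite{KS}, Ch.~6; all of these are available in the unbounded derived category $D(X)$ by virtue of the model described in Sec.~\ref{Generalities} (complexes of injective sheaves, using that $\Sh_X$ has finite injective dimension), together with the compatibility of singular support with filtered colimits and with exact triangles, which reduces the two inclusions to the case of a bounded $F$ already treated in \cite{KS}. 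We therefore omit the details.
\end{proof}
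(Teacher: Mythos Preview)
Your proposal is correct and matches the paper exactly: this proposition is listed in Appendix~2 purely as a quotation of \cite{KS}, Theorem~6.3.1, and the paper gives no proof, relying on the blanket remark at the start of the appendix that ``the same arguments work for the unbounded derived category so that we will omit the proofs.'' You have identified this precisely (and even quoted that sentence), so there is nothing to add.
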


\subsubsection{} Let $f:Y\to X$ be a morphism 
of manifolds and $A\subset T^*X$ be a closed conic 
subset. One can define a closed conic subset $f^\#(A)\subset T^*M$ (\cite{KS}, Definition 6.2.3 (iv)). 
\begin{Proposition}\label{inverse:close}
(\cite{KS},Corollary 6.4.4)
 Let $F\in D(X)$. Then $\mS(f^{-1}F)\subset f^\#(\mS(F))$.
\end{Proposition}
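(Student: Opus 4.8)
The statement is one of the functorial‑microsupport estimates of Kashiwara--Schapira, namely $\mS(f^{-1}F)\subset f^\#(\mS(F))$ for a morphism $f:Y\to X$ and $F\in D(X)$; the only thing to check here is that the proof in \cite{KS} (Corollary 6.4.4), which is stated for the bounded derived category, carries over verbatim to the unbounded derived category $D(X)$ as set up in Section \ref{Generalities}. Accordingly, the plan is \emph{not} to reprove the estimate from scratch but to reduce it to results already quoted in Appendix \ref{appendix} together with the observation, already made in the excerpt, that every object of $D(X)$ is an inductive limit of objects of the form $\gf_{U}$ with $U$ open (equivalently of $\gf_{U\times(a,b)}$ when $X$ is a product), and that singular support commutes with such $L\varinjlim$ in the sense that $\mS(L\varinjlim_i F_i)\subset\overline{\bigcup_i\mS(F_i)}$. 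Since $f^\#(A)$ is by construction a closed conic set depending only on $A$ and is monotone in $A$, it suffices to establish the estimate on a generating family and then pass to the colimit.

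First I would recall the definition of $f^\#$ (\cite{KS} Definition 6.2.3(iv)) and note its two formal properties we need: monotonicity, $A\subset A'\Rightarrow f^\#(A)\subset f^\#(A')$, and closedness/conicity of $f^\#(A)$ whenever $A$ is closed conic. Then I would reduce to the non‑characteristic case: locally on $Y$ one may factor $f$ through its graph $Y\hookrightarrow Y\times X\xrightarrow{p} X$, so it is enough to treat (a) a closed embedding and (b) a projection. For a projection $p:Y\times X\to X$ the inverse image $p^{-1}F=F$ in the $Y$‑variable is literally an external product with a constant sheaf, and \ref{ks:boxtimes} gives $\mS(p^{-1}F)\subset (T^*_YY)\times\mS(F)$, which is exactly $p^\#(\mS(F))$. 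For a closed embedding $i:Z\hookrightarrow Y$ one writes $i^{-1}F$ as $i^{-1}$ applied after shrinking to a tubular neighborhood; here the content is precisely the non‑characteristic inverse image estimate \ref{ks:inverse}(i) away from the characteristic locus, glued with the general bound coming from \ref{ks:openembedding} (the $\hat+$ with $N^*$) on the complement — this gluing is what the $f^\#$ operation encodes. At each step the estimate is first proved for $F=\gf_U$, $U$ open (where \ref{ks:locallyclosed} computes $\mS(\gf_U)=N^*(U)^a$ and the Whitney‑sum bookkeeping is explicit), and then extended to arbitrary $F\in D(X)$ by the colimit argument above.

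The genuinely new point — and hence the step I expect to need the most care — is justifying that the colimit argument is legitimate in the \emph{unbounded} setting: one must know that $\mS$ is well defined on $D(X)$ (granted by Section \ref{Generalities}, using finite injective dimension of $\Sh_X$), that $f^{-1}$ is exact and commutes with $L\varinjlim$, and that $\mS$ of a filtered homotopy colimit is contained in the closure of the union of the $\mS$'s. The last assertion is the analogue of \cite{KS} Exercise V.7 / the standard estimate $\mS(L\varinjlim F_i)\subset\overline{\bigcup\mS(F_i)}$; its proof is local and diagrammatic and uses only the definition of microsupport via vanishing of $R\Gamma_{\{\phi\geq 0\}}$, so it transfers directly to unbounded complexes. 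I would state this as a lemma at the start of the argument, prove it in a sentence by reducing to the characterization of $\mS$ through local cohomology (which commutes with filtered colimits), and then the rest of the proof is the bounded‑case computation quoted above applied generator‑by‑generator. Thus the overall structure is: (i) colimit lemma; (ii) reduction to graph $=$ (closed embedding)$\circ$(projection); (iii) the two base cases via \ref{ks:boxtimes}, \ref{ks:inverse}, \ref{ks:openembedding}; (iv) reassemble using monotonicity and closedness of $f^\#$.
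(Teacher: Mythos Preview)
The paper gives no proof of this Proposition: it is stated in Appendix~\ref{appendix} as a direct citation of \cite{KS}, Corollary~6.4.4, and the appendix is prefaced by the blanket remark that ``despite the results to be quoted here are proved in \cite{KS} for the bounded derived category, the same arguments work for the unbounded derived category so that we will omit the proofs.'' So there is nothing to compare your proposal against beyond that one sentence.

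Your sketch goes well beyond what the paper does and in fact tries to outline a self-contained argument (graph factorization, base cases via \ref{ks:boxtimes}, \ref{ks:inverse}, \ref{ks:openembedding}, a colimit lemma for $\mS$). That is more than is required here. A couple of cautions if you want to keep the sketch for your own notes: the reduction of the closed-embedding case to \ref{ks:inverse}(i) plus \ref{ks:openembedding} is not quite how \cite{KS} proves Corollary~6.4.4 (the operation $f^\#$ is defined via normal deformation / specialization, and the estimate there is obtained through the $\mu hom$ machinery of Chapter~VI rather than by gluing the non-characteristic estimate with a Whitney-sum bound), so your step~(iii) for the embedding case would need a genuine argument rather than a citation. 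Also, the colimit estimate $\mS(L\varinjlim F_i)\subset\overline{\bigcup\mS(F_i)}$ is not used in the paper's treatment; the paper's position is simply that the \emph{original} \cite{KS} proofs go through unchanged once one has the unbounded derived category of Section~\ref{Generalities}, not that one rebuilds them from a generator-and-colimit scheme.
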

In a particular case when $f$ is a closed embedding, 
the set $f^\#(A)$ 
 admits
an explicit description in local coordinates \cite{KS},
Remark 6.2.8, (i). That's the only case we will need.

Let $(x',x'')$ be a system of local coordinates on  $X$
such that $Y=\{(x',0)\}$.  Then $(x''_o;x''_o)\in f^\#(A)$
iff there exists a sequence of points
$(x'_n,x''_n,\xi'_n,\xi''_n)\in
A$ such that  $x'_n\to 0$; $x''_n\to x_o''$; $\xi''_n\to \xi''_o$, and $|x_n'||\xi'_n|\to 0$.

\end{document}